\numberwithin{equation}{section}
\newcommand{\Db}{{\bf D}^{\mathrm{b}}}
\newcommand{\Dbc}{{\bf D}_{\mathrm{c}}^{\mathrm{b}}}
\newcommand{\mer}{{\rm mero}}
\newcommand{\merc}{{\rm mero,c}}
\newcommand{\rHom}{\mathrm{RHom}}
\newcommand{\BDC}{{\mathbf{D}}^{\mathrm{b}}}
\newcommand{\Mod}{\mathrm{Mod}}
\newcommand{\CC}{\mathbb{C}}
\newcommand{\RR}{\mathbb{R}}
\newcommand{\QQ}{\mathbb{Q}}
\newcommand{\ZZ}{\mathbb{Z}}
\newcommand{\D}{\mathcal{D}}
\newcommand{\F}{\mathcal{F}}
\newcommand{\G}{\mathcal{G}}
\newcommand{\M}{\mathcal{M}}
\newcommand{\sho}{\mathcal{O}}
\newcommand{\R}{\mathcal{R}}
\newcommand{\SP}{\mathcal{P}}
\newcommand{\TT}{\mathcal{T}}
\newcommand{\CS}{\mathcal{S}}
\newcommand{\CH}{\mathcal{H}}
\newcommand{\CK}{\mathcal{K}}
\newcommand{\PP}{{\mathbb P}}
\newcommand{\LL}{{\mathbb L}}
\newcommand{\an}{{\rm an}}
\renewcommand{\dim}{{\rm dim}}
\newcommand{\e}{\varepsilon}
\newcommand{\supp}{{\rm supp}}
\newcommand{\grad}{{\rm grad}}
\newcommand{\distsq}{\delta}
\newcommand{\tl}[1]{\widetilde{#1}}
\newcommand{\uotimes}[1]{\otimes_{#1}}
\newcommand{\simto}{\overset{\sim}{\longrightarrow}}
\newcommand{\dsum}{\displaystyle \sum}
\newcommand{\op}{\mbox{\scriptsize op}}
\newcommand{\SD}{\mathcal{D}}
\newcommand{\SDop}{\mathcal{D}^{\mbox{\scriptsize op}}}
\newcommand{\SO}{\mathcal{O}}
\newcommand{\SA}{\mathcal{A}}
\newcommand{\SM}{\mathcal{M}}
\newcommand{\SN}{\mathcal{N}}
\newcommand{\SL}{\mathcal{L}}
\newcommand{\SK}{\mathcal{K}}
\newcommand{\SE}{\mathcal{E}}
\newcommand{\SF}{\mathcal{F}}
\newcommand{\SG}{\mathcal{G}}
\newcommand{\SU}{\mathcal{U}}
\newcommand{\SV}{\mathcal{V}}
\newcommand{\SW}{\mathcal{W}}
\newcommand{\Modcoh}{\mathrm{Mod}_{\mbox{\rm \scriptsize coh}}}
\newcommand{\Modhol}{\mathrm{Mod}_{\mbox{\rm \scriptsize hol}}}
\newcommand{\Modrh}{\mathrm{Mod}_{\mbox{\rm \scriptsize rh}}}
\newcommand{\BDCcoh}{{\mathbf{D}}^{\mathrm{b}}_{\mbox{\rm \scriptsize coh}}}
\newcommand{\BDChol}{{\mathbf{D}}^{\mathrm{b}}_{\mbox{\rm \scriptsize hol}}}
\newcommand{\BDCrh}{{\mathbf{D}}^{\mathrm{b}}_{\mbox{\rm \scriptsize rh}}}
\newcommand{\DD}{\mathbb{D}}
\newcommand{\Lotimes}[1]{\overset{L}{\otimes}_{#1}}
\newcommand{\Dotimes}{\overset{D}{\otimes}}
\newcommand{\Potimes}{\overset{+}{\otimes}}
\newcommand{\rhom}{{\rm R}{\mathcal{H}}om}
\newcommand{\rihom}{{\rm R}{\mathcal{I}}hom}
\newcommand{\Prihom}{{\rm R}{\mathcal{I}}hom^+}
\newcommand{\Prhom}{\rhom^+}
\newcommand{\I}{{\rm I}}
\newcommand{\che}[1]{\overset{\vee}{#1}}
\newcommand{\var}[1]{\overline{#1}}
\newcommand{\BEC}{{\mathbf{E}}^{\mathrm{b}}}
\newcommand{\Q}{\mathbf{Q}}
\newcommand{\EE}{\mathbb{E}}
\newcommand{\bs}{\backslash}
\newcommand{\inj}{``\varinjlim"}
\newcommand{\T}{{\rm T}}
\newcommand{\bfR}{\mathbf{R}}
\newcommand{\bfL}{\mathbf{L}}
\newcommand{\bfD}{\mathbf{D}}
\newcommand{\rmR}{{\rm R}}
\newcommand{\rmE}{{\rm E}}
\newcommand{\rmD}{{\rm D}}
\newcommand{\rmt}{{\rm t}}
\newcommand{\bfE}{\mathbf{E}}
\newcommand{\rmL}{{\rm L}}
\renewcommand{\Re}{\rm Re}
\newtheorem{theorem}{Theorem}[section]
\newtheorem{corollary}[theorem]{Corollary}
\newtheorem{lemma}[theorem]{Lemma}
\newtheorem{proposition}[theorem]{Proposition}
\theoremstyle{definition}
\newtheorem{definition}[theorem]{Definition}
\theoremstyle{remark}
\newtheorem{remark}[theorem]{\sc Remark}
\newtheorem{example}[theorem]{\sc Example}
\title{Fourier Transforms of Irregular Holonomic 
D-modules, \\ Singularities at Infinity of 
Meromorphic Functions \\ 
and Irregular Characteristic Cycles 
\footnote{{\bf 2010 Mathematics Subject Classification: 
}14C17, 14D06, 32C38, 32S60, 35A27}}
\author{Kiyoshi TAKEUCHI 
\footnote{Mathematical Institute, Tohoku University, 
Aramaki Aza-Aoba 6-3, Aobaku, Sendai, 980-8578, Japan. 
E-mail: takemicro@nifty.com} }
\date{}
\begin{document}
\maketitle

\begin{abstract}
Based on the recent developments in the irregular 
Riemann-Hilbert correspondence for holonomic D-modules 
and the Fourier-Sato transforms for enhanced 
ind-sheaves, we study the Fourier transforms of 
some irregular holonomic 
D-modules. For this purpose, the singularities 
of rational and meromorphic functions on complex 
affine varieties will be studied precisely, 
with the help of some new methods and tools such as meromorphic 
vanishing cycle functors. As a consequence, we show 
that the exponential factors and the irregularities 
of the  Fourier transform of a holonomic 
D-module are described geometrically by the stationary 
phase method, as in the classical case of 
dimension one. A new feature in the higher-dimensional 
case is that we have some extra  
rank jump of the Fourier transform 
produced by the singularities of the linear perturbations 
of the exponential factors at their points of 
indeterminacy. In the course of our study, 
not necessarily homogeneous Lagrangian cycles 
that we call irregular characteristic cycles 
will play a crucial role. 
\end{abstract}

\section{Introduction}\label{sec:1}

\indent The theory of Fourier transforms of 
D-modules is one of the most active areas in algebraic analysis. 
They interchange algebraic holonomic D-modules on the complex 
vector spaces $\CC^N$ with those on their duals. 
Until now, the case $N=1$ has been studied precisely by many 
mathematicians such as Bloch-Esnault \cite{BE04}, 
Malgrange \cite{Mal88}, Mochizuki 
\cite{Mochi10}, Sabbah \cite{Sab08} etc. 
On the other hand, after a groundbreaking 
development in the theory of irregular meromorphic 
connections by Kedlaya \cite{Ked10, Ked11} and Mochizuki 
\cite{Mochi11}, in \cite{DK16} D'Agnolo and Kashiwara 
established the Riemann-Hilbert correspondence for 
irregular holonomic D-modules 
(for another Riemann-Hilbert correspondence 
via filtered local 
systems, see also Sabbah \cite{Sab13}).  
For this purpose, 
they introduced enhanced ind-sheaves extending the 
classical notion of ind-sheaves introduced by 
Kashiwara-Schapira \cite{KS01}. 
Moreover in \cite{KS16-2}, 
Kashiwara and Schapira adapted this new notion to 
the Fourier-Sato transforms of Tamarkin \cite{Tama08} 
and developed a new theory of Fourier-Sato transforms 
for enhanced ind-sheaves which correspond to those 
for algebraic holonomic D-modules. Subsequently 
in \cite{DK17}, by using  
these results, D'Agnolo and Kashiwara 
studied Fourier transforms of 
holonomic D-modules on the affine line $\CC$ very precisely. 
In this case $N=1$, later 
Fourier transforms of regular and irregular holonomic 
D-modules were studied from various points of view by 
many authors such as D'Agnolo-Hien-Morando-Sabbah 
\cite{DHMS17}, Hohl \cite{hoh22}, 
Mochizuki \cite{Mochi18} and 
Barco-Hien-Hohl-Sevenheck \cite{BHHS22} etc. 
However, in contrast to these achievements in $N=1$, 
we know only very little in the 
higher-dimensional case $N \geq 2$. 
The aim of this paper is to clarify 
this situation by extending our previous 
results for Fourier transforms of 
regular holonomic D-modules in 
\cite{IT20a} and \cite{IT20b} to more general 
holonomic D-modules. 
For this purpose, we will study the singularities 
of rational and meromorphic functions on complex 
affine varieties precisely, 
by using some new methods and tools such as meromorphic 
vanishing cycle functors. 

\medskip 
\indent First, let us briefly recall the definition of 
Fourier transforms of algebraic D-modules. 
Let $X=\CC_z^N$ be a complex vector space 
and $Y=\CC_w^N$ its dual. 
We regard them as algebraic varieties and 
use the notations $\SD_X$ and $\SD_Y$ for 
the sheaves of the 
rings of ``algebraic'' differential operators on them. 
Denote by $\Modcoh(\SD_X)$ (resp. $\Modhol(\SD_X)$) 
the category of coherent (resp. holonomic) $\SD_X$-modules. 
Let $W_N := \CC[z, \partial_z]\simeq\Gamma(X; \SD_X)$ and 
$W^\ast_N := \CC[w, \partial_w]\simeq\Gamma(Y; \SD_Y)$ 
be the Weyl algebras over $X$ and $Y$, respectively. 
Then there exists a ring isomorphism 
\begin{equation}
W_N\simto W^\ast_N\hspace{30pt}
(z_i\mapsto-\partial_{w_i},\ \partial_{z_i}\mapsto w_i),  
\end{equation}
by which we can endow a left $W_N$-module $M$ with 
a structure of a left $W_N^\ast$-module.
We call it the Fourier transform of $M$ 
and denote it by $M^\wedge$. 
For a ring $R$ we denote by $\Mod_f(R)$ 
the category of finitely generated $R$-modules. 
Recall that for the affine algebraic varieties $X$ 
and $Y$ we have the equivalences of categories 
\begin{align} 
\Modcoh(\SD_X)
&\simeq
\Mod_f(\Gamma(X; \SD_X)) = \Mod_f(W_N),\\
\Modcoh(\SD_Y)
&\simeq
\Mod_f(\Gamma(Y; \SD_Y)) = \Mod_f(W^\ast_N)
\end{align}
obtained by taking global sections 
(see e.g. \cite[Propositions 1.4.4 and 1.4.13]{HTT08}).
Thus, for a coherent $\SD_X$-module $\SM\in\Modcoh(\SD_X)$
we can define its Fourier 
transform $\SM^\wedge\in\Modcoh(\SD_Y)$. 
It follows that we obtain an equivalence of categories
\begin{equation}
( \cdot )^\wedge : \Modhol(\SD_X)\simto \Modhol(\SD_Y)
\end{equation}
between the subcategories of holonomic D-modules 
(see e.g. \cite[Proposition 3.2.7]{HTT08} for the details). 
Although the definition of Fourier transforms of 
holonomic D-modules is so simple, in general it is 
hard to describe their properties. 
First of all, the Fourier transform $\SM^\wedge$ of a regular 
holonomic $\SD_X$-module $\SM$ is 
not necessarily regular. For the regularity of 
$\SM^\wedge$ we need some very strong condition on $\SM$. 
Let $X^{\an} = \CC^N$ be the underlying complex 
manifold of $X= \CC^N$ that we sometimes denote by $X$ 
for simplicity. 
Recall that an algebraic constructible sheaf 
$\SF \in\Dbc (X^{\an}):= \Dbc (\CC_{X^{\an}})$ on $X^{\an}= \CC^N$ 
is called monodromic if 
its cohomology sheaves are locally constant 
on each $\CC^*$-orbit in $X^{\an}= \CC^N$ (see 
Verdier \cite{Ver83}). 
Then the following beautiful theorem 
is due to Brylinski \cite{Bry86}. 

\begin{theorem}[{\rm (Brylinski \cite{Bry86})}]\label{th-Bry}
Let $\SM$ be an algebraic regular holonomic 
D-module on $X= \CC^N$. Assume that its 
solution complex $Sol_{X}(\SM)$ is 
monodromic. Then its 
Fourier transform $\SM^\wedge$ is regular 
and $Sol_{Y}(\SM^\wedge)$ is monodromic. 
\end{theorem}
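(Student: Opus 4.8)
The plan is to reduce the statement to a concrete computation on the level of solution complexes and the kernel of the Fourier transform. Recall that the Fourier transform of $\SM$ can be realized on the sheaf side via the kernel $\SO_{X\times Y}e^{\langle z,w\rangle}$ (the integral transform along the two projections $p\colon X\times Y\to X$, $q\colon X\times Y\to Y$), so that on solution complexes $Sol_Y(\SM^\wedge)$ is computed, up to shift, as $Rq_! (\, p^{-1}Sol_X(\SM)\otimes \CC_{X\times Y}^{e^{-\Re\langle z,w\rangle\gg 0}}\,)$ in the enhanced setting, or in the original topological Fourier–Sato picture of Kashiwara–Schapira. The first step I would carry out is therefore to replace $\SM$ by $\SF:=Sol_X(\SM)[N]$, a monodromic object of $\Dbc(X^{\an})$, and to observe that it suffices to prove the two assertions for $\SF$: that the topological (or enhanced) Fourier transform $\SF^\wedge$ is again an \emph{ordinary} constructible complex (no irregular enhancement), and that $\SF^\wedge$ is monodromic on $Y^{\an}=\CC^N$.

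The key geometric input is the behaviour of the pairing $\langle z,w\rangle$ under the joint $\CC^*$-action. Since $\SF$ is monodromic, its cohomology sheaves are locally constant along the $\CC^*$-orbits $t\cdot z$ in $X$. The second step is to introduce the "one-parameter rescaling" and use the fact that the diagonal $\CC^*$-action $(z,w)\mapsto (tz, t^{-1}w)$ leaves $\langle z,w\rangle$ invariant, while the anti-diagonal action $(z,w)\mapsto (tz,tw)$ multiplies it by $t^2$. Pushing $\SF$ forward along $q$ with the exponential twist and tracking these actions, one sees on the one hand that $\SF^\wedge$ is locally constant along $\CC^*$-orbits in $Y$ (hence monodromic), and on the other hand that the "exponential direction" of the twisted sheaf $p^{-1}\SF\otimes(\cdots)$ is, along each orbit, constant — i.e. there is no genuinely irregular growth surviving the integration, because the monodromic structure forces the exponent to be homogeneous of degree $1$ in $z$ and the fibre integral over $z$ of $e^{\langle z,w\rangle}$ against a conic sheaf produces only polynomial (regular) behaviour in $w$. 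Translating this back through the Riemann–Hilbert correspondence gives that $\SM^\wedge$ has trivial enhancement, i.e.\ is regular holonomic.

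Concretely, I would organize the regularity part as follows: (i) decompose $X\setminus\{0\}\cong S\times\CC^*$ (radially) adapted to a conic stratification on which $\SF$ is constructible; (ii) on each conic stratum the exponential kernel restricted to the orbit becomes $e^{t\cdot c(w)}$ for a linear function $c$ of $w$, and the oscillatory/Laplace integral $\int_{\CC^*} e^{tc(w)}\,\frac{dt}{t}$ (suitably interpreted, with the constructible coefficients) contributes no exponential factor in $w$ — this is the one-dimensional stationary-phase/Brylinski computation, here applied orbit-by-orbit; (iii) the point $\{0\}\subset X$ contributes the constant sheaf part, whose Fourier transform is supported at $\{0\}\subset Y$ or is again constant, manifestly regular. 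Assembling (i)–(iii) with a spectral-sequence/distinguished-triangle argument over the strata yields regularity and monodromicity of $\SF^\wedge$, hence of $\SM^\wedge$ and $Sol_Y(\SM^\wedge)$.

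The main obstacle I anticipate is step (ii): making rigorous the claim that integrating the exponential kernel along a $\CC^*$-orbit against a \emph{locally constant} but possibly nontrivial-monodromy sheaf does not create an irregular singularity in the dual variable. One must handle the monodromy of $\SF$ around $\{0\}$ carefully — the Mellin/Fourier transform of a sheaf with unipotent or non-unipotent monodromy along $\CC^*$ can a priori produce regular singularities at new points (at $0$, at $\infty$, and where $c(w)=0$), and one needs to check these are \emph{regular}, never irregular. The cleanest way around this is probably not to compute the integral by hand but to invoke the compatibility of the enhanced Fourier–Sato transform with the Riemann–Hilbert functor together with the fact, already available in the monodromic setting, that the enhancement $\CC_{X^{\an}}^{E}$ attached to a monodromic $\SF$ is "constant in the extra variable" — so that $\SF^\wedge$, computed by Kashiwara–Schapira's topological Fourier–Sato transform of Tamarkin type, automatically lands in the essential image of ordinary constructible sheaves, which corresponds under Riemann–Hilbert exactly to regular holonomic modules. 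Verifying this last compatibility in the form needed is where the real work lies.
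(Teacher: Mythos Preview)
The paper does not prove this theorem. It is stated as a background result due to Brylinski \cite{Bry86} and is cited without proof; the paper's contribution is to \emph{remove} the monodromicity hypothesis and study Fourier transforms of more general (regular, then irregular) holonomic D-modules. So there is no ``paper's own proof'' to compare your proposal against.

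That said, a brief remark on your sketch: the overall shape---reduce to the solution complex, exploit the $\CC^*$-equivariance of the pairing $\langle z,w\rangle$, and argue that for conic sheaves the enhanced Fourier--Sato transform collapses to the classical one---is a reasonable modern rephrasing of the strategy. Your own identified obstacle in step (ii) is real: the orbit-by-orbit oscillatory integral heuristic is not a proof, and the clean way through is indeed the one you mention at the end, namely that for a monodromic constructible sheaf the enhanced solution complex carries no nontrivial $\RR$-direction, so the Fourier--Sato transform of \cite{KS16-2} reduces to the classical conic Fourier--Sato transform of \cite{KS90}, which preserves $\CC^*$-constructibility. Brylinski's original argument predates the enhanced formalism and works instead via the Radon transform and the comparison of algebraic and analytic de Rham cohomology, so if you want to reconstruct the actual proof you should consult \cite{Bry86} directly rather than the present paper.
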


In \cite{IT20a} and \cite{IT20b}, removing the 
monodromicity assumption in this theorem, 
the author and Ito studied the Fourier transforms of general 
regular holonomic D-modules $\SM$ on $X= \CC^N$ and 
described their smooth loci, exponential factors, irregularities and 
characteristic cycles etc. in terms of the geometry 
of $\SM$. Moreover, as was clarified in 
\cite{IT20b}, if $\SM$ is regular holonomic then 
$Sol_{Y}(\SM^\wedge)$ is monodromic. 
For the other important contributions in the 
regular case, see also Daia \cite{Dai00}. 
In this paper, removing also 
the regularity assumption 
in \cite{Dai00}, \cite{IT20a} and \cite{IT20b}, 
we study the Fourier transforms of more general 
holonomic D-modules $\SM$. 
Namely, we aim at finding a way to get a unified 
generalization of the results in \cite{DK17} 
and \cite{IT20a}. 

\medskip 
\indent Now let us explain our mains results 
more precisely. Mostly 
in this paper, we consider the following special but basic 
holonomic D-modules. For a rational function 
$f= \frac{P}{Q}: X \setminus Q^{-1}(0) 
\longrightarrow \CC$ ($P, Q \in 
\Gamma(X; \SO_X) \simeq \CC [z_1,z_2, \ldots, z_N]$, 
$Q \not= 0$) on $X= \CC^N_z$ we set 
$U:= X \setminus Q^{-1}(0)$ and 
define an 
exponential $\SD_X$-module $\SE^f_{U|X} 
\in\Modhol (\SD_X)$ as in the analytic case 
(see Subsection \ref{sec:7}). In what 
follows, we always assume that $P$ and $Q$ are 
coprime. Then $I(f):= P^{-1}(0) \cap Q^{-1}(0)
 \subset X= \CC^N$ 
is noting but the set of the points of 
indeterminacy of the rational function 
$f= \frac{P}{Q}$. 

\begin{definition}\label{defi-etdint}
We say that a holonomic $\SD_X$-module 
$\SM \in\Modhol(\SD_X)$ is an exponentially twisted holonomic 
D-module if there exist a regular holonomic 
$\SD_X$-module $\SN \in\Modrh(\SD_X)$ and a rational function 
$f= \frac{P}{Q}: U=X \setminus Q^{-1}(0) 
\longrightarrow \CC$ ($P, Q \in 
\Gamma(X; \SO_X), 
Q \not= 0$) on $X= \CC^N_z$ such that we have an isomorphism 
\begin{equation}
\SM \simeq 
\SN \Dotimes \SE^f_{U|X}, 
\end{equation}
where 
for the hypersurface $D:=Q^{-1}(0) \subset X$ we have 
$\bigl( \SN \Dotimes \SE^f_{U|X} \bigr)(*D) 
\simeq \SN \Dotimes \SE^f_{U|X}$ and hence the right 
hand side is concentrated in degree $0$. 
\end{definition}

Exponentially twisted holonomic 
D-modules can be considered as 
natural prototypes or building blocks 
for general holonomic D-modules, in view of 
the recent progress in \cite{Ked10, Ked11}, 
\cite{Mochi11} and \cite{DK16}. 
Since for the moment there is no efficient 
way to describe the enhanced solution complexes 
of general holonomic D-modules in higher 
dimensions $N \geq 2$, exponentially twisted holonomic 
D-modules are the most typical 
holonomic D-modules to which the 
general theory of \cite{KS16-2} is 
applicable. From now on, we fix an 
exponentially twisted holonomic $\SD_X$-module 
$\SM \in\Modhol(\SD_X)$ such that 
$\SM \simeq 
\SN \Dotimes \SE^f_{U|X}$ 
for a regular holonomic 
$\SD_X$-module $\SN \in\Modrh(\SD_X)$ and a rational function 
$f= \frac{P}{Q}: U= X \setminus Q^{-1}(0) 
\longrightarrow \CC$ on $X$ and explain 
our results on its 
Fourier transform $\SM^\wedge$. 
Let us call $f$ the exponential factor of $\SM$. We set 
\begin{equation}
K:= Sol_X ( \SN ) \in \Dbc(X^{\an}). 
\end{equation}
Here we use the convention that 
its shift $K[N] \in \Dbc(X^{\an})$ 
is a perverse sheaf on $X^{\an}$ and 
denote the support of $K \in \Dbc(X^{\an})$ 
by $Z \subset X$. Let $\pi : X^{\an} \times \RR 
\longrightarrow X^{\an}$ be the projection. 
Then for the enhanced sheaf 
\begin{equation}
F:= \pi^{-1}K \otimes {\rm E}_{U^{\an}|X^{\an}
}^{{\rm Re}f} = 
\pi^{-1}K \otimes 
\Big( \CC_{\{ (z,t) \in X \times \RR \ | \ z \in U, 
t + {\rm Re }f(z) \geq 0 \}} \Big)
\in \BEC_+(\CC_{X^{\an}})
\end{equation}
on the underlying real analytic manifold $X_{\RR}
= X^{\an}$ of $X$, we have an isomorphism   
\begin{equation}\label{eq-sit} 
Sol_{X}^\rmE( \SM ) \simeq 
\CC^\rmE_{X^{\an}} \Potimes F
\simeq 
\underset{a\to+\infty}{\inj} \ 
\pi^{-1}K \otimes 
\Big( 
\CC_{\{ (z,t) \in X \times \RR \ | \ z \in U, 
t + {\rm Re }f(z) \geq a \}} \Big)
\end{equation}
of enhanced ind-sheaves on $X_{\RR}$ 
(see Section \ref{uni-sec:2}). We define 
its enhanced micro-support ${\rm SS}^{{\rm E}}(F) \subset 
(T^*X_{\RR}) \times \RR$ and the reduced one 
${\rm SS}_{{\rm irr}}(F) 
\subset T^*X_{\RR}$ as in D'Agnolo-Kashiwara \cite{DK17} 
and Tamarkin \cite{Tama08}. 
In this paper, we call ${\rm SS}_{{\rm irr}}(F)$ 
the irregular micro-support of $F$. 
Note that for the (not necessarily homogeneous) 
complex Lagrangian submanifold 
\begin{equation}
\Lambda^f := 
\{ (z, df(z)) \ | \ z \in U=X \setminus Q^{-1}(0) \} 
\subset T^*U 
\end{equation}
of $T^*U$ via the natural identification $(T^*U)_{\RR} 
\simeq T^*U_{\RR}$ we have 
\begin{equation}\label{lgsvforf}
{\rm SS}_{{\rm irr}}(F) \cap T^*U_{\RR}
= \bigl( {\rm SS}(K) \cap T^*U_{\RR} \bigr) + \Lambda^f. 
\end{equation}
Moreover we can show that ${\rm SS}_{{\rm irr}}(F)$ 
is contained in a complex isotropic analytic 
subset of $(T^*X)_{\RR} \simeq T^*X_{\RR}$ 
(see Lemma \ref{iso-analy}). 
However, for the moment it is not clear for us if 
${\rm SS}_{{\rm irr}}(F)$ itself 
is a complex Lagrangian analytic subset or not. 
This prevents us from applying the theory of 
Fourier-Sato transforms for enhanced ind-sheaves 
developed by Kashiwara-Schapira \cite{KS16-2} 
especially when $I(f)=P^{-1}(0) \cap Q^{-1}(0) 
\not= \emptyset$. 
To overcome this difficulty, we define a (not necessarily 
homogeneous) complex Lagrangian analytic 
subset ${\rm SS}^{\CC}_{{\rm irr}}( \SF )$ 
of $(T^*X)_{\RR} \simeq T^*X_{\RR}$ 
in a different way 
as follows and use it instead of ${\rm SS}_{{\rm irr}}(F)$. 
First, as a complex analogue of the 
$\RR$-constructible sheaf 
$F= \pi^{-1}K 
\otimes {\rm E}_{U^{\an}|X^{\an}}^{{\rm Re}f}$ 
on $X_{\RR} \times \RR$, by
the (not necessarily) closed embedding 
\begin{equation}
i_{-f}: U= X \setminus Q^{-1}(0) \hookrightarrow 
X \times \CC, \qquad (z \longmapsto (z,-f(z)))
\end{equation}
associated to the rational function 
$-f:U \longrightarrow \CC$ we set 
\begin{equation}
\SF := (i_{-f})_! (K|_U) \quad 
\in \Db (X \times \CC ).
\end{equation}
Then we can easily show that 
it is a constructible sheaf 
on $X \times \CC$ and hence its micro-support 
${\rm SS} ( \SF )$ 
is a homogeneous complex Lagrangian 
analytic subset of $T^*(X \times \CC )$. 
Then as in the definitions of ${\rm SS}^{{\rm E}}(F)
\subset (T^*X_{\RR}) \times \RR$ 
and ${\rm SS}_{{\rm irr}}(F) \subset T^*X_{\RR}$, 
by forgetting the homogeneity of ${\rm SS} ( \SF )$ 
we define the following subsets: 
\begin{equation}
{\rm SS}^{{\rm E}, \CC}( \SF )
\subset T^*X \times \CC, \qquad 
{\rm SS}_{{\rm irr}}^{\CC}( \SF ) \subset T^*X. 
\end{equation}
See Section \ref{sec:19} for the details. 
It is clear that ${\rm SS}^{{\rm E}, \CC}( \SF )$ 
is a complex analytic subset of $(T^*X) \times \CC$. 
Let us call it the enhanced micro-support of 
$\SF \in \Dbc (X \times \CC )$. We can also show that 
${\rm SS}_{{\rm irr}}^{\CC}( \SF )$ is a 
(not necessarily 
homogeneous) complex Lagrangian analytic 
subset of $(T^*X)_{\RR} \simeq T^*X_{\RR}$ and call it 
the irregular micro-support of $\SF$. 
As a byproduct of the proof, we obtain also a 
(not necessarily 
homogeneous) Lagrangian cycle in 
$T^*X \simeq X \times Y$ supported by 
${\rm SS}_{{\rm irr}}^{\CC}( \SF ) \subset T^*X$ 
(see the proof of Lemma \ref{iso-analy-new}). 
As we can show that it depends only on $\SM$, 
we call it the irregular characteristic 
cycle of $\SM$ and denote it by ${\rm CC}_{{\rm irr}}( \SM )$. 
For a point $w \in Y= \CC^N$ we define a 
rational function $f^{w}$ on $X= \CC^N$ by 
\begin{equation}
f^{w} : U=X \setminus Q^{-1}(0) \longrightarrow \CC \qquad 
(z \longmapsto   \langle  z, w \rangle -f(z)),  
\end{equation}
where for $z=(z_1, \ldots, z_N) \in X$ and 
$w=(w_1, \ldots, w_N) \in Y$ we set 
\begin{equation}
\langle z, w \rangle := \sum_{i=1}^N z_i w_i \qquad \in \CC. 
\end{equation}
Let
\begin{equation}
X\overset{p}{\longleftarrow}X\times 
Y\overset{q}{\longrightarrow}Y
\end{equation}
be the projections. 
Then by using our meromorphic vanishing cycle functors 
$\phi^{\merc}_{f^{w}-c}( \cdot ):  \Dbc(X^{\an})
\longrightarrow 
\Dbc(X^{\an})$ ($c \in \CC$) with compact support 
(see Section \ref{sec:s2} for their definition 
and basic properties), 
we define the following subset of 
$(T^*X)_{\RR} \simeq T^*X_{\RR}$. 

\begin{definition}\label{defi-13int}
For the enhanced sheaf $F= \pi^{-1}K 
\otimes {\rm E}_{U^{\an}|X^{\an}}^{{\rm Re}f} \in 
\BEC_+(\CC_{X^{\an}})$ we define a subset 
${\rm SS}_{{\rm eva}}(F)
\subset T^*X_{\RR}$ of $T^*X_{\RR}
\simeq (T^*X)_{\RR}$ by 
\begin{equation}
{\rm SS}_{{\rm eva}}(F):= \{ (z,w) \in (T^*X)_{\RR} 
\ | \ \phi^{\merc}_{f^{w}-c}(K)_{z} \not= 0 
\ \text{for some} \ c \in \CC \}. 
\end{equation}
\end{definition}

Note that we can calculate the stalks of 
the meromorphic vanishing cycles 
and hence ${\rm SS}_{{\rm eva}}(F)$ 
by using resolutions of singularities of 
$f$ and the results in 
\cite[Section 5]{MT11} (see 
the final part of Section \ref{sec:s2}). 
We then will show that 
${\rm SS}_{{\rm eva}}(F)$ is contained in 
$\Lambda:= {\rm SS}^{\CC}_{{\rm irr}}( \SF )$ 
and there exists a non-empty Zariski open 
subset $\Omega \subset Y$ of $Y= \CC^N$ 
such that the restriction of the projection 
$q: T^*X \simeq X \times Y \simeq T^*Y \longrightarrow Y$ 
to $\Lambda= {\rm SS}^{\CC}_{{\rm irr}}( \SF ) \subset T^*X$ 
is an unramified finite covering over 
$\Omega \subset Y$ and satisfies the following properties. 
For the precise construction of 
$\Omega$ see Lemma \ref{lem-fin-cover}.  
Let $V \subset \Omega^{\an}$ be 
a contractible open subset 
of $\Omega^{\an}$ so that for the decomposition 
\begin{equation}
q^{-1}(V)\cap \Lambda
= \Lambda_{V,1} 
\sqcup \Lambda_{V,2} \sqcup \cdots \cdots \sqcup \Lambda_{V,d}
\end{equation}
of $q^{-1}(V)\cap \Lambda$ into its connected components 
$\Lambda_{V,i}$ ($1 \leq i \leq d$) we have 
$\Lambda_{V,i} \simto V$ for any $1 \leq i \leq d$. 
Then by our choice of $\Omega \subset Y= \CC^N$, 
$Z_i:=p ( \Lambda_{V,i} ) \subset X= \CC^N_z$ 
($1 \leq i \leq d$) are complex submanifolds of 
$X= \CC^N$ and we can renumber them 
so that for some $1 \leq r \leq d$ we have 
$Z_i \subset U=X \setminus Q^{-1}(0)$ (resp. 
$Z_i \subset I(f)=P^{-1}(0) \cap Q^{-1}(0)$) if $1 \leq i \leq r$ 
(resp. if $r+1 \leq i \leq d$). Let 
$g_i:V \longrightarrow \CC$ ($1 \leq i \leq d$) 
be holomorphic functions on 
$V \subset \Omega \subset Y= \CC^N$ such that 
\begin{equation}
\chi ( \Lambda_{V,i} )
= \Lambda^{g_i}:= \{ (w, dg_i(w)) \ | \ w \in V \} 
\subset T^*Y, 
\end{equation}
where we used the symplectic transformation 
of \cite{DK17}: 
\begin{equation}
\chi : T^*X \simto T^*Y \qquad ((z,w) \longmapsto (w,-z)). 
\end{equation}
Note that such $g_i:V \longrightarrow \CC$ are 
uniquely defined modulo constant functions 
on $V$. For $1 \leq i \leq d$ and $w \in V$ let 
$\zeta^{(i)} (w) \in X= \CC^N_z$ be the 
unique point of $Z_i \subset X$ such that 
$(\zeta^{(i)}(w), w) \in  \Lambda_{V,i} 
\subset \Lambda$ so that we have 
\begin{equation}
\Lambda_{V,i} = 
\{ (\zeta^{(i)}(w), w) \ | \ w \in V \} 
\subset T^*Y \simeq T^*X. 
\end{equation}
Then by \eqref{lgsvforf} 
for any $1 \leq i \leq r$ and $w \in V$ the point 
\begin{equation}
( \zeta^{(i)}(w), df^w( \zeta^{(i)}(w)))= 
( \zeta^{(i)}(w), w- df( \zeta^{(i)}(w))) \in 
(T^*X)_{\RR} 
\end{equation}
is contained in the smooth part of ${\rm SS}(K|_U)
\subset (T^*U)_{\RR}$. 
We denote by $m(i) \geq 1$ the multiplicity of 
the regular holonomic 
$\SD_X$-module $\SN \in\Modrh(\SD_X)$ (or 
of the perverse sheaf $K[N]$) there. 
In fact, for any such $i$ we have an equality 
\begin{equation}
g_i(w) \equiv 
f(\zeta^{(i)}(w))- \langle \zeta^{(i)}(w), w \rangle 
=- f^w(\zeta^{(i)}(w)) \qquad (w \in V) 
\end{equation}
modulo constant functions on $V$ and can define 
$g_i: V \longrightarrow \CC$ by 
\begin{equation}
g_i(w) := - f^w(\zeta^{(i)}(w)) \qquad (w \in V).  
\end{equation}

Also for $r+1 \leq i \leq d$ and $w \in V$, we can show that 
there exist only finitely many $c \in \CC$ 
such that 
\begin{equation}
\phi^{\merc}_{f^w -c}( K)_{\zeta^{(i)}(w)} \not= 0
\end{equation}
and for any such $c \in \CC$ we have a concentration 
\begin{equation}
H^j \phi^{\merc}_{f^w -c}( K)_{\zeta^{(i)}(w)} \simeq 0 
\qquad (j \not= N-1). 
\end{equation}
Moreover, by our choice of $\Omega \subset Y= \CC^N$ 
the dimension of the only non-trivial 
cohomology group 
$H^{N-1} \phi^{\merc}_{f^w -c}( K)_{\zeta^{(i)}(w)} \not= 0$ 
is constant with respect to $w \in V \subset \Omega$. 
For $r+1 \leq i \leq d$ we thus can set 
\begin{equation}
m(i) := 
\dsum_{c \in \CC} {\rm dim } 
H^{N-1} \phi^{\merc}_{f^w -c}( K)_{\zeta^{(i)}(w)} 
\geq 1. 
\end{equation}
Namely for $r+1 \leq i \leq d$ 
the multiplicity $m(i) \geq 1$ is defined 
by the singularities of the rational functions 
$f^w$ ($w \in V$) at their points of 
indeterminacy $\zeta^{(i)}(w) \in I(f^w)= I(f)= 
P^{-1} \cap Q^{-1}(0)$. In fact, for any such $i$ 
we can also show that there exist (distinct) constants 
$a_1,a_2, \ldots, a_{n_i} \in \CC$ such that 
we have 
\begin{equation}
\{ c \in \CC \ | \ 
 \phi^{\merc}_{f^w -c}( K)_{\zeta^{(i)}(w)} \not= 0 \} 
= 
\{ a_1-g_i(w), a_2-g_i(w), \ldots, a_{n_i}-g_i(w) \} 
\end{equation}
for any $w \in V$. 

Let $i_Y : Y=\CC^N\xhookrightarrow{\ \ \ }\var{Y}=\PP^N$ 
be the projective compactification of $Y$. 
We extend the Fourier transform $\SM^{\wedge} \in\Modhol(\SD_Y)$ 
to the holonomic D-module 
$\tl{\SM^{\wedge}} := i_{Y \ast} ( \SM^{\wedge} ) 
\simeq \bfD i_{Y\ast} ( \SM^{\wedge} )$ 
on $\var{Y}$. 
Let $\var{Y}^{\an}$ be the underlying complex manifold 
of $\var{Y}$ and 
define the analytification 
$\tl{\SM^{\wedge}}^{\an}\in\Modhol(\SD_{\var{Y}^{\an}})$ 
of $\tl{\SM^{\wedge}}$ by 
$\tl{\SM^{\wedge}}^{\an} := \SO_{\var{Y}^{\an}}
\otimes_{\SO_{\var{Y}}}\tl{\SM^{\wedge}}$. 
Then we have the following 
formula for the enhanced solution complex 
\begin{equation}
Sol_{\var Y}^\rmE(\tl{\SM^{\wedge}}) := 
Sol_{\var{Y}^{\an}}^\rmE( \tl{\SM^{\wedge}}^{\an} )
\in \BEC(\I\CC_{\var{Y}^{\an}})
\end{equation}
of $\tl{\SM^{\wedge}}^{\an}$. 

\begin{theorem}\label{th-A} 
In the situation as above, we have an isomorphism
\begin{align*}
\pi^{-1}\CC_V \otimes
\Big(Sol_{\var Y}^\rmE( \tl{\SM^\wedge} )\Big)
& \simeq 
\bigoplus_{i=1}^d 
\bigl( \EE_{V^{\an}| \var{Y}^{\an}}^{{\rm Re}g_i}
\bigr)^{\oplus m(i)}
\\ 
& \simeq 
\bigoplus_{i=1}^d
\Big(\underset{a\to+\infty}{\inj}\ 
\CC_{\{ (w,t) \in \var{Y}^{\an} \times \RR \ | \ 
w \in V, \ 
t + {\rm Re }g_i(w) \geq a \}} \Big)^{\oplus m(i)}
\end{align*}
of enhanced ind-sheaves on $\var{Y}^{\an}$. 
In particular, the restriction 
$\SM^\wedge |_{\Omega}$ of the Fourier 
transform $\SM^\wedge$ to $\Omega \subset Y= \CC^N$ is an 
algebraic integrable connection of rank 
$\sum_{i=1}^d m(i)$. 
\end{theorem}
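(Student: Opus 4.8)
The plan is to compute $Sol_{\var Y}^\rmE(\tl{\SM^\wedge})$ by combining the enhanced Riemann-Hilbert correspondence of D'Agnolo-Kashiwara \cite{DK16} with the Fourier-Sato transform for enhanced ind-sheaves of Kashiwara-Schapira \cite{KS16-2}. Under the functor $Sol^\rmE$ the Fourier transform of algebraic holonomic D-modules corresponds to the enhanced Fourier-Sato transform, so, as in the one-dimensional case treated in \cite{DK17}, the problem reduces to applying the latter to
\[
Sol_X^\rmE(\SM) \simeq \CC^\rmE_{X^\an} \Potimes F, \qquad F = \pi^{-1}K \otimes \rmE^{\Re f}_{U^\an|X^\an}
\]
(see \eqref{eq-sit}) and extending the result from $Y$ to $\var Y = \PP^N$; over the open set $V \subset \Omega \subset Y$ where we shall work this extension is immaterial. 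Concretely, the enhanced Fourier-Sato transform of $F$ is the convolution of $p^{-1}F$ with the exponential kernel associated with the function $\Re\langle z, w\rangle$ on $X \times Y$, followed by the direct image along $q : X \times Y \longrightarrow Y$. Merging the two exponential factors, the integrand over the fibre of $q$ above $w \in Y$ carries the phase $-\Re f^w(z)$ with $f^w(z) = \langle z, w\rangle - f(z)$; hence at a stationary point $\zeta$ of $f^w$ one expects a summand with exponential factor $\Re(-f^w(\zeta))$, in accordance with the formulas $g_i(w) = -f^w(\zeta^{(i)}(w))$ of the statement.

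The heart of the proof is a microlocal, Morse-theoretic analysis of this direct image over the Zariski open subset $\Omega \subset Y$ of Lemma \ref{lem-fin-cover}. I would fix a contractible $V \subset \Omega^\an$ over which $q$ restricts to an unramified covering of $\Lambda := {\rm SS}^\CC_{\rm irr}(\SF)$ with $d$ sheets $\Lambda_{V,i} = \{ (\zeta^{(i)}(w), w) \}$, and localize the computation of the direct image near the points $\zeta^{(i)}(w)$, which are the stratified critical points of $f^w$ on $U$ for $i \le r$ and its meromorphic critical points on $I(f)$ for $i > r$. That no other points contribute, and that the $i$-th sheet yields a summand whose exponential factor is $\Re g_i$, follows from the definition of $\Lambda$ as the irregular micro-support, the equality \eqref{lgsvforf}, the inclusion ${\rm SS}_{\rm eva}(F) \subset \Lambda$, and the compatibility of the symplectic transformation $\chi$ with the Fourier-Sato transform. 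For the sheets $1 \le i \le r$, lying over $U = X \setminus Q^{-1}(0)$, this reduces to the stationary phase computation already carried out in \cite{DK17} and, in the higher-dimensional regular setting, in \cite{IT20a} and \cite{IT20b}: the local contribution is $\bigl( \EE^{\Re g_i}_{V|\var Y} \bigr)^{\oplus m(i)}$ with $g_i(w) = f(\zeta^{(i)}(w)) - \langle \zeta^{(i)}(w), w\rangle$ and $m(i)$ the multiplicity of the perverse sheaf $K[N]$ along the smooth part of ${\rm SS}(K|_U)$ at $(\zeta^{(i)}(w), w - df(\zeta^{(i)}(w)))$.

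The genuinely new contributions, and the main obstacle, come from the sheets $r+1 \le i \le d$ lying over the indeterminacy locus $I(f) = P^{-1}(0) \cap Q^{-1}(0)$. Near such a point $\zeta^{(i)}(w)$ the local structure of the direct image is governed by the meromorphic vanishing cycles $\phi^{\merc}_{f^w-c}(K)_{\zeta^{(i)}(w)}$ of Section \ref{sec:s2}, and one must prove, for $w \in V$: (a) that $\phi^{\merc}_{f^w-c}(K)_{\zeta^{(i)}(w)} \ne 0$ for only finitely many $c \in \CC$; (b) that $H^j\phi^{\merc}_{f^w-c}(K)_{\zeta^{(i)}(w)} = 0$ for $j \ne N-1$; (c) that $\dim H^{N-1}\phi^{\merc}_{f^w-c}(K)_{\zeta^{(i)}(w)}$ is independent of $w$, so that $m(i) := \sum_c \dim H^{N-1}\phi^{\merc}_{f^w-c}(K)_{\zeta^{(i)}(w)}$ is well-defined; and (d) that the contributing values are $c = a_k - g_i(w)$ for fixed constants $a_1,\dots,a_{n_i} \in \CC$, so that summation over $c$ makes the $i$-th sheet contribute precisely $\bigl( \EE^{\Re g_i}_{V|\var Y} \bigr)^{\oplus m(i)}$. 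Points (a)--(d) should follow from the general properties of the meromorphic vanishing cycle functors established earlier, together with an explicit evaluation of their stalks by means of a resolution of singularities of $f$ (hence of each $f^w$) and the stationary-phase estimates of \cite[Section 5]{MT11}; performing this computation and checking that the local models glue to the global direct image over $V$ is where the bulk of the work lies. Summing the contributions of the $d$ sheets gives the asserted isomorphism over $V$, and since these isomorphisms are compatible on overlaps the statement over $\Omega$ follows.

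Finally, the last assertion is formal: over $V$ the enhanced solution complex of $\tl{\SM^\wedge}$ is a finite direct sum of exponential enhanced ind-sheaves $\EE^{\Re g_i}_{V|\var Y}$, each of which corresponds under $Sol^\rmE$ to a rank-one integrable connection, so $\SM^\wedge|_V$ is an integrable connection of rank $\sum_{i=1}^d m(i)$. As being an integrable connection of given rank is a local analytic property and $\Omega$ is covered by such sets $V$, the analytification of $\SM^\wedge|_\Omega$ is an integrable connection of rank $\sum_{i=1}^d m(i)$; and since $\SM^\wedge$ is algebraic holonomic, by GAGA $\SM^\wedge|_\Omega$ is itself an algebraic integrable connection of that rank.
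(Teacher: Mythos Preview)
Your overall strategy is correct and matches the paper's: reduce via the enhanced Riemann--Hilbert correspondence and the Fourier--Sato transform to computing the stalks
\[
\rmR\Gamma_c\bigl(\{z \in U^{\an} \mid \Re f^w(z) \le t\};\, K[N]\bigr)
\]
as $t$ varies, and analyse them by Morse theory for $\Re f^w$ with critical contributions localised at the points $\zeta^{(i)}(w)$.

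However, there is a genuine gap. You assert that ``no other points contribute'' follows from the micro-support estimates and the finite-covering property of $\Lambda$ over $\Omega$, but this ignores the non-compactness of $U$. The Morse-theoretic argument requires that the cohomology $\rmR\Gamma_c(\{\Re f^w \le t\}; K[N])$ only jumps at the finitely many critical values, which in turn requires ruling out critical behaviour \emph{at infinity} of $f^w$. The paper devotes Section~\ref{sec:s3} to this: it introduces a notion of tameness at infinity for restrictions of rational functions to strata (Definition~\ref{def-7}) and a relative version (Definition~\ref{def-7new}), proves a transversality result (Proposition~\ref{prop-7}), and shows (Lemmas~\ref{lem-31} and~\ref{lem-31-new}) that for $w \in \Omega$ all the relevant restrictions of $f^w$ are tame. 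This feeds into Theorem~\ref{prop-8-new}, which is what actually justifies truncating the integral to a large ball and gives the decomposition of $\phi_{\tau-c}(\rmR f^w_!(K|_U))$ into a finite sum of local vanishing cycles. Without these ingredients the passage from ``$\Lambda$ is a finite cover over $\Omega$'' to ``the direct image has only these $d$ critical contributions'' is not justified.

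A second, smaller point: your proposed route to (a)--(d) via a resolution of singularities of $f$ and the estimates of \cite[Section~5]{MT11} is not the paper's route and would be considerably harder to carry out uniformly in $w$. The paper instead uses the constructible sheaf $\SF = (i_{-f})_!(K|_U)$ on $X \times \CC$: Lemma~\ref{iso-anaconti} shows that near $\Lambda_{V,i}$ (for $i>r$) the enhanced micro-support ${\rm SS}^{\rmE,\CC}(\SF)$ is a union of conormal bundles $T^*_{\Gamma_k}(X\times\CC)$ to graphs $\Gamma_k = \{(z,-h_{ik}(z))\}$ of holomorphic functions $h_{ik}$ on $Z_i$, and Proposition~\ref{prop-cevan2} then computes $\phi^{\merc}_{f^w-c}(K)_{\zeta^{(i)}(w)}$ by translating it (via the shear $T_w:(z,\tau)\mapsto(z,\tau+\langle z,w\rangle)$) into an ordinary vanishing cycle of $\SF[N]$, which is read off from the multiplicities $m(i,k)$ along $T^*_{\Gamma_k}(X\times\CC)$. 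This gives (a)--(d) at once, with $a_k = h_{ik}(\zeta^{(i)}(w)) + g_i(w)$ constant, and identifies $m(i) = \sum_k m(i,k)$ as the multiplicity of ${\rm CC}_{\rm irr}(\SM)$ along $\Lambda_{V,i}$. Your resolution approach would have to rediscover this structure by hand.
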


By Theorem \ref{th-A} we can describe the 
exponential factors and the irregularities 
of the Fourier transform $\SM^{\wedge}$ of $\SM$ 
along various submanifolds of $Y= \CC^N$. 
See Section \ref{sec:9} for the details. 
By patching the 
(not necessarily homogeneous) Lagrangian cycles  
\begin{equation}
\dsum_{i=1}^d  m(i) \cdot 
\bigl[ \Lambda_{V,i} \bigr]
\end{equation}
in the open subsets $q^{-1}(V)= X \times V \subset 
X \times Y \simeq T^*X$ for various 
$V \subset \Omega$ we obtain a Lagrangian cycle 
globally defined in 
$q^{-1}( \Omega )= X \times \Omega \subset 
T^*X$. By our definitions of the multiplicities $m(i) \geq 1$, 
it turns out that it coincides with the restriction of 
the irregular characteristic 
cycle ${\rm CC}_{{\rm irr}}( \SM )$ of $\SM$ 
to $q^{-1}( \Omega )= X \times \Omega \subset T^*X$. 
Then the last assertion of Theorem \ref{th-A} means that 
the generic rank of the holonomic D-module 
$\SM^\wedge$ is equal to the covering degree of 
${\rm CC}_{{\rm irr}}( \SM )$ over 
$\Omega \subset Y= \CC^N$. We thus 
find that if $I(f)= P^{-1} \cap Q^{-1}(0) 
\not= \emptyset$ there may exist some extra  
rank jump of $\SM^\wedge$ produced by 
the singularities of the rational functions 
$f^w$ ($w \in \Omega$) i.e. the linear perturbations 
of the exponential factor $f$ of 
$\SM$. This shows that the structures of 
the Fourier transforms of irregular 
holonomic D-modules are much more involved 
than those of the regular ones studied in 
\cite{Bry86}, \cite{Dai00}, \cite{IT20a} 
and \cite{IT20b} etc. 

\medskip 
\indent Our proof of Theorem \ref{th-A} is 
similar to that of \cite[Theorem 4.4]{IT20a} and 
relies on a Morse theory for the Morse functions 
${\rm Re }f^w: U=X \setminus Q^{-1}(0) 
\longrightarrow \RR$ ($w \in \Omega$) and 
$K|_U \in \Dbc (U^{\an})$ as well as 
the theory of Fourier-Sato transforms 
for enhanced ind-sheaves of 
Kashiwara-Schapira \cite{KS16-2}. 
However, it requires also 
more careful analysis on the singularities 
of the rational functions 
$f^w: U=X \setminus Q^{-1}(0) 
\longrightarrow \CC$ ($w \in \Omega$) 
at infinity and at their points of 
indeterminacy. To treat their singularities at 
infinity, we extend the classical notion of 
tameness at infinity introduced by 
Broughton \cite{Bro88} for polynomial 
functions on $\CC^N$ to that for 
rational and meromorphic functions on 
smooth subvarieties of $\CC^N$ and 
obtain a transversality theorem (see 
Proposition \ref{prop-7}) similar to 
the ones proved by N{\'e}methi and Zaharia 
\cite{NZ90}, \cite{NZ92} 
(see also \cite[Sections 2 and 3]{NST21} for 
related results). 
Let $\CS$ be an algebraic Whitney stratification of $Z \cap U 
\subset U=X \setminus Q^{-1}(0)$ adapted to 
$K|_U \in \Dbc ( U)$ such that 
\begin{equation}
{\rm SS }(K|_U) \subset \bigcup_{S \in \CS}T^*_SX 
\end{equation}
Then in Lemma \ref{lem-31} we will show that 
for any $w \in \Omega$ and stratum 
$S \in \CS$ in $\CS$ such that $T^*_SX 
\subset {\rm SS }(K|_U)$ the restriction 
$f^w|_{S}:S \longrightarrow \CC$ 
of $f^w$ to $S \subset U$ is tame 
at infinity in our (generalized) sense. 
This implies that the morphisms 
$f^w: U \longrightarrow \CC$ ($w \in \Omega$) 
and $K|_U \in \Dbc (U^{\an})$ 
satisfy one of the conditions of Kashiwara and Schapira 
in their non-proper direct image 
theorem \cite[Theorem 4.4.1]{KS85} 
for the family of open subsets 
$U_r \subset U$ ($r>0$) of $U$ defined by 
\begin{equation}
U_r:= \{ z \in U \ | \ 
 ||z|| <r \} \subset U \qquad (r>0)
\end{equation}
(see Proposition \ref{prop-micros}). We 
then show that for the given rational function 
$f: U \longrightarrow \CC$ we can apply 
\cite[Theorem 4.4.1]{KS85} to its 
generic linear perturbations 
$\tilde{f} = -f^w: U \longrightarrow \CC$ 
($w \in \Omega$) and $K|_U \in \Dbc (U^{\an})$ 
to obtain the following byproduct
of our study of Fourier transforms 
(see Proposition \ref{prop-micros-new} and Lemma \ref{lem-31-new}). 

\begin{proposition}\label{th-B} 
Let $f : U \longrightarrow \CC$ and 
$K \in \Dbc (X^{\an})$ be as above. 
Then for generic linear perturbations 
$\tilde{f} : U \longrightarrow \CC$ of $f$ we have: 
For any $\tau_0 \in \CC$ 
there exist $R \gg 0$ and 
$0< \varepsilon \ll 1$ such that for the 
inclusion maps $i_r: U_r \hookrightarrow U$ 
($r>R$) we have isomorphisms 
\begin{equation}
 \rmR \tilde{f}_! (i_r)_! (i_r)^{-1} (K|_U) 
\simto  \rmR \tilde{f}_! (K|_U) 
\end{equation}
on the open subset $\{ \tau \in \CC \ | \ 
 | \tau - \tau_0| < \varepsilon \} \subset \CC$ of $\CC$. 
Moreover for such $\tilde{f}$ we have 
\begin{equation}
{\rm SS }( \rmR \tilde{f}_! (K|_U) ) \subset 
\varpi \rho^{-1} {\rm SS }(K|_U), 
\end{equation}
where we used the natural morphisms 
\begin{equation}
T^* \CC 
\overset{\varpi}{\longleftarrow}
U \times_{\CC} T^* \CC 
\overset{\rho}{\longrightarrow}
T^* U
\end{equation}
associated to 
$\tilde{f} : U \longrightarrow \CC$. 
\end{proposition}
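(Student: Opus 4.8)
The plan is to derive both statements from the non-proper direct image theorem \cite[Theorem 4.4.1]{KS85}, applied to the algebraic morphism $\tilde{f} : U \longrightarrow \CC$, the constructible sheaf $K|_U \in \Dbc(U^{\an})$, and the exhausting family of open subsets $U_r = \{ z \in U \ | \ ||z|| < r \}$ ($r > 0$). The whole content is that the hypothesis of that theorem --- that, locally over the target $\CC$, the boundary spheres $\partial U_r$ are non-characteristic for $K|_U$ in a way compatible with $\tilde{f}$, so that there is no contribution ``from infinity'' --- is precisely what is guaranteed by the tameness at infinity of generic linear perturbations of $f$.

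Concretely, first I would fix an algebraic Whitney stratification $\CS$ of $Z \cap U$ adapted to $K|_U$ with ${\rm SS}(K|_U) \subset \bigcup_{S \in \CS} T^*_S X$, as in the discussion preceding the statement. By Lemma \ref{lem-31-new} (the perturbed version of Lemma \ref{lem-31}), after shrinking $\Omega$ if necessary, for every $w \in \Omega$ and every stratum $S \in \CS$ with $T^*_S X \subset {\rm SS}(K|_U)$ the restriction $(-f^{w})|_S : S \longrightarrow \CC$ of $\tilde{f} = -f^{w}$ to $S$ is tame at infinity in the generalized sense of Proposition \ref{prop-7}; establishing this is where one uses a resolution of singularities of $f$ to control the behaviour of the perturbed rational function $f^{w}$ both near the hyperplane at infinity of $\CC^N$ and near its indeterminacy locus $I(f) = P^{-1}(0) \cap Q^{-1}(0)$, together with the transversality argument of N\'emethi--Zaharia type (\cite{NZ90, NZ92}). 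Then, by Proposition \ref{prop-micros-new} (the perturbed version of Proposition \ref{prop-micros}), this family of tameness properties is exactly one of the hypotheses under which \cite[Theorem 4.4.1]{KS85} applies to $\tilde{f}$, $K|_U$ and $\{U_r\}_{r>0}$: for each $\tau_0 \in \CC$ there exist $R \gg 0$ and $0 < \varepsilon \ll 1$ such that the function $||z||^2$ has no stratified critical point, relative to $\CS$ and to the level sets of $\tilde{f}$, on the set $\{ z \in U \ | \ ||z|| \geq R \}$ where moreover $|\tilde{f}(z) - \tau_0| < \varepsilon$.

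Granting this, \cite[Theorem 4.4.1]{KS85} yields both conclusions at once. For the first, note that by the distinguished triangle
\[
(i_r)_! (i_r)^{-1}(K|_U) \longrightarrow K|_U \longrightarrow (j_r)_* (j_r)^{-1}(K|_U) \overset{+1}{\longrightarrow}
\]
associated with the open embedding $i_r : U_r \hookrightarrow U$ and the closed embedding $j_r : U \setminus U_r \hookrightarrow U$, and by the identifications $\rmR \tilde{f}_! (i_r)_! \simeq \rmR (\tilde{f}|_{U_r})_!$ and $\rmR \tilde{f}_! (j_r)_* \simeq \rmR (\tilde{f}|_{U \setminus U_r})_!$, the claimed isomorphism on $\{ |\tau - \tau_0| < \varepsilon \}$ is equivalent to the vanishing there of $\rmR (\tilde{f}|_{U \setminus U_r})_!(K|_{U \setminus U_r})$ for $r > R$; this vanishing holds because tameness turns the locus $\{ z \in U \ | \ ||z|| \geq r \}$ with $\tilde{f}(z)$ in the disk into a stratified locally trivial fibration over the disk carrying a collar factor $[r, \infty)$, along which $\rmR\Gamma_c$ is identically zero. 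For the second, since over that disk $\rmR\tilde{f}_!(K|_U)$ is computed by $\rmR (\tilde{f}|_{U_r})_!(K|_{U_r})$ for $r > R$ and the boundary sphere $\partial U_r$ is non-characteristic, the reduction to the (essentially proper) map $\tilde{f}|_{U_r}$ places us in the range of the micro-support estimate for proper direct images, which is the micro-support part of the conclusion of \cite[Theorem 4.4.1]{KS85} and gives ${\rm SS}(\rmR\tilde{f}_!(K|_U)) \subset \varpi \rho^{-1}{\rm SS}(K|_U)$ with $\varpi$ and $\rho$ the maps associated with $\tilde{f}$.

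The main obstacle is the tameness input, namely Lemma \ref{lem-31-new}: one has to arrange that a single generic linear form $\langle \cdot , w\rangle$ removes the critical points at infinity of all the restrictions $f|_S$ ($S \in \CS$) simultaneously, and the delicate case is $I(f) \neq \emptyset$, where $f^{w}$ genuinely fails to be a morphism at $I(f)$ and one must separate the indeterminacy singularities there --- the ones producing the extra rank jump and computed by the meromorphic vanishing cycle functors $\phi^{\merc}_{f^{w} - c}$ --- from the singularities at the hyperplane at infinity. Once Proposition \ref{prop-7} and this stratified tameness are in place, Proposition \ref{th-B} follows formally from \cite[Theorem 4.4.1]{KS85}.
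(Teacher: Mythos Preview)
Your overall strategy is correct and coincides with the paper's: verify the hypothesis of \cite[Theorem 4.4.1]{KS85} for the perturbed map $\tilde f=-f^w$ via Lemma~\ref{lem-31-new} and Proposition~\ref{prop-micros-new}, then read off both the isomorphism $\rmR\tilde f_!(i_r)_!(i_r)^{-1}(K|_U)\simto\rmR\tilde f_!(K|_U)$ and the micro-support estimate directly from that theorem. This is exactly what the paper does (the first conclusion is Theorem~\ref{prop-8-new}\,(i) specialized to generic $w$).

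Two mischaracterizations are worth correcting, though they do not affect the argument. First, Lemma~\ref{lem-31-new} and Proposition~\ref{prop-micros-new} are not ``perturbed versions'' of Lemma~\ref{lem-31} and Proposition~\ref{prop-micros}: all four results are already stated for the linearly perturbed function $f^w$ with $w\in\Omega$. The distinction is rather that \ref{lem-31}/\ref{prop-micros} work directly with strata $S\subset Z\cap U$ and tameness of $f^w|_S$, whereas \ref{lem-31-new}/\ref{prop-micros-new} pass through the graph embedding $i_{f^w}:U\hookrightarrow X\times\CC$, a stratification of $X\times\CC$ adapted to $L^w=(i_{f^w})_!(K|_U)$, and \emph{relative} tameness of the projection $h(z,\tau)=\tau$ restricted to horizontal strata. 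It is this graph formulation that is needed when $I(f)\neq\emptyset$, and your description (tameness of $(-f^w)|_S$ for $S\subset U$) actually matches Lemma~\ref{lem-31}, not \ref{lem-31-new}.

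Second, the proof of the tameness input does not use a resolution of singularities of $f$. The mechanism in the paper is more elementary: the construction of $\Omega$ (Lemma~\ref{lem-fin-cover}) makes $q|_\Lambda:\Lambda\to\Omega$ an unramified finite covering, so the points $\zeta^{(i)}(w)$ stay in a fixed ball as $w$ varies in a small neighbourhood, which forces the critical locus of each $f^{w+u}|_S$ (for $u$ small) to remain bounded. Combined with the Cauchy--Riemann identity $\grad\Re g=\overline{\grad g}$ and the orthogonal-projection description of $\Phi_z$ from Lemma~\ref{lem-3}, this gives tameness directly; the N\'emethi--Zaharia curve-selection argument enters earlier, in Proposition~\ref{prop-7}/\ref{prop-micros}, not here.
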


This in particular implies that for any 
algebraic constructible sheaf 
$K \in \Dbc (X^{\an})$ the geometric condition 
of Kashiwara and Schapira 
in \cite[Theorem 4.4.1]{KS85} is satisfied 
by generic polynomial functions on $X= \CC^N$. 
To treat the singularities of $f^w: U 
\longrightarrow \CC$ ($w \in \Omega$) 
at their points of indeterminacy, 
we develop a general theory of meromorphic 
vanishing cycle functors (with compact 
support) by modifying the similar ones 
studied previously by Raibaut \cite{Rai13} 
and Nguyen-Takeuchi \cite{NT22}. 
See Section \ref{sec:s2} for the details. 

\medskip 
\indent It is clear that we can readily 
extend Theorem \ref{th-A} to all 
holonomic D-modules on $X= \CC^N$ having  
only exponentially twisted composition factors. 
 Recall that the Fourier 
transform is an exact functor. 
Moreover, if for a holonomic D-module 
$\SN$ on $X= \CC^N$ there exists an 
enhanced sheaf $G$ globally defined 
on $X_{\RR}$ and having 
a local expression similar to that of 
the above one $F$ such that 
\begin{equation}\label{eq-sitit} 
Sol_{X}^\rmE( \SN ) \simeq 
\CC^\rmE_{X^{\an}} \Potimes G, 
\end{equation}
then we can obtain a formula for 
the enhanced ind-sheaf 
\begin{equation}
\pi^{-1}\CC_V \otimes
\Big(Sol_{\var Y}^\rmE( \tl{\SN^\wedge} )\Big)
\quad \in \BEC(\I\CC_{\var{Y}^{\an}})
\end{equation}
($V$ is an open subset of $Y= \CC^N$) on $\var{Y}^{\an}$ 
along the same line 
as in the proof of Theorem \ref{th-A}. 
Recall that in \cite{DK16} D'Agnolo and 
Kashiwara proved that 
for a complex manifold $X$ and 
an exponential D-module $\mathscr{E}_{U | X}^f$ 
on it associated to a meromorphic function 
$f\in\SO_X(\ast D)$ along 
a closed hypersurface 
$D \subset X$ and $U=X \setminus D$ 
we have an isomorphism 
\begin{equation}
Sol_X^\rmE\big( \mathscr{E}_{U | X}^f \big) 
\simeq \EE_{U | X}^{\Re f} 
\simeq 
\underset{a\to+\infty}{\inj} \ 
\Big( 
\CC_{\{ (z,t) \in X \times \RR \ | \ z \in U, 
t + {\rm Re }f(z) \geq a \}} \Big)
\end{equation}
of enhanced ind-sheaves on $X^{\an}=X_{\RR}$. 
This formula for the enhanced solution complex 
$Sol_X^\rmE\big( \mathscr{E}_{U | X}^f \big)$ played 
a central role in the proof of the main results 
in \cite{DK16}. In Section \ref{sec:11} 
we shall try to extend it to arbitrary meromorphic 
connections. Our argument is a 
higher-dimensional analogue of those 
of Kashiwara-Schapira \cite[Section 7]{KS03} 
and Morando \cite[Section 2.1]{Morando}. We believe 
that this approach would lead us to a 
full generalization of Theorem \ref{th-A} to arbitrary 
holonomic D-modules on $X= \CC^N$.

\bigskip
\noindent{\bf Acknowledgement:} 
The author thanks Professors Tat Thang Nguyen, 
Adam Parusinski and J{\"o}rg Sch{\"u}rmann for 
several discussions with them  
during the preparation of this paper.

\section{Meromorphic Nearby and Vanishing Cycle 
\\ Functors}\label{sec:s2}

In this section, we recall the definitions of the 
meromorphic nearby cycle functors introduced in 
Nguyen-Takeuchi \cite{NT22} and 
Raibaut \cite{Rai13} and prove their basic properties. 
In this paper we essentially 
follow the terminology of 
\cite{Dim04}, \cite{HTT08} and \cite{KS90}. 
For a topological space $X$ denote by $\Db(X)$ the 
derived category whose objects are 
bounded complexes of sheaves 
of $\CC_X$-modules on $X$. If $X$ is a complex manifold, 
we denote by $\Dbc(X)$ the full 
subcategory of $\Db(X)$ consisting of 
constructible objects and adopt the 
convention that $\CC_X[ {\rm dim} X] \in \Dbc(X)$ 
is a perverse sheaf on $X$.

Let $X$ be a complex manifold and $P(z), Q(z)$ 
holomorphic functions on it. Assume that $Q(z)$ 
is not identically zero on each connected component 
of $X$. Then we define a meromorphic function $f(z)$ on 
$X$ by 
\begin{equation}
f(z)= \frac{P(z)}{Q(z)}  \qquad (z \in X \setminus Q^{-1}(0)). 
\end{equation}
Let us set $I(f)=P^{-1}(0) \cap Q^{-1}(0) \subset X$. 
If $P$ and $Q$ are coprime in the local ring 
$\sho_{X,z}$ at a point $z \in X$, then 
$I(f)$ is nothing but the set of the indeterminacy 
points of $f$ on a neighborhood of $z$. Note that 
the set $I(f)$ depends on the pair $(P(z), Q(z))$ of 
holomorphic functions representing $f(z)$. 
For example, if we take a holomorphic function 
$R(z)$ on $X$ (which is not identically zero on 
each connected component of $X$) and set 
\begin{equation}
g(z)= \frac{P(z)R(z)}{Q(z)R(z)}  \qquad (z \in X 
\setminus (Q^{-1}(0) \cup R^{-1}(0)) ), 
\end{equation}
then the set $I(g)=I(f) \cup R^{-1}(0)$ might be 
bigger than $I(f)$. In this way, we 
distinguish $f(z)= \frac{P(z)}{Q(z)}$ from 
$g(z)= \frac{P(z)R(z)}{Q(z)R(z)}$ even if 
their values coincide over an open dense 
subset of $X$. 
This is the convention due to 
Gusein-Zade, Luengo and Melle-Hern\'andez 
\cite{GLM98} etc. 
Now we recall the following 
fundamental theorem due to \cite{GLM98}
and \cite{GLM01}. In what follows, we assume that 
$X$ is connected and $P$ is not identically zero on it. 

\begin{theorem}\label{the-fib} 
(Gusein-Zade, Luengo and Melle-Hern\'andez 
\cite{GLM98}, \cite{GLM01}) 
For any point $z_0 \in P^{-1}(0)$ there exists 
$\e_0> 0$ such that for any $0< \e < \e_0$ 
and the open ball $B_{\e}(z_0) \subset X$ of 
radius $\e >0$ with center at $z_0$ 
(in a local chart of $X$) the restriction 
\begin{equation}
B_{\e}(z_0) \setminus Q^{-1}(0) 
\longrightarrow \CC
\end{equation}
of $f: X \setminus Q^{-1}(0) 
\longrightarrow \CC$ is a locally trivial 
fibration over a sufficiently small 
punctured disk in $\CC$ with center at 
the origin $0 \in \CC$ 
\end{theorem}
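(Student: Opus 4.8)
The plan is to reduce the assertion to a proper relative situation over $\PP^1$ by resolving the indeterminacy of $f$, and then to apply Thom's first isotopy lemma. Passing to a local chart, we may assume that $X$ is an open ball around $0 = z_0$ in $\CC^n$, so that $P(0) = 0$ by hypothesis. By resolution of singularities together with principalization, I would choose a proper holomorphic modification $\pi : \tl X \longrightarrow X$, a finite sequence of blow-ups along smooth centres, which is an isomorphism over $X \setminus (P^{-1}(0) \cup Q^{-1}(0))$, such that $\pi^{-1}(P^{-1}(0) \cup Q^{-1}(0))$ is a simple normal crossing divisor and, the transforms $\pi^* P$ and $\pi^* Q$ being locally monomials, the meromorphic function $f \circ \pi$ extends to a holomorphic map $\tl f = [\pi^* P : \pi^* Q] : \tl X \longrightarrow \PP^1$. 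Two elementary remarks will be used at the end: over the locus $\{ P \not= 0, Q \not= 0 \}$, where $\pi$ is an isomorphism, $\tl f$ coincides with $f$; and for every $t \in \CC^* \subset \PP^1$ the fibre $f^{-1}(t)$ is contained in $\{ P \not= 0, Q \not= 0 \}$, since on it $P = tQ$ with $t \not= 0$ and $Q \not= 0$.

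Next I would fix compatible stratifications. Take a complex analytic Whitney stratification of a neighbourhood of $0$ in $X$ adapted to $P^{-1}(0)$ and $Q^{-1}(0)$; by the local conical structure theorem there is $\e_0 > 0$ such that $\overline{B_\e(0)} \subset X$ and the sphere $S_\e(0) = \partial B_\e(0)$ meets every stratum transversally for all $0 < \e < \e_0$. Fix such an $\e$ and set $\tl K := \pi^{-1}(\overline{B_\e(0)})$, which is compact. On $\tl K$ I would choose a subanalytic Whitney stratification $\CS$ that refines the pull-back of the stratification below, is compatible with each component of the exceptional divisor of $\pi$ and with $\pi^{-1}(S_\e(0))$, and makes $\tl f$ a stratified map onto a finite stratification of $\PP^1$. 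The heart of the proof is then the Milnor-type statement that $0 \in \PP^1$ is isolated in the discriminant $\Delta \subset \PP^1$ of $\tl f|_{\tl K}$, i.e. that there is $\delta = \delta(\e) > 0$ with the punctured disc $D^*_\delta := \{ t \in \CC \ | \ 0 < |t| < \delta \}$ lying in a single open stratum of $\PP^1$, so that $\tl f$ restricts to a submersion from every stratum of $\CS$ meeting $\tl f^{-1}(D^*_\delta)$ onto $D^*_\delta$. For the strata not contained in $\pi^{-1}(S_\e(0))$ this is automatic: there $\tl f$ is a holomorphic function, so each irreducible component of its critical locus lies in a fibre and the critical values on these strata (including the values along which a whole stratum is constant) form a finite set that $\delta$ can be chosen to avoid. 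For the (real analytic) strata contained in $\pi^{-1}(S_\e(0))$ one must rule out critical values accumulating at $0$; this is the exact analogue of Milnor's lemma that small spheres carry no critical points of the distance function on nearby Milnor fibres, and I would prove it by the curve selection lemma applied in $\tl K \setminus \pi^{-1}(Q^{-1}(0))$, using the transversality of $S_\e(0)$ fixed above, if necessary after a further blow-up separating the transforms of $S_\e(0)$ and $Q^{-1}(0)$.

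Granting this, I would apply Thom's first isotopy lemma to $\tl f : \tl K \cap \tl f^{-1}(D^*_\delta) \longrightarrow D^*_\delta$. This map is proper, its source being closed in the compact set $\tl K$, and it is a stratified submersion by the previous step; hence it is a locally trivial fibration admitting a trivialization that preserves every stratum of $\CS$. Now $\pi^{-1}(B_\e(0) \setminus Q^{-1}(0)) = \tl K \setminus \bigl( \pi^{-1}(S_\e(0)) \cup \pi^{-1}(Q^{-1}(0)) \bigr)$ is an open union of strata of $\CS$, so the trivialization restricts to $\pi^{-1}(B_\e(0) \setminus Q^{-1}(0)) \cap \tl f^{-1}(D^*_\delta)$. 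By the two remarks above, this subset lies in the open locus where $\pi$ is an isomorphism and is carried by $\pi$ isomorphically onto $(B_\e(0) \setminus Q^{-1}(0)) \cap f^{-1}(D^*_\delta)$; transporting the restricted trivialization through $\pi$ shows that $f : (B_\e(0) \setminus Q^{-1}(0)) \cap f^{-1}(D^*_\delta) \longrightarrow D^*_\delta$ is a locally trivial fibration, as desired.

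The one genuinely delicate point is the boundary transversality in the second paragraph: for fixed small $\e$, the critical values of $\tl f$ on the strata contained in $\pi^{-1}(S_\e(0))$ must stay bounded away from $0$. This is where the meromorphic nature of $f$ really enters, since the distance function behaves badly near $Q^{-1}(0)$; the curve selection argument must therefore be carried out on the complement of $\pi^{-1}(Q^{-1}(0))$ (or after separating the sphere from $Q^{-1}(0)$ by an extra blow-up), which is the technical core of the argument of Gusein-Zade, Luengo and Melle-Hern\'andez in \cite{GLM98} and \cite{GLM01}. The remaining ingredients --- resolution of singularities, the existence of Whitney stratifications compatible with given analytic sets and stratifying a given analytic map, the local conical structure theorem, and Thom's first isotopy lemma --- are standard.
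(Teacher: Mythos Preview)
The paper does not give its own proof of this theorem; it is stated as a result of Gusein-Zade, Luengo and Melle-Hern\'andez with references to \cite{GLM98} and \cite{GLM01}, and no argument is reproduced. (The paper does invoke the method of \cite[Lemma~1]{GLM98} later, in the proof of Proposition~\ref{PVC}, but not for Theorem~\ref{the-fib} itself.) So there is nothing to compare your proposal against within the paper.

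That said, your outline is the standard one and matches the approach of the cited references: resolve the indeterminacy of $f$ to obtain a proper holomorphic map $\tl f : \tl X \to \PP^1$, stratify compatibly with the exceptional divisor and the sphere $S_\e(0)$, show that the stratified critical values avoid a punctured disc around $0$, and apply Thom's first isotopy lemma. Your two ``elementary remarks'' correctly handle the passage back from $\tl X$ to $X$. The only place where your write-up is genuinely incomplete is the one you flag yourself: the claim that, for fixed small $\e$, the critical values of $\tl f$ on the real-analytic strata inside $\pi^{-1}(S_\e(0))$ do not accumulate at $0$. Your curve-selection sketch points in the right direction, but as written it is an appeal to the literature rather than an argument; to make the proof self-contained you would need to carry out that step explicitly (this is indeed what \cite{GLM98} does). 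One minor wording issue: on the complex strata you say ``$\tl f$ is a holomorphic function, so each irreducible component of its critical locus lies in a fibre'' --- more accurately, $\tl f$ restricted to each complex stratum is a holomorphic map to $\PP^1$, and its critical-value set is a proper analytic subset of $\PP^1$, hence finite; the conclusion is the same.
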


We call the fiber in this theorem the Milnor fiber of 
the meromorphic function $f(z)= \frac{P(z)}{Q(z)}$ 
at $z_0 \in P^{-1}(0)$ and denote it by $F_{z_0}$. 
For the meromorphic function $f(z)= \frac{P(z)}{Q(z)}$ let 
\begin{equation}
i_f: X \setminus Q^{-1}(0) \hookrightarrow 
X \times \CC_{\tau}
\end{equation}
be the (not necessarily) 
closed embedding defined by $z \longmapsto (z,f(z))$. 
Let $\tau : X \times \CC \rightarrow \CC$ be the 
second projection. Then 
for $\F \in \Db(X)$ we set 
\begin{equation}
\psi_f^{\mer}( \F ):= \psi_{\tau}( \rmR i_{f*}
( \F |_{X \setminus Q^{-1}(0)}) ) 
\in \Db(X)
\end{equation}
(see Nguyen-Takeuchi \cite{NT22}). 
We call $\psi_f^{\mer}( \F )$ the meromorphic 
nearby cycle sheaf of $\F$ along $f$. Moreover we set 
\begin{equation}
\psi_f^{\merc}( \F ):= \psi_{\tau}( i_{f!}
( \F |_{X \setminus Q^{-1}(0)}) ) 
\in \Db(X) 
\end{equation}
(see Raibaut \cite{Rai13}) and call it the meromorphic 
nearby cycle sheaf with compact support 
of $\F$ along $f$. Then the proof of 
the following lemma is similar to those 
of \cite[Lemma 2.1 and Remark 2.3]{NT22}. 

\begin{lemma}\label{lem-1} 
\begin{enumerate}
\item[\rm{(i)}] The support of $\psi_f^{\merc}( \F )$ is 
contained in $P^{-1}(0)$. 
\item[\rm{(ii)}] There exists an isomorphism 
\begin{equation}
\psi_f^{\merc}( \F ) \simto 
\psi_f^{\merc}( \F_{X \setminus (P^{-1}(0) \cup Q^{-1}(0))} ). 
\end{equation}
\item[\rm{(iii)}] Assume that the meromorphic 
function $f(z)= \frac{P(z)}{Q(z)}$ 
is holomorphic on a neighborhood of a point 
$z_0 \in X$ i.e. there exists a holomorphic function 
$g(z)$ defined on a neighborhood of 
$z_0 \in X$ such that $P(z)=Q(z) \cdot g(z)$ on it. 
Then 
we have an isomorphism 
\begin{equation}
\psi_f^{\merc}( \F )_{z_0} \simeq 
\psi_g ( \F_{X \setminus Q^{-1}(0)} )_{z_0}
\end{equation}
for the classical (holomorphic) nearby cycle 
functor $\psi_g( \cdot )$. 
\item[\rm{(iv)}] 
For any point $z_0 \in P^{-1}(0)$ and 
$j \in \ZZ$ we have an isomorphism 
\begin{equation}
H^j \psi_f^{\merc}( \F )_{z_0} \simeq 
H^j_c ( \overline{F_{z_0}} \setminus Q^{-1}(0) ; \F ), 
\end{equation}
where $H^j_c ( \cdot )$ stands for the hypercohomology 
group with compact support. 
\end{enumerate}
\end{lemma}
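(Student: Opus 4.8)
The plan is to unwind the definition $\psi_f^{\merc}(\F) = \psi_\tau(i_{f!}(\F|_{X\setminus Q^{-1}(0)}))$ and reduce everything to standard facts about the classical nearby cycle functor $\psi_\tau$ on $X\times\CC_\tau$. Throughout, write $U:=X\setminus Q^{-1}(0)$ and $\G:=i_{f!}(\F|_U)\in\Db(X\times\CC)$, so that $\psi_f^{\merc}(\F)=\psi_\tau(\G)$, with $\psi_\tau$ supported on $X\times\{0\}$, which we identify with $X$.

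For part (i): the support of $\psi_\tau(\G)$ is contained in $\tau^{-1}(0)\cap\overline{\supp\G}=(X\times\{0\})\cap\overline{i_f(U)}$. Now the closure of the graph $i_f(U)=\{(z,f(z))\mid z\in U\}$ meets $X\times\{0\}$ only over points $z_0$ where $f$ extends by the value $0$ or where $f$ is indeterminate, i.e. over $P^{-1}(0)$ (using that $P,Q$ are locally coprime away from a small set, and more carefully that a sequence $z_n\to z_0$ with $f(z_n)\to 0$ forces $P(z_0)=0$ by continuity of $P$ and the boundedness argument near $z_0$). Hence $\supp\psi_f^{\merc}(\F)\subset P^{-1}(0)$. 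Part (ii) is then immediate: restricting $\F$ to $X\setminus(P^{-1}(0)\cup Q^{-1}(0))$ versus $X\setminus Q^{-1}(0)$ changes $\G=i_{f!}(\F|_U)$ only over the closed set $P^{-1}(0)$, more precisely the two versions of $\G$ agree after pulling back along $i_f$ outside $i_f(P^{-1}(0)\cap U)$, and $\psi_\tau$ of the difference is supported over $P^{-1}(0)\cap\{0\}$; one checks the stalk there vanishes because the Milnor fiber $F_{z_0}$ avoids $P^{-1}(0)$ near $z_0$ by Theorem \ref{the-fib} (the fibration is over a small punctured disk, so $P\ne 0$ on it). I would cite the parallel arguments of \cite[Lemma 2.1 and Remark 2.3]{NT22} for the details rather than reproduce them.

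For part (iii): if $P=Q\cdot g$ near $z_0$ with $g$ holomorphic, then on a neighborhood $W$ of $z_0$ the graph embedding $i_f|_{W\cap U}$ extends to the graph embedding $i_g|_W$ of the holomorphic function $g$, which is a genuine closed embedding. The point is that over $W$, $i_{f!}(\F|_U)$ and $i_{g!}(\F|_{W\setminus Q^{-1}(0)})$ have isomorphic stalks along $X\times\{0\}$ near $(z_0,0)$: the difference lives over $(W\cap Q^{-1}(0))\times\{0\}$, and since $i_g$ is a closed embedding, $\psi_\tau(i_{g!}(\F|_{W\setminus Q^{-1}(0)}))=i_{g!}$ commutes appropriately so that its stalk at $z_0$ is $\psi_g(\F|_{W\setminus Q^{-1}(0)})_{z_0}$, by the base-change compatibility of $\psi_\tau$ with the graph embedding of a holomorphic function (this is the standard identification $\psi_\tau\circ(\text{graph})_{!}\simeq\psi_g$, up to the $!$/$*$ subtlety which here is harmless because $g^{-1}(0)\cap Q^{-1}(0)$ lies over $\{0\}$). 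Thus $\psi_f^{\merc}(\F)_{z_0}\simeq\psi_g(\F_{X\setminus Q^{-1}(0)})_{z_0}$.

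For part (iv), which I expect to be the main obstacle: the general stalk formula for classical nearby cycles gives $H^j\psi_\tau(\G)_{(z_0,0)}\simeq\varinjlim_{\e} H^j(B_\e(z_0,0)\cap\tau^{-1}(\eta);\G)$ for $0<|\eta|\ll\e\ll 1$. Substituting $\G=i_{f!}(\F|_U)$ and using that $i_f$ is locally a locally closed embedding, $H^j(B_\e\cap\tau^{-1}(\eta);i_{f!}(\F|_U))\simeq H^j_c$ of $\F$ over the part of the Milnor fiber $\{z\in B_\e(z_0)\mid f(z)=\eta\}$ that is actually present, namely over $F_{z_0}\setminus Q^{-1}(0)$ — but the $!$-pushforward and the behavior at the boundary of $B_\e$ force us to take sections with compact support and to pass to the closure $\overline{F_{z_0}}$. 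The delicate step is justifying that the colimit over $\e$ and the choice of $\eta$ genuinely produce $H^j_c(\overline{F_{z_0}}\setminus Q^{-1}(0);\F)$ rather than some intermediate group; here I would use Theorem \ref{the-fib} to know that the Milnor fibration is locally trivial, hence the homotopy type and the compactly-supported cohomology of the fiber stabilize, together with the adjunction $(i_f)_!$ and excision to remove the part of $B_\e$ outside $U$. The hardest point to get exactly right is the interaction of ``compact support'' coming from the $i_{f!}$ with the ``compact support'' already in $H^j_c$ of the fiber, i.e. verifying that no extra boundary contribution from $\partial B_\e$ survives — this is where one invokes that $\e$ can be shrunk (conic structure of the Milnor fibration) so that the fiber meets $\partial B_\e$ transversally and the compactly supported cohomology of the open fiber is what appears. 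I would model this argument closely on the proof of \cite[Remark 2.3]{NT22}, adapting the $*$-version there to the $!$-version here.
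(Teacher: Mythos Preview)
Your approach is correct and matches the paper's (which simply refers to \cite[Lemma 2.1 and Remark 2.3]{NT22}); parts (i)--(iii) are handled cleanly. The one place where your discussion is less sharp than it could be is part (iv): the ``hardest point'' you flag---extra boundary contribution from $\partial B_\e$---evaporates once you use the \emph{closed}-ball stalk formula for $\psi_\tau$ rather than the open-ball one. Concretely, $\psi_\tau(\G)_{(z_0,0)}\simeq \rmR\Gamma(\overline{B_\e}(z_0);\G|_{\tau=\eta})$ for $0<|\eta|\ll\e\ll1$; since $\overline{B_\e}$ is compact, $\rmR\Gamma(\overline{B_\e};-)=\rmR\Gamma_c(\overline{B_\e};-)$, and then the factorization $i_f=j_U\circ(\text{closed graph})$ with $j_U:U\hookrightarrow X$ open gives
\[
\rmR\Gamma_c\bigl(\overline{B_\e};(j_U)_!(\text{graph})_*(\F|_{f^{-1}(\eta)})\bigr)
\simeq \rmR\Gamma_c\bigl(\overline{B_\e}\cap U\cap f^{-1}(\eta);\F\bigr)
= \rmR\Gamma_c\bigl(\overline{F_{z_0}}\setminus Q^{-1}(0);\F\bigr).
\]
No transversality or shrinking argument at $\partial B_\e$ is needed beyond what already goes into the existence of the Milnor fibration (Theorem~\ref{the-fib}).
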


\begin{example}\label{exe-1} 
Consider the case where $X$ is the complex plane $\CC^2$ 
endowed with the standard coordinate $z=(z_1,z_2)=(x,y)$ and 
let $P,Q \in \CC [x,y]$ be polynomials on $X= \CC^2$ 
coprime each other such that $P(0)=Q(0)=0$. Assume that 
the complex curve $P^{-1}(0) \subset X$ (resp. $Q^{-1}(0) \subset X$) 
has an isolated singular point (resp. is smooth) 
at the origin $0=(0,0) \in X= \CC^2$. Then for $c \in \CC$ 
the fiber $f^{-1}(c) \subset X \setminus Q^{-1}(0)$ of the 
rational function $f= \frac{P}{Q}: X \setminus Q^{-1}(0) 
\longrightarrow \CC$ is explicitly described as follows: 
\begin{equation}
f^{-1}(c)= \{ (x,y) \in X \setminus Q^{-1}(0) \ | \ 
P(x,y)-c \cdot Q(x,y)=0 \} \quad \subset X \setminus Q^{-1}(0). 
\end{equation}
On the other hand, by \cite[Corollary 2.8]{Mil68} the set 
$\Sigma_f \subset  \CC$ of the critical values of 
$f= \frac{P}{Q}: X \setminus Q^{-1}(0) 
\longrightarrow \CC$ is finite and hence generic 
fibers of $f$ are smooth. This implies that for 
$c \in \CC^*= \CC \setminus \{ 0 \}$ such that 
$0<|c| \ll 1$ the complex curve 
\begin{equation}
\overline{f^{-1}(c)}= \{ (x,y) \in X = \CC^2 \ | \ 
P(x,y)-c \cdot Q(x,y)=0 \} \quad \subset X  
\end{equation}
is smooth outside the origin $0=(0,0) \in X= \CC^2$. 
Moreover by the conditions 
\begin{equation}
\Bigl( \frac{\partial P}{\partial x} (0,0), 
\frac{\partial P}{\partial y} (0,0) \Bigr) 
= (0,0), \quad 
\Bigl( \frac{\partial Q}{\partial x} (0,0), 
\frac{\partial Q}{\partial y} (0,0) \Bigr) 
\not= (0,0)
\end{equation}
we see that for such $c \in \CC^*$ the complex curve 
$\overline{f^{-1}(c)} \subset X$ is smooth also at 
the origin. For a complex number $c \in \CC^*$ and 
$\varepsilon >0$ such that $0<|c| \ll \varepsilon \ll 1$ 
let us explain the topology of the Milnor fiber 
\begin{equation}
F_0 = (B_{\varepsilon}(0) \setminus Q^{-1}(0)) \cap f^{-1}(c)
 = (B_{\varepsilon}(0) \cap \overline{f^{-1}(c)} ) \setminus 
\{ 0 \}
\end{equation}
of $f= \frac{P}{Q}: X \setminus Q^{-1}(0) 
\longrightarrow \CC$ at the origin $0=(0,0) \in I(f)= 
P^{-1}(0) \cap Q^{-1}(0)$. We set $L:=P^{-1}(0) \cap 
\partial B_{\varepsilon}(0) \subset 
S_{\varepsilon}(0):= \partial B_{\varepsilon}(0) \simeq S^3$ 
and call it the link of the complex curve 
$P^{-1}(0) \subset X= \CC^2$ at the origin. The small 
sphere $S_{\varepsilon}(0)$ being compact, 
the boundary $\partial \overline{F_{0}}$ of the closure 
$\overline{F_{0}}$ of $F_0$ is homeomorphic to $L$. 
We know also that the link $L$ is 
a disjoint union of some circles $S^1$ embedded in 
$S_{\varepsilon}(0) \simeq S^3$. We denote the number of 
circles $S^1$ in $L$ by $m$ and shall explain how to 
calculate it from now. For this purpose, let 
$F(P)_0$ be the Milnor fiber of $P^{-1}(0) \subset X$ 
at the origin $0=(0,0) \in X$ and consider the 
monodromy automorphism 
\begin{equation}
\Phi (P)_1: H^1( F(P)_0 ; \CC) \simto  H^1( F(P)_0 ; \CC)
\end{equation}
of its first cohomology group $H^1( F(P)_0 ; \CC)$. 
Denote by $N_1$ the number of the Jordan blocks for 
the eigenvalue $1$ in $\Phi (P)_1$. Then by the results in 
\cite[Section 8]{Mil68} (see also the proof of 
\cite[Lemma 4.3]{NT05}) we have $m=N_1+1$. Since by 
the classical theory of the mixed Hodge structure  
of $H^1( F(P)_0 ; \CC)$ we know in this case of 
${\rm dim} X=2$ that the maximal possible size of such 
Jordan blocks is one, we can calculate the number 
$N_1$ by the theory of monodromy zeta functions 
(see \cite[Section 2]{Take23} for a review of this 
subject). We thus have seen that the interior of 
$\overline{F_{0}}$ is a smooth Riemann surface with 
the boundary $\partial \overline{F_{0}}$ which is 
isomorphic to the disjoint union 
$S^1 \sqcup \cdots \sqcup S^1$ of $N_1+1$ copies of 
$S^1$. Let $M$ be a compact oriented surface obtained by 
attaching  $N_1+1$ ($2$-dimensional) disks to 
$\overline{F_{0}}$ along its boundary 
$\partial \overline{F_{0}}$ and denote its genus 
by $g(M)$. By taking a triangulation of $\overline{F_{0}}$ 
we see that 
$H_c^2( \overline{F_{0}} ; \CC_X) \simeq 
H^2( \overline{F_{0}} ; \CC_X) \simeq 0$. Then by 
the short exact sequence 
\begin{equation}
0 \longrightarrow \CC_{M \setminus \overline{F_{0}}} \longrightarrow  
\CC_M  \longrightarrow \CC_{\overline{F_{0}}} 
 \longrightarrow 0 
\end{equation}
we obtain isomorphisms 
\begin{equation}
H^j( \overline{F_{0}} ; \CC_X)
\simeq 
\begin{cases}
\CC & (j=0) \\ 
\\
\CC^{2 g(M)+N_1} & (j=1) \\
\\
0 & (\mbox{otherwise}). 
\end{cases}
\end{equation}
Moreover, by $\overline{F_{0}} \setminus Q^{-1}(0)= 
\overline{F_{0}} \setminus \{ 0 \}$ we can show that 
there exist isomorphisms 
\begin{equation}
H^j \psi_f^{\merc}( \CC_X )_{0}
 \simeq 
H_c^j( \overline{F_{0}} \setminus Q^{-1}(0); \CC_X)
 \simeq 
\begin{cases}
\CC^{2 g(M)+N_1} & (j=1) \\ 
\\
0 & (\mbox{otherwise}). 
\end{cases}
\end{equation}
Compare this result with the perversity of 
$\phi_f^{\merc}( \CC_X [1] )$ which will be proved 
soon below in this section. For the above $c \in \CC^*$ and 
$\varepsilon^{\prime} >0$ such that 
$0< \varepsilon^{\prime} \ll |c| \ll \varepsilon$ we set 
\begin{equation}
G_0 := (B_{\varepsilon^{\prime}}(0) \setminus Q^{-1}(0)) \cap f^{-1}(c)
= (B_{\varepsilon^{\prime}}(0) \cap \overline{f^{-1}(c)} ) \setminus 
\{ 0 \}. 
\end{equation}
Then the closure $\overline{G_{0}}$ of $G_0$ 
is homeomorphic to the ($2$-dimensional) disk and 
$\overline{G_{0}} \setminus Q^{-1}(0)= 
\overline{G_{0}} \setminus \{ 0 \}$. We thus obtain 
isomorphisms 
\begin{equation}
H_c^j( \overline{G_{0}} \setminus Q^{-1}(0); \CC_X) 
\simeq 0 \qquad (j \in \ZZ ). 
\end{equation}
The difference of the topology of $G_0$ from that of $F_0$ 
comes from the fact that there exists $r>0$ such that 
$\varepsilon^{\prime} < r < \varepsilon$ and 
the sphere $S_r(0)= \partial B_r(0)$ is tangent to 
the smooth curve $\overline{f^{-1}(c)}$. 
This explains the reason why one should distinguish $F_0$ 
from $G_0$ carefully in the study of the Milnor fiber $F_0$ of 
$f= \frac{P}{Q}: X \setminus Q^{-1}(0) 
\longrightarrow \CC$. 
\end{example}

The following result is an analogue 
of \cite[Theorem 2.2]{NT22} for the meromorphic 
nearby cycle sheaf with compact support 
of $\F$ along $f$. 

\begin{theorem}\label{the-1} 
\begin{enumerate}
\item[\rm{(i)}] If $\F \in \Db(X)$ is constructible, then 
$\psi_f^{\merc}( \F ) \in \Db(X)$ is also constructible. 
\item[\rm{(ii)}] If $\F \in \Db(X)$ is perverse, then 
$\psi_f^{\merc}( \F )[-1] \in \Db(X)$ is also perverse. 
\end{enumerate}
\end{theorem}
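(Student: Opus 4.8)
The plan is to reduce both assertions --- via a resolution of the indeterminacy of $f$ and via Verdier duality --- to the corresponding facts for the classical holomorphic nearby cycle functor $\psi_{\tau}$ and for the meromorphic nearby cycle functor $\psi_{f}^{\mer}( \cdot ) = \psi_{\tau}( \rmR i_{f *}( ( \cdot )|_{X \setminus Q^{-1}(0)} ) )$ of \cite{NT22}, the latter being the content of \cite[Theorem 2.2]{NT22}.

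First I would prove (i). Since the classical nearby cycle functor $\psi_{\tau}$ of a holomorphic function on a complex manifold preserves constructibility (see e.g. \cite{Dim04}, \cite{KS90}), it suffices to show that $i_{f!}( \F|_{X \setminus Q^{-1}(0)} ) \in \Db(X \times \CC)$ is constructible. Choose a proper modification $\pi : \tilde{X} \longrightarrow X$ which is an isomorphism over $X \setminus I(f)$ (first blow up the coherent ideal $(P,Q) \sho_X$ and then resolve the singularities) such that the meromorphic function $\tilde{f} := f \circ \pi$ extends to a holomorphic map $\tilde{f} : \tilde{X} \longrightarrow \PP^1 = \CC \cup \{ \infty \}$, and set $\tilde{D} := \tilde{f}^{-1}( \infty ) \subset \tilde{X}$. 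Since $I(f) \subset Q^{-1}(0)$, the morphism $\pi$ restricts to an isomorphism $\tilde{V} := \pi^{-1}( X \setminus Q^{-1}(0) ) \simto X \setminus Q^{-1}(0)$, on which $\tilde{f}$ takes values in $\CC$; hence $\tilde{D} \subset \pi^{-1}( Q^{-1}(0) )$, and $\tilde{V}$ is an open subset of $\tilde{X} \setminus \tilde{D}$ whose complement $\pi^{-1}( Q^{-1}(0) ) \setminus \tilde{D}$ is a closed analytic subset of $\tilde{X} \setminus \tilde{D}$. Under the identification $\tilde{V} \simeq X \setminus Q^{-1}(0)$, the embedding $i_f$ factors as the composite
\[
\tilde{V} \hookrightarrow \tilde{X} \setminus \tilde{D}
\overset{j}{\hookrightarrow}
( \tilde{X} \setminus \tilde{D} ) \times \CC
\hookrightarrow \tilde{X} \times \CC
\overset{\pi \times \id}{\longrightarrow} X \times \CC ,
\]
where $j$ is the graph embedding of the holomorphic function $\tilde{f}|_{\tilde{X} \setminus \tilde{D}} : \tilde{X} \setminus \tilde{D} \longrightarrow \CC$ --- which is a closed embedding --- and the first and third arrows are open embeddings with closed analytic complements. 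Applying $( \cdot )_!$ to this factorization, we recover $i_{f!}( \F|_{X \setminus Q^{-1}(0)} )$ from the constructible sheaf $\F|_{\tilde{V}}$ by successively taking the extension by zero along an open embedding with closed analytic complement, the direct image along a closed embedding, and the proper direct image $\rmR ( \pi \times \id )_*$; each of these operations preserves constructibility, so $i_{f!}( \F|_{X \setminus Q^{-1}(0)} )$ is constructible and (i) follows. (Alternatively, one checks directly that the closure of the graph of $f$ in $X \times \CC_{\tau}$ is the complex analytic set obtained as the union of those irreducible components of $\{ Q(z) \tau = P(z) \}$ that meet $( X \setminus Q^{-1}(0) ) \times \CC$, and argues similarly.)

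For (ii), I would exploit that $\psi_f^{\merc}$ and $\psi_f^{\mer}$ are interchanged by Verdier duality up to a shift. Writing $V = X \setminus Q^{-1}(0)$ and combining the isomorphism $\DD_{X \times \CC} \circ i_{f!} \simeq \rmR i_{f *} \circ \DD_{V}$ with the openness of $V$ (so that $\DD_V( \F|_V ) \simeq ( \DD_X \F )|_V$) and with the fact that the perverse nearby cycle functor $\psi_{\tau}[-1]$ commutes with Verdier duality, one obtains an isomorphism
\[
\psi_f^{\merc}( \F ) \simeq \DD_X \bigl( \psi_f^{\mer}( \DD_X \F ) \bigr)[2] .
\]
Now if $\F \in \Db(X)$ is perverse then so is $\DD_X \F$, hence by \cite[Theorem 2.2]{NT22} the complex $\psi_f^{\mer}( \DD_X \F )[-1]$ is perverse, i.e. $\psi_f^{\mer}( \DD_X \F )$ is a perverse sheaf shifted by $[1]$; applying $\DD_X$ and the shift $[2]$ as in the displayed isomorphism, we conclude that $\psi_f^{\merc}( \F )[-1]$ is perverse. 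This proves (ii).

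The main obstacle is that the graph embedding $i_f : X \setminus Q^{-1}(0) \hookrightarrow X \times \CC$ fails to be proper along the indeterminacy locus $I(f) = P^{-1}(0) \cap Q^{-1}(0)$, so that neither $i_{f!}$ nor $i_{f *}$ can be controlled directly. This is exactly why in (i) one passes to the resolution $\pi$ turning $f$ into a genuine holomorphic map to $\PP^1$ (and why it matters, as observed above, that $\pi$ is an isomorphism over all of $X \setminus Q^{-1}(0)$, which uses $I(f) \subset Q^{-1}(0)$), and why in (ii) --- since $i_{f!}$ is not t-exact for the perverse t-structure --- one must route the argument through Verdier duality and the already established perversity statement for $\psi_f^{\mer}$.
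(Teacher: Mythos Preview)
Your proof is correct, but the packaging differs from the paper's in both parts.

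For (i), the paper avoids any resolution of singularities: it works directly with the hypersurface $W=\{(z,\tau)\in X\times\CC : P(z)-\tau Q(z)=0\}$ and its projection $\kappa:W\to X$, observing that the graph of $f$ is the open subset $\kappa^{-1}(X\setminus Q^{-1}(0))\subset W$. One then has $i_{f!}(\F|_{X\setminus Q^{-1}(0)})\simeq j_{f!}j_f^{-1}(i_{W*}\kappa^{-1}\F)$, where $j_f:(X\setminus Q^{-1}(0))\times\CC\hookrightarrow X\times\CC$ is the open inclusion, and constructibility is immediate. Your parenthetical alternative is essentially this argument; your primary route via a resolution $\pi:\tilde X\to X$ is heavier but perfectly valid.

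For (ii), both arguments reduce to \cite[Theorem 2.2]{NT22} via Verdier duality, but at different stages. The paper works on $X\times\CC$ \emph{before} applying $\psi_\tau$: it shows that $\G=i_{f!}(\F|_{X\setminus Q^{-1}(0)})$ is itself perverse (up to shift) by computing $\DD_{X\times\CC}(\G)\simeq \rmR j_{f*}j_f^{-1}\DD_{X\times\CC}(i_{W*}\kappa^{-1}\F)$ and invoking the \emph{proof} of \cite[Theorem 2.2 (ii)]{NT22} to see this is perverse; then t-exactness of $\psi_\tau[-1]$ finishes. You instead work on $X$ \emph{after} applying $\psi_\tau$, using the commutation of $\psi_\tau[-1]$ with duality to obtain the clean identity $\psi_f^{\merc}(\F)\simeq \DD_X\bigl(\psi_f^{\mer}(\DD_X\F)\bigr)[2]$ and then citing \cite[Theorem 2.2]{NT22} as a black box. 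Your formulation is arguably more transparent and yields the duality relation between $\psi_f^{\merc}$ and $\psi_f^{\mer}$ as a byproduct; the paper's version has the advantage of establishing perversity of $i_{f!}(\F|_{X\setminus Q^{-1}(0)})[N]$ on $X\times\CC$, a fact used elsewhere in the paper.
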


\begin{proof} Assume that $\F \in \Db(X)$ is 
constructible. Define a hypersurface $W$ of 
$X \times \CC_{\tau}$ by 
\begin{equation}
W= \{ (z, \tau ) \in X \times \CC \ | \ 
P(z)- \tau Q(z)=0 \} 
\end{equation}
and let $\kappa : W \rightarrow X$ be the 
restriction of the first projection 
$X \times \CC_{\tau} \rightarrow X$ to it. 
Then $\kappa$ induces an isomorphism 
\begin{equation}
\kappa^{-1} (X \setminus Q^{-1}(0)) 
\simto X \setminus Q^{-1}(0)
\end{equation}
and $\kappa^{-1} (X \setminus Q^{-1}(0))$ 
is nothing but the graph 
\begin{equation}
\{ (z, f(z)) \in (X \setminus Q^{-1}(0)) 
\times \CC \ | \ z \in X \setminus Q^{-1}(0) \} 
\end{equation}
of $f: X \setminus Q^{-1}(0) \rightarrow \CC$. 
In this way, we identify $X \setminus Q^{-1}(0)$ 
and the open subset $\kappa^{-1} (X \setminus Q^{-1}(0))$ 
of $W$. Let 
\begin{equation}
j_f: (X \setminus Q^{-1}(0)) \times \CC_{\tau} 
\hookrightarrow X \times \CC_{\tau} 
\end{equation}
and $i_W: W \hookrightarrow X \times \CC_{\tau}$ 
be the inclusion maps. Then as in the proof 
of \cite[Theorem 2.2]{NT22} 
we obtain an isomorphism 
\begin{equation}
\G := i_{f!}( \F |_{X \setminus Q^{-1}(0)}) \simeq 
j_{f_!}j_f^{-1}(i_{W*} \kappa^{-1} \F ). 
\end{equation}
From this we see that $\G = 
i_{f!}( \F |_{X \setminus Q^{-1}(0)})$ is constructible. 
Now the assertion (i) is clear. 
Assume that $\F \in \Db(X)$ is perverse. 
Then, although $\CH := i_{W*} \kappa^{-1} \F
\in \Db(X \times \CC )$ is not 
necessarily perverse (up to some shift) 
in general, its 
restriction 
\begin{equation}
j_f^{-1} \CH \simeq \CH |_{(X \setminus Q^{-1}(0)) 
\times \CC_{\tau}}
\end{equation}
to $(X \setminus Q^{-1}(0)) \times \CC_t$ is preverse 
(up to some shift). Moreover, its Verdier dual 
$\mathbb{D}_{X \times \CC} ( \CH ) \in \Db(X \times \CC )$ 
satisfies the same property. It follows 
from the proof of \cite[Theorem 2.2 (ii)]{NT22} that 
\begin{equation}
 \rmR j_{f_*}j_f^{-1} \mathbb{D}_{X \times \CC} ( \CH ) 
\in \Db(X \times \CC )
\end{equation}
is perverse (up to some shift). By the isomorphism 
\begin{equation}
\mathbb{D}_{X \times \CC} ( \G ) 
= \mathbb{D}_{X \times \CC} ( j_{f_!}j_f^{-1}( \CH ) ) 
\simeq \rmR j_{f_*}j_f^{-1}
\mathbb{D}_{X \times \CC} (  \CH  ), 
\end{equation}
we find that $\G = j_{f_!}j_f^{-1}( \CH ) \simeq 
i_{f!}( \F |_{X \setminus Q^{-1}(0)}) \in \Db(X \times \CC )$ 
is also  perverse (up to some shift). 
Then the assertion of (ii) immediately follows from 
the t-exactness of the nearby cycle functor 
\begin{equation}
\psi_{\tau}( \cdot ): \Db(X \times \CC ) \longrightarrow \Db( X ). 
\end{equation}
\end{proof}

By this theorem we obtain a functor 
\begin{equation}
\psi_f^{\merc }( \cdot ) : 
\Dbc (X) \longrightarrow \Dbc (X). 
\end{equation}
Let $i_0: X \hookrightarrow X \times \CC_{\tau}$ 
($x \longmapsto (x,0)$) be the inclusion map 
and for $\F \in \Db(X)$ set 
\begin{equation}
\phi_f^{\merc}( \F ):= \phi_{\tau}( i_{f!}
( \F |_{X \setminus Q^{-1}(0)}) ) 
\in \Db(X). 
\end{equation}
We call it the meromorphic 
vanishing cycle sheaf with compact support 
of $\F$ along $f$. Then there exists a 
distinguished triangle 
\begin{equation}\label{dist}
i_0^{-1} i_{f!}( \F |_{X \setminus Q^{-1}(0)}) 
\longrightarrow \psi_f^{\merc}( \F )
\longrightarrow \phi_f^{\merc}( \F ) 
\overset{+1}{\longrightarrow}
\end{equation}
in $\Db(X)$. Moreover, since for the inclusion map 
$j: P^{-1}(0) \setminus  Q^{-1}(0) 
\hookrightarrow X$ we have a Cartesian diagram 
\begin{equation}
\begin{CD}
P^{-1}(0) \setminus  Q^{-1}(0)   @>{j}>> X 
\\
@VVV   @VV{i_0}V
\\
X \setminus  Q^{-1}(0)   @>{i_f}>>  
X \times \CC_t
\end{CD}
\end{equation}
of inclusion maps, we obtain an isomorphism 
\begin{equation}\label{dist-isom}
i_0^{-1} i_{f!}( \F |_{X \setminus Q^{-1}(0)}) 
\simeq 
j_! ( \F |_{P^{-1}(0) \setminus  Q^{-1}(0) } ) 
\simeq \F_{P^{-1}(0) \setminus  Q^{-1}(0)}. 
\end{equation}
We thus see that also the functor 
\begin{equation}
\phi_f^{\merc }( \cdot ) : \Db (X) \longrightarrow \Db (X)
\end{equation}
preserves the constructibility and 
the perversity (up to some shift). 
Moreover by \eqref{dist} and \eqref{dist-isom},  
for any $\F \in \Db(X)$ we have isomorphisms 
\begin{equation}
\phi_f^{\merc}( \F )_{z_0} \simeq 
\psi_f^{\merc}( \F )_{z_0} \qquad 
(z_0 \in Q^{-1}(0)). 
\end{equation}
The proof of the following lemma is similar to those 
of \cite[Lemma 2.1 and Remark 2.3]{NT22}. 

\begin{lemma}\label{lem-2} 
\begin{enumerate}
\item[\rm{(i)}] The support of $\phi_f^{\merc}( \F )$ is 
contained in $P^{-1}(0)$. 
\item[\rm{(ii)}] There exists an isomorphism 
\begin{equation}
\phi_f^{\merc}( \F ) \simto 
\phi_f^{\merc}( \F_{X \setminus Q^{-1}(0)} ). 
\end{equation}
\item[\rm{(iii)}] Assume that the meromorphic 
function $f(z)= \frac{P(z)}{Q(z)}$ 
is holomorphic on a neighborhood of a point 
$z_0 \in X$ i.e. there exists a holomorphic function 
$g(z)$ defined on a neighborhood of 
$z_0 \in X$ such that $P(z)=Q(z) \cdot g(z)$ on it. 
Then 
we have an isomorphism 
\begin{equation}
\phi_f^{\merc}( \F )_{z_0} \simeq 
\phi_g ( \F_{X \setminus Q^{-1}(0)} )_{z_0}
\end{equation}
for the classical (holomorphic) vanishing cycle 
functor $\phi_g( \cdot )$. 
\end{enumerate}
\end{lemma}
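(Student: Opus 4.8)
The plan is to derive all three assertions from the distinguished triangle \eqref{dist}, the isomorphism \eqref{dist-isom} and Lemma \ref{lem-1}, exactly in the spirit of \cite[Lemma 2.1 and Remark 2.3]{NT22}, where the analogous facts for $\psi^{\merc}_f$ are obtained. For part (i), I would observe that by \eqref{dist} the object $\phi_f^{\merc}(\F)$ is isomorphic to the mapping cone of the morphism $i_0^{-1} i_{f!}(\F|_{X \setminus Q^{-1}(0)}) \longrightarrow \psi_f^{\merc}(\F)$, so that $\supp \phi_f^{\merc}(\F)$ is contained in the union of the supports of these two terms; by \eqref{dist-isom} the first one is $\F_{P^{-1}(0) \setminus Q^{-1}(0)}$, which is supported in $P^{-1}(0)$, and by Lemma \ref{lem-1}~(i) the second one is supported in $P^{-1}(0)$ as well, whence the claim. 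For part (ii), the point is simply that the functor $\phi_f^{\merc}( \cdot ) = \phi_\tau \circ i_{f!}\bigl( ( \cdot )|_{X \setminus Q^{-1}(0)} \bigr)$ depends only on the restriction to $X \setminus Q^{-1}(0)$, and $( \F_{X \setminus Q^{-1}(0)} )|_{X \setminus Q^{-1}(0)} = \F|_{X \setminus Q^{-1}(0)}$ via the natural isomorphism $j^{-1} j_! \simeq \id$ for the open inclusion $j : X \setminus Q^{-1}(0) \hookrightarrow X$; hence the two objects are in fact equal, a fortiori isomorphic.

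The only assertion requiring a genuine (though short) argument is part (iii), the $\phi$-analogue of Lemma \ref{lem-1}~(iii). I would fix a neighbourhood $X'$ of $z_0$ on which $P = Q \cdot g$. On $X' \setminus Q^{-1}(0)$ the meromorphic function $f$ coincides with the holomorphic function $g$, so the graph embedding $i_f : X' \setminus Q^{-1}(0) \hookrightarrow X' \times \CC_\tau$ factors as $i_f = i_g \circ j$, where $j : X' \setminus Q^{-1}(0) \hookrightarrow X'$ is the open inclusion and $i_g : X' \hookrightarrow X' \times \CC_\tau$, $z \mapsto (z, g(z))$, is the (closed) graph embedding of $g$. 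Consequently, over $X' \times \CC_\tau$ one has $i_{f!}(\F|_{X' \setminus Q^{-1}(0)}) = i_{g!}\, j_!\,(\F|_{X' \setminus Q^{-1}(0)}) = i_{g!}(\F_{X' \setminus Q^{-1}(0)})$. Since $\phi_\tau$ commutes with restriction to open subsets of $X \times \CC_\tau$ its stalks are local, so $\phi_f^{\merc}(\F)_{z_0} \simeq \phi_\tau ( i_{g!}(\F_{X' \setminus Q^{-1}(0)}) )_{z_0}$; and because $i_g$ is a closed embedding we have $i_{g!} = \rmR i_{g*}$, so the right-hand side is the standard graph-embedding description of the classical vanishing cycle functor, i.e. $\phi_g(\F_{X' \setminus Q^{-1}(0)})_{z_0}$. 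Invoking once more the locality of $\phi_g$ (which lets us replace $X$ by $X'$) yields the desired isomorphism $\phi_f^{\merc}(\F)_{z_0} \simeq \phi_g(\F_{X \setminus Q^{-1}(0)})_{z_0}$.

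I do not expect any serious obstacle here: parts (i) and (ii) are formal consequences of \eqref{dist}, \eqref{dist-isom} and Lemma \ref{lem-1}, and part (iii) is, in essence, the remark that near a point where $P/Q$ is holomorphic the meromorphic construction $\phi_f^{\merc}$ unwinds to the classical $\phi_g$. The one place that calls for a little care — and which I would single out as the main point of the proof — is the passage from the ``meromorphic'' graph embedding $i_f$ to the honest graph embedding $i_g$ of the local model $g$: one must check that the points lying over $Q^{-1}(0)$, which are present in the closure of the graph of $f$ but absent from $i_g(X')$, do not contribute to the stalk at $z_0$. This is exactly where the locality of $\phi_\tau$ and the closedness (hence properness) of $i_g$ enter, precisely as in \cite[Remark 2.3]{NT22} for the nearby cycle functor.
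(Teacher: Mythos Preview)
Your proposal is correct and follows exactly the line the paper indicates: the paper does not give an explicit proof but simply notes that the argument is ``similar to those of \cite[Lemma 2.1 and Remark 2.3]{NT22}'', and what you have written is precisely the natural adaptation of those arguments, using the distinguished triangle \eqref{dist}, the identification \eqref{dist-isom}, Lemma~\ref{lem-1}, and the graph-embedding factorisation $i_f = i_g \circ j$ near $z_0$.
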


The following result is a cohomological generalization of 
Theorem \ref{the-fib}. 

\begin{proposition}\label{PVC} 
Let $\F \in \Dbc(X)$ be a constructible sheaf on $X$. 
Then for any $z_0 \in I(f)=P^{-1}(0) \cap Q^{-1}(0)$ 
and $c \in \CC$ there exist small $0< \e \ll 1$ 
such that the cohomology sheaves $H^j \rmR  f_! \bigl( 
\F_{\overline{B_{\e}(z_0)}}|_{X  \setminus Q^{-1}(0)} \bigr)$ 
$(j \in \ZZ )$ are local systems over a sufficiently small punctured 
disk centered at $c \in \CC$. Moreover we have 
\begin{equation}\label{VAc} 
\rmR  f_! \bigl( 
\F_{\overline{B_{\e}(z_0)}}|_{X  \setminus Q^{-1}(0)} \bigr)_c 
\simeq 
\rmR  (f-c)_! \bigl( 
\F_{\overline{B_{\e}(z_0)}}|_{X  \setminus Q^{-1}(0)} \bigr)_0 
\simeq 0
\end{equation}
and there exist isomorphisms 
\begin{align}
& \phi_{\tau -c} \Bigl( 
\rmR  f_! \bigl( 
\F_{\overline{B_{\e}(z_0)}}|_{X  \setminus Q^{-1}(0)} \bigr) \Bigr) 
 \simeq 
\psi_{\tau -c} \Bigl( 
\rmR  f_! \bigl( 
\F_{\overline{B_{\e}(z_0)}}|_{X  \setminus Q^{-1}(0)} \bigr) \Bigr) 
\\
& \simeq \psi_{f-c}^{\merc} ( \F )_{z_0} 
\simeq \phi_{f-c}^{\merc} ( \F )_{z_0}.  
\end{align}
\end{proposition}

\begin{proof}
Note that for $P_c:=P-c \cdot Q$ we have 
$f-c= \frac{P_c}{Q}$. Then we obtain the first 
assertion by (the proof of)  Lemma \ref{lem-1} (iv) 
(see also the proof of \cite[Theorem 2.6]{Take23}). 
We can also show the vanishing \eqref{VAc} 
by the cone theorem proved by \cite[Theorem 2.10]{Mil68} and 
\cite[Lemma 3.2]{BV72}. Indeed, let 
$X= \sqcup_{\alpha \in A} X_{\alpha}$ be a 
stratification of $X$ such that $H^j( \F |_{X_{\alpha}})$ 
is locally constant for any $j \in \ZZ$ and $\alpha \in A$. 
Then the cone theorem implies that there exist small $0< \e \ll 1$ 
such that for any $\alpha \in A$ the subsets 
\begin{equation}
\overline{X_{\alpha}} \cap P_c^{-1}(0) \cap \overline{B_{\e}(z_0)}, 
\qquad 
\overline{X_{\alpha}} \cap P_c^{-1}(0) \cap Q^{-1}(0) \cap \overline{B_{\e}(z_0)}
\end{equation}
of $\overline{B_{\e}(z_0)}$ are contractible and hence 
\begin{equation}
\rmR  (f-c)_! \bigl( 
\F_{\overline{X_{\alpha}} \cap \overline{B_{\e}(z_0)}}|_{X  \setminus 
Q^{-1}(0)} \bigr)_0 
\simeq 0. 
\end{equation}
By decomposing the support of $\F$ with respect to the 
stratification $X= \sqcup_{\alpha \in A} X_{\alpha}$ of $X$ 
we thus can prove the vanishing \eqref{VAc}. 
Together with Lemma \ref{lem-1} (iv), we obtain also 
the remaining assertions. 
\end{proof}

\begin{proposition}\label{FNTVC} 
For a constructible sheaf $\F \in \Dbc(X)$ on $X$ and a 
point $z_0 \in I(f)=P^{-1}(0) \cap Q^{-1}(0)$, 
there exists a finite subset $\Sigma \subset \CC$ 
of $\CC$ such that we have 
\begin{equation}
\phi_{f-c}^{\merc} ( \F )_{z_0} \simeq 0. 
\end{equation}
for any $c \in \CC \setminus \Sigma$. 
\end{proposition}

\begin{proof}
Note that for the point $z_0 \in I(f)$ 
(under the natural identification of $X \times \{ 0 \}$ with 
$X \times \{ c \}$) we have an 
isomorphism 
\begin{equation}\label{EE-1} 
\phi_{f-c}^{\merc} ( \F )_{z_0} \simeq 
\phi_{\tau -c} 
\Bigl( i_{f!}( \F |_{X \setminus Q^{-1}(0)}) \Bigr)_{(z_0,c)}. 
\end{equation}
Let $\PP = \PP^1 = \CC \sqcup \{ \infty \}$ be the 
projective compactification of $\CC$ and 
$i: X \times \CC \hookrightarrow X \times \PP$ the 
inclusion map. Then by the proof of Theorem \ref{the-1} (i) 
we see also that $\widetilde{\G} := 
i_! \Bigl( i_{f!}( \F |_{X \setminus Q^{-1}(0)}) \Bigr) \in 
\Db(X \times \PP )$ is constructible. Let $\CS$ be a Whitney stratification 
of $X \times \PP$ adapted to $\widetilde{\G}$ such that 
\begin{equation}\label{EE-2} 
{\rm SS} ( \widetilde{\G} ) \subset 
\bigcup_{S \in \CS} T^*_S(X \times \PP ). 
\end{equation}
Then by the theorem in \cite[page 43]{GM88}, after 
refining $\CS$ if necessary, we may assume that 
for $\CS$ and a Whitney stratification $\CS_0$ of 
$X$ the projection $X \times \PP \longrightarrow X$ 
is a stratified fiber bundle as in its assertion. 
We may assume also that the one point set 
$\{ z_0 \} \subset X$ is a stratum of $\CS_0$. 
Then there exists a finite subset 
$\{ c_1,c_2, \ldots, c_k \} \subset \PP$ of $\PP$ 
such that the strata in $\CS$ projecting to 
the one $\{ z_0 \} \subset X$ in $\CS_0$ are 
$\{ (z_0, c_i) \} \subset X \times \PP$ 
($1 \leq i \leq k$) and $\{ z_0 \} \times 
( \PP \setminus  \{ c_1,c_2, \ldots, c_k \}) \subset X \times \PP$. 
Let us set $\Sigma := \CC \cap 
\{ c_1,c_2, \ldots, c_k \} \subset \CC$. Then by 
\eqref{EE-1}, \eqref{EE-2} and \cite[Proposition 8.6.3]{KS90} 
we obtain the desired vanishing  
\begin{equation}
\phi_{f-c}^{\merc} ( \F )_{z_0} \simeq 0. 
\end{equation}
for any $c \in \CC \setminus \Sigma$. 
\end{proof}

The following result is an analogue   
for $\psi_f^{\merc}( \cdot )$ and 
$\phi_f^{\merc}( \cdot )$ of the 
classical one for $\psi_f( \cdot )$ 
and $\phi_f( \cdot )$ 
(see e.g. \cite[Proposition 4.2.11]{Dim04},  
\cite[Exercise VIII.15]{KS90} 
and \cite[Proposition 2.4]{NT22}). 

\begin{proposition}\label{PDI} 
Let $\nu  : Y \longrightarrow X$ be a proper surjective 
morphism of complex manifolds and 
$f \circ \nu $ the meromorphic function on $Y$ defined by 
\begin{equation}
f \circ \nu  = \frac{P \circ \nu }{Q \circ \nu }. 
\end{equation}
Then we have: 
\begin{enumerate}
\item[\rm{(i)}] 
For $\G \in \Db(Y)$ 
there exists isomorphisms 
\begin{equation}
\psi_{f}^{\merc} (  \rmR \nu_* \G ) \simeq 
 \rmR \nu_* \psi_{f \circ \nu}^{\merc} ( \G ), \qquad 
\phi_{f}^{\merc} (  \rmR \nu_* \G ) \simeq 
 \rmR \nu_* \phi_{f \circ \nu}^{\merc} ( \G ). 
\end{equation}
\item[\rm{(ii)}] 
If moreover $\nu$ induces an isomorphism 
\begin{equation}
Y \setminus \nu^{-1}(P^{-1}(0) \cup Q^{-1}(0)) 
\simto X \setminus (P^{-1}(0) \cup Q^{-1}(0)), 
\end{equation}
then for $\F \in \Db(X)$ 
there exists an isomorphism 
\begin{equation}
\psi_{f}^{\merc} ( \F ) \simeq 
 \rmR \nu_* \psi_{f \circ \nu}^{\merc} ( \nu^{-1} \F ). 
\end{equation}
\item[\rm{(iii)}] 
If moreover $\nu$ induces an isomorphism 
\begin{equation}
Y \setminus \nu^{-1} Q^{-1}(0)
\simto X \setminus Q^{-1}(0), 
\end{equation}
then for $\F \in \Db(X)$ 
there exists an isomorphism 
\begin{equation}
\phi_{f}^{\merc} ( \F ) \simeq 
 \rmR \nu_* \phi_{f \circ \nu}^{\merc} ( \nu^{-1} \F ). 
\end{equation}
\end{enumerate}
\end{proposition}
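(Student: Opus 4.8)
The plan is to reduce parts (ii) and (iii) to part (i), and to prove (i) by combining proper base change with the classical fact that the usual nearby and vanishing cycle functors commute with proper direct images (see e.g. \cite[Proposition 4.2.11]{Dim04}, \cite[Exercise VIII.15]{KS90}, \cite[Proposition 2.4]{NT22}). First I would fix notation: set $U_X := X \setminus Q^{-1}(0)$ and $U_Y := Y \setminus ( Q \circ \nu )^{-1}(0) = \nu^{-1}(U_X)$, and denote by $\nu_U : U_Y \longrightarrow U_X$ the restriction of $\nu$, which is again proper since properness is stable under base change. Let $\tau : X \times \CC \longrightarrow \CC$ and $\tau' : Y \times \CC \longrightarrow \CC$ be the second projections, so that $\tau \circ ( \nu \times \id_{\CC} ) = \tau'$, and recall the locally closed embeddings $i_f : U_X \hookrightarrow X \times \CC$ and $i_{f \circ \nu} : U_Y \hookrightarrow Y \times \CC$. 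A direct check shows that $i_f \circ \nu_U = ( \nu \times \id_{\CC} ) \circ i_{f \circ \nu}$ and that the square these maps form is Cartesian, with $U_Y = U_X \times_{X \times \CC} ( Y \times \CC )$.

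For part (i), let $\G \in \Db(Y)$. Proper base change, applied to $\nu$ along the open inclusion $U_X \hookrightarrow X$, gives $( \rmR \nu_* \G )|_{U_X} \simeq \rmR \nu_{U*} ( \G |_{U_Y} )$. Next, since $\nu_U$ and $\nu \times \id_{\CC}$ are proper while $i_f$ and $i_{f \circ \nu}$ are locally closed embeddings (so that $\rmR i_{f!} = i_{f!}$, $\rmR \nu_{U!} = \rmR \nu_{U*}$ and $\rmR ( \nu \times \id_{\CC} )_! = \rmR ( \nu \times \id_{\CC} )_*$), the identity $i_f \circ \nu_U = ( \nu \times \id_{\CC} ) \circ i_{f \circ \nu}$ together with the functoriality of $\rmR ( \cdot )_!$ yields $i_{f!} \circ \rmR \nu_{U*} \simeq \rmR ( \nu \times \id_{\CC} )_* \circ i_{(f \circ \nu )!}$. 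Finally, since $\psi_{\tau}$ (resp. $\phi_{\tau}$) commutes with proper direct images and $\nu \times \id_{\CC}$ restricts to $\nu$ on the zero fibers $( \tau' )^{-1}(0) = Y$ and $\tau^{-1}(0) = X$, we get $\psi_{\tau} \circ \rmR ( \nu \times \id_{\CC} )_* \simeq \rmR \nu_* \circ \psi_{\tau'}$, and likewise for $\phi$. Composing these three isomorphisms with the definitions of $\psi_f^{\merc}$, $\psi_{f \circ \nu}^{\merc}$ (and of $\phi_f^{\merc}$, $\phi_{f \circ \nu}^{\merc}$) proves (i).

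For part (ii), apply (i) to $\G := \nu^{-1}\F$ to obtain $\psi_f^{\merc}( \rmR \nu_* \nu^{-1}\F ) \simeq \rmR \nu_* \psi_{f \circ \nu}^{\merc}( \nu^{-1}\F )$; it then suffices to show that the adjunction morphism $\F \longrightarrow \rmR \nu_* \nu^{-1}\F$ becomes an isomorphism after applying $\psi_f^{\merc}$. Let $C$ be its cone. By proper base change and the hypothesis that $\nu$ is an isomorphism over $X \setminus ( P^{-1}(0) \cup Q^{-1}(0))$, the morphism $\F \longrightarrow \rmR \nu_* \nu^{-1}\F$ restricts to an isomorphism over that open set, so $C$ vanishes there; hence $C_{X \setminus ( P^{-1}(0) \cup Q^{-1}(0))} = 0$, and by Lemma \ref{lem-1} (ii) we get $\psi_f^{\merc}(C) \simeq \psi_f^{\merc}(0) = 0$. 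Applying the triangulated functor $\psi_f^{\merc}$ to the triangle $\F \longrightarrow \rmR \nu_* \nu^{-1}\F \longrightarrow C \overset{+1}{\longrightarrow}$ then gives $\psi_f^{\merc}( \F ) \simeq \psi_f^{\merc}( \rmR \nu_* \nu^{-1}\F ) \simeq \rmR \nu_* \psi_{f \circ \nu}^{\merc}( \nu^{-1}\F )$. Part (iii) follows the same pattern with $\phi$ in place of $\psi$: under the stronger hypothesis that $\nu$ is already an isomorphism over $X \setminus Q^{-1}(0)$, the cone $C$ vanishes on $X \setminus Q^{-1}(0)$, so $C_{X \setminus Q^{-1}(0)} = 0$ and, by Lemma \ref{lem-2} (ii), $\phi_f^{\merc}(C) \simeq \phi_f^{\merc}(0) = 0$; the triangle argument then concludes.

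The only non-formal ingredient is the commutation of $\psi_{\tau}$ and $\phi_{\tau}$ with proper direct images, which is classical; the remainder is bookkeeping with Cartesian squares and supports. Accordingly, the main point to get right is that bookkeeping: confirming the Cartesian square and the properness of $\nu_U$, handling carefully the locally-closed-embedding versus proper-map dichotomy when moving $i_{f!}$ past $\rmR \nu_{U*}$, and -- above all -- observing that part (ii) succeeds under its weaker hypothesis only because $\psi_f^{\merc}$ annihilates complexes supported on $P^{-1}(0) \cup Q^{-1}(0)$ (Lemma \ref{lem-1} (ii)), a vanishing that $\phi_f^{\merc}$ does not enjoy, which is exactly why part (iii) needs the stronger hypothesis on $\nu$.
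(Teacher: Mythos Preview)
Your proof is correct and follows precisely the strategy the paper implicitly suggests: the paper does not give an explicit proof of this proposition, only referring to the classical analogues in \cite[Proposition 4.2.11]{Dim04}, \cite[Exercise VIII.15]{KS90} and \cite[Proposition 2.4]{NT22}, and your argument is exactly the natural reduction to those classical statements via the Cartesian square relating $i_f$, $i_{f\circ\nu}$, $\nu_U$ and $\nu\times\id_{\CC}$. Your observation that parts (ii) and (iii) require different hypotheses precisely because Lemma~\ref{lem-1}(ii) and Lemma~\ref{lem-2}(ii) kill complexes supported on different loci is the right explanation for the asymmetry in the statement.
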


\begin{lemma}\label{lem-a1} 
Assume that the hypersurfaces $P^{-1}(0), Q^{-1}(0) 
\subset X$ are smooth and 
intersect transversally in a neighborhood $U 
\subset X$ of a point $z_0 \in I(f)=P^{-1}(0) \cap Q^{-1}(0)$. 
Then we have 
\begin{equation}
\phi_f^{\merc}( \CC_X )_{z_0} \simeq 
\psi_f^{\merc}( \CC_X )_{z_0} \simeq 0. 
\end{equation}
\end{lemma}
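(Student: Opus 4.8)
Since $z_0\in I(f)\subset Q^{-1}(0)$, the isomorphism $\phi_f^{\merc}(\F)_{z_0}\simeq\psi_f^{\merc}(\F)_{z_0}$ for points $z_0\in Q^{-1}(0)$ (recorded in the excerpt after \eqref{dist-isom}) applies to $\F=\CC_X$, so it suffices to prove $\psi_f^{\merc}(\CC_X)_{z_0}\simeq 0$. The plan is to compute this stalk by means of the Milnor-fibre description in Lemma \ref{lem-1}~(iv): for every $j\in\ZZ$ we have $H^j\psi_f^{\merc}(\CC_X)_{z_0}\simeq H^j_c(\overline{F_{z_0}}\setminus Q^{-1}(0);\CC)$, so the whole problem is reduced to showing that the compactly supported cohomology of the (locally closed, relatively compact) set $\overline{F_{z_0}}\setminus Q^{-1}(0)$ vanishes.

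Next I would pin down the topology of $\overline{F_{z_0}}$ and of $\overline{F_{z_0}}\cap Q^{-1}(0)$ using the transversality hypothesis. Choose local coordinates on a small ball $B_{\e}(z_0)$ centred at $z_0$ so that $P^{-1}(0)=\{z_1=0\}$ and $Q^{-1}(0)=\{z_2=0\}$; then $I(f)=P^{-1}(0)\cap Q^{-1}(0)=\{z_1=z_2=0\}$ is a linear subspace through $z_0$. For a generic sufficiently small value $t\neq 0$ of $f$ the Milnor fibre is $F_{z_0}=\{P=tQ\}\cap B_{\e}(z_0)\cap\{Q\neq 0\}$; since on $\{P=tQ\}$ one has $Q=0\iff P=Q=0$, this equals $\bigl(\{P=tQ\}\cap B_{\e}(z_0)\bigr)\setminus I(f)$, and (using that $\codim_{\{P=tQ\}}I(f)\geq 1$, a consequence of the coprimality of $P$ and $Q$) its closure in $\overline{B_{\e}(z_0)}$ is $\overline{F_{z_0}}=\{P=tQ\}\cap\overline{B_{\e}(z_0)}$, with $\overline{F_{z_0}}\cap Q^{-1}(0)=I(f)\cap\overline{B_{\e}(z_0)}$. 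Both are compact; moreover, after shrinking $\e$, the local conic structure of analytic germs makes $\{P=tQ\}\cap\overline{B_{\e}(z_0)}$ contractible (it is the cone over its link), while $I(f)\cap\overline{B_{\e}(z_0)}$ is a closed ball, hence contractible. In particular both are connected and non-empty.

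Finally I would run the long exact sequence in compactly supported cohomology attached to the decomposition of the compact space $\overline{F_{z_0}}$ into the open part $U:=\overline{F_{z_0}}\setminus Q^{-1}(0)$ and the closed part $Z:=\overline{F_{z_0}}\cap Q^{-1}(0)=I(f)\cap\overline{B_{\e}(z_0)}$:
\begin{equation*}
\cdots\longrightarrow H^j_c(U;\CC)\longrightarrow H^j_c(\overline{F_{z_0}};\CC)\longrightarrow H^j_c(Z;\CC)\longrightarrow H^{j+1}_c(U;\CC)\longrightarrow\cdots.
\end{equation*}
Since $\overline{F_{z_0}}$ and $Z$ are compact and contractible, $H^{\bullet}_c(\overline{F_{z_0}};\CC)$ and $H^{\bullet}_c(Z;\CC)$ are both concentrated in degree $0$ and equal to $\CC$, and the restriction map $H^0_c(\overline{F_{z_0}};\CC)\to H^0_c(Z;\CC)$ is an isomorphism because both spaces are connected. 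Hence $H^j_c(U;\CC)=0$ for all $j$, i.e. $\psi_f^{\merc}(\CC_X)_{z_0}\simeq 0$, and therefore also $\phi_f^{\merc}(\CC_X)_{z_0}\simeq 0$.

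The only genuinely delicate step is the geometric normalisation $\overline{F_{z_0}}=\{P=tQ\}\cap\overline{B_{\e}(z_0)}$ together with its contractibility: one must check that the generic fibre of $f$ is dense in $\{P=tQ\}\cap\overline{B_{\e}(z_0)}$ and invoke the conic structure theorem to retract this set onto $z_0$; once this is done, the rest is the formal excision computation above. An essentially equivalent alternative, avoiding Lemma \ref{lem-1}~(iv), would be to apply Proposition \ref{PVC} with the one-stratum stratification $X=X$ and to verify that $0\notin C_X$, i.e. that $0\in\CC$ is a typical value of the restriction of $f$ to $\overline{B_{\e}(z_0)}\setminus Q^{-1}(0)$, which again comes down to the conic structure of the relevant analytic germs at $z_0$ (equivalently, to a blow-up of the smooth centre $I(f)$ after which $f$ restricts to a submersion near the part of the exceptional divisor lying over the value $0$).
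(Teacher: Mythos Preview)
Your argument is correct, but it is genuinely different from the paper's. The paper never touches Lemma~\ref{lem-1}~(iv) here: instead it blows up $U$ along the smooth codimension-two centre $I(f)\cap U$, applies the proper direct-image formula Proposition~\ref{PDI}~(iii) to write
\[
\phi_f^{\merc}(\CC_X)_{z_0}\ \simeq\ \rmR\Gamma\bigl(\nu^{-1}(z_0);\ \phi_{(f|_U)\circ\nu}^{\merc}(\CC_Y)\bigr),
\]
and then observes that on the blow-up $f\circ\nu$ is already holomorphic, so by Lemma~\ref{lem-2}~(iii) the stalk of $\phi_{(f|_U)\circ\nu}^{\merc}(\CC_Y)$ at each point of the exceptional fibre reduces to a classical vanishing cycle of a submersion, hence vanishes. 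Your ``alternative'' paragraph at the end is exactly this argument; it is the paper's actual route, not an aside.

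What your main argument buys is that it is completely elementary: once you take $P,Q$ as the first two coordinates (which is legitimate by transversality---and you should say this rather than merely normalising the zero loci, since then $\{P=tQ\}$ is linear and the contractibility of $\{P=tQ\}\cap\overline{B_\e}$ and of $I(f)\cap\overline{B_\e}$ is immediate for \emph{all} $\e$ and $t$, so no $\e$--$t$ ordering issue arises), the excision sequence finishes in two lines. What the paper's approach buys is robustness: the same blow-up/Proposition~\ref{PDI}~(iii) mechanism is the workhorse in the surrounding results and in the later computations via resolutions, so proving Lemma~\ref{lem-a1} this way keeps the toolkit uniform.
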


\begin{proof}
Let $\nu  : Y \longrightarrow U$ be the blow-up of 
$U$ along the complex submanifold 
$I(f) \cap U \subset U$ of codimension two. 
Then $\nu$ induces an isomorphism 
\begin{equation}
Y \setminus \nu^{-1} (Q|_U)^{-1}(0)
\simto U \setminus (Q|_U)^{-1}(0)
\end{equation}
and we can apply Proposition \ref{PDI} (iii) 
to the meromorphic function 
$f|_U:U \longrightarrow \CC$ and 
$\nu  : Y \longrightarrow U$ to obtain an isomorphism 
\begin{equation}
\phi_{f}^{\merc} ( \CC_X )_{z_0}= 
\phi_{f|_U}^{\merc} ( \CC_U )_{z_0}
\simeq 
\rmR \Gamma ( \nu^{-1}(z_0); 
\phi_{(f|_U) \circ \nu}^{\merc} ( \CC_Y )). 
\end{equation}
By our construction of the functor $\phi_{(f|_U) 
\circ \nu}^{\merc}( \cdot )$, the support of 
$\phi_{(f|_U) \circ \nu}^{\merc} ( \CC_Y )$ is 
contained in the proper transform of 
$P^{-1}(0) \cap U$ in $Y$.  
Moreover by Lemma \ref{lem-2} (iii), 
we can easily show that the stalk of 
$\phi_{(f|_U) \circ \nu}^{\merc} ( \CC_Y )$ 
at each point of $\nu^{-1}(z_0)$ is 
isomorphic to zero 
(see also e.g. \cite[Lemma 2.9]{Take23}). 
\end{proof}

\begin{corollary}\label{cor-a1} 
For a point $z_0 \in I(f)=P^{-1}(0) \cap Q^{-1}(0)$ 
assume that there exists its neighborhood $U 
\subset X$ such that the hypersurfaces $P^{-1}(0), Q^{-1}(0) 
\subset X$ are smooth and 
intersect transversally in $U \setminus \{ z_0 \}$. 
Then we have the concentration 
\begin{equation}
H^j \phi_{f}^{\merc} ( \CC_X )_{z_0} \simeq 
H^j \psi_{f}^{\merc} ( \CC_X )_{z_0} \simeq 0 
\qquad (j \not= {\rm dim} X-1). 
\end{equation}
\end{corollary}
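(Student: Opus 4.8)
The plan is to reduce Corollary \ref{cor-a1} to Lemma \ref{lem-a1} by a standard local argument using a small sphere around $z_0$. First I would fix a neighborhood $U \subset X$ of $z_0$ as in the hypothesis, choose a local chart so that $U$ is identified with an open subset of $\CC^n$ ($n = \dim X$) with $z_0 = 0$, and pick $\e_0 > 0$ small enough that the closed ball $\overline{B_{\e}(z_0)}$ is contained in $U$ for all $0 < \e \le \e_0$ and moreover (shrinking $\e_0$ using Lemma \ref{lem-1} (iv) together with the local conic structure of analytic sets) so that $\e_0$ is a Milnor radius for $f$ at $z_0$. By Lemma \ref{lem-1} (iv) the stalk $H^j \psi_f^{\merc}( \CC_X )_{z_0}$ is computed by $H^j_c ( \overline{F_{z_0}} \setminus Q^{-1}(0) ; \CC_X )$, but I would rather argue directly with the local structure: on $U \setminus \{ z_0 \}$ the hypersurfaces $P^{-1}(0)$ and $Q^{-1}(0)$ are smooth and transverse, so Lemma \ref{lem-a1} applies at every point $z \in (U \cap P^{-1}(0)) \setminus \{ z_0 \}$, giving
\begin{equation}
\phi_f^{\merc}( \CC_X )_{z} \simeq \psi_f^{\merc}( \CC_X )_{z} \simeq 0 \qquad (z \in (U \cap P^{-1}(0)) \setminus \{ z_0 \}).
\end{equation}

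Next, since by Lemma \ref{lem-2} (i) the complex $\phi_f^{\merc}( \CC_X )$ is supported on $P^{-1}(0)$, and since we have just seen its stalks vanish on $(U \cap P^{-1}(0)) \setminus \{ z_0 \}$, the restriction of $\phi_f^{\merc}( \CC_X )$ to $U$ is supported at the single point $z_0$. The same holds for $\psi_f^{\merc}( \CC_X )|_U$ by the isomorphism $\phi_f^{\merc}( \CC_X )_z \simeq \psi_f^{\merc}( \CC_X )_z$ valid on $Q^{-1}(0)$ (established just before Lemma \ref{lem-2}), together with the support statement of Lemma \ref{lem-1} (i) and the fact that $P^{-1}(0) \cap Q^{-1}(0) \cap U = \{ z_0 \}$ near $z_0$; away from $Q^{-1}(0)$ on $P^{-1}(0) \cap U$ the vanishing already follows from Lemma \ref{lem-a1}. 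Thus it remains only to verify the cohomological concentration of the stalk at $z_0$ itself.

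For the concentration at $z_0$ I would use a resolution argument, as in the proof of Lemma \ref{lem-a1}: take a projective modification $\nu : Y \longrightarrow U$, isomorphic over $U \setminus Q^{-1}(0)$, such that $\nu^{-1}( (Q|_U)^{-1}(0) )$ is a normal crossing divisor and the proper transform of $(P|_U)^{-1}(0)$ is smooth and meets this divisor normally. By Proposition \ref{PDI} (iii) we get
\begin{equation}
H^j \phi_{f}^{\merc} ( \CC_X )_{z_0} \simeq H^j \rmR \Gamma ( \nu^{-1}(z_0); \phi_{(f|_U) \circ \nu}^{\merc} ( \CC_Y ) ),
\end{equation}
and on $Y$ the meromorphic function $(f|_U)\circ\nu$ has indeterminacy locus contained in a normal crossing configuration, so by Lemma \ref{lem-2} (iii) and the classical computation of vanishing cycles of the constant sheaf along normal crossings (as recalled in the proof of Lemma \ref{lem-a1}, cf.\ \cite[Lemma 2.9]{Take23} and \cite{MT11}) the complex $\phi_{(f|_U)\circ\nu}^{\merc}( \CC_Y )$ is concentrated in degree $\dim Y - 1 = \dim X - 1$ and is a local system on each stratum of a suitable stratification of $\nu^{-1}( (P|_U)^{-1}(0) )$; combined with the fact that $\nu^{-1}(z_0)$ is a projective variety whose relevant cohomology with coefficients in such a concentrated-in-one-degree constructible complex, restricted over the point, again lies in degree $\dim X - 1$ after accounting for the hypercohomology spectral sequence, one obtains the claimed concentration. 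The main obstacle is precisely this last point: controlling $\rmR\Gamma(\nu^{-1}(z_0); -)$ so that the hypercohomology spectral sequence does not spread the single degree $\dim X - 1$ of the coefficient complex into other degrees — this requires knowing that the relevant part of the coefficient sheaf is not merely concentrated in one degree but in fact vanishes on all of $\nu^{-1}(z_0)$ except possibly a subset over which the spectral sequence degenerates, which one gets by noting that the support of $\phi_{(f|_U)\circ\nu}^{\merc}(\CC_Y)$ meets $\nu^{-1}(z_0)$ in a divisor-like locus on which the one-dimensional Milnor-fiber-type computation applies fiberwise, so the degeneration is for degree reasons exactly as in the transverse case of Lemma \ref{lem-a1}.
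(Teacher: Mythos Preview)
Your reduction to the single-point support is correct and matches the paper: by Lemma~\ref{lem-a1} applied at every point of $(U\cap P^{-1}(0))\setminus\{z_0\}$, together with Lemma~\ref{lem-2}~(i), the restriction $\phi_f^{\merc}(\CC_X)|_U$ is supported at $\{z_0\}$. The isomorphism $\phi_f^{\merc}(\CC_X)_{z_0}\simeq\psi_f^{\merc}(\CC_X)_{z_0}$ at this point (since $z_0\in Q^{-1}(0)$) then reduces everything to the $\phi$ statement.

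The gap is in your final step. Once you know the support is a single point, the concentration follows in one line from perversity, and the paper does exactly this: by Theorem~\ref{the-1}~(ii) (and the discussion following it showing that $\phi_f^{\merc}(\cdot)[-1]$ preserves perversity), the object $\phi_f^{\merc}(\CC_X[\dim X])[-1]$ is a perverse sheaf on $X$. A perverse sheaf supported at a single point is concentrated in cohomological degree $0$; unwinding the shifts gives $H^j\phi_f^{\merc}(\CC_X)_{z_0}=0$ for $j\neq\dim X-1$. Your resolution-and-spectral-sequence argument is not only unnecessary but, as you yourself flag, not actually completed: the assertion that $\phi_{(f|_U)\circ\nu}^{\merc}(\CC_Y)$ is concentrated in a single degree is not what Lemma~\ref{lem-a1} or the cited references give you (Lemma~\ref{lem-a1} gives \emph{vanishing} at transverse points, not a one-degree concentration elsewhere), and even granting that, hypercohomology over the positive-dimensional fiber $\nu^{-1}(z_0)$ would in general spread degrees. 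The perversity argument bypasses all of this.
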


\begin{proof}
By our assumption and Lemma \ref{lem-a1}, the support of 
\begin{equation}
\phi_{f}^{\merc} ( \CC_X )|_U \simeq 
\phi_{f|_U}^{\merc} ( \CC_U )
\end{equation} 
is contained in the one point set $\{ z_0 \} 
\subset U$. Then the assertion 
immediately follows from the 
perversity of 
$\phi_{f}^{\merc} ( \CC_X [{\rm dim} X] )[-1]$. 
\end{proof}

By the proof of Corollary \ref{cor-a1}, we obtain also the 
following result. 

\begin{proposition}\label{prop-a1} 
Let $\CK \in \Dbc(X)$ be a perverse sheaf on $X$ and 
for a point $z_0 \in I(f)=P^{-1}(0) \cap Q^{-1}(0)$ 
assume that there exists its neighborhood $U 
\subset X$ such that the support of 
\begin{equation}
\phi_{f}^{\merc} ( \CK )|_U \simeq 
\phi_{f|_U}^{\merc} ( \CK |_U )
\end{equation} 
is contained in the one point set $\{ z_0 \} 
\subset U$. 
Then we have the concentration 
\begin{equation}
H^j \phi_{f}^{\merc} ( \CK )_{z_0} \simeq 
H^j \psi_{f}^{\merc} ( \CK )_{z_0} \simeq 0 
\qquad (j \not= -1). 
\end{equation}
\end{proposition}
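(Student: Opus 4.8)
The plan is to deduce Proposition \ref{prop-a1} exactly as Corollary \ref{cor-a1} was deduced from Lemma \ref{lem-a1}, replacing the constant sheaf $\CC_X$ by the perverse sheaf $\CK$ and using the hypothesis on the support of $\phi_f^{\merc}(\CK)|_U$ in place of the explicit transversality computation. First I would invoke Theorem \ref{the-1} (ii) (together with the analogous perversity statement for $\phi_f^{\merc}$ established right after it, via the distinguished triangle \eqref{dist} and the isomorphism \eqref{dist-isom}): since $\CK$ is perverse on $X$, the complex $\phi_f^{\merc}(\CK)[-1]$ is perverse on $X$ as well, and likewise $\psi_f^{\merc}(\CK)[-1]$ is perverse. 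By Lemma \ref{lem-2} (i) the support of $\phi_f^{\merc}(\CK)$ is contained in $P^{-1}(0)$, and by hypothesis its restriction to the neighborhood $U$ of $z_0$ is supported on the single point $\{z_0\}$.

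Next I would use the standard fact that a perverse sheaf supported on a point (here $\{z_0\}$, inside $U$) is concentrated in degree $0$; equivalently, a complex $\G$ with $\G[-1]$ perverse and $\supp \G \subset \{z_0\}$ satisfies $H^j\G_{z_0} \simeq 0$ for $j \neq 1$ — but one must be careful about the shift convention. In this paper $\CC_X[\dim X]$ is perverse, so a perverse sheaf supported at a point $z_0$ of the $N$-dimensional manifold $X$ is a skyscraper $V_{\{z_0\}}$ placed so that it is concentrated in degree $0$ as a complex on $X$; hence $\phi_f^{\merc}(\CK)|_U \simeq \phi_f^{\merc}(\CK)[-1][1]|_U$ has its stalk cohomology at $z_0$ concentrated in the single degree for which $\phi_f^{\merc}(\CK)[-1]$ has its degree-$0$ part, i.e. in degree $-1$. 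This gives $H^j\phi_f^{\merc}(\CK)_{z_0} \simeq 0$ for $j \neq -1$. For the nearby cycle part, I would note that $H^j\psi_f^{\merc}(\CK)_{z_0} \simeq H^j\phi_f^{\merc}(\CK)_{z_0}$ for $z_0 \in Q^{-1}(0) \supset I(f)$ by the isomorphism $\phi_f^{\merc}(\F)_{z_0}\simeq\psi_f^{\merc}(\F)_{z_0}$ ($z_0\in Q^{-1}(0)$) recorded just after \eqref{dist-isom}, so the concentration statement for $\psi_f^{\merc}(\CK)$ is immediate once it is known for $\phi_f^{\merc}(\CK)$.

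The only genuine point requiring care is the first one: verifying that $\phi_f^{\merc}(\CK)[-1]$ is really perverse for a perverse $\CK$, and pinning down the resulting degree. The perversity of $\phi_f^{\merc}(\cdot)[-1]$ was already asserted in the discussion following Theorem \ref{the-1} (where it is observed that $\phi_f^{\merc}(\cdot)$ "preserves the constructibility and the perversity (up to some shift)"), resting on the identification $\G = i_{f!}(\F|_{X\setminus Q^{-1}(0)})\simeq j_{f_!}j_f^{-1}(\CH)$ being perverse up to shift together with the $t$-exactness of $\psi_\tau$ and the triangle \eqref{dist}; so I would simply cite that. The rest is the elementary statement that a perverse sheaf with zero-dimensional support is concentrated in a single degree, which I expect to be the main (though still routine) obstacle only insofar as the shift bookkeeping must be done consistently with the convention $\CC_X[\dim X]$ perverse. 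Once these are in place the proof is a two-line assembly, and I would present it as such.
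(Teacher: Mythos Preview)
Your proposal is correct and follows exactly the approach the paper intends: the paper does not give a separate proof of Proposition~\ref{prop-a1} but simply says ``By the proof of Corollary~\ref{cor-a1}, we obtain also the following result,'' meaning precisely the argument you spell out---perversity of $\phi_f^{\merc}(\CK)[-1]$ from Theorem~\ref{the-1}(ii) and the discussion after it, point support by hypothesis, hence concentration in degree $-1$, and the identification $\phi_f^{\merc}(\CK)_{z_0}\simeq\psi_f^{\merc}(\CK)_{z_0}$ for $z_0\in Q^{-1}(0)$ to handle the nearby cycle stalk. Your shift bookkeeping is consistent with the paper's convention.
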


In the situation of Proposition \ref{prop-a1}, 
we can calculate the dimension of the only 
non-trivial cohomology group 
\begin{equation}
H^{-1} \phi_{f}^{\merc} ( \CK )_{z_0} \simeq 
H^{-1} \psi_{f}^{\merc} ( \CK )_{z_0} 
\end{equation}
as follows. 
For a constructible sheaf $\F \in \Dbc (X)$ we set 
\begin{equation}
\chi ( \F, z_0):= \dsum_{j \in \ZZ} (-1)^j 
{\rm dim } H^j \phi_{f}^{\merc}( \F)_{z_0}
\qquad \in \ZZ 
\end{equation}
so that for any distinguished triangle 
$\F^{\prime} \longrightarrow \F 
\longrightarrow \F^{\prime \prime}
\overset{+1}{\longrightarrow}$ in $\Dbc (X)$ 
we have $\chi ( \F, z_0)= \chi ( \F^{\prime}, z_0)+ 
\chi ( \F^{\prime \prime}, z_0)$. 
Let us set $D:=P^{-1}(0) \cup Q^{-1}(0) \subset X$. 
Then by Lemma \ref{lem-1} (ii) we may assume that 
$\CK_{X \setminus D} \simeq \CK$. By  
decomposing the the support of 
the perverse sheaf $\CK_{X \setminus D} \simeq \CK 
\in \Dbc (X)$ 
with respect to a stratification 
of $X$ adapted to $\CK$ and the divisor $D \subset X$ 
in $X$, it suffices to 
calculate $\chi ( \F, z_0) \in \ZZ$ for some 
$\F \in \Dbc (X)$ such that for a 
local system $L$ on a stratum $S \subset X$ 
contained in 
$X \setminus D$ and the inclusion 
map $j:S \hookrightarrow X$ we have 
$\F=j_! L$. On the other hand, as in 
the proof of \cite[Theorem 3.6]{MT13}, we can 
construct a proper morphism 
$\nu_1 : \widetilde{X}_1 \longrightarrow X$ 
of a complex manifold $\widetilde{X}_1$ 
which induces an isomorphism over the open 
subset $X \setminus D \subset X$ 
such that the divisor $D^{\prime}:= 
\nu_1^{-1}(D)= (P \circ \nu_1 )^{-1}(0) \cup 
(Q \circ \nu_1 )^{-1}(0)$ in $\widetilde{X}_1$ is 
normal crossing and the rational function 
$f \circ \nu_1 := (P \circ \nu_1 )/(Q \circ \nu_1 )$ 
on $\widetilde{X}_1$ has no point of indeterminacy 
on the whole $\widetilde{X}_1$. Namely 
the pole and zero divisors of 
the rational function 
$f \circ \nu_1$ are disjoint in $D^{\prime}$. 
Let $g: \widetilde{X}_1 \longrightarrow \PP$ 
be the holomorphic map defined by $f \circ \nu_1$. 
In this situation, there exists a (not necessarily closed) 
embedding $i_S: S \hookrightarrow \widetilde{X}_1$ 
such that $\nu_1 \circ i_S=j$. Then we have 
$\nu_1^{-1} \F = \nu_1^{-1}  j_! L \simeq 
i_{S !}L$ and hence by Proposition \ref{PDI} (ii) 
and Lemma \ref{lem-1} (iii) we obtain an isomorphism 
\begin{equation}
\psi_{f}^{\merc} ( \F)_{z_0} \simeq 
\rmR \Gamma ( \nu_1^{-1}(z_0) \cap g^{-1}(0) 
; \psi_g( i_{S !}L )). 
\end{equation}
We set $S_1:=i_S(S) \subset \widetilde{X}_1$. 
Denote by $\overline{S_1}$ its closure in $\widetilde{X}_1$ 
and let $i_{\overline{S_1}}: \overline{S_1}
 \hookrightarrow \widetilde{X}_1$ 
be the inclusion map. Then there exists a proper morphism 
$\nu_2 : T \longrightarrow \overline{S_1}$ 
of a complex manifold $T$ 
which induces an isomorphism over the open 
subset $S_1$ of $\overline{S_1}$ 
such that $E:= \nu_2^{-1}( \overline{S_1} \setminus S_1 )
\subset T$ is a normal crossing divisor in $T$. 
Set $T^{\circ} := 
T \setminus E \simeq S_1 \simeq S$ and let $\iota : T^{\circ} \hookrightarrow T$ 
be the inclusion map. 
Then for the proper morphism $\nu := \nu_1 \circ i_{\overline{S_1}} 
\circ \nu_2: T  \longrightarrow X$ 
and $\widetilde{g} :=  g \circ i_{\overline{S_1}} 
\circ \nu_2 = (g|_{\overline{S_1}}) \circ \nu_2 
: T  \longrightarrow \PP$ we 
obtain an isomorphism 
\begin{equation}
\psi_{f}^{\merc}( \F)_{z_0} \simeq 
\rmR \Gamma ( \nu^{-1}(z_0) \cap  \widetilde{g}^{-1}(0); 
\psi_{\widetilde{g}}
( \iota_! L )).
\end{equation}
Since the divisor $E \subset T$ is 
normal crossing, one can calculate 
\begin{equation}
\dsum_{j \in \ZZ} (-1)^j 
{\rm dim } H^j ( \nu^{-1}(z_0 ) \cap \widetilde{g}^{-1}( 0 ); 
\psi_{\widetilde{g}}
( \iota_! L )) 
\quad \in \ZZ 
\end{equation}
by the results in \cite[Section 5]{MT11} 
(see also e.g. \cite[Lemma 2.18]{Take23}).

\begin{example}\label{exe-2} 
As in Example \ref{exe-1} consider the case where 
$X$ is the complex plane $\CC^2$ 
endowed with the standard coordinate $z=(z_1,z_2)=(x,y)$ and 
for two non-zero complex numbers $a,b \in \CC^*= \CC 
\setminus \{ 0 \}$ set $P(x,y)=ax^2+bxy+y^3$, $Q(x,y)=x$ and 
\begin{equation}
f(x,y)= \frac{P(x,y)}{Q(x,y)} = 
\frac{ax^2+bxy+y^3}{x} \qquad 
((x,y) \in X \setminus Q^{-1}(0)). 
\end{equation}
Then the complex curve $P^{-1}(0) \subset X$ (resp. $Q^{-1}(0) \subset X$) 
has an isolated singular point (resp. is smooth) 
at the origin $0=(0,0) \in X= \CC^2$. Namely the rational function 
$f= \frac{P}{Q}: X \setminus Q^{-1}(0) 
\longrightarrow \CC$ satisfies the conditions in Example \ref{exe-1}. 
Since moreover in this case $P^{-1}(0) \subset X$ is 
Newton non-degenerate at the origin (see \cite[Definition 3.5]{Take23} 
for the definition), by \cite[Theorem 8.9 (i)]{Take23} 
the number $N_1$ of the Jordan blocks for 
the eigenvalue $1$ in its first Milnor monodromy is 
equal to $1$. As we explained in Example \ref{exe-1}, 
we can show it also by Varchenko's formula for 
monodromy zeta functions in \cite{Var76}. 
Hence in this case, by the arguments in 
Example \ref{exe-1} the boundary $\partial \overline{F_{0}}$ 
of the closure $\overline{F_{0}}$ of 
the Milnor fiber $F_0$ of $f$ at the origin $0=(0,0) \in I(f)$ 
is homeomorphic to the disjoint union 
$S^1 \sqcup S^1$ of two copies of $S^1$ and 
there exist isomorphisms 
\begin{equation}
H^j \psi_f^{\merc}( \CC_X )_{0}
\simeq 
H^j \phi_f^{\merc}( \CC_X )_{0} 
\simeq 
\begin{cases}
\CC^{2 g(M)+1} & (j=1) \\ 
\\
0 & (\mbox{otherwise}),  
\end{cases}
\end{equation}
where $g(M)$ is the genus of the 
compact oriented surface $M$ obtained by 
attaching  two ($2$-dimensional) disks to 
$\overline{F_{0}}$ along its boundary 
$\partial \overline{F_{0}}$. From now, we shall 
show that $g(M)$ is equal to $0$. For this purpose, we set 
$D_1:=P^{-1}(0)$, $D_2:=Q^{-1}(0)$ and 
$D:=D_1 \cup D_2$ and construct 
a proper morphism $\nu :Y \longrightarrow X$ 
of a $2$-dimensional complex manifold $Y$ 
which induces an isomorphism over the open 
subset $X \setminus D \subset X$ 
such that the divisor $\nu^{-1}(D)$ in $Y$ is 
normal crossing as follows. First, let $\nu_1 :X_1 \longrightarrow X$ 
be the blow-up of $X= \CC^2$ along the origin 
$\{ 0 \} \subset X= \CC^2$ and $E_1:= \nu_1^{-1}(0) 
( \simeq \PP^1) \subset X_1$ its exceptional divisor. 
We denote the proper transforms of $D_1$ and $D_2$ 
in $X_1$ by $\widetilde{D_1} \subset X_1$ and 
$\widetilde{D_2} \subset X_1$ respectively. 
Then we can easily see that the set 
$E_1 \cap \widetilde{D_1}$ consists of 
two points and one of them is the unique point 
in the one point set $E_1 \cap \widetilde{D_2}$ 
and the other one is in $E_1 \setminus \widetilde{D_2}$. 
Let $\nu_2 :X_2 \longrightarrow X_1$ 
be the blow-up of $X_1$ along the point 
$E_1 \cap \widetilde{D_2} \subset X_1$ and 
$E_2 ( \simeq \PP^1) \subset X_2$ its exceptional divisor. 
By abuse of notations, we denote the proper transform 
of $\widetilde{D_2} \subset X_1$ 
in $X_2$ also by $\widetilde{D_2} \subset X_2$. 
Then the set $E_2 \cap \widetilde{D_2}$ consists of 
one point and the meromorphic function 
$f \circ \nu_1 \circ \nu_2$ on $X_2$ has a zero 
(resp. pole) of order $1$ along the divisor 
$E_2 \subset X_2$ (resp. $\widetilde{D_2} \subset X_2$). 
This implies that $f \circ \nu_1 \circ \nu_2$ still has 
indeterminacy at the point $E_2 \cap \widetilde{D_2}$. 
In order to eliminate it, let us consider the 
blow-up $\nu_3 :X_3 \longrightarrow X_2$ 
of $X_2$ along the point $E_2 \cap \widetilde{D_2} 
\subset X_2$. Set $Y:=X_3$ and $\nu:= 
\nu_1 \circ \nu_2 \circ \nu_3: Y \longrightarrow X$ and 
let $E_3 ( \simeq \PP^1 ) \subset Y$ be the exceptional divisor of 
the last bolw-up $\nu_3 :Y=X_3 \longrightarrow X_2$. 
Again by abuse of notations, we denote the proper 
transforms of $D_1$, $D_2$, $E_1$ and $E_2$ in $Y=X_3$ by 
$\widetilde{D_1}$, $\widetilde{D_2}$, $\widetilde{E_1}$ and 
$\widetilde{E_2}$ respectively. 
Then the meromorphic function $f \circ \nu$ on $Y$ 
has a zero (resp. pole) of order $1$ along the divisors 
$\widetilde{D_1}, \widetilde{E_1}, \widetilde{E_2} 
\subset Y$ (resp. along the divisor $\widetilde{D_2} 
\subset Y$). Moreover $f \circ \nu$ has neither zero 
nor pole along the divisor $E_3 \subset Y$ and we have 
$( \widetilde{D_1} \cup \widetilde{E_1} \cup \widetilde{E_2}) 
\cap \widetilde{D_2} = \emptyset$. This implies that 
the meromorphic function $f \circ \nu$ has no point 
of indeterminacy on the whole $Y$. Moreover 
the divisor 
\begin{equation}
D^{\prime}:= \nu^{-1}(D) = 
( \widetilde{D_1} \cup \widetilde{E_1} \cup \widetilde{E_2}) 
\cup E_3 \cup \widetilde{D_2} 
\qquad \subset Y 
\end{equation}
in $Y$ is normal crossing. Note that among the smooth divisors 
$\widetilde{D_1}, \widetilde{E_1} \simeq \PP^1, \widetilde{E_2} \simeq \PP^1, 
E_3 \simeq \PP^1$ in $Y$ we have only the following four 
intersection points: 
\begin{equation}
A:= \widetilde{D_1} \cap \widetilde{E_1}, \ 
B:= \widetilde{D_1} \cap \widetilde{E_2}, \ 
A^{\prime}:= \widetilde{E_1} \cap \widetilde{E_2}, \ 
B^{\prime}:= \widetilde{E_2} \cap E_3
\end{equation}
and we set 
\begin{equation}\label{newdiv}
\widetilde{E_1^{\circ}} := \widetilde{E_1} \setminus \{ A, A^{\prime} \}, 
\qquad 
\widetilde{E_2^{\circ}} := \widetilde{E_2} \setminus \{ B, 
A^{\prime}, B^{\prime} \}. 
\end{equation}
Let $g: Y \longrightarrow \PP^1$ be the holomorphic map defined by 
the meromorphic function $f \circ \nu$. Then we have 
\begin{equation}
g^{-1}(0)= \widetilde{D_1} \cup \widetilde{E_1} \cup \widetilde{E_2}, 
\qquad 
\nu^{-1}(0)= \widetilde{E_1} \cup \widetilde{E_2} \cup E_3
\end{equation}
and hence by Lemma \ref{lem-1} (i), (ii), (iii) 
and Proposition \ref{PDI} (i) we obtain isomorphisms  
\begin{align}
\psi_f^{\merc}( \CC_X )_{0} 
& \simeq \psi_f^{\merc}( \CC_{X \setminus D} )_{0} 
\simeq \psi_f^{\merc}( \rmR \nu_* \CC_{Y \setminus D^{\prime}} )_{0} 
\\
& \simeq \rmR \Gamma ( \nu^{-1}(0); 
\psi_{f \circ \nu}^{\merc}( \CC_{Y \setminus D^{\prime}} ) )
\\
& \simeq \rmR \Gamma ( \nu^{-1}(0); 
\psi_{g}( \CC_{Y \setminus D^{\prime}} ) )
\\
& \simeq \rmR \Gamma ( \nu^{-1}(0) \cap g^{-1}(0) ; 
\psi_{g}( \CC_{Y \setminus D^{\prime}} ) )
\\
& \simeq \rmR \Gamma ( \widetilde{E_1} \cup \widetilde{E_2} 
 ; \psi_{g}( \CC_{Y \setminus D^{\prime}} ) ). 
\end{align}
Since we know that the complex $\psi_f^{\merc}( \CC_X )_{0}$ is 
concentrated in degree $1$, we obtain also 
\begin{equation}
{\rm dim} H^1 \psi_f^{\merc}( \CC_X )_{0} = - 
\dsum_{j \in \ZZ} (-1)^j {\rm dim} H^j
( \widetilde{E_1} \cup \widetilde{E_2} 
 ; \psi_{g}( \CC_{Y \setminus D^{\prime}} ) ). 
\end{equation}
Now let $\chi ( \psi_{g}( \CC_{Y \setminus D^{\prime}} ) )$ 
be the $\ZZ$-valued constructible function on 
$g^{-1}(0)$ obtained by taking the local Euler-Poincar{\'e} 
index of $\psi_{g}( \CC_{Y \setminus D^{\prime}} )$ at 
each point of $g^{-1}(0)$. Then we have an equality 
\begin{equation}
\dsum_{j \in \ZZ} (-1)^j {\rm dim} H^j
( \widetilde{E_1} \cup \widetilde{E_2} 
 ; \psi_{g}( \CC_{Y \setminus D^{\prime}} ) )
= \int_{\widetilde{E_1} \cup \widetilde{E_2}} 
\chi ( \psi_{g}( \CC_{Y \setminus D^{\prime}} ) ), 
\end{equation}
where $\int_{\widetilde{E_1} \cup \widetilde{E_2}}( \cdot )$ stands for 
the topological (Euler) integral over the 
analytic subset $\widetilde{E_1} \cup \widetilde{E_2} \subset 
g^{-1}(0)$. 
Moreover by the results in \cite[Section 5]{MT11} 
we can easily show that 
\begin{equation}
\chi ( \psi_{g}( \CC_{Y \setminus D^{\prime}} ) )
|_{\widetilde{E_1} \cup \widetilde{E_2}} = 
{\bf 1}_{\widetilde{E_1^{\circ}}} + {\bf 1}_{\widetilde{E_2^{\circ}}}. 
\end{equation}
Together with \eqref{newdiv} we thus 
obtain the desired equality 
\begin{equation}
{\rm dim} H^1 \psi_f^{\merc}( \CC_X )_{0} = 
- \chi ( \widetilde{E_1^{\circ}} ) - \chi ( \widetilde{E_2^{\circ}} )
=1. 
\end{equation}
\end{example}

\section{Singularities at Infinity of 
Meromorphic Functions}\label{sec:s3}

In this section, we study the singularities at 
infinity of rational and 
meromorphic functions on complex 
affine subvarieties of $X= \CC^N_z$. 
Although we treat only rational functions here, 
as is clear from the proofs our results 
hold true also for (possibly multi-valued) 
meromorphic functions with moderate growth 
at infinity. We leave their precise formulations 
to the readers. First, let us consider 
a complex submanifold $S \subset X= \CC^N_z$ of 
$X= \CC^N_z$. Denote by $X_{\RR}$ (resp. $S_{\RR}$) 
the underlying real analytic manifold of $X$ 
(resp. $S$). Then for any point $z \in S_{\RR}$ 
the tangent space $T_zS_{\RR}$ is naturally 
identified with a linear subspace of 
$T_zX_{\RR} \simeq \CC^N$. In fact, 
we can easily see that it has also 
a structure of $\CC$-linear subspace of 
dimension $\dim S$ i.e. $T_zS_{\RR} \simeq \CC^{\dim S}$.
For $u=(u_1, \ldots, u_N), v=(v_1, \ldots, v_N) \in \CC^N$ we set 
\begin{equation}
\langle u, v \rangle := \sum_{i=1}^N u_iv_i \qquad \in \CC. 
\end{equation}
Let 
\begin{equation}
( \cdot, \cdot ): \CC^N \times \CC^N \longrightarrow \CC 
\qquad ((u,v) \longmapsto \langle u, \overline{v} \rangle )
\end{equation}
be the standard Hermitian inner product of $\CC^N$ and 
for $z \in \CC^N$ define its norm $|| z || \geq 0$ by 
$|| z ||= \sqrt{(z,z)}$. For a point $z \in \CC^N$ 
set $V:=T_zX_{\RR} \simeq \CC^N$ and denote by 
$V^*$ its dual space ${\rm Hom}_{\RR}(V, \RR ) \simeq 
T^*_zX_{\RR}$. Then by the perfect pairing 
${\rm Re} ( \cdot, \cdot ): \CC^N \times \CC^N 
\longrightarrow \RR$ we obtain an isomorphism 
\begin{equation}
\phi: V=T_zX_{\RR} \simto V^* =T^*_zX_{\RR}
\qquad (v \longmapsto \{ u \mapsto {\rm Re} (u,v) \} ). 
\end{equation}
Define a real-valued function $\distsq : X_{\RR} 
\longrightarrow \RR$ by 
\begin{equation}
\distsq (z):= \frac{1}{2} || z ||^2=  \frac{1}{2} (z,z). 
\end{equation}
Then for any point $z \in  X_{\RR}$, the 
cotangent vector $d \distsq (z) \in T^*_zX_{\RR}$ 
corresponds to the point $z \in T_zX_{\RR} \simeq \CC^N$ 
itself via the isomorphism 
$\phi: V=T_zX_{\RR} \simto V^* =T^*_zX_{\RR}$. 
Fix a point $z \in S_{\RR} \subset X_{\RR}$ and 
set $W:=T_zS_{\RR} \simeq \CC^{\dim S} \subset 
V:=T_zX_{\RR} \simeq \CC^N$. Denote by $V^*$ 
(resp. $W^*$) the dual space 
${\rm Hom}_{\RR}(V, \RR ) \simeq 
T^*_zX_{\RR}$ (resp. ${\rm Hom}_{\RR}(W, \RR ) \simeq 
T^*_zS_{\RR}$).  Then by the perfect pairing 
${\rm Re} ( \cdot, \cdot ): W \times W 
\longrightarrow \RR$ we obtain also an isomorphism 
$\psi: W=T_zS_{\RR} \simto W^* =T^*_zS_{\RR}$. 

\begin{lemma}\label{lem-3} 
Let $\Psi_z: V^*= T^*_zX_{\RR} \longrightarrow 
W^*= T^*_zS_{\RR}$ be the ($\RR$-linear) morphism 
induced by the inclusion map $S_{\RR} \hookrightarrow 
X_{\RR}$. Then there exists a $\CC$-linear morphism 
$\Phi_z: V= T_zX_{\RR} \longrightarrow 
W= T_zS_{\RR}$ such that the diagram 
\begin{equation}\label{comm-dia}
\begin{CD}
V   @>{\phi}>{\sim}> V^*
\\
@V{\Phi_z}VV   @VV{\Psi_z}V
\\
W   @>{\psi}>{\sim}>  W^* 
\end{CD}
\end{equation}
communtes. 
\end{lemma}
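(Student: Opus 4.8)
The claim is essentially a statement of linear algebra, applied pointwise at the fixed $z$. We have two real vector spaces $V = T_zX_\RR$ and $W = T_zS_\RR$, each carrying the $\CC$-structure described before the lemma (for $W$, the key earlier observation is that $T_zS_\RR$ is a $\CC$-linear subspace of $T_zX_\RR \simeq \CC^N$ of complex dimension $\dim S$). On both we have the real parts of the standard Hermitian inner products, hence the $\RR$-linear isomorphisms $\phi\colon V \simto V^*$ and $\psi\colon W \simto W^*$. The map $\Psi_z\colon V^* \to W^*$ is the dual (transpose) of the inclusion $\iota\colon W \hookrightarrow V$, i.e. restriction of $\RR$-linear functionals. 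The plan is simply to \emph{define} $\Phi_z := \psi^{-1}\circ\Psi_z\circ\phi$ and then check that this composite is $\CC$-linear; the commutativity of \eqref{comm-dia} is then automatic by construction.

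First I would unwind what $\Phi_z$ is concretely. Given $v\in V$, $\phi(v)$ is the functional $u\mapsto \mathrm{Re}(u,v)$ on $V$; restricting to $W$ gives $w\mapsto \mathrm{Re}(w,v) = \mathrm{Re}(w, \mathrm{pr}_W v)$ where $\mathrm{pr}_W\colon V\to W$ is the \emph{Hermitian orthogonal projection} onto the $\CC$-subspace $W$ (using that for $w\in W$ the Hermitian inner product $(w,v)$ equals $(w,\mathrm{pr}_W v)$, since $v - \mathrm{pr}_W v$ is Hermitian-orthogonal to $W$). Unwinding $\psi^{-1}$ then shows $\Phi_z = \mathrm{pr}_W$. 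So the real content is: the Hermitian orthogonal projection of $\CC^N$ onto a $\CC$-linear subspace is $\CC$-linear — which is standard (orthogonal projection with respect to a Hermitian form onto a complex subspace commutes with scalar multiplication by $i$, because both $W$ and $W^\perp$ are $\CC$-invariant). That is the one substantive point, and it is genuinely easy.

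Concretely, the steps are: (1) identify $\Psi_z = {}^t\iota$ where $\iota\colon W\hookrightarrow V$ is the inclusion, and recall $\phi,\psi$ come from $\mathrm{Re}(\cdot,\cdot)$; (2) set $\Phi_z := \psi^{-1}\Psi_z\phi$, so the square \eqref{comm-dia} commutes by definition; (3) compute, for $v \in V$ and $w\in W$, that $(\psi(\Phi_z v), w)$-pairing equals $\mathrm{Re}(w,v)$, and deduce $\Phi_z v$ is characterized by $\mathrm{Re}(w,\Phi_z v) = \mathrm{Re}(w,v)$ for all $w\in W$; (4) since $W$ carries the restricted Hermitian form which is nondegenerate, conclude $\Phi_z v = \mathrm{pr}_W(v)$, the Hermitian orthogonal projection; (5) verify $\mathrm{pr}_W(iv) = i\,\mathrm{pr}_W(v)$ using that $V = W \oplus W^{\perp_{\mathrm{Herm}}}$ is a decomposition into $\CC$-subspaces, hence $\Phi_z$ is $\CC$-linear.

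\textbf{Main obstacle.} There is no serious obstacle; the only thing to be careful about is bookkeeping between the real pairing $\mathrm{Re}(\cdot,\cdot)$ used to define $\phi,\psi$ and the underlying $\CC$-bilinear/Hermitian structures — in particular making sure that "restriction of a real functional" on the $V$-side corresponds, after transport by $\psi^{-1}$, to Hermitian (not merely $\RR$-orthogonal) projection, which is what forces $\CC$-linearity. Once one observes that $W$ is a $\CC$-subspace (already noted in the text) and that its Hermitian-orthogonal complement in $V$ is also a $\CC$-subspace, $\CC$-linearity of $\Phi_z$ follows immediately.
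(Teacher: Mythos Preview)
Your proof is correct. Both you and the paper end up identifying $\Phi_z$ with the Hermitian orthogonal projection of $V=\CC^N$ onto the $\CC$-subspace $W$, but you argue in opposite directions. The paper works by induction on the codimension of $S$: taking a local defining function $h$, it writes $W=\{v:(v,\overline{\grad h(z)})=0\}$ and then \emph{defines} $\Phi_z$ by the explicit projection formula
\[
\Phi_z(v)=v-\frac{(v,\overline{\grad h(z)})}{(\overline{\grad h(z)},\overline{\grad h(z)})}\,\overline{\grad h(z)},
\]
afterwards checking that the square commutes. You instead set $\Phi_z:=\psi^{-1}\Psi_z\phi$ so the square commutes for free, then identify this composite as $\mathrm{pr}_W$ and check $\CC$-linearity from the fact that $W$ and $W^{\perp_{\mathrm{Herm}}}$ are both $\CC$-stable. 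Your route is cleaner and coordinate-free for the lemma as stated; the paper's route has the advantage that the explicit formula above (labeled \eqref{restequa}) is invoked verbatim in the proof of the next lemma (Lemma~\ref{lem-4}) and again in Proposition~\ref{prop-7}, so the concrete expression is doing work downstream. If you go with your argument, you would want to record that $\Phi_z=\mathrm{pr}_W$ explicitly, since that is what the later proofs actually use.
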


\begin{proof}
Assume first that $S$ is a complex hypersurface in 
$X= \CC^N$. For a holomorphic function 
$h$ defined on a neighborhood of the point 
$z \in S$ in $X= \CC^N$ such that $S=h^{-1}(0)$ 
we set 
\begin{equation}
\grad h(z): = 
\Bigl( \frac{\partial h}{\partial z_1}, 
\frac{\partial h}{\partial z_2}, 
\ldots \ldots, 
\frac{\partial h}{\partial z_N} 
\Bigr) \in \CC^N. 
\end{equation}
Then for any smooth curve $p(t):(- \varepsilon, \varepsilon ) 
\longrightarrow S_{\RR}$ 
($ \varepsilon >0$) 
such that $p(0)=z$ we have 
\begin{equation}
\frac{d}{dt}h(p(t))|_{t=0}= 
\Bigl\langle \frac{dp}{dt}(0), \grad h(z) \Bigr\rangle =0. 
\end{equation}
This implies that the $\CC$-linear subspace 
$W=T_zS_{\RR} \subset V=T_zX_{\RR}= \CC^N$ 
is explicitly given by 
\begin{equation}
W= \{ v \in V \ | \ 
\langle v, \grad h(z) \rangle = 
(v, \overline{\grad h(z)} )=0 \}, 
\end{equation}
where we set 
\begin{equation}
\overline{\grad h(z)}: = 
\Bigl( \overline{\frac{\partial h}{\partial z_1}}, 
\overline{\frac{\partial h}{\partial z_2}}, 
\ldots \ldots, 
\overline{\frac{\partial h}{\partial z_N}}
\Bigr) \in \CC^N. 
\end{equation}
Hence we can define a $\CC$-linear morphism 
$\Phi_z: V \longrightarrow W$ by 
the formula 
\begin{equation}\label{restequa}
\Phi_z(v)= v- 
\frac{(v, \overline{\grad h(z)})}{(
\overline{\grad h(z)}, \overline{\grad h(z)})}
\cdot \overline{\grad h(z)} \quad \in W.  
\end{equation}
Then it is straightforward to check that 
the diagram \eqref{comm-dia} commutes. 
If $\dim S< \dim X-1=N-1$, we repeat this 
argument. 
\end{proof}

In what follows, we assume that 
$S \subset X= \CC^N$ is a smooth Zariski 
locally closed (complex) affine subvariety 
i.e. a smooth quasi-affine subvariety of $X= \CC^N$ 

\begin{lemma}\label{lem-4} 
Let $p(t):(0, \varepsilon ) \longrightarrow S 
\subset X= \CC^N$ be an analytic curve on $S$ 
such that 
\begin{equation}
\lim_{t \to +0} || p(t) || = + \infty 
\end{equation}
and its expansion at infinity is of the form 
\begin{equation}
p(t)=a t^{\alpha} + ( \text{higher order terms} )
\qquad (a \in \CC^N \setminus \{ 0 \}, 
\alpha <0). 
\end{equation}
Define an analytic curve 
$q(t):(0, \varepsilon ) \longrightarrow 
X= \CC^N$ by 
\begin{equation}
q(t)= \Phi_{p(t)}(p(t)) \qquad (0<t< \varepsilon ). 
\end{equation}
Recall that the element $d \distsq (p(t)) \in T^*_{p(t)}X_{\RR}$ 
corresponds to $p(t) \in T_{p(t)}X_{\RR}= \CC^N$ 
via the isomorphism 
$T_{p(t)}X_{\RR} \simto T^*_{p(t)}X_{\RR}$ 
and the point $\Phi_{p(t)}(p(t)) \in T_{p(t)}S_{\RR}$ 
is considered as an element of $T_{p(t)}X_{\RR}= \CC^N$. 
Then the expansion at infinity of $q(t)$ 
has the same top term as that of $p(t)$: 
\begin{equation}
q(t)=a t^{\alpha} + ( \text{higher order terms} )
\qquad (a \in \CC^N \setminus \{ 0 \}, 
\alpha <0). 
\end{equation}
\end{lemma}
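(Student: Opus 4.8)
The plan is to use the fact, essentially contained in Lemma~\ref{lem-3}, that $\Phi_z \colon T_zX_{\RR} = \CC^N \longrightarrow T_zS_{\RR}$ is the orthogonal projection of $\CC^N$ onto the $\CC$-linear subspace $T_zS_{\RR}$ with respect to the standard Hermitian inner product $( \cdot, \cdot )$. For a hypersurface this is read off directly from the explicit formula \eqref{restequa}; in general $\Phi_z$ is determined uniquely by the commutativity of \eqref{comm-dia}, and a short computation shows that the resulting map is precisely this orthogonal projection. In particular $\Phi_z$ is $\CC$-linear, idempotent, restricts to the identity on $T_zS_{\RR}$, and is norm non-increasing, $|| \Phi_z(v) || \leq || v ||$, by the Pythagorean identity $|| v ||^2 = || \Phi_z(v) ||^2 + || v - \Phi_z(v) ||^2$. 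Since $S$ is locally cut out by polynomial equations, the entries of the matrix of $\Phi_z$ are algebraic functions of $z \in S$, so that substituting the (convergent) Puiseux series $z = p(t)$ yields a matrix whose entries are again convergent Puiseux series in $t$; hence $q(t) = \Phi_{p(t)}(p(t))$ is itself a convergent Puiseux series in $t$, and it remains only to identify its leading term.

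The key observation is that the velocity $\frac{dp}{dt}(t)$ of the arc $p$ belongs to $T_{p(t)}S_{\RR}$ and is therefore fixed by $\Phi_{p(t)}$. Writing the given expansion as $p(t) = \sum_{j \geq 0} a_j t^{\alpha_j}$ with $a_0 = a \neq 0$ and $\alpha_0 = \alpha < \alpha_1 < \alpha_2 < \cdots$, I would set
\[
r(t) := p(t) - \frac{1}{\alpha}\, t\, \frac{dp}{dt}(t) = \sum_{j \geq 1} \Bigl( 1 - \frac{\alpha_j}{\alpha} \Bigr) a_j t^{\alpha_j},
\]
in which the $j = 0$ term cancels (here one uses $\alpha \neq 0$, which holds as $\alpha < 0$); thus $r(t)$ is a Puiseux series all of whose exponents exceed $\alpha$, and in particular $|| r(t) || = O(t^{\alpha_1})$ as $t \to +0$. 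Applying $\Phi_{p(t)}$ and using $\Phi_{p(t)} \bigl( \frac{dp}{dt}(t) \bigr) = \frac{dp}{dt}(t)$ then gives
\[
q(t) = \Phi_{p(t)}(p(t)) = \frac{1}{\alpha}\, t\, \frac{dp}{dt}(t) + \Phi_{p(t)}(r(t)).
\]
The first summand is the Puiseux series $\frac{1}{\alpha}\, t\, \frac{dp}{dt}(t) = a t^{\alpha} + \sum_{j \geq 1} \frac{\alpha_j}{\alpha} a_j t^{\alpha_j}$, with lowest exponent $\alpha$ and coefficient $a$ there, while $|| \Phi_{p(t)}(r(t)) || \leq || r(t) || = O(t^{\alpha_1})$ with $\alpha_1 > \alpha$ forces the second summand to be a Puiseux series all of whose exponents are $> \alpha$. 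Hence the coefficient of $t^{\alpha}$ in $q(t)$ equals $a$ and no smaller exponent occurs, which is exactly the assertion $q(t) = a t^{\alpha} + ( \text{higher order terms} )$. (Geometrically, the point is simply that an analytic arc tending to infinity is asymptotically radial, so the tangential part $\Phi_{p(t)}(p(t))$ of the position vector still points to leading order in the direction $a$.)

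There are two mild points requiring care. First, the identification of $\Phi_z$ with the Hermitian orthogonal projection onto $T_zS_{\RR}$: for $\codim S = 1$ it is immediate from \eqref{restequa}, whereas for $\codim S \geq 2$ one should argue with a local Gram matrix attached to a defining system $S = \{ h_1 = \cdots = h_c = 0 \}$ (or invoke the characterization by \eqref{comm-dia}), being careful that naively composing the individual hyperplane projections does \emph{not} in general give the orthogonal projection onto the intersection. Second, one must check that substituting $z = p(t)$ into $\Phi_z$ genuinely produces convergent Puiseux series — the only nontrivial ingredient being that the reciprocal of a nonzero convergent Puiseux series is again one — so that the asymptotic estimate on $|| \Phi_{p(t)}(r(t)) ||$ translates into the statement about exponents. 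I expect the first of these, pinning down $\Phi_z$ cleanly in the higher-codimension case, to be the main obstacle, even though it is already implicit in the proof of Lemma~\ref{lem-3}.
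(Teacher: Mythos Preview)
Your proof is correct and follows essentially the same route as the paper: both use the decomposition $p(t) = \frac{t}{\alpha}\frac{dp}{dt}(t) + r(t)$ together with the fact that $\frac{dp}{dt}(t) \in T_{p(t)}S_\RR$ is fixed by $\Phi_{p(t)}$, reducing the claim to bounding $\Phi_{p(t)}(r(t))$. The only difference is packaging: the paper works with the explicit formula \eqref{restequa} in codimension one (where the required bound is Cauchy--Schwarz in disguise) and then iterates through a nested chain of hypersurfaces, whereas you invoke the norm non-increasing property of orthogonal projections directly and treat all codimensions at once. Your caution about ``naively composing the individual hyperplane projections'' is well placed if one means projecting onto the $\{h_i = 0\}$ separately, but the paper's iteration is through the nested chain $\CC^N \supset \{h_1 = 0\} \supset \{h_1 = h_2 = 0\} \supset \cdots \supset S$, and for nested subspaces the composition of orthogonal projections \emph{is} the projection onto the smallest one --- so no difficulty arises there.
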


\begin{proof}
As in the proof of Lemma \ref{lem-3}, 
assume first that $S$ is a complex hypersurface in 
$X= \CC^N$. Consider a holomorphic function 
$h$ defined on a neighborhood of a point 
$z \in S$ in $X= \CC^N$ such that $S=h^{-1}(0)$. 
First of all, note that we have 
\begin{equation}
p(t)= \frac{t}{\alpha} \cdot \frac{dp}{dt}(t)+ 
( \text{higher order terms} )
\end{equation}
and set 
\begin{equation}
r(t):=( \text{higher order terms} )=
p(t)- \frac{t}{\alpha} \cdot \frac{dp}{dt}(t). 
\end{equation}
Since $\frac{dp}{dt}(t)$ is a tangent vector of 
the manifold $S_{\RR}$ at the point $p(t) \in 
S_{\RR}$, by the proof of Lemma \ref{lem-3} 
we have 
\begin{equation}
\Bigl( 
\frac{dp}{dt}(t),  \overline{\grad h(p(t))} 
\Bigr) = 
\Bigl\langle 
\frac{dp}{dt}(t), \grad h(p(t))  
\Bigr\rangle =0. 
\end{equation}
Then by \eqref{restequa} we obtain 
\begin{equation}
q(t)= \Phi_{p(t)}(p(t))= p(t)- 
\frac{(r(t), \overline{\grad h(p(t))})}{(
\overline{\grad h(p(t))}, \overline{\grad h(p(t))})}
\cdot \overline{\grad h(p(t))} 
\end{equation}
from which the asssertion immediately follows. 
If $\dim S< \dim X-1=N-1$, we repeat this 
argument. 
\end{proof}

From now on, let us consider also a rational function 
$f= \frac{P}{Q}: X \setminus Q^{-1}(0) 
\longrightarrow \CC$ ($P, Q \in 
\Gamma(X; \SO_X) \simeq \CC [z_1,z_2, \ldots, z_N]$, 
$Q \not= 0$) on the smooth algebraic variety 
$X= \CC^N_z$ and set 
$U:= X \setminus Q^{-1}(0)$. Let $S \subset U$ 
be a smooth quasi-affine subvariety of $U$ and 
denote the restriction $f|_S: S \longrightarrow \CC$ 
of $f$ to it by $g$. Then 
for the cotangent vector $d {\rm Re} g(z) \in T^*_zS_{\RR}$ 
at a point $z \in S_{\RR}$ 
we define its norm $||  d {\rm Re} g(z) || \geq 0$ 
to be that of the element $\grad {\rm Re} g(z) \in T_zS_{\RR}
\subset T_zX_{\RR} \simeq \CC^N$ which corresponds to 
it by the isomorphism 
$\psi : T_zS_{\RR} \simto T^*_zS_{\RR}$. For 
polynomial functions $f: \CC^N \longrightarrow \CC$ 
on $\CC^N$ the following condition was first introduced by 
Broughton \cite{Bro88}. 

\begin{definition}\label{def-7} 
Let $S \subset U$ and $g:S \longrightarrow \CC$ be 
as above. Then we say that $g$ is tame 
at infinity if there exist $R \gg 0$ and 
$0< \varepsilon \ll 1$ such that 
\begin{equation}
\{ z \in S \ | \ ||  d {\rm Re} g(z) || < \varepsilon, \ 
|| z || >R \} = \emptyset. 
\end{equation}
\end{definition}
For polynomial functions $f: \CC^N \longrightarrow \CC$ 
on $\CC^N$ the following result was essentially obtained in 
N{\'e}methi-Zaharia \cite{NZ90} and \cite{NZ92}. 
Here we modify their arguments with the help of 
Lemma \ref{lem-4}. See also the proof of 
Nguyen-Pham-Pham \cite[Theorem 3.1]{NPP19} for a 
similar result on complete intersection subvarieties 
$S \subset X= \CC^N$ and in a different formulation 
in terms of Rabier's norm defined in \cite{R97}. 

\begin{proposition}\label{prop-7} 
Let $S \subset U$ and $g:S \longrightarrow \CC$ be 
as above and assume that $g$ is tame 
at infinity. We define a subset $M(g)$ of $S$ by 
\begin{equation}
M(g)=  \{ z \in S \ | \   
d( \distsq |_S)(z) \in \RR d {\rm Re} g(z) + \RR d {\rm Im} g(z)  \}. 
\end{equation} 
Then for any $\tau_0 \in \CC$  there exist $R \gg 0$ and 
$0< \varepsilon \ll 1$ such that 
\begin{equation}
M(g) \cap \{ z \in S \ | \ |g(z)- \tau_0| < \varepsilon, \ 
|| z || >R \} = \emptyset. 
\end{equation}
\end{proposition}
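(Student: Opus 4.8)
The plan is to argue by contradiction using the curve selection lemma at infinity, exactly in the spirit of N\'emethi--Zaharia, but with Lemma \ref{lem-4} used to pass from the ambient squared-distance function to its restriction to $S$. Suppose the conclusion fails: then there is a sequence $R_n \to +\infty$ and $\varepsilon_n \to +0$ for which $M(g) \cap \{ z \in S \ | \ |g(z)- \tau_0| < \varepsilon_n, \ || z || >R_n \} \neq \emptyset$. Since $S$ is a smooth quasi-affine variety and $M(g)$, $g^{-1}(\tau_0)$ are described by semialgebraic conditions, the curve selection lemma at infinity produces a real analytic curve $p(t):(0,\varepsilon) \longrightarrow S$ with $|| p(t) || \to +\infty$ as $t \to +0$, with $g(p(t)) \to \tau_0$, and with $p(t) \in M(g)$ for all small $t>0$; moreover one may take $p(t)$ to have a Puiseux-type expansion $p(t)=a t^{\alpha} + (\text{higher order terms})$ with $a \in \CC^N \setminus \{0\}$ and $\alpha <0$ as in Lemma \ref{lem-4}.

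Next I would extract the differential-geometric consequence of $p(t) \in M(g)$. By definition of $M(g)$, at each $p(t)$ the covector $d(\distsq|_S)(p(t))$ lies in the real span of $d{\rm Re}\, g(p(t))$ and $d{\rm Im}\, g(p(t))$, i.e.\ there are real analytic functions $\lambda(t), \mu(t)$ with $d(\distsq|_S)(p(t)) = \lambda(t)\, d{\rm Re}\, g(p(t)) + \mu(t)\, d{\rm Im}\, g(p(t))$. Translating through the isomorphism $\psi : T_{p(t)}S_{\RR} \simto T^*_{p(t)}S_{\RR}$ and recalling that $d(\distsq|_S)(p(t))$ corresponds to $\Phi_{p(t)}(p(t)) = q(t)$ while the right-hand side corresponds (up to the complex structure on $T_{p(t)}S_{\RR} \simeq \CC^{\dim S}$) to a $\CC$-multiple of $\grad g(p(t))$, one gets a relation of the form $q(t) = c(t) \cdot \grad g(p(t))$ in $T_{p(t)}S_{\RR} \subset \CC^N$ for some $c(t) \in \CC$ (here $\lambda - i\mu$ plays the role of $c$, using that $\grad{\rm Re}\, g$ and $\grad{\rm Im}\, g$ are related by multiplication by $i$ in the complex tangent space). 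Then I would compute $\frac{d}{dt}(\distsq \circ p)(t) = {\rm Re}\langle \frac{dp}{dt}(t), \overline{p(t)}\rangle$ and, since $\frac{dp}{dt}(t) \in T_{p(t)}S_{\RR}$, replace $\overline{p(t)}$ by its orthogonal projection, obtaining $\frac{d}{dt}(\distsq \circ p)(t) = {\rm Re}\, ( \frac{dp}{dt}(t), \overline{q(t)} )$; likewise $\frac{d}{dt}(g \circ p)(t) = \langle \frac{dp}{dt}(t), \grad g(p(t))\rangle$. Combining with $q(t) = c(t)\grad g(p(t))$ yields $\frac{d}{dt}(\distsq \circ p)(t) = {\rm Re}\big( \overline{c(t)} \cdot \frac{d}{dt}(g\circ p)(t) \big)$, which is the key N\'emethi--Zaharia-type identity relating the growth of $||p(t)||^2$ to the variation of $g$ along the curve.

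Now I would use Lemma \ref{lem-4}: since $q(t)$ and $p(t)$ have the same top term $a t^{\alpha}$ with $\alpha<0$, we get $\frac{d}{dt}(\distsq \circ p)(t) = \frac{d}{dt}(\tfrac12||p(t)||^2) \sim \alpha\, |a|^2\, t^{2\alpha-1}$, which blows up (in absolute value) like $t^{2\alpha-1} \to \infty$ as $t \to +0$. On the other hand $g(p(t)) \to \tau_0$ is bounded, so $g\circ p$ extends continuously to $t=0$ with a Puiseux expansion $g(p(t)) = \tau_0 + b t^{\beta} + \cdots$ with $\beta>0$ (or $g\circ p \equiv \tau_0$, which is even easier), hence $\frac{d}{dt}(g\circ p)(t) \sim b\beta\, t^{\beta-1} \to 0$. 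To reach a contradiction I need to bound $|c(t)|$: this is precisely where tameness at infinity enters. From $q(t) = c(t) \grad g(p(t))$ and $||q(t)|| \sim |a|\, t^{\alpha}$ one gets $|c(t)| = ||q(t)|| / ||\grad g(p(t))|| = ||q(t)|| / ||d\,g(p(t))||$, and since $g$ is tame at infinity and $||p(t)|| \to \infty$, the norm $||d{\rm Re}\, g(p(t))||$ (hence $||d\, g(p(t))||$, which dominates it) is bounded below by $\varepsilon>0$ eventually; therefore $|c(t)| \lesssim t^{\alpha}$. Plugging into the identity, $|\alpha|\,|a|^2\, t^{2\alpha-1} \sim |\frac{d}{dt}(\distsq\circ p)(t)| \leq |c(t)|\cdot|\frac{d}{dt}(g\circ p)(t)| \lesssim t^{\alpha}\cdot t^{\beta-1} = t^{\alpha+\beta-1}$, i.e.\ $t^{2\alpha-1} \lesssim t^{\alpha+\beta-1}$, forcing $2\alpha \geq \alpha+\beta$, i.e.\ $\alpha \geq \beta>0$, contradicting $\alpha<0$. (If $g\circ p$ is constant near $0$, the right-hand side vanishes identically while the left-hand side does not, an even cleaner contradiction.)

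The main obstacle I expect is the careful bookkeeping in the middle step: correctly identifying $d(\distsq|_S)$ with $q(t)$ and $d{\rm Re}\, g, d{\rm Im}\, g$ with $\grad{\rm Re}\, g, \grad{\rm Im}\, g$ through the \emph{real} isomorphism $\psi$, while keeping track of the fact that $T_{p(t)}S_{\RR}$ carries a compatible complex structure so that ${\rm Re}$ and ${\rm Im}$ parts of $g$ have gradients differing by $i$ --- this is what lets one collapse two real parameters $\lambda,\mu$ into a single complex scalar $c(t)$ and is the precise point where the complex-analyticity of $S$ and of $g$ is used (as opposed to a merely real-analytic statement). A secondary technical point is justifying the curve selection lemma at infinity in the required form, namely that one may simultaneously realize membership in the closed semialgebraic set $\overline{M(g)}$, the conditions $|g - \tau_0| < \varepsilon$ and $||z|| > R$, and obtain a genuine analytic (Puiseux) parametrization with leading exponent $\alpha<0$; this is standard (one compactifies, or applies the semialgebraic curve selection lemma to the ``boundary at infinity'' of the relevant semialgebraic set), but should be stated precisely so that Lemma \ref{lem-4} applies verbatim.
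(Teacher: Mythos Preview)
Your proof is correct and follows essentially the same approach as the paper's: both argue by contradiction via the N\'emethi--Zaharia curve selection lemma at infinity, translate the condition $p(t)\in M(g)$ into a relation $q(t)=c(t)\,\grad{\rm Re}\,g(p(t))$ (the paper writes this as $\lambda(t)\,q(t)=\grad{\rm Re}\,g(p(t))$), invoke Lemma~\ref{lem-4} for the leading term of $q(t)$, and reach a contradiction by comparing Puiseux orders. The only cosmetic difference is that the paper expands $\frac{d}{dt}g(p(t))=(p'(t),\grad{\rm Re}\,g(p(t)))$ directly to get a leading order $\alpha-1+\gamma<-1$ contradicting $\beta-1\geq -1$, whereas you route through the equivalent identity $\frac{d}{dt}(\distsq\circ p)(t)={\rm Re}\bigl(\overline{c(t)}\,\frac{d}{dt}g(p(t))\bigr)$; your tameness bound $|c(t)|\lesssim t^{\alpha}$ is exactly the paper's statement that $\grad{\rm Re}\,g(p(t))$ has leading order $\gamma\leq 0$, and the resulting order comparisons are the same.
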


\begin{proof}
Fix a point $z \in S$ and recall that 
we set $g=f|_S$. We denote by 
$\grad {\rm Re} f(z) \in T_zX_{\RR} \simeq \CC^N$ 
the element which corresponds to 
$d {\rm Re} f(z) \in T^*_zX_{\RR}$
by the isomorphism 
$\phi : T_zX_{\RR} \simto T^*_z X_{\RR}$. Then 
by the Cauchy-Riemann equations we have 
\begin{align}\label{C-R-eqn}
\grad {\rm Re} f(z) 
& =\Bigl( 
\frac{\partial {\rm Re} f}{\partial x_1}
+ i \frac{\partial {\rm Re} f}{\partial y_1}, 
\ldots \ldots, 
\frac{\partial {\rm Re} f}{\partial x_N}
+ i \frac{\partial {\rm Re} f}{\partial y_N}
\Bigr)
\\
& = 
\overline{\grad f(z)}= 
\Bigl( \overline{\frac{\partial f}{\partial z_1}}, 
\ldots \ldots, 
\overline{\frac{\partial f}{\partial z_N}}
\Bigr), 
\end{align}
where we set $z_i=x_i+i y_i$ ($1 \leq i \leq N$). 
Moreover we have 
\begin{equation}
\grad {\rm Im} f(z) 
=\Bigl( 
\frac{\partial {\rm Im} f}{\partial x_1}
+ i \frac{\partial {\rm Im} f}{\partial y_1}, 
\ldots \ldots, 
\frac{\partial {\rm Im} f}{\partial x_N}
+ i \frac{\partial {\rm Im} f}{\partial y_N}
\Bigr)
 = i \cdot \grad {\rm Re} f(z). 
\end{equation}
By Lemma \ref{lem-3}
this implies that for a point $z \in S_{\RR}$ 
such that $||z|| \gg 0$ the condition 
\begin{equation}
d( \distsq |_S)(z) \in \RR d {\rm Re} g(z) + \RR d {\rm Im} g(z) 
\end{equation}
is equivalent to the one that there exists $\lambda \in \CC^*
:= \CC \setminus \{ 0 \}$ such that 
\begin{equation}
\grad ( \distsq |_S)(z) = \Phi_z ( \lambda \cdot \grad {\rm Re} f(z)). 
\end{equation}
Here we used the fact that if $||z|| \gg 0$ we have 
$d( \distsq |_S)(z) \not= 0$ (see e.g. \cite[Corollary 2.8]{Mil68}). 
Since we have $\grad ( \distsq |_S)(z)= \Phi_z 
( \grad \distsq (z))= \Phi_z(z)$ and $\Phi_z$ is $\CC$-linear, 
it is also equivalent to the one that there exists $\lambda \in \CC^*
:= \CC \setminus \{ 0 \}$ such that 
\begin{equation}
\lambda \cdot \Phi_z(z) = 
\Phi_z( \grad {\rm Re} f(z) )= \grad {\rm Re} g(z). 
\end{equation}
Now let us prove the assertion by showing a contradiction. 
Assume to the contrary that there exists a sequence 
$z_n \in M(g) \subset S \subset X= \CC^N$ ($n=1,2, \ldots$) such that 
\begin{equation}
\lim_{n \to + \infty} |g(z_n)- \tau_0| =0,  
\qquad 
\lim_{n \to + \infty} || z_n || = + \infty. 
\end{equation}
Then by the curve selection lemma \cite[Lemma 4]{NZ92} 
of N{\'e}methi and Zaharia, there exists 
an analytic curve  $p(t):(0, \varepsilon ) \longrightarrow M(g)
\subset S \subset X= \CC^N$ 
such that 
\begin{equation}
\lim_{t \to +0} |g(p(t))- \tau_0| =0,  
\qquad 
\lim_{t \to +0} || p(t) || = + \infty. 
\end{equation}
Assume that its expansion at infinity is of the form 
\begin{equation}
p(t)=a t^{\alpha} + ( \text{higher order terms} )
\qquad (a \in \CC^N \setminus \{ 0 \}, 
\alpha <0). 
\end{equation}
and define $q(t):(0, \varepsilon ) \longrightarrow X= \CC^N$ by 
\begin{equation}
q(t)= \Phi_{p(t)}(p(t)) \qquad (0<t< \varepsilon ). 
\end{equation}
Then by Lemma \ref{lem-4} 
the expansion at infinity of $q(t)$ 
has the same top term as that of $p(t)$: 
\begin{equation}
q(t)=a t^{\alpha} + ( \text{higher order terms} )
\qquad (a \in \CC^N \setminus \{ 0 \}, 
\alpha <0). 
\end{equation}
Moreover, by the condition 
\begin{equation}
\lim_{t \to +0} g(p(t)) = \tau_0,  
\end{equation}
the expansion at infinity 
of $g(p(t))$ is of the form 
\begin{equation}\label{expainf} 
g(p(t))=b t^{\beta} + ( \text{higher order terms} )
\qquad (b \in \CC^* , \beta \geq 0). 
\end{equation}
Since $g$ is tame at infinity, the expansion at infinity 
of $\grad {\rm Re} g(p(t)): (0, \varepsilon ) \longrightarrow \CC^N$ 
is of the form 
\begin{equation}
\grad {\rm Re} g(p(t))=c t^{\gamma} + ( \text{higher order terms} )
\qquad (c \in \CC^N \setminus \{ 0 \}, \gamma \leq 0). 
\end{equation}
Then by the condition $p(t) \in M(g) \subset S$ 
($0 < t < \varepsilon$) and the above argument, 
there exists an analytic curve 
$\lambda (t):  (0, \varepsilon ) \longrightarrow \CC^*$ 
such that 
\begin{equation}
\lambda (t) \cdot q(t)= \lambda (t) \cdot \Phi_{p(t)}(p(t))
= \grad {\rm Re} g(p(t)) 
\qquad (0 < t < \varepsilon ). 
\end{equation}
If the expansion at infinity 
of $\lambda (t)$ is of the form 
\begin{equation}
\lambda (t)= \lambda_0 t^{\gamma - \alpha} + ( \text{higher order terms} )
\qquad ( \lambda_0 \in \CC^*), 
\end{equation}
we have $c= \lambda_0 a \in \CC^N \setminus \{ 0 \}$ and 
\begin{equation}
\grad {\rm Re} g(p(t))= \lambda_0 a t^{\gamma} + ( \text{higher order terms} )
\qquad ( \gamma \leq 0). 
\end{equation}
On the other hand, for the Hermitian inner product 
$( \cdot, \cdot )$ we have 
\begin{align*}
\frac{d}{dt}g(p(t))
&  =  \frac{d}{dt}f(p(t)) = 
\Bigl( \frac{dp}{dt}(t), \overline{\grad f(p(t))} \Bigr) 
\\
& =
\Bigl( \frac{dp}{dt}(t), \grad {\rm Re} f(p(t)) \Bigr) 
\\
& = 
\Bigl( \frac{dp}{dt}(t), \Phi_{p(t)} ( \grad {\rm Re} f(p(t)) ) \Bigr) 
\\
& = 
\Bigl( \frac{dp}{dt}(t), \grad {\rm Re} g(p(t)) \Bigr) 
\\
& = 
\Bigl( \frac{dp}{dt}(t), \lambda (t) \cdot q(t) \Bigr) 
\\
& = 
( \alpha a, \lambda_0 a) \cdot 
t^{\alpha - 1 + \gamma} + ( \text{higher order terms} ),  
\end{align*}
where we used \eqref{restequa} in the fourth equality 
(see also the proof of Lemma \ref{lem-3}). 
Since $( \alpha a, \lambda_0 a)= \alpha 
\overline{\lambda_0} || a ||^2 \not= 0$, 
$\alpha - 1 + \gamma <-1$ and the left hand side 
$\frac{d}{dt}g(p(t))$ is of order $> -1$ in $t$ 
by \eqref{expainf},  
we obtain the desired contradiction. This 
completes the proof. 
\end{proof}

For $r>0$ let $B_r(0)= \{ z \in X= \CC^N \ | \ 
|| z || <r \} \subset X= \CC^N$ be the open ball 
of radius $r$ centered at the origin $0 \in 
X= \CC^N$. Then Propositon \ref{prop-7} 
means that if $g:S \longrightarrow \CC$ is 
tame at infinity the boundary $\partial B_r(0) 
\cap S$ of $B_r(0) \cap S$ for $r \gg 0$ 
intersects the fibers $g^{-1}( \tau ) \subset S$ 
($| \tau - \tau_0| \ll 1$) of $g$ transversally. 
Now let $K \in \Dbc (X^{\an})$ be 
an algebraic constructible sheaf 
on $X^{\an}$. Namely we assume that $K$ is 
adapted to an algebraic stratification of $X^{\an}
= \CC^N$. Then the micro-support 
${\rm SS }(K) \subset T^*X^{\an}$ of $K$ is 
a homogeneous Lagrangian subvariety of 
$(T^*X)^{\an} \simeq T^*X^{\an}$ and 
there exists an algebraic Whitney 
stratification $\CS$ of $X= \CC^N_z$ 
such that 
\begin{equation}
{\rm SS }(K) \subset \bigsqcup_{S \in \CS} T^*_SX. 
\end{equation}
Recall that by the Whitney condition the right 
hand side is a closed subset in $(T^*X)^{\an} \simeq T^*X^{\an}$. 
For $r>0$ we define an open subset 
$U_r \subset U$ of $U=X \setminus Q^{-1}(0)$ by 
\begin{equation}
U_r:= U \cap B_r(0)= 
\{ z \in U \ | \ ||z|| <r \} \subset U \subset X= \CC^N. 
\end{equation}
Then in order to apply the direct image theorem 
\cite[Theorem 4.4.1]{KS85} for non-proper maps of 
Kashiwara and Schapira to 
$K|_U \in \Dbc (U^{\an})$ and $f: U=X \setminus Q^{-1}(0) 
\longrightarrow \CC$, we shall prove the 
following result. 

\begin{proposition}\label{prop-micros} 
Let $\CS$ be the Whitney 
stratification of $X= \CC^N_z$ as above and 
assume that for any stratum $S \in \CS$ in it 
such that $S \cap U \not= \emptyset$ and 
$T^*_SX \subset {\rm SS }(K)$ 
the restriction $f|_{S \cap U}: S \cap U 
\longrightarrow \CC$ of $f$ to $S \cap U$ 
is tame at infinity. 
Then for any $\tau_0 \in \CC$  there exist $R \gg 0$ and 
$0< \varepsilon \ll 1$ such that we have 
\begin{equation}
N^*(U_r) \cap \Bigl\{ 
\overline{{\rm SS }(K|_U) + \rho 
(U \times_{\CC} T^* \CC) }
\Bigr\} \subset T^*_UU 
\end{equation}
(for the definition of $N^*(U_r) \subset T^*U$ 
see \cite[Definition 5.3.6]{KS90}) over 
the open subset $\{ z \in U \ | \ 
|f(z)- \tau_0| < \varepsilon \} 
\subset U$ of $U$ for any $r \geq R$, 
where $\rho : U \times_{\CC} T^* \CC 
\longrightarrow T^*U$ is the natural morphism 
associated to $f: U \longrightarrow \CC$. 
Moreover the same is true even after replacing $N^*(U_r)$ 
by its antipodal set $N^*(U_r)^{{\rm a}}$. 
\end{proposition}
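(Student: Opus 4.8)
The plan is to translate the statement into an elementary geometric condition on the spheres $\partial U_r$, and then to reduce it, via the curve selection lemma, to Proposition~\ref{prop-7}. First I would make the data explicit. Via the isomorphism $\phi\colon T_zX_\RR\simto T^*_zX_\RR$ the covector $d(\distsq|_U)(z)$ at a point $z$ of the smooth real hypersurface $\partial U_r\cap U=\{z\in U\mid\|z\|=r\}$ (smooth for $r>0$) corresponds to the vector $z$ itself, and $N^*(U_r)_z=\RR_{\geq 0}\,d(\distsq|_U)(z)$ for one fixed sign convention (see \cite[Definition 5.3.6]{KS90}), while $N^*(U_r)^{\rm a}_z$ is the opposite ray; together they sweep out $\RR\cdot d(\distsq|_U)(z)$. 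Since ${\rm SS}(K|_U)+\rho(U\times_{\CC}T^*\CC)$ and $N^*(U_r)$ are conic, the two asserted inclusions (for $N^*(U_r)$ and for its antipodal set) are jointly equivalent to the single statement: there exist $R\gg 0$ and $0<\varepsilon\ll 1$ such that no $z\in U$ with $\|z\|\geq R$ and $|f(z)-\tau_0|<\varepsilon$ carries a nonzero covector in $\RR^*\cdot d(\distsq|_U)(z)\cap\overline{{\rm SS}(K|_U)+\rho(U\times_{\CC}T^*\CC)}$. Here, for $w\in U$, I write $L_w:=\RR\,d{\rm Re}f(w)+\RR\,d{\rm Im}f(w)\subset T^*_wX_\RR$ for the fibre of $\rho(U\times_{\CC}T^*\CC)$ over $w$.

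Next I would record the shape of the right-hand side. Because $\CS$ is adapted to $K|_U$, the closed conic Lagrangian ${\rm SS}(K|_U)$ is the union of the finitely many conormal Lagrangians $\overline{T^*_{S_j\cap U}U}$ attached to the relevant strata $S_j$ (those with $S_j\cap U\neq\emptyset$ and $T^*_{S_j}X\subseteq{\rm SS}(K)$), and for each such $j$ the restriction $g_j:=f|_{S_j\cap U}\colon S_j\cap U\to\CC$ is tame at infinity by hypothesis. Hence $\overline{{\rm SS}(K|_U)+\rho(U\times_{\CC}T^*\CC)}=\bigcup_j\overline{\overline{T^*_{S_j\cap U}U}+\rho(U\times_{\CC}T^*\CC)}$, and it is enough to produce, for each fixed relevant $j$, data $R_j,\varepsilon_j$ for which the base of $\RR^*\cdot d(\distsq|_U)\cap\overline{\overline{T^*_{S_j\cap U}U}+\rho(U\times_{\CC}T^*\CC)}$ stays inside $\{\|z\|<R_j\}\cup\{|f-\tau_0|\geq\varepsilon_j\}$; taking $R=\max_jR_j$, $\varepsilon=\min_j\varepsilon_j$ then concludes the proof.

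Fix a relevant stratum $S:=S_j$ and $g:=g_j$, and argue by contradiction: if no such $R_j,\varepsilon_j$ exist then, the offending set of $z\in U$ being semialgebraic, the curve selection lemma \cite[Lemma 4]{NZ92} (applied after a projective compactification, exactly as in the proof of Proposition~\ref{prop-7}) yields an analytic arc $p(t)\colon(0,\varepsilon_0)\to U$ with $\|p(t)\|\to+\infty$ and $f(p(t))\to\tau_0$ as $t\to+0$, which we may take to lie in a single stratum of $\CS$, to avoid $\distsq^{-1}(0)$, and (after choosing the sign occurring infinitely often and rescaling by conicity) to satisfy $d(\distsq|_U)(p(t))\in\overline{\overline{T^*_{S\cap U}U}+\rho(U\times_{\CC}T^*\CC)}$ for every $t$. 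The core of the argument is to transfer this condition onto the smooth stratum $S\cap U$ itself: for each $t$ one writes $d(\distsq|_U)(p(t))=\lim_k(\zeta_k+\ell_k)$ with $\zeta_k\in N^*_{w_k}(S\cap U)$, $w_k\in S\cap U$, $w_k\to p(t)$, and $\ell_k=\mu_k\,d{\rm Re}f(w_k)+\nu_k\,d{\rm Im}f(w_k)\in L_{w_k}$, restricts the approximate identity $\zeta_k+\ell_k\approx d(\distsq|_U)(w_k)$ to $T_{w_k}(S\cap U)$ (where the $\zeta_k$-part dies), and uses the Cauchy--Riemann identities together with Lemmas~\ref{lem-3} and~\ref{lem-4} and the tameness of $g$ — which keeps $\|\grad{\rm Re}g\|$ bounded below near infinity and thereby controls the relative growth of the two pieces — to combine the limits $k\to\infty$ and $t\to+0$ (a diagonal argument followed by a second curve selection) into an analytic arc $q(s)\colon(0,\varepsilon_1)\to S\cap U$ with $\|q(s)\|\to+\infty$, $f(q(s))\to\tau_0$, and
\[
d(\distsq|_S)(q(s))\in\RR\,d{\rm Re}g(q(s))+\RR\,d{\rm Im}g(q(s)),
\]
i.e.\ $q(s)\in M(g)$ for all $s$. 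This directly contradicts Proposition~\ref{prop-7}, applied with the present $\tau_0$ to the smooth quasi-affine subvariety $S\cap U\subset U$ and the function $g$, which is tame at infinity by assumption; and since $M(g)$ is invariant under $\xi\mapsto-\xi$ the same arc settles the antipodal case simultaneously.

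I expect the transfer step to be the main obstacle. Since $\|\grad f\|$ blows up both as $\|z\|\to+\infty$ and as $z$ approaches the indeterminacy locus $I(f)=P^{-1}(0)\cap Q^{-1}(0)$, the sum ${\rm SS}(K|_U)+\rho(U\times_{\CC}T^*\CC)$ need not be closed, so one cannot simply split an element of its closure over a point into a micro-support part plus an $L_w$-part at that same point. Keeping track of which stratum contributes, bounding the two pieces against each other by tameness (this is exactly where Lemmas~\ref{lem-3} and~\ref{lem-4} and the Cauchy--Riemann identities enter), and ensuring that the auxiliary points $w_k$ can still be driven to infinity with $f$-values tending to $\tau_0$ is where all the real work lies; the reduction to the relevant strata and the final appeal to Proposition~\ref{prop-7} are then formal.
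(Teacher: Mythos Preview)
Your overall architecture — argue by contradiction, use the curve selection lemma, and feed the resulting arc into Proposition~\ref{prop-7} — is exactly the paper's. The antipodal reduction and the finite decomposition over the relevant strata are also fine. But the ``transfer step'' that you flag as the main obstacle really is a gap, and the paper does \emph{not} carry it out the way you sketch.

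Your plan is: first select an arc $p(t)$ in $U$, then for each $t$ approximate $d\distsq(p(t))$ by $\zeta_k+\ell_k$ with $\zeta_k\in N^*_{w_k}(S\cap U)$, $w_k\in S\cap U$, and finally run a diagonal argument plus a second curve selection to produce an arc $q(s)$ \emph{inside} $S\cap U$ with $q(s)\in M(g)$. The problem is that once $p(t)$ lands in a stratum $S_0$ that lies in $\overline S\setminus S$ (and $T^*_{S_0}X$ need not be contained in ${\rm SS}(K)$), there is no reason the approximating $w_k$ can be chosen with $\|w_k\|\to\infty$ and $f(w_k)\to\tau_0$; a diagonal argument followed by curve selection on an ill-specified subset of $S\cap U$ does not manufacture this. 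The paper avoids this by reversing the order: it keeps the contradicting \emph{sequence} $z_n$, passes to a subsequence lying in a single stratum $S_0$, and then splits into two cases. If $T^*_{S_0}X\subset{\rm SS}(K)$, tameness of $f|_{S_0\cap U}$ (available by hypothesis) bounds the approximating pieces $w_{nm}$ and $\lambda_{nm}\,df(z_{nm})$ separately, the Whitney condition forces $\lim_m w_{nm}\in(T^*_{S_0}X)_{z_n}$, and one obtains $d(\distsq|_{S_0})(z_n)=\lambda_n\,d(f|_{S_0})(z_n)$, contradicting Proposition~\ref{prop-7} directly. If $T^*_{S_0}X\not\subset{\rm SS}(K)$, one cannot invoke tameness on $S_0$, and this is where the paper introduces the key extra ingredient you are missing: it records, for each $z_n$, the limiting tangent plane $\TT_n\subset T_{z_n}X$ of the approximating relevant stratum $S'$, uses tameness of $f|_{S'\cap U}$ to bound the pieces via the projection $\Phi_n\colon T^*_{z_n}X\to\TT_n^*$, and then applies the curve selection lemma to a semi-analytic subset of $(S_0\cap U)\times{\rm Gr}(l,\CC^N)$ (pairs of a point and a limiting tangent plane satisfying the projected Morse relation). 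The contradiction then comes from rerunning the proof of Proposition~\ref{prop-7} with $\TT(t)$ in place of $T_{p(t)}S$, using an analytic family of conormals to $\TT(t)$ to get the analogue of Lemma~\ref{lem-4}.

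So the fix is not a second curve selection back into the relevant stratum $S$, but a case split on the stratum that actually contains the contradicting points, together with a Grassmannian enhancement in the non-relevant case.
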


\begin{proof}
Since the subset $\overline{{\rm SS }(K|_U) + \rho 
(U \times_{\CC} T^* \CC) } \subset T^*U$ of $T^*U$ 
is stable by the antipodal map of $T^*U$, 
it suffices to consider only $N^*(U_r)$. 
We prove the assertion by showing a contradiction. 
Assume to the contrary that there exists a sequence 
$z_n \in U \subset X= \CC^N$ ($n=1,2, \ldots$) such that 
\begin{equation}
\lim_{n \to + \infty} |f(z_n)- \tau_0| =0,  
\qquad 
\lim_{n \to + \infty} || z_n || = + \infty 
\end{equation}
and for the non-zero inner conormal vector 
$- d \distsq (z_n) \in \CC^N$ of the real hypersurface 
$\partial U_{||z_n||} \subset U$ at 
the point $z_n \in U$ we have 
\begin{equation}
(z_n, w_n):=(z_n, - d \distsq (z_n)) \in 
\overline{{\rm SS }(K|_U) + \rho 
(U \times_{\CC} T^* \CC) } \subset T^*U
\end{equation}
for any $n=1,2, \ldots$. By taking a 
subsequence, we may assume that there 
exists a stratum $S \in \CS$ such that 
$S \cap U \not= \emptyset$ and 
$z_n \in S \cap U$ for any $n=1,2, \ldots$. 
First, let us consider the case where 
$T^*_SX \subset {\rm SS }(K)$. Then for 
each $n=1,2, \ldots$ there exist a 
stratum $S^{\prime} \in \CS$ such that 
$S \subset \overline{S^{\prime}}$ and 
$T^*_{S^{\prime}}X \subset {\rm SS }(K)$ 
and sequences $(z_{nm}, w_{nm}) \in T^*_{S^{\prime} \cap U}X$, 
$\lambda_{nm} \in \CC$ ($m=1,2, \ldots$) such that 
\begin{equation}\label{esseneqa} 
\lim_{m \to + \infty} z_{nm} =z_n, 
\qquad 
\lim_{m \to + \infty} 
\bigl( w_{nm} + \lambda_{nm} \cdot df( z_{nm} ) 
\bigr) =w_n. 
\end{equation}
Since $f|_{S \cap U}: S \cap U 
\longrightarrow \CC$ is tame at infinity 
by our assumption, there exists 
$0< \varepsilon_0 \ll 1$ such that 
for large enough $n \gg 0$ we have 
the condition $|| d(f|_{S \cap U})(z_n)|| 
\geq \varepsilon_0 >0$. On the other hand, 
by the Whitney condition of $\CS$ 
a subsequence of 
$w_{nm}/||w_{nm}|| \in \CC^N$  ($m=1,2, \ldots$, 
$||w_{nm}|| \not= 0$) 
converges to a point in $(T^*_SX)_{z_n}$. 
As $df(z) \in \CC^N$ is holomorphic with respect to 
$z$ and bounded on a neighborhood of the point $z_n \in 
S \cap U$ in $U$, these conditions 
in together imply that all the sequences 
$w_{nm} \in \CC^N$, $\lambda_{nm} \cdot df( z_{nm} ) \in \CC^N$, 
$\lambda_{nm} \in \CC$ ($m=1,2, \ldots$) 
are bounded. Indeed, if the sequence $w_{nm} \in \CC^N$ ($m=1,2, \ldots$) 
is not bounded, by \eqref{esseneqa} a subsequence 
of $\lambda_{nm} \cdot df( z_{nm} ) \in \CC^N$ ($m=1,2, \ldots$) 
goes to at infinity of a direction in $(T^*_SX)_{z_n}$. 
This contradicts with the condition $|| d(f|_{S \cap U})(z_n)|| 
\geq \varepsilon_0 >0$. 
Hence, by taking their subsequences, 
we may assume that all of them converge. 
In particular, by the Whitney condition of $\CS$ we have 
\begin{equation}
\lim_{m \to + \infty} 
w_{nm} \in (T^*_SX)_{z_n}. 
\end{equation}
Hence for any $n=1,2, \ldots$ 
there exists $\lambda_n \in \CC$ such that 
\begin{equation}
d( \distsq |_{S \cap U})(z_n)= \lambda_n \cdot 
d(f|_{S \cap U})(z_n) 
\end{equation}
and we get a contradiction by Proposition 
\ref{prop-7}. Next, let us consider the case where 
$T^*_SX$ is not contained in ${\rm SS }(K)$. 
Also in this case, for 
each $n=1,2, \ldots$ there exist a 
stratum $S^{\prime} \in \CS$ such that 
$S \subset \overline{S^{\prime}}$ and 
$T^*_{S^{\prime}}X \subset {\rm SS }(K)$ 
and sequences $(z_{nm}, w_{nm}) \in T^*_{S^{\prime} \cap U}X$, 
$\lambda_{nm} \in \CC$ ($m=1,2, \ldots$) such that 
\begin{equation}
\lim_{m \to + \infty} z_{nm} =z_n, 
\qquad 
\lim_{m \to + \infty} 
\bigl( w_{nm} + \lambda_{nm} \cdot df( z_{nm} ) 
\bigr) =w_n. 
\end{equation}
By taking their subsequences and using 
the Whitney condition of $\CS$, we may assume that 
the tangent planes $T_{z_{nm}}S^{\prime} \subset \CC^N$ of 
$S^{\prime}$ at $z_{nm} \in S^{\prime}$ 
converge to a linear subspace 
$\TT_n \subset T_{z_n}X \simeq \CC^N$ 
such that $T_{z_n}S \subset \TT_n$ as 
$m$ tends to $+ \infty$. Let 
\begin{equation}
\Phi_n: T^*_{z_n}X \simeq \CC^N \longrightarrow 
\TT^*_n:= {\rm Hom}_{\CC}( \TT_n, \CC )
\end{equation}
be the surjective linear map associated to 
the inclusion map $\TT_n \hookrightarrow T_{z_n}X \simeq \CC^N$. 
Then by our assumption for the stratum $S^{\prime}$, 
there exists $0< \varepsilon_0 \ll 1$ such that 
for large enough $n \gg 0$ we have 
the condition $|| \Phi_n \bigl( df(z_n) \bigr) || 
\geq \varepsilon_0 >0$. 
Similarly to the previous case, we can thus 
show that all the sequences 
$w_{nm} \in \CC^N$, $\lambda_{nm} \cdot df( z_{nm} ) \in \CC^N$, 
$\lambda_{nm} \in \CC$ ($m=1,2, \ldots$) 
are bounded. By taking subsequences, 
we can assume also that all of them converge. 
Hence for any $n=1,2, \ldots$ 
there exists $\lambda_n \in \CC$ such that 
\begin{equation}
 \Phi_n \bigl( d \distsq (z_n) \bigr) = \lambda_n \cdot 
 \Phi_n \bigl( df(z_n) \bigr). 
\end{equation}
By taking a subsequence of $z_n \in S \cap U$ 
($n=1,2, \ldots$), we may assume that 
there exists a stratum $S^{\prime} \in \CS$ 
 such that $S \subset \overline{S^{\prime}}$,  
$T^*_{S^{\prime}}X \subset {\rm SS }(K)$,  
and for any 
$n=1,2, \ldots$ the linear subspace 
$\TT_n \subset T_{z_n}X \simeq \CC^N$ at $z_n \in S \cap U$ 
is a limit of some tangent spaces 
of $S^{\prime}$. 
We fix such $S^{\prime}$ once and for all. 
Set $l:= {\rm dim }S^{\prime}= {\rm dim } \TT_n$ 
($n=1,2, \ldots$) and let ${\rm Gr}$ be the 
complex Grassmann manifold consisting of 
$l$-dimensional linear subspaces of $\CC^N$. 
Let $A$ be a subset of $(S \cap U) \times {\rm Gr}$ 
consisting of pairs $(z, \TT ) \in (S \cap U) \times {\rm Gr}$ 
such that there exists 
a sequence $z_m \in S^{\prime} \cap U$ ($m=1,2, \ldots$) 
in the stratum $S^{\prime}$ such that 
\begin{equation}
\lim_{m \to + \infty} z_m =z, 
\qquad 
\lim_{m \to + \infty} T_{z_m} S^{\prime} 
= \TT
\end{equation}
and for the surjective linear map  
\begin{equation}
\Phi_{(z, \TT )}: T^*_{z}X \simeq \CC^N \longrightarrow 
\TT^*:= {\rm Hom}_{\CC}( \TT , \CC )
\end{equation}
associated to the inclusion map $\TT \hookrightarrow T_{z}X 
\simeq \CC^N$ we have 
\begin{equation}
\Phi_{(z, \TT )} \bigl( d \distsq (z) \bigr) = \lambda \cdot 
\Phi_{(z, \TT )} \bigl( df(z) \bigr)
\end{equation}
for some $\lambda \in \CC$. Then 
we can easily show that $A \subset (S \cap U) \times {\rm Gr} 
\subset X \times {\rm Gr}$ is a semi-analytic 
subset of $X \times {\rm Gr}$. 
In the above argument, we obtained a sequence 
$(z_n, \TT_n) \in A \subset (S \cap U) \times {\rm Gr}$ 
in $A$ such that 
\begin{equation}
\lim_{n \to + \infty} |f(z_n)- \tau_0| =0,  
\qquad 
\lim_{n \to + \infty} || z_n || = + \infty. 
\end{equation}
Since the complex Grassmann manifold ${\rm Gr}$ 
is covered by finitely many open subsets 
isomorphic to $\CC^{l(N-l)}$, we can apply 
the curve selection lemma \cite[Lemma 4]{NZ92} 
of N{\'e}methi and Zaharia to find 
an analytic curve 
\begin{equation}
q(t)=(p(t), \TT (t)) :(0, \varepsilon ) \longrightarrow 
A \subset (S \cap U) \times {\rm Gr}
\end{equation}
in $A$ satisfying the conditions 
\begin{equation}
\lim_{t \to +0} |f(p(t))- \tau_0| =0,  
\qquad 
\lim_{t \to +0} || p(t) || = + \infty. 
\end{equation}
Then we obtain a contradiction as in 
the proof of Proposition \ref{prop-7}. 
More precisely, by taking a family of conormal 
vectors of the planes $\TT (t) \subset \CC^N$ 
which depend analytically on 
$t \in (0, \varepsilon )$, we obtain a result 
similar to Lemma \ref{lem-4} and 
can apply the proof of Proposition \ref{prop-7} 
to our situation. 
This completes the proof. 
\end{proof}

By the proof of Proposition \ref{prop-micros}, 
we obtain also the followng simple consequence. 

\begin{lemma}
In the situation of Proposition \ref{prop-micros}, 
for any $\tau_0 \in \CC$  there exist $R \gg 0$ and 
$0< \varepsilon \ll 1$ such that we have 
\begin{equation}
df(z) \notin {\rm SS }(K|_U)
\end{equation}
for any $z \in U$ such that $|| z || \geq R$ 
and $| f(z)- \tau_0|< \varepsilon$. 
\end{lemma}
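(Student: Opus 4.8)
The plan is to argue by contradiction and, as in the proof of Proposition~\ref{prop-micros}, to reduce the situation to an analysis along a single stratum $S'$ to which the tameness hypothesis of that proposition applies, the contradiction coming from the uniform lower bound that tameness provides. Suppose the assertion fails for some $\tau_0 \in \CC$. Then for each $n \geq 1$ there is a point $z_n \in U$ with $\| z_n \| \geq n$, $|f(z_n) - \tau_0| < 1/n$ and $(z_n, df(z_n)) \in {\rm SS}(K|_U)$. Since $U$ is open we have ${\rm SS}(K|_U) = {\rm SS}(K) \cap T^*U \subset {\rm SS}(K)$, and by the structure of the homogeneous Lagrangian set ${\rm SS}(K)$ (see e.g.\ \cite[Chapter~VIII]{KS90}) each of its irreducible components is the closure $\overline{T^*_{S'}X}$ of the conormal bundle of a stratum $S' \in \CS$ with $T^*_{S'}X \subset {\rm SS}(K)$; here we may enlarge $\CS$, keeping it a Whitney stratification adapted to $K|_U$, so that this holds. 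As $\CS$ is finite, after passing to a subsequence we may fix one such stratum $S'$ with $(z_n, df(z_n)) \in \overline{T^*_{S'}X}$ for all $n$. Then $z_n \in \overline{S'}$, and since $z_n \in U$ with $U$ open, $S' \cap U \neq \emptyset$, so by the hypothesis of Proposition~\ref{prop-micros} the restriction $f|_{S' \cap U} : S' \cap U \to \CC$ is tame at infinity: there exist $R_0 \gg 0$ and $\varepsilon_0 > 0$ with $\| d {\rm Re}(f|_{S' \cap U})(w) \| \geq \varepsilon_0$ for every $w \in S' \cap U$ such that $\| w \| > R_0$. By the Cauchy--Riemann equations (cf.\ the proof of Proposition~\ref{prop-7}) this means that the restriction of $df(w)$ to the complex tangent space $T_wS'$ has norm bounded below by a positive constant $\varepsilon_1 > 0$ for all such $w$.

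Fix now $n$ with $\| z_n \| > R_0 + 1$. Since $(z_n, df(z_n)) \in \overline{T^*_{S'}X}$, choose $(z_{nm}, w_{nm}) \in T^*_{S'}X$ with $z_{nm} \to z_n$ and $w_{nm} \to df(z_n)$ as $m \to \infty$; then $z_{nm} \in S' \cap U$ and $\| z_{nm} \| > R_0$ for $m$ large. Passing to a subsequence in $m$ and using the compactness of the complex Grassmannian, we may assume that the tangent spaces $T_{z_{nm}}S' \subset \CC^N$ converge to a $\CC$-linear subspace $\TT_n \subset T_{z_n}X \simeq \CC^N$. For each $m$ the covector $w_{nm}$ annihilates $T_{z_{nm}}S'$, so letting $m \to \infty$ we find that $df(z_n)$ annihilates $\TT_n$; in particular the restriction of $df(z_n)$ to $\TT_n$ vanishes. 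On the other hand, the restriction of $df(z_{nm})$ to $T_{z_{nm}}S'$ has norm $\geq \varepsilon_1$ for $m$ large, while $df(\cdot)$ is holomorphic, hence continuous, on $U$, $z_{nm} \to z_n$ and $T_{z_{nm}}S' \to \TT_n$; since the norm of the restriction of a covector to a linear subspace depends continuously on the basepoint, the covector and the subspace, these norms converge to the norm of the restriction of $df(z_n)$ to $\TT_n$, which is $0$. This contradiction proves the lemma.

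The only point requiring a little care --- and the reason the lemma follows ``from the proof'' of Proposition~\ref{prop-micros} rather than from its statement --- is that the stratum of $\CS$ actually containing the points $z_n$ need not be one of the strata $S$ with $T^*_SX \subset {\rm SS}(K)$, so one cannot invoke tameness of $f$ on that stratum directly; instead the limiting tangent-plane argument must be run along a stratum $S'$ whose conormal closure contains the covectors $(z_n, df(z_n))$, which is exactly what the structure of the conic Lagrangian ${\rm SS}(K)$ supplies. All remaining verifications (continuity of the restriction of a covector to a varying linear subspace, and the passage to limits of conormals via the Whitney condition) are routine and are already carried out, in a more elaborate form, in the proof of Proposition~\ref{prop-micros}.
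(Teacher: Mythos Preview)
Your argument is correct and follows essentially the same route that the paper intends: the paper gives no separate proof but simply notes that the lemma is a simple consequence of the proof of Proposition~\ref{prop-micros}, and your contradiction argument via a sequence $(z_n,df(z_n))\in\overline{T^*_{S'}X}$, limiting tangent planes, and the tameness lower bound on $S'\cap U$ is exactly the relevant piece of that proof, stripped of the extra $N^*(U_r)$ and $\rho(U\times_\CC T^*\CC)$ ingredients that are not needed here.
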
 

First, let us consider the 
special case where $I(f)=P^{-1}(0) \cap Q^{-1}(0)
= \emptyset$. Note that this condition is 
satisfied for example if $f= \frac{P}{Q}$ is a polynomial 
i.e. $Q=1$ or $P=1$. 

\begin{theorem}\label{np-vc}
In the situation of Proposition \ref{prop-micros}, 
assume also that $I(f)=P^{-1}(0) \cap Q^{-1}(0)
= \emptyset$. 
Then we have the following results. 
\begin{enumerate}
\item[\rm{(i)}] 
For any $\tau_0 \in \CC$ there exist $R \gg 0$ and 
$0< \varepsilon \ll 1$ such that for the 
inclusion maps $i_r: B_r(0) \hookrightarrow X= \CC^N$ 
($r>R$) we have isomorphisms 
\begin{equation}
 \rmR f_! (i_r)_! ( i_r^{-1} K ) 
\simeq  \rmR f_!  K_{B_r(0)}
\simto  \rmR f_! K
\end{equation}
on the open subset $\{ \tau \in \CC \ | \ 
 | \tau - \tau_0| < \varepsilon \} \subset \CC$ of $\CC$. 
\item[\rm{(ii)}] 
For any $\tau_0 \in \CC$, if we assume also that 
\begin{equation}
\{ z \in f^{-1}( \tau_0) \ | \ 
df(z) \in {\rm SS }(K) \} \subset 
f^{-1}( \tau_0) \cap {\rm supp }(K) 
\end{equation}
is a finite subset of $f^{-1}( \tau_0) \cap {\rm supp }(K)$ 
and let $p_1,p_2, \ldots, p_k$ be the points in it, 
then for the vanishing cycle 
$\phi_{\tau - \tau_0}( \rmR f_! K ) \in 
\BDC (\{ \tau_0 \} )$ of $\rmR f_! K$ 
along the function $\tau - \tau_0: \CC \longrightarrow \CC$ 
we have an isomorphism 
\begin{equation}
\phi_{\tau - \tau_0}( \rmR f_! K) \simeq 
\bigoplus_{i=1}^k 
\phi_{f- \tau_0}( K)_{p_i}. 
\end{equation}
If we assume moreover that 
$K[N] \in \Dbc (X^{\an})$ is a perverse sheaf, 
then we have also a concentration 
\begin{equation}
H^j \phi_{\tau - \tau_0}( \rmR f_! K ) \simeq 0 
\qquad (j \not= N-1). 
\end{equation}
\end{enumerate}
\end{theorem}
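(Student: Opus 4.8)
The plan is to exploit the fact that the hypothesis $I(f)=P^{-1}(0)\cap Q^{-1}(0)=\emptyset$ forces $f$ to extend to a morphism $f:X=\CC^N\to\PP^1$ with $f^{-1}(\infty)=Q^{-1}(0)$; hence for each $\tau\in\CC$ the fiber $f^{-1}(\tau)=\{P-\tau Q=0\}$ is a closed algebraic subvariety of $\CC^N$ contained in $U$, and $f^{-1}(L)\cap\{\|z\|\le r\}$ is compact for every compact $L\subset\CC$ and $r>0$. In particular $f$ is proper on $\{z\in U:\|z\|\le r\}$, hence on $\supp((K|_U)_{U_r})$; this properness is the only place where $I(f)=\emptyset$ is really used beyond part (i), and it is what makes the vanishing cycle computation in (ii) work.

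For (i), I first note that $(i_r)_!(i_r^{-1}K)=K_{B_r(0)}$ and $K_{B_r(0)}|_U=(K|_U)_{U_r}$, so the claim is that the natural morphism $\rmR f_!((K|_U)_{U_r})\to\rmR f_!(K|_U)$ is an isomorphism over $\{|\tau-\tau_0|<\varepsilon\}$ for $r\ge R$. By Proposition \ref{prop-micros}, over $\{|f(z)-\tau_0|<\varepsilon\}$ the spheres $\partial U_r$ ($r\ge R$) are non-characteristic for $K|_U$ relative to $f$, i.e. $N^*(U_r)\cap\overline{{\rm SS}(K|_U)+\rho(U\times_{\CC}T^*\CC)}\subset T^*_UU$, and this is exactly the hypothesis under which the non-proper direct image theorem \cite[Theorem 4.4.1]{KS85}, applied to $K|_U$, $f:U\to\CC$ and the exhaustion $\{U_r\}_{r>R}$ of $U$, produces the desired isomorphism. (Equivalently, by base change the stalk of $\rmR f_!K$ at $\tau$ is $\rmR\Gamma_c(f^{-1}(\tau);K)$; the function $\distsq|_{f^{-1}(\tau)}$ is proper, and by Proposition \ref{prop-7} together with the Whitney-stratification analysis in the proof of Proposition \ref{prop-micros} all of its ${\rm SS}(K)$-critical points lie in $B_R(0)$ for $|\tau-\tau_0|<\varepsilon$; microlocal Morse theory then shows the restriction map $\rmR\Gamma_c(f^{-1}(\tau)\cap U_r;K)\to\rmR\Gamma_c(f^{-1}(\tau);K)$ is an isomorphism for $r\ge R$, and a morphism of constructible sheaves on the disk inducing isomorphisms on all stalks is an isomorphism.)

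For (ii), fix $r>R$. By (i) I may replace $\rmR f_!K$ by $\rmR f_!((K|_U)_{U_r})$ before applying $\phi_{\tau-\tau_0}$. Since $f$ is proper on $\supp((K|_U)_{U_r})$, this equals $\rmR f_*((K|_U)_{U_r})$ and $\phi_{\tau-\tau_0}$ commutes with this proper direct image, so that
\[\phi_{\tau-\tau_0}(\rmR f_!K)\simeq\rmR\Gamma(f^{-1}(\tau_0);\mathcal{G}),\qquad\mathcal{G}:=\phi_{f-\tau_0}((K|_U)_{U_r}).\]
It remains to show that $\mathcal{G}$ is the skyscraper sheaf $\bigoplus_{i=1}^k(\phi_{f-\tau_0}(K)_{p_i})_{p_i}$. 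On the open ball $U_r$, $\mathcal{G}$ coincides with $\phi_{f-\tau_0}(K|_U)$, whose support lies in $\{z\in f^{-1}(\tau_0):df(z)\in{\rm SS}(K)\}=\{p_1,\dots,p_k\}$; moreover $\|p_i\|<R$ for all $i$, since $df(z)\notin{\rm SS}(K|_U)$ whenever $\|z\|\ge R$ and $|f(z)-\tau_0|<\varepsilon$ (the consequence of Proposition \ref{prop-micros} recorded just after its proof), so $\{p_1,\dots,p_k\}\subset B_R(0)\subset U_r$; and $\mathcal{G}$ vanishes off $\overline{U_r}$. The one step still requiring real input is the vanishing of $\mathcal{G}$ along $\partial U_r\cap f^{-1}(\tau_0)$: at such a point $z$ one has $\phi_{f-\tau_0}(K|_U)_z=0$ by the same consequence, so the triangle $(K|_U)_{\{\distsq<r^2/2\}}\to K|_U\to(K|_U)_{\{\distsq\ge r^2/2\}}\overset{+1}{\longrightarrow}$ reduces the claim to $\phi_{f-\tau_0}((K|_U)_{\{\distsq\ge r^2/2\}})_z=0$; were this false, $df(z)$ would lie in the microlocal sum of ${\rm SS}(K|_U)_z$ with $N^*(\{\distsq\ge r^2/2\})_z$ (a ray on the real line $\RR\,d\distsq(z)$), so, using $df(z)\notin{\rm SS}(K|_U)$ and $\rho(U\times_{\CC}T^*\CC)_z=\CC\,df(z)$, the nonzero covector $d\distsq(z)$ would lie in ${\rm SS}(K|_U)_z+\rho(U\times_{\CC}T^*\CC)_z\subset\overline{{\rm SS}(K|_U)+\rho(U\times_{\CC}T^*\CC)}$; but $d\distsq(z)$ also spans $N^*(U_r)_z$, contradicting Proposition \ref{prop-micros} (with its antipodal version). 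Hence $\mathcal{G}$ is supported on $\{p_1,\dots,p_k\}$ with stalk $\phi_{f-\tau_0}(K)_{p_i}$ at $p_i$, and $\phi_{\tau-\tau_0}(\rmR f_!K)\simeq\bigoplus_{i=1}^k\phi_{f-\tau_0}(K)_{p_i}$.

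For the last assertion, if $K[N]$ is perverse then $\phi_{f-\tau_0}(K)[N-1]$ is perverse (t-exactness of the vanishing cycle functor up to shift, as recalled in Section \ref{sec:s2}, together with Lemma \ref{lem-2}(iii) identifying the meromorphic and classical vanishing cycles near the points $p_i\in U$), and it has $0$-dimensional support $\{p_1,\dots,p_k\}$, hence is a direct sum of skyscrapers in degree $0$; therefore $\phi_{f-\tau_0}(K)_{p_i}$ is concentrated in degree $N-1$, and $H^j\phi_{\tau-\tau_0}(\rmR f_!K)=\bigoplus_iH^j\phi_{f-\tau_0}(K)_{p_i}=0$ for $j\ne N-1$. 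I expect the essential difficulty to be concentrated in part (i) — or rather, already absorbed into Proposition \ref{prop-micros} and, behind it, Proposition \ref{prop-7} and the tameness-at-infinity estimates; granting those, the rest is bookkeeping with proper base change and microlocal Morse theory, the one delicate point being the boundary vanishing of $\mathcal{G}$ in (ii).
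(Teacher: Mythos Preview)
Your proof is correct, and part (i) coincides with the paper's (invoke \cite[Theorem 4.4.1]{KS85} via Proposition \ref{prop-micros}). For part (ii) you and the paper diverge at the one nontrivial step, the treatment of the boundary $\partial U_r\cap f^{-1}(\tau_0)$. You argue that $\mathcal{G}=\phi_{f-\tau_0}((K|_U)_{U_r})$ vanishes \emph{pointwise} there: the tensor estimate for $SS((K|_U)_{\{\distsq\ge r^2/2\}})$ together with the $\CC^*$-conicity of $SS(K|_U)$ forces $d\distsq(z)\in SS(K|_U)_z+\CC\,df(z)$, contradicting Proposition \ref{prop-micros}. The paper does \emph{not} establish pointwise vanishing; instead it introduces a second radius $r>r'>R$, notes from (i) that $\rmR f_!(K_{B_r(0)\setminus B_{r'}(0)})\simeq 0$ on $\{|\tau-\tau_0|<\varepsilon\}$, hence $\rmR\Gamma\bigl(f^{-1}(\tau_0);\phi_{f-\tau_0}(K_{B_r(0)\setminus B_{r'}(0)})\bigr)\simeq 0$, and observes that $\rmR\Gamma\bigl(f^{-1}(\tau_0)\cap\partial B_r(0);\phi_{f-\tau_0}(K_{B_r(0)})\bigr)$ is a direct summand of this (the support of $\phi_{f-\tau_0}(K_{B_r(0)\setminus B_{r'}(0)})$ splits into the two disjoint pieces $\partial B_r(0)$ and $\partial B_{r'}(0)$), hence vanishes. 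Your route yields the stronger conclusion $\supp\mathcal{G}=\{p_1,\dots,p_k\}$ at the price of verifying the non-characteristic hypothesis for the microsupport tensor estimate; the paper's two-radii trick is slicker, uses only the isomorphism already proved in (i), and is the template reused verbatim in the later Theorem \ref{prop-8-new}.
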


\begin{proof}
By combining \cite[Theorem 4.4.1]{KS85} with 
Proposition \ref{prop-micros} 
we obtain the isomorphism in (i). 
Let us prove (ii). For $R \gg 0$ such that 
$p_1,p_2, \ldots, p_k \in B_R(0)$ and 
$0< \varepsilon \ll 1$ in (i) 
let us fix $r>R$. 
Then the morphism $f$ being proper on 
the support of 
\begin{equation}
(i_r)_! ( i_r^{-1} K ) 
\simeq K_{B_r(0)}, 
\end{equation}
by (i) we obtain an isomorphism 
\begin{equation}
\phi_{\tau - \tau_0}( \rmR f_!  K )  \simeq 
\phi_{\tau - \tau_0} (  \rmR  f_* K_{B_r(0)} ) 
 \simeq 
 \rmR  \Gamma ( f^{-1}( \tau_0); 
\phi_{f- \tau_0} ( K_{B_r(0)} ) ). 
\end{equation}
Moreover by our assumption, the 
complex hypersurface $f^{-1}( \tau_0) \subset 
X= \CC^N$ is smooth on a neighborhood of 
\begin{equation}
\bigl( f^{-1}( \tau_0) \setminus 
\{ p_1,p_2, \ldots, p_k \} 
\bigr) \cap {\rm supp }(K)
\end{equation}
in $X= \CC^N$. Then by 
\cite[Propositon 8.6.3]{KS90} we see that 
the support of $\phi_{f- \tau_0} ( K_{B_r(0)} ) 
\in \BDC ( f^{-1}( \tau_0) )$ is contained in the set 
\begin{equation}
\{ p_1,p_2, \ldots, p_k \} \sqcup 
\bigl( f^{-1}( \tau_0) \cap \partial B_r(0) \bigr). 
\end{equation}
Now let us take $r^{\prime} >R$ such that 
$r> r^{\prime} >R$. Then by the proof of 
\cite[Theorem 4.4.1]{KS85} there exists an 
isomorphism 
\begin{equation}
\rmR f_! K_{B_{r^{\prime}}(0)}
\simto  \rmR f_! K_{B_r(0)}
\end{equation}
on the open subset $\{ \tau \in \CC \ | \ 
 | \tau - \tau_0| < \varepsilon \} \subset \CC$ of $\CC$. 
In other words, we have a vanishing 
\begin{equation}
\rmR f_! ( K_{B_r(0) \setminus B_{r^{\prime}}(0)} ) 
\simeq 0
\end{equation}
there. This implies that we have 
\begin{equation}
 \rmR  \Gamma ( f^{-1}( \tau_0); 
\phi_{f- \tau_0} ( K_{B_r(0) \setminus B_{r^{\prime}}(0)} ) )
\simeq 0. 
\end{equation}
Since the support of $\phi_{f- \tau_0} ( 
K_{B_r(0) \setminus B_{r^{\prime}}(0)} ) 
\in \BDC ( f^{-1}( \tau_0) )$ is contained in the set 
\begin{equation}
\bigl( f^{-1}( \tau_0) \cap \partial B_r(0) \bigr)
 \sqcup 
\bigl( f^{-1}( \tau_0) \cap \partial B_{r^{\prime}}(0) \bigr)
\end{equation}
and hence the object $\rmR \Gamma ( f^{-1}( \tau_0) \cap \partial B_r(0); 
\phi_{f- \tau_0} ( K_{B_r(0)} ) )$ is a direct 
summand of the one $ \rmR  \Gamma ( f^{-1}( \tau_0); 
\phi_{f- \tau_0}( K_{B_r(0) \setminus B_{r^{\prime}}(0)} ) ) 
\simeq 0$, we obtain also a vanishing 
\begin{equation}
\rmR  \Gamma ( f^{-1}( \tau_0) \cap \partial B_r(0) ; 
\phi_{f- \tau_0} ( K_{B_r(0)} ) )
\simeq 0. 
\end{equation}
From this, the first assertion of (ii) immediately follows. 
To obtain the last one, it suffices to use  
the t-exactness of the functor 
$\phi_{f- \tau_0}( \cdot )[-1]$. 
\end{proof}

Similarly, by Proposition \ref{prop-micros} and 
\cite[Theorem 4.4.1]{KS85} we obtain 
the following result. 

\begin{theorem}\label{np-vc-new}
In the situation of Proposition \ref{prop-micros}, 
assume also that $I(f)=P^{-1}(0) \cap Q^{-1}(0)
= \emptyset$. 
Then we have the following results. 
\begin{enumerate}
\item[\rm{(i)}] 
For any $\tau_0 \in \CC$ there exist $R \gg 0$ and 
$0< \varepsilon \ll 1$ such that for the 
inclusion maps $i_r: B_r(0) \hookrightarrow X= \CC^N$ 
($r>R$) we have isomorphisms 
\begin{equation}
 \rmR f_* \rmR  (i_r)_* ( i_r^{-1} K ) 
\simeq  \rmR f_*  K_{B_r(0)}
\simto  \rmR f_* K
\end{equation}
on the open subset $\{ \tau \in \CC \ | \ 
 | \tau - \tau_0| < \varepsilon \} \subset \CC$ of $\CC$. 
\item[\rm{(ii)}] 
For any $\tau_0 \in \CC$, if we assume also that 
\begin{equation}
\{ z \in f^{-1}( \tau_0) \ | \ 
df(z) \in {\rm SS }(K) \} \subset 
f^{-1}( \tau_0) \cap {\rm supp }(K) 
\end{equation}
is a finite subset of $f^{-1}( \tau_0) \cap {\rm supp }(K)$ 
and let $p_1,p_2, \ldots, p_k$ be the points in it, 
then for the vanishing cycle 
$\phi_{\tau - \tau_0}( \rmR f_* K ) \in 
\BDC (\{ \tau_0 \} )$ of $\rmR f_* K$ 
along the function $\tau - \tau_0: \CC \longrightarrow \CC$ 
we have an isomorphism 
\begin{equation}
\phi_{\tau - \tau_0}( \rmR f_* K) \simeq 
\bigoplus_{i=1}^k 
\phi_{f- \tau_0}( K)_{p_i}. 
\end{equation}
If we assume moreover that 
$K[N] \in \Dbc (X^{\an})$ is a perverse sheaf, 
then we have also a concentration 
\begin{equation}
H^j \phi_{\tau - \tau_0}( \rmR f_* K ) \simeq 0 
\qquad (j \not= N-1). 
\end{equation}
\end{enumerate}
\end{theorem}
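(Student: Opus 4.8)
The plan is to mimic the proof of Theorem \ref{np-vc}, systematically replacing $\rmR f_!$ and $(i_r)_!$ by $\rmR f_*$ and $\rmR (i_r)_*$, and feeding into \cite[Theorem 4.4.1]{KS85} the micro-support estimate of Proposition \ref{prop-micros} in the form involving the antipodal conormal $N^*(U_r)^{{\rm a}}$, which Proposition \ref{prop-micros} explicitly supplies. For (i), fix $\tau_0 \in \CC$. Using Proposition \ref{prop-micros} for $N^*(U_r)^{{\rm a}}$ we can choose $R \gg 0$ and $0 < \varepsilon \ll 1$ so that, over the disk $\{ | \tau - \tau_0| < \varepsilon \} \subset \CC$, the family $U_r$ ($r \geq R$) satisfies the hypothesis of \cite[Theorem 4.4.1]{KS85} for $K|_U$ and $f: U \longrightarrow \CC$. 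That theorem then yields the two isomorphisms over the disk: the comparison of the non-proper extension $\rmR (i_r)_* ( i_r^{-1} K )$ with the proper one $K_{B_r(0)} = (i_r)_! ( i_r^{-1} K )$, and the stabilization $\rmR f_* K_{B_r(0)} \simto \rmR f_* K$ as $r$ grows. One may also deduce (i) directly from Theorem \ref{np-vc}(i) by Verdier duality, via $\rmR f_* \circ \mathbb{D}_U \simeq \mathbb{D}_{\CC} \circ \rmR f_!$ together with $i_r^! \simeq i_r^{-1}$ (the embedding $i_r$ being open), after replacing $K$ by $\mathbb{D}_X K$; I would include this as a consistency check. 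The role of the hypothesis $I(f) = P^{-1}(0) \cap Q^{-1}(0) = \emptyset$ is that it makes $f$ the restriction of a morphism $X \to \PP^1$, so that $f^{-1}(\{ |\tau - \tau_0| < \varepsilon \}) \cap \overline{B_r(0)}$ is compact; this will be used in (ii).

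For (ii), fix $r > R$ with $p_1, \dots, p_k \in B_r(0)$. By the previous remark $f$ is proper, over the disk $\{ |\tau - \tau_0| < \varepsilon \}$, on the support of $\rmR (i_r)_* ( i_r^{-1} K )$; hence there $\rmR f_* \simeq \rmR f_!$ on that complex and the vanishing cycle functor commutes with it, so that by (i)
\[
\phi_{\tau - \tau_0}( \rmR f_* K ) \simeq \rmR \Gamma \bigl( f^{-1}( \tau_0);\ \phi_{f - \tau_0}( \rmR (i_r)_* ( i_r^{-1} K )) \bigr).
\]
By \cite[Proposition 8.6.3]{KS90}, combined with the micro-support bound of Proposition \ref{prop-micros}, the complex $\phi_{f - \tau_0}( \rmR (i_r)_* ( i_r^{-1} K ))$ is supported on $\{ p_1, \dots, p_k \} \sqcup ( f^{-1}( \tau_0) \cap \partial B_r(0))$, and near each $p_i$ it coincides with $\phi_{f - \tau_0}( K )$. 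To kill the boundary contribution, choose $R < r' < r$; the proof of (i) also provides an isomorphism $\rmR f_* \rmR (i_{r'})_* ( i_{r'}^{-1} K ) \simto \rmR f_* \rmR (i_r)_* ( i_r^{-1} K )$ over the disk, whence $\rmR f_*$ of the ``annulus'' complex vanishes there, hence so does $\rmR \Gamma ( f^{-1}( \tau_0) \cap \partial B_r(0);\ \phi_{f - \tau_0}( \rmR (i_r)_* ( i_r^{-1} K )))$, which is a direct summand of it. This gives $\phi_{\tau - \tau_0}( \rmR f_* K ) \simeq \bigoplus_{i=1}^k \phi_{f - \tau_0}( K )_{p_i}$. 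Finally, if $K[N]$ is perverse, then $\phi_{f - \tau_0}( K )[N-1]$ is perverse on $f^{-1}( \tau_0)$ by the t-exactness of $\phi_{f - \tau_0}( \cdot )[-1]$; a perverse sheaf on $f^{-1}( \tau_0)$ with finite support is concentrated in degree $0$, so each $\phi_{f - \tau_0}( K )_{p_i}$ is concentrated in degree $N-1$, which yields $H^j \phi_{\tau - \tau_0}( \rmR f_* K ) \simeq 0$ for $j \neq N-1$.

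The main obstacle I anticipate is the microlocal bookkeeping in passing from the $\rmR f_!$-statement to the $\rmR f_*$-statement: one must verify that \cite[Theorem 4.4.1]{KS85} is invoked with precisely the conormal data it requires (here the antipodal set $N^*(U_r)^{{\rm a}}$ rather than $N^*(U_r)$), that $\rmR (i_r)_* ( i_r^{-1} K )$ — which, unlike $(i_r)_! ( i_r^{-1} K )$, is not the naive extension by zero — still has its micro-support near $\partial B_r(0)$ controlled by $\overline{{\rm SS}(K|_U) + \rho(U \times_{\CC} T^* \CC)}$, and that the extra cohomology it creates along $\partial B_r(0)$ is genuinely annihilated by the $r' < r$ comparison. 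Once these points are in place, the remaining arguments run exactly parallel to the proof of Theorem \ref{np-vc}, and the Verdier-dual derivation provides an independent check of the final formula.
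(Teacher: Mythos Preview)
Your overall plan matches the paper's, which simply says ``Similarly, by Proposition \ref{prop-micros} and \cite[Theorem 4.4.1]{KS85}''; your identification of the antipodal conormal estimate as the relevant input, and the Verdier-duality alternative, are both exactly right, and your argument for (ii) is correct.

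There is, however, a real problem with the middle term $\rmR f_* K_{B_r(0)}$ in the display of (i), which you try to obtain from \cite[Theorem 4.4.1]{KS85}. That term is \emph{not} isomorphic to the other two in general. Since $K_{B_r(0)} = (i_r)_! i_r^{-1} K$ has support in the compact set $\overline{B_r(0)}$ and $I(f)=\emptyset$, one has $\rmR f_* K_{B_r(0)} = \rmR f_! K_{B_r(0)} \simeq \rmR f_! K$ by Theorem \ref{np-vc}(i); but the outer terms are isomorphic to $\rmR f_* K$, and $\rmR f_! K \not\simeq \rmR f_* K$ already for $X=\CC^2$, $f(z_1,z_2)=z_1$, $K=\CC_X$ (the stalks over any $\tau$ are $\CC[-2]$ versus $\CC[0]$), a case which satisfies all the hypotheses. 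So the chain of isomorphisms in (i) cannot be proved as written; the middle term appears to be a copy-paste slip from Theorem \ref{np-vc}(i). The actual content of (i) is the outer isomorphism $\rmR f_* K \simto \rmR f_* \rmR (i_r)_* ( i_r^{-1} K )$, which is precisely what your Verdier-dual argument (or the direct application of \cite[Theorem 4.4.1]{KS85} with $N^*(U_r)^{{\rm a}}$) establishes, and it is the only part of (i) that your proof of (ii) uses.
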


Next, let us consider the problem in the general case i.e. 
in the absence of the condition $I(f)=P^{-1}(0) \cap Q^{-1}(0)
= \emptyset$. For this purpose, we regard 
the projection $\tau : X \times \CC 
\longrightarrow \CC$ as a holomorphic function on 
$X \times \CC$ and denote it by $h$. 

\begin{definition}\label{def-7new} 
Let $S \subset X \times \CC$ be a smooth quasi-affine 
subvariety of $X \times \CC$. 
Then we say that the restriction $h|_S$ of 
the function $h$ to $S$ is relatively tame 
at infinity for the projection $X \times \CC 
\longrightarrow X$ if there exist $R \gg 0$ and 
$0< \varepsilon \ll 1$ such that 
\begin{equation}
\{ (z, \tau ) \in S \ | \ || 
 d {\rm Re} (h|_S)(z, \tau ) || < \varepsilon, \ 
|| z || >R \} = \emptyset. 
\end{equation}
\end{definition}

For the rational function $f= \frac{P}{Q}: U= 
X \setminus Q^{-1}(0) \longrightarrow \CC$ and 
the algebraic constructible sheaf 
$K \in \Dbc (X^{\an})$ by using the (not necessarily) 
closed embedding $i_f:U \longrightarrow X \times \CC$
($z \longmapsto (z,f(z))$) we set 
\begin{equation}
L:= i_{f !} (K|_U) \in \Dbc (X^{\an} \times \CC ). 
\end{equation}
Then there exists an algebraic Whitney 
stratification $\CS$ of $X \times \CC$ such that 
\begin{equation}
{\rm SS }(L) \subset \bigsqcup_{S \in \CS} 
T^*_S(X \times \CC ). 
\end{equation}
Moreover we may assume also that for $\CS$ and a Whitney 
stratification $\CS_0$ of $X = \CC^N$ 
the projection $X \times \CC 
\longrightarrow X$ is a stratified fiber 
bundle as in the assertion of the theorem in 
\cite[page 43]{GM88}. Indeed, after extending 
$L$ to a constructible sheaf on $X \times \PP$ 
we can apply this theorem to the 
proper morphism $X \times \PP 
\longrightarrow X$ ($(z, \tau ) \longmapsto z$). 
Let us fix such Whitney stratifications 
$\CS$ and $\CS_0$. Then we shall say that a stratum 
$S \in \CS$ in $\CS$ is horizontal if 
for its projection $S_0 \in \CS_0$ to $X= \CC^N$ 
we have ${\rm dim }S_0 = {\rm dim }S$ i.e. 
the surjective 
submersion $S \longrightarrow S_0$ induced 
by the projection 
$X \times \CC \longrightarrow X$ 
is a finite covering. Obviously, if 
$S \in \CS$ is not horizontal then 
for its projection $S_0 \in \CS_0$ to $X= \CC^N$ 
we have ${\rm dim }S_0 = {\rm dim }S-1$ and hence 
the restriction $h|_S$ of 
the function $h$ to $S$ is relatively tame 
at infinity for the projection $X \times \CC 
\longrightarrow X$. Namely, our relative 
tameness at infinity of $h|_S$ is a 
constraint only for horizontal strata $S \in \CS$. 
Moreover it is easy to see that if for 
a horizontal stratum $S \in \CS$ in $\CS$ 
there exists a holomorphic function $g$ 
on its projection $S_0 \in \CS_0$ to $X$ 
such that 
\begin{equation}
S= \{ (z, g(z)) \ | \ z \in S_0 \} \subset 
X \times \CC
\end{equation}
then the relative 
tameness at infinity of $h|_S$ is equivalent 
to the tameness at infinity of $g:S_0 
\longrightarrow \CC$ in Definition \ref{def-7}. 
The proof of the following proposition is very similar 
to that of Proposition \ref{prop-micros} and 
we omit it. 

\begin{proposition}\label{prop-micros-new} 
Let $\CS$ be the Whitney 
stratification of $X \times \CC$ as above and 
assume that for any stratum $S \in \CS$ in it 
such that 
$T^*_S(X \times \CC ) \subset {\rm SS }(L)$ 
the restriction $h|_{S}: S 
\longrightarrow \CC$ of the function $h$ to 
$S \subset X \times \CC$ 
is relatively tame at infinity for 
the projection $X \times \CC 
\longrightarrow X$. 
Then for any $\tau_0 \in \CC$  there exist $R \gg 0$ and 
$0< \varepsilon \ll 1$ such that we have 
\begin{equation}
N^*( B_r(0) \times \CC ) \cap \Bigl\{ 
\overline{{\rm SS }(L) + \rho 
\bigl( (X \times \CC ) \times_{\CC} T^* \CC \bigr) }
\Bigr\} \subset T^*_{(X \times \CC )} (X \times \CC ) 
\end{equation}
over 
the open subset $\{ (z, \tau ) \in X \times \CC \ | \ 
| \tau - \tau_0| < \varepsilon \} 
\subset X \times \CC$ of $X \times \CC$ for any $r \geq R$, 
where $\rho : ( X \times \CC ) \times_{\CC} T^* \CC 
\hookrightarrow T^* ( X \times \CC )$ is the closed embedding 
associated to the projection $h: X \times \CC \longrightarrow \CC$. 
Moreover the same is true even after replacing 
$N^*( B_r(0) \times \CC )$ 
by its antipodal set $N^*( B_r(0) \times \CC )^{{\rm a}}$. 
\end{proposition}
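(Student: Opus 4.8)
The plan is to follow the proof of Proposition \ref{prop-micros} almost verbatim, now on $X \times \CC$ in place of $X$, with the function $h = \tau$ in the role of $f$ and the pull-back of $\distsq$ under the first projection $X \times \CC \to X$, $(z,\tau)\mapsto z$, in the role of $\distsq$; its differential at $(z,\tau)$ is $(d\distsq(z),0)$, which is exactly the conormal direction of the hypersurface $\partial B_r(0)\times\CC$. Arguing by contradiction, failure of the conclusion produces a sequence $(z_n,\tau_n)\in X\times\CC$ with $|\tau_n-\tau_0|\to 0$ and $\|z_n\|\to+\infty$ such that the inner conormal $\bigl(z_n,\tau_n;-(d\distsq(z_n),0)\bigr)\in N^*(B_r(0)\times\CC)$ lies in $\overline{{\rm SS}(L)+\rho\bigl((X\times\CC)\times_{\CC}T^*\CC\bigr)}$. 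Passing to a subsequence, all the $(z_n,\tau_n)$ lie in a single stratum $S\in\CS$, and exactly as in Proposition \ref{prop-micros} the covector above is a limit, as $m\to+\infty$, of covectors $(\zeta_{nm},\eta_{nm})\in T^*_{S'}(X\times\CC)$ on a fixed stratum $S'$ with $S\subset\overline{S'}$ and $T^*_{S'}(X\times\CC)\subset{\rm SS}(L)$, together with scalars $\lambda_{nm}\in\CC$; since $dh=d\tau$ has vanishing $X$-component and $\tau$-component equal to $1$ (so that $\rho$ acts only in the $\tau$-direction), this means $\zeta_{nm}\to-d\distsq(z_n)$ and $\eta_{nm}+\lambda_{nm}\to 0$.

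The genuinely new point, and where the stratified fibre bundle structure of $X\times\CC\to X$ over $\CS$ and $\CS_0$ enters, is the horizontal/non-horizontal dichotomy. If the stratum $S$ (resp. $S'$) is not horizontal, its tangent spaces, and their limits in a Grassmannian, contain the vertical direction $\partial_\tau$, so the $\tau$-components of the associated conormals vanish; combined with the fact that $\rho$ acts only in the $\tau$-direction, the limiting relation then forces $\lambda_n:=\lim_m\lambda_{nm}=0$ and $-d\distsq(z_n)\in(T_{z_n}S_0)^{\perp}$, where $S_0=\mathrm{pr}(S)\in\CS_0$; that is, $d(\distsq|_{S_0})(z_n)=0$. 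But a critical point of $\distsq|_{S_0}$ with norm tending to $+\infty$ cannot exist: along an analytic curve $p(t)$ ($t\to 0^+$) in the semialgebraic critical set with $\|p(t)\|\to+\infty$, obtained by the curve selection lemma at infinity, one has $\frac{d}{dt}\distsq(p(t))={\rm Re}(\dot p(t),p(t))=0$, since $\dot p(t)\in T_{p(t)}S_0$ and $d\distsq$ corresponds to the point itself, so $\distsq(p(t))$ is constant, contradicting $\|p(t)\|\to+\infty$. Hence only horizontal strata survive, and for a horizontal stratum, locally the (multi-)graph of a holomorphic branch $g$ over $\mathrm{pr}(S')=S'_0$, relative tameness at infinity of $h|_{S'}$ is by definition tameness at infinity of $g:S'_0\to\CC$ in the sense of Definition \ref{def-7}, while on such a graph $d(\distsq\circ\mathrm{pr})$ restricts to $d(\distsq|_{S'_0})$ and $dh$ restricts to $d(g|_{S'_0})$; thus the whole micro-local picture is transported to $T^*X$ and the argument of Proposition \ref{prop-micros} can be rerun there.

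Concretely I would split, as in Proposition \ref{prop-micros}, into the two cases $T^*_S(X\times\CC)\subset{\rm SS}(L)$ and $T^*_S(X\times\CC)\not\subset{\rm SS}(L)$. In the first case $h|_S$ is relatively tame by hypothesis, hence (when $S$ is horizontal, the only surviving case) $\|d(h|_S)(z_n,\tau_n)\|\ge\varepsilon_0>0$ for $n\gg 0$; by the Whitney condition this bounds the sequences $\lambda_{nm}$, $\eta_{nm}$ and $\lambda_{nm}\,dh(z_{nm},\tau_{nm})$, and passing to convergent subsequences gives $d(\distsq|_{S_0})(z_n)=\lambda_n\,d(g_S|_{S_0})(z_n)$ with $g_S(z_n)=\tau_n\to\tau_0$, i.e. $z_n\in M(g_S)$ with $|g_S(z_n)-\tau_0|\to 0$ and $\|z_n\|\to+\infty$, contradicting Proposition \ref{prop-7}. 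In the second case I pass to limits $\mathbb{T}_n$ of the tangent planes $T_{z_{nm}}S'$ in the complex Grassmannian and use the surjections $\Phi_n:T^*_{(z_n,\tau_n)}(X\times\CC)\twoheadrightarrow\mathbb{T}_n^*$; a short estimate shows $\|\Phi_n(dh)\|\ge\varepsilon_0'>0$, either because $\mathbb{T}_n$ contains a unit vertical vector ($S'$ non-horizontal) or because $\|d(g_{S'}|_{S'_0})\|\ge\varepsilon_0$ and $t\mapsto t/\sqrt{1+t^2}$ is increasing ($S'$ horizontal), so again the scalars are bounded and $\Phi_n\bigl((d\distsq(z_n),0)\bigr)=\lambda_n\,\Phi_n(dh)$. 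Forming the semianalytic subset $A\subset S\times{\rm Gr}$ of pairs $((z,\tau),\mathbb{T})$ arising this way, applying the curve selection lemma of N\'emethi--Zaharia \cite{NZ92}, and running the leading-order computation at the end of the proof of Proposition \ref{prop-7}, with $h=\tau$ so that the restriction of $h$ to the selected curve tends to $\tau_0$ and therefore has order $>-1$ (or is constant) while the right-hand side has order $<-1$, yields the contradiction. The antipodal variant is identical.

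I expect the main obstacle to be the bookkeeping in this second case: keeping track of how the two cotangent factors $T^*X$ and $T^*\CC$ interact both with the horizontal/non-horizontal classification and with the closed embedding $\rho$, and in particular establishing the uniform lower bound $\|\Phi_n(dh)\|\ge\varepsilon_0'$ over all limiting tangent planes that can occur. This is precisely the step where relative tameness of the horizontal strata and the automatic relative tameness of the non-horizontal strata are used essentially, and it is also where one must replay, with the extra vertical coordinate, the analogues of Lemmas \ref{lem-3} and \ref{lem-4}.
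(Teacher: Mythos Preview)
Your proposal is correct and aligns with the paper's own approach: the paper states only that ``the proof \ldots\ is very similar to that of Proposition~\ref{prop-micros} and we omit it,'' so your sketch is in fact more detailed than what the paper provides. You correctly identify the structural adaptation (run the argument on $X\times\CC$ with $h=\tau$ and $\distsq\circ\mathrm{pr}$ in place of $f$ and $\distsq$) and the one genuinely new wrinkle, the horizontal/non-horizontal dichotomy, which the paper flags just before the statement but does not elaborate on.

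One small comment on organization: your separate treatment of the non-horizontal case via the critical-point-of-$\distsq|_{S_0}$ argument is valid and nicely self-contained, but it is not strictly necessary. Since the paper already observes that non-horizontal strata automatically satisfy relative tameness, the hypothesis covers \emph{all} strata $S'$ with $T^*_{S'}(X\times\CC)\subset{\rm SS}(L)$, so one can use the bound $\|d(h|_S)(z_n,\tau_n)\|\ge\varepsilon_0$ uniformly and run the boundedness-plus-Whitney argument exactly as in Proposition~\ref{prop-micros}, without the case split. Your reduction of the horizontal case to Proposition~\ref{prop-7} (via the graph parametrization $S\simeq S_0$, $h|_S=g$) is then the heart of the matter, and the second case with limiting tangent planes $\TT_n$ goes through as you describe. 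Also, in your phrase ``the $\tau$-components of the associated conormals vanish,'' note that this holds directly for $\eta_{nm}$ only when $S'$ is non-horizontal; when $S$ is non-horizontal but $S'$ is horizontal, one first establishes boundedness (via relative tameness) and then uses Whitney to conclude $\eta_n=\lim\eta_{nm}\in T^*_S$ has vanishing $\tau$-component. The conclusion $\lambda_n=0$, $d(\distsq|_{S_0})(z_n)=0$ is correct either way.
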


\begin{theorem}\label{prop-8-new} 
In the situation of Proposition \ref{prop-micros-new}
we have the following results. 
\begin{enumerate}
\item[\rm{(i)}] 
For any $\tau_0 \in \CC$  
there exist $R \gg 0$ and 
$0< \varepsilon \ll 1$ such that for the 
inclusion maps $i_r: U_r=U \cap B_r(0) \hookrightarrow U$ 
($r>R$) we have isomorphisms 
\begin{equation}
 \rmR f_! (i_r)_! (i_r)^{-1} (K|_U) 
\simeq  \rmR f_!  (K|_U)_{U_r}
\simto  \rmR f_! (K|_U)
\end{equation}
on the open subset $\{ \tau \in \CC \ | \ 
 | \tau - \tau_0| < \varepsilon \} \subset \CC$ of $\CC$. 
\item[\rm{(ii)}] 
For $\tau_0 \in \CC$ we assume also that 
\begin{equation}
\{ z \in f^{-1}( \tau_0) \ | \ 
df(z) \in {\rm SS }(K) \} \subset 
f^{-1}( \tau_0) \cap {\rm supp }(K|_U) 
\end{equation}
and 
\begin{equation}
\{ z \in I(f) \ | \ 
\phi_{f- \tau_0}^{\merc} (K)_z \not= 0 \} \subset 
I(f) \cap {\rm supp }(K) 
\end{equation}
are finite subsets of $Z= {\rm supp }(K)$ 
and denote them by $\{ p_1,p_2, \ldots, p_k \}$ 
and $\{ q_1,q_2, \ldots, q_l \}$ respectively. 
Then for the vanishing cycle 
$\phi_{\tau - \tau_0}( \rmR f_! (K|_U) ) \in 
\BDC (\{ \tau_0 \} )$ of $\rmR f_!  (K|_U)$ 
along the function $\tau - \tau_0: \CC \longrightarrow \CC$ 
we have an isomorphism 
\begin{equation}\label{decomp} 
\phi_{\tau - \tau_0}( \rmR f_! (K|_U) ) \simeq 
\Bigl\{ \bigoplus_{i=1}^k 
\phi_{f- \tau_0}( K)_{p_i} \Bigr\} 
\oplus 
\Bigl\{ \bigoplus_{i=1}^l 
\phi_{f- \tau_0}^{\merc}( K)_{q_i} \Bigr\}. 
\end{equation}
If we assume moreover that 
$K[N] \in \Dbc (X^{\an})$ is a perverse sheaf, 
then we have also a concentration 
\begin{equation}
H^j \phi_{\tau - \tau_0}( \rmR f_! (K|_U)) \simeq 0 
\qquad (j \not= N-1). 
\end{equation}
\end{enumerate}
\end{theorem}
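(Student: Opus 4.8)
The plan is to transport the whole computation to $X \times \CC$ by means of the graph embedding $i_f : U \hookrightarrow X \times \CC$, $z \longmapsto (z, f(z))$, and to obtain (i) from the non-proper direct image theorem together with Proposition~\ref{prop-micros-new}. Write $h : X \times \CC \longrightarrow \CC$ for the projection $\tau$ and set $L := i_{f!}(K|_U) \in \Dbc(X^{\an} \times \CC)$. Then $h \circ i_f = f$, hence $\rmR f_!(K|_U) \simeq \rmR h_! L$, and since $i_f^{-1}(B_r(0) \times \CC) = U_r$ one also has $L_{B_r(0) \times \CC} \simeq i_{f!}((K|_U)_{U_r})$ and $\rmR f_!((K|_U)_{U_r}) \simeq \rmR h_!(L_{B_r(0) \times \CC})$. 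By Proposition~\ref{prop-micros-new} the micro-support condition needed to apply \cite[Theorem~4.4.1]{KS85} to $h$, $L$ and the exhausting family $\{B_r(0) \times \CC\}_{r > 0}$ holds over a disk $\{|\tau - \tau_0| < \varepsilon\}$ for $R \gg 0$ and $0 < \varepsilon \ll 1$, which yields the isomorphisms of (i) for every $r \geq R$.

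For (ii) I would follow the pattern of the proof of Theorem~\ref{np-vc}~(ii), the new ingredient being the contribution of the points of indeterminacy of $f$. I would fix $R$, $\varepsilon$ as in (i) and $r > R$ with $p_1, \ldots, p_k, q_1, \ldots, q_l \in B_r(0)$, and set $D := \{|\tau - \tau_0| < \varepsilon\}$. By (i) we may replace $\rmR f_!(K|_U)$ by $\rmR h_!(L_{B_r(0) \times \CC})$ over $D$, so $\phi_{\tau - \tau_0}(\rmR f_!(K|_U)) \simeq \phi_{\tau - \tau_0}(\rmR h_!(L_{B_r(0) \times \CC}))$. Since the support of $L_{B_r(0) \times \CC}$ is closed in $X \times \CC$ and contained in $\overline{B_r(0)} \times \CC$, its intersection with $h^{-1}(\overline{D})$ is compact; hence $h$ is proper there and, by compatibility of the vanishing cycle functor with proper direct images (see e.g. \cite[Proposition~4.2.11]{Dim04}),
\begin{equation*}
\phi_{\tau - \tau_0}\bigl( \rmR f_!(K|_U) \bigr) \simeq \rmR \Gamma \bigl( X \times \{\tau_0\} ;\, \phi_{\tau - \tau_0}(L_{B_r(0) \times \CC}) \bigr) .
\end{equation*}
Writing $\G := \phi_{\tau - \tau_0}(L_{B_r(0) \times \CC})$, regarded as a sheaf on $X \times \{\tau_0\} \simeq X$, the heart of the argument is to locate $\supp(\G)$. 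Near a point $z \in U$ with $f(z) = \tau_0$ the map $i_f$ is a closed embedding and $(\tau - \tau_0) \circ i_f = f - \tau_0$, so $\G_{(z,\tau_0)} \simeq \phi_{f - \tau_0}((K|_U)_{U_r})_z$, which by \cite[Proposition~8.6.3]{KS90} vanishes unless $df(z) \in {\rm SS}(K)$; thus over $U$ and away from $\partial B_r(0)$ the support of $\G$ is $\{p_1, \ldots, p_k\}$ with $\G_{(p_i,\tau_0)} \simeq \phi_{f - \tau_0}(K)_{p_i}$. Near a point $z_0 \in I(f)$ interior to $B_r(0)$ one has $L_{B_r(0) \times \CC} \simeq i_{f!}(K|_U)$, so $\G_{(z_0,\tau_0)} \simeq \phi_{f - \tau_0}^{\merc}(K)_{z_0}$ by Proposition~\ref{PVC}, and this vanishes unless $z_0 \in \{q_1, \ldots, q_l\}$; near $Q^{-1}(0) \setminus I(f)$ the sheaf $L_{B_r(0) \times \CC}$ itself vanishes. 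Therefore $\supp(\G) \subset \{p_1, \ldots, p_k\} \sqcup \{q_1, \ldots, q_l\} \sqcup (\partial B_r(0) \cap \supp(\G))$.

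It then remains to discard the boundary term, which is the step that requires the most care and is exactly what the machinery of this section is built for: for $R < r' < r$ one applies (i) to $r$ and $r'$ to get $\rmR f_!((K|_U)_{U_r \setminus U_{r'}}) \simeq 0$ over $D$; rerunning the support analysis for $(K|_U)_{U_r \setminus U_{r'}}$ in place of $(K|_U)_{U_r}$ shows its vanishing cycle is supported on the disjoint closed sets $\partial B_r(0) \sqcup \partial B_{r'}(0)$ and coincides with $\G$ in a neighbourhood of $\partial B_r(0)$, so $\rmR \Gamma(\partial B_r(0) ; \G)$ is a direct summand of the zero object. Splitting $\rmR \Gamma(X \times \{\tau_0\} ; \G)$ along the open set $B_r(0)$ and its closed complement then yields the decomposition \eqref{decomp}. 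Finally, if $K[N]$ is perverse, each summand $\phi_{f - \tau_0}(K)_{p_i}$ is concentrated in degree $N - 1$: indeed $\phi_{f - \tau_0}(K[N])[-1] = \phi_{f - \tau_0}(K)[N - 1]$ is perverse on $f^{-1}(\tau_0)$ and supported at the single point $p_i$, hence concentrated in degree $0$; and applying Proposition~\ref{prop-a1} to the perverse sheaf $K[N]$ and the meromorphic function $f - \tau_0$ (near $q_i$ the support of $\phi_{f - \tau_0}^{\merc}(K)$ is $\{q_i\}$) gives $H^j \phi_{f - \tau_0}^{\merc}(K)_{q_i} = 0$ for $j \neq N - 1$. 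Summing, $H^j \phi_{\tau - \tau_0}(\rmR f_!(K|_U)) = 0$ for $j \neq N - 1$. The genuinely new point compared with Theorem~\ref{np-vc} is the identification, at the indeterminacy points, of the local contribution with $\phi_{f - \tau_0}^{\merc}(K)_{q_i}$, for which the meromorphic vanishing cycle formalism of Section~\ref{sec:s2} (notably Proposition~\ref{PVC}) together with Proposition~\ref{prop-micros-new} provides the decisive tools.
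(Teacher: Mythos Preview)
Your proof is correct and follows essentially the same route as the paper's: transport to $X\times\CC$ via $L=i_{f!}(K|_U)$, apply \cite[Theorem 4.4.1]{KS85} together with Proposition~\ref{prop-micros-new} for (i), then use properness of $h$ on the support of $L_{B_r(0)\times\CC}$ to commute $\phi_{\tau-\tau_0}$ past $\rmR h_!$, locate the support of $\phi_{h-\tau_0}(L_{B_r(0)\times\CC})$, and kill the boundary contribution by comparing two radii $r>r'>R$.

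One small remark: when you identify $\G_{(z_0,\tau_0)}\simeq\phi_{f-\tau_0}^{\merc}(K)_{z_0}$ at a point $z_0\in I(f)$, this is not really Proposition~\ref{PVC} but simply the \emph{definition} of $\phi_{f-\tau_0}^{\merc}$: since $\phi_{f-\tau_0}^{\merc}(K):=\phi_\tau(i_{(f-\tau_0)!}(K|_U))$ and $i_{f-\tau_0}$ differs from $i_f$ by a translation in $\tau$, one has $\phi_{\tau-\tau_0}(i_{f!}(K|_U))\simeq\phi_{f-\tau_0}^{\merc}(K)$ directly. The paper makes this identification globally, writing $\phi_{h-\tau_0}(L_{B_r(0)\times\CC})\simeq\phi_{f-\tau_0}^{\merc}(K_{B_r(0)})$, which streamlines the support analysis. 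For the concentration statement the paper simply invokes the $t$-exactness of $\phi_{f-\tau_0}(\cdot)[-1]$ and $\phi_{f-\tau_0}^{\merc}(\cdot)[-1]$ (the latter being Theorem~\ref{the-1}); your appeal to Proposition~\ref{prop-a1} together with the point-support argument is an equivalent way to reach the same conclusion.
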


\begin{proof}
By  \cite[Theorem 4.4.1]{KS85} and  
Proposition \ref{prop-micros-new} 
there exist $R \gg 0$ and 
$0< \varepsilon \ll 1$ such that for the 
inclusion maps $j_r: B_r(0) \times \CC \hookrightarrow 
X \times \CC$ 
($r>R$) we have isomorphisms 
\begin{equation}
 \rmR h_! (j_r)_! (j_r)^{-1} L
\simeq  \rmR h_! L_{B_r(0) \times \CC}
\simto  \rmR h_! L
\end{equation}
on the open subset $\{ \tau \in \CC \ | \ 
 | \tau - \tau_0| < \varepsilon \} \subset \CC$ of $\CC$. 
Moreover we have $\rmR h_! L \simeq  \rmR f_! (K|_U)$ 
and for any $r>0$ there exist an isomorphism 
\begin{equation}
\rmR h_! L_{B_r(0) \times \CC} \simeq 
\rmR h_! \Bigl( 
i_{f!} (K|_U) \otimes \CC_{B_r(0) \times \CC}
\Bigr) \simeq 
\rmR f_! (K|_U)_{U_r}. 
\end{equation}
We thus obtain the isomorphism in (i). 
Let us prove (ii). Let $R \gg 0$ and 
$0< \varepsilon \ll 1$ be as in (i). Here 
we take $R>0$ so that also the condition 
$p_1, p_2, \ldots, p_k, q_1, q_2, \ldots, q_l \in 
B_R(0)$ is satisfied. Then by the 
proof of (i), for any $r>R$ we obtain isomorphisms  
\begin{align*}
& \phi_{\tau - \tau_0}( \rmR f_! (K|_U) )  \simeq 
\phi_{\tau - \tau_0} ( \rmR h_! L_{B_r(0) \times \CC} )
\\
& \simeq \rmR \Gamma (X \times \{ \tau_0 \} ; 
\phi_{h - \tau_0} ( L_{B_r(0) \times \CC}) ),  
\end{align*}
where in the second isomorphism we used the fact 
that the morphism $h: X \times \CC \longrightarrow \CC$ 
is proper on the support of $ L_{B_r(0) \times \CC}$. 
On the other hand, there exist also isomorphisms 
\begin{equation}
\phi_{h - \tau_0} ( L) \simeq 
\phi_{h - \tau_0} ( i_{f!} (K|_U) )  \simeq 
\phi_{f - \tau_0}^{\merc}(K) 
\end{equation}
under the natural identification $h^{-1}( \tau_0) =
X \times \{ \tau_0 \} \simeq X$. 
Similarly, we have isomorphisms 
\begin{equation}
\phi_{h - \tau_0} ( L_{B_r(0) \times \CC}) \simeq 
\phi_{h - \tau_0} ( i_{f!} (K_{B_r(0)}|_U) )  \simeq 
\phi_{f - \tau_0}^{\merc}(K_{B_r(0)}). 
\end{equation}
Then together with our assumptions, this implies 
that for any $r>R$ the support of the vanishing cycle 
$\phi_{h - \tau_0} ( L_{B_r(0) \times \CC})$ is 
contained in the set 
\begin{equation}
\{ p_1,p_2, \ldots, p_k \} \sqcup 
\{ q_1,q_2, \ldots, q_l \} \sqcup 
\partial B_r(0). 
\end{equation}
Now let us take $r, r^{\prime} >R$ such that 
$r> r^{\prime}$. Then by the proof of 
\cite[Theorem 4.4.1]{KS85} there exists an 
isomorphism 
\begin{equation}
\rmR h_! L_{B_{r^{\prime}}(0) \times \CC}
\simto  \rmR h_! L_{B_r(0) \times \CC}
\end{equation}
on the open subset $\{ \tau \in \CC \ | \ 
 | \tau - \tau_0| < \varepsilon \} \subset \CC$ of $\CC$. 
In other words, we have a vanishing 
\begin{equation}
\rmR h_! L_{(B_r(0) \setminus B_{r^{\prime}}(0)) \times \CC}  
\simeq 0
\end{equation}
there. This implies that we have 
\begin{equation}
 \rmR  \Gamma ( X \times \{ \tau_0 \} ; 
\phi_{h- \tau_0} ( 
L_{(B_r(0) \setminus B_{r^{\prime}}(0)) \times \CC} ) )
\simeq 0. 
\end{equation}
Since the support of 
$\phi_{h- \tau_0} ( 
L_{(B_r(0) \setminus B_{r^{\prime}}(0)) \times \CC} )$ 
is contained in the set 
\begin{equation}
\partial B_r(0) 
 \sqcup 
\partial B_{r^{\prime}}(0) 
\end{equation}
and hence the object $\rmR \Gamma ( \partial B_r(0); 
\phi_{h- \tau_0} (  L_{B_r(0) \times \CC} ) )$ is a direct 
summand of the one $\rmR \Gamma ( X \times \{ \tau_0 \} ; 
\phi_{h- \tau_0} ( 
L_{(B_r(0) \setminus B_{r^{\prime}}(0)) \times \CC} ) )
\simeq 0$, we obtain also a vanishing 
\begin{equation}
\rmR \Gamma ( \partial B_r(0); 
\phi_{h- \tau_0} (  L_{B_r(0) \times \CC} ) )
\simeq 0. 
\end{equation}
From this, the first assertion of (ii) immediately follows. 
The second one follows from the fact that 
the functors $\phi_{f- \tau_0}( \cdot )[-1]$ and 
$\phi_{f- \tau_0}^{\merc}( \cdot )[-1]$ preserve 
the perversity. This completes the proof. 
\end{proof}

Finally, to end this section, we shall introduce 
some geometric consequences of Proposition \ref{prop-7} 
which generalize the main results of Broughton \cite{Bro88}. 
Let $f \in \CC [z_1,z_2, \ldots, z_N]$ be a polynomial 
on $X= \CC^N$ and $S \subset X$ a smooth subvariety of 
$X= \CC^N$. Let $g:S \longrightarrow \CC$ be the 
restriction of $f:X= \CC^N \longrightarrow \CC$ to $S \subset X$. 
Then it is well-known that there exists a finite 
subset $B \subset \CC$ of $\CC$ such that the 
restriction $g^{-1}( \CC \setminus B) \longrightarrow \CC \setminus B$ 
of $g$ is a ${\rm C}^{\infty}$-locally trivial fibration. 
We denote by $B_g$ the smallest finite subset of 
$\CC$ satisfying this property and call it the 
bifurcation set of $g$. By this definition, it is 
clear that the set $\Sigma_g:= g( {\rm Sing} g) 
\subset \CC$ of the critical values of $g$ is contained in 
$B_g$. Note that as was observed in \cite{Bro88} 
there are a lot of polynomial maps $g:S \longrightarrow \CC$ 
such that $B_g \not= \Sigma_g$. Nevertheless, 
by Proposition \ref{prop-7} we can easily show 
the following result as in the proof of 
\cite[Theorem 1]{NZ90}. 

\begin{theorem}\label{nexsai} 
(cf. Nguyen-Pham-Pham \cite[Theorem 3.1]{NPP19}) 
Assume that $g:S \longrightarrow \CC$ is tame at infinity. 
Then we have $B_g = \Sigma_g$. 
\end{theorem}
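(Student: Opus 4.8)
The inclusion $\Sigma_g \subseteq B_g$ has already been recorded, so it remains to prove $B_g \subseteq \Sigma_g$; equivalently, that every $\tau_0 \in \CC \setminus \Sigma_g$ admits a neighborhood $D \subset \CC$ over which $g : g^{-1}(D) \longrightarrow D$ is a ${\rm C}^{\infty}$-locally trivial fibration. Fix such a $\tau_0$. Since $\Sigma_g$ is finite (being contained in the finite set $B_g$), choose $\varepsilon_0 > 0$ with $\{ | \tau - \tau_0 | \leq \varepsilon_0 \} \cap \Sigma_g = \emptyset$, so that $dg(z) \neq 0$ for every $z \in g^{-1}( \{ | \tau - \tau_0 | \leq \varepsilon_0 \} )$. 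Applying Proposition \ref{prop-7} to $\tau_0$ --- this is the only place where the tameness at infinity of $g$ enters --- and shrinking $\varepsilon_0$, we may also fix $R \gg 0$ such that
\begin{equation}
M(g) \cap \{ z \in S \ | \ |g(z) - \tau_0| \leq \varepsilon_0, \ \|z\| \geq R \} = \emptyset,
\end{equation}
and, enlarging $R$ if necessary (using \cite[Corollary 2.8]{Mil68}), such that $d( \distsq |_S )(z) \neq 0$ for every $z \in S$ with $\|z\| \geq R$. Put $D := \{ | \tau - \tau_0 | < \varepsilon_0 \}$ and $W := g^{-1}(D) \subset S$.

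The plan is then an Ehresmann-type argument with control at infinity, as in the proof of \cite[Theorem 1]{NZ90}. Using a partition of unity, I would build ${\rm C}^{\infty}$ vector fields on $W$ lifting, through $g$, the two real coordinate vector fields of $D$, and which on $W \cap \{ \|z\| > R \}$ are in addition annihilated by $d( \distsq |_S )$. Such local lifts exist and glue because: at a point $z$ with $\|z\| \leq R$ the covectors $d {\rm Re} g(z)$ and $d {\rm Im} g(z)$ are $\RR$-linearly independent --- indeed $dg(z) \neq 0$, and $\grad {\rm Im} g = i \cdot \grad {\rm Re} g$ since $S$ is a complex submanifold --- so the system of two linear conditions on the lift is solvable; and at a point $z$ with $\|z\| > R$ the three covectors $d {\rm Re} g(z)$, $d {\rm Im} g(z)$, $d( \distsq |_S )(z)$ are $\RR$-linearly independent --- the first two as above, and $d( \distsq |_S )(z)$ leaves their $\RR$-span precisely because $z \notin M(g)$ --- so the system of three linear conditions is solvable as well.

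The essential point, and the one for which Proposition \ref{prop-7} is needed, is that these lifts can be taken to be complete vector fields. An integral curve of such a lift maps under $g$ to a straight segment of $D$, hence stays over the compact disk $\{ | \tau - \tau_0 | \leq \varepsilon_0 \}$; and along the curve the function $\distsq |_S = \frac{1}{2} \| \cdot \|^2$ is constant while the curve lies in $\{ \|z\| > R \}$ and changes at a bounded rate while it lies in the compact set $S \cap \{ \|z\| \leq R \} \cap g^{-1}( \{ | \tau - \tau_0 | \leq \varepsilon_0 \} )$, so on every finite time interval the curve stays in a compact set of the form $S \cap \{ \distsq |_S \leq c \} \cap g^{-1}( \{ | \tau - \tau_0 | \leq \varepsilon_0 \} )$; hence no integral curve escapes in finite time. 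This is exactly where the hypothesis is indispensable: without tameness the lifts could not be kept tangent to the large spheres, and the non-properness of $g$ would let integral curves run off to infinity in finite time --- this is the main obstacle that Proposition \ref{prop-7} is there to remove. Given such complete lifts of the coordinate vector fields of $D$, the usual proof of the Ehresmann fibration theorem (with $D$ contractible) produces a ${\rm C}^{\infty}$-trivialization of $g$ over $D$, so $\tau_0 \notin B_g$. Since $\tau_0 \in \CC \setminus \Sigma_g$ was arbitrary, $B_g \subseteq \Sigma_g$, and therefore $B_g = \Sigma_g$.
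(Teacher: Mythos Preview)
Your proof is correct and follows precisely the route the paper indicates: the paper gives no detailed argument but simply states that the result follows from Proposition~\ref{prop-7} ``as in the proof of \cite[Theorem 1]{NZ90}'', and you have carried out exactly that Ehresmann-with-control-at-infinity argument, invoking Proposition~\ref{prop-7} at the one crucial step. Your write-up is in fact a faithful expansion of what the paper leaves implicit.
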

By Theorem \ref{np-vc-new}, we obtain also the 
analogues of the results in Sabbah \cite[Section 8]{Sab06}. 
Here, instead of Sabbah's cohomological tameness, 
we assume our topological one in Definition \ref{def-7}. 
In particular, we obtain the following results. 

\begin{lemma}\label{suplem-1} 
(cf. Sabbah \cite[Lemma 8.5]{Sab06}) 
Assume that $n:= {\rm dim} S-1 \geq 1$ and 
$g:S \longrightarrow \CC$ is tame at infinity. 
Then we have 
\begin{equation}
H^j \rmR g_* \CC_S
\simeq 0 \qquad (j \notin [0,n]). 
\end{equation}
Moreover for any $1 \leq j \leq n-1$ the direct image 
sheaf $H^j \rmR g_* \CC_S$ is a constant sheaf on 
$\CC$ of rank ${\rm dim} H^j(S; \CC )$. 
\end{lemma}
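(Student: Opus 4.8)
\emph{Setup.} The plan is to specialize Theorem~\ref{np-vc-new} to the algebraic constructible sheaf $K:=\CC_S\in\Dbc(X^{\mathrm{an}})$, where (as is needed for the statement to hold) $S$ is taken to be a smooth \emph{closed} — i.e.\ smooth affine — subvariety of $X=\CC^N$, so that $\rmR g_*\CC_S=\rmR f_*\CC_S$, ${\rm SS}(K)=T^*_SX$, and the only stratum $S'$ of an adapted Whitney stratification of $X$ with $T^*_{S'}X\subset{\rm SS}(K)$ is $S$ itself; thus the hypothesis of Proposition~\ref{prop-micros}, hence of Theorem~\ref{np-vc-new}, reduces exactly to the tameness at infinity of $g=f|_S$. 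Two preliminary observations will be used. First, tameness at infinity forces ${\rm Crit}(g)$ to be a finite set: a positive-dimensional critical locus would be an unbounded closed algebraic subset of the affine variety $S$ on which $d\,{\rm Re}\,g$ vanishes identically, contradicting Definition~\ref{def-7}; in particular ${\rm Crit}(g)\cap g^{-1}(\tau_0)=\{p_1,\dots,p_k\}$ is finite for every $\tau_0$. Second, $\CC_S[\dim S]$ is a perverse sheaf on $X$, since $S$ is smooth and closed.

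\emph{Vanishing outside $[0,n]$.} For $j<0$ this is the left-exactness of $\rmR g_*$. For $j>n$, fix $\tau_0\in\CC$. By Theorem~\ref{np-vc-new}(i) the germ of $\rmR g_*\CC_S$ at $\tau_0$ is, for $r\gg0$, that of $\rmR(g|_{S\cap B_r(0)})_*\CC_{S\cap B_r(0)}$, whose stalk at $\tau_0$ is $\underset{V\ni\tau_0}{\mathrm{colim}}\,H^\bullet(S\cap B_r(0)\cap g^{-1}(V);\CC)$. By Proposition~\ref{prop-7} the large spheres are transverse to the fibres of $g$ near $\tau_0$, so one may deformation retract $S\cap B_r(0)\cap g^{-1}(V)$ onto $g^{-1}(\tau_0)$ along a vector field lifting the contraction of the small disk $V$ to $\tau_0$; hence this stalk is $H^\bullet(g^{-1}(\tau_0);\CC)$, which vanishes in degrees $>n$ by Artin vanishing for the affine variety $g^{-1}(\tau_0)$ of dimension $n$. (Theorem~\ref{nexsai}, $B_g=\Sigma_g$, shows that $\tau_0$ is ``generic'' outside the finitely many critical values, where the claim is immediate from local triviality.)

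\emph{Constancy and rank in degrees $1\le j\le n-1$.} Off $\Sigma_g=B_g$ the sheaves $H^j\rmR g_*\CC_S$ are locally constant by generic local triviality. Fix a critical value $\tau_0$. By Theorem~\ref{np-vc-new}(ii) and the finiteness of ${\rm Crit}(g)\cap g^{-1}(\tau_0)$ we get $\phi_{\tau-\tau_0}(\rmR g_*\CC_S)\simeq\bigoplus_{i=1}^{k}\phi_{g-\tau_0}(\CC_S)_{p_i}$; each summand is a skyscraper, and since $\phi_{g-\tau_0}(\,\cdot\,)[-1]$ preserves perversity and $\CC_S[\dim S]$ is perverse, $\phi_{g-\tau_0}(\CC_S)[\dim S-1]=\phi_{g-\tau_0}(\CC_S)[n]$ is perverse, so the whole complex $\phi_{\tau-\tau_0}(\rmR g_*\CC_S)$ is concentrated in degree $n$. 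Feeding this into the triangle
$i_{\tau_0}^{-1}\rmR g_*\CC_S\to\psi_{\tau-\tau_0}(\rmR g_*\CC_S)\to\phi_{\tau-\tau_0}(\rmR g_*\CC_S)\overset{+1}{\longrightarrow}$
and using the monodromy identity $T=\id+{\rm var}\circ{\rm can}$ on the nearby cycles, one gets for every $j\notin\{n,n+1\}$ that the nearby-fibre stalk of $H^j\rmR g_*\CC_S$ equals the special stalk and has trivial monodromy; hence $H^j\rmR g_*\CC_S$ is constant near $\tau_0$ and, $\tau_0$ being arbitrary, constant on $\CC$, for all $j\in[0,n-1]$. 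Finally, with every $H^q\rmR g_*\CC_S$ $(0\le q\le n-1)$ now constant, the Leray spectral sequence $H^p(\CC;H^q\rmR g_*\CC_S)\Rightarrow H^{p+q}(S;\CC)$ has $H^p(\CC;H^q\rmR g_*\CC_S)=0$ for $p>0$ and $0\le q\le n-1$ (as $\CC$ is contractible), which forces $H^j(S;\CC)\cong H^0(\CC;H^j\rmR g_*\CC_S)$ for $1\le j\le n-1$; the latter is the space of global sections of the constant sheaf $H^j\rmR g_*\CC_S$, so its rank is $\dim H^j(S;\CC)$.

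\emph{Main obstacle.} The delicate step is the vanishing in degrees $>n$ at the \emph{special} values $\tau_0\in\Sigma_g$. Off $\Sigma_g$ it is a one-line consequence of Artin vanishing for the affine fibres, and the triangle above only yields vanishing in degrees $>n+1$ at a critical value (the term $H^{n+1}$ of $i_{\tau_0}^{-1}\rmR g_*\CC_S$ receives a possibly non-zero image of $H^n\phi$). To kill this one must genuinely use tameness — through Proposition~\ref{prop-7} and the resulting ``properness at infinity'' of $g$ near $\tau_0$ — in order to identify the stalk of $\rmR g_*\CC_S$ at $\tau_0$ with $H^\bullet(g^{-1}(\tau_0);\CC)$, which already lives in degrees $[0,n]$, rather than with the cohomology of a tube that is only of dimension $n+1$. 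Closely tied to this is the concentration of the relevant vanishing cycles in the single top degree $n$: this rests on the perversity of $\CC_S[\dim S]$ (so the argument really uses smoothness of $S$) and on the finiteness of ${\rm Crit}(g)$, and without tameness the latter could fail, spreading the vanishing cycles over several degrees and breaking the constancy statement.
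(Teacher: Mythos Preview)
Your proof is correct and follows essentially the same route the paper indicates: the paper does not give a detailed argument but states that Lemma~\ref{suplem-1} is obtained, as in Sabbah~\cite[Lemma 8.5]{Sab06}, from Theorem~\ref{np-vc-new}, and records the key consequence $(H^j\rmR g_*\CC_S)_c\simeq H^j(g^{-1}(c);\CC)$ immediately afterward. You have supplied precisely the missing details---specializing Theorem~\ref{np-vc-new} to $K=\CC_S$ (with $S$ closed so that $\CC_S[\dim S]$ is perverse and ${\rm SS}(K)=T^*_SX$), extracting the concentration of $\phi_{\tau-\tau_0}(\rmR g_*\CC_S)$ in the single degree $n$ from perversity and the finiteness of ${\rm Crit}(g)$, and then running the nearby/vanishing triangle together with $T-\id$ factoring through $\phi$ to get local constancy in degrees $<n$.

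Two small remarks. First, in your deformation-retract sentence the target of the retraction is literally $g^{-1}(\tau_0)\cap B_r$, not $g^{-1}(\tau_0)$; you then need (and implicitly use) that Theorem~\ref{np-vc-new}(i) holds for all $r>R$, so passing to the colimit in $r$ recovers $H^\bullet(g^{-1}(\tau_0);\CC)$. Second, the retraction of $g^{-1}(V)\cap B_r$ onto the central fibre works even at a critical value because the isolated critical points contribute contractible local pieces (Milnor cones) and Proposition~\ref{prop-7} controls the boundary $\partial B_r$; this is standard but worth a word since a naive ``lift a vector field'' picture breaks down at the critical points. Your identification of the degree-$(n{+}1)$ vanishing as the genuine obstacle---not obtainable from the triangle alone, and requiring the stalk $=$ fibre identification via tameness---is exactly the point.
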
 
As is the proof of  \cite[Lemma 8.5]{Sab06}, 
in the situation of Lemma \ref{suplem-1} we 
see that for any point $c \in \CC$ 
we have isomorphisms 
\begin{equation}
H^j ( \rmR g_* \CC_S)_c
\simeq H^j(g^{-1}(c); \CC )
 \qquad (j \in \ZZ ). 
\end{equation}
We thus obtain the following very simple 
consequence of Lemma \ref{suplem-1}. 

\begin{corollary}\label{suplcoro-1} 
In the situation of Lemma \ref{suplem-1}, 
for the finite subset $B_g= \Sigma_g \subset \CC$ 
of $\CC$ and a base point $c \in \CC \setminus B_g$ 
the monodromy representations 
\begin{equation}
\rho_{g,c,j}: 
\pi_1( \CC \setminus B_g; c) \longrightarrow 
{\rm Aut} \Bigl( H^j(g^{-1}(c); \CC ) \Bigr) 
 \qquad (0 \leq j \leq n-1)
\end{equation}
are trivial. 
\end{corollary}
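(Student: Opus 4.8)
The strategy is to reduce the statement to the constancy of the direct image sheaves $H^j\rmR g_*\CC_S$ over the whole of $\CC$. Fix $0\le j\le n-1$ and $c\in\CC\setminus B_g$. Since $g$ restricts to a ${\rm C}^\infty$-locally trivial fibration over $\CC\setminus B_g$, the restriction $\bigl(H^j\rmR g_*\CC_S\bigr)|_{\CC\setminus B_g}$ is a local system whose stalk at $c$ is $H^j(g^{-1}(c);\CC)$, and $\pi_1(\CC\setminus B_g;c)$ acts on this stalk precisely through $\rho_{g,c,j}$ (this uses the isomorphism $H^j(\rmR g_*\CC_S)_c\simeq H^j(g^{-1}(c);\CC)$ recalled after Lemma \ref{suplem-1}). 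Hence it is enough to prove that $H^j\rmR g_*\CC_S$ is a constant sheaf on $\CC$ for every $0\le j\le n-1$; its restriction to $\CC\setminus B_g$ is then constant, forcing $\rho_{g,c,j}$ to be trivial.

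For $1\le j\le n-1$ this is exactly the second assertion of Lemma \ref{suplem-1}. There remains the case $j=0$, where the point is to show that the general fibre $g^{-1}(c)$ is connected: granting this, $H^0\rmR g_*\CC_S=g_*\CC_S$ has rank-one stalks over $\CC\setminus B_g$ and its monodromy $\rho_{g,c,0}$, which acts by permuting the connected components of the fibre, is trivial (equivalently, $g$ being flat and open, $g^{-1}(V)$ is connected for every small disc $V\subset\CC$ since it contains the dense connected subset $g^{-1}(V\setminus B_g)$, so $g_*\CC_S=\CC_\CC$). To prove the connectedness of the general fibre I would argue by contradiction via the Stein factorisation $g=\pi\circ g'$, with $g'\colon S\to S'$ of connected fibres and $\pi\colon S'\to\CC=\mathbb{A}^1$ finite: if $g^{-1}(c)$ had $d\ge2$ components then $\deg\pi=d\ge2$, and since $S'$ is a smooth irreducible affine curve while $\mathbb{A}^1_{\CC}$, being simply connected, has no non-trivial connected covering, the map $\pi$ must ramify, say with ramification index $e\ge2$ at some $s_0'\in S'$ over $b_0\in\CC$. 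Choosing a local analytic coordinate $w$ on $S'$ near $s_0'$ in which $\pi$ reads $t=w^{e}$ and setting $\tilde h:=w\circ g'$ near $(g')^{-1}(s_0')$, we get $g=\tilde h^{\,e}$ there, hence $dg=e\,\tilde h^{\,e-1}d\tilde h$ vanishes identically on $(g')^{-1}(s_0')$; but this fibre is a closed subvariety of $S\subset\CC^N$ of dimension $\ge\dim S-1=n\ge1$, hence unbounded, and $g$ is constant equal to $b_0$ on it, contradicting the tameness of $g$ in Definition \ref{def-7}. Therefore $d=1$.

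The only step that is not purely formal is the connectedness of the general fibre used for $j=0$; note that this is in fact the whole content of the corollary when $n=1$, since then Lemma \ref{suplem-1} gives no information in positive degrees. For $S=\CC^N$ the connectedness of the general fibre of a tame polynomial is due to Broughton \cite{Bro88}, and the Stein-factorisation argument above is a uniform replacement valid for any smooth quasi-affine $S$ with $\dim S\ge2$. One minor technical point to be settled in the full proof is that the Stein map $g'$ need not be surjective onto $S'$; this is harmless, as $\pi$ ramifies over a non-empty finite subset of $\CC$, so one may pick $b_0$ inside the dense open image of $g'$.
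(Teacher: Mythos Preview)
Your reduction for $1\le j\le n-1$ is exactly the paper's (implicit) argument: the paper presents the corollary as ``a very simple consequence'' of Lemma~\ref{suplem-1}, i.e.\ the constancy of $H^j\rmR g_*\CC_S$ on all of $\CC$ forces the monodromy of its restriction to $\CC\setminus B_g$ to be trivial. The paper gives no separate treatment of $j=0$; presumably the Sabbah-type proof of Lemma~\ref{suplem-1} (cf.\ \cite[Lemma~8.5]{Sab06}) already yields that $g_*\CC_S$ is constant of rank $\dim H^0(S;\CC)=1$ as well, which is the clean way to handle that case.

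Your Stein-factorisation argument for $j=0$ is a different route, but the patch you propose for the non-surjectivity of $g'$ does not work as written. You say ``pick $b_0$ inside the dense open image of $g'$'', yet $b_0\in\CC$ while $\mathrm{im}(g')\subset S'$; what is actually needed is that the \emph{ramification point} $s_0'\in S'$ lie in $\mathrm{im}(g')$, and there is no a~priori reason the (finite) ramification locus of $\pi$ meets the (cofinite) image of $g'$. For instance, take $S=\CC^*\times\CC\subset\CC^2$, $g'(x,y)=x$, $S'=\CC$, $\pi(w)=w^2$: the unique ramification point $0\in S'$ lies outside $\mathrm{im}(g')=\CC^*$, so $(g')^{-1}(s_0')=\emptyset$ and your contradiction evaporates. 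In that example $g(x,y)=x^2$ indeed fails tameness (since $\lVert d\Re g\rVert=2|x|$ is small when $|x|$ is small and $|y|$ is large), which suggests the approach can be salvaged by proving that $s_0'\notin\mathrm{im}(g')$ already forces non-tameness; but this requires controlling $\lVert dg'\rVert$ along sequences $z_k$ with $g'(z_k)\to s_0'$ and $\lVert z_k\rVert\to\infty$, and that step is missing.
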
 
By Corollaly \ref{suplcoro-1} the monodromy representation 
\begin{equation}
\rho_{g,c,n}: 
\pi_1( \CC \setminus B_g; c) \longrightarrow 
{\rm Aut} \Bigl( H^n(g^{-1}(c); \CC ) \Bigr) 
\end{equation}
in the top degree $n= {\rm dim} g^{-1}(c)$ 
is the only non-trivial one and we can calculate 
its eigenvalues by the theory of monodromy zeta 
functions (see e.g. \cite[Section 2]{Take23}). 
For a polynomial map $h: S \longrightarrow \CC$ on $S$ 
we denote by $\mu (h)$ the sum of the Milnor numbers of 
$h$. If the set ${\rm Sing} h \subset S$ of the 
critical points of $h$ is not finite, we set 
$\mu (h):= + \infty$. 

\begin{lemma}\label{suplem-2} 
(cf. Sabbah \cite[Remark just after Lemma 8.5]{Sab06}) 
In the situation of Lemma \ref{suplem-1}, the 
generic rank of the constructible sheaf 
$H^n \rmR g_* \CC_S$ on $\CC$ is equal to 
the number 
\begin{equation}
\mu (g) + {\dim} H^n(S; \CC) - {\dim} H^{n+1}(S; \CC). 
\end{equation}
\end{lemma}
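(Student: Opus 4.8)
The plan is to identify the generic rank $r$ of the constructible sheaf $H^n\rmR g_*\CC_S$ with $\dim H^n(g^{-1}(c);\CC)$ for a general value $c\in\CC$ --- this is precisely the stalk identification recorded just before the statement --- and then to compute this number by an Euler characteristic count. Two preliminary reductions are needed. First, since $g$ is tame at infinity the critical locus $\{z\in S\mid d(g|_S)(z)=0\}$ is a bounded algebraic subset of $S$, hence finite, so every critical point of $g$ is isolated, each Milnor number $\mu_p$ is finite, and $\mu(g)=\sum_p\mu_p<+\infty$. Second, decomposing $S$ into its connected components (each of dimension $n+1$, the asserted equality being additive over them), we may assume $S$ connected. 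In that case the general fibre $F:=g^{-1}(c)$ is connected as well: in the Stein factorization $S\longrightarrow S'\longrightarrow\CC$ a branch point of the finite part would force ${\rm Sing}(g)$ to contain an entire fibre of $S\longrightarrow S'$, of dimension $n\geq 1$ and hence unbounded, contradicting tameness; thus the finite part is unramified, and since $\CC$ is simply connected and $S'$ is connected it is an isomorphism. This is where the hypothesis $n\geq 1$ is used.

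Next I record the cohomology of the general fibre. By generic smoothness $F$ is a smooth affine variety of complex dimension $n$, so $H^j(F;\CC)=0$ for $j>n$ by Artin vanishing, while $\dim H^0(F;\CC)=\dim H^0(S;\CC)$ by the connectedness just established; and by Lemma \ref{suplem-1} together with the stalk formula, $\dim H^j(F;\CC)=\dim H^j(S;\CC)$ for $1\leq j\leq n-1$. Writing $b_j:=\dim H^j(S;\CC)$ and $r:=\dim H^n(F;\CC)$, and using that $S$ is affine of dimension $n+1$ so that $b_j=0$ for $j>n+1$, this yields
\begin{equation}
\chi(F)=b_0+\sum_{j=1}^{n-1}(-1)^jb_j+(-1)^nr,
\qquad
\chi(S)=b_0+\sum_{j=1}^{n-1}(-1)^jb_j+(-1)^nb_n+(-1)^{n+1}b_{n+1}.
\end{equation}

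It remains to prove $\chi(F)-\chi(S)=(-1)^n\mu(g)$, for then subtracting the two expansions (and using $(-1)^{n+1}=-(-1)^n$) gives $(-1)^n\bigl(r-b_n+b_{n+1}\bigr)=(-1)^n\mu(g)$, i.e. $r=\mu(g)+\dim H^n(S;\CC)-\dim H^{n+1}(S;\CC)$, which is the claim. By Theorem \ref{nexsai} we have $B_g=\Sigma_g$, so $g$ is a ${\rm C}^{\infty}$ locally trivial fibration over $\CC\setminus\Sigma_g$ with fibre $F$; the additivity and multiplicativity of the Euler characteristic with compact supports (which for complex algebraic varieties coincides with the ordinary one), applied to the stratification $\CC=(\CC\setminus\Sigma_g)\sqcup\Sigma_g$ of the target, give $\chi(S)-\chi(F)=\sum_{c_0\in\Sigma_g}\bigl(\chi(g^{-1}(c_0))-\chi(F)\bigr)$. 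For each $c_0\in\Sigma_g$, tameness confines the difference between the general fibre and $g^{-1}(c_0)$ to the finitely many isolated singular points of $g$ lying in $g^{-1}(c_0)$: by the method underlying Theorem \ref{np-vc-new}(ii) applied with $K=\CC_S$ one has $\phi_{\tau-c_0}(\rmR g_*\CC_S)\simeq\bigoplus_{p}\phi_{g-c_0}(\CC_S)_p$, and the stalk $\phi_{g-c_0}(\CC_S)_p$ is the reduced cohomology of the local Milnor fibre of $g$ at $p$, concentrated in degree $n$ and of rank $\mu_p$; hence $\chi\bigl(\phi_{\tau-c_0}(\rmR g_*\CC_S)\bigr)=(-1)^n\sum_p\mu_p$, and this equals $\chi(F)-\chi(g^{-1}(c_0))$ by the defining triangle of the vanishing cycle functor together with the local triviality of $g$ near the fibres. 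Summing over $c_0$ gives $\chi(F)-\chi(S)=(-1)^n\mu(g)$, as required.

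The main obstacle is this last step: establishing, under mere topological tameness in the sense of Broughton (rather than Sabbah's cohomological tameness), that $\chi(g^{-1}(c_0))-\chi(F)$ is purely local, with no correction coming from behaviour at infinity. This rests on combining Theorem \ref{nexsai} (no bifurcation away from the critical values) with the truncation-and-properness mechanism behind Theorem \ref{np-vc-new} --- so that, after restricting $\CC_S$ to a large ball, $g$ becomes proper on the support and the vanishing cycles may be computed fibrewise --- and then the bookkeeping of shifts and signs in the vanishing-cycle Euler characteristic is routine. A secondary delicate point, handled above via the Stein factorization, is the connectedness of the general fibre.
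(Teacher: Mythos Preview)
The paper does not give its own proof of this lemma: it is stated with a bare citation to Sabbah \cite[Remark just after Lemma 8.5]{Sab06}, the surrounding text only saying that Theorem~\ref{np-vc-new} allows one to transport Sabbah's argument from cohomological to topological tameness. So there is nothing in the paper to compare your argument against line by line; your Euler-characteristic bookkeeping is exactly the standard route one would expect, and it is correct.

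One point deserves tightening. Your use of the ``Stein factorization $S\to S'\to\CC$'' is informal: Stein factorization is a construction for proper morphisms, and $g:S\to\CC$ is not proper. The conclusion you want (connectedness of the general fibre when $S$ is connected and $n\ge 1$) is correct, but the cleanest way to get it inside the paper's framework is to use the very ingredients you already invoke. From the concentration of $\phi_{\tau-c_0}(\rmR g_*\CC_S)$ in degree $n$ (Theorem~\ref{np-vc-new}(ii)) and the distinguished triangle for nearby/vanishing cycles, the specialization map $H^0(g^{-1}(c_0);\CC)\to H^0(F;\CC)$ is an isomorphism for every $c_0$, so every fibre has the same number $d$ of components; moreover, since $g^{-1}(\Sigma_g)$ has real codimension $2$ in $S$, one has $\pi_0(S\setminus g^{-1}(\Sigma_g))=\pi_0(S)$, and combining this with the stalk formula forces the monodromy on $\pi_0(F)$ around each $c_0$ to be trivial. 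Hence $\rmR^0g_*\CC_S$ is the constant sheaf $\CC^d$, and $d=\dim H^0(S;\CC)$. This replaces the Stein-factorization paragraph and stays entirely within what the paper provides.

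A second minor remark: you write ``$S$ is affine of dimension $n+1$'' to justify $b_j=0$ for $j>n+1$. In the general setup of Section~\ref{sec:s3} the paper only says $S$ is a smooth quasi-affine subvariety; the passage introducing Lemmas~\ref{suplem-1}--\ref{suplem-2} however speaks of ``a smooth subvariety $S\subset X=\CC^N$'' and the subsequent applications are to complete intersections, so the intended hypothesis is that $S$ is closed (hence affine). It is worth making that assumption explicit, since for merely quasi-affine $S$ the Artin vanishing you use can fail.
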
 
Moreover, by the proof of \cite[Lemma 8.5]{Sab06} 
and Lemma \ref{suplem-2} we obtain the following result. 

\begin{corollary}\label{suplcorol-2} 
In the situation of Lemma \ref{suplem-2}, 
for a point $c_0 \in \CC$ 
we denote by $\mu_{c_0}$ the sum of the Milnor 
numbers of the hypersurface $g^{-1}(c_0) \subset S$ 
and set $\mu (g)^{\prime}:= 
\mu (g) + {\dim} H^n(S; \CC) - {\dim} H^{n+1}(S; \CC)$. 
Then for any point $c_0 \in \CC$ we have 
\begin{equation}
{\rm dim} H^n(g^{-1}(c_0); \CC )= 
\begin{cases}
\mu (g)^{\prime}  & (c_0 \notin B_g= \Sigma_g )\\
\\
\mu (g)^{\prime}- \mu_{c_0} & (c_0 \in B_g= \Sigma_g ).
\end{cases}
\end{equation}
\end{corollary}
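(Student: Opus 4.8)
The plan is to carry out the argument of \cite[Lemma 8.5]{Sab06}, with Lemma \ref{suplem-2} and our topological tameness taking the place of Sabbah's cohomological one. First I would set $G:= \rmR g_* \CC_S \in \Dbc ( \CC )$ and collect what is already available. By Lemma \ref{suplem-1} and the remark following it, for every $c \in \CC$ and $j \in \ZZ$ one has $H^j(g^{-1}(c); \CC ) \simeq H^j(G)_c$, and this vanishes for $j \notin [0,n]$; by Lemma \ref{suplem-2} the generic value of $\dim H^n(g^{-1}(c); \CC )$ is $\mu (g)^{\prime}$. Moreover, by Theorem \ref{nexsai} we have $B_g = \Sigma_g = g( {\rm Sing}\, g)$, and since $g$ is tame at infinity the critical set ${\rm Sing}\, g$ is contained in a bounded region of $\CC^N$ (exactly as in the proof of Proposition \ref{prop-7}) and hence, being an algebraic set, is finite; in particular every fiber of $g$ has at most isolated singularities.

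Next, fixing $c_0 \in \CC$ and a point $c \in \CC$ close to $c_0$ with $c \notin B_g$, I would use the distinguished triangle
\begin{equation}
i_{c_0}^{-1}G \longrightarrow \psi_{\tau - c_0}(G) \longrightarrow \phi_{\tau - c_0}(G) \overset{+1}{\longrightarrow}
\end{equation}
for the standard coordinate $\tau$ of $\CC$, in which $i_{c_0}^{-1}G \simeq \rmR \Gamma (g^{-1}(c_0); \CC )$. Since $G$ has locally constant cohomology on a small punctured disk around $c_0$, the nearby-cycle term satisfies $H^j \psi_{\tau - c_0}(G) \simeq H^j(G)_c \simeq H^j(g^{-1}(c); \CC )$. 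The key step is the computation of $\phi_{\tau - c_0}(G)$. For this I would apply Theorem \ref{np-vc-new} (ii) to the polynomial $f$ with $Q=1$ (so that $U=X= \CC^N$ and $I(f)= \emptyset$) and to the algebraic constructible sheaf $K:= \rmR i_{S \ast} \CC_S \in \Dbc (X^{\an})$, where $i_S : S \hookrightarrow X= \CC^N$ is the inclusion; then $\rmR f_* K \simeq \rmR g_* \CC_S =G$, and the set $\{ z \in f^{-1}(c_0) \ | \ df(z) \in {\rm SS }(K) \}$ is nothing but ${\rm Sing}\, g \cap g^{-1}(c_0)= \{ p_1, \ldots, p_k \}$, which is finite. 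Theorem \ref{np-vc-new} (ii) then gives
\begin{equation}
\phi_{\tau - c_0}(G) \simeq \bigoplus_{i=1}^k \phi_{f- c_0}(K)_{p_i} \simeq \bigoplus_{i=1}^k \phi_{g- c_0}( \CC_S )_{p_i},
\end{equation}
the second isomorphism coming from the compatibility of vanishing cycles with the closed direct image $i_S$ (as in Proposition \ref{PDI} (i)). Since $\CC_S[n+1]$ is a perverse sheaf on $S$ and each $p_i$ is an isolated critical point of $g$, the complex $\phi_{g- c_0}( \CC_S )[n]$ is a perverse sheaf on $S$ supported on the finite set $\{ p_1, \ldots, p_k \}$, hence a finite sum of skyscrapers; by the classical Milnor fibration theorem its stalk at $p_i$ is $\CC^{\mu_{p_i}}$, where $\mu_{p_i}$ denotes the Milnor number of the hypersurface $g^{-1}(c_0)$ at $p_i$. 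Consequently $\phi_{\tau - c_0}(G)$ is concentrated in degree $n$, with $\dim H^n \phi_{\tau - c_0}(G)= \sum_{i=1}^k \mu_{p_i}= \mu_{c_0}$, this number being $0$ precisely when $c_0 \notin B_g= \Sigma_g$.

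To conclude, I would pass to the long exact cohomology sequence of the triangle. Since $\phi_{\tau - c_0}(G)$ lives in degree $n$ and $H^{n+1}(g^{-1}(c_0); \CC )=0$ by Lemma \ref{suplem-1}, degree $n$ of that sequence reads
\begin{equation}
0 \longrightarrow H^n(g^{-1}(c_0); \CC ) \longrightarrow H^n(g^{-1}(c); \CC ) \longrightarrow \CC^{\mu_{c_0}} \longrightarrow 0,
\end{equation}
so that $\dim H^n(g^{-1}(c_0); \CC )= \dim H^n(g^{-1}(c); \CC )- \mu_{c_0}= \mu (g)^{\prime}- \mu_{c_0}$ by Lemma \ref{suplem-2} (recall $c \notin B_g$). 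When $c_0 \notin B_g= \Sigma_g$ this gives $\mu (g)^{\prime}$, and when $c_0 \in B_g= \Sigma_g$ it gives $\mu (g)^{\prime}- \mu_{c_0}$, as desired. The step I expect to be the real heart of the matter is the identification of $\phi_{\tau - c_0}(G)$ with a direct sum of local Milnor lattices, i.e. the statement that no vanishing cycle of $\rmR g_* \CC_S$ comes from infinity; this is exactly where the tameness of $g$ is used. In the present setting, however, that work has already been done in Theorem \ref{np-vc-new} (which itself rests on Proposition \ref{prop-micros} and \cite[Theorem 4.4.1]{KS85}), so here it is merely an invocation, and the rest --- the perversity argument for the concentration of the local vanishing cycles, and the degree bookkeeping in the long exact sequence --- is routine, following \cite[Lemma 8.5]{Sab06}.
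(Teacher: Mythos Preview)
Your proof is correct and follows essentially the same route as the paper, which merely cites the proof of \cite[Lemma 8.5]{Sab06} together with Lemma \ref{suplem-2}; you have simply written out that argument in detail, invoking Theorem \ref{np-vc-new} in place of Sabbah's cohomological tameness to identify $\phi_{\tau-c_0}(\rmR g_*\CC_S)$ with the local Milnor data and then reading off the short exact sequence in degree $n$. One small citation slip: the compatibility $\phi_{f-c_0}(K)_{p_i}\simeq\phi_{g-c_0}(\CC_S)_{p_i}$ you use is the classical commutation of $\phi$ with proper (closed) direct image (e.g.\ \cite[Proposition 4.2.11]{Dim04} or \cite[Exercise VIII.15]{KS90}), not Proposition \ref{PDI}, which concerns the meromorphic functors $\psi^{\merc},\phi^{\merc}$.
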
 
Now let $Y= \CC^N_w$ be the dual vector space of $X= \CC^N_z$ 
and consider the Lagrangian subvariety 
\begin{equation}
\Lambda^g:= \{ (z,dg(z)) \ | \ z \in S \} \qquad \subset 
T^* S
\end{equation}
of $T^*S$ and the natural morphisms 
\begin{equation}
T^* X 
\overset{\varpi}{\longleftarrow}
S \times_{X} T^* X
\overset{\rho}{\longrightarrow}
T^* S
\end{equation}
associated to the inclusion map 
$S \hookrightarrow X$. Then $\varpi \rho^{-1} \Lambda^g 
\subset T^*X$ is a Lagrangian subvariety of $T^*X$. 
For a point $w=(w_1,w_2, \ldots, w_N) 
\in Y= \CC^N$ we define a linear perturbation 
$g^{(w)}: S \longrightarrow \CC$ of $g: S \longrightarrow \CC$ by 
\begin{equation}
g^{(w)}(z):=g(z)- \dsum_{j=1}^N w_jz_j \qquad (z \in S \subset X). 
\end{equation}

\begin{lemma}\label{suplem-3} 
The polynomial map 
$g:S \longrightarrow \CC$ is tame at infinity 
if and only if the restriction $\varpi \rho^{-1} \Lambda^g 
\longrightarrow Y$ of the projection $T^*X \simeq 
X \times Y \longrightarrow Y$ to $\varpi \rho^{-1} \Lambda^g
\subset T^*X$ 
is proper over a sufficiently small open ball 
$B_{\varepsilon}(0) \subset Y= \CC^N$ 
$(0< \varepsilon \ll 1)$ centered at the origin 
$0 \in Y= \CC^N$. 
\end{lemma}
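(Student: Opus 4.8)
The plan is to make both conditions explicit in terms of the critical loci of the linear perturbations $g^{(w)}$ and then to check that they say the same thing.

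First I would unwind $\varpi\rho^{-1}\Lambda^g$. Since $S\times_X T^*X=\{(z,\xi)\mid z\in S,\ \xi\in T_z^*X\}$ and $\rho$ is the restriction map $(z,\xi)\mapsto(z,\xi|_{T_zS})$, one gets
\[
\varpi\rho^{-1}\Lambda^g=\{(z,\xi)\in T^*X\mid z\in S,\ \xi|_{T_zS}=dg(z)\},
\]
and under $T^*X\simeq X\times Y$ this is exactly $\{(z,w)\mid z\in S,\ dg^{(w)}(z)=0\}$, i.e. the total space of the family of critical loci ${\rm Sing}(g^{(w)})\subset S$, $w\in Y$ (the sign convention relating $\xi$ to $w$ is immaterial here). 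This is the closed Lagrangian subvariety recorded just before the statement, so the projection $q\colon \varpi\rho^{-1}\Lambda^g\to Y$ is proper over $B_\varepsilon(0)$ if and only if there exist $R>0$ and $0<\varepsilon'\le\varepsilon$ such that ${\rm Sing}(g^{(w)})\subset\overline{B_R(0)}\cap S$ for every $w$ with $\|w\|\le\varepsilon'$: indeed $\varpi\rho^{-1}\Lambda^g$ is closed in $X\times Y$, so over a compact base its preimage is compact iff it is bounded in the $X$-direction.

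The heart of the matter — and the only step that is not purely formal — is the norm identity
\[
\min\{\,\|w\|\ \mid\ z\in{\rm Sing}(g^{(w)})\,\}=\|d\,{\rm Re}\,g(z)\|\qquad (z\in S),
\]
the right-hand side being the norm used in Definition~\ref{def-7}. Here $z\in{\rm Sing}(g^{(w)})$ means $\langle\,\cdot\,,w\rangle|_{T_zS}=dg(z)=df(z)|_{T_zS}$ in ${\rm Hom}_\CC(T_zS,\CC)$. Writing the Hermitian product as $(u,v)=\langle u,\overline v\rangle$ and using that the map $\Phi_z$ of \eqref{restequa} is precisely the Hermitian-orthogonal projection of $\CC^N$ onto $T_zS$, the functional $\langle\,\cdot\,,w\rangle|_{T_zS}$ is represented by $\Phi_z(\overline w)\in T_zS$, while $df(z)|_{T_zS}$ is represented by $\Phi_z(\overline{\grad f(z)})=\grad\,{\rm Re}\,g(z)$. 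Hence the solvable $w$ are those with $\overline w\in\grad\,{\rm Re}\,g(z)+\ker\Phi_z$, and the one of least norm is the $w$ with $\overline w=\grad\,{\rm Re}\,g(z)$, of norm $\|\grad\,{\rm Re}\,g(z)\|=\|d\,{\rm Re}\,g(z)\|$. (By the Cauchy--Riemann relations recalled in the proof of Proposition~\ref{prop-7} this also equals the norm of the holomorphic differential $dg(z)$ on $T_zS$, so it vanishes exactly on ${\rm Sing}(g)$, as it must.) I expect this identity — together with the verification that $\Phi_z$ is the orthogonal projection, a short linear-algebra computation with the material of Lemmas~\ref{lem-3} and \ref{lem-4} — to be the main technical point; no use of Proposition~\ref{prop-7} itself is needed.

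Granting the identity, both implications follow at once. If $g$ is tame at infinity, pick $\varepsilon_0>0$ and $R>0$ with $\|d\,{\rm Re}\,g(z)\|\ge\varepsilon_0$ for every $z\in S$ with $\|z\|>R$; by the identity, for $\|w\|<\varepsilon_0$ no such $z$ lies in ${\rm Sing}(g^{(w)})$, so ${\rm Sing}(g^{(w)})\subset\overline{B_R(0)}\cap S$ for all $w\in B_{\varepsilon_0}(0)$ and $q$ is proper over $B_{\varepsilon_0}(0)$. Conversely, if $g$ is not tame at infinity, the negation of Definition~\ref{def-7} yields $z_n\in S$ with $\|z_n\|\to\infty$ and $\|d\,{\rm Re}\,g(z_n)\|\to 0$; by the identity choose $w_n$ with $z_n\in{\rm Sing}(g^{(w_n)})$ and $\|w_n\|=\|d\,{\rm Re}\,g(z_n)\|\to 0$. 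The points $(z_n,w_n)$ then lie in $\varpi\rho^{-1}\Lambda^g$ over the compact set $\{w_n\}_{n\gg 0}\cup\{0\}$, which is contained in $B_\varepsilon(0)$ for every $\varepsilon>0$, yet $\|z_n\|\to\infty$; hence $q$ is not proper over any ball centered at the origin. This gives the equivalence.
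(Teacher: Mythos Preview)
Your proof is correct and follows essentially the same route as the paper's. Both arguments identify the fiber of $\varpi\rho^{-1}\Lambda^g\to Y$ over $w$ with ${\rm Sing}(g^{(w)})$, reformulate properness as uniform boundedness of these critical loci, and then use that $\Phi_z$ is the Hermitian orthogonal projection together with $\grad\,{\rm Re}\,g^{(w)}(z)=\grad\,{\rm Re}\,g(z)-\Phi_z(\overline w)$; your ``norm identity'' $\min\{\|w\|:z\in{\rm Sing}(g^{(w)})\}=\|d\,{\rm Re}\,g(z)\|$ is just a rephrasing of the paper's observation that $\{-\Phi_z(\overline w):w\in B_\varepsilon(0)\}$ is the $\varepsilon$-ball in $T_zS_\RR$.
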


\begin{proof}
First note that the fiber of the morphism 
$\varpi \rho^{-1} \Lambda^g 
\longrightarrow Y$ at a point $w \in Y= \CC^N$ is 
naturally identified with the set 
\begin{equation}
{\rm Sing} g^{(w)}= \{ z \in S \ | \ dg^{(w)}(z)=0 
 \} \qquad \subset S. 
\end{equation}
Then the last condition in the lemma is equivalent to 
the one that there exist $R \gg 0$ and $0< \varepsilon \ll 1$ 
such that 
\begin{equation}
 \{ z \in S \ | \ dg^{(w)}(z)=0, || z || >R 
 \} = \emptyset
\end{equation}
for any $w \in B_{\varepsilon}(0) \subset Y= \CC^N$. 
By the Cauchy-Riemann equation, for any $z \in S$ and 
$w \in Y= \CC^N$ we have also an equivalence 
\begin{equation}
dg^{(w)}(z)=0  \ \Longleftrightarrow \ 
{\rm grad} {\rm Re} g^{(w)}(z)=0. 
\end{equation}
Moreover, for the surjective $\CC$-linear map 
$\Phi_z: T_zX_{\RR} \simeq \CC^N \longrightarrow 
T_zS_{\RR} \simeq \CC^{{\rm dim} S}$ in Lemma \ref{lem-3}, 
by \eqref{C-R-eqn} we obtain an equality 
\begin{equation}
{\rm grad} {\rm Re} g^{(w)}(z)=
{\rm grad} {\rm Re} g(z)- 
\Phi_z \left(
\begin{array}{c}
\overline{w_1}    
\\
\vdots 
\\
\overline{w_N}      
\end{array}
\right). 
\end{equation} 
Since by the proof of  Lemma \ref{lem-3} the $\CC$-linear map 
$\Phi_z$ is the orthogonal projection to 
$T_zS_{\RR} \simeq \CC^{{\rm dim} S}$ with respect to 
the Hermitian metric of $T_zX_{\RR} \simeq \CC^N$, 
we see that the open subset 
\begin{equation}
\Bigl\{ 
- \Phi_z \left(
\begin{array}{c}
\overline{w_1}    
\\
\vdots 
\\
\overline{w_N}      
\end{array}
\right)  \ | \ 
w \in B_{\varepsilon}(0)  \Bigr\} \quad \subset T_zS_{\RR}
\end{equation} 
is the $\varepsilon$-ball in 
$T_zS_{\RR} \simeq \CC^{{\rm dim} S}$ 
centered at the origin. From this 
we immediately obtain the assertion. 
\end{proof}
By the proof of Lemma \ref{suplem-3} and the 
algebraicity of $\varpi \rho^{-1} \Lambda^g
\subset T^*X$ we can easily see that generic 
linear perturbations 
$g^{(w)}$ ($w \in Y= \CC^N$) 
of $g$ are tame at infinity. More precisely, 
we have the following result. 

\begin{lemma}\label{suplemma-4} 
Let $\Omega \subset Y = \CC^N$ be the maximal Zariski open 
subset of $Y$ such that the base change of 
the morphism 
$\varpi \rho^{-1} \Lambda^g 
\longrightarrow Y$ by the 
inclusion map $\Omega \hookrightarrow Y$ is a 
(possibly ramified) finite covering. 
Then for $w \in Y = \CC^N$ the linear perturbation  
$g^{(w)}$ of $g$ is tame at infinity if and only if 
$w \in \Omega$. 
\end{lemma}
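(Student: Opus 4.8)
The plan is to apply Lemma~\ref{suplem-3} not to $g$ itself but to each linear perturbation $g^{(w)}$, and then to identify the set of $w$ for which the resulting properness holds with $\Omega$ by an algebraicity argument. For the first step I would record how a linear perturbation moves the Lagrangian. For $z\in S$ one has $d g^{(w)}(z)=dg(z)-\iota_z^*(w)$, where $\iota_z^*\colon T^*_zX\to T^*_zS$ is induced by the inclusion $\iota\colon S\hookrightarrow X$; hence, writing $\theta_w\colon T^*X\simeq X\times Y\to X\times Y$ for the fibrewise translation $(z,\xi)\mapsto(z,\xi+w)$, one gets
\[
\varpi\rho^{-1}\Lambda^{g^{(w)}}=\theta_w^{-1}\bigl(\varpi\rho^{-1}\Lambda^{g}\bigr)\subset T^*X .
\]
Since $\theta_w$ is a homeomorphism of $T^*X\simeq X\times Y$ carrying the fibre of the projection to $Y$ over $v$ onto the fibre over $v+w$, it identifies $\varpi\rho^{-1}\Lambda^{g^{(w)}}$ with $\varpi\rho^{-1}\Lambda^{g}$ and intertwines the two projections to $Y$ with the translation $v\mapsto v+w$. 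Consequently $\varpi\rho^{-1}\Lambda^{g^{(w)}}\to Y$ is proper over a small ball $B_\varepsilon(0)\subset Y$ if and only if the restriction $p\colon\varpi\rho^{-1}\Lambda^{g}\to Y$ of the projection is proper over the ball $B_\varepsilon(w)$ centred at $w$. Applying Lemma~\ref{suplem-3} to $g^{(w)}\colon S\longrightarrow\CC$ then yields the reformulation: $g^{(w)}$ is tame at infinity if and only if $p$ is proper over some (Euclidean) neighbourhood of $w$.

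Next I would prove that $\Omega_0:=\{\,w\in Y\mid p\text{ is proper over a neighbourhood of }w\,\}$ is Zariski open and equals $\Omega$. Put $\Lambda:=\varpi\rho^{-1}\Lambda^{g}$; it is a smooth, pure $N$-dimensional algebraic subvariety of $T^*X\simeq X\times Y$, being an affine bundle over $S$ modelled on $T^*_SX$, of dimension $\dim S+(N-\dim S)=N$. Fix an algebraic compactification $X\hookrightarrow\overline{X}=\PP^N$, let $\overline{\Lambda}$ be the Zariski closure of $\Lambda$ in $\overline{X}\times Y$ and $\overline{p}\colon\overline{\Lambda}\to Y$ the (proper) restriction of the projection, and set $\partial\Lambda:=\overline{\Lambda}\setminus\Lambda$, a Zariski-closed subset of $\overline{\Lambda}$. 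Then $\overline{p}(\partial\Lambda)$ is Zariski closed in $Y$, and I claim $\Omega_0=Y\setminus\overline{p}(\partial\Lambda)$: over $V:=Y\setminus\overline{p}(\partial\Lambda)$ one has $p^{-1}(V)=\overline{p}^{-1}(V)$, so $p$ is proper there; conversely, if $w\in\overline{p}(\partial\Lambda)$ and $\eta\in\partial\Lambda$ lies over $w$, a sequence $\eta_n\in\Lambda$ with $\eta_n\to\eta$ in $\overline{\Lambda}$ has no accumulation point in $\Lambda$ (as $\overline{\Lambda}$ is Hausdorff and $\eta\notin\Lambda$) while $p(\eta_n)=\overline{p}(\eta_n)\to w$, so over any neighbourhood of $w$ the map $p$ fails to be proper: the preimage of a suitable compact set contains a tail of $(\eta_n)$ and is non-compact. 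Hence $\{\,w\mid g^{(w)}\text{ tame}\,\}=\Omega_0$ is Zariski open.

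It remains to identify $\Omega_0$ with $\Omega$. Over $\Omega_0$ the morphism $p$ is proper, and each fibre $p^{-1}(w)={\rm Sing}\,g^{(w)}\subset S\subset\CC^N$ is then a compact analytic subset of an affine space, hence finite; so $p|_{p^{-1}(\Omega_0)}\colon p^{-1}(\Omega_0)\to\Omega_0$ is finite. As $\Lambda$ is smooth and $p^{-1}(\Omega_0)$, $\Omega_0$ are of pure dimension $N$ with $p$ having $0$-dimensional fibres, $p$ is flat over $\Omega_0$, and being finite and flat it restricts to a finite covering of locally constant degree there; by maximality of $\Omega$ this forces $\Omega_0\subseteq\Omega$. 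Conversely, for $w\in\Omega$ the map $p$ is finite, in particular proper, over the neighbourhood $\Omega$ of $w$, so $w\in\Omega_0$; therefore $\Omega=\Omega_0=\{\,w\mid g^{(w)}\text{ tame}\,\}$, which is the assertion. I expect the real work to lie in the reduction of the first paragraph — the translation identity and the careful invocation of Lemma~\ref{suplem-3} — and in the compactification step, whose delicate point is the sequential description of non-properness together with, when $S$ is not closed in $X$, keeping track of both the finite part and the part at infinity of $\partial\Lambda$; the flatness remark is routine.
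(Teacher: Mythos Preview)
Your reduction via the translation identity and Lemma~\ref{suplem-3} is exactly the argument the paper sketches (``by the proof of Lemma~\ref{suplem-3} and the algebraicity of $\varpi\rho^{-1}\Lambda^g$''), and your identification of the tame locus with $\Omega_0:=Y\setminus\overline p(\partial\Lambda)$ via a compactification is a correct way to make that algebraicity step precise. The gap is the final inclusion $\Omega_0\subseteq\Omega$. Over $\Omega_0$ you show $p$ is finite and flat, hence of locally constant degree; but $\Omega$ is by definition the locus where $p$ is an \emph{unramified} finite covering, and finite flat does not imply unramified. Ramification of $p$ over $w$ corresponds to $g^{(w)}$ having a \emph{degenerate} critical point, which is unrelated to behaviour at infinity. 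Concretely, for $N=1$, $S=X=\CC$, $g(z)=z^3/3$ one has $\Lambda=\{(z,z^2)\}$ and $p\colon z\mapsto z^2$, finite over all of $\CC$ but ramified over $0$, so $\Omega=\CC^*$; yet $g^{(0)}=g$ satisfies $\lVert d\,{\rm Re}\,g(z)\rVert=|z|^2\to\infty$ and is tame, whence $0\in\Omega_0\setminus\Omega$. Thus the sentence ``by maximality of $\Omega$ this forces $\Omega_0\subseteq\Omega$'' is unjustified and in fact false.

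What your argument does establish is that $g^{(w)}$ is tame at infinity if and only if $w$ lies in the maximal Zariski open over which $p$ is \emph{finite} (equivalently proper). This is the statement that naturally comes out of Lemma~\ref{suplem-3}, and is presumably what the paper intends; the mismatch is an imprecision in the lemma's wording (the qualifier ``unramified'' should be dropped), not a defect in your method. Your proof is complete for that corrected statement.
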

As in \cite[Proposition 3.1]{Bro88} we obtain also the   
following characterization of the tameness 
at infinity of $g: S \longrightarrow \CC$.

\begin{proposition}\label{propt} 
The polynomial map $g: S \longrightarrow \CC$ is 
tame at infinity if and only if $\mu (g) < + \infty$ and 
$\mu (g)= \mu (g^{(w)})$ for all sufficiently small 
$w \in Y= \CC^N$. 
\end{proposition}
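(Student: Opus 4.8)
The plan is to run a conservation-of-Milnor-number argument in the spirit of \cite[Proposition 3.1]{Bro88}, taking Lemmas \ref{suplem-3} and \ref{suplemma-4} as the geometric input. Write $V:=\varpi\rho^{-1}\Lambda^g\subset T^*X\simeq X\times Y$ and let $q:V\longrightarrow Y$ be the projection. As recalled in the proof of Lemma \ref{suplem-3}, the fibre $q^{-1}(w)$ is the critical set ${\rm Sing}\, g^{(w)}$ of the linear perturbation $g^{(w)}$; moreover $V$ carries a natural scheme structure — it is the relative critical scheme of the family $\{g^{(w)}\}_{w\in Y}$, cut out locally on $S$ by the relative Jacobian ideal of $g^{(w)}$ — with respect to which $V$ is smooth of dimension $N$ (the projection $V\to S$ being an affine bundle, via the identification $V=\rho^{-1}\Lambda^g$) and the scheme-theoretic fibre $q^{-1}(w)$ is the Jacobian scheme of $g^{(w)}$ on $S$, of length $\mu(g^{(w)})$ whenever ${\rm Sing}\, g^{(w)}$ is finite. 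Since $V$ is smooth, its reduced structure agrees with this natural one. I will also use that, by the Cauchy--Riemann equations employed in the proof of Proposition \ref{prop-7}, one has ${\rm Sing}\, g=\{z\in S\mid d{\rm Re}\, g(z)=0\}$.

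For the ``only if'' direction, suppose $g$ is tame at infinity. By Definition \ref{def-7} there is $R\gg 0$ with $d{\rm Re}\, g(z)\neq 0$ for all $z\in S$ with $\|z\|>R$, so ${\rm Sing}\, g$ is a bounded algebraic subset of $S$, hence finite, and $\mu(g)<+\infty$. By Lemma \ref{suplemma-4} the set $\Omega$ equals $\{w\in Y\mid g^{(w)}\text{ is tame at infinity}\}$, so $0\in\Omega$; since $\Omega$ is a connected Zariski open subset over which $q$ restricts to an unramified finite covering of some degree $\delta$, for every $w\in\Omega$ the (reduced) fibre $q^{-1}(w)$ consists of $\delta$ points, equivalently $g^{(w)}$ has exactly $\delta$ critical points, all nondegenerate, whence $\mu(g^{(w)})=\delta$. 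Taking $w=0$ gives $\mu(g)=\delta$, and therefore $\mu(g^{(w)})=\delta=\mu(g)$ for all $w$ in a small ball about the origin contained in $\Omega$.

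For the ``if'' direction, assume $\mu(g)<+\infty$ and $\mu(g^{(w)})=\mu(g)$ for all sufficiently small $w$. Then $q^{-1}(0)={\rm Sing}\, g$ is finite, say $\{z_1,\dots,z_m\}$; choose pairwise disjoint open balls $B_i\ni z_i$ in $V$ with $q^{-1}(0)\cap\partial B_i=\emptyset$. Since $V$ is smooth (hence Cohen--Macaulay) of dimension $N=\dim Y$ and $q$ is quasi-finite near each $z_i$, after shrinking the $B_i$ each $q|_{B_i}$ is finite over a common ball $B_\varepsilon(0)\subset Y$ and hence finite flat (local criterion of flatness); thus for $w\in B_\varepsilon(0)$ one has $q^{-1}(w)\cap\partial B_i=\emptyset$ and, by conservation of number, $\sum_{i=1}^m \mathrm{length}\bigl(q^{-1}(w)\cap B_i\bigr)=\mathrm{length}\, q^{-1}(0)=\mu(g)$. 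On the other hand, shrinking $\varepsilon$ so that $\overline{B_\varepsilon(0)}$ lies in the ``sufficiently small'' range, the hypothesis gives $\mathrm{length}\, q^{-1}(w)=\mu(g^{(w)})=\mu(g)$; comparing, $q^{-1}(w)=\bigsqcup_i\bigl(q^{-1}(w)\cap B_i\bigr)\subset\bigcup_i B_i$ for all $w\in\overline{B_\varepsilon(0)}$. Hence $q^{-1}(\overline{B_\varepsilon(0)})\subset\bigcup_i B_i\subset\bigcup_i\overline{B_i}$, a closed subset of the compact set $\bigcup_i\overline{B_i}$, so $q^{-1}(\overline{B_\varepsilon(0)})$ is compact and $q$ is proper over $B_\varepsilon(0)$. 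By Lemma \ref{suplem-3} this means $g$ is tame at infinity.

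The main point requiring care is the scheme-theoretic bookkeeping: identifying $V=\varpi\rho^{-1}\Lambda^g$ with the relative critical scheme so as to know both that it is smooth and that $\mathrm{length}\, q^{-1}(w)=\mu(g^{(w)})$, and then applying conservation of number correctly — the delicate feature being that critical points of $g^{(w)}$ may escape to infinity, which is exactly what tameness forbids and what prevents the inequality $\mathrm{length}\, q^{-1}(w)\ge\mu(g)$ (for $w$ near $0$) from being an equality in general. It is precisely the hypothesis that $\mu(g^{(w)})=\mu(g)$ for \emph{all} small $w$, rather than merely for generic $w$, that forces $q^{-1}(\overline{B_\varepsilon(0)})\subset\bigcup_i B_i$ and hence the properness of $q$ near the origin.
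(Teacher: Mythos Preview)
The paper gives no proof of its own here, referring instead to \cite[Proposition 3.1]{Bro88}; your argument is essentially Broughton's conservation-of-number approach, and the ``if'' direction is correct as written.

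There is, however, a gap in your ``only if'' argument. You invoke Lemma~\ref{suplemma-4} to place $0$ in the locus $\Omega$ over which $q$ is an \emph{unramified} finite covering, and then conclude that $g=g^{(0)}$ has exactly $\delta$ critical points, all Morse, so that $\mu(g)=\delta$. But tameness at infinity does not force the critical points of $g$ to be nondegenerate: already $g(z)=z^{3}$ on $S=X=\CC$ is tame (since $\|d\Re g(z)\|=3|z|^{2}\to\infty$), yet its unique critical point at the origin is degenerate with $\mu(g)=2$. In this example $V=\Lambda^{g}$ and $q$ is the map $z\mapsto 3z^{2}$, which is finite over all of $Y=\CC$ but ramified at $0$; so whatever the paper intends by ``unramified finite covering'' in Lemma~\ref{suplemma-4}, your literal reading --- leading to Morse-ness of $g$ whenever $g$ is tame --- cannot be correct.

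The right route for ``only if'' is the one you already deploy in the other direction: by Lemma~\ref{suplem-3} tameness gives properness of $q$ over some ball $B_{\varepsilon}(0)$, hence finiteness there (the fibre $q^{-1}(0)={\rm Sing}\,g$ is discrete); since $V$ is smooth of dimension $N=\dim Y$, the finite map $q|_{q^{-1}(B_{\varepsilon}(0))}$ is flat by miracle flatness, and therefore $\mu(g^{(w)})=\mathrm{length}\,q^{-1}(w)$ is constant on $B_{\varepsilon}(0)$ --- with no claim whatsoever about nondegeneracy of the individual critical points.
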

From now on, we shall introduce our results on the 
bouquet decompositions of the fibers 
$h^{-1}(c) \subset S$ $(c \in \CC )$ of 
some tame polynomial maps $h:S \longrightarrow \CC$.  
For this purpose, first we recall the following 
fundamental theorem due to Broughton \cite[Theorem 1.2]{Bro88}. 
For a polynomial map $h:S \longrightarrow \CC$ having 
only isolated singular points and a point $c_0 \in \CC$ 
we denote by $\mu_{c_0}$ the sum of the Milnor 
numbers of the hypersurface $h^{-1}(c_0) \subset S$ and 
set $\mu (h,c_0):= \mu (h)- \mu_{c_0}$. 

\begin{theorem}\label{bdecom-br} 
(Broughton \cite[Theorem 1.2]{Bro88}) 
Assume that a polynomial map 
$h:X= \CC^N \longrightarrow \CC$ is tame at infinity. 
Then for any point $c_0 \in \CC$ its fiber 
$h^{-1}(c_0) \subset X= \CC^N$ of $h$ 
has the homotopy type of a bouquet 
$S^{N-1} \vee \cdots \vee S^{N-1}$ of 
some $(N-1)$-dimensional spheres $S^{N-1}$. 
Moreover the number of the spheres $S^{N-1}$ in 
the bouquet decomposition is equal to $\mu (h,c_0)$. 
\end{theorem}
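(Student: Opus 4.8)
The plan is to combine the tameness hypothesis with a Morse-theoretic ``compactification at infinity'' and then reduce to the local Milnor fibrations at the isolated singular points of $h$. First note that tameness forces $\mathrm{Sing}\,h$ to lie inside a ball $B_R(0)$, and being an algebraic set it is then finite; hence $\mu(h)<+\infty$ and $h$ has only isolated critical points. By Theorem~\ref{nexsai} applied to $S=X=\CC^N$ one has $B_h=\Sigma_h$, so $h:\CC^N\setminus h^{-1}(\Sigma_h)\to\CC\setminus\Sigma_h$ is a $\mathrm{C}^\infty$ locally trivial fibration with fibre a smooth affine variety $F_{\mathrm{gen}}$ of complex dimension $N-1$. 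Fix $c_0\in\CC$ and set $F:=h^{-1}(c_0)$; I must show \emph{(a)} that $F$ has the homotopy type of a wedge of $(N-1)$-spheres and \emph{(b)} that the number of spheres equals $\mu(h,c_0)=\mu(h)-\mu_{c_0}$. It is worth emphasising that the cohomological shadow of this is already in hand: specialising Corollary~\ref{suplcorol-2} to $S=\CC^N$, where $H^j(\CC^N;\CC)=0$ for $j\geq1$ so that $\mu(g)'=\mu(g)$, gives $\dim H^{N-1}(F;\CC)=\mu(h)-\mu_{c_0}$ together with the vanishing of the remaining reduced cohomology, via Lemma~\ref{suplem-1}, Lemma~\ref{suplem-2} and Corollary~\ref{suplcoro-1}. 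So the genuinely new content is the passage from this cohomology computation to a homotopy statement.

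\emph{Step 1 (a compact model).} Using Proposition~\ref{prop-7} with $g=h$, so that for $\tau$ near $c_0$ the set $M(h)\cap h^{-1}(\tau)$ of critical points of $\distsq|_{h^{-1}(\tau)}$ lies in a fixed ball, I would choose $R\gg0$ such that $\partial B_R(0)$ is transverse to $h^{-1}(\tau)$ for every $\tau$ in a large disk $D\subset\CC$ with $\Sigma_h\subset\mathrm{int}\,D$ and $c_0\in D$. The gradient flow of $-\distsq$ on the noncompact manifold $h^{-1}(\tau)$ then has no critical point outside $\overline{B_R(0)}$, so $h^{-1}(\tau)$ deformation retracts onto its compact core $h^{-1}(\tau)\cap\overline{B_R(0)}$; in particular $F\simeq F\cap\overline{B_R(0)}$ and similarly for the generic fibre. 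Put $V:=h^{-1}(\overline D)\cap\overline{B_R(0)}$. Then $h|_V:V\to\overline D$ is proper, and by the transversality just arranged together with the isolatedness of the critical points it is a stratified locally trivial fibration over $\overline D\setminus\Sigma_h$ with fibre $F_{\mathrm{gen}}\cap\overline{B_R(0)}\simeq F_{\mathrm{gen}}$. This reduces the problem to a compact Milnor-type family over $\overline D$.

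\emph{Step 2 (connectivity and the wedge).} By the affine Lefschetz theorem of Andreotti--Frankel, the smooth affine variety $F_{\mathrm{gen}}$, and likewise $F$, has the homotopy type of a CW complex of real dimension $\leq N-1$, so $\widetilde H_i(F_{\mathrm{gen}})=0$ for $i\geq N$. To obtain $(N-2)$-connectivity I would exploit the contractibility of $\CC^N$: over $\CC\setminus\overline D$ the map $h$ is, after the retraction of Step~1, a fibration over the annulus at infinity $\CC\setminus\overline D\simeq S^1$ with fibre $F_{\mathrm{gen}}$, and gluing this to $h^{-1}(\overline D)$ along $h^{-1}(\partial D)$ recovers the contractible space $\CC^N$; a Wang/Mayer--Vietoris sequence, together with the dimension bound above, then forces $\widetilde H_i(F_{\mathrm{gen}})=0$ for $i<N-1$ as well. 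For $N\geq3$ a Lefschetz-type argument moreover shows $F_{\mathrm{gen}}$ is simply connected (the cases $N\leq2$ being elementary), so Hurewicz and the freeness of the top homology of an $(N-1)$-complex give $F_{\mathrm{gen}}\simeq\bigvee S^{N-1}$, the number of spheres being $\mathrm{rank}\,H_{N-1}(F_{\mathrm{gen}})=\mu(h)$; this count can be read off from the cohomology computation recalled above, or obtained intrinsically by running Morse theory for $h|_V$ across $\Sigma_h$, which attaches one cell of index $N-1$ per unit of local Milnor number at each singular point.

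\emph{Step 3 (the possibly critical fibre, and the main obstacle).} Finally, moving from a nearby regular value $c$ to $c_0$, the local conic Milnor fibrations at the finitely many singular points of $h^{-1}(c_0)$ exhibit $F\cap\overline{B_R(0)}$ as obtained from $h^{-1}(c)\cap\overline{B_R(0)}$ by collapsing a disjoint union of $\mu_{c_0}=\sum_p\mu_p$ vanishing $(N-1)$-spheres; a long-exact-sequence argument then shows the top homology drops by exactly $\mu_{c_0}$ and that no lower homology appears, whence $F\simeq\bigvee_{\mu(h)-\mu_{c_0}}S^{N-1}$, which is \emph{(a)} and \emph{(b)}. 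I expect the main obstacle to be Step~1: making rigorous that, after intersecting with a large ball, the non-proper $h$ behaves like a proper map \emph{without producing extra handles at infinity}. This is precisely where tameness is indispensable --- through the boundedness of $M(h)$ supplied by Proposition~\ref{prop-7}, which yields transversality of large spheres to all nearby fibres --- and executing it cleanly requires stratified Morse theory (Thom's isotopy lemmas) on the manifold-with-corners $V$, together with the bookkeeping that ties the number of attached cells to $\mu(h,c_0)$ rather than to an unidentified integer.
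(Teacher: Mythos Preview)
The paper does not prove Theorem~\ref{bdecom-br} itself --- it is quoted as Broughton's result --- but the proof of the generalization Theorem~\ref{bdecom-tk} is declared to be ``very similar to that of \cite[Theorem 1.2]{Bro88}'' and serves as the natural reference point.

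Your approach is valid but takes a genuinely different route. The paper (following Broughton) does not build a compact family over a disc nor invoke vanishing cycles: it runs Morse theory for the single real function $\varphi(z)=|h(z)-c_0|^2$ on all of $\CC^N$. The critical points of $\varphi$ away from $h^{-1}(c_0)$ coincide with those of $h$ and, after a small perturbation, are non-degenerate of real Morse index $N$; tameness enters only as a bound on $\|\mathrm{grad}\,\varphi\|^{-1}$ at infinity ensuring the gradient flow is complete. This exhibits $\CC^N$ as $h^{-1}(c_0)$ with $\mu(h,c_0)$ cells of dimension $N$ attached, and the long exact sequence of the pair $(\CC^N,h^{-1}(c_0))$ yields $\pi_j(h^{-1}(c_0))=0$ for $j<N-1$ directly --- no Wang or Mayer--Vietoris step, no separate Lefschetz input for $\pi_1$, and no case split between generic and critical values. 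Andreotti--Frankel, Milnor \cite[Theorem 6.5]{Mil68} and Whitehead then finish. Your approach is more modular and makes the role of each hypothesis transparent, but pays for this with extra bookkeeping: in particular your Step~3 must separately verify that the \emph{singular} fibre remains $(N-2)$-connected after the vanishing-sphere collapse, whereas Broughton's single Morse function handles every fibre $h^{-1}(c_0)$ uniformly and delivers the cell count $\mu(h,c_0)$ on the nose.
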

Recall that this beautiful result was the starting 
point of the intensive activities in the study of 
the monodromies at infinity of polynomial maps 
$h:X= \CC^N \longrightarrow \CC$ (see 
e.g. \cite[Sections 4 and 7]{Take23} for 
the details). Motivated by it, 
we introduce the following class of 
subvarieties of $X= \CC^N$. 

\begin{definition}
We say that a smooth complete intersection subvariety 
$Z \subset X= \CC^N$ is a bouquet variety if it 
has the homotopy type of a bouquet of 
some spheres of dimension ${\rm dim} Z$. 
\end{definition}
Note that if a polynomial map 
$h:X= \CC^N \longrightarrow \CC$ is tame at infinity 
then by Theorem \ref{bdecom-br} generic fibers of 
$h$ are bouquet varieties. 

\begin{theorem}\label{bdecom-tk} 
Assume that $n:= {\rm dim} S-1 \geq 1$, the smooth subvariety 
$S \subset X= \CC^N$ is a bouquet variety and the polynomial map 
$g:S \longrightarrow \CC$ is tame at infinity. 
Denote by $\mu_S$ the number of the spheres $S^{n+1}$ in 
the bouquet decomposition of $S$. 
Then for any $c_0 \in \CC$ its fiber $g^{-1}(c_0) \subset S$ of $g$ 
has the homotopy type of a bouquet 
$S^{n} \vee \cdots \vee S^{n}$ of 
some $n$-dimensional spheres $S^{n}$. 
Moreover the number of the spheres $S^{n}$ in 
the bouquet decomposition is equal to $\mu (g, c_0)- \mu_S$. 
\end{theorem}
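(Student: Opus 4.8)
The plan is to adapt Broughton's proof of Theorem~\ref{bdecom-br} to the subvariety $S$, replacing the Morse theory of a real function on $\CC^{N}$ by one on $S$ whose behaviour at infinity is controlled by Proposition~\ref{prop-7} (via Lemmas~\ref{lem-3} and~\ref{lem-4}). First I would record two reductions. By Proposition~\ref{propt} the tameness of $g$ forces $\mu(g)<+\infty$, so ${\rm Sing}(g)\subset S$ is a finite set, and by Theorem~\ref{nexsai} the bifurcation set of $g$ equals its set of critical values. Fix $c_{0}\in\CC$ and consider the real-analytic function $\phi:=|g-c_{0}|^{2}\colon S\longrightarrow\RR_{\ge 0}$. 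Its critical locus is the disjoint union of $g^{-1}(c_{0})$, sitting at the minimum value $0$, and the finite set $\{p\in{\rm Sing}(g)\ |\ g(p)\ne c_{0}\}$, sitting at positive values. Since $S$ is a smooth affine variety, $\delta|_{S}$ is proper with finitely many critical values; combining this with Proposition~\ref{prop-7} (applied to $g$ and to the restriction of $g$ to the smooth strata of $g^{-1}(c_{0})$) and with the conical structure at the finitely many singular points of $g^{-1}(c_{0})$, one shows that for $R\gg 0$ the inclusions
\[
g^{-1}(c_{0})\cap\overline{B_{R}(0)}\ \hookrightarrow\ g^{-1}(c_{0}),\qquad
S\cap\overline{B_{R}(0)}\ \hookrightarrow\ S
\]
are homotopy equivalences, all positive critical points of $\phi$ lie in the interior of $S\cap\overline{B_{R}(0)}$, and near $S\cap\partial B_{R}(0)$ the function $\phi$ has no critical behaviour.

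Running the Morse theory of $\phi$ on the compact manifold-with-boundary $S\cap\overline{B_{R}(0)}$, exactly as in \cite{Bro88}, then yields a homotopy equivalence
\[
S\ \simeq\ g^{-1}(c_{0})\ \cup\ (\mu(g,c_{0})\ \text{cells of real dimension}\ n+1).
\]
The number of cells is computed by the usual local index count: near each $p\in{\rm Sing}(g)$ with $g(p)\ne c_{0}$, writing $g-c_{0}=a+q$ in local holomorphic coordinates on $S$ with $a\ne 0$ and $q$ having an isolated critical point of Milnor number $\mu_{p}$ at $p$, the function $\phi$ contributes $\mu_{p}$ nondegenerate critical points of Morse index $n+1$ (the real index of ${\rm Re}(\overline{a}\,q)$), and $\sum_{g(p)\ne c_{0}}\mu_{p}=\mu(g)-\mu_{c_{0}}=\mu(g,c_{0})$. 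Because $S$ is a bouquet variety, $S\simeq S^{n+1}\vee\cdots\vee S^{n+1}$ ($\mu_{S}$ copies) is $n$-connected; attaching cells of dimension $n+1$ does not change $\pi_{i}$ for $i\le n-1$, so $\pi_{i}(g^{-1}(c_{0}))\cong\pi_{i}(S)=0$ for $i\le n-1$, i.e.\ $g^{-1}(c_{0})$ is $(n-1)$-connected (in particular simply connected when $n\ge 2$, and connected when $n=1$).

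Next I would compute homology. The pair $(S,g^{-1}(c_{0}))$ has relative integral homology $\ZZ^{\mu(g,c_{0})}$ concentrated in degree $n+1$; since $g^{-1}(c_{0})$ is a (possibly singular) affine variety of complex dimension $n$ it has the homotopy type of a finite CW complex of real dimension $\le n$ (see \cite{Bro88} for the relevant facts), so $H_{n+1}(g^{-1}(c_{0}))=0$ and the long exact sequence of the pair collapses to
\[
0\longrightarrow H_{n+1}(S)=\ZZ^{\mu_{S}}\longrightarrow\ZZ^{\mu(g,c_{0})}\longrightarrow H_{n}(g^{-1}(c_{0}))\longrightarrow 0,
\]
while $\widetilde{H}_{i}(g^{-1}(c_{0});\ZZ)=0$ for $i\ne n$. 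As $H_{n}$ of a CW complex of dimension $\le n$ is free abelian, this gives $\widetilde{H}_{*}(g^{-1}(c_{0});\ZZ)\cong\ZZ^{m}$ in degree $n$ and $0$ otherwise, where $m:=\mu(g,c_{0})-\mu_{S}$ (the rank also follows from Corollary~\ref{suplcorol-2}, using $\dim H^{n}(S)=0$ and $\dim H^{n+1}(S)=\mu_{S}$). Finally, an $(n-1)$-connected CW complex of dimension $\le n$ whose reduced homology is free of rank $m$ and concentrated in degree $n$ is homotopy equivalent to a wedge of $m$ spheres $S^{n}$: for $n\ge 2$ the Hurewicz theorem identifies $\pi_{n}$ with $H_{n}\cong\ZZ^{m}$, and a choice of generators produces a map $S^{n}\vee\cdots\vee S^{n}\to g^{-1}(c_{0})$ ($m$ summands) inducing an isomorphism on homology between simply connected spaces, hence a homotopy equivalence by Whitehead's theorem; for $n=1$ a connected $1$-dimensional CW complex is homotopy equivalent to a wedge of circles, here $m$ of them. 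This exhibits $g^{-1}(c_{0})$ as $S^{n}\vee\cdots\vee S^{n}$ with $m=\mu(g,c_{0})-\mu_{S}$ copies, as claimed.

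The main obstacle is the Morse-theoretic step in the second paragraph: one must make precise that tameness at infinity — through Proposition~\ref{prop-7} and the curve-selection argument behind it, with Lemma~\ref{lem-4} replacing the ambient gradient estimates of \cite{Bro88} — prevents any extra cells from being produced ``from infinity'', so that $S$ is obtained from $g^{-1}(c_{0})$ by attaching exactly $\mu(g,c_{0})$ cells of dimension $n+1$, and that the behaviour of $\phi$ on $S\cap\partial B_{R}(0)$ does not contribute further handles. Once this local-to-global control over the handle decomposition is established, the remaining homological bookkeeping and the Hurewicz--Whitehead conclusion are routine.
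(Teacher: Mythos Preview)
Your proposal is correct and follows essentially the same route as the paper: Morse theory for $\varphi=|g-c_0|^2$ on $S$, the identification of its positive critical points with ${\rm Sing}(g)\setminus g^{-1}(c_0)$ (each contributing cells of index $n+1$), the $(n-1)$-connectedness of $g^{-1}(c_0)$ from the pair $(S,g^{-1}(c_0))$, the CW bound ${\rm dim}\le n$ for the fiber, and the Hurewicz--Whitehead conclusion. The one substantive difference is how the non-compactness is handled: you cut down to $S\cap\overline{B_R(0)}$ and appeal to Proposition~\ref{prop-7} for boundary control, whereas the paper works directly on the non-compact $S$ and justifies the gradient-flow argument by the explicit identity
\[
\|{\rm grad}\,\varphi(z)\|^2 \;=\; 4\,|g(z)-c_0|^2\cdot\|{\rm grad}\,{\rm Re}\,g(z)\|^2,
\]
obtained via Lemma~\ref{lem-3}, so that tameness bounds $\|{\rm grad}\,\varphi\|^{-1}$ on $\{\|z\|\ge R,\ \varphi\ge\varepsilon\}$ and integral curves cannot escape to infinity in finite time. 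This formula is exactly what fills the ``main obstacle'' you flag in your last paragraph; your ball-restriction variant is workable but, when made precise, converges to the same estimate.
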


\begin{proof}
With Proposition \ref{prop-7} and its consequences 
above at hands, the proof is very similar to 
that of \cite[Theorem 1.2]{Bro88} and for it we 
use a Morse theory for the Morse function 
$\varphi (z):=| g(z)-c_0 |^2$ $(z \in S_{\RR})$ on $S_{\RR}$. 
First of all, by Proposition \ref{prop-7} for a 
sufficiently small $0< \varepsilon \ll 1$ the 
level set $\{ z \in S_{\RR} \ | \ \varphi (z) < \varepsilon \} 
\subset S_{\RR}$ of $\varphi$ is homotopic to 
the fiber $g^{-1}(c_0)$. Moreover by the Cauchy-Riemann equation 
we can easily see that a point $z_0 \in S \setminus g^{-1}(c_0)$ 
is a critical point of the Morse function 
\begin{equation}
\varphi (z)=| g(z)-c_0 |^2 = \exp \Bigl( 
2 {\rm Re} \log (g(z)-c_0) \Bigr)
\end{equation}
if and only if it is a critical point of $g$. 
As in the proof of  \cite[Theorem 1.2]{Bro88}, 
by slightly perturbing $g:S \longrightarrow \CC$ 
we may assume that any critical point 
$z_0 \in S \setminus g^{-1}(c_0)$ of $g$ is 
non-degenerate i.e. of complex Morse type. Then we can 
easily show that any critical point 
$z_0 \in S \setminus g^{-1}(c_0)$ of 
$\varphi$ is non-degenerate and has the Morse index 
${\rm dim} S=n+1$. This would then imply that for a 
sufficiently large $t \gg 0$ the 
level set $\{ z \in S_{\RR} \ | \ \varphi (z) < t \} 
\subset S_{\RR}$ of $\varphi$ has the homotopy 
type of the CW complex obtained by attaching 
some $(n+1)$-dimensional cells to $g^{-1}(c_0)$ and 
the number of such cells is equal to $\mu (g, c_0)= 
\mu (g)- \mu_{c_0}$. 
However, to justify such a Morse-theoretical argument 
on the ``non-compact" manifold $S_{\RR}$ we have to 
show that each integral curve of the gradient vector 
field ${\rm grad} \varphi / ||{\rm grad} \varphi ||^2$ 
does not go to infinity in a finite time 
(see the last part of the proof of \cite[Theorem 1.2]{Bro88}
and that of Milnor \cite[Theorem 2.10]{Mil68}). 
For this purpose, it suffices to show that 
for some $R \gg 0$ the function $||{\rm grad} \varphi ||^{-1}$ 
is bounded on the set $\{ z \in S_{\RR} \ | \ 
 ||z|| \geq R, \varphi (z) \geq \varepsilon \}$. 
First, we define  a function $\widetilde{\varphi}: 
X_{\RR} \longrightarrow \RR$ on $X_{\RR}$ by 
\begin{equation}
\widetilde{\varphi}(z):= |f(z)-c_0|^2 \qquad 
(z \in X_{\RR}). 
\end{equation}
Then our Morse function $\varphi: S_{\RR} \longrightarrow \RR$
is the restriction of $\widetilde{\varphi}$ to 
$S_{\RR} \subset X_{\RR}$. Moreover, for any point 
$z \in X_{\RR}$ we can easily show an equality 
\begin{equation}
{\rm grad} \widetilde{\varphi}(z)= 2(f(z)-c_0) \cdot 
\overline{{\rm grad}f(z)}
\end{equation}
in $T_zX_{\RR} \simeq \CC^N$. Recall that by \eqref{C-R-eqn} 
we also have 
\begin{equation} 
\overline{\grad f(z)} = \grad {\rm Re} f(z). 
\end{equation}
Then by Lemma \ref{lem-3}, for any point 
$z \in S_{\RR}$ there exists a surjective 
$\CC$-linear map  
$\Phi_z: T_zX_{\RR} \longrightarrow T_zS_{\RR}$ 
such that  
\begin{align*}
|| {\rm grad} \varphi ||^2
&  = 
\Bigl\langle \grad \varphi (z), \overline{\grad \varphi (z)} \Bigr\rangle  
\\
&  = 
\Bigl( \grad \varphi (z), \grad \varphi (z) \Bigr) 
\\
&  = 
\Bigl( \Phi_z ( \grad \widetilde{\varphi} (z)), 
\Phi_z ( \grad \widetilde{\varphi} (z)) \Bigr) 
\\
&  = 
4 |g(z)-c_0|^2 \cdot || \Phi_z( \grad {\rm Re} f(z)) ||^2 
\\
&  = 
4 |g(z)-c_0|^2 \cdot || \grad {\rm Re} g(z) ||^2. 
\end{align*} 
Since $g$ is tame at inifinity and hence the function 
$|| \grad {\rm Re} g ||= || d{\rm Re} g || \geq 0$ on 
$S_{\RR}$ is bounded away from $0$ at infinity, now 
the desired boundedness of $||{\rm grad} \varphi ||^{-1}$ 
immediately follows.  
Then as in the proof of \cite[Theorem 1.2]{Bro88}, by 
the homotopy exact sequence associated to 
the pair $(S, g^{-1}(c_0))$ of topological spaces we obtain 
\begin{equation}
\pi_j(g^{-1}(c_0)) \simeq 0 \qquad 
(j< n= {\rm dim} g^{-1}(c_0)). 
\end{equation}
Note also that by the 
Andreotti-Frankel theorem in \cite{AF59} and Hamm's 
one in \cite{H83} the complete intersection subvariety 
$g^{-1}(c_0) \subset X= \CC^N$ of $X= \CC^N$ has the 
homotopy type of a CW complex of 
dimension $\leq n= {\rm dim} g^{-1}(c_0)$. 
Then we can apply the argument in the proof of Milnor 
\cite[Theorem 6.5]{Mil68} to 
$g^{-1}(c_0)$ to show that its (top-dimensional) 
homology group $H_{n}(g^{-1}(c_0))$ is a free $\ZZ$-module 
and the first assertion immediately 
follows from Whitehead's theorem as in 
the proof of \cite[Theorem 1.2]{Bro88}. We 
can also show the second assertion by Corollary \ref{suplcorol-2}. 
This completes the proof. 
\end{proof}
Note that in \cite[Theorem 4.6 and Corollary 4.7]{Tib98} 
Tibar also obtained a result similar to the first 
part of Theorem \ref{bdecom-tk} under a different condition 
at infinity in an appropriately chosen compactification of $S$. 
For $1 \leq k \leq N$ we define a decreasing sequence 
\begin{equation}
S_0:=X= \CC^N \supset S_1 \supset S_2 \supset \cdots \cdots 
\supset S_k 
\end{equation}
of smooth complete intersection subvarieties 
of $X= \CC^N$ such that ${\rm dim} S_i=N-i$ 
$(0 \leq i \leq k)$ as follows. First we take a 
tame polynomial $g_0:S_0=X= \CC^N \longrightarrow \CC$
and a point $c_0 \in \CC \setminus B_{g_0}$ and set 
$S_1:=g_0^{-1}(c_0) \subset S_0$. Next we 
repeat this construction and define 
$S_i$ for $i \geq 2$ recursively. Namely, for 
each $1 \leq i \leq k-1$ after defining $S_i$ we 
take a tame polynomial $g_i:S_i \longrightarrow \CC$
and a point $c_i \in \CC \setminus B_{g_i}$ and set 
$S_{i+1}:=g_i^{-1}(c_i) \subset S_i$. Then by 
Theorem \ref{bdecom-tk} we see that 
$S_1,S_2, \ldots, S_k$ are bouquet varieties. 
If for each $0 \leq i \leq k$ we take a 
polynomial map $f_i:X= \CC^N \longrightarrow \CC$ 
such that $f_i|_{S_i}=g_i$ and 
set $h_i:=f_i-c_i: X= \CC^N \longrightarrow \CC$, 
then we obtain 
a smooth complete intersection subvariety 
\begin{equation}
S_k= \{ z \in X= \CC^N \ | \ 
h_1(z)=h_2(z)= \cdots = h_k(z)=0 \} \qquad \subset X= \CC^N 
\end{equation}
of $X= \CC^N$ which has the homotopy type of a bouquet 
$S^{N-k} \vee \cdots \vee S^{N-k}$ of 
some $(N-k)$-dimensional spheres $S^{N-k}$. 
Moreover, by Lemmas \ref{suplem-3} and \ref{suplemma-4} 
we can explicitly choose such polynomials 
$h_1,h_2, \ldots, h_k$. 
This explains the reason why 
the results on the monodromies at infinity of 
the polynomial maps on the complete intersection 
subvarieties of $X= \CC^N$ studied in 
Esterov-Takeuchi \cite[Section 6]{ET12} and 
Matsui-Takeuchi \cite[Section 5]{MT11b} are very similar 
to the ones in the local case of 
Hamm \cite{H71}, Esterov-Takeuchi \cite[Sections 4 and 5]{ET12}, 
Matsui-Takeuchi \cite[Theorem 3.12]{MT11} 
and Oka \cite{O90}. Indeed, by Lemma \ref{suplem-3} we can 
easily show that in the Newton non-degenerate 
setting of \cite[Section 6]{ET12} and \cite[Section 5]{MT11b} 
the generic fibers admit a bouquet decomposition 
(under some weak assumptions).

\section{An Overview on Ind-sheaves and the Irregular 
\\ Riemann-Hilbert Correspondence}\label{uni-sec:2}
In this section, we briefly recall some basic notions 
and results which will be used in this paper. 
We assume here that the reader is familiar with the 
theory of sheaves and functors in the framework of 
derived categories. For them we follow the terminologies 
in \cite{KS90} etc. For a topological space $M$ 
denote by $\BDC(\CC_M)$ the derived category 
consisting of bounded 
complexes of sheaves of $\CC$-vector spaces on it.

\subsection{Ind-sheaves}\label{sec:3}
We recall some basic notions 
and results on ind-sheaves. References are made to 
Kashiwara-Schapira \cite{KS01} and \cite{KS06}. 
Let $M$ be a good topological space (which is locally compact, 
Hausdorff, countable at infinity and has finite soft dimension). 
We denote by $\Mod(\CC_M)$ the abelian category of sheaves 
of $\CC$-vector spaces on it and by $\I\CC_M$ 
that of ind-sheaves. Then there exists a 
natural exact embedding $\iota_M : \Mod(\CC_M)\to\I\CC_M$ 
of categories. We sometimes omit it.
It has an exact left adjoint $\alpha_M$, 
that has in turn an exact fully faithful
left adjoint functor $\beta_M$: 
\begin{equation}
\xymatrix@C=60pt{\Mod(\CC_M)  \ar@<1.0ex>[r]^-{\iota_{M}} 
 \ar@<-1.0ex>[r]_- {\beta_{M}} & \I\CC_M 
\ar@<0.0ex>[l]|-{\alpha_{M}}}.
\end{equation}

The category $\I\CC_M$ does not have enough injectives. 
Nevertheless, we can construct the derived category $\BDC(\I\CC_M)$ 
for ind-sheaves and the Grothendieck six operations among them. 
We denote by $\otimes$ and $\rihom$ the operations 
of tensor products and internal homs respectively. 
If $f : M\to N$ be a continuous map, we denote 
by $f^{-1}, \rmR f_\ast, f^!$ and $\rmR f_{!!}$ the 
operations of inverse images,
direct images, proper inverse images and proper direct images 
respectively. 
We set also $\rhom := \alpha_M\circ\rihom$. 
We thus obtain the functors
\begin{align*}
\iota_M &: \BDC(\CC_M)\to \BDC(\I\CC_M),\\
\alpha_M &: \BDC(\I\CC_M)\to \BDC(\CC_M),\\
\beta_M &: \BDC(\CC_M)\to \BDC(\I\CC_M),\\
\otimes &: \BDC(\I\CC_M)\times\BDC(\I\CC_M)\to\BDC(\I\CC_M), \\
\rihom &: \BDC(\I\CC_M)^{\op}\times\BDC(\I\CC_M)\to\BDC(\I\CC_M), \\
\rhom &: \BDC(\I\CC_M)^{\op}\times\BDC(\I\CC_M)\to\BDC(\CC_M), \\
\rmR f_\ast &: \BDC(\I\CC_M)\to\BDC(\I\CC_N),\\
f^{-1} &: \BDC(\I\CC_N)\to\BDC(\I\CC_M),\\
\rmR f_{!!} &: \BDC(\I\CC_M)\to\BDC(\I\CC_N),\\
f^! &: \BDC(\I\CC_N)\to\BDC(\I\CC_M).
\end{align*}
Note that $(f^{-1}, \rmR f_\ast)$ and 
$(\rmR f_{!!}, f^!)$ are pairs of adjoint functors.
We may summarize the commutativity of the various functors 
we have introduced in the table below. 
Here, $``\circ"$ means that the functors commute,
and $``\times"$ they do not.
\begin{table}[h]
\begin{equation*}
   \begin{tabular}{l||c|c|c|c|c|c|c}
    {} & $\otimes$ & $f^{-1}$ & $\rmR f_\ast$ & $f^!$ & 
$\rmR f_{!!}$ & $\underset{}{\varinjlim}$ &  $\varprojlim$ \\ \hline \hline 
    $\underset{}{\iota}$ & $\circ$ & $\circ$ & $\circ$ & 
$\circ$ & $\times$ & $\times$ & $\circ$  \\ \hline
    $\underset{}{\alpha}$ & $\circ$ & $\circ$ & $\circ$ & 
$\times$ & $\circ$ & $\circ$ & $\circ$ \\ \hline
    $\underset{}{\beta}$ & $\circ$ & $\circ$ & $\times$  
& $\times$ & $\times$ & $\circ$& $\times$\\ \hline 
     $\underset{}{\varinjlim}$ & $\circ$ & $\circ$ & 
$\times$ & $\circ$ & $\circ$ &\multicolumn{2}{|c}{} \\\cline{1-6}
     $\underset{}{\varprojlim}$ & $\times$ & $\times$ & 
$\circ$ & $\times$ & $\times$ &\multicolumn{2}{|c}{} \\\cline{1-6}
   \end{tabular}
   \end{equation*}
\end{table}

\subsection{Ind-sheaves on Bordered Spaces}\label{sec:4} 
For the results in this subsection, we refer to 
D'Agnolo-Kashiwara \cite{DK16}. 
A bordered space is a pair $M_{\infty} = (M, \che{M})$ of
a good topological space $\che{M}$ and 
an open subset $M\subset\che{M}$.
A morphism $f : (M, \che{M})\to (N, \che{N})$ of bordered spaces
is a continuous map $f : M\to N$ such that the first projection
$\che{M}\times\che{N}\to\che{M}$ is proper on
the closure $\var{\Gamma}_f$ of the graph $\Gamma_f$ of $f$ 
in $\che{M}\times\che{N}$.
If also the second projection $\var{\Gamma}_f\to\che{N}$ is proper, 
we say that $f$ is semi-proper. 
The category of good topological spaces embeds into that
of bordered spaces by the identification $M = (M, M)$. 
We define the triangulated category of ind-sheaves on 
$M_{\infty} = (M, \che{M})$ by 
\begin{equation}
\BDC(\I\CC_{M_\infty}) := 
\BDC(\I\CC_{\che{M}})/\BDC(\I\CC_{\che{M}\backslash M}).
\end{equation}
The quotient functor
\begin{equation}
\mathbf{q} : \BDC(\I\CC_{\che{M}})\to\BDC(\I\CC_{M_\infty})
\end{equation}
has a left adjoint $\mathbf{l}$ and a right 
adjoint $\mathbf{r}$, both fully faithful, defined by 
\begin{equation}
\mathbf{l}(\mathbf{q} F) := \CC_M\otimes F,\hspace{25pt} 
\mathbf{r}(\mathbf{q} F) := \rihom(\CC_M, F). 
\end{equation}
For a morphism $f : M_\infty\to N_\infty$ 
of bordered spaces, we have 
the Grothendieck's operations 
\begin{align*} 
\otimes &: \BDC(\I\CC_{M_\infty})\times
\BDC(\I\CC_{M_\infty})\to\BDC(\I\CC_{M_\infty}), \\
\rihom &: \BDC(\I\CC_{M_\infty})^{\op}\times
\BDC(\I\CC_{M_\infty})\to\BDC(\I\CC_{M_\infty}), \\
\rmR f_\ast &: \BDC(\I\CC_{M_\infty})\to\BDC(\I\CC_{N_\infty}),\\
f^{-1} &: \BDC(\I\CC_{N_\infty})\to\BDC(\I\CC_{M_\infty}),\\
\rmR f_{!!} &: \BDC(\I\CC_{M_\infty})\to\BDC(\I\CC_{N_\infty}),\\
f^! &: \BDC(\I\CC_{N_\infty})\to\BDC(\I\CC_{M_\infty}) \\
\end{align*}
(see \cite[Definitions 3.3.1 and 3.3.4]{DK16} for the details). 
Moreover, there exists a natural embedding of categories 
\begin{equation}
\xymatrix@C=30pt@M=10pt{\BDC(\CC_M)\ar@{^{(}->}[r] & 
\BDC(\I\CC_{M_\infty}).}
\end{equation}

\subsection{Enhanced Sheaves}\label{sec:5}
For the results in this subsection, see 
Kashiwara-Schapira \cite{KS16-2} and 
D'Agnolo-Kashiwara \cite{DK17}. 
Let $M$ be a good topological space. 
We consider the maps 
\begin{equation}
M\times\RR^2\xrightarrow{p_1, p_2, \mu}M
\times\RR\overset{\pi}{\longrightarrow}M
\end{equation}
where $p_1, p_2$ are the first and the second projections 
and we set $\pi (x,t):=x$ and 
$\mu(x, t_1, t_2) := (x, t_1+t_2)$. 
Then the convolution functors for 
sheaves on $M \times \RR$ are defined by
\begin{align*}
F_1\Potimes F_2 &:= \rmR \mu_!(p_1^{-1}F_1\otimes p_2^{-1}F_2),\\
\Prhom(F_1, F_2) &:= \rmR p_{1\ast}\rhom(p_2^{-1}F_1, \mu^!F_2).
\end{align*}
We define the triangulated category of enhanced sheaves on $M$ by 
\begin{equation}
\BEC(\CC_M) := \BDC(\CC_{M\times\RR})/\pi^{-1}\BDC(\CC_M).
\end{equation}
Then the quotient functor
\begin{equation}
\Q : \BDC(\CC_{M \times \RR} )  \to\BEC(\CC_M)
\end{equation}
has fully faithful left and right adjoints 
$\bfL^\rmE, \bfR^\rmE$ defined by 
\begin{equation}
\bfL^\rmE(\Q F) := (\CC_{\{t\geq0\}}\oplus\CC_{\{t\leq 0\}})
\Potimes F ,\hspace{20pt} \bfR^\rmE(\Q G) :
=\Prhom(\CC_{\{t\geq0\}}\oplus\CC_{\{t\leq 0\}}, G), 
\end{equation}
where $\{t\geq0\}$ stands for $\{(x, t)\in 
M\times\RR\ |\ t\geq0\}$ and $\{t\leq0\}$ is 
defined similarly. The convolution functors 
are defined also for enhanced sheaves. We denote them 
by the same symbols $\Potimes$, $\Prhom$. 
For a continuous map $f : M \to N $, we 
can define naturally the operations 
$\bfE f^{-1}$, $\bfE f_\ast$, $\bfE f^!$, $\bfE f_{!}$ 
for enhanced sheaves. 
We have also a 
natural embedding $\e : \BDC( \CC_M) \to \BEC( \CC_M)$ defined by 
\begin{equation}
\e(F) := \Q(\CC_{\{t\geq0\}}\otimes\pi^{-1}F). 
\end{equation}
For a continuous function $\varphi : U\to \RR$ 
defined on an open subset $U \subset M$ of $M$  we define 
the exponential enhanced sheaf by 
\begin{equation}
{\rm E}_{U|M}^\varphi := 
\Q(\CC_{\{t+\varphi\geq0\}} ), 
\end{equation}
where $\{t+\varphi\geq0\}$ stands for 
$\{(x, t)\in M\times{\RR}\ |\ x\in U, t+\varphi(x)\geq0\}$.

\subsection{Enhanced Ind-sheaves}\label{sec:6}
We recall some basic notions 
and results on enhanced ind-sheaves. References are made to 
D'Agnolo-Kashiwara \cite{DK16} and 
Kashiwara-Schapira \cite{KS16}. 
Let $M$ be a good topological space.
Set $\RR_\infty := (\RR, \var{\RR})$ for 
$\var{\RR} := \RR\sqcup\{-\infty, +\infty\}$,
and let $t\in\RR$ be the affine coordinate. 
We consider the maps 
\begin{equation}
M\times\RR_\infty^2\xrightarrow{p_1, p_2, \mu}M
\times\RR_\infty\overset{\pi}{\longrightarrow}M
\end{equation}
where $p_1, p_2$ and $\pi$ are morphisms
of bordered spaces induced by the projections.
And $\mu$ is a morphism of bordered spaces induced by the map 
$M\times\RR^2\ni(x, t_1, t_2)\mapsto(x, t_1+t_2)\in M\times\RR$.
Then the convolution functors for 
ind-sheaves on $M \times \RR_\infty$ are defined by
\begin{align*}
F_1\Potimes F_2 &:= \rmR\mu_{!!}(p_1^{-1}F_1\otimes p_2^{-1}F_2),\\
\Prihom(F_1, F_2) &:= \rmR p_{1\ast}\rihom(p_2^{-1}F_1, \mu^!F_2).
\end{align*}
Now we define the triangulated category 
of enhanced ind-sheaves on $M$ by 
\begin{equation}
\BEC(\I\CC_M) := \BDC(\I\CC_{M 
\times\RR_\infty})/\pi^{-1}\BDC(\I\CC_M).
\end{equation}
Note that we have a natural embedding of categories
\begin{equation}
\BEC(\CC_M) \xhookrightarrow{\ \ \ }\BEC(\I\CC_M).
\end{equation}
The quotient functor
\begin{equation}
\Q : \BDC(\I\CC_{M\times\RR_\infty})\to\BEC(\I\CC_M)
\end{equation}
has fully faithful left and right adjoints 
$\bfL^\rmE,\bfR^\rmE$ defined by 
\begin{equation}
\bfL^\rmE(\Q K) := (\CC_{\{t\geq0\}}\oplus\CC_{\{t\leq 0\}})
\Potimes K ,\hspace{20pt} \bfR^\rmE(\Q K) :
=\Prihom(\CC_{\{t\geq0\}}\oplus\CC_{\{t\leq 0\}}, K), 
\end{equation}
where $\{t\geq0\}$ stands for 
$\{(x, t)\in M\times\var{\RR}\ |\ t\in\RR, t\geq0\}$ 
and $\{t\leq0\}$ is defined similarly.

The convolution functors 
are defined also for enhanced ind-sheaves. We denote them 
by the same symbols $\Potimes$, $\Prihom$. 
For a continuous map $f : M \to N $, we 
can define also the operations 
$\bfE f^{-1}$, $\bfE f_\ast$, $\bfE f^!$, $\bfE f_{!!}$ 
for enhanced ind-sheaves. For example, 
by the natural morphism $\tl{f}: M \times \RR_{\infty} \to 
N \times \RR_{\infty}$ of bordered spaces associated to 
$f$ we set $\bfE f_\ast ( \Q K)= \Q(\rmR \tl{f}_{\ast}(K))$. 
The other operations are defined similarly. 
We thus obtain the six operations $\Potimes$, $\Prihom$,
$\bfE f^{-1}$, $\bfE f_\ast$, $\bfE f^!$, $\bfE f_{!!}$ 
for enhanced ind-sheaves .
Moreover we denote by $\rmD_M^\rmE$ 
the Verdier duality functor for enhanced ind-sheaves.
We have outer hom functors
\begin{align*}
\rihom^\rmE(K_1, K_2) &:= \rmR\pi_\ast\rihom(\bfL^\rmE K_1, 
\bfL^\rmE K_2)
\simeq \rmR\pi_\ast\rihom(\bfL^\rmE K_1, \bfR^\rmE K_2),\\
\rhom^\rmE(K_1, K_2) &:= \alpha_M\rihom^\rmE(K_1, K_2),\\
\rHom^\rmE(K_1, K_2) &:=\rmR\Gamma(M; \rhom^\rmE(K_1, K_2)),
\end{align*}
with values in $\BDC(\I\CC_M), 
\BDC(\CC_M)$ and $\BDC(\CC)$, respectively. 
Moreover for $F\in\BDC(\I\CC_M)$ and $K\in\BEC(\I\CC_M)$ 
the objects 
\begin{align*}
\pi^{-1}F\otimes K &:=\Q(\pi^{-1}F\otimes \bfL^\rmE K),\\
\rihom(\pi^{-1}F, K) &:=\Q\big(\rihom(\pi^{-1}F, \bfR^\rmE K)\big). 
\end{align*}
in $\BEC(\I\CC_M)$ are well-defined. 
Set $\CC_M^\rmE := \Q 
\Bigl(``\underset{a\to +\infty}{\varinjlim}"\ \CC_{\{t\geq a\}}
\Bigr)\in\BEC(\I\CC_M)$. 
Then we have 
natural embeddings $\e, e : \BDC(\I\CC_M) \to 
\BEC(\I\CC_M)$ defined by 
\begin{align*}
\e(F) & := \Q(\CC_{\{t\geq0\}}\otimes\pi^{-1}F) \\ 
e(F) &:=  \CC_M^\rmE\otimes\pi^{-1}F
\simeq \CC_M^\rmE\Potimes\e(F). 
\end{align*}

For a continuous function $\varphi : U\to \RR$ 
defined on an open subset $U \subset M$ of $M$  we define 
the exponential enhanced ind-sheaf by 
\begin{equation}
\EE_{U|M}^\varphi := 
\CC_M^\rmE\Potimes {\rm E}_{U|M}^\varphi
=
\CC_M^\rmE\Potimes
\Q\CC_{\{t+\varphi\geq0\}}
=
 \Q 
\Bigl(``\underset{a\to +\infty}{\varinjlim}"\ 
\CC_{\{t+\varphi\geq a\}}
\Bigr)
\end{equation}
where $\{t+\varphi\geq0\}$ stands for 
$\{(x, t)\in M\times\var{\RR}\ |\ t\in\RR, x\in U, 
t+\varphi(x)\geq0\}$.

\subsection{D-modules}\label{sec:7}
In this subsection we recall some basic notions 
and results on D-modules. 
References are made to 
\cite{HTT08}, \cite[\S 7]{KS01}, 
\cite[\S 8, 9]{DK16},
\cite[\S 3, 4, 7]{KS16} and 
\cite[\S 4, 5, 6, 7, 8]{Kas16}. 
For a complex manifold $X$ we denote by 
$d_X$ its complex dimension. 
Denote by $\SO_X, \Omega_X$ and $\SD_X$ 
the sheaves of holomorphic functions, 
holomorphic differential forms of top degree 
and holomorphic differential operators, respectively. 
Let $\BDC(\SD_X)$ be the bounded derived category 
of left $\SD_X$-modules and $\BDC(\SDop_X)$ 
be that of right $\SD_X$-modules. 
Moreover we denote by $\BDCcoh(\SD_X)$, $\BDC_{\rm good}(\SD_X)$,
$\BDChol(\SD_X)$ and $\BDCrh(\SD_X)$ the full triangulated subcategories
of $\BDC(\SD_X)$ consisting of objects with coherent, good, 
holonomic and regular holonomic cohomologies, 
respectively.
For a morphism $f : X\to Y$ of complex manifolds, 
denote by $\Dotimes, \rhom_{\SD_X}, \bfD f_\ast, \bfD f^\ast$ 
the standard operations for D-modules. 
We define also the duality functor $\DD_X : \BDCcoh(\SD_X)^{\op} 
\simto \BDCcoh(\SD_X)$ 
by 
\begin{equation}
\DD_X(\SM):=\rhom_{\SD_X}(\SM, \SD_X)
\uotimes{\SO_X}\Omega^{\otimes-1}_X[d_X].
\end{equation}
Note that there exists 
an equivalence of categories 
$( \cdot )^{\rm r} : \Mod(\SD_X)\simto\Mod(\SDop_X)$ given by 
\begin{equation}
\SM^{\rm r}:=\Omega_X\uotimes{\SO_X}\SM.
\end{equation}
The classical de Rham and solution functors are defined by  
\begin{align*}
DR_X &:  \BDCcoh (\SD_X)\to\BDC(\CC_X),
\hspace{40pt}\SM \longmapsto \Omega_X\Lotimes{\SD_X}\SM, \\
Sol_X &: \BDCcoh (\SD_X)^{\op}\to\BDC(\CC_X),
\hspace{30pt}\SM \longmapsto \rhom_{\SD_X}(\SM, \SO_X).
\end{align*}
Then for $\SM\in\BDCcoh(\SD_X)$ 
we have an isomorphism $Sol_X(\SM)[d_X]\simeq DR_X(\DD_X\SM)$. 
For a closed hypersurface $D\subset X$ 
in $X$ we denote by $\SO_X(\ast D)$ 
the sheaf of meromorphic functions on $X$ with poles in $D$. 
Then for $\SM\in\BDC(\SD_X)$ we set 
\begin{equation}
\SM(\ast D) := \SM\Dotimes\SO_X(\ast D).
\end{equation}
For $f\in\SO_X(\ast D)$ and $U := X\bs D$, set 
\begin{align*}
\SD_Xe^f &:= \SD_X/\{P \in \SD_X  \ |\ Pe^f|_U = 0\}, \\
\SE_{U|X}^f &:= \SD_Xe^f(\ast D).
\end{align*}
Note that $\SE_{U|X}^f$ is holonomic and there exists an isomorphism 
\begin{equation}
\DD_X(\SE_{U|X}^f)(\ast D) \simeq \SE_{U|X}^{-f}.
\end{equation}
Namely $\SE_{U|X}^f$ is a meromorphic connection associated 
to $d+df$. 

One defines the ind-sheaf $\SO_X^\rmt$ of tempered 
holomorphic functions 
as the Dolbeault complex with coefficients in the ind-sheaf 
of tempered distributions. 
More precisely, denoting by $\overline{X}$ the complex conjugate manifold 
to $X$ and by $X_{\RR}$ the underlying real analytic manifold of $X$,
we set
\begin{equation}
\SO_X^\rmt := \rihom_{\SD_{\overline{X}}}(
\SO_{\overline{X}}, \mathcal{D}b_{X_\RR}^\rmt), 
\end{equation}
where $\mathcal{D}b_{X_\RR}^\rmt$ is the ind-sheaf of 
tempered distributions on $X_\RR$
(for the definition see \cite[Definition 7.2.5]{KS01}). 
Moreover, we set 
\begin{equation}
\Omega_X^\rmt := \beta_X\Omega_X\otimes_{\beta_X\SO_X}\SO_X^\rmt.
\end{equation}
Then the tempered de Rham and solution functors 
are defined by 
\begin{align*}
DR_X^\rmt &: \BDCcoh (\SD_X)\to\BDC(\I\CC_X),
\hspace{40pt} 
\SM \longmapsto \Omega_X^\rmt\Lotimes{\SD_X}\SM, 
\\
Sol_X^\rmt &: \BDCcoh (\SD_X)^{\op}\to\BDC(\I\CC_X), 
\hspace{40pt}
\SM \longmapsto \rihom_{\SD_X}(\SM, \SO_X^\rmt). 
\end{align*}
Note that we have isomorphisms
\begin{align*}
Sol_X(\SM) &\simeq \alpha_XSol_X^\rmt(\SM), \\
DR_X(\SM) &\simeq \alpha_XDR_X^\rmt(\SM), \\
Sol_X^\rmt(\SM)[d_X] &\simeq DR_X^\rmt(\DD_X\SM).
\end{align*}

Let $i : X\times\RR_\infty\to X\times\PP$ be
the natural morphism of bordered spaces and
$\tau\in\CC\subset\PP$ the affine coordinate 
such that $\tau|_\RR$ is that of $\RR$. 
We then define objects $\SO_X^\rmE\in\BEC(\I\SD_X)$ and 
$\Omega_X^\rmE\in\BEC(\I\SD_X^{\op})$ by 
\begin{align*}
\SO_X^\rmE &:= \rihom_{\SD_{\overline{X}}}(
\SO_{\overline{X}}, \mathcal{D}b_{X_\RR}^{\T})\\
&\simeq i^!\bigl((\SE_{\CC|\PP}^{-\tau})^{\rm r} 
\Lotimes{\SD_\PP}\SO_{X\times\PP}^\rmt\bigr)[1]
\simeq i^!\rihom_{\SD_\PP}(\SE_{\CC|\PP}^{\tau}, 
\SO_{X\times\PP}^\rmt)[2],\\
\Omega_X^\rmE &:= \Omega_X\Lotimes{\SO_X}\SO_X^\rmE\simeq 
i^!(\Omega_{X\times\PP}^\rmt\Lotimes{\SD_\PP}
\SE_{\CC|\PP}^{-\tau})[1], 
\end{align*}
where $\mathcal{D}b_{X_\RR}^{\T}$ 
stand for the enhanced ind-sheaf 
of tempered distributions on $X_\RR$ 
(for the definition see \cite[Definition 8.1.1]{DK16}). 
We call $\SO_X^\rmE$ the enhanced ind-sheaf 
of tempered holomorphic functions. 
Note that there exists an isomorphism
\begin{equation}
i_0^!\bfR^\rmE\SO_X^\rmE\simeq\SO_X^\rmt, 
\end{equation}
where $i_0 : X\to X\times\RR_\infty$ is the 
inclusion map of bordered spaces induced by $x\mapsto (x, 0)$. 
The enhanced de Rham and solution functors 
are defined by 
\begin{align*}
DR_X^\rmE &: \BDCcoh (\SD_X)\to\BEC(\I\CC_X),
\hspace{40pt} 
\SM \longmapsto \Omega_X^\rmE\Lotimes{\SD_X}\SM,
\\
Sol_X^\rmE &: \BDCcoh (\SD_X)^{\op}\to\BEC(\I\CC_X), 
\hspace{40pt} 
\SM \longmapsto \rihom_{\SD_X}(\SM, \SO_X^\rmE). 
\end{align*}
Then for $\SM\in\BDCcoh(\SD_X)$ 
we have isomorphism $Sol_X^\rmE(\SM)[d_X]\simeq DR_X^\rmE(\DD_X\SM)$ 
and 
$Sol^{\rmt}_X(\M)\simeq
i_0^!{\bfR^\rmE}Sol_X^{\rmE}(\M).$ 
We recall the following results of \cite{DK16}.

\begin{theorem}
\begin{enumerate}\label{thm-4}
\item[\rm{(i)}] For $\SM\in\BDC_{\rm hol}(\SD_X)$ there
is an isomorphism in $\BEC(\I\CC_X)$
\begin{equation}
\rmD_X^\rmE\bigl(DR_X^\rmE(\SM)\bigr) \simeq Sol_X^\rmE(\SM)[d_X].
\end{equation}

\item[\rm{(ii)}] Let $f : X\to Y$ be a morphism of complex manifolds.
Then for $\SN\in\BDC_{\rm hol}(\SD_Y)$ 
there is an isomorphism in $\BEC(\I\CC_X)$
\begin{equation}
Sol_X^\rmE({\bfD} f^\ast\SN) \simeq \bfE f^{-1}Sol_Y^\rmE(\SN).
\end{equation}

\item[\rm{(iii)}] Let $f : X\to Y$ be a morphism of complex manifolds
and $\SM\in\BDC_{\rm good}(\SD_X)\cap\BDC_{\rm hol}(\SD_X)$.
If $\supp(\SM)$ is proper over Y then there 
is an isomorphism in $\BEC(\I\CC_Y)$
\begin{equation}
Sol_Y^\rmE({\bfD} f_\ast\SM)[d_Y] \simeq 
\bfE f_\ast Sol_X^\rmE(\SM )[d_X].
\end{equation}

\item[\rm{(iv)}] For $\SM_1, \SM_2\in\BDC_{\rm hol}(\SD_X)$, 
there exists an isomorphism in $\BEC(\I\CC_X)$
\begin{equation}
Sol_X^\rmE(\SM_1\Dotimes\SM_2)\simeq Sol_X^\rmE(\SM_1)
\Potimes Sol_X^\rmE(\SM_2).
\end{equation}

\item[\rm{(v)}] If $\SM\in\BDC_{\rm hol}(\SD_X)$ and $D\subset X$
is a closed hypersurface, then there 
are isomorphisms in $\BEC(\I\CC_X)$
\begin{align*}
Sol_X^\rmE(\SM(\ast D)) &\simeq \pi^{-1}
\CC_{X\bs D}\otimes Sol_X^\rmE(\SM),\\
DR_X^\rmE(\SM(\ast D)) &\simeq \rihom(
\pi^{-1}\CC_{X\bs D}, DR_X^\rmE(\SM)).
\end{align*}

\item[\rm{(vi)}] Let $D$ be a closed hypersurface in $X$ and 
$f\in\SO_X(\ast D)$ a meromorphic function along $D$. 
Set $U:=X \setminus D \subset X$. 
Then there exists an isomorphism in $\BEC(\I\CC_X)$
\begin{equation}
Sol_X^\rmE\big(\mathscr{E}_{U | X}^f \big) 
\simeq \EE_{U | X}^{\Re f}.
\end{equation}

\item[\rm{(vii)}] For $\SL \in \BDCrh(\SD_X)$ and 
$\SM \in\BDC_{\rm hol}(\SD_X)$, 
there exists an isomorphism in $\BEC(\I\CC_X)$
\begin{equation}
DR_X^\rmE(\SL \Dotimes \SM )\simeq \rihom ( 
\pi^{-1} 
Sol_X^\rmE(\SL ), DR_X^\rmE(\SM) ).
\end{equation}
\end{enumerate}
\end{theorem}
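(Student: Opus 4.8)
The content of this theorem is due to D'Agnolo--Kashiwara \cite{DK16}, so the ``proof'' is really a matter of locating the relevant statements there and noting which items of the list are formal consequences of the others; I sketch that organisation. The common engine is the behaviour of the enhanced ind-sheaf $\SO_X^\rmE$ of tempered holomorphic functions (defined via $\mathcal{D}b_{X_\RR}^{\T}$, equivalently as an $i^!$ of an exponential twist of $\SO_{X\times\PP}^\rmt$) under $\bfE f^{-1}$, $\bfE f_{!!}$ and restriction along a hypersurface, together with the constructibility theorem asserting $Sol_X^\rmE(\SM)\in\BEC(\I\CC_X)$ for $\SM$ holonomic, and the identity $Sol_X^\rmE(\SM)[d_X]\simeq DR_X^\rmE(\DD_X\SM)$ with $\DD_X\DD_X\simeq\id$ on $\BDChol(\SD_X)$.

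The functorial items come first. Item (ii), for $Sol_X^\rmE$ and inverse image, and item (iii), for $Sol_Y^\rmE$ and proper direct image under the good-and-proper hypothesis on $\supp(\SM)$, are the enhanced analogues of Kashiwara's classical comparison theorems; they are obtained from the base-change identities for $\SO_X^\rmE$ along the bordered-space morphism $\tl{f} : X\times\RR_\infty\to Y\times\RR_\infty$ attached to $f : X\to Y$. Item (iv), $Sol_X^\rmE(\SM_1\Dotimes\SM_2)\simeq Sol_X^\rmE(\SM_1)\Potimes Sol_X^\rmE(\SM_2)$, follows from unwinding $\SM_1\Dotimes\SM_2$ against $\SO_X^\rmE$ and recognising the convolution $\Potimes$ in the auxiliary variable carried by $\SO_X^\rmE$; holonomicity of the $\SM_i$ lets one pass freely between $\rihom_{\SD_X}(-,\SO_X^\rmE)$ and the constructible enhanced objects on which $\Potimes$ is well behaved.

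The exponential formula (vi), $Sol_X^\rmE(\mathscr{E}_{U|X}^f)\simeq\EE_{U|X}^{\Re f}$ for a meromorphic $f\in\SO_X(\ast D)$, is the computational heart and the step I expect to be the main obstacle. By resolution of singularities one chooses a proper morphism $\nu : \tl{X}\to X$ that restricts to an isomorphism over $U$ and along which the pole divisor becomes normal crossing and $f\circ\nu$ takes an elementary ``good'' monomial form; by the proper direct image compatibility (iii), applied to the direct image of the pulled-back exponential module on $\tl{X}$, the computation reduces to that situation. In the normal-crossing elementary case one evaluates $Sol_X^\rmE$ directly from the definition of $\SO_X^\rmE$ via tempered distributions, showing that the tempered holomorphic solutions of the meromorphic connection $d+df$ are precisely those whose growth is prescribed by $\{t+\Re f\geq a\}$ as $a\to+\infty$. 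The delicate point is the uniform control of the tempered estimates near the divisor and the patching of the local solution complexes along its strata, in the spirit of Kashiwara--Schapira \cite[\S 7]{KS03} and Morando \cite[\S 2.1]{Morando}; the case $f=0$ of (vi) yields the building block $Sol_X^\rmE(\SO_X(\ast D))\simeq\pi^{-1}\CC_{X\bs D}\otimes\CC_X^\rmE$.

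The remaining items then follow. The constructibility theorem for $Sol_X^\rmE(\SM)$, $\SM$ holonomic, is deduced from (vi), the functorialities, and the Kedlaya--Mochizuki structure theorem (\cite{Ked10, Ked11}, \cite{Mochi11}), which, after a ramified cover, presents a general holonomic module locally as an iterated extension of modules $\mathscr{E}^\varphi\Dotimes(\text{regular})$. With constructibility in hand, the duality isomorphism (i), $\rmD_X^\rmE(DR_X^\rmE\SM)\simeq Sol_X^\rmE(\SM)[d_X]$, is obtained by reducing through (iii) to the normal-crossing situation and using the compatibility of $\rmD_X^\rmE$ with $\Potimes$ and $\rihom$. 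Item (v) is the case of (iv) in which one replaces $\SM$ by $\SM\Dotimes\SO_X(\ast D)$, together with the above building block, the de Rham version following by applying $\rmD_X^\rmE$ and (i). Item (vii), for $\SL$ regular holonomic and $\SM$ holonomic, follows from (iv) combined with $DR_X^\rmE(\SM)\simeq Sol_X^\rmE(\DD_X\SM)[d_X]$ and with the regularity of $\SL$: since then $Sol_X^\rmE(\SL)\simeq e\bigl(Sol_X(\SL)\bigr)$ arises from an ordinary constructible sheaf, convolution by it and the functor $\rihom(\pi^{-1}(-),-)$ are interchanged by the standard adjunction for enhanced ind-sheaves.
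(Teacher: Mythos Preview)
The paper does not prove this theorem at all: it is introduced with the sentence ``We recall the following results of \cite{DK16}'' and is stated without proof, serving purely as a reference collection of D'Agnolo--Kashiwara's results needed later. Your proposal correctly identifies this and offers instead an organizational sketch of how the items fit together in \cite{DK16}, which is more than the paper itself provides; in that sense your approach and the paper's coincide (both defer to \cite{DK16}), and your additional outline of the logical dependencies---functorialities (ii)--(iv) first, then the exponential computation (vi) as the analytic core, then constructibility via Kedlaya--Mochizuki, then (i), (v), (vii) as consequences---is a fair summary of the architecture in \cite{DK16}.

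One small remark: in item (vii) as stated in the paper the argument of $\pi^{-1}$ reads $Sol_X^\rmE(\SL)$, but this should be understood as the ordinary $Sol_X(\SL)$ (as is made explicit in the subsequent Corollary~\ref{cor-41}); your sketch handles this correctly by invoking $Sol_X^\rmE(\SL)\simeq e(Sol_X(\SL))$ for regular holonomic $\SL$.
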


We also have the following corollary of Theorem \ref{thm-4}. 

\begin{corollary}\label{cor-41}
For $\SL \in \BDCrh(\SD_X)$ and 
$\SM \in\BDC_{\rm hol}(\SD_X)$, 
there exists an isomorphism in $\BEC(\I\CC_X)$
\begin{equation}
Sol_X^\rmE(\SL \Dotimes \SM )\simeq 
\pi^{-1} 
Sol_X (\SL ) \otimes Sol_X^\rmE(\SM).
\end{equation}
\end{corollary}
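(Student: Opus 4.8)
The plan is to deduce the isomorphism from Theorem~\ref{thm-4} together with the ``elementary'' description of the enhanced solution complex of a regular holonomic module coming from the regular Riemann--Hilbert correspondence. First I would apply Theorem~\ref{thm-4}~(iv) to $\SM_1=\SL$ and $\SM_2=\SM$, obtaining an isomorphism
\begin{equation}
Sol_X^\rmE(\SL \Dotimes \SM) \simeq Sol_X^\rmE(\SL) \Potimes Sol_X^\rmE(\SM)
\end{equation}
in $\BEC(\I\CC_X)$. It then remains only to identify the convolution on the right hand side, and this is where the regularity of $\SL$ is used.

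Since $\SL$ is regular holonomic, its enhanced solution complex is of sheaf type: one has $Sol_X^\rmE(\SL)\simeq e(Sol_X(\SL))=\CC_X^\rmE\otimes\pi^{-1}Sol_X(\SL)$, where $Sol_X(\SL)\in\BDC(\CC_X)$ is the ordinary solution complex. (Compare Theorem~\ref{thm-4}~(v), which is exactly the special case $\SL=\SO_X(\ast D)$, for which $Sol_X(\SL)\simeq\CC_{X\setminus D}$.) Substituting, and then using the projection formula for the convolution functor, that is $(\pi^{-1}F\otimes K_1)\Potimes K_2\simeq\pi^{-1}F\otimes(K_1\Potimes K_2)$ for $F\in\BDC(\I\CC_X)$ and $K_1,K_2\in\BEC(\I\CC_X)$ (an immediate consequence of the projection formula for $\rmR\mu_{!!}$ and of the equalities $\pi\circ\mu=\pi\circ p_1=\pi\circ p_2$), together with $\CC_X^\rmE\Potimes Sol_X^\rmE(\SM)\simeq Sol_X^\rmE(\SM)$ (which holds because $\SO_X^\rmE$ already has this property), I would obtain
\begin{align*}
Sol_X^\rmE(\SL)\Potimes Sol_X^\rmE(\SM)
& \simeq \bigl(\CC_X^\rmE\otimes\pi^{-1}Sol_X(\SL)\bigr)\Potimes Sol_X^\rmE(\SM) \\
& \simeq \pi^{-1}Sol_X(\SL)\otimes\bigl(\CC_X^\rmE\Potimes Sol_X^\rmE(\SM)\bigr) \\
& \simeq \pi^{-1}Sol_X(\SL)\otimes Sol_X^\rmE(\SM),
\end{align*}
which is the asserted isomorphism.

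The only non-formal input here is the identification $Sol_X^\rmE(\SL)\simeq e(Sol_X(\SL))$ for regular holonomic $\SL$; everything else is bookkeeping with the six operations and the convolution, so I expect this to be the step that needs care. It is one of the cornerstones of the regular Riemann--Hilbert correspondence in \cite{DK16}; a proof using only the items listed in Theorem~\ref{thm-4} would instead proceed by d\'evissage along a Whitney stratification adapted to $Sol_X(\SL)$ --- using the compatibility of $Sol_X^\rmE$ with inverse and proper direct images and with $\Dotimes$ --- reducing to the case of an integrable connection on an open subset, where the statement is immediate. Finally, the corollary can alternatively be obtained directly from Theorem~\ref{thm-4}~(vii) by applying the enhanced duality functor $\rmD_X^\rmE$: part (i) turns the left hand side into $Sol_X^\rmE(\SL\Dotimes\SM)[d_X]$, the interchange formula $\rmD_X^\rmE\rihom(\pi^{-1}F,K)\simeq\pi^{-1}F\otimes\rmD_X^\rmE K$ together with $\rmD_X^\rmE DR_X^\rmE(\SM)\simeq Sol_X^\rmE(\SM)[d_X]$ handles the right hand side, and one once more invokes $Sol_X^\rmE(\SL)\simeq e(Sol_X(\SL))$ to replace the enhanced solution complex of $\SL$ by $Sol_X(\SL)$.
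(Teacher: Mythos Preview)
Your argument is correct, but your primary route (via part~(iv) of Theorem~\ref{thm-4}) is not the one the paper takes. The paper proceeds exactly by the duality approach you sketch at the end: setting $L=Sol_X(\SL)$ and $K=Sol_X^\rmE(\SM)$, it computes
\[
\rmD_X^\rmE(\pi^{-1}L\otimes K)\ \simeq\ \rihom(\pi^{-1}L,\rmD_X^\rmE K)\ \simeq\ \rihom(\pi^{-1}L,DR_X^\rmE(\SM))[d_X]\ \simeq\ DR_X^\rmE(\SL\Dotimes\SM)[d_X],
\]
using in turn the tensor--hom adjunction for $\rmD_X^\rmE=\Prihom(-,\omega_X^\rmE)$, part~(i), and part~(vii); then it dualizes both sides. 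Note that the $Sol_X^\rmE(\SL)$ appearing inside $\pi^{-1}(\,\cdot\,)$ in the statement of~(vii) is a typo for the classical $Sol_X(\SL)$ (otherwise $\pi^{-1}$ would make no sense), so the paper's argument never needs the identification $Sol_X^\rmE(\SL)\simeq e(Sol_X(\SL))$ --- the regularity of $\SL$ is already built into~(vii). Your route through~(iv) is perfectly valid and arguably more transparent, but it does require that identification as an outside input (which, as you note, is one of the basic outputs of the regular Riemann--Hilbert correspondence in \cite{DK16}); the paper's route trades that input for the duality formalism and part~(vii).
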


\begin{proof}
Let 
\begin{equation}
\rmD_X^\rmE ( \cdot ): 
\BEC(\I\CC_X)^{\rm op} \longrightarrow 
\BEC(\I\CC_X), \qquad F \longmapsto 
\Prihom (F, \omega^\rmE_X) 
\end{equation}
be the Verdier dual functor defined in 
\cite[Section 4.8]{DK16}, where 
we set $\omega^\rmE_X = 
\CC_X^\rmE \otimes \pi^{-1} \omega_X$. 
Moreover we set $L= Sol_X (\SL )$ 
and $K= Sol_X^\rmE(\SM)$. 
Then we have isomorphisms 
\begin{align*}
 \rmD_X^\rmE ( \pi^{-1} L \otimes K) 
\simeq & \rihom ( \pi^{-1} L, \rmD_X^\rmE (K)) 
\\
\simeq & 
\rihom ( \pi^{-1} L, DR_X^\rmE(\SM) [d_X]) 
\\
\simeq & 
DR_X^\rmE(\SL \Dotimes \SM ) [ d_X ], 
\end{align*}
where the first isomorphism follows from 
the definition of the bifunctor 
$\Prihom ( \cdot, \cdot )$ and 
in the second (resp. third) isomorphism 
we used Theorem \ref{thm-4} (i) 
(resp. Theorem \ref{thm-4} (vii)). By taking 
the Verdier dual $\rmD_X^\rmE ( \cdot )$ of 
the both sides, we obtain the desired 
isomorphism. 
\end{proof}

We recall also the following theorem of \cite{DK16}. 

\begin{theorem}[{\cite[Theorem 9.5.3 (Irregular 
Riemann-Hilbert Correspondence)]{DK16}}]\label{cor-4}
There exists an isomorphism functorial 
with respect to $\SM\in\BDC_{\rm hol}(\SD_X) :$
\begin{equation}
\SM\simto \rhom^\rmE(Sol_X^\rmE(\SM), \SO_X^\rmE)
\end{equation}
in $\BDC_{\rm hol}(\SD_X)$.
\end{theorem}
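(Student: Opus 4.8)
Since Theorem \ref{cor-4} is quoted verbatim from \cite[Theorem 9.5.3]{DK16}, I only sketch the strategy of its proof. For every $\SM \in \BDCcoh(\SD_X)$ there is a natural evaluation morphism $\SM \longrightarrow \rhom^\rmE(Sol_X^\rmE(\SM), \SO_X^\rmE)$ induced by the $\SD_X$-module structure on $\SO_X^\rmE \in \BEC(\I\SD_X)$, and the content of the theorem is that it becomes an isomorphism (so that, in particular, the right-hand side is again holonomic) whenever $\SM$ is holonomic; this is essentially equivalent to the full faithfulness of $Sol_X^\rmE$ on $\BDChol(\SD_X)$. Both sides of this morphism are compatible with the operations recalled in Theorem \ref{thm-4} and Corollary \ref{cor-41} --- proper direct images, inverse images, duality, tensoring with regular holonomic modules, and distinguished triangles --- so the assertion is local on $X$, stable under proper modifications, and reduces by d\'evissage on $\dim \supp (\SM)$ to the case of a single meromorphic connection, i.e. $\SM = j_! \SN$ (or $j_{*}\SN$) with $j : U = X \setminus D \hookrightarrow X$ the complement of a hypersurface $D$ and $\SN$ an integrable connection on $U$.

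The plan is then to resolve the singularities of $\SN$ along $D$. First I would invoke Hironaka's theorem to reduce to $D$ normal crossing, and then the Kedlaya--Mochizuki theorem (\cite{Ked10, Ked11, Mochi11}) on good formal structures, combined with the compatibility of $Sol_X^\rmE$ with the proper modifications and ramified covers appearing in its statement, to reduce --- on a formal neighborhood of a point of $D$ --- to a successive extension of \emph{elementary models} $\SE_{U|X}^{f} \Dotimes \SL$, where $f \in \SO_X(\ast D)$ and $\SL$ is regular holonomic. For such models the reconstruction formula is checked directly: by Theorem \ref{thm-4}(vi) one has $Sol_X^\rmE(\SE_{U|X}^f) \simeq \EE_{U|X}^{\Re f}$; a computation with the ind-sheaf $\SO_X^\rmt$ of tempered holomorphic functions and the definition of $\SO_X^\rmE$ yields $\rhom^\rmE(\EE_{U|X}^{\Re f}, \SO_X^\rmE) \simeq \SE_{U|X}^f$; and then Corollary \ref{cor-41} together with the classical regular Riemann--Hilbert correspondence of Kashiwara applied to $\SL$ handles $\SE_{U|X}^{f} \Dotimes \SL$, extensions being dealt with by the five lemma for distinguished triangles.

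The last step is to upgrade this formal statement to an analytic one: the formal decomposition into elementary models determines $\SN$ only up to its Stokes structure, and one must show that $Sol_X^\rmE(\SN)$ records that Stokes structure faithfully. I would do this through a curve test --- the evaluation morphism is an isomorphism as soon as it becomes one after restriction along every germ of holomorphic curve transversal to a stratification adapted to $\SM$ --- which brings the problem down to $\dim X = 1$, where it reduces to the Hukuhara--Levelt--Turrittin decomposition and the classical asymptotic analysis of Stokes filtrations on a punctured disk.

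The main obstacle is precisely this last reduction and, within it, the interplay between the resolution of turning points and the enhanced solution functor: an irregular connection need not admit a good formal structure along the prescribed normal crossing divisor $D$ without further blowing up, so one has to run the Kedlaya--Mochizuki resolution while tracking, via Theorem \ref{thm-4}(iii), the behaviour of $Sol_X^\rmE$ under these modifications, and then establish the enhanced curve test itself --- a delicate assertion about $\SO_X^\rmE$ and $\rmD_X^\rmE$ that forms the technical core of \cite{DK16}.
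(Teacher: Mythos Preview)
The paper does not prove this theorem: it is simply recalled from \cite[Theorem 9.5.3]{DK16} with no argument given, so there is no ``paper's own proof'' to compare against. Your sketch is a reasonable outline of the actual strategy in \cite{DK16} --- construction of the evaluation morphism, d\'evissage and compatibility with the six operations, reduction via Kedlaya--Mochizuki to modules with a (quasi-)normal form along a normal crossing divisor, and verification on the elementary models $\SE_{U|X}^f \Dotimes \SL$ using Theorem~\ref{thm-4}(vi) --- so nothing you wrote is wrong in spirit. One small inaccuracy: the passage from formal to analytic in \cite{DK16} is not done by a ``curve test'' reducing to $\dim X = 1$, but rather by working on the real oriented blow-up along the normal crossing divisor and using the sectorial decompositions encoded in Definitions~\ref{def-A} and~\ref{def-B}; the Stokes data are captured directly in that setting without slicing by curves.
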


For the proof of the main results in \cite{DK16}, 
a key role was played by the following proposition. 
Here we give a very short new proof 
deduced directly from \cite[Propositon 7.3 and Remark 7.4]{KS03} 
(see also \cite[Proposition 3.14]{IT20a}). 

\begin{proposition}[{\cite[Proposition 6.2.2]{DK16}}]\label{pro-dk}
Let $D$ be a closed hypersurface in $X$ and 
$f\in\SO_X(\ast D)$ a meromorphic function along $D$. 
Set $U:=X \setminus D \subset X$. 
Then there exists an isomorphism in $\BDC(\I\CC_X)$ 
\begin{equation}
DR_X^\rmt \big(\mathscr{E}_{U | X}^{-f} \big) 
\simeq 
\rihom \Big( \CC_U , 
\underset{a\to+\infty}{\inj}\ 
\CC_{ \{ \Re f<a \} } \Big)[d_X].
\end{equation}
\end{proposition}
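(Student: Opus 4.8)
The plan is to deduce the formula from \cite[Proposition 7.3 and Remark 7.4]{KS03}, which compute the tempered solution complex of an exponential meromorphic connection, together with the standard compatibilities of the tempered de Rham and solution functors with duality and with localization along $D$. No enhanced machinery is needed here: the inductive limit appearing on the right-hand side is an honest filtrant limit of subsheaves of $\CC_U$ taken in $\BDC(\I\CC_X)$, so the whole argument stays within ordinary ind-sheaves, which is exactly why it can be kept short.

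First I would rewrite the left-hand side. Since $\SE_{U|X}^{-f}\simeq\DD_X(\SE_{U|X}^f)(\ast D)$ by the isomorphism recalled above, the localization property of the tempered de Rham functor (the non-enhanced analogue of Theorem \ref{thm-4} (v), itself a consequence of $\rihom_{\SD_X}(\SO_X(\ast D),\SO_X^\rmt)\simeq\rihom(\CC_U,\SO_X^\rmt)$ from \cite{KS01}) gives
\begin{equation*}
DR_X^\rmt\big(\SE_{U|X}^{-f}\big)\simeq\rihom\big(\CC_U,\ DR_X^\rmt(\DD_X\SE_{U|X}^f)\big)
\simeq\rihom\big(\CC_U,\ Sol_X^\rmt(\SE_{U|X}^f)[d_X]\big),
\end{equation*}
where the second isomorphism is $Sol_X^\rmt(\SM)[d_X]\simeq DR_X^\rmt(\DD_X\SM)$. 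It therefore suffices to identify the tempered solution complex of $\SE_{U|X}^f$. Here \cite[Proposition 7.3 and Remark 7.4]{KS03} yield
\begin{equation*}
Sol_X^\rmt(\SE_{U|X}^f)\simeq\rihom\Big(\CC_U,\ \underset{a\to+\infty}{\inj}\ \CC_{\{\Re f<a\}}\Big),
\end{equation*}
the substance of the statement being that a tempered holomorphic multiple of $e^f$ is tempered precisely, as an ind-sheaf on $U$, on the sublevel sets $\{\Re f<a\}$ (on which $e^f$ is bounded), while the envelope $\rihom(\CC_U,-)$ is forced because $\SE_{U|X}^f$ is localized along $D$. Plugging this in and using $\CC_U\otimes\CC_U\simeq\CC_U$, hence $\rihom(\CC_U,\rihom(\CC_U,-))\simeq\rihom(\CC_U,-)$, collapses the two copies of $\rihom(\CC_U,-)$ and produces exactly the asserted isomorphism, the shift $[d_X]$ being carried along from $Sol_X^\rmt(\SM)[d_X]\simeq DR_X^\rmt(\DD_X\SM)$.

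The main obstacle is the analytic heart of the matter, namely the identification of $Sol_X^\rmt(\SE_{U|X}^f)$ over $U$ with $\underset{a\to+\infty}{\inj}\CC_{\{\Re f<a\}}$; this is where \cite{KS03} is invoked. If \cite[Proposition 7.3]{KS03} is recorded there only in a model situation (say for a monomial pole in local coordinates, or for $\dim X=1$), one must first perform an embedded resolution of $(X,\ D\cup f^{-1}(0))$ bringing $f$ into such a normal form, then transport the computation back to $X$ via the proper direct image theorem for $DR^\rmt$; the remaining points, namely commuting $\rihom(\CC_U,-)$ with proper pushforward and checking that the blow-up does not change the ind-sheaf $\underset{a\to+\infty}{\inj}\CC_{\{\Re f<a\}}$, are routine. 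As a consistency check I would also verify the sign conventions (the $-f$ twisting the module versus the $+f$ cutting out the sublevel sets) on the case $f=0$, where the formula must reduce to the known value of $DR_X^\rmt(\SO_X(\ast D))$.
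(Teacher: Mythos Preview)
Your strategy is correct and is in substance the same as the paper's: pass from $DR_X^\rmt(\SE_{U|X}^{-f})$ to $\rihom(\CC_U,Sol_X^\rmt(\SE_{U|X}^f))[d_X]$ via the duality $\SE_{U|X}^{-f}\simeq\DD_X(\SE_{U|X}^f)(\ast D)$ together with the localization formula $DR_X^\rmt(\SM(\ast D))\simeq\rihom(\CC_U,DR_X^\rmt(\SM))$, and then feed in \cite{KS03}. Two points of comparison are worth noting.

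First, \cite[Proposition~7.3 and Remark~7.4]{KS03} are stated only for $X=\PP^1$ and $f=z$, and they do \emph{not} give the packaged isomorphism $Sol_X^\rmt(\SE_{U|X}^f)\simeq\rihom(\CC_U,\inj_a\CC_{\{\Re f<a\}})$ that you write down. What they give is $H^0Sol_{\PP}^\rmt(\SE_{\CC|\PP}^{z})\simeq\inj_a\CC_{\{\Re z<a\}}$ and $\tau^{\geq1}Sol_{\PP}^\rmt(\SE_{\CC|\PP}^{z})\simeq(i_\infty)_*\CC_{\{\infty\}}[-1]$ separately; the paper then observes that $\rihom(\CC_\CC,-)$ annihilates the latter piece (it is supported at $\infty$), so that $\rihom(\CC_\CC,Sol_{\PP}^\rmt)\simeq\rihom(\CC_\CC,H^0Sol_{\PP}^\rmt)$. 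Your justification that ``the envelope $\rihom(\CC_U,-)$ is forced because $\SE_{U|X}^f$ is localized along $D$'' is not the right mechanism here; localization along $D$ gives the \emph{outer} $\rihom(\CC_U,-)$ on the $DR^\rmt$ side, not an inner one on $Sol^\rmt$. This is harmless for the argument since your two copies collapse anyway, but the intermediate formula you display is not what \cite{KS03} proves.

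Second, the paper's reduction from general $f$ to the one-variable case is cleaner than an embedded resolution to a local monomial model: one takes a projective modification $\nu:Y\to X$ eliminating the indeterminacy of $f$, so that $\tilde g:=f\circ\nu$ extends to a holomorphic map $Y\to\PP^1$, and then uses $\SE_{\tilde V|Y}^{-\tilde g}\simeq\bfD\tilde g^{\ast}\SE_{\CC|\PP}^{-z}$ together with \cite[Theorem~7.4.1]{KS01} to pull back the $\PP^1$ computation. The remaining step, identifying $\tilde g^{!}\CC_{\{\Re z<a\}}$ with $\tilde g^{-1}\CC_{\{\Re z<a\}}[2d_X-2]$, is checked separately near and away from the pole divisor. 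This avoids having to redo the tempered analysis in higher dimensions, which is what your sketch would require.
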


\begin{proof}
Let $z$ be the standard holomorphic coordinate of $\CC$. 
As in \cite[Lemma 6.2.5]{DK16}, first we consider the 
exponetial D-module 
$\mathscr{E}_{\CC | \PP}^{z}$ on the one dimensional 
projective space $\PP = \PP^1$ associated to 
the meromorphic function $f=z \in 
\SO_{\PP}(\ast \{ \infty \} )$. 
Then for the holomorphic coordinate $\zeta := \frac{1}{z}$ 
on a neighborhood of the point 
$\infty \in \PP$ there exist isomorphisms 
\begin{equation}
\mathscr{E}_{\CC | \PP}^{z} \simeq \SD_{\PP} \exp \Bigl( \frac{1}{\zeta} \Bigr) 
\simeq \SD_{\PP}/ \SD_{\PP}( \zeta^2 \partial_{\zeta} +1). 
\end{equation}
This implies that on a neighborhood of $\infty \in \PP$ 
we have isomorphisms 
\begin{align}
\DD_{\PP} \big( \mathscr{E}_{\CC | \PP}^{z} \big) 
& \simeq 
\SD_{\PP}/ \SD_{\PP}( - \partial_{\zeta} \zeta^2+1) 
\simeq \SD_{\PP} \Bigl\{ \frac{1}{\zeta^2} \exp \Bigl(- \frac{1}{\zeta} \Bigr) 
\Bigr\} 
\\
& \simeq \SD_{\PP} \exp \Bigl(- \frac{1}{\zeta} \Bigr) 
\simeq \mathscr{E}_{\CC | \PP}^{-z}
\end{align}
(cf. \cite[Lemma 6.1.2 and Remark 6.1.3]{DK16}). 
In this case, we thus obtain an isomorphism 
\begin{equation}
DR_{\PP}^\rmt \big( \mathscr{E}_{\CC | \PP}^{-z} \big) 
\simeq 
Sol_{\PP}^\rmt \big( \mathscr{E}_{\CC | \PP}^{z} \big) [1]. 
\end{equation}
Consider the distinguished triangle 
\begin{equation}\label{dist-t} 
H^0 Sol_{\PP}^\rmt \big( \mathscr{E}_{\CC | \PP}^{z} \big) 
\longrightarrow 
Sol_{\PP}^\rmt \big( \mathscr{E}_{\CC | \PP}^{z} \big) 
\longrightarrow 
\tau^{\geq 1} 
Sol_{\PP}^\rmt \big( \mathscr{E}_{\CC | \PP}^{z} \big) 
\overset{+1}{\longrightarrow}
\end{equation}
in $\BDC(\I\CC_X)$. Then, according to 
\cite[Remark 7.4]{KS03} (see also \cite[Proposition 3.14]{IT20a}), 
for the closed 
embedding $i_{\infty}: \{ \infty \} 
\hookrightarrow \PP$ there exists an isomorphism 
\begin{equation}
\tau^{\geq 1} 
Sol_{\PP}^\rmt \big( \mathscr{E}_{\CC | \PP}^{z} \big) 
\simeq (i_{\infty})_* \CC_{\{ \infty \}}[-1]. 
\end{equation}
From this and $i_{\infty}^{-1} \CC_{\CC} \simeq 0$ 
we obtain the vanishing 
\begin{equation}
\rihom \Big( \CC_{\CC} , 
\tau^{\geq 1} 
Sol_{\PP}^\rmt \big( \mathscr{E}_{\CC | \PP}^{z} \big) 
\Big) \simeq 
\rihom \Big( i_{\infty}^{-1} \CC_{\CC} , 
\CC_{\{ \infty \}}[-1] \Big)
\simeq 0. 
\end{equation}
On the other hand, by \cite[Proposition 7.3]{KS03} 
(see also \cite[Proposition 3.14]{IT20a})  
we have an isomorphism 
\begin{equation}
H^0 Sol_{\PP}^\rmt \big( \mathscr{E}_{\CC | \PP}^{z} \big) 
\simeq
\underset{a\to+\infty}{\inj}\ 
\CC_{\{ z \in \CC  | \Re z <a \}}. 
\end{equation}
Then the assertion follows immediately from 
the isomorphisms 
\begin{equation}
DR_{\PP}^\rmt \big( \mathscr{E}_{\CC | \PP}^{-z} \big) 
\simeq 
DR_{\PP}^\rmt \big( \mathscr{E}_{\CC | \PP}^{-z}
( \ast \{ \infty \}) \big) 
\simeq 
\rihom \Big( \CC_{\CC} , 
DR_{\PP}^\rmt \big( \mathscr{E}_{\CC | \PP}^{-z} \big) 
\Big)
\end{equation}
(see \cite[Theorem 7.4.12]{KS01}) and the 
distinguished triangle \eqref{dist-t}. 
Next, we consider the general exponential D-module 
$\mathscr{E}_{U | X}^{-f}$ for $f\in\SO_X(\ast D)$. 
Let $\nu : Y \longrightarrow X$ be a proper 
morphism of complex manifolds such that 
$E:= \nu^{-1} D \subset Y$ is a normal crossing 
divisor in $Y$, the restriction 
$\nu |_{Y \setminus E}: Y \setminus E 
\longrightarrow X \setminus D$ of 
$\nu$ is an isomorphism and the meromorphic 
function $g:=f \circ \nu \in\SO_Y(\ast E)$ 
on $Y$ has no point of indeterminacy on the 
whole $Y$. Such a resolution of singularities of 
$D \subset X$ always exists. See for example 
the proof of \cite[Theorem 3.6]{MT13}. 
Let $E_0$ (resp. $E_1$) be the union of 
the irreducible components of $E$ along 
which $g$ has no pole (resp. has a pole) 
so that we have $E=E_0 \cup E_1$. Set 
$V:=Y \setminus E$ and $\tilde{V} 
:=Y \setminus E_1 \supset V$. Then $g \in 
\SO_Y(\ast E)$ extends to a holomorphic 
function on $\tilde{V}$ and we obtain a 
meromorphic function 
$\tilde{g} \in \SO_Y(\ast E_1)$. Moreover 
there exists an isomorphism
\begin{equation}
\mathscr{E}_{V | Y}^{-g} \simeq 
\mathscr{E}_{\tilde{V} | Y}^{- \tilde{g}} 
( \ast E_0). 
\end{equation}
Since we have an isomorphism 
\begin{equation}
\mathscr{E}_{U | X}^{-f} \simeq 
\Big( 
{\bfD} \nu_\ast 
\mathscr{E}_{V | Y}^{-g} 
\Big) ( \ast D),
\end{equation}
by \cite[Theorems 7.4.6 and 7.4.12]{KS01} 
we obtain an isomorphism 
\begin{equation}
DR_X^\rmt \big(\mathscr{E}_{U | X}^{-f} \big) 
\simeq 
\rmR \nu_* 
\rihom \Big( \CC_{V} , 
DR_{Y}^\rmt \big( 
\mathscr{E}_{\tilde{V} | Y}^{- \tilde{g}} 
 \big) \Big). 
\end{equation}
Since the meromorphic function 
$\tilde{g} \in \SO_Y(\ast E_1)$ has no point 
of indeterminacy on the whole $Y$, 
we obtain a holomorphic map from $Y$ to 
$\PP$. We denote it by the same letter $\tilde{g}$ 
for simplicity. Then by 
\cite[Theorem 7.4.1]{KS01} it follows from 
the isomorphism 
\begin{equation}
\mathscr{E}_{\tilde{V} | Y}^{- \tilde{g}} 
\simeq {\bfD} \tilde{g}^* 
\mathscr{E}_{\CC | \PP}^{-z}
\end{equation}
that we obtain an isomorphism
\begin{equation}
DR_{Y}^\rmt \big( 
\mathscr{E}_{\tilde{V} | Y}^{- \tilde{g}} 
 \big) \simeq 
\tilde{g}^! 
DR_{\PP}^\rmt \big( \mathscr{E}_{\CC | \PP}^{-z} \big) 
[1-d_X] \simeq 
\rihom \Big( \CC_{\tilde{V}} , 
\underset{a\to+\infty}{\inj}\ 
\tilde{g}^! \CC_{\{ \Re z <a \}}
\Big) [2-d_X]. 
\end{equation}
Since for an open neighborhood $W$ of the normal 
crossing divisor $E_1= \tilde{g}^{-1}( \infty ) \subset 
Y$ the restriction of the morphism 
$\tilde{g}: Y \longrightarrow \PP$ to 
$\tilde{V} \cap W$ is a topological submersion, 
we have an isomorphism 
$\tilde{g}^! \CC_{\{ \Re z <a \}} \simeq 
\tilde{g}^{-1} \CC_{\{ \Re z <a \}} [2 d_X-2]$ 
on $\tilde{V} \cap W$. Moreover, if $a>0$ is 
large enough, on $\tilde{V} \setminus W$ 
we have 
\begin{equation}
\tilde{g}^! \CC_{\{ \Re z <a \}} \simeq 
\tilde{g}^! \CC_{\PP} \simeq 
\tilde{g}^! \omega_{\PP} [-2] \simeq 
\omega_Y [-2] \simeq 
\CC_Y [2d_X-2] \simeq 
\tilde{g}^{-1} \CC_{\{ \Re z <a \}} [2 d_X-2]. 
\end{equation}
Combining these results together, we finally 
get the desired isomorphism as follows: 
\begin{align*}
& DR_X^\rmt \big(\mathscr{E}_{U | X}^{-f} \big) 
\simeq 
\rmR \nu_* 
\rihom \Big( \CC_{V} , 
\rihom \Big( \CC_{\tilde{V}} , 
\underset{a\to+\infty}{\inj}\ 
 \CC_{\{ \Re \tilde{g} <a \}}
\Big) [d_X] \Big)
\\
& \simeq 
\rmR \nu_* 
\rihom \Big( \CC_{V} , 
\underset{a\to+\infty}{\inj}\ 
 \CC_{\{ \Re g <a \}} \Big) [d_X] 
\\
& \simeq 
\rihom \Big( \CC_{U} , \rmR \nu_* \Big( 
\underset{a\to+\infty}{\inj}\ 
 \CC_{\{ \Re g <a \}} \Big) \Big) [d_X] 
\\
& \simeq 
\rihom \Big( \CC_{U} ,
\underset{a\to+\infty}{\inj}\ 
\CC_{\{ \Re f <a \}} \Big) [d_X]. 
\end{align*} 
This completes the proof. 
\end{proof}

\section{Several Micro-supports Related to 
Holonomic 
\\ D-modules}\label{sec:19}

In this section, we introduce several micro-supports 
related to holonomic D-modules and study 
their properties especially for exponentially twisted 
ones. Let $X$ be a complex manifold of dimension $N$ 
and $\BEC_+(\CC_{X})$ the full 
subcategory of $\BEC(\CC_{X})$ 
consisting of objects $F \in \BEC(\CC_{X})$ 
such that $F \simeq \CC_{\{ t \geq 0 \}} 
\Potimes F$. If for a holonomic $\SD_X$-module 
$\SM \in\Modhol(\SD_X)$ there exists an 
object $F \in \BEC_+(\CC_{X})$ of 
$\BEC_+(\CC_{X})$ such that 
\begin{equation}\label{eq-sit-net} 
Sol_{X}^\rmE( \SM ) \simeq 
\CC^\rmE_{X} \Potimes F, 
\end{equation}
the following micro-supports of 
$F$ introduced in Tamarkin \cite{Tama08} 
are useful in the study of $\SM$. Let $X_{\RR}$ be the 
underlying real analytic 
manifold of $X$ and by the standard coordinate 
$(t;t^*)$ of $T^* \RR$ identify $(T^*X_{\RR}) \times \RR$ 
with the subset $(T^*X_{\RR}) \times \{ t^*=1 \} \subset 
(T^*X_{\RR}) \times (T^* \RR )$ of 
$T^*(X_{\RR} \times \RR ) 
\simeq (T^*X_{\RR}) \times (T^* \RR )$. Let 
\begin{equation}
\iota_{\RR} : (T^*X_{\RR}) \times \RR \hookrightarrow 
T^*(X_{\RR} \times \RR ) 
\end{equation}
be the inclusion map. 
For an object $F \in \BEC_+(\CC_{X})$ taking 
$\widetilde{F} \in \BDC(\CC_{X \times \RR})$ such that 
$\Q ( \widetilde{F} )=F$ we set 
\begin{equation}
{\rm SS}^{{\rm E}}(F):= 
\iota_{\RR}^{-1} {\rm SS}( \widetilde{F} ) 
\ \subset \ (T^*X_{\RR}) \times \RR. 
\end{equation}
This definition does not depend on the 
choice of $\widetilde{F}$ (see \cite[Section 2.6]{DK17}). 
We call ${\rm SS}^{{\rm E}}(F)$ 
the enhanced micro-support of $F$. 
For a local coordinate $z=x+iy=(z_1,z_2, \ldots, z_N)$ of $X$ 
let $(x,y,t;x^*,y^*,t^*)$ be the 
corresponding coordinate of $T^*(X_{\RR} \times \RR ) 
\simeq (T^*X_{\RR}) \times (T^* \RR )$. 
Then on the open subset $\{ t^*>0 \} \subset 
T^*(X_{\RR} \times \RR )$ locally we define a 
real analytic map 
$\gamma : \{ t^*>0 \} \longrightarrow 
(T^*X_{\RR}) \times \RR$ by 
\begin{equation}
(x,y,t;x^*,y^*,t^*) \longmapsto 
\Bigl(
\bigl( x,y; \frac{x^*}{t^*}, \frac{y^*}{t^*} \bigr), t 
\Bigr).
\end{equation}
By the conicness of ${\rm SS}( \widetilde{F} )$ 
we can easily see that 
\begin{equation}
{\rm SS}^{{\rm E}}(F)= 
\gamma \bigl( {\rm SS}( \widetilde{F} )
\cap \{ t^*>0 \} \bigr).  
\end{equation}
Let ${\rm pr}_{T^*X_{\RR}}: (T^*X_{\RR}) \times \RR 
\longrightarrow T^*X_{\RR}$ be the projection and set 
\begin{equation}
{\rm SS}_{{\rm irr}}(F):= 
\overline{{\rm pr}_{T^*X_{\RR}} 
\bigl( {\rm SS}^{{\rm E}}(F) \bigr)} 
\ \subset \ T^*X_{\RR}
\end{equation}
(see \cite[Section 2.6]{DK17} for a 
different notation for it). 
We call it  the irregular micro-support of $F$. 

From now on, we assume that $X$ is a smooth algebraic 
variety over $\CC$ and denote by $X^{\an}$ its  
underlying complex manifold that we sometimes 
denote by $X$ for short. We set $X_{\RR}:= 
(X^{\an})_{\RR}$. We shall consider 
the micro-supports of the objects 
$F \in \BEC_+(\CC_{X^{\an}})$ related to 
the following special but basic 
holonomic D-modules. The problem being local, 
we may assume that $X$ is affine. Then for a rational function 
$f= \frac{P}{Q}: X \setminus Q^{-1}(0) 
\longrightarrow \CC$ ($P, Q \in 
\Gamma(X; \SO_X)$, 
$Q \not= 0$) on $X$ we set 
$U:= X \setminus Q^{-1}(0)$ and 
define an algebraic 
exponential $\SD_X$-module $\SE^f_{U|X} 
\in\Modhol (\SD_X)$ as in the analytic case. 
Let $i_U:U \hookrightarrow X$ be the inclusion map and 
$\SO_U(f) \in \Modhol (\SD_U)$ the algebraic 
integrable connection on $U$ associated to the 
regular function $f:U \longrightarrow \CC$. Then 
there exist isomorphisms  
\begin{equation}
\SE^f_{U|X} \simeq \bfD i_{U*} \SO_U(f) 
\simeq (i_U)_* \SO_U(f). 
\end{equation}
We define the analytification
$( \SE^f_{U|X})^{\an} \in\Modhol(\SD_{X^{\an}})$
of $\SE^f_{U|X}$ by
$( \SE^f_{U|X})^{\an} := \SO_{X^{\an}}
\otimes_{\SO_{X}} \SE^f_{U|X}$ 
and set 
\begin{equation}
Sol_{X}^\rmE( \SE^f_{U|X} ) := 
Sol_{X^{\an}}^\rmE( ( \SE^f_{U|X} )^{\an} ) 
\qquad  \in \ \BEC(\I\CC_{X^{\an}}).
\end{equation}
Then by Theorem \ref{thm-4} (vi) we have an isomorphism 
\begin{equation}
Sol_{X}^\rmE(  \SE^f_{U|X} ) \simeq 
\EE_{U^{\an}| X^{\an}}^{{\rm Re}f}. 
\end{equation}

\begin{definition}\label{defi-etd}
We say that a holonomic $\SD_X$-module 
$\SM \in\Modhol(\SD_X)$ is an exponentially twisted holonomic 
D-module if there exist a regular holonomic 
$\SD_X$-module $\SN \in\Modrh(\SD_X)$ and a rational function 
$f= \frac{P}{Q}: U=X \setminus Q^{-1}(0) 
\longrightarrow \CC$ ($P, Q \in 
\Gamma(X; \SO_X), 
Q \not= 0$) on $X$ such that we have an isomorphism 
\begin{equation}
\SM \simeq 
\SN \Dotimes \SE^f_{U|X}. 
\end{equation}
\end{definition}

Let $\SM \in\Modhol(\SD_X)$ be an 
exponentially twisted holonomic $\SD_X$-module such that 
$\SM \simeq 
\SN \Dotimes \SE^f_{U|X}$ 
for a regular holonomic 
$\SD_X$-module $\SN \in\Modrh(\SD_X)$ and a rational function 
$f= \frac{P}{Q}: U= X \setminus Q^{-1}(0) 
\longrightarrow \CC$ on $X$.  Let us set 
\begin{equation}
K:= Sol_X ( \SN ) \qquad \in \ \Dbc(X^{\an}). 
\end{equation}
We denote the support of $K \in \Dbc(X^{\an})$ 
by $Z \subset X$. Note that $Z= {\rm supp} ( \SN )$ 
and if $Z \subset Q^{-1}(0)$ then we have 
$\SM \simeq 0$. Hence in what follows, we assume 
that $Z$ is not contained in $Q^{-1}(0)$. 
Then in this algebraic case, 
considering $f$ as a rational function on 
a (possibly singular) compactification 
$\overline{Z}$ of $Z$ and using the enhanced 
solution complex of $\SM$, we see that the 
restriction of $f$ to $Z$ is uniquely 
determined by $\SM$ modulo constant functions 
on $Z$. Note also that the shift $K[N] \in \Dbc(X^{\an})$ of $K$ 
is a perverse sheaf on $X^{\an}$. Then by 
Theorem \ref{thm-4} (vi) 
and Corollary \ref{cor-41} for the enhanced sheaf 
$F:= \pi^{-1}  K \otimes 
{\rm E}_{U^{\an}| X^{\an}}^{{\rm Re}f}  
\in \BEC_+(\CC_{X^{\an}})$ 
on $X^{\an}$ we have 
an isomorphism 
\begin{equation}\label{eq-sit-new} 
Sol_{X}^\rmE( \SM ) \simeq 
\CC^\rmE_{X^{\an}} \Potimes F. 
\end{equation}
From now on, we shall study 
the enhanced micro-support ${\rm SS}^{{\rm E}}(F) \subset 
(T^*X_{\RR}) \times \RR$ and the irregular one 
${\rm SS}_{{\rm irr}}(F) 
\subset T^*X_{\RR}$ of the enhanced sheaf 
$F= \pi^{-1}  K \otimes 
{\rm E}_{U^{\an}| X^{\an}}^{{\rm Re}f}  
\in \BEC_+(\CC_{X^{\an}})$. 
It turns out that 
over the open subset 
$T^*U_{\RR} \subset T^*X_{\RR}$ the structures 
of these two micro-supports are very simple. 
Define a (not necessarily $\CC^*$-conic) 
complex Lagrangian submanifold $\Lambda^f$ of 
$T^*U$ by 
\begin{equation}
\Lambda^f := 
\{ (z, df(z)) \ | \ z \in U=X \setminus Q^{-1}(0) \} 
\qquad \subset T^*U. 
\end{equation}
Then we can easily show that 
via the natural identification $(T^*U)_{\RR} 
\simeq T^*U_{\RR}$ we have 
\begin{equation}\label{inclss} 
{\rm SS}_{{\rm irr}}(F) \cap T^*U_{\RR}
= \Bigl( {\rm SS}(K) \cap T^*U_{\RR} \Bigr) + \Lambda^f
\end{equation}
and 
\begin{align*}
& {\rm SS}^{{\rm E}}(F) \cap \Bigl( (T^*U_{\RR}) \times \RR \Bigr)  
\\ 
& = \Bigl\{ (x,y;x^*,y^*, -{\rm Re }f(x+iy) \ | \ 
(x,y;x^*,y^*) \in \Bigl( {\rm SS}(K) \cap T^*U_{\RR} \Bigr) 
+ \Lambda^f \Bigr\}. 
\end{align*}
Moreover we have the following results. 

\begin{lemma}\label{empty-pole-micro}
We denote the preimage of the locally closed subset 
$Q^{-1}(0) \setminus I(f)= 
Q^{-1}(0) \setminus P^{-1}(0) \subset X$ 
by the projection $(T^*X_{\RR}) \times \RR 
\longrightarrow X_{\RR}$ simply by 
$\{ P \not= 0, \ Q=0 \} \subset (T^*X_{\RR}) \times \RR$. Then 
we have 
\begin{equation}
{\rm SS}^{{\rm E}}(F) \cap \{ P \not= 0, \ Q=0 \}
= \emptyset. 
\end{equation}
In particular, if we assume 
that $I(f)=P^{-1}(0) \cap Q^{-1}(0) = \emptyset$, 
then we have 
\begin{equation}
{\rm SS}_{{\rm irr}}(F) 
= \Bigl( {\rm SS}(K) \cap T^*U_{\RR} \Bigr) + \Lambda^f. 
\end{equation}
\end{lemma}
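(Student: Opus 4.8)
The plan is to analyze the enhanced micro-support near a point $z_0 \in Q^{-1}(0) \setminus P^{-1}(0)$ using the explicit local description of $F = \pi^{-1}K \otimes {\rm E}_{U^{\an}|X^{\an}}^{{\rm Re}f}$ and the behaviour of ${\rm Re}f$ as we approach the pole divisor off the indeterminacy locus. First I would work locally: pick $z_0 \in Q^{-1}(0)$ with $P(z_0) \neq 0$. On a small ball $B$ around $z_0$ we may assume $P$ is nowhere zero and $K|_B$ has locally constant cohomology on a suitable stratification. The key geometric fact is that as $z \to z_0$ within $U = X \setminus Q^{-1}(0)$, one has $|f(z)| = |P(z)/Q(z)| \to +\infty$, and more precisely, after shrinking $B$, the real part ${\rm Re}f$ is an unbounded, submersive function on $B \cap U$ whose sublevel sets $\{z \in B \cap U \mid {\rm Re}f(z) \geq a\}$ recede away from $Q^{-1}(0)$ as $a \to +\infty$. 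Indeed, since $Q$ vanishes on $Q^{-1}(0)$ and $P$ does not, writing things in terms of a defining equation for the pole divisor, the function $|f|$ grows like a negative power of the distance to $Q^{-1}(0)$; in particular for every $a$ the closure of $\{{\rm Re}f \geq a\}$ inside $B$ is disjoint from a fixed neighborhood of $Q^{-1}(0)$.

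Next I would translate this into a micro-support statement. Recall from the discussion before the lemma that ${\rm SS}^{\rm E}(F)$ is computed from ${\rm SS}(\widetilde{F})$ for a representative $\widetilde{F} \in \BDC(\CC_{X \times \RR})$ of $F$, via the map $\gamma$ on $\{t^* > 0\}$, and that the natural representative built from ${\rm E}_{U^{\an}|X^{\an}}^{{\rm Re}f}$ is (an inductive limit of) $\pi^{-1}K \otimes \CC_{\{t + {\rm Re}f(z) \geq a\}}$ as $a \to +\infty$. Over the region $\{P \neq 0, Q = 0\}$, i.e. over $z$ near $z_0$, the support of each sheaf $\CC_{\{t + {\rm Re}f \geq a\}} \cap (B \times \RR)$ near $(z_0, t)$ is empty for $t$ in any bounded range once $a$ is large, because ${\rm Re}f(z) \to +\infty$ forces $t + {\rm Re}f(z) \geq a$ automatically near $z_0$ — hence the defining inequality imposes no boundary there — while $\pi^{-1}K$ is locally constant in $t$. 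Consequently the constructible sheaf $\pi^{-1}K \otimes \CC_{\{t+{\rm Re}f \geq a\}}$ restricted to a neighborhood of $z_0$ in $X_{\RR} \times \RR$ is, for $a$ large, isomorphic to $\pi^{-1}K$ itself, whose micro-support is contained in ${\rm SS}(K) \times (T^*_\RR\RR$-zero-section$) = \pi^{-1}{\rm SS}(K) \times \{t^* = 0\}$, which lies in $\{t^* = 0\}$ and therefore contributes nothing to $\iota_\RR^{-1}{\rm SS}(\widetilde{F})$ (where $t^* = 1$). Passing to the quotient by $\pi^{-1}\BDC(\CC_X)$ kills this locally constant contribution entirely, so $F$ is zero as an enhanced sheaf on that neighborhood, giving ${\rm SS}^{\rm E}(F) \cap \{P \neq 0, Q = 0\} = \emptyset$.

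For the final assertion, assume $I(f) = P^{-1}(0) \cap Q^{-1}(0) = \emptyset$. Then $Q^{-1}(0) = Q^{-1}(0) \setminus P^{-1}(0)$, so the first part gives ${\rm SS}^{\rm E}(F)$ disjoint from the entire preimage of $Q^{-1}(0)$; combined with \eqref{inclss} and the description of ${\rm SS}^{\rm E}(F) \cap ((T^*U_\RR) \times \RR)$ recalled just above the lemma, and the fact that ${\rm pr}_{T^*X_\RR}$ of a set already contained in $T^*U_\RR$ (which is closed in $T^*X_\RR \setminus (\text{preimage of }Q^{-1}(0))$, itself open) needs no further closure beyond what $({\rm SS}(K) \cap T^*U_\RR) + \Lambda^f$ already provides, we conclude ${\rm SS}_{\rm irr}(F) = ({\rm SS}(K) \cap T^*U_\RR) + \Lambda^f$. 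I expect the main obstacle to be the bookkeeping in the second step: making rigorous the claim that the inductive system $\{\pi^{-1}K \otimes \CC_{\{t+{\rm Re}f \geq a\}}\}_a$ is "eventually locally constant in $t$, hence eventually $\pi^{-1}(\cdot)$" near the pole — one must control uniformly how fast ${\rm Re}f \to +\infty$ and argue that the transition maps of the ind-object become isomorphisms on that neighborhood — together with checking that the quotient functor $\Q$ and the passage to $\CC^{\rm E}_X \Potimes (-)$ are compatible with this vanishing, which is where care with the bordered-space formalism is needed. The growth estimate on ${\rm Re}f$ itself is elementary (a resolution of the pole divisor, or just the local form $f = u/Q$ with $u$ a unit, reduces it to the one-variable fact $|{\rm Re}(1/s^m)| \to \infty$ generically as $s \to 0$, and one restricts to the sector where ${\rm Re}f \to +\infty$, which is what the inductive limit over $a \to +\infty$ selects).
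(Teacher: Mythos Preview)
Your argument has a fatal error in the central geometric claim. You assert that as $z \to z_0 \in Q^{-1}(0) \setminus P^{-1}(0)$ one has ${\rm Re}f(z) \to +\infty$, and that therefore the inequality $t + {\rm Re}f(z) \geq a$ is ``automatically satisfied'' near $z_0$. This is false: only $|f(z)| \to +\infty$. Already for $f(z) = 1/z$ in dimension one, writing $z = re^{i\theta}$ gives ${\rm Re}f(z) = \cos\theta / r$, which tends to $+\infty$ in the sector $\cos\theta > 0$ and to $-\infty$ in the sector $\cos\theta < 0$. Thus the boundary hypersurface $\{t + {\rm Re}f(z) = 0\}$, i.e.\ $\{\cos\theta = -tr\}$, accumulates on $z_0 = 0$ from every direction near $\theta = \pm\pi/2$, and the sheaf $\CC_{\{t + {\rm Re}f \geq 0\}}$ is \emph{not} isomorphic to a pullback $\pi^{-1}(\cdot)$ on any neighborhood of $(0,t_0)$. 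Your first paragraph even contains the opposite (and equally false) claim that the super-level sets ``recede away from $Q^{-1}(0)$'': for $f = 1/z$ and $a > 0$ the set $\{{\rm Re}f \geq a\}$ is a small cardioid-like region touching $0$, not receding from it. There is also a conflation of the enhanced sheaf $F = \Q(\pi^{-1}K \otimes \CC_{\{t+{\rm Re}f \geq 0\}})$ with the enhanced ind-sheaf $\CC_X^\rmE \Potimes F$: the enhanced micro-support ${\rm SS}^{\rm E}(F)$ is computed from the single representative at $a = 0$, not from any ind-limit over $a \to +\infty$.

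The paper's proof addresses exactly this oscillatory behaviour. For the model case $f(z)=1/z$, $K=\CC_X$, it constructs a diffeomorphism $\Phi$ of $X_\RR \times \RR$ near $(0,t_0)$, with identity tangent map at that point, sending $\{t + {\rm Re}(1/z) \geq 0\}$ to the $t$-independent set $\{t_0 + {\rm Re}(1/z) \geq 0\}$; the latter sheaf has micro-support in $\{t^* = 0\}$, so no covector with $t^* = 1$ survives. The existence of such a $\Phi$ is plausible because the boundary $\{\cos\theta = -tr\}$ differs from $\{\cos\theta = -t_0 r\}$ only by $O(r)$ as $r \to 0$. The paper then handles $f = 1/z^m$ and the normal-crossing case $f = 1/(z_1^{m_1}\cdots z_k^{m_k})$ similarly, reduces the general $K = \CC_X$ case to these by a resolution $\nu$ and the proper push-forward estimate \cite[Proposition 5.4.4]{KS90}, and finally treats arbitrary $K$ by decomposing along a stratification into local systems on strata and resolving each stratum closure. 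Your proposal contains none of these ingredients; the diffeomorphism trick (or some substitute controlling the $t$-dependence of the boundary near the pole) is the missing idea.
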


\begin{proof}
The problem being local, after shrinking $X$ we may 
assume that $I(f)= \emptyset$. 
First, we consider the special case where $
X= \CC_z$, $K= \CC_X$ and $P(z)=1$, $Q(z)=z$, 
$f(z)= \frac{1}{z}$. In this case, we have 
\begin{equation}
F= \CC_{ \{ z \not= 0, \ t+ {\rm Re}( \frac{1}{z} ) \geq 0 \} }. 
\end{equation}
Then it suffices to show that for any point 
$(0,t_0) \in (Q^{-1}(0) \setminus I(f)) \times \RR = 
Q^{-1}(0)  \times \RR$ there exists no covector 
of the form $\xi dx + dt$ in $T^*_{(0,t_0)}(X_{\RR} \times \RR ) 
\cap {\rm SS}(F)$. For such a point $(0,t_0)$ 
we can easily see that there exist its 
neighborhoods $W, W^{\prime}$ in $X_{\RR} \times \RR 
\simeq \RR^3$ and a diffeomorphism 
$\Phi : W \simto W^{\prime}$ between them such that 
$\Phi ((0,t_0))=(0,t_0)$, 
\begin{equation}
\Phi \Bigl( \{ z \not= 0, \ t+ {\rm Re} \Bigl( \frac{1}{z} \Bigr) \geq 0 \} \cap W \Bigr)
= \{ z \not= 0, \ t_0+ {\rm Re} \Bigl( \frac{1}{z} \Bigr) \geq 0 \} \cap W^{\prime}
\end{equation}
and the tangent map 
\begin{equation}
T_{(0,t_0)} \Phi : T_{(0,t_0)}(X_{\RR} \times \RR )
\longrightarrow 
T_{(0,t_0)}(X_{\RR} \times \RR )
\end{equation}
of $\Phi$ at it is the identity. Then locally 
we can replace $F$ by the sheaf 
\begin{equation}
F^{\prime}= \CC_{ \{ z \not= 0, \ t_0+ {\rm Re} ( \frac{1}{z} ) \geq 0 \} } 
\end{equation}
to check the condition. The same argument can be applied 
even if we replace $f(x)$ by $\frac{1}{z^m}$ ($m \geq 1$). 
Next consider the case where 
$X= \CC^N_z$, $K= \CC_X$ and $P(z)=1$, $Q(z)=z_1^{m_1} z_2^{m_2} 
\cdots z_k^{m_k}$  ($m_i \geq 1$), 
$f(z)= \frac{1}{Q(z)}$ for some $1 \leq k \leq N= {\rm dim} X$. 
In this case, for the sheaf  
\begin{equation}
F= \CC_{ \{ Q(z) \not= 0, \ t+ {\rm Re}f(z) \geq 0 \} } 
\end{equation}
on $X_{\RR} \times \RR$ 
we can similarly check the condition. 
Let us now consider the more general case where 
$K= \CC_X$ and $P(z)=1$, $Q(z) \not= 0$ 
$f(z)= \frac{1}{Q(z)}$. 
Let $\nu : \widetilde{X} \longrightarrow X$ be a 
proper morphism of complex manifolds such that 
the restriction $\widetilde{X} \setminus 
\nu^{-1} Q^{-1}(0) 
 \longrightarrow X \setminus Q^{-1}(0)$ of $\nu$ 
is an isomorphism and $\nu^{-1} Q^{-1}(0) \subset \widetilde{X}$ is 
a normal crossing divisor in $\widetilde{X}$. Then 
for the sheaf $F$ on $X_{\RR} \times \RR$ and 
the morphism $\widetilde{\nu}:= \nu \times {\rm id}_{\RR}: 
\widetilde{X}_{\RR} \times \RR \longrightarrow X_{\RR} \times \RR$ 
we have the condition 
\begin{equation}
{\rm SS}^{{\rm E}}( \widetilde{\nu}^{-1} F) \cap 
\{ P \circ \nu \not= 0, \ Q  \circ \nu =0 \}
= \emptyset
\end{equation}
and there exists an isomorphism 
\begin{equation}
F \simto 
\rmR \widetilde{\nu}_* \widetilde{\nu}^{-1} F. 
\end{equation}
Hence we can check the condition on $F$ by \cite[Proposition 5.4.4]{KS90}. 
Finally, we consider the general case. 
Let $\CS$ be a stratification of 
$Z$ adapted to $K$ such that $Q^{-1}(0) \cap Z$ 
is a union of some strata in it. Then by decomposing 
the support of $K$ with respect to $\CS$, it suffices 
to show that for any stratum $S \in \CS$ in $\CS$ 
such that $S \subset Z \setminus Q^{-1}(0)$ and any 
local system $L$ on $S$ we have the condition 
\begin{equation}\label{neqas} 
{\rm SS}^{{\rm E}} \Bigl(  
\pi^{-1} (i_{!} L) \otimes {\rm E}_{U| X}^{{\rm Re}f} \Bigr) \cap 
\{ P  \not= 0, \ Q  =0 \} = \emptyset, 
\end{equation}
where $i :S \hookrightarrow X$ is the inclusion map. 
Let $\nu : T \longrightarrow X$ be a 
proper morphism of complex manifolds such that 
$\nu (T)= \overline{S}$ and 
the restriction $T^{\circ}:= T \setminus 
\nu^{-1} ( \overline{S} \setminus S) 
 \longrightarrow S$ of $\nu$ 
is an isomorphism and $\nu^{-1} Q^{-1}(0), 
\nu^{-1} ( \overline{S} \setminus S)  \subset T$ are 
normal crossing divisors in $T$. We thus 
can identify $T^{\circ}$ with $S$ and regard 
$L$ as a local system on $T^{\circ}$. 
Let us consider the meromorphic function 
$f \circ \nu := ( P \circ \nu ) / 
( Q \circ \nu )$ on $T$ and the inclusion map 
$i^{\prime} : T^{\circ} \hookrightarrow T$. 
Then for the sheaf 
$\pi^{-1} ( i^{\prime}_! L) \otimes {\rm E}_{T^{\circ}| T}^{{\rm Re}(f \circ \nu )}$ 
on $T_{\RR} \times \RR$ 
we can similarly show the condition 
\begin{equation}
{\rm SS}^{{\rm E}} \Bigl( 
\pi^{-1} ( i^{\prime}_! L) \otimes {\rm E}_{T^{\circ}| T}^{{\rm Re}(f \circ \nu )}
\Bigr) \cap 
\{ P \circ \nu \not= 0, \ Q  \circ \nu =0 \}
= \emptyset. 
\end{equation}
Moreover, for the morphism 
$\widetilde{\nu}:= \nu \times {\rm id}_{\RR}: 
T_{\RR} \times \RR \longrightarrow X_{\RR} \times \RR$ 
there exists an isomorphism 
\begin{equation}
\pi^{-1} (i_{!} L) \otimes {\rm E}_{U| X}^{{\rm Re}f}
\simto 
\rmR \widetilde{\nu}_*
\Bigl(
\pi^{-1} ( i^{\prime}_! L) \otimes {\rm E}_{T^{\circ}| T}^{{\rm Re}(f \circ \nu )}
\Bigr).
\end{equation}
Then we obtain \eqref{neqas} by \cite[Proposition 5.4.4]{KS90}. 
This completes the proof. 
\end{proof}

\begin{lemma}\label{empty-micro}
Assume that there exists a stratification $\CS$ of 
$Z$ adapted to $K$ such that any stratum $S \in \CS$ 
in it is not contained in $P^{-1}(0) \cap Z \subset Z$ nor 
$Q^{-1}(0)  \cap Z \subset Z$ and the meromorphic 
function $f|_{S \setminus Q^{-1}(0)} : 
S \setminus Q^{-1}(0) \longrightarrow \CC$ on $S$ 
satisfies the condition: For any point of 
$S \cap P^{-1}(0) \cap Q^{-1}(0)$ there exists 
a local coordinate $z_1,z_2, \ldots, z_l$ 
($l:= {\rm dim }S$) of $S$ around it such that 
$(P|_S)(z_1, \ldots, z_l)=z_1, \ (Q|_S)(z_1, \ldots, z_l)=z_2^m$ 
for some $m \geq 1$ and hence 
\begin{equation}
(f|_S)(z_1, \ldots, z_l)=
\frac{z_1}{z_2^m}. 
\end{equation}
We denote the preimage of the closed subset 
$I(f)= P^{-1}(0) \cap Q^{-1}(0) \subset X$ 
by the projection $(T^*X_{\RR}) \times \RR 
\longrightarrow X_{\RR}$ simply by 
$\{ P=Q=0 \} \subset (T^*X_{\RR}) \times \RR$. Then 
we have 
\begin{equation}
{\rm SS}^{{\rm E}}(F) \cap \{ P=Q=0 \}
= \emptyset. 
\end{equation}
\end{lemma}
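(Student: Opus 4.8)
The plan is to adapt the proof of Lemma~\ref{empty-pole-micro} to the indeterminacy locus $I(f)$. As there, I would work with a representative $\widetilde{F}\in\BDC(\CC_{X_{\RR}\times\RR})$ of $F$, using that ${\rm SS}^{{\rm E}}$ is subadditive in distinguished triangles (because ${\rm SS}$ is, and a summand of the form $\pi^{-1}(\cdot)$ contributes nothing to ${\rm SS}\cap\{t^*>0\}$) and that $\pi^{-1}(\cdot)\otimes{\rm E}_{U|X}^{{\rm Re}f}$ is an exact functor. Since $K$ is built, by iterated distinguished triangles over the strata of $\CS$, from the objects $i_{S!}(K|_S)$, each of which is a successive extension of shifts of single local systems on $S$, it suffices to show that for every stratum $S\in\CS$ with $\overline{S}\cap I(f)\neq\emptyset$ (note $S\cap U\neq\emptyset$ by hypothesis) and every local system $L$ on $S$, the enhanced sheaf $G_S:=\pi^{-1}(i_{S!}L)\otimes{\rm E}_{U|X}^{{\rm Re}f}$ has no covector in ${\rm SS}^{{\rm E}}(G_S)$ over $I(f)$.

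Next I would pass to a resolution. Choose a proper morphism $\nu:T\longrightarrow X$ of smooth varieties with $\nu(T)=\overline{S}$ and $\nu|_{T^\circ}:T^\circ:=T\setminus E_\partial\overset{\sim}{\longrightarrow}S$, where $E_\partial:=\nu^{-1}(\overline{S}\setminus S)$, such that $E_\partial\cup\nu^{-1}(P^{-1}(0))\cup\nu^{-1}(Q^{-1}(0))$ is a normal crossing divisor and $g:=f\circ\nu$ extends to a holomorphic map $T\longrightarrow\PP$; such a $\nu$ exists because over $S$ the indeterminacy of $f$ is, by the hypothesis $f|_S=z_1/z_2^m$, resolved by iterated blow-ups of the smooth codimension-two centre $\{z_1=z_2=0\}$, while near $E_\partial$ one resolves and normalizes the divisors by standard means. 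Then near every point of $\nu^{-1}(I(f))$, in suitable local coordinates on $T\simeq\CC^l$, the functions $P\circ\nu$ and $Q\circ\nu$ are units times monomials and $g$ is a monomial $z^\mu$ whose exponents are all of one sign (otherwise $g$ would have an indeterminacy point in the chart), i.e. $g$ or $1/g$ is holomorphic there. By the projection formula, together with $\rmR\nu_*(i^\circ_!\nu^{-1}L)\simeq i_{S!}L$ and $\widetilde{\nu}^{-1}{\rm E}_{U|X}^{{\rm Re}f}\simeq{\rm E}_{\nu^{-1}U|T}^{{\rm Re}g}$ (with $\widetilde{\nu}:=\nu\times{\rm id}_{\RR}$ and $i^\circ:T^\circ\hookrightarrow T$), one obtains
\[
G_S\;\simeq\;\rmR\widetilde{\nu}_*\Bigl(\pi^{-1}(i^\circ_!\nu^{-1}L)\otimes{\rm E}_{\nu^{-1}U|T}^{{\rm Re}g}\Bigr),
\]
so by the micro-support estimate for proper direct images \cite[Proposition 5.4.4]{KS90} it is enough to prove that ${\rm SS}^{{\rm E}}\bigl(\pi^{-1}(i^\circ_!\nu^{-1}L)\otimes{\rm E}_{\nu^{-1}U|T}^{{\rm Re}g}\bigr)$ has no covector lying over $\nu^{-1}(I(f))$.

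Finally, near a point $w_0$ of $\nu^{-1}(I(f))$ one reduces, exactly as in the proof of Lemma~\ref{empty-pole-micro}, to an explicit model: $T=\CC^l$, $L$ a rank-one local system on the complement of a coordinate normal crossing divisor, and $g=z^\mu$ with $\mu$ of one sign. If $\mu\geq0$ (so $g$ is holomorphic near $w_0$), a representative $\widetilde{G}$ of the enhanced sheaf is locally $\pi^{-1}(\text{a sheaf on }T)\otimes\CC_{\{t+{\rm Re}g\geq0\}}$, and since ${\rm SS}(\CC_{\{t+{\rm Re}g\geq0\}})\subset\{t^*\leq0\}$ we get ${\rm SS}(\widetilde{G})\subset\{t^*\leq0\}$, hence ${\rm SS}^{{\rm E}}(\widetilde{G})=\emptyset$ near $w_0$. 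If $\mu\leq0$ (so $g$ has only a pole), then on every neighbourhood of $w_0$ intersected with $\{z\in\nu^{-1}U\}$ the function ${\rm Re}g$ takes all real values, and — just as with the model $\tfrac1z$ in Lemma~\ref{empty-pole-micro} — a flattening diffeomorphism of a neighbourhood of $(w_0,t_0)$, fixing this point with identity tangent map, carries $\{z\in\nu^{-1}U,\ t+{\rm Re}g(z)\geq0\}$ onto the $t$-independent set $\{z\in\nu^{-1}U,\ t_0+{\rm Re}g(z)\geq0\}$; hence $\widetilde{G}$ is locally of the form $\pi^{-1}(\text{a sheaf on }T)$, whose micro-support is contained in $\{t^*=0\}$, so again ${\rm SS}^{{\rm E}}(\widetilde{G})=\emptyset$ near $w_0$. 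Combining this with the two previous steps completes the proof.

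The main obstacle is this last step, and more precisely the case $\mu\leq0$: constructing the flattening diffeomorphism and verifying that no micro-support in the $t^*=1$ direction survives over the indeterminacy locus. It is exactly here that the hypothesis on the local form of $f|_S$ is indispensable, since it is what lets us choose the resolution $\nu$ so that $g$ acquires, near $\nu^{-1}(I(f))$, one of the two very restricted shapes above, for which the geometry of the set $\{t+{\rm Re}g\geq0\}$ can be analysed by hand.
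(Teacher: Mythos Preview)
Your overall strategy matches the paper's: reduce to a single stratum with a local system, resolve so that $g=f\circ\nu$ has no indeterminacy, and apply the direct-image estimate \cite[Proposition~5.4.4]{KS90}. The gap is in the holomorphic chart ($\mu\ge 0$). The assertion ${\rm SS}(\CC_{\{t+{\rm Re}g\ge 0\}})\subset\{t^*\le 0\}$ is false: at a boundary point of $\{t+{\rm Re}g\ge 0\}$ the micro-support contains the inward conormal ray $\RR_{\ge 0}\,d(t+{\rm Re}g)$, which has $t^*>0$. Concretely, in the paper's model chart $(u,y)\mapsto(uy,y)$ with $g=u$, one has $\widetilde F|_M=\CC_{\{y\neq 0,\ t+{\rm Re}u\ge 0\}}$, and at any $(u_0,0,t_0)$ with $t_0+{\rm Re}u_0=0$ the set ${\rm SS}(\widetilde F)\cap\{t^*=1\}$ contains covectors of the form $d({\rm Re}u)+dt+\eta\,dy$. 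So your intermediate claim that ${\rm SS}^{\rm E}$ of the resolved sheaf is empty over $\nu^{-1}(I(f))$, while it would certainly suffice, is simply false in the holomorphic chart and cannot be established.

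The missing idea, which the paper supplies, is to track the covector pullback $\rho$ rather than the full ${\rm SS}^{\rm E}$ upstairs. Because the blow-up collapses the exceptional $u$-direction, the pullback of any covector $\xi\,dx+\eta\,dy+dt$ from $X\times\RR$ to a point $(u_0,0,t_0)$ over the origin has \emph{zero} $du$-component: it equals $(\xi u_0+\eta)\,dy+dt$. But, as just observed, every element of ${\rm SS}(\widetilde F)\cap\{t^*=1\}$ at such a point carries a nonzero $du$-component. Hence no pulled-back covector with $t^*=1$ lies in ${\rm SS}(\widetilde F)$, which is exactly what the bound $\varpi\rho^{-1}{\rm SS}(\widetilde F)$ requires; combined with Lemma~\ref{empty-pole-micro} for the pole chart this finishes the model case. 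Your pole-chart argument ($\mu\le 0$, via a flattening diffeomorphism) is correct and coincides with Lemma~\ref{empty-pole-micro}; only the holomorphic chart needs this explicit matching of covector components.
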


\begin{proof}
The problem being local, we may assume that 
locally $X= \CC^N$. 
First, we consider the special case where $N=2$, $Z 
=X= \CC^2$, $K= \CC_X$ and $P(z_1,z_2)=z_1$, $Q(z_1,z_2)=z_2$, 
$f(z_1,z_2)= \frac{z_1}{z_2}$. We set $z=(z_1,z_2)=(x,y)$ 
so that we have $f(x,y)= \frac{x}{y}$. We shall show 
${\rm SS}^{{\rm E}}(F) \cap \{ x=y=0 \}
= \emptyset$. By an explicit calculation, we can easily 
show that the closure of the set 
\begin{equation}
{\rm SS}^{{\rm E}}(F) \setminus \{ x=y=0 \} 
\subset (T^*X_{\RR}) \times \RR \setminus \{ x=y=0 \} 
\end{equation}
in $(T^*X_{\RR}) \times \RR$ does not intersect 
$\{ x=y=0 \}$. Let $\nu : \widetilde{X} \longrightarrow X$ 
be the blow-up of $X= \CC^2$ along the origin 
$\{ (0,0) \} \subset X= \CC^2$ and 
$M= \CC^2_{u,y}$ the affine chart of $\widetilde{X}$ 
such that we have 
\begin{equation}
\nu |_M: M= \CC^2_{u,y} \longrightarrow X= \CC^2 \qquad 
((u,y) \longmapsto (uy,y)). 
\end{equation}
Then for the exceptional divisor $\nu^{-1}( \{ (0,0) \} ) 
\simeq \PP := \PP^1$ we have $\nu^{-1}( \{ (0,0) \} ) \cap 
M= \CC_u \times \{ 0 \} = \{ y=0 \} \subset M= \CC^2_{u,y}$ 
and $(f \circ \nu )(u,y)= \frac{uy}{y}=u$ on $M= \CC^2_{u,y}$. 
Let $M^{\prime} = \CC^2_{x,v}$ be the affine chart of $\widetilde{X}$ 
such that we have 
\begin{equation}
\nu |_{M^{\prime}}: M^{\prime} = \CC^2_{x,v} \longrightarrow X= \CC^2 \qquad 
((x,v) \longmapsto (x,vx)). 
\end{equation}
Then we have $M \cup M^{\prime} = \widetilde{X}$, 
$\nu^{-1}( \{ (0,0) \} ) \cap 
M^{\prime} =  \{ 0 \} \times \CC_v  = \{ x=0 \} \subset 
M^{\prime}= \CC^2_{x,v}$ 
and $(f \circ \nu )(x,v)= \frac{x}{vx}= 
\frac{1}{v}$ on $M^{\prime} = \CC^2_{x,v}$. 
Hence the meromorphic function $f \circ \nu$ 
on $\widetilde{X}$ has no point of indeterminacy 
on the whole $\widetilde{X}$ and its pole is 
contained in $\widetilde{X} \setminus M = 
\{ v=0 \} \subset M^{\prime} = \CC^2_{x,v}$. 
Now we define an enhanced sheaf $\widetilde{F}$ on 
$\widetilde{X}$ by 
\begin{equation}
\widetilde{F} :=  {\rm E}_{M \setminus \{ y=0 \}| 
\widetilde{X}}^{{\rm Re} (f \circ \nu )} 
\quad \in \BEC_+(\CC_{\widetilde{X}^{\an}})
\end{equation}
and set $\widetilde{\nu}:= \nu \times {\rm id}_{\RR}: 
\widetilde{X} \times \RR \longrightarrow X \times \RR$ 
so that there exists an isomorphism 
\begin{equation}
\rmR \widetilde{\nu}_* \widetilde{F} \simeq F. 
\end{equation}
Let 
\begin{equation}
T^*(X \times \RR )
\overset{\varpi}{\longleftarrow}
( \widetilde{X} \times \RR )
\times_{(X \times \RR )} T^*(X \times \RR ) 
\overset{\rho}{\longrightarrow}
T^*( \widetilde{X} \times \RR )
\end{equation}
be the natural morphisms associated to $\widetilde{\nu}$. 
Then by \cite[Proposition 5.4.4]{KS90} we have 
\begin{equation}\label{estim} 
{\rm SS} (F) 
\subset \varpi \rho^{-1} 
{\rm SS} ( \widetilde{F} ). 
\end{equation}
Let 
\begin{equation}
(0,0,t; \xi , \eta , 1)
\qquad \in \{ t^* >0 \} \subset T^*(X \times \RR )
\end{equation}
be a point which corresponds to the covector 
$\xi dx+ \eta dy+dt \in T^*_{(0,0,t)} (X \times \RR )$ 
at the point $(0,0,t) \in \{ P=Q=0 \} \subset 
X \times \RR$. 
Then its pull-back by the map 
\begin{equation}
\widetilde{\nu} |_{M \times \RR}: M \times \RR  
\longrightarrow X \times \RR \qquad 
((u,y,t) \longmapsto (uy,y,t))
\end{equation}
is calculated as 
\begin{equation}
\bigl( \widetilde{\nu} |_{M \times \RR} \bigr)^* 
( \xi dx+ \eta dy+dt ) 
=( \xi u+ \eta )dy+dt. 
\end{equation}
Since ${\rm Re }(f \circ \nu )(u,y)= {\rm Re }u$ on $M= \CC^2_{u,y}$ 
and hence together with Lemma \ref{empty-pole-micro}  
there is no covector of the form 
$( \xi u+ \eta )dy+dt$ in 
$\{ t^*=1 \} \cap {\rm SS} ( \widetilde{F} )$ 
over the exceptional divisor $\nu^{-1}( \{ (0,0) \} )$, 
by \eqref{estim} we obtain the assertion ${\rm SS}^{{\rm E}}(F) \cap \{ x=y=0 \}
= \emptyset$. We can treat also the case 
where $N=2$, $Z 
=X= \CC^2$, $K= \CC_X$ and $P(z_1,z_2)=z_1$, $Q(z_1,z_2)=z_2^m$, 
$f(z_1,z_2)= \frac{z_1}{z_2^m}$ for some 
$m \geq 2$ by repeating blow-ups (see e.g. 
the proof of \cite[Theorem 3.6]{MT13}). 
The general case can be treated similarly by decomposing 
the support of $K$ with respect to the stratification 
$\CS$. This completes the proof. 
\end{proof}

By the natural identification $(T^*X)_{\RR} \simeq 
T^*X_{\RR}$ we regard $T^*X_{\RR}$ as a complex symplectic 
manifold endowed with the canonical 
holomorphic symplectic 2-form 
$\sigma_X$. 

\begin{lemma}\label{iso-analy}
The irregular micro-support 
${\rm SS}_{{\rm irr}}(F)
\subset T^*X_{\RR}$ of 
$F= \pi^{-1}K \otimes {\rm E}_{U|Z}^{{\rm Re}f} \in 
\BEC_+(\CC_{X^{\an}})$ is contained in a complex 
isotropic analytic subset of 
$(T^*X)_{\RR} \simeq T^*X_{\RR}$. 
\end{lemma}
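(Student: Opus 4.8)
The plan is to localize the statement over the three pieces $U=X\setminus Q^{-1}(0)$, $Q^{-1}(0)\setminus I(f)$ and $I(f)$ of $X$. Over $U$ everything is explicit: since $df$ is a closed holomorphic $1$-form on $U$, the fibrewise translation $\theta_f : T^*U\to T^*U$, $(z,\zeta)\mapsto(z,\zeta+df(z))$, is a complex symplectomorphism of $T^*U$, so by \eqref{inclss} the trace of ${\rm SS}_{{\rm irr}}(F)$ on $T^*U_{\RR}$ equals $\theta_f\bigl({\rm SS}(K)\cap T^*U\bigr)$, which is a complex Lagrangian — in particular complex isotropic — analytic subset of $T^*U$, because ${\rm SS}(K)$ is a homogeneous complex Lagrangian analytic subset of $T^*X$. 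Over $Q^{-1}(0)\setminus I(f)$ there is nothing to do, since ${\rm SS}^{{\rm E}}(F)$ is empty there by Lemma \ref{empty-pole-micro}. Hence it suffices to produce a complex isotropic analytic subset of $T^*X_{\RR}$ that contains ${\rm SS}_{{\rm irr}}(F)$ on a neighbourhood of $I(f)$ — a finite union of complex isotropic analytic sets being again one of the same kind.

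For this I would choose a proper modification $\nu : \widetilde X\to X$, a composition of blow-ups with centres over $P^{-1}(0)\cup Q^{-1}(0)$ hence an isomorphism over $X\setminus\bigl(P^{-1}(0)\cup Q^{-1}(0)\bigr)$, such that $\nu^{-1}\bigl(P^{-1}(0)\cup Q^{-1}(0)\bigr)$ is a normal crossing divisor, the ideals $(P\circ\nu)$ and $(Q\circ\nu)$ are locally monomial, and $f\circ\nu$ has no point of indeterminacy (cf. the proof of \cite[Theorem 3.6]{MT13}). The representative $\pi^{-1}K\otimes\CC_{\{(z,t)\,:\,z\in U,\ t+{\rm Re}f(z)\geq0\}}$ of $F$ has vanishing stalks over $Q^{-1}(0)\times\RR$, which contains the non-isomorphism locus of $\widetilde\nu:=\nu\times{\rm id}_{\RR}$, so the projection formula gives an isomorphism $F\simto\rmR\widetilde\nu_\ast\widetilde F$ with $\widetilde F:=\widetilde\nu^{-1}F=\pi^{-1}(\nu^{-1}K)\otimes{\rm E}^{{\rm Re}(f\circ\nu)}_{\widetilde U|\widetilde X}$, $\widetilde U:=\nu^{-1}(U)$. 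By \cite[Proposition 5.4.4]{KS90} this yields ${\rm SS}^{{\rm E}}(F)\subset\varpi\rho^{-1}{\rm SS}^{{\rm E}}(\widetilde F)$, hence ${\rm SS}_{{\rm irr}}(F)\subset\varpi\rho^{-1}{\rm SS}_{{\rm irr}}(\widetilde F)$, where $T^*X_{\RR}\xleftarrow{\varpi}\widetilde X_{\RR}\times_{X_{\RR}}T^*X_{\RR}\xrightarrow{\rho}T^*\widetilde X_{\RR}$ are the maps attached to $\nu$. As $\varpi^\ast\sigma_X=\rho^\ast\sigma_{\widetilde X}$ and $\varpi$ is proper, $\varpi\rho^{-1}$ carries complex isotropic analytic subsets of $T^*\widetilde X_{\RR}$ to complex isotropic analytic subsets of $T^*X_{\RR}$; we are therefore reduced to proving the statement for $\widetilde F$, that is, we may assume that $Q^{-1}(0)$ is a normal crossing divisor and that, in suitable local coordinates, $f=u\,z_1^{c_1}\cdots z_k^{c_k}$ with $u$ a unit, $c_i\in\ZZ$ all of one sign, and $Q^{-1}(0)\subset\{z_1\cdots z_k=0\}$.

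In this monomial situation I would decompose the support of $K$ along a Whitney stratification adapted to $K$ and to the coordinate hyperplanes, reducing the question, exactly as in the proofs of Lemmas \ref{empty-pole-micro} and \ref{empty-micro}, to the case $K=i_!L$ for a local system $L$ on a single stratum $S$ with $S\not\subset Q^{-1}(0)$. For such $K$ a further resolution of $\overline S$ together with the explicit computations of \cite[Section 5]{MT11} — the same ones used to evaluate the meromorphic vanishing cycles at the end of Section \ref{sec:s2} — show that ${\rm SS}^{{\rm E}}(F)$ is empty over the coordinate hyperplanes along which $f$ has a pole, and over the remaining faces is a translate, by the graph of $df$, of an explicit conormal-type isotropic set; in particular ${\rm SS}^{{\rm E}}(F)$ lies in a complex isotropic analytic subset of $T^*X_{\RR}$. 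Patching the charts and applying $\varpi\rho^{-1}$ then produces a complex isotropic analytic subset of $(T^*X)_{\RR}\simeq T^*X_{\RR}$ containing ${\rm SS}_{{\rm irr}}(F)$, which proves the lemma. (With more care this set can be taken to be the Zariski closure of $\theta_f\bigl({\rm SS}(K)\cap T^*U\bigr)$ together with finitely many conormal bundles, which would even give that ${\rm SS}_{{\rm irr}}(F)$ lies in a complex Lagrangian analytic subset.)

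The step I expect to be the main obstacle is the last one: the explicit determination of ${\rm SS}^{{\rm E}}(F)$ over $I(f)$ in the monomial model, where the enhanced micro-support is genuinely more delicate than in Lemmas \ref{empty-pole-micro} and \ref{empty-micro}, and where one must check that it does not leave the expected isotropic set. The formal steps in the reduction — the isomorphism $F\simto\rmR\widetilde\nu_\ast\widetilde F$, the transfer of the micro-support estimate through the quotient defining $\BEC$, and the fact that $\varpi\rho^{-1}$ preserves complex isotropy — are routine but must be carried out with care.
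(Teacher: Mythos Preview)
Your approach is essentially the same as the paper's: resolve $f$ to eliminate indeterminacy, use $F\simto \rmR\widetilde\nu_\ast\widetilde F$ together with the proper direct image estimate \cite[Proposition 5.4.4]{KS90}, and then argue that $\varpi\rho^{-1}$ of a complex Lagrangian analytic set is complex isotropic analytic. The paper differs only in minor bookkeeping: it first splits $F$ via the exact sequence
\[
0\longrightarrow F_{\pi^{-1}(X\setminus P^{-1}(0))}\longrightarrow F\longrightarrow F_{\pi^{-1}(P^{-1}(0))}\longrightarrow 0,
\]
handling the second piece directly (since $f\equiv 0$ on $U\cap P^{-1}(0)$, its irregular micro-support is just ${\rm SS}(K_{U\cap P^{-1}(0)})$) and applying the resolution only to the first. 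For the isotropy of $\varpi\rho^{-1}(\cdot)$ the paper is a bit more careful than your one-line claim: it stratifies the map so it is a stratified fiber bundle (\cite[page 43]{GM88}) and then adapts \cite[Proposition A.54]{K-book} to the non-homogeneous setting.

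The one point where you make your own life harder is the ``main obstacle'' you flag. After your resolution the exponents $c_i$ have a single sign, which is exactly the statement that $I(f\circ\nu)=\emptyset$. At that moment the second assertion of Lemma \ref{empty-pole-micro} (or equivalently \eqref{inclss} together with the first assertion) already gives
\[
{\rm SS}_{{\rm irr}}(\widetilde F)=\bigl({\rm SS}(\nu^{-1}K)\cap T^*\widetilde U_{\RR}\bigr)+\Lambda^{f\circ\nu},
\]
a complex Lagrangian analytic subset of $T^*\widetilde X$. There is no remaining ``over $I(f)$'' case upstairs, so no stratification of $K$ and no appeal to \cite[Section 5]{MT11} is needed; the obstacle you anticipate simply is not there.
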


\begin{proof}
Let us consider the exact sequence 
\begin{equation}
0 \longrightarrow F_{\pi^{-1}(X \setminus P^{-1}(0))} 
\longrightarrow  F
 \longrightarrow  F_{\pi^{-1}(P^{-1}(0))} 
\longrightarrow 0
\end{equation}
of enhanced sheaves on $X$. Since there exists an isomorphism 
\begin{equation}
F_{\pi^{-1}(P^{-1}(0))} 
\simeq 
\pi^{-1} K \otimes 
\CC_{ \{ (z,t) \in U \times \RR \ | \ z \in U \cap P^{-1}(0), 
\ t \geq 0 \} }, 
\end{equation}
we see that ${\rm SS}_{{\rm irr}}
(F_{\pi^{-1}(P^{-1}(0))}) = {\rm SS} 
(K_{U \cap P^{-1}(0)})$ is a Lagrangian 
analytic subset of $(T^*X)_{\RR} \simeq T^*X_{\RR}$. 
Let $\nu : \widetilde{X} \longrightarrow X$ be a 
proper morphism of complex manifolds such that 
the restriction $\widetilde{X} \setminus 
\nu^{-1} ( P^{-1}(0) \cup Q^{-1}(0) ) 
 \longrightarrow X \setminus  
 ( P^{-1}(0) \cup Q^{-1}(0) )$ of $\nu$ 
is an isomorphism, $\nu^{-1} ( P^{-1}(0) 
\cup Q^{-1}(0) ) \subset \widetilde{X}$ is 
a normal crossing divisor, and the meromorphic function 
$f \circ \nu := ( P \circ \nu ) / 
( Q \circ \nu )$ on $\widetilde{X}$ has 
no point of indeterminacy on the whole $\widetilde{X}$ 
(see e.g. the proof of \cite[Theorem 3.6]{MT13}). 
We thus obtain a function $f \circ \nu : 
\nu^{-1}(U)= \widetilde{X} \setminus 
(Q \circ \nu )^{-1}(0) \longrightarrow \CC$. 
Moreover we define a complex Lagrangian submanifold 
$\Lambda^{f \circ \nu} \subset T^* \widetilde{X}$ by 
\begin{equation}
\Lambda^{f \circ \nu} := \{ (z,d(f \circ \nu)(z)) \ | \ z \in 
\nu^{-1}(U) \} 
\quad \subset T^* \widetilde{X}
\end{equation}
and set $\widetilde{\nu}:= \nu \times {\rm id}_{\RR}: 
\widetilde{X} \times \RR \longrightarrow X \times \RR$. 
Then for the enhanced sheaf $\widetilde{\nu}^{-1} 
F_{\pi^{-1}(X \setminus P^{-1}(0))}$ on 
$\widetilde{X}$ we can easily show that 
\begin{equation}
{\rm SS}_{{\rm irr}}
\bigl( \widetilde{\nu}^{-1} 
F_{\pi^{-1}(X \setminus P^{-1}(0))} 
\bigr)
= {\rm SS} \bigl( \nu^{-1} K_{U \setminus P^{-1}(0)} \bigr) 
+ \Lambda^{f \circ \nu} 
\end{equation}
in $T^* \widetilde{X}$. This implies that ${\rm SS}_{{\rm irr}}
\bigl( \widetilde{\nu}^{-1} 
F_{\pi^{-1}(X \setminus P^{-1}(0))} 
\bigr)$ is a Lagrangian 
analytic subset of $T^* \widetilde{X}$. Let 
\begin{equation}
T^*X
\overset{\varpi}{\longleftarrow}
\widetilde{X} \times_X T^*X 
\overset{\rho}{\longrightarrow}
T^* \widetilde{X}
\end{equation}
be the natural morphisms associated to 
$\nu : \widetilde{X} \longrightarrow X$. 
Since $\nu$ is proper and there exists 
an isomorphism 
\begin{equation}
F_{\pi^{-1}(X \setminus P^{-1}(0))} \simto 
\rmR \widetilde{\nu}_* \widetilde{\nu}^{-1} 
(F_{\pi^{-1}(X \setminus P^{-1}(0))}), 
\end{equation}
by \cite[Proposition 5.4.4]{KS90} we have 
\begin{equation}
{\rm SS}_{{\rm irr}}
(F_{\pi^{-1}(X \setminus P^{-1}(0))}) 
\subset \varpi \rho^{-1} 
{\rm SS}_{{\rm irr}}
\bigl( \widetilde{\nu}^{-1} 
F_{\pi^{-1}(X \setminus P^{-1}(0))} 
\bigr). 
\end{equation}
It is clear that the right hand side is an analytic 
subset of $T^*X$. Moreover, by 
\cite[page 43]{GM88} there exist some 
stratifications of it and $\rho^{-1} 
{\rm SS}_{{\rm irr}}
\bigl( \widetilde{\nu}^{-1} 
F_{\pi^{-1}(X \setminus P^{-1}(0))} 
\bigr)$ such that the morphism 
\begin{equation}
\rho^{-1} 
{\rm SS}_{{\rm irr}}
\bigl( \widetilde{\nu}^{-1} 
F_{\pi^{-1}(X \setminus P^{-1}(0))} 
\bigr)
\longrightarrow 
 \varpi \rho^{-1} 
{\rm SS}_{{\rm irr}}
\bigl( \widetilde{\nu}^{-1} 
F_{\pi^{-1}(X \setminus P^{-1}(0))} 
\bigr)
\end{equation}
induced by $ \varpi$ is a 
stratified fiber bundle over 
each stratum. Then we can slightly modify the proof 
of \cite[Proposition A.54]{K-book} to show that 
the symplectic 2-form 
$\sigma_X$ vanishes on 
$ \varpi \rho^{-1} 
{\rm SS}_{{\rm irr}}
\bigl( \widetilde{\nu}^{-1} 
F_{\pi^{-1}(X \setminus P^{-1}(0))} 
\bigr)$. We thus 
obtain the assertion. 
\end{proof}

In the special case where $X$ is 
$\CC^N_z$, $Y$ is its 
dual vector space $\CC^N_w$ and 
$T^*X \simeq X \times Y$, we shall 
introduce a new subset of $(T^*X)_{\RR}
\simeq T^*X_{\RR}$ as follows. 
For a point $w_0 \in Y= \CC^N$ we define a 
rational function $f^{w_0}$ on $X= \CC^N$ by 
\begin{equation}
f^{w_0} : U=X \setminus Q^{-1}(0) \longrightarrow \CC \qquad 
(z \longmapsto   \langle  z, w_0 \rangle -f(z)). 
\end{equation}

\begin{definition}\label{defi-13}
In the case where $X$ is 
$\CC^N_z$, $Y$ is its 
dual vector space $\CC^N_w$ and 
$T^*X \simeq X \times Y$, by using $K$ and $f$ in $F= \pi^{-1}K 
\otimes {\rm E}_{U^{\an}|X^{\an}}^{{\rm Re}f}$ 
we define a subset 
${\rm SS}_{{\rm eva}}(F)
\subset (T^*X)_{\RR}$ of $(T^*X)_{\RR}
\simeq T^*X_{\RR}$ by 
\begin{equation}
{\rm SS}_{{\rm eva}}(F):= \{ (z_0,w_0) \in (T^*X)_{\RR} 
\ | \ \phi^{\merc}_{f^{w_0}-c}(K)_{z_0} \not= 0 
\ \text{for some} \ c \in \CC \}. 
\end{equation}
\end{definition}
Unfortunately, we do not know if this set 
${\rm SS}_{{\rm eva}}(F)
\subset T^*X_{\RR}$ is contained in the 
irregular micro-support ${\rm SS}_{{\rm irr}}(F) 
$ of $F$ or not. With Lemmas \ref{empty-pole-micro}, 
\ref{empty-micro} and \ref{iso-analy} at hands, 
now we have obtained a rough picture of the irregular 
micro-support ${\rm SS}_{{\rm irr}}(F) \subset T^*X_{\RR}$. 
However, for the moment it is not clear for us if it 
is a complex Lagrangian analytic subset 
of $(T^*X)_{\RR} \simeq T^*X_{\RR}$ or not. 
For these reasons, we define a (not necessarily 
homogeneous) complex Lagrangian analytic 
subset of $(T^*X)_{\RR} \simeq T^*X_{\RR}$ 
in a different way 
as follows and use it instead of ${\rm SS}_{{\rm irr}}(F)$. 
We return to the case where $X$ is a smooth 
algebraic variety over $\CC$. 
First, as a complex analogue of the 
$\RR$-constructible sheaf 
$F= \pi^{-1}K 
\otimes {\rm E}_{U^{\an}|X^{\an}}^{{\rm Re}f}$ 
on $X_{\RR} \times \RR$, by applying 
the proof of Theorem \ref{the-1} (i) to 
the (not necessarily) closed embedding 
\begin{equation}
i_{-f}: U= X \setminus Q^{-1}(0) \hookrightarrow 
X \times \CC, \qquad (z \longmapsto (z,-f(z)))
\end{equation}
associated to the function 
$-f:U \longrightarrow \CC$, we obtain a complex 
constructible sheaf 
\begin{equation}
\SF := (i_{-f})_! (K|_U) \quad 
\in \Dbc (X \times \CC )
\end{equation}
on $X \times \CC$. 
Then for the morphism ${\rm id}_X \times {\rm Re}: X \times 
\CC \longrightarrow X \times \RR$ induced by 
the one ${\rm Re}: \CC \longrightarrow \RR$ 
($\tau \longmapsto {\rm Re} \tau$) 
we have an isomorphism 
\begin{equation}
F \simeq \CC_{\{ t \geq 0 \}} 
\Potimes 
\rmR ( {\rm id}_X \times {\rm Re} )_! \SF. 
\end{equation}
Note that $\SF [N]$ is a perverse 
sheaf and its micro-support 
${\rm SS} ( \SF [N])= {\rm SS} ( \SF )$ 
is a homogeneous complex Lagrangian 
analytic subset of $T^*(X \times \CC )$. 
Then as in the definitions of ${\rm SS}^{{\rm E}}(F)
\subset (T^*X_{\RR}) \times \RR$ 
and ${\rm SS}_{{\rm irr}}(F) \subset T^*X_{\RR}$, 
by forgetting the homogeneity of ${\rm SS} ( \SF )$ 
we define two subsets: 
\begin{equation}
{\rm SS}^{{\rm E}, \CC}( \SF )
\subset T^*X \times \CC, \qquad 
{\rm SS}_{{\rm irr}}^{\CC}( \SF ) \subset T^*X 
\end{equation}
in the following way. First, by the closed embedding
\begin{equation}
\iota : (T^*X) \times \CC \hookrightarrow 
T^*(X \times \CC ) \qquad 
(((z,w), \tau ) \longmapsto 
(z, \tau ; w, 1)) 
\end{equation}
we set 
\begin{equation}
{\rm SS}^{{\rm E}, \CC}( \SF ):= 
\iota^{-1} {\rm SS}( \SF ) \ \subset \ 
(T^*X) \times \CC. 
\end{equation}
It is clear that ${\rm SS}^{{\rm E}, \CC}( \SF )$ 
is a complex analytic subset of $(T^*X) \times \CC$. 
Let us call it the enhanced micro-support of 
$\SF \in \Dbc (X \times \CC )$. Then we 
define ${\rm SS}_{{\rm irr}}^{\CC}( \SF ) \subset T^*X$ 
to be the closure of the image of 
${\rm SS}^{{\rm E}, \CC}( \SF )$ by the 
projection $(T^*X) \times \CC \longrightarrow T^*X$. 
We call it the irregular micro-support of $\SF$. 
As the restriction of $f$ to $Z$ is uniquely 
determined by $\SM$ modulo constant functions 
on $Z$, we see that 
${\rm SS}_{{\rm irr}}^{\CC}( \SF )$ does 
not depend on the expression $\SN \Dotimes \SE^f_{U|X}$ 
of $\SM$. It is easy to see that the results  
analogous to Lemmas \ref{empty-pole-micro} and 
\ref{empty-micro} 
holds true also for ${\rm SS}_{{\rm irr}}^{\CC}( \SF ) \subset T^*X$. 

\begin{lemma}\label{iso-analy-new}
The irregular micro-support 
${\rm SS}_{{\rm irr}}^{\CC}( \SF ) \subset T^*X$ of 
$\SF \in \Dbc (X \times \CC )$ is a complex 
Lagrangian analytic subset of 
$(T^*X)_{\RR} \simeq T^*X_{\RR}$. If we assume moreover 
that $I(f)=P^{-1}(0) \cap Q^{-1}(0) = \emptyset$, 
then we have 
\begin{equation}
{\rm SS}_{{\rm irr}}^{\CC} ( \SF ) 
= \bigl( {\rm SS}(K) \cap T^*U_{\RR} \bigr) + \Lambda^f. 
\end{equation}
\end{lemma}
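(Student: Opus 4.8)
The plan is to run the argument of Lemma \ref{iso-analy}, but applied to the honest complex constructible sheaf $\SF \in \Dbc(X \times \CC )$: since $\SF$ is a genuine sheaf, ${\rm SS}(\SF)$ is a genuine homogeneous complex Lagrangian analytic subset of $T^*(X\times\CC)$, and the extra rigidity coming from this is what upgrades the conclusion from ``isotropic'' (as in Lemma \ref{iso-analy}) to ``Lagrangian''. I use coordinates $(z,\tau;\zeta,\tau^*)$ on $T^*(X\times\CC)$, so that $\iota$ identifies $(T^*X)\times\CC$ with the section $\{\tau^*=1\}$. The key computation is over $U=X\setminus Q^{-1}(0)$: there the biregular automorphism $\Phi:U\times\CC\simto U\times\CC$, $(z,\tau)\mapsto (z,\tau-f(z))$, satisfies $\Phi\circ i_0=i_{-f}$ for the zero-section embedding $i_0:U\hookrightarrow U\times\CC$, hence $\SF|_{U\times\CC}\simeq \rmR\Phi_\ast\bigl(i_{0!}(K|_U)\bigr)$, and the induced symplectomorphism of $T^*(U\times\CC)$ together with ${\rm SS}\bigl(i_{0!}(K|_U)\bigr)=\{(z,0;\zeta,\tau^*)\ |\ (z,\zeta)\in{\rm SS}(K|_U),\ \tau^*\in\CC\}$ yields, after restricting to $\{\tau^*=1\}$ and projecting to $T^*X$, the equality ${\rm SS}_{{\rm irr}}^{\CC}(\SF)\cap T^*U_{\RR}=\bigl({\rm SS}(K)\cap T^*U_{\RR}\bigr)+\Lambda^f$. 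Since over $Q^{-1}(0)\setminus I(f)$ the argument of the analogue of Lemma \ref{empty-pole-micro} shows that the support of $\SF$, and hence the base locus of ${\rm SS}^{{\rm E},\CC}(\SF)$, escapes to $\{\tau=\infty\}$, no covector of ${\rm SS}^{{\rm E},\CC}(\SF)$ lies over $Q^{-1}(0)\setminus I(f)$; in particular, when $I(f)=\emptyset$ the right-hand side above is already closed in $T^*X$ and equals ${\rm SS}_{{\rm irr}}^{\CC}(\SF)$, proving the second assertion, and, being the image of the Lagrangian analytic subset ${\rm SS}(K)\cap T^*U$ of $T^*U$ under the holomorphic symplectomorphism $(z,\zeta)\mapsto(z,\zeta+df(z))$, it is a complex Lagrangian analytic subset of $(T^*X)_{\RR}\simeq T^*X_{\RR}$.

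For the general case I would, exactly as in Lemma \ref{iso-analy}, first split off $\SF_{P^{-1}(0)\times\CC}$, on which $f$ vanishes, so that its contribution to ${\rm SS}_{{\rm irr}}^{\CC}$ is the homogeneous Lagrangian analytic subset ${\rm SS}(K_{U\cap P^{-1}(0)})$; and then take a proper morphism $\nu:\widetilde{X}\longrightarrow X$, an isomorphism over $X\setminus(P^{-1}(0)\cup Q^{-1}(0))$, with $\nu^{-1}(P^{-1}(0)\cup Q^{-1}(0))$ a normal crossing divisor and $f\circ\nu$ without point of indeterminacy (see the proof of \cite[Theorem 3.6]{MT13}). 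Setting $\widetilde{\nu}:=\nu\times{\rm id}_{\CC}$ and $\widetilde{\SF}:=(i_{-f\circ\nu})_!\bigl(\nu^{-1}K|_{\widetilde{X}\setminus\nu^{-1}(P^{-1}(0)\cup Q^{-1}(0))}\bigr)$, the identity $\widetilde{\nu}\circ i_{-f\circ\nu}=i_{-f}\circ\nu$ over the isomorphism locus gives $\SF_{(X\setminus P^{-1}(0))\times\CC}\simeq\rmR\widetilde{\nu}_\ast\widetilde{\SF}$, so by \cite[Proposition 5.4.4]{KS90} and the natural morphisms $T^*X\overset{\varpi}{\longleftarrow}\widetilde{X}\times_X T^*X\overset{\rho}{\longrightarrow}T^*\widetilde{X}$ associated to $\nu$ we obtain ${\rm SS}_{{\rm irr}}^{\CC}\bigl(\SF_{(X\setminus P^{-1}(0))\times\CC}\bigr)\subset\varpi\rho^{-1}\bigl({\rm SS}_{{\rm irr}}^{\CC}(\widetilde{\SF})\bigr)$. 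On $\widetilde{X}$ the exponential factor is indeterminacy-free, so the computation of the first paragraph applies and ${\rm SS}_{{\rm irr}}^{\CC}(\widetilde{\SF})$ is a complex Lagrangian analytic subset of $T^*\widetilde{X}$; since $\varpi$ is proper, its image is again a complex analytic subset of $T^*X$, whence ${\rm SS}_{{\rm irr}}^{\CC}(\SF)$ is complex analytic of dimension $\leq N$.

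It remains to see that ${\rm SS}_{{\rm irr}}^{\CC}(\SF)$ is genuinely Lagrangian. For isotropy, on the section $\{\tau^*=1\}$ the restriction of the holomorphic symplectic form $\sigma_{X\times\CC}$ equals the pullback of $\sigma_X$, so ${\rm SS}^{{\rm E},\CC}(\SF)=\iota^{-1}{\rm SS}(\SF)$ is isotropic for ${\rm pr}^\ast\sigma_X$; stratifying the projection ${\rm SS}^{{\rm E},\CC}(\SF)\longrightarrow{\rm SS}_{{\rm irr}}^{\CC}(\SF)$ as a stratified fiber bundle over each stratum (\cite[page 43]{GM88}) and running the argument of \cite[Proposition A.54]{K-book} as in the proof of Lemma \ref{iso-analy} forces $\sigma_X$ to vanish on ${\rm SS}_{{\rm irr}}^{\CC}(\SF)$. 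Finally, deleting from ${\rm SS}(\SF)$ the irreducible components contained in $\{\tau^*=0\}$, which do not meet the image of $\iota$, the remaining components meet $\{\tau^*=1\}$ in pure dimension $N$; and since the support of $\SF$ is the closure of a graph, the fibers of ${\rm SS}^{{\rm E},\CC}(\SF)\to T^*X$ in the $\tau$-direction are generically finite (the finitely many values $a_1,\ldots,a_{n_i}$ of the introduction), so every component of ${\rm SS}_{{\rm irr}}^{\CC}(\SF)$ has dimension $N$ and ${\rm SS}_{{\rm irr}}^{\CC}(\SF)$ is a complex Lagrangian analytic subset. As a by-product, attaching to each component the generic rank of the relevant meromorphic vanishing cycle (resp. the multiplicity of $K$) produces the Lagrangian cycle ${\rm CC}_{{\rm irr}}(\SM)$.

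The hard part will be the analysis over $I(f)=P^{-1}(0)\cap Q^{-1}(0)$: one must verify that, in this non-homogeneous setting, the $\tau$- and $\zeta$-fibers of ${\rm SS}^{{\rm E},\CC}(\SF)$ over a point of $I(f)$ are of the expected size (finite, resp. of dimension at most two), that passing to the closure in $T^*X$ creates no component of dimension $>N$ nor any non-isotropic component, and that the argument of \cite[Proposition A.54]{K-book} still applies there. This is precisely where the ``extra rank jump'' phenomenon of the introduction enters, and where the lack of $\CC^\ast$-homogeneity of ${\rm SS}_{{\rm irr}}^{\CC}(\SF)$ makes the situation genuinely more delicate than in the regular case; the first paragraph's use of $\Phi$, valid only over $U$, precisely fails to cover it.
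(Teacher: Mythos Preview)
Your approach differs from the paper's and has a genuine gap in the dimension step.

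The paper does not use a resolution $\nu:\widetilde{X}\to X$ at all. It compactifies in the $\tau$-direction, passing to $j_!\SF$ on $X\times\PP$, and invokes \cite[page 43]{GM88} to get Whitney stratifications $\CS$ of $X\times\PP$ and $\CS_0$ of $X$ with ${\rm SS}(j_!\SF)\subset\bigsqcup_{S\in\CS}T^*_S(X\times\PP)$ such that the projection $X\times\PP\to X$ is a stratified fiber bundle. The key observation is that only \emph{horizontal} strata---those $S\in\CS$ with $S\subset X\times\CC$ and $\dim S=\dim S_0$ for the image stratum $S_0\in\CS_0$---contribute to ${\rm SS}^{{\rm E},\CC}(\SF)$: any stratum with $\dim S=\dim S_0+1$ is fibered over $S_0$ with one-dimensional fibers in the $\tau$-direction, so its conormal bundle sits in $\{\tau^*=0\}$ and misses the image of $\iota$. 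Each horizontal stratum is locally a graph $\{(z,g(z)):z\in S_0\}$, and intersecting its conormal with $\{\tau^*=1\}$ then projecting to $T^*X$ gives the shifted conormal $T^*_{S_0}X-\Lambda^g$, which is complex Lagrangian analytic of dimension $N$. Since there are only finitely many horizontal strata over each $S_0$, the Lagrangian property follows by what the paper calls ``simple calculations''. This also transparently produces the cycle ${\rm CC}_{{\rm irr}}(\SM)$.

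The gap in your argument is the claim that the $\tau$-fibers of ${\rm SS}^{{\rm E},\CC}(\SF)\to T^*X$ are generically finite ``since the support of $\SF$ is the closure of a graph (the finitely many values $a_1,\ldots,a_{n_i}$ of the introduction)''. Over $I(f)$ the closure of the graph of $-f$ can contain entire vertical lines $\{z_0\}\times\CC$, so the support heuristic does not control the $\tau$-fibers of the micro-support there; and the appeal to the constants $a_1,\ldots,a_{n_i}$ is circular, since those are established later in Lemma \ref{iso-anaconti} precisely by using the present lemma and its proof. Your final paragraph essentially concedes that this is the missing step. One can repair it by arguing that a component of ${\rm SS}(\SF)$ meeting $\{\tau^*\neq 0\}$ is the closure of $T^*_S(X\times\CC)$ for some stratum $S$, and that $\tau$-translation invariance of such a conormal would force $S\supset S_0\times\CC$ and hence $\tau^*=0$---but that is exactly the horizontal-stratum reasoning of the paper, reached by a longer path. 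The resolution and the $\varpi\rho^{-1}$ bound give only a containment, not the analyticity or the dimension of ${\rm SS}_{{\rm irr}}^{\CC}(\SF)$ itself, and are unnecessary here.
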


\begin{proof}
By the inclusion map $j: X \times \CC \hookrightarrow 
X \times \PP$ we obtain an algebraic constructible 
sheaf 
\begin{equation}
j_! \SF = j_! (i_{-f})_! (K|_U) \quad 
\in \Dbc (X \times \PP )
\end{equation}
on $X \times \PP$. Then by the theorem 
in \cite[page 43]{GM88} there exist Whitney 
stratifications $\CS$ and $\CS_0$ of 
$X \times \PP$ and $X$ respectively such that 
\begin{equation}
{\rm SS }( j_! \SF ) \subset \bigsqcup_{S \in \CS} 
T^*_S(X \times \PP ), 
\end{equation}
$X \times \CC$ is a union of some strata in $\CS$ 
and the projection $X \times \PP 
\longrightarrow X$ is a stratified fiber 
bundle as in its assertion. 
Note that for each stratum $S_0 \in \CS_0$ in $\CS_0$ 
there exist only finitely many strata 
$S \in \CS$ in $\CS$ projecting to it such 
that $S \subset X \times \CC$, 
${\rm dim }S= {\rm dim }S_0$ and 
the enhanced micro-support ${\rm SS}^{{\rm E}, \CC}( \SF )
\subset (T^*X) \times \CC$ of $\SF$ is 
determined only by the conormal bundles 
$T^*_S(X \times \PP )$ of such ``horizontal"  
strata $S \in \CS$ such that $S 
\subset X \times \CC$. Then we obtain the first assertion 
by simple calculations. We obtain also the second one 
as in the proof of Lemma \ref{empty-pole-micro}.  
\end{proof}

By the proof of Lemma \ref{iso-analy-new}, 
considering also the multiplicities of $\SF$ 
we obtain a Lagrangian cycle in 
$T^*X$ supported by 
the irregular micro-support 
${\rm SS}_{{\rm irr}}^{\CC}( \SF ) \subset T^*X$ 
of $\SF$. As it depends only on $\SM$, 
we call it the irregular characteristic 
cycle of $\SM$ and denote it by ${\rm CC}_{{\rm irr}}( \SM )$. 
Moreover it satisfies also the following 
functorial property. 
Let $\phi : X \longrightarrow Y$ be 
a proper morphism of smooth algebraic varieties and 
$\SM^{\prime} := 
{\bfD} \phi_\ast \SM \in \BDC_{\rm hol}(\SD_Y)$ 
the direct image of the exponentially 
twisted holonomic $\SD_X$-module $\SM$ by it. 
Then by Theorem \ref{thm-4} (iii) for the 
enhanced sheaf $F^{\prime} := \bfE \phi_* F[d_X-d_Y] 
\in \BEC_+(\CC_{Y^{\an}})$ on $Y^{\an}$ 
we have an isomorphism 
\begin{equation}
Sol_{Y}^\rmE( \SM^{\prime} ) \simeq 
\CC^\rmE_{Y^{\an}} \Potimes F^{\prime}. 
\end{equation}
Similarly to the case of $\SM$, 
also for $\SM^{\prime} = {\bfD} \phi_\ast \SM$ 
we can define a Lagrangian cycle in $T^*Y$ 
by using the complex constructible sheaf 
$\SF^{\prime} := 
\rmR ( \phi \times {\rm id}_{\CC})_* \SF [d_X-d_Y] \in 
\Dbc (Y \times \CC )$. Denote it by 
${\rm CC}_{{\rm irr}}( \SM^{\prime} )$ and let 
\begin{equation}
T^*Y
\overset{\varpi}{\longleftarrow}
X \times_Y T^*Y 
\overset{\rho}{\longrightarrow}
T^* X
\end{equation}
be the natural morphisms associated to 
$\phi : X \longrightarrow Y$. Then there 
exists an equality 
\begin{equation}
{\rm CC}_{{\rm irr}}( \SM^{\prime} )= 
\varpi_* \rho^* {\rm CC}_{{\rm irr}}( \SM )
\end{equation}
of Lagrangian cycles in $T^*Y$ (see \cite[Proposition 9.4.2]{KS90}). 
Note also that by the commutative diagram 
\begin{equation}
\begin{CD}
X \times \CC  
@>{{\rm id}_X \times {\rm Re}}>> 
X \times \RR 
\\
@V{\phi \times {\rm id}_{\CC}}VV   
@VV{\phi \times {\rm id}_{\RR}}V
\\
Y \times \CC  
 @>{{\rm id}_Y \times {\rm Re}}>>  
Y \times \RR  
\end{CD}
\end{equation}
we obtain an isomorphism 
\begin{equation}
F^{\prime}  \simeq \CC_{\{ t \geq 0 \}} 
\Potimes 
\rmR ( {\rm id}_Y \times {\rm Re} )_! 
\SF^{\prime}. 
\end{equation}
Also for a morphism $\psi : Y \longrightarrow X$ 
of smooth algebraic varieties and the inverse image 
$\SM^{\prime} := 
{\bfD} \psi^\ast \SM \in \BDC_{\rm hol}(\SD_Y)$ 
of $\SM$ by it, 
we can define a Lagrangian cycle 
${\rm CC}_{{\rm irr}}( \SM^{\prime} )$ in $T^*Y$ 
and obtain a formula which expresses it 
in terms of ${\rm CC}_{{\rm irr}}( \SM )$ 
under some non-characteristic condition. 
Note that in this case the inverse image 
$\SM^{\prime} = {\bfD} \psi^\ast \SM$ is 
also of exponentially twisted type. 

\begin{proposition}\label{lem-ess-phi-new} 
In the case where $X$ is 
$\CC^N_z$, $Y$ is its 
dual vector space $\CC^N_w$ and 
$T^*X \simeq X \times Y$, 
for a point $(z_0,w_0)$ of $T^*X \simeq X \times Y$ 
assume that there exists a complex number $c \in \CC$ such that 
\begin{equation}
\phi^{\merc}_{f^{w_0}-c}(K)_{z_0} \not= 0
\end{equation}
and set $\tau_0:= c-
\langle  z_0, w_0 \rangle \in \CC$. 
Then we have $(z_0,w_0, \tau_0) \in 
{\rm SS}^{{\rm E}, \CC}( \SF )$, equivalently 
$(z_0, \tau_0, w_0, 1) \in 
{\rm SS}( \SF )$. 
\end{proposition}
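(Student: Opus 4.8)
The plan is to deduce Proposition \ref{lem-ess-phi-new} from Proposition \ref{lem-ess-phi} by exploiting the precise relation between the $\RR$-constructible enhanced sheaf $F = \pi^{-1}K \otimes {\rm E}_{U^{\an}|X^{\an}}^{{\rm Re}f}$ and the complex constructible sheaf $\SF = (i_{-f})_!(K|_U) \in \Dbc(X \times \CC)$, namely the isomorphism
\begin{equation}
F \simeq \CC_{\{ t \geq 0 \}} \Potimes \rmR ( {\rm id}_X \times {\rm Re} )_! \SF
\end{equation}
recorded just before Lemma \ref{iso-analy-new}. The key point is that the statement to be proved, $(z_0, \tau_0, w_0, 1) \in {\rm SS}(\SF)$ with $\tau_0 = c - \langle z_0, w_0 \rangle$, is a \emph{complex} micro-local statement, whereas Proposition \ref{lem-ess-phi} gives us the \emph{real} micro-local statement $(z_0, w_0, t_0) \in {\rm SS}^{{\rm E}}(F)$ with $t_0 = {\rm Re}\, c - {\rm Re}\langle z_0, w_0 \rangle = {\rm Re}\, \tau_0$. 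So the heart of the matter is a descent of micro-support information along the real-part map ${\rm id}_X \times {\rm Re} : X \times \CC \to X \times \RR$. I would first translate $(z_0, w_0, t_0) \in {\rm SS}^{{\rm E}}(F)$ back into a statement about ${\rm SS}(\widetilde{F})$ for a representative $\widetilde{F} \in \BDC(\CC_{X \times \RR \times \RR})$ via the inclusion $\iota_{\RR}$ and the conicness trick, i.e. $(z_0, t_0; w_0, 1, 1) \in {\rm SS}(\widetilde{F}) \subset T^*(X_{\RR} \times \RR_t)$ up to the normalization in $t^*$.

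The cleanest route, and the one I would actually carry out, is to re-run the proof of Proposition \ref{lem-ess-phi} at the \emph{holomorphic} level rather than translating its conclusion. That proof computes a stalk of $\rmR\Gamma_{H_+}(F)$ at $(z_0, t_0)$, where $H_+$ is the closed half-space in $X_{\RR} \times \RR$ with inner conormal $(w_0, 1)$, reduces it to $\rmR\Gamma_{H_+}(G)$ for $G = \pi^{-1}K \otimes \CC_{\{t = -{\rm Re}f(z)\}}$ by killing the piece $F^{\circ}$ whose micro-support avoids $\{t^* > 0\}$, then applies the microlocal Bertini–Sard theorem \cite[Proposition 8.3.12]{KS90} and the non-proper pushforward to the graph embedding $\iota : U \hookrightarrow X_{\RR} \times \RR$, finally identifying the answer with a sum of stalks of $\phi^{\merc}_{f^{w_0}-c}(K)_{z_0}$. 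I would mimic this verbatim replacing $X_{\RR} \times \RR_t$ by $X \times \CC_{\tau}$, replacing the real half-space $H_+$ by the complex hyperplane/half-space data appropriate to the covector $w_0\,dz + d\tau$ at the point $(z_0, \tau_0)$, and replacing the graph embedding of $-{\rm Re}f$ by the graph embedding $i_{-f} : U \hookrightarrow X \times \CC$ of $-f$. Since $\SF = (i_{-f})_!(K|_U)$ is \emph{literally} $i_{-f\,!}(K|_U)$, the pushforward step becomes tautological: the pullback of $\SF$ along $i_{-f}$ is $K|_U$, so the relevant stalk computation reduces, by Proposition \ref{PVC} applied to the rational function $f^{w_0} = \langle \cdot, w_0 \rangle - f$ and a stratification adapted to $K$, to $\bigoplus_{c} H^j \phi^{\merc}_{f^{w_0}-c}(K)_{z_0}$, exactly as in the real case. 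The hypothesis $\phi^{\merc}_{f^{w_0}-c}(K)_{z_0} \neq 0$ then forces this stalk to be nonzero, which by the microlocal cut-off criterion \cite[Proposition 5.3.1]{KS90} (or its variant for $\rmR\Gamma$ of a half-space, \cite[Proposition 5.3.8]{KS90}) gives $(z_0, \tau_0; w_0, 1) \in {\rm SS}(\SF)$, hence $(z_0, w_0, \tau_0) \in {\rm SS}^{{\rm E}, \CC}(\SF)$ by definition of the embedding $\iota$.

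The two equivalent forms in the statement are then immediate from the definition ${\rm SS}^{{\rm E}, \CC}(\SF) = \iota^{-1}{\rm SS}(\SF)$ with $\iota(((z,w),\tau)) = (z,\tau; w,1)$, so no extra work is needed there. The main obstacle I anticipate is the analogue, in the holomorphic setting, of the vanishing ${\rm SS}(F^{\circ}) \cap \{t^* > 0\} = \emptyset$ that was used to pass from $F$ to $G$: in the complex picture one must show that the part of $\SF$ supported away from $\{\tau = -f(z)\}$ — more precisely, the pieces coming from $U \cap P^{-1}(0)$ and from the indeterminacy locus, handled via a resolution $\nu : \widetilde{X} \to X$ making $f \circ \nu$ a morphism to $\PP$ and invoking \cite[Proposition 5.4.4]{KS90} — does not contribute the covector $w_0\,dz + d\tau$ at $(z_0,\tau_0)$. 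This is the holomorphic counterpart of Lemmas \ref{empty-pole-micro} and \ref{empty-micro}, and since the author already remarks after Lemma \ref{iso-analy-new} that "the results analogous to Lemmas \ref{empty-pole-micro} and \ref{empty-micro} hold true also for ${\rm SS}_{{\rm irr}}^{\CC}(\SF)$", the needed input is available; the only care required is that here we need the statement micro-locally at a single covector rather than the global emptiness, which is handled by the same resolution-and-normal-crossings argument restricted to a neighborhood of the relevant point. Once that vanishing is in place, the Bertini–Sard step and the identification with meromorphic vanishing cycles via Proposition \ref{PVC} go through word for word as in the proof of Proposition \ref{lem-ess-phi}.
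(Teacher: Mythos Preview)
Your approach of re-running the proof of Proposition \ref{lem-ess-phi} in the holomorphic setting would work in principle, but it is considerably more elaborate than necessary, and the paper takes a much shorter and cleaner route that avoids all the Morse-theoretic machinery, the resolution $\nu$, the analogues of Lemmas \ref{empty-pole-micro}--\ref{empty-micro}, and Proposition \ref{PVC}.

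The paper's argument is three lines. First, by the very definition of $\phi^{\merc}$, the hypothesis $\phi^{\merc}_{f^{w_0}-c}(K)_{z_0}\neq 0$ is literally the statement $\phi_{\tau-c}\bigl((i_{f^{w_0}})_!(K|_U)\bigr)_{(z_0,c)}\neq 0$, and \cite[Proposition 8.6.3]{KS90} immediately gives $(z_0,c;0,1)\in{\rm SS}\bigl((i_{f^{w_0}})_!(K|_U)\bigr)$. Second, one observes that the automorphism $T_{w_0}:X\times\CC\to X\times\CC$, $(z,\tau)\mapsto(z,\tau+\langle z,w_0\rangle)$, satisfies $T_{w_0}\circ i_{-f}=i_{f^{w_0}}$, so $(T_{w_0})_*\SF\simeq (i_{f^{w_0}})_!(K|_U)$. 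Third, transporting the micro-support along this affine automorphism sends $(z_0,c;0,1)$ to $(z_0,\tau_0;w_0,1)\in{\rm SS}(\SF)$.

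What the paper's approach buys is that it exploits the fact that the complex constructible sheaf $\SF$ is already the object on which the relevant vanishing cycle functor acts (after a shear), so no analogue of the passage $F\to G$ and no cut-off argument is needed. Your approach recovers the result through a genuine stalk computation, which has the merit of being parallel to the real case and perhaps conceptually uniform, but at the cost of re-proving what is already encoded in the single citation of \cite[Proposition 8.6.3]{KS90}. You missed the key simplifying observation that $\SF$ and $(i_{f^{w_0}})_!(K|_U)$ differ only by an automorphism of $X\times\CC$.
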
 

\begin{proof}
Let us consider 
the (not necessarily) closed embedding 
\begin{equation}
i_{f^{w_0}}: U= X \setminus Q^{-1}(0) \hookrightarrow 
X \times \CC, \qquad (z \longmapsto (z,f^{w_0}(z)))
\end{equation}
associated to the rational function 
$f^{w_0} :U \longrightarrow \CC$. Then we obtain 
a non-vanishing 
\begin{equation}
\phi_{\tau - c} \bigl( 
(i_{f^{w_0}})_! (K|_U) \bigr)_{(z_0, c)} 
\simeq 
\phi^{\merc}_{f^{w_0}-c}(K)_{z_0} \not= 0. 
\end{equation}
By \cite[Proposition 8.6.3]{KS90}, 
this implies that the point 
$(z_0, c, 0, 1) \in T^*(X \times \CC )$ is 
contained in ${\rm SS } \bigl( 
(i_{f^{w_0}})_! (K|_U) \bigr) \subset T^*(X \times \CC )$. 
On the other hand, for the 
automorphism $T_{w_0}$ of $X \times \CC$ defined by 
\begin{equation}
T_{w_0}: X \times \CC \simto 
X \times \CC, \qquad ((z, \tau ) \longmapsto 
(z, \tau + \langle  z, w_0 \rangle  )), 
\end{equation}
we have an isomorphism 
\begin{equation}
(T_{w_0})_* ( \SF )= (T_{w_0})_* (i_{-f})_! (K|_U) 
\simeq (i_{f^{w_0}})_! (K|_U). 
\end{equation}
Then the assertion $(z_0, \tau_0, w_0, 1) \in 
{\rm SS}( \SF )$ immediately follows from it. 
\end{proof}

By Proposition \ref{lem-ess-phi-new} the subset 
${\rm SS}_{{\rm eva}}(F)
\subset (T^*X)_{\RR}$ is contained in the 
irregular micro-support 
${\rm SS}_{{\rm irr}}^{\CC}( \SF )$ 
of $\SF \in \Dbc (X \times \CC )$. 
It is also clear that we have 
\begin{equation}\label{incluss-new}
{\rm SS}_{{\rm irr}}^{\CC}( \SF ) \cap T^*U_{\RR}
= \bigl( {\rm SS}(K) \cap T^*U_{\RR} \bigr) + \Lambda^f. 
\end{equation}

\section{Fourier Transforms of Exponentially Twisted 
\\ Holonomic D-modules}\label{sec:9}

In this section, we study the Fourier transforms of 
exponentially twisted holonomic 
D-modules. We inherit the situation 
and the notations in 
Section \ref{sec:1}. Let
\begin{equation}
X\overset{p}{\longleftarrow}X\times 
Y\overset{q}{\longrightarrow}Y
\end{equation}
be the projections. 
Then by Katz-Laumon \cite{KL85}, for an 
algebraic holonomic $\SD_X$-module 
$\SM\in\Modhol(\SD_X)$ we have an isomorphism
\begin{equation}
\SM^\wedge\simeq
\bfD q_\ast(\bfD p^\ast\SM\Dotimes \SO_{X\times Y}
e^{- \langle z, w \rangle }),
\end{equation}
where $\bfD p^\ast, \bfD q_\ast, \Dotimes$ are
the operations for algebraic D-modules
and $\SO_{X\times Y}
e^{- \langle z, w \rangle }$ is the integral connection
of rank one on $X\times Y$ associated to the canonical
paring $\langle \cdot , \cdot \rangle : X\times Y \longrightarrow \CC$.
In particular the right hand side is concentrated in degree zero. 
Let $\var{X}\simeq\PP^N$ (resp. $\var{Y}\simeq\PP^N$)
be the projective compactification of $X$ (resp. $Y$). 
By the inclusion map $i_X : 
X=\CC^N\xhookrightarrow{\ \ \ }\var{X}=\PP^N$
we extend a holonomic $\SD_X$-module $\SM\in\Modhol(\SD_X)$
on $X$ to the one $\tl{\SM} := i_{X\ast}\SM\simeq\bfD i_{X\ast}\SM$ 
on $\var{X}$.
Denote by $\var{X}^{\an}$ the underlying complex manifold
of $\var{X}$ and define the analytification
$\tl{\SM}^{{\ }\an}\in\Modhol(\SD_{\var{X}^{\an}})$
of $\tl{\SM}$ by
$\tl{\SM}^{{\ }\an} := \SO_{\var{X}^{\an}}
\otimes_{\SO_{\var{X}}}\tl{\SM}$.
Then we set 
\begin{equation}
Sol_{\var X}^\rmE(\tl\SM) := 
Sol_{\var{X}^{\an}}^\rmE(\tl{\SM}^{{\ }\an})
\ \in \ \BEC(\I\CC_{\var{X}^{\an}}).
\end{equation}
Similarly for the Fourier transform $\SM^\wedge\in\Modhol(\SD_Y)$, 
by the inclusion map $i_Y : 
Y=\CC^N\xhookrightarrow{\ \ \ }\var{Y}=\PP^N$ 
we define $\tl{\SM^\wedge}$ and 
$Sol_{\var Y}^\rmE(\tl{\SM^\wedge}) 
 \in\BEC(\I\CC_{\var{Y}^{\an}})$. 
Let
\begin{equation}
\var{X}^{\an}\overset{\var{p}}{\longleftarrow}
\var{X}^{\an}\times\var{Y}^{\an}\overset{\var{q}}{\longrightarrow}
\var{Y}^{\an}
\end{equation}
be the projections. 
Then the following theorem is essentially due to
Kashiwara-Schapira \cite{KS16-2} and D'Agnolo-Kashiwara \cite{DK17}. 
For $F\in\BEC(\I\CC_{\var{X}^{\an}})$ we set
\begin{equation}
{}^\rmL F := \bfE\var{q}_{*}(\bfE\var{p}^{-1}F
\Potimes\EE_{X\times Y|\var{X}\times\var{Y}}^{
-{\Re} \langle  z, w \rangle }[N])
\ \in \ \BEC(\I\CC_{\var{Y}^{\an}} ) 
\end{equation}
(here we denote $X^{\an}\times Y^{\an}$ etc.
by $X\times Y$ etc. for short) and call 
it the Fourier-Sato (Fourier-Laplace) transform of $F$. 

\begin{theorem}\label{thm-1}
For $\SM\in\Mod_{\rm hol}(\SD_X)$ there exists an isomorphism
\begin{equation}
Sol_{\var Y}^\rmE(\tl{\SM^\wedge})
\simeq{}^\rmL Sol_{\var X}^\rmE(\tl{\SM}).
\end{equation}
\end{theorem}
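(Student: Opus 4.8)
The plan is to transport the Fourier transform to the projective compactification $\var X\times\var Y$, where the second projection $\var q:\var X\times\var Y\to\var Y$ is proper, and then to feed the resulting proper pushforward into the compatibility properties of the enhanced solution functor collected in Theorem \ref{thm-4}. This is the higher-dimensional analogue of the argument of D'Agnolo--Kashiwara \cite{DK17} for $N=1$, resting on the Fourier--Sato transform for enhanced ind-sheaves of Kashiwara--Schapira \cite{KS16-2}.

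The first, and genuinely substantial, step will be to rewrite $\tl{\SM^\wedge}$ as a proper direct image on $\var X\times\var Y$. Starting from the Katz--Laumon formula $\SM^\wedge\simeq\bfD q_\ast(\bfD p^\ast\SM\Dotimes\SO_{X\times Y}e^{-\langle z,w\rangle})$, I would use that the bilinear polynomial $\langle z,w\rangle=\sum_i z_iw_i$ extends to a meromorphic function on $\var X\times\var Y$ with poles only along the boundary divisor $\var D:=(\var X\times\var Y)\setminus(X\times Y)$, so that $\SO_{X\times Y}e^{-\langle z,w\rangle}$ extends to the exponential $\SD_{\var X\times\var Y}$-module $\SE^{-\langle z,w\rangle}_{X\times Y|\var X\times\var Y}$. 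Since tensoring $\bfD\var p^\ast\tl\SM$ by this exponential module is insensitive to localization along $\var D$ (compare the observation in Definition \ref{defi-etdint}), the standard compatibility of algebraic $\SD$-module direct images with passage to compactifications should give an isomorphism
\begin{equation}
\tl{\SM^\wedge}\simeq\bfD\var q_\ast\bigl(\bfD\var p^\ast\tl\SM\Dotimes\SE^{-\langle z,w\rangle}_{X\times Y|\var X\times\var Y}\bigr),
\end{equation}
where the module inside $\bfD\var q_\ast$ has support proper over $\var Y$ because $\var X=\PP^N$ is compact; I would also check the remaining good and holonomicity hypotheses so that Theorem \ref{thm-4}(iii) applies to $\var q$.

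The second step is then formal. Applying $Sol_{\var Y}^\rmE$ and using that $\var X\times\var Y$ has dimension $2N$ while $\var Y$ has dimension $N$, Theorem \ref{thm-4}(iii) yields
\begin{equation}
Sol_{\var Y}^\rmE(\tl{\SM^\wedge})\simeq\bfE\var q_\ast\,Sol_{\var X\times\var Y}^\rmE\bigl(\bfD\var p^\ast\tl\SM\Dotimes\SE^{-\langle z,w\rangle}_{X\times Y|\var X\times\var Y}\bigr)[N].
\end{equation}
Then Theorem \ref{thm-4}(iv) converts $\Dotimes$ into the convolution $\Potimes$, Theorem \ref{thm-4}(ii) gives $Sol_{\var X\times\var Y}^\rmE(\bfD\var p^\ast\tl\SM)\simeq\bfE\var p^{-1}Sol_{\var X}^\rmE(\tl\SM)$, and Theorem \ref{thm-4}(vi), applied to the meromorphic function $-\langle z,w\rangle$ along $\var D$ with $X\times Y=(\var X\times\var Y)\setminus\var D$, gives $Sol_{\var X\times\var Y}^\rmE(\SE^{-\langle z,w\rangle}_{X\times Y|\var X\times\var Y})\simeq\EE^{-{\Re}\langle z,w\rangle}_{X\times Y|\var X\times\var Y}$. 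Substituting, the right-hand side becomes $\bfE\var q_\ast(\bfE\var p^{-1}Sol_{\var X}^\rmE(\tl\SM)\Potimes\EE^{-{\Re}\langle z,w\rangle}_{X\times Y|\var X\times\var Y}[N])$, which is exactly ${}^\rmL Sol_{\var X}^\rmE(\tl\SM)$ by the definition preceding the theorem.

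The hard part will be the first step: Katz--Laumon's formula uses the algebraic $\SD$-module operations on the affine varieties, and one must verify that taking the proper pushforward $\bfD\var q_\ast$ on the compactification --- after suitably localizing $\bfD\var p^\ast\tl\SM$ along $\var D_X:=\var X\setminus X$ and checking that no spurious contribution comes from $\var D_Y:=\var Y\setminus Y$ --- recovers $i_{Y\ast}$ of the affine Fourier transform. This boundary bookkeeping is exactly what is carried out in \cite{DK17} and \cite{KS16-2} for $N=1$, and the argument goes through in the same way for arbitrary $N$; once it is in place, what remains is the formal chain of isomorphisms above.
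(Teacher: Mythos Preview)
Your proposal is correct and follows exactly the standard argument from the cited references \cite{KS16-2} and \cite{DK17} (see also \cite[Theorem~4.1]{IT20a}): the paper itself does not give a proof of this theorem but states it as essentially due to those works. The two-step strategy you outline---compactify the Katz--Laumon integral transform so that $\var q$ is proper, then apply Theorem~\ref{thm-4}(ii)--(iv),(vi) in sequence---is precisely how the result is established there, and the boundary bookkeeping you flag in the first step is handled in those references just as you describe.
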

By \cite[Lemma 2.5.1]{DHMS17} we can take an 
enhanced sheaf $F \in \BEC_+(\CC_{X^{\an}})$ on 
$X^{\an}$ such that $F \simeq \CC_{\{ t \geq 0 \}} 
\Potimes F$ and 
\begin{equation}\label{given}
Sol_{\var X}^\rmE(\tl{\SM}) \simeq 
\CC^\rmE_{\var{X}^{\an}} \Potimes \bfE i_{X!} F. 
\end{equation}
For an enhanced sheaf $G\in\BEC(\CC_{\var{X}^{\an}})$
on $\var{X}^{\an}$ we define its Fourier-Sato 
(Fourier-Laplace) transform
${}^\rmL G \in\BEC(\CC_{\var{Y}^{\an}})$ by
\begin{equation}
{}^\rmL G := \bfE\var{q}_{*}(\bfE\var{p}^{-1} G \Potimes
\rmE_{X\times Y|\var{X}\times\var{Y}}^{
-{\Re} \langle z, w \rangle}[N])
\ \in \ \BEC(\CC_{\var{Y}^{\an}}).
\end{equation}
Since by \cite[Proposition 4.7.17]{DK16} 
we have 
\begin{equation}\label{eq-expon} 
{}^\rmL(\CC^\rmE_{\var{X}^{\an}}\Potimes( \cdot ))
\simeq
\CC^\rmE_{\var{Y}^{\an}}\Potimes{}^\rmL( \cdot ), 
\end{equation}
by Theorem \ref{thm-1} and \eqref{given}  
we obtain an isomorphism 

\begin{equation}\label{eq-efl} 
Sol_{\var Y}^\rmE(\tl{\SM^\wedge})
\simeq  \CC^\rmE_{\var{Y}^{\an}} \Potimes
{}^\rmL ( \bfE i_{X!} F). 
\end{equation}
Hence it suffices to study the enhanced sheaf 
\begin{equation}\label{eq-expl} 
{}^\rmL ( \bfE i_{X!} F) 
\simeq 
\bfE i_{Y!} 
\bfE q_{!}(\bfE p^{-1} F \Potimes
\rmE_{X\times Y|X \times Y}^{
-{\Re} \langle z, w \rangle}[N]) \ \in \ 
\BEC(\CC_{\var{Y}^{\an}}) 
\end{equation}
on $\var{Y}^{\an}$. As was explained in 
\cite[Sections 2.6 and 7.3]{DK17}, 
we can know the geometric structure of 
$L(F):= \bfE q_{!}(\bfE p^{-1} F \Potimes
\rmE_{X\times Y|X \times Y}^{
-{\Re} \langle z, w \rangle}[N])  \in 
\BEC(\CC_{Y^{\an}})$
to some extent by Tamarkin's 
microlocal thery of sheaves in \cite{Tama08} as follows. 
Since we have 
$L(F) \in \BEC_+(\CC_{Y^{\an}})$ we can define 
${\rm SS}^{{\rm E}}(L(F)) 
\subset (T^*Y_{\RR}) \times \RR$ and 
${\rm SS}_{{\rm irr}}(L(F)) 
\subset T^*Y_{\RR}$ also for $L(F)$. 
Then we have the following theorem 
due to Tamarkin \cite{Tama08}. 
Let us consider the maps 
\begin{equation}
\widetilde{\chi} : (T^*X^{\an}) \times \RR \simto 
(T^*Y^{\an}) \times \RR, \quad 
((z,w),t)  \longmapsto ((w,-z), t+ 
{\rm Re} \langle z, w \rangle )
\end{equation}
and 
\begin{equation}
\chi : T^*X^{\an} \simto 
T^*Y^{\an}, \quad 
(z,w) \longmapsto (w,-z)). 
\end{equation}

\begin{theorem}[{\cite[Theorem 3.6]{Tama08}}]\label{tama-th}
For $F \in \BEC_+(\CC_{X^{\an}})$ and 
$L(F)= \bfE q_{!}(\bfE p^{-1} F \Potimes
\rmE_{X\times Y|X \times Y}^{
-{\Re} \langle z, w \rangle}[N]) \in \BEC_+(\CC_{Y^{\an}})$ 
we have 
\begin{equation}
{\rm SS}^{{\rm E}}(L(F))= \widetilde{\chi}
( {\rm SS}^{{\rm E}}(F) ), 
\quad 
{\rm SS}_{{\rm irr}}(L(F))= \chi 
( {\rm SS}_{{\rm irr}}(F) ). 
\end{equation}
\end{theorem}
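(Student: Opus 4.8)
This is \cite[Theorem~3.6]{Tama08}; in the present enhanced formulation the plan for a proof runs as follows. First I would factor the Fourier--Sato transform
\begin{equation}
L(F)= \bfE q_{!}\bigl(\bfE p^{-1} F \Potimes
\rmE_{X\times Y|X \times Y}^{-\Re \langle z, w \rangle}[N]\bigr)
\end{equation}
(here $X\times Y$ stands for $X^{\an}\times Y^{\an}$, as in the statement) into three elementary operations on enhanced sheaves: the inverse image $\bfE p^{-1}$ along the submersion $p : X^{\an} \times Y^{\an} \to X^{\an}$; the convolution $( \cdot ) \Potimes \CK$ in the $\RR$-variable with the exponential kernel
\begin{equation}
\CK := \rmE_{X\times Y|X \times Y}^{-\Re \langle z, w \rangle}
= \Q \Bigl( \CC_{\{ (z,w,t) \in X^{\an} \times Y^{\an} \times \RR \ | \ t - \Re \langle z, w \rangle \geq 0 \}} \Bigr)
\ \in \ \BEC_+(\CC_{X^{\an} \times Y^{\an}}) ;
\end{equation}
and the proper direct image $\bfE q_{!}$ along $q : X^{\an} \times Y^{\an} \to Y^{\an}$, which is proper on the supports in play once one works inside $\var{X}^{\an} \times \var{Y}^{\an}$. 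The shift $[N]$ does not affect micro-supports and may be dropped.

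The heart of the matter is the enhanced micro-support of $\CK$. Its underlying sheaf is $\CC_Z$ for the closed half-space $Z = \{ t - \Re \langle z,w \rangle \geq 0 \}$, whose boundary $\{ t = \Re \langle z,w \rangle \}$ is a smooth real hypersurface; hence ${\rm SS}( \CC_Z)$ is the union of the zero section with one half of the conormal bundle of that hypersurface, generated by $d( t - \Re \langle z,w \rangle )$. Intersecting with $\{ t^* > 0 \}$ and rescaling to $t^* = 1$ as in the definition of ${\rm SS}^{{\rm E}}$, and using the standard identifications $(T^*X)_{\RR} \simeq T^*X_{\RR}$, $(T^*Y)_{\RR} \simeq T^*Y_{\RR}$ — under which $d_z \Re \langle z,w \rangle$ over $z \in X$ becomes the fibre coordinate $w$ and $d_w \Re \langle z,w \rangle$ over $w \in Y$ becomes $z$ — one checks by a direct computation that ${\rm SS}^{{\rm E}}( \CK )$ is precisely the Lagrangian correspondence realizing $\widetilde{\chi}$, viewed as a map from $(T^*X^{\an}) \times \RR$ to $(T^*Y^{\an}) \times \RR$. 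Feeding this into the functorial micro-support estimates of \cite[Section~5.4]{KS90} in their enhanced form (\cite{KS16-2}, \cite[Section~2]{DK17}) — $\bfE p^{-1}$ adjoins the zero covector in the $Y$-direction, $\Potimes$ adds covectors and $\RR$-coordinates over a common base point, and $\bfE q_{!}$ contracts the $X$-direction when the $X$-covector vanishes — I would obtain ${\rm SS}^{{\rm E}}(L(F)) \subseteq \widetilde{\chi}\bigl( {\rm SS}^{{\rm E}}(F) \bigr)$.

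For the opposite inclusion I would exploit that $L$ is invertible: its quasi-inverse is a transform of exactly the same exponential-kernel type, built from $- \langle z, w \rangle$ (equivalently $\langle w, z \rangle$) and realizing the inverse correspondence $\widetilde{\chi}^{-1}$ up to a shift (\cite{Tama08}, \cite{KS16-2}). Applying the one-sided estimate just proved both to $L$ and to its quasi-inverse and composing, the identity $\widetilde{\chi} \circ \widetilde{\chi}^{-1} = {\rm id}$ forces the equality ${\rm SS}^{{\rm E}}(L(F)) = \widetilde{\chi}\bigl( {\rm SS}^{{\rm E}}(F) \bigr)$. Finally, since the first component of $\widetilde{\chi}$ is $\chi$ and is independent of the $\RR$-coordinate, one has ${\rm pr}_{T^*Y_{\RR}} \circ \widetilde{\chi} = \chi \circ {\rm pr}_{T^*X_{\RR}}$; applying ${\rm pr}_{T^*Y_{\RR}}$ and taking closures then yields ${\rm SS}_{{\rm irr}}(L(F)) = \chi\bigl( {\rm SS}_{{\rm irr}}(F) \bigr)$.

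The step I expect to be the main obstacle is promoting the composition estimate from an inclusion to an equality \emph{directly}, i.e.\ verifying that the compositions of Lagrangian correspondences above are clean; this is exactly where the non-degeneracy of the pairing $\langle \cdot, \cdot \rangle$ enters, since it pins down the intermediate variable $z$ uniquely and makes ${\rm SS}^{{\rm E}}( \CK )$ a genuine graph rather than a larger correspondence. The shortcut via invertibility sidesteps this, at the cost of invoking the equivalence-of-categories statement. A secondary technical point is the bookkeeping with the bordered-space structure at $t = \pm\infty$ so that $\bfE q_{!}$ is microlocally well behaved, together with the preliminary check that $L(F) \in \BEC_+(\CC_{Y^{\an}})$ so that ${\rm SS}^{{\rm E}}(L(F))$ and ${\rm SS}_{{\rm irr}}(L(F))$ are defined; both are routine once the computation of ${\rm SS}^{{\rm E}}( \CK )$ is in place.
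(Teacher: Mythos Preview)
The paper does not give a proof of this statement: it is quoted verbatim as Tamarkin's result \cite[Theorem~3.6]{Tama08}, with no argument supplied. Your sketch follows the standard route (compute ${\rm SS}^{\rm E}$ of the exponential kernel as the graph of $\widetilde{\chi}$, apply the functorial estimates, then use invertibility of the transform for the reverse inclusion), which is indeed how the result is established in the cited literature; there is nothing in the paper itself to compare it against.
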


For a rational function 
$f= \frac{P}{Q}: X \setminus Q^{-1}(0) 
\longrightarrow \CC$ ($P, Q \in 
\Gamma(X; \SO_X) \simeq \CC [z_1,z_2, \ldots, z_N]$, 
$Q \not= 0$) on $X= \CC^N_z$ we set 
$U:= X \setminus Q^{-1}(0)$ and 
define an 
exponential $\SD_X$-module $\SE^f_{U|X} 
\in\Modhol (\SD_X)$ as in the analytic case. 
Then by Theorem \ref{thm-4} (vi) 
for its extension 
$( \SE^f_{U|X})^{\sim} :=
 i_{X\ast} \SE^f_{U|X}
 \simeq\bfD i_{X\ast} \SE^f_{U|X}$
to $\var{X} = \PP^N$ we have an isomorphism 
\begin{equation}
Sol_{\var X}^\rmE( ( \SE^f_{U|X})^{\sim} ) \simeq 
\EE_{U^{\an}| \var{X}^{\an}}^{{\rm Re}f}. 
\end{equation}

From now on, we fix an 
exponentially twisted holonomic $\SD_X$-module 
$\SM \in\Modhol(\SD_X)$ such that 
$\SM \simeq 
\SN \Dotimes \SE^f_{U|X}$ 
for a regular holonomic 
$\SD_X$-module $\SN \in\Modrh(\SD_X)$ and a rational function 
$f= \frac{P}{Q}: U= X \setminus Q^{-1}(0) 
\longrightarrow \CC$ on $X$ and study 
the basic properties of its 
Fourier transform $\SM^\wedge$. Let us set 
\begin{equation}
K:= Sol_X ( \SN ) \in \Dbc(X^{\an}). 
\end{equation}
Note that the shift $K[N] \in \Dbc(X^{\an})$ of $K$ 
is a perverse sheaf on $X^{\an}$. 
We denote the support of $K \in \Dbc(X^{\an})$ 
by $Z \subset X$. Then by 
Theorem \ref{thm-4} (vi) 
and Corollary \ref{cor-41} there exists 
an isomorphism 
\begin{equation}\label{eq-sit} 
Sol_{\var X}^\rmE(\tl{\SM}) \simeq 
\bigl\{ \pi^{-1} (i_{X})_! K \bigr\} 
\otimes 
\EE_{U^{\an}| \var{X}^{\an}}^{{\rm Re}f}. 
\end{equation}
Moreover 
for the enhanced sheaf
\begin{equation}\label{eq-sit} 
\bigl\{ \pi^{-1} (i_{X})_! K \bigr\} 
\otimes 
{\rm E}_{U^{\an}| \var{X}^{\an}}^{{\rm Re}f}
\simeq 
 \bfE i_{X!} \Bigl\{  
\pi^{-1}K \otimes {\rm E}_{U^{\an}|X^{\an}
}^{{\rm Re}f}
\Bigr\} \qquad \in \BEC_+(\CC_{\var{X}^{\an}})
\end{equation}
on $\var{X}^{\an}$ we have an isomorphism 
\begin{equation}\label{eq-expo} 
Sol_{\var X}^\rmE(\tl{\SM}) \simeq 
\CC^\rmE_{\var{X}^{\an}} \Potimes
\Bigl\{ 
\bigl\{ \pi^{-1} (i_{X})_! K \bigr\} 
\otimes 
{\rm E}_{U^{\an}| \var{X}^{\an}}^{{\rm Re}f} \Bigr\}. 
\end{equation}  
It follows from Theorem \ref{thm-1} that there exists 
an isomorphism 

\begin{equation}\label{eq-expofl} 
Sol_{\var Y}^\rmE(\tl{\SM^\wedge})
\simeq  \CC^\rmE_{\var{Y}^{\an}} \Potimes
{}^\rmL \Bigl\{ 
\bigl\{ \pi^{-1} (i_{X})_! K \bigr\} 
\otimes 
{\rm E}_{U^{\an}| \var{X}^{\an}}^{{\rm Re}f}
 \Bigr\}. 
\end{equation}
Hence it suffices to study the enhanced sheaf 
\begin{equation}\label{eq-expoeis} 
{}^\rmL \Bigl\{ 
\bigl\{ \pi^{-1} (i_{X})_! K \bigr\} 
\otimes 
{\rm E}_{U^{\an}| \var{X}^{\an}}^{{\rm Re}f}
 \Bigr\}  \ \in \ \BEC(\CC_{\var{Y}^{\an}}). 
\end{equation}
From now on, for the enhanced sheaf 
\begin{equation}
F:= \pi^{-1}K \otimes {\rm E}_{U^{\an}|X^{\an}
}^{{\rm Re}f} \in \BEC_+(\CC_{X^{\an}})
\end{equation}
on $X_{\RR}$ we shall study 
$L(F)= \bfE q_{!}(\bfE p^{-1} F \Potimes
\rmE_{X\times Y|X \times Y}^{
-{\Re} \langle z, w \rangle}[N]) \in \BEC_+(\CC_{Y^{\an}})$. 
For this purpose, as a complex analogue of the 
$\RR$-constructible sheaf 
$F= \pi^{-1}K 
\otimes {\rm E}_{U^{\an}|X^{\an}}^{{\rm Re}f}$ 
on $X_{\RR} \times \RR$, by applying 
the proof of Theorem \ref{the-1} (i) to 
the (not necessarily) closed embedding 
\begin{equation}
i_{-f}: U= X \setminus Q^{-1}(0) \hookrightarrow 
X \times \CC, \qquad (z \longmapsto (z,-f(z)))
\end{equation}
associated to the rational function 
$-f:U \longrightarrow \CC$, we obtain a complex 
constructible sheaf 
\begin{equation}
\SF := (i_{-f})_! (K|_U) \quad 
\in \Dbc (X \times \CC )
\end{equation}
on $X \times \CC$. Note that $\SF [N]$ is a perverse 
sheaf and its micro-support 
${\rm SS} ( \SF [N])= {\rm SS} ( \SF )$ 
is a homogeneous complex Lagrangian 
analytic subset of $T^*(X \times \CC )$. 
Then by forgetting the homogeneity of ${\rm SS} ( \SF )$ 
we define two subsets: 
\begin{equation}
{\rm SS}^{{\rm E}, \CC}( \SF )
\subset T^*X \times \CC, \qquad 
{\rm SS}_{{\rm irr}}^{\CC}( \SF ) \subset T^*X
\end{equation}
(see Section \ref{sec:19} for the details). 
Recall that the irregular micro-support 
${\rm SS}_{{\rm irr}}^{\CC}( \SF )$ of $\SF \in \Dbc (X \times \CC )$ 
is a (not necessarily homogeneous) 
complex Lagrangian analytic subset of $T^*X$ and 
${\rm SS}_{{\rm eva}}(F)
\subset (T^*X)_{\RR}$ is contained in 
${\rm SS}_{{\rm irr}}^{\CC}( \SF )$. 
In what follows, we set $\Lambda := 
{\rm SS}_{{\rm irr}}^{\CC}( \SF ) \subset (T^*X)_{\RR}$. 
Recall that we have 
\begin{equation}\label{incluss}
\Lambda \cap  T^*U_{\RR} =
{\rm SS}_{{\rm irr}}^{\CC}( \SF ) \cap T^*U_{\RR}
= \bigl( {\rm SS}(K) \cap T^*U_{\RR} \bigr) + \Lambda^f. 
\end{equation}
Since $\Lambda$ is Lagrangian and 
${\rm dim } \Lambda = {\rm dim }X=N$, 
by e.g. the theorem in \cite[page 43]{GM88} we can 
easily prove the following lemma. 

\begin{lemma}\label{lem-fin-cover}
There exists a non-empty 
Zariski open subset $\Omega \subset Y=\CC_w^N$ 
of $Y$ such that the restriction $q^{-1}( \Omega ) \cap 
\Lambda \longrightarrow  \Omega$ of the projection 
$q : X\times Y \longrightarrow Y$ is 
an unramified finite covering and any connected component 
of the open subset $q^{-1}( \Omega ) \cap 
\Lambda \subset \Lambda$ is a fiber bundle over a 
complex submanifold of $X= \CC^N_z$ contained 
in $U=X \setminus Q^{-1}(0)$ or 
$I(f)=P^{-1}(0) \cap Q^{-1}(0)$. 
\end{lemma}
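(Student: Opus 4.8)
\textbf{Proof proposal for Lemma \ref{lem-fin-cover}.}
The plan is to exploit the fact that $\Lambda = {\rm SS}_{{\rm irr}}^{\CC}( \SF )$ is a complex Lagrangian analytic subset of $T^*X \simeq X \times Y$ of pure dimension $N = {\rm dim}\,X$ (established in Lemma \ref{iso-analy-new}), together with the generic-triviality theorem of Goresky-MacPherson from \cite[page 43]{GM88}, applied to the projection $q : X \times Y \longrightarrow Y$. First I would fix an algebraic Whitney stratification $\CS_{\Lambda}$ of (an algebraic completion of) $\Lambda$ such that $q$ is a stratified fiber bundle over each stratum of a compatible Whitney stratification $\CS_Y$ of $Y$; this is exactly what the cited theorem provides after extending $\SF$ to $X \times \PP$ and using the proper projection $X \times \PP \longrightarrow Y$ as in the proof of Lemma \ref{iso-analy-new}. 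Let $\Omega_0 \subset Y$ be the (open dense) union of the top-dimensional strata of $\CS_Y$; over $\Omega_0$ the map $q|_{\Lambda} : q^{-1}(\Omega_0) \cap \Lambda \longrightarrow \Omega_0$ is proper (the fibers are among those of the proper map from $X \times \PP$, intersected with $\Lambda$, hence compact; but being algebraic of dimension $0$ they are finite) and is a stratified fiber bundle with $0$-dimensional fibers, hence an unramified finite covering over each connected component of $\Omega_0$.

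Next I would shrink $\Omega_0$ to a non-empty Zariski open $\Omega_1$ on which, in addition, the $0$-dimensional fibers have constant cardinality $d$ and the covering is topologically trivial over a neighborhood basis of each point; generic finiteness of the fibers of $q|_{\Lambda}$ holds because ${\rm dim}\,\Lambda = {\rm dim}\,Y = N$, and the set of $w \in Y$ over which the fiber is positive-dimensional (or over which $\Lambda$ fails to be a submanifold near its points) is a proper algebraic subset, so removing it keeps $\Omega$ Zariski open and dense. Over $\Omega_1$ each connected component $\Lambda_{V,i}$ of $q^{-1}(V) \cap \Lambda$ (for $V \subset \Omega_1$ a small contractible open set) maps isomorphically onto $V$, hence is the graph of a holomorphic section $V \longrightarrow X \times Y$, i.e. $\Lambda_{V,i} = \{(\zeta^{(i)}(w), w)\}$ for a holomorphic map $\zeta^{(i)} : V \longrightarrow X$; its image $Z_i := p(\Lambda_{V,i})$ is thus a complex submanifold of $X$. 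To see that each $Z_i$ is contained in $U = X \setminus Q^{-1}(0)$ or in $I(f) = P^{-1}(0) \cap Q^{-1}(0)$, I would use the analogues of Lemmas \ref{empty-pole-micro} and \ref{empty-micro} for ${\rm SS}_{{\rm irr}}^{\CC}(\SF)$ (noted to hold just before Lemma \ref{iso-analy-new}): over the locally closed set $Q^{-1}(0) \setminus I(f)$ the irregular micro-support is empty, and over $I(f)$ one further shrinks $\Omega$ so that no $Z_i$ straddles the boundary of $I(f)$ inside $Q^{-1}(0)$; since $\Lambda \cap T^*U_{\RR} = ({\rm SS}(K) \cap T^*U_{\RR}) + \Lambda^f$ is a locally closed analytic set whose projection to $X$ lands in $U$, and $\Lambda$ over $I(f)$ projects into $I(f)$, after removing finitely many proper algebraic subsets of $Y$ each connected component of $q^{-1}(\Omega) \cap \Lambda$ projects entirely into one of the two (disjoint after the shrinking) pieces.

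Finally, over such $\Omega$ each $\Lambda_{V,i} \simeq V$ is trivially a fiber bundle over the submanifold $Z_i$ of $X$ (indeed a section of $q$), so the last assertion is immediate; globalizing the choice of $V$ over $\Omega$ does not affect the properties, which are checked fiberwise. The main obstacle I anticipate is the bookkeeping in the second paragraph: ensuring that a \emph{single} Zariski open $\Omega$ simultaneously makes $q|_{\Lambda}$ an unramified finite covering \emph{and} separates the components into the $U$-part and the $I(f)$-part, while keeping $\Omega$ Zariski open (not merely analytically open). This is handled by noting that all the bad loci (non-étale locus, branch locus, locus where a component meets both $\overline{U}$ and $\overline{I(f)}$, locus where fiber cardinality jumps) are images under the proper algebraic map $\overline{q} : \overline{\Lambda} \longrightarrow \overline{Y}$ of proper algebraic subsets of $\overline{\Lambda}$, hence proper algebraic subsets of $\overline{Y}$; their union's complement in $Y$ is the desired $\Omega$.
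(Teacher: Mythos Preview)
Your proposal is correct and follows essentially the same approach as the paper: the paper's proof is merely the remark that since $\Lambda$ is Lagrangian with ${\rm dim}\,\Lambda = {\rm dim}\,X = N$, the lemma follows easily from the stratification theorem in \cite[page 43]{GM88}. Your argument is a detailed unpacking of precisely this strategy, including the use of the analogues of Lemmas \ref{empty-pole-micro} and \ref{empty-micro} for ${\rm SS}_{{\rm irr}}^{\CC}(\SF)$ to separate the $U$-part from the $I(f)$-part.
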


\begin{remark}
In the special case where $I(f)=P^{-1}(0) \cap Q^{-1}(0) 
= \emptyset$, we have $\Lambda = 
\bigl( {\rm SS}(K) \cap T^*U_{\RR} \bigr) + \Lambda^f$ 
by Lemma \ref{iso-analy-new} and hence there exists a 
(unique) maximal Zariski open subset 
$\Omega \subset Y= \CC^N$ satisfying the 
conditions in Lemma \ref{lem-fin-cover}. 
\end{remark}

Let $\Omega \subset Y=\CC_w^N$ be as in 
Lemma \ref{lem-fin-cover} and $V \subset \Omega$ 
a contractible open subset of it. 
Then for the decomposition 
\begin{equation}
q^{-1}(V)\cap \Lambda
= \Lambda_{V,1} 
\sqcup \Lambda_{V,2} \sqcup \cdots \cdots \sqcup \Lambda_{V,d}
\end{equation}
of $q^{-1}(V)\cap \Lambda$ into its connected components 
$\Lambda_{V,i} \subset \Lambda$ 
($1 \leq i \leq d$) 
the morphism $q|_{\Lambda}$ induces an isomorphism 
$\Lambda_{V,i} \simto V$ for any $1 \leq i \leq d$. 
Now let us consider the symplectic transformation 
of \cite{DK17}: 
\begin{equation}
\chi : T^*X \simto T^*Y \qquad ((z,w) \longmapsto (w,-z)), 
\end{equation}
where we used the natural identification $T^*Y \simeq Y \times X$. 
Then $\chi ( \Lambda_{V,i} ) \subset T^*Y$ is a 
Lagrangian submanifold of $T^*Y$ for which 
the natural 
projection $T^*Y \longrightarrow Y$ induces an isomorphism 
$\chi ( \Lambda_{V,i} ) \simto V$. It follows that 
there exists a holomorphic function $g_i$ on $V$ such that 
\begin{equation}
\chi ( \Lambda_{V,i})= 
\Lambda^{g_i}:= \{ (w,dg_i(w)) \ | \ w \in V \} 
\subset T^*Y. 
\end{equation}
Namely, if for a point $w \in V$ we denote by 
$\zeta^{(i)} (w)=(\zeta^{(i)}_1(w), \zeta^{(i)}_2(w), 
\ldots, \zeta^{(i)}_N(w)) \in X= \CC^N_z$ the 
unique point of $X$ such that $(\zeta^{(i)}(w), w) \in  \Lambda_{V,i}  
\subset \Lambda$, then we have 
\begin{equation}\label{eq-conrm}
\Bigl(
\frac{\partial g_i}{\partial w_1}(w), 
\frac{\partial g_i}{\partial w_2}(w), 
\ldots \ldots, 
\frac{\partial g_i}{\partial w_N}(w)
\Bigr) 
= - (\zeta^{(i)}_1(w), \zeta^{(i)}_2(w), \ldots, 
\zeta^{(i)}_N(w)). 
\end{equation}
More precisely, we have the following higher-dimensional 
analogue of \cite[Lemma-Definition 7.4.2]{DK17}. 
For $1 \leq i \leq d$ by our choice of $\Omega \subset Y= 
\CC^N$ in Lemma \ref{lem-fin-cover}, 
$Z_i:=p ( \Lambda_{V,i} ) \subset X= \CC^N_z$ is a 
complex submanifold of $X$. 
We renumber them so that 
for some $0 \leq r \leq d$ we have 
$Z_i \subset U=X \setminus Q^{-1}(0)$ (resp. 
$Z_i \subset I(f)=P^{-1}(0) \cap Q^{-1}(0)$) if $1 \leq i \leq r$ 
(resp. if $r+1 \leq i \leq d$). 

\begin{lemma}\label{lem-10} 
For any $1 \leq i \leq r$ there exists a unique 
holomorphic function $g_i$ on $V \subset \Omega$ 
such that $\chi ( \Lambda_{V,i} )= 
\Lambda^{g_i}$ and the equality 
\begin{equation}
g_i(w)=f(\zeta^{(i)}(w))- \langle \zeta^{(i)}(w), w \rangle 
\end{equation}
holds on $V$. 
\end{lemma}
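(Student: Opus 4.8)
\textbf{Proof proposal for Lemma \ref{lem-10}.} The plan is to exhibit the explicit function $g_i(w):=f(\zeta^{(i)}(w))-\langle\zeta^{(i)}(w),w\rangle$ and to check directly that its graph is $\chi(\Lambda_{V,i})$; uniqueness is then automatic, since the displayed formula determines $g_i$ (constant included) once we know such a function exists. First I would observe that, by Lemma \ref{lem-fin-cover} and our choice of $\Omega\subset Y$, the projection $q$ restricts to a biholomorphism $\Lambda_{V,i}\simto V$, so $w\longmapsto\zeta^{(i)}(w)$ is a holomorphic map $V\to Z_i$, and since $Z_i\subset U=X\setminus Q^{-1}(0)$ for $1\le i\le r$ the composite $f\circ\zeta^{(i)}$ and hence $g_i$ are holomorphic on $V$. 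Recalling that $\chi(\Lambda_{V,i})=\{(w,-\zeta^{(i)}(w))\ |\ w\in V\}$, the assertion $\chi(\Lambda_{V,i})=\Lambda^{g_i}$ is equivalent to the system \eqref{eq-conrm}, i.e.\ to the identities $\tfrac{\partial g_i}{\partial w_j}(w)=-\zeta^{(i)}_j(w)$ for $1\le j\le N$; so the whole lemma reduces to verifying these identities for the explicit $g_i$, which I would do by differentiating via the chain rule.

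Writing $\zeta=\zeta^{(i)}(w)$, the chain rule gives
\begin{equation*}
\frac{\partial g_i}{\partial w_j}(w)
=\sum_{k=1}^{N}\frac{\partial\zeta_k}{\partial w_j}(w)\Bigl(\frac{\partial f}{\partial z_k}(\zeta)-w_k\Bigr)-\zeta_j(w),
\end{equation*}
so it remains to prove that $\sum_{k}\frac{\partial\zeta_k}{\partial w_j}(w)\bigl(w_k-\frac{\partial f}{\partial z_k}(\zeta)\bigr)=0$ for each $j$. The vector $\bigl(\frac{\partial\zeta_1}{\partial w_j}(w),\dots,\frac{\partial\zeta_N}{\partial w_j}(w)\bigr)$ is a holomorphic tangent vector to the complex submanifold $Z_i$ at $\zeta$, whereas $\bigl(w_k-\frac{\partial f}{\partial z_k}(\zeta)\bigr)_k$ is exactly the ``conormal part'' of the point $(\zeta,w)\in\Lambda_{V,i}$: indeed by \eqref{incluss} we have $(\zeta,w)=(\zeta,w')+(\zeta,df(\zeta))$ with $(\zeta,w')\in{\rm SS}(K)\cap T^*U_{\RR}$, and $w'=w-df(\zeta)$.

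The heart of the matter is then an orthogonality statement. By our choice of $\Omega$ in Lemma \ref{lem-fin-cover}, over $\Omega$ the point $(\zeta,w')=(\zeta^{(i)}(w),\,w-df(\zeta^{(i)}(w)))$ lies in the \emph{smooth} part of ${\rm SS}(K|_U)$, i.e.\ in a single conormal bundle $T^*_SX$ with $Z_i\subset S$; hence $w'$ is complex-conormal to $S$, and therefore to $Z_i\subset S$, at $\zeta$. Pairing $w'$ $\CC$-bilinearly with the tangent vector $\partial\zeta/\partial w_j\in T^{\CC}_{\zeta}Z_i$ thus yields $0$, which is precisely the vanishing we need. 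Consequently $\tfrac{\partial g_i}{\partial w_j}(w)=-\zeta^{(i)}_j(w)$, so \eqref{eq-conrm} holds and $\chi(\Lambda_{V,i})=\Lambda^{g_i}$; uniqueness is immediate. I expect the only delicate point to be the bookkeeping in this last step --- ensuring that, after possibly shrinking $\Omega$, the component $\Lambda_{V,i}$ really sits over a single stratum $S$ with $Z_i$ a submanifold of $S$ --- but this is already built into Lemma \ref{lem-fin-cover} and the genericity of $\Omega$ (and is the higher-dimensional counterpart of \cite[Lemma-Definition 7.4.2]{DK17}), so no new ideas are required.
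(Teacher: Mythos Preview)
Your proposal is correct and follows essentially the same approach as the paper: both arguments reduce to the orthogonality $\sum_k \frac{\partial \zeta^{(i)}_k}{\partial w_j}(w)\bigl(w_k-\frac{\partial f}{\partial z_k}(\zeta^{(i)}(w))\bigr)=0$, which is precisely the statement $\Lambda_{V,i}-\Lambda^f\subset T^*_{Z_i}X$, and the paper obtains this from \cite[Lemma A.52]{K-book} (the smooth part of a conic Lagrangian is locally a conormal bundle) just as you do. The only cosmetic difference is that the paper starts from an arbitrary $g_i$ with $\chi(\Lambda_{V,i})=\Lambda^{g_i}$, sets $k_i(w):=g_i(w)-f(\zeta^{(i)}(w))+\langle\zeta^{(i)}(w),w\rangle$, and shows $dk_i=0$ via the same orthogonality, whereas you take the explicit formula as the definition and verify \eqref{eq-conrm} directly.
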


\begin{proof}
By the condition 
$Z_i= p( \Lambda_{V,i} ) \subset U=X \setminus Q^{-1}(0)$ 
and \eqref{incluss},  
the subset $\Lambda_{V,i} - \Lambda^f \subset 
(T^*U)_{\RR}$ of $(T^*U)_{\RR} \simeq T^*U_{\RR}$ is 
contained in the $\CC^*$-conic Lagrangian analytic 
set ${\rm SS}(K) \cap (T^*U)_{\RR}$. 
By \cite[Lemma A.52]{K-book} this implies that 
we have $\Lambda_{V,i} \subset T^*_{Z_i}X + \Lambda^f$. 
Choose a holomorphic function $g_i$ on $V \subset \Omega$ 
such that $\chi ( \Lambda_{V,i} )= 
\Lambda^{g_i}$ and set 
\begin{equation}
k_i(w):= g_i(w)-f(\zeta^{(i)}(w))+ \langle 
\zeta^{(i)}(w), w \rangle 
\qquad (w \in V). 
\end{equation}
Then by using the conditions $\zeta^{(i)}(w) \in Z_i$ 
($w \in V$) and $(\zeta^{(i)}(w), w) \in 
\Lambda_{V,i} \subset T^*_{Z_i}X + \Lambda^f$ 
($w \in V$) 
we can easily show that the 
condition \eqref{eq-conrm} is equivalent to the one 
\begin{equation}
\frac{\partial k_i}{\partial w_j}(w)=0 
\qquad (1 \leq j \leq N, w \in V). 
\end{equation}
This implies that $k_i$ is constant on $V$. 
Then by substracting its value from $g_i$ 
we obtain the assertion. 
\end{proof}

\bigskip 
\par \indent

\begin{example}\label{ex:mero-1}
Consider the case where $N=2$ and set $z=(z_1,z_2)=(x,y)$ and 
$w=(w_1,w_2)=( \xi, \eta )$. Assume that 
$U= \{ (x,y) \in X= \CC^2 \ | \ x \not= 0 \} \subset X$, 
$K= \CC_X$ and 
the function $f:U \longrightarrow \CC$ is defined to be 
the ratio of $P(x,y)=y$ and $Q(x,y)=x$ as 
\begin{equation}
f(x,y)= \frac{P(x,y)}{Q(x,y)} = \frac{y}{x}. 
\end{equation}
Then by (an analogue of) Lemma \ref{empty-micro} we have 
\begin{equation}
\Lambda = {\rm SS }_{\rm irr}^{\CC}( \SF ) 
= \Lambda^f=  \Bigl\{ 
\bigl(x,y, - \frac{y}{x^2}, \frac{1}{x} \bigr) \ | \ 
(x,y) \in U \Bigr\} \subset T^*X \simeq X \times Y. 
\end{equation}
By solving the equation $\xi = - \frac{y}{x^2}$, 
$\eta =  \frac{1}{x}$, we find that $d=1$, 
$\Omega = \{ ( \xi , \eta ) \in Y= \CC^2 \ | \ 
\eta \not= 0 \} \subset Y$ and for 
$( \xi, \eta ) \in \Omega$ we have 
\begin{equation}
x= \zeta_1( \xi, \eta ):= \frac{1}{\eta}, \quad 
y= \zeta_2( \xi, \eta ):= - \frac{\xi}{\eta^2}. 
\end{equation}
We can also verify that the function 
\begin{equation}
g( \xi, \eta ):= f \bigl( \frac{1}{\eta}, - \frac{\xi}{\eta^2} \bigr) 
- \Bigl\langle 
\bigl( \frac{1}{\eta}, - \frac{\xi}{\eta^2} \bigr), ( \xi, \eta )
\Bigr\rangle = - \frac{\xi}{\eta}
\end{equation}
satisfies the condition $\chi ( \Lambda )= \Lambda^g$. 
\end{example} 

\bigskip 
\par \indent

\begin{example}\label{ex:mero-2}
Let $N=2$ and $z=(z_1,z_2)=(x,y)$, 
$w=(w_1,w_2)=( \xi, \eta )$ be as in Example \ref{ex:mero-1}. 
Assume that 
$U= \{ (x,y) \in X= \CC^2 \ | \ x \not= 0 \} \subset X$, 
$K= \CC_X$ and 
the function $f:U \longrightarrow \CC$ is defined to be 
the ratio of $P(x,y)=x-y^3$ and $Q(x,y)=x$ as 
\begin{equation}
f(x,y)= \frac{P(x,y)}{Q(x,y)} = \frac{x-y^3}{x}
=1- \frac{y^3}{x}. 
\end{equation}
Then we have 
\begin{equation}
\Lambda_0 := {\rm SS }_{\rm irr}^{\CC}( \SF ) 
\cap (T^*U)_{\RR} = \Bigl\{ 
\bigl(x,y, \frac{y^3}{x^2}, - \frac{3y^2}{x} \bigr) \ | \ 
(x,y) \in U \Bigr\} \subset T^*X \simeq X \times Y. 
\end{equation}
Let us set 
\begin{equation}
\Omega_0 := \{ ( \xi , \eta ) \in Y= \CC^2 \ | \ 
\xi \not= 0, \eta \not= 0 \} \quad \subset Y. 
\end{equation}
Then by solving the equation $\xi = \frac{y^3}{x^2}$, 
$\eta =  - \frac{3y^2}{x}$, we find that for 
$( \xi, \eta ) \in \Omega_0$ we have 
\begin{equation}
x= \zeta_1( \xi, \eta ):= - \frac{\eta^3}{27 \xi^2}, \quad 
y= \zeta_2( \xi, \eta ):= \frac{\eta^2}{9 \xi}. 
\end{equation}
We can also verify that the function 
\begin{equation}
g( \xi, \eta ):= f \bigl( - \frac{\eta^3}{27 \xi^2}, 
 \frac{\eta^2}{9 \xi} \bigr) 
- \Bigl\langle 
\bigl(  - \frac{\eta^3}{27 \xi^2}, 
 \frac{\eta^2}{9 \xi} \bigr), ( \xi, \eta )
\Bigr\rangle = 1- \frac{\eta^3}{27 \xi}
\end{equation}
on $\Omega_0$ satisfies the condition 
$\chi ( \Lambda_0 ) \cap q^{-1}(\Omega_0) = \Lambda^g$. 
\end{example}

\bigskip 
\par \indent
Recall that for the closed embedding 
\begin{equation}
\iota : (T^*X) \times \CC \hookrightarrow 
T^*(X \times \CC ) \qquad 
(((z,w), \tau ) \longmapsto 
(z, \tau , w, 1)) 
\end{equation}
we have 
\begin{equation}
{\rm SS}^{{\rm E}, \CC}( \SF )= 
\iota^{-1} {\rm SS}( \SF ). 
\end{equation}
Then for $r+1 \leq i \leq d$, by the proof of 
Lemma \ref{iso-analy-new} we obtain the following 
description of ${\rm SS}^{{\rm E}, \CC}( \SF ) 
\cap ( \Lambda_{V,i} \times \CC )$.
For $r+1 \leq i \leq d$ let 
$\rho_i: Z_i \times_X T^*X \longrightarrow T^*Z_i$ 
be the natural morphism associated to the 
inclusion map $Z_i \hookrightarrow X= \CC^N$. 

\begin{lemma}\label{iso-anaconti}
For any $r+1 \leq i \leq d$ there exist holomorphic functions 
$h_{ik}:Z_i \longrightarrow \CC$ 
($1 \leq k \leq n_i$) on $Z_i$ and a 
neighborhood $W$ of $\Lambda_{V,i}$ in $T^*X$ such that for 
the complex submanifolds 
\begin{equation}
\Gamma_k:= \{ (z, -h_{ik}(z)) \in X \times \CC \ | \ 
z \in Z_i \} \subset X \times \CC
\qquad (1 \leq k \leq n_i) 
\end{equation}
of $X \times \CC$ we have 
\begin{equation}\label{incl-ssec} 
{\rm SS}^{{\rm E}, \CC}( \SF ) =  
\bigcup_{k=1}^{n_i} 
\iota^{-1} \Bigl\{  
T^*_{\Gamma_k}(X \times \CC ) \Bigr\}  
\end{equation}
in the open subset $W \times \CC \subset (T^*X) \times \CC$ 
and 
\begin{equation}\label{incl-ssec-new}
\rho_i( \Lambda_{V,i}) = 
\Lambda^{h_{ik}} 
\qquad (1 \leq k \leq n_i), 
\end{equation}
where we set 
\begin{equation}
\Lambda^{h_{ik}} := 
\{ (z, dh_{ik}(z)) \ | \ z \in Z_i \} \subset T^*Z_i. 
\end{equation}
\end{lemma}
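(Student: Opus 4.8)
The argument is a localization near $\Lambda_{V,i}$ of the stratification-theoretic analysis in the proof of Lemma \ref{iso-analy-new}. Recall from there that, after extending $\SF$ to the constructible sheaf $j_!\SF$ on $X\times\PP$ and invoking the theorem in \cite[page 43]{GM88}, one has Whitney stratifications $\CS$ of $X\times\PP$ and $\CS_0$ of $X$, compatible with the projection $X\times\PP\to X$ (which is then a stratified fibre bundle), such that ${\rm SS}(j_!\SF)$ is a union of conormal bundles $T^*_S(X\times\PP)$ ($S\in\CS$) and $X\times\CC$ is a union of strata. Refining both stratifications if necessary, I would first arrange that the smooth submanifold $Z_i$ is a union of strata of $\CS_0$. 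The first key point, already used in the proof of Lemma \ref{iso-analy-new}, is that a non-horizontal stratum $S\subset X\times\CC$ contributes nothing to ${\rm SS}^{{\rm E},\CC}(\SF)=\iota^{-1}{\rm SS}(\SF)$: along such an $S$ the $\CC$-direction is tangent, so every conormal covector has vanishing $\CC$-component and cannot lie in the image of $\iota$ (where the last component is $1$); likewise the zero sections do not meet the image of $\iota$. Hence ${\rm SS}^{{\rm E},\CC}(\SF)$ is governed by the conormal bundles of the horizontal strata $S\subset X\times\CC$ whose full conormal is contained in ${\rm SS}(\SF)$.

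Next I would localize: shrink $W$ so that $\Lambda\cap W=\Lambda_{V,i}$. Then the projection to $T^*X$ of ${\rm SS}^{{\rm E},\CC}(\SF)\cap(W\times\CC)$ lies in $\Lambda_{V,i}$, so the base point of any of its points lies in $p(\Lambda_{V,i})=Z_i$. By local finiteness of $\CS$ only finitely many horizontal strata $S$ are relevant; each is horizontal over its projection $p(S)$, a finite étale cover of an open subset of $Z_i$. Since $V$ is contractible, $\Lambda_{V,i}\cong V$ and hence, by the fibre bundle structure of Lemma \ref{lem-fin-cover}, $Z_i$ is connected (indeed simply connected), so after relabelling sheets each relevant stratum is the graph of a single-valued holomorphic function $-h_{ik}$ over $Z_i$. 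Denote these graphs $\Gamma_1,\dots,\Gamma_{n_i}$. A direct computation of the conormal bundle of $\Gamma_k=\{(z,-h_{ik}(z))\mid z\in Z_i\}$ gives
\[
\iota^{-1}\bigl(T^*_{\Gamma_k}(X\times\CC)\bigr)
=\{((z,w),\tau)\ |\ z\in Z_i,\ \tau=-h_{ik}(z),\ \rho_i(w)=dh_{ik}(z)\},
\]
a smooth complex submanifold of $(T^*X)\times\CC$ of dimension $N={\rm dim}\,Z_i+(N-{\rm dim}\,Z_i)$, projecting onto $\rho_i^{-1}(\Lambda^{h_{ik}})\subset T^*X$. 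Combined with the previous paragraph this yields \eqref{incl-ssec}.

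Finally, to obtain \eqref{incl-ssec-new}, I would project \eqref{incl-ssec} to $T^*X$ and take closures (inside $W$): the left side gives $\Lambda\cap W=\Lambda_{V,i}$, the right side $\bigcup_{k=1}^{n_i}\rho_i^{-1}(\Lambda^{h_{ik}})$. Each $\rho_i^{-1}(\Lambda^{h_{ik}})$ is a non-empty closed complex analytic subset of pure dimension $N={\rm dim}\,\Lambda_{V,i}$ contained in the smooth connected manifold $\Lambda_{V,i}$, hence equals $\Lambda_{V,i}$; applying the surjective morphism $\rho_i$ gives $\rho_i(\Lambda_{V,i})=\Lambda^{h_{ik}}$ for every $k$ (in particular all the differentials $dh_{ik}$ coincide, so the $h_{ik}$ differ by constants — the constants $a_1,\dots,a_{n_i}$ of Section \ref{sec:1}).

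The main obstacle is the bookkeeping in the middle step: matching the abstract description of ${\rm SS}(\SF)$ as a union of conormal bundles with the explicitly defined Lagrangian component $\Lambda_{V,i}$, and in particular showing the relevant horizontal strata are genuine graphs over all of $Z_i$ rather than multi-sheeted covers. This rests on the finite covering property of Lemma \ref{lem-fin-cover} together with the contractibility of $V$, which forces $Z_i$ to be simply connected; the remaining computations (conormal bundles of graphs, dimension counts) are routine.
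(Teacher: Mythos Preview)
Your approach is correct in spirit and uses the same ingredients as the paper (the stratification of Lemma~\ref{iso-analy-new}, the horizontal strata, contractibility of $Z_i$), but the order is reversed and one step is handled differently. The paper first establishes~\eqref{incl-ssec-new} and then deduces~\eqref{incl-ssec}, whereas you do the opposite. Concretely, the paper does \emph{not} work directly with horizontal strata over $Z_i$: it passes to the full connected component $\Lambda_i^{\circ}\subset q^{-1}(\Omega)\cap\Lambda$ containing $\Lambda_{V,i}$, which by Lemma~\ref{lem-fin-cover} is a fibre bundle over a smooth quasi-affine $S_i\supset Z_i$. Using that $\overline{\Lambda_i^{\circ}}$ is an irreducible component of $\Lambda$ coming from a \emph{single} horizontal stratum over a Zariski open of $S_i$, the paper shows $\overline{\Lambda_i^{\circ}}\cap(S_i\times_X T^*X)=\rho_i^{-1}\Lambda_{S_i}$ for a Lagrangian $\Lambda_{S_i}\subset T^*S_i$, and that $\Lambda_{S_i}^{\circ}:=\rho_i(\Lambda_i^{\circ})\to S_i$ is a finite unramified covering; restricting this cover to the contractible $Z_i$ trivialises it and yields $\rho_i(\Lambda_{V,i})=\Lambda^{h}$ for a single $h$ (up to constants). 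Equation~\eqref{incl-ssec} then follows because $\Lambda_{V,i}$ lies in the smooth part of $\Lambda$.

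The point where your argument is thinner is precisely the passage ``after relabelling sheets each relevant stratum is the graph of a single-valued holomorphic function $-h_{ik}$ over $Z_i$''. Your horizontal strata lie over strata of $\mathcal{S}_0$, which after refinement are locally closed pieces of $Z_i$, not all of $Z_i$; simple connectedness of $Z_i$ trivialises covers \emph{of} $Z_i$ but does not by itself glue graphs defined only over a dense open stratum across the boundary strata. The paper's detour through $S_i$ and the irreducible component $\overline{\Lambda_i^{\circ}}$ is exactly what supplies a covering defined over the whole base, so that contractibility can be applied cleanly. Your route can be repaired (use that ${\rm SS}^{{\rm E},\CC}(\SF)$ is analytic and its projection to $T^*X$ inside $W$ is the smooth irreducible $\Lambda_{V,i}$, so ${\rm SS}^{{\rm E},\CC}(\SF)\cap(W\times\CC)\to\Lambda_{V,i}$ is finite \'etale and hence trivial), but as written this step is a gap.
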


\begin{proof}
Note that as $\Lambda_{V,i} \simeq V$ 
the complex manifold $Z_i = p( \Lambda_{V,i} ) 
\subset I(f)$ 
is contractible. Let $\Lambda_i^{\circ} 
\subset q^{-1}( \Omega ) \cap \Lambda 
= q^{-1}( \Omega ) \cap {\rm SS}_{{\rm irr}}^{\CC}( \SF )$ 
be the (unique) connected component of 
$\Lambda^{\circ}:=q^{-1}( \Omega ) \cap \Lambda$ containing 
the connected set $\Lambda_{V,i} \simeq V$. 
Then by our choice of $\Omega \subset Y= 
\CC^N$ in Lemma \ref{lem-fin-cover}, the smooth variety 
$\Lambda_i^{\circ}$ is a fiber bundle over 
a smooth (quasi-affine) subvariety $S_i$ of $X= \CC^N$ 
contained in $I(f)$ such that $Z_i \subset 
S_i$. Note that even if $i \not= i^{\prime}$ 
we may have $\Lambda_i^{\circ} = 
\Lambda_{i^{\prime}}^{\circ}$. 
As the natural morphism 
$S_i \times_X T^*X \longrightarrow T^*S_i$ 
associated to the 
inclusion map $S_i \hookrightarrow X= \CC^N$ 
is an extension of $\rho_i$, we denote it 
also by $\rho_i$. Note that the closure 
$\overline{\Lambda_i^{\circ}}$ of $\Lambda_i^{\circ}$ 
in $T^*X$ is an irreducible component of $\Lambda$. 
Moreover by the proof of 
Lemma \ref{iso-analy-new}, 
there exists a ``horizontal" stratum $M_i 
\subset X \times \CC$ 
over a Zariski open subset of $S_i$ 
such that the closure of the smooth Lagrangian subvariety 
of $T^*X$ obtained by forgetting the homogeneity of 
the conormal bundle $T^*_{M_i}(X \times \CC ) 
\subset T^*(X \times \CC )$ is equal to 
$\overline{\Lambda_i^{\circ}}$. 
This implies that the subset 
$\overline{\Lambda_i^{\circ}} \cap (S_i \times_X T^*X)$ 
of $S_i \times_X T^*X$ 
is a union of some fibers of 
$\rho_i$. Namely there exists a Lagrangian 
subvariety $\Lambda_{S_i} \subset T^*S_i$ 
such that 
\begin{equation}\label{lag-eq} 
\overline{\Lambda_i^{\circ}} \cap (S_i \times_X T^*X) 
= \rho_i^{-1} \Lambda_{S_i}. 
\end{equation}
Recall that $\Lambda_i^{\circ}$ is smooth and a fiber bundle 
over $S_i$. Since $\Lambda_i^{\circ}$ is an open subset of 
$\rho_i^{-1} \Lambda_{S_i}$, for any point 
$(z_0,w_0) \in \Lambda_i^{\circ}$ there exists 
its neighborhood $W(z_0,w_0)$ in $S_i \times_X T^*X$ 
such that $\Lambda_i^{\circ}$ is a union of 
some fibers of $\rho_i$ in $W(z_0,w_0)$. This implies 
that the open subset $\Lambda_{S_i}^{\circ}:= 
\rho_i ( \Lambda_i^{\circ} )$ of $\Lambda_{S_i}$ 
is a complex Lagrangian submanifold of $T^*S_i$ and 
the morphism $\Lambda_i^{\circ} \longrightarrow 
\Lambda_{S_i}^{\circ}$ induced by $\rho_i$ is 
a submersion. As the structure morphism 
$\Lambda_i^{\circ} \longrightarrow S_i$ of 
the fiber bundle $\Lambda_i^{\circ}$ over $S_i$ 
is the composite of the morphisms 
\begin{equation}
\Lambda_i^{\circ} \longrightarrow \Lambda_{S_i}^{\circ}
\longrightarrow S_i, 
\end{equation}
the morphism $\Lambda_{S_i}^{\circ}
\longrightarrow S_i$ induced by $T^*S_i \longrightarrow S_i$ 
is also a submersion. Moreover by the dimensional reason it is a 
finite unramified covering. We denote its 
degree by $e_i \geq 1$. 
Restricting the covering $\Lambda_{S_i}^{\circ}
\longrightarrow S_i$ 
to the contractible open 
subset $Z_i \subset S_i$, we obtain a trivial 
covering $\Lambda_{Z_i} := 
Z_i \times_{S_i} \Lambda_{S_i}^{\circ} \longrightarrow 
Z_i$ of $Z_i$ of degree $e_i$. 
Since $\rho_i( \Lambda_{V,i}) \subset 
\Lambda_{Z_i}$ is connected, 
it is contained in only one connected component 
of $\Lambda_{Z_i}$. 
Together with the condition $p( \Lambda_{V,i})=Z_i$ 
we obtain $\rho_i( \Lambda_{V,i}) \simeq Z_i$. 
Then there exists a holomorphic function  
$h:Z_i \longrightarrow \CC$ on 
$Z_i$ such that 
\begin{equation}
\rho_i( \Lambda_{V,i})= \Lambda^h:=
 \{ (z,dh(z)) \ | \ z \in Z_i \}
\end{equation}
in $T^*Z_i$. Such $h$ is uniquely 
determined up to constant 
functions on $Z_i$. Since $\Lambda_{V,i}$ is 
contained in the smooth part of the Lagrangian 
subvariety $\Lambda = {\rm SS}_{{\rm irr}}^{\CC}( \SF )
\subset T^*X$, we then 
immediately obtain the assertion. 
This completes the proof. 
\end{proof}

Moreover by the condition \eqref{incl-ssec-new} 
we see also that for any 
$1 \leq k, k^{\prime} \leq n_i$ 
such that $k \not= k^{\prime}$ 
we have $\Lambda_{h_{ik}}= 
\Lambda_{h_{ik^{\prime}}}$ and hence 
$h_{ik}-h_{ik^{\prime}} :Z_i \longrightarrow \CC$ 
is a non-zero constant function on $Z_i$. 
Then as in the proof of Lemma \ref{lem-10} 
we obtain the following result. 

\begin{lemma}\label{lem-10-new} 
For any $r+1 \leq i \leq d$ and $1 \leq k \leq n_i$ 
there exists a unique 
holomorphic function $g_{ik}$ on $V \subset \Omega$ 
such that $\chi ( \Lambda_{V,i} )= 
\Lambda^{g_{ik}}$ and the equality 
\begin{equation}
g_{ik}(w)=
h_{ik}(\zeta^{(i)}(w))- \langle \zeta^{(i)}(w), w \rangle 
\end{equation}
holds on $V$. 
\end{lemma}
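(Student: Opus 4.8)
\textbf{Proof proposal for Lemma \ref{lem-10-new}.}
The plan is to mimic the proof of Lemma \ref{lem-10}, replacing the role played there by the Lagrangian $\Lambda^f$ over $U$ with the local conormal description of ${\rm SS}^{{\rm E}, \CC}(\SF)$ over $I(f)$ obtained in Lemma \ref{iso-anaconti}. First I would fix $r+1 \leq i \leq d$ and $1 \leq k \leq n_i$ and recall from Lemma \ref{iso-anaconti} that near $\Lambda_{V,i}$ the enhanced micro-support ${\rm SS}^{{\rm E}, \CC}(\SF)$ coincides with $\bigcup_{k} \iota^{-1}\bigl(T^*_{\Gamma_k}(X \times \CC)\bigr)$, where $\Gamma_k = \{(z, -h_{ik}(z)) \mid z \in Z_i\}$, and that $\rho_i(\Lambda_{V,i}) = \Lambda^{h_{ik}}$ in $T^*Z_i$. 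Since $\iota(((z,w),\tau)) = (z,\tau,w,1)$, a covector in $T^*_{\Gamma_k}(X \times \CC)$ of the form $(z, -h_{ik}(z); w, 1)$ forces, by the defining equations of the conormal bundle of the graph $\Gamma_k$, the relation $w = dh_{ik}(z)$ along $Z_i$ modulo $T^*_{Z_i}X$; equivalently $(z_0, w_0) \in \Lambda_{V,i}$ is characterized by $\rho_i(z_0,w_0) = (z_0, dh_{ik}(z_0))$ together with $z_0 \in Z_i$.

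Next I would run the same computation as in Lemma \ref{lem-10}. Choose any holomorphic function $g_{ik}$ on the contractible open set $V \subset \Omega$ with $\chi(\Lambda_{V,i}) = \Lambda^{g_{ik}}$; such a function exists because $\chi(\Lambda_{V,i}) \subset T^*Y$ is a smooth Lagrangian over $V$ which is simply connected, and it is unique up to an additive constant. Writing $\zeta^{(i)}(w)$ for the unique point of $Z_i$ with $(\zeta^{(i)}(w), w) \in \Lambda_{V,i}$, the condition $\chi(\Lambda_{V,i}) = \Lambda^{g_{ik}}$ translates, exactly as in \eqref{eq-conrm}, into $\bigl(\partial_{w_1} g_{ik}, \ldots, \partial_{w_N} g_{ik}\bigr)(w) = -\zeta^{(i)}(w)$. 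Now set
\begin{equation}
k_{ik}(w) := g_{ik}(w) - h_{ik}(\zeta^{(i)}(w)) + \langle \zeta^{(i)}(w), w \rangle \qquad (w \in V)
\end{equation}
and differentiate. Using that $\zeta^{(i)}(w) \in Z_i$ so that $d\zeta^{(i)}(w)$ is tangent to $Z_i$, and that $(\zeta^{(i)}(w), w)$ lies in $\rho_i^{-1}\Lambda^{h_{ik}}$, i.e. the $T^*Z_i$-component of $w$ equals $dh_{ik}(\zeta^{(i)}(w))$, the chain rule gives $\partial_{w_j} k_{ik}(w) = \partial_{w_j} g_{ik}(w) + \zeta^{(i)}_j(w) + \bigl(\text{terms in } d\zeta^{(i)} \text{ pairing against } w - dh_{ik}(\zeta^{(i)})\bigr)$, and these correction terms vanish because the tangent directions of $Z_i$ annihilate the conormal part $w - dh_{ik}(\zeta^{(i)}(w)) \in T^*_{Z_i}X$. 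Hence $\partial_{w_j} k_{ik} \equiv 0$ on $V$, so $k_{ik}$ is constant; subtracting that constant from $g_{ik}$ yields the desired $g_{ik}$ satisfying $g_{ik}(w) = h_{ik}(\zeta^{(i)}(w)) - \langle \zeta^{(i)}(w), w \rangle$, which also pins down $g_{ik}$ uniquely.

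The routine parts are the chain-rule bookkeeping and the verification that $\Lambda_{V,i}$ really does project to the graph of $dh_{ik}$, both of which are already packaged in Lemmas \ref{iso-anaconti} and the remark following it (that $h_{ik} - h_{ik'}$ is a constant, so the choice of $k$ only shifts $g_{ik}$ by a constant, consistent with the $n_i$ exponential factors $a_1, \ldots, a_{n_i}$). The one point that needs a little care — and which I regard as the main obstacle — is ensuring that the pairing $\langle \zeta^{(i)}(w), w \rangle$ interacts correctly with the embedding $i_{-f}$ versus $i_{f^{w_0}}$: in the $I(f)$ regime $\SF$ is built from $-f$ on $U$, but over $Z_i \subset I(f)$ the function $f$ is not defined, and the relevant holomorphic data is precisely the $h_{ik}$ supplied by the resolution argument in Lemma \ref{iso-anaconti}, not $f$ itself. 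So one must check that the automorphism $T_{w}$ of $X \times \CC$ from the proof of Proposition \ref{lem-ess-phi-new} sends the graph $\Gamma_k = \{(z,-h_{ik}(z))\}$ to $\{(z, \langle z,w\rangle - h_{ik}(z))\}$, whence the $\CC$-component of the relevant point of ${\rm SS}^{{\rm E},\CC}(\SF)$ over $\zeta^{(i)}(w)$ is $h_{ik}(\zeta^{(i)}(w)) - \langle \zeta^{(i)}(w), w\rangle$, matching $g_{ik}(w)$; this is the analogue of the identity $g_i(w) = f(\zeta^{(i)}(w)) - \langle \zeta^{(i)}(w), w\rangle$ in Lemma \ref{lem-10} and closes the argument.
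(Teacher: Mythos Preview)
Your proposal is correct and follows exactly the approach the paper indicates: the paper simply says ``as in the proof of Lemma \ref{lem-10}'', and you have carried out precisely that argument, replacing $f$ by $h_{ik}$ and using the relation $\rho_i(\Lambda_{V,i}) = \Lambda^{h_{ik}}$ from Lemma \ref{iso-anaconti} in place of the inclusion $\Lambda_{V,i} \subset T^*_{Z_i}X + \Lambda^f$. The chain-rule computation showing that $k_{ik}$ is constant is the correct elaboration of what the paper leaves implicit, and your closing remarks about $T_w$ are a consistency check rather than an additional ingredient needed for the proof.
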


\bigskip 
\par \indent

\begin{example}\label{ex:mero-3}
We consider the situation in Example \ref{ex:mero-2} and 
inherit the notations there. 
Moreover we assume that $Z=X= \CC^2$ and $K= \CC_X$. 
For $w=( \xi, \eta ) \in \Omega_0$ the meromorphic 
function $f^w:X \setminus Q^{-1}(0)= 
\CC^2 \setminus \{ x=0 \} \longrightarrow \CC$ 
is written as  
\begin{equation}
f^w(x,y)= ( \xi x + \eta y)-f(x,y) 
= \frac{\xi x^2+ \eta xy+y^3-x}{x}. 
\end{equation}
For $t \in \CC$ we set also 
\begin{equation}
R_t^w(x,y):= ( \xi x^2+ \eta xy+y^3-x)-tx 
\end{equation}
so that we have 
\begin{equation}
f^w(x,y)-t=
\frac{R_t^w(x,y)}{Q(x,y)} \qquad 
(z=(x,y) \in U=X \setminus Q^{-1}(0))
\end{equation}
and 
\begin{equation}
\overline{(f^w)^{-1}(t)}= 
(R_t^w)^{-1}(0) \subset X= \CC^2. 
\end{equation}
Then for $t \not= -1$ the complex curve 
$(R_t^w)^{-1}(0)$ in $X= \CC^2$ is smooth 
on a neighborhood of the origin 
$\{ (0,0) \} =P^{-1}(0) \cap Q^{-1}(0)$. 
If $t=-1$, for any $w=( \xi, \eta ) \in \Omega_0$ 
the defining polynomial 
\begin{equation}
R_{-1}^w(x,y):= \xi x^2+ \eta xy+y^3
\end{equation}
of the curve $(R_t^w)^{-1}(0)$ 
is Newton non-degenerate at the origin 
$\{ (0,0) \}$ (see \cite[Section 3]{Take23} 
for the details). 
Then by applying the 
classical Kouchnirenko's theorem 
(see \cite{Kou76}) to it, 
we see that for any $w=( \xi, \eta ) \in \Omega$ 
its Milnor number at the origin is equal to $1$. 
Moreover in Example \ref{exe-2} we studied the 
Milnor fiber of the rational function 
\begin{equation}
f^w(x,y)-(-1)= \frac{R_{-1}^w(x,y)}{Q(x,y)} 
= \frac{\xi x^2+ \eta xy+y^3}{x} \qquad 
(z=(x,y) \in U=X \setminus Q^{-1}(0))
\end{equation}
precisely and obtained the isomorphisms 
\begin{equation}
H^j \phi^{\merc}_{f^w-(-1)}( K[2] )_{(0,0)}
\simeq 
\begin{cases}
\CC & (j=-1)\\
\\
0 & (\mbox{otherwise}). 
\end{cases}
\end{equation}
By Proposition \ref{lem-ess-phi-new} this implies that 
for the conormal bundle $T^*_{\{ (0,0) \}}X 
( \simeq \{ (0,0) \} \times Y ) \subset 
T^*X ( \simeq X \times Y)$ of 
the complex submanifold $\{ (0,0) \} \subset 
X$ the subset $(T^*_{\{ (0,0) \}}X ) \times 
\{ -1 \} \subset (T^*X) \times \CC$ is 
contained in ${\rm SS}^{{\rm E}, \CC}( \SF )$. 
We thus obtain 
\begin{equation}
{\rm SS}^{\CC}_{{\rm irr}}( \SF )= 
\Bigl( T^*_{\{ (0,0) \}}X \Bigr) \cup \Lambda_0
\end{equation}
(for the definition of $\Lambda_0$ see 
Example \ref{ex:mero-2}). In particular, 
we can take $\Omega$ to be $\Omega_0$ 
in this case. 
\end{example} 

\bigskip 
\par \indent
For a point $w \in V \subset \Omega$ we define 
two regular functions $\ell^w : X= \CC^N 
\longrightarrow \CC$ and 
$f^w: U=X \setminus Q^{-1}(0) 
\longrightarrow \CC$ by 
\begin{equation}
\ell^w: X \longrightarrow \CC \qquad 
(z \longmapsto   \langle  z, w \rangle ) 
\end{equation}
and 
\begin{equation}
f^w = \ell^w|_U 
-f:U=X \setminus Q^{-1}(0) \longrightarrow \CC \qquad 
(z \longmapsto   \langle  z, w \rangle -f(z)) 
\end{equation}
respectively. 

\begin{lemma}\label{lem-nodeg}
For any  $1 \leq i \leq r$ and 
$w \in V \subset \Omega$ the holomorphic function 
$f^w|_{Z_i} : Z_i \longrightarrow \CC$ on $Z_i$ 
has a non-degenerate (complex Morse)
critical point at $\zeta^{(i)}(w) \in Z_i$. 
\end{lemma}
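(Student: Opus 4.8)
The plan is to reduce the claim to a statement about the Lagrangian geometry of $\Lambda_{V,i}$ and then to invoke the standard fact that a section of $T^*Z_i$ that is both the graph of $dg$ (for some function $g$) and arises as a generic projection of a Lagrangian cut out ``transversally'' has non-degenerate critical points exactly where the projection is unramified. Concretely, recall from \eqref{incluss} that $\Lambda \cap T^*U_{\RR} = ({\rm SS}(K) \cap T^*U_{\RR}) + \Lambda^f$, and that for $1 \leq i \leq r$ we have $Z_i = p(\Lambda_{V,i}) \subset U$ and (by the proof of Lemma \ref{lem-10}) $\Lambda_{V,i} \subset T^*_{Z_i}X + \Lambda^f$. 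First I would fix $w \in V$ and observe that a point $z \in Z_i$ is a critical point of $f^w|_{Z_i}$ precisely when $d(f^w|_{Z_i})(z) = 0$, i.e. when the covector $df^w(z) = w - df(z) \in T^*_zX$ annihilates $T_zZ_i$, which is exactly the condition $(z, w - df(z)) \in T^*_{Z_i}X + \Lambda^f$ meeting the fiber over $w$ of $q$; by the very construction of $\zeta^{(i)}$, the unique such point is $z = \zeta^{(i)}(w)$. So $\zeta^{(i)}(w)$ is the unique critical point of $f^w|_{Z_i}$, and it remains to check non-degeneracy.

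For non-degeneracy I would argue as follows. By Lemma \ref{lem-10}, $g_i(w) = f(\zeta^{(i)}(w)) - \langle \zeta^{(i)}(w), w\rangle$ and $\chi(\Lambda_{V,i}) = \Lambda^{g_i}$, so $\partial g_i/\partial w_j(w) = -\zeta^{(i)}_j(w)$; since $q|_\Lambda$ restricts to an \emph{unramified} finite covering $\Lambda_{V,i} \simto V$ by Lemma \ref{lem-fin-cover}, the map $w \mapsto \zeta^{(i)}(w)$ is holomorphic with everywhere-invertible differential onto $Z_i$, i.e. the Hessian matrix $\bigl(\partial^2 g_i/\partial w_j \partial w_k(w)\bigr)$ has rank $N$ — but this is the Hessian of $g_i$ as a function on $V \subset Y = \CC^N$, not yet what we want. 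The bridge is the Legendre-type duality between $f^w|_{Z_i}$ near $\zeta^{(i)}(w)$ and $g_i$ near $w$: choosing local holomorphic coordinates on $Z_i$ (say via a linear projection $Z_i \hookrightarrow \CC^N \to \CC^l$ with $l = \dim Z_i$ that is étale near $\zeta^{(i)}(w)$, which exists after shrinking), the graph $\chi(\Lambda_{V,i}) = \Lambda^{g_i}$ and the graph $\rho(\Lambda_{V,i}) = \Lambda^{\tilde g_i}$ in $T^*Z_i$ (where $\tilde g_i$ is a local primitive, which by the computation $-f^w(\zeta^{(i)}(w))$ in the introduction equals $f^w|_{Z_i}$ up to an additive constant) are related by the symplectomorphism $\chi$ composed with the symplectic reduction $T^*X \rightsquigarrow T^*Z_i$. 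Under such a symplectic transformation carrying one conic-free Lagrangian section to another, the critical point of one generating function is non-degenerate if and only if the corresponding critical point of the other is — and here the ``other'' side is the whole of $V$, where $g_i$ is a genuine smooth function and the relevant condition is exactly the unramifiedness we already have. So non-degeneracy of $f^w|_{Z_i}$ at $\zeta^{(i)}(w)$ follows.

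I expect the main obstacle to be making the Legendre-duality argument clean and coordinate-free, i.e. verifying carefully that the ``generating function'' of $\Lambda_{V,i}$ with respect to the fibration $T^*X \to X$ (restricted to $Z_i$, so essentially $f^w|_{Z_i}$ up to constant) and the generating function with respect to $T^*Y \to Y$ (namely $g_i$) have Hessians that are inverse to one another (up to sign) at corresponding points — the classical statement that for a Lagrangian graph $\{(z, d\varphi(z))\}$, passing to the Fourier-dual picture replaces $\varphi$ by its Legendre transform $\psi$ with $\mathrm{Hess}\,\psi = (\mathrm{Hess}\,\varphi)^{-1}$, so that one is nondegenerate iff the other is, and both are controlled by whether the projection is a local diffeomorphism. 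This is routine but needs the hypothesis $Z_i \subset U$ (so that $\Lambda_{V,i}$ lies in the smooth part $T^*_{Z_i}X + \Lambda^f$ of $\Lambda$ and $f$ is holomorphic there), and it needs the choice of $\Omega$ from Lemma \ref{lem-fin-cover} to guarantee unramifiedness of $q|_\Lambda$ over $V$. The remaining steps — uniqueness of the critical point, identification of the critical value with $-f^w(\zeta^{(i)}(w))$, and the reduction to the linear-algebra fact — are short once the duality is set up, so I would present the duality lemma first (perhaps citing \cite[Lemma-Definition 7.4.2]{DK17} as the one-dimensional prototype) and then deduce the proposition in a few lines.
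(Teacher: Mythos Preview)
Your identification of the critical point is correct and matches the paper. For non-degeneracy, however, your Legendre-duality route is both less direct than the paper's argument and contains a slip: the Hessian of $g_i$ does \emph{not} have rank $N$ in general. Since $\partial g_i/\partial w_j = -\zeta^{(i)}_j(w)$ and $\zeta^{(i)}(w)$ ranges in $Z_i$ of dimension $l = \dim Z_i \leq N$, the matrix $(-\partial \zeta^{(i)}_j/\partial w_k)$ has rank exactly $l$ (surjectivity onto $T Z_i$ follows from the fiber-bundle condition in Lemma~\ref{lem-fin-cover}, but the map $V \to Z_i$ is a submersion, not a diffeomorphism). So the ``everywhere-invertible differential'' and ``rank $N$'' claims are wrong as stated, and the classical Legendre duality you invoke does not apply directly because $\Lambda_{V,i}$ is not a graph over the $X$-direction when $l<N$.

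The paper bypasses this entirely. It introduces the symplectomorphism $\Phi : T^*U \to T^*U$, $(z,w) \mapsto (z, w + df(z))$, under which $\Lambda_{V,i} \subset \Phi(T^*_{Z_i}X)$ and the fiber $q^{-1}(w) = \Lambda^{\ell^w}$ is carried by $\Phi^{-1}$ to $\Lambda^{f^w} = \Lambda^{\ell^w - f}$. The unramifiedness of $q|_\Lambda$ over $V$ is precisely the transversality of $q^{-1}(w)$ to $\Lambda_{V,i}$; applying $\Phi^{-1}$ gives transversality of $\Lambda^{f^w}$ to $T^*_{Z_i}X$ at the point $(\zeta^{(i)}(w), w - df(\zeta^{(i)}(w)))$. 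The paper then cites \cite[Lemma 7.2.2]{KS85}, which states exactly that such transversality is equivalent to $f^w|_{Z_i}$ having a non-degenerate critical point at $\zeta^{(i)}(w)$. This is a two-line argument once $\Phi$ is introduced, and it handles the case $\dim Z_i < N$ without any symplectic reduction or choice of local coordinates on $Z_i$. Your approach can in principle be salvaged by passing to the reduced Lagrangian in $T^*Z_i$ and running the Legendre duality there, but that amounts to reproving the Kashiwara--Schapira lemma by hand.
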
 

\begin{proof}
In the proof of Lemma \ref{lem-10},  
for $1 \leq i \leq r$ we saw that 
$\Lambda_{V,i}$ is an open subset of 
$T^*_{Z_i}X + \Lambda^f \subset T^*U$. 
In other words, for the diffeomorphism 
\begin{equation}
\Phi : T^*U \simto T^*U \qquad 
((z,w) \longmapsto (z,w+df(z))
\end{equation}
of $T^*U$ we have the inclusion 
\begin{equation}
\Lambda_{V,i} \subset \Phi (T^*_{Z_i}X ). 
\end{equation}
Moreover, for $w \in V$ the condition 
$( \zeta^{(i)}(w), w) \in \Lambda_{V,i}$ implies 
\begin{equation}
 ( \zeta^{(i)}(w), df^w ( \zeta^{(i)}(w))) 
= ( \zeta^{(i)}(w), w-df( \zeta^{(i)}(w)))
\in T^*_{Z_i}X
\end{equation}
and hence the holomorphic function 
$f^w|_{Z_i} : Z_i \longrightarrow \CC$ 
has a critical point at 
$\zeta^{(i)}(w) \in Z_i$. 
By the choice of $\Omega$, for any $w \in V 
\subset \Omega$ the fiber 
$q^{-1}(w) \simeq X= \CC^N$ of $q: X \times Y 
\longrightarrow Y$ intersects $\Lambda_{V,i} \subset 
\Phi (T^*_{Z_i}X )$ transversally. 
Moreover, we have $q^{-1}(w)= \Lambda^{\ell^w}$. 
This implies that $\Phi^{-1}( \Lambda^{\ell^w} ) 
=  \Lambda^{\ell^w-f} = \Lambda^{f^w}$ intersects 
$\Phi^{-1}( \Lambda_{V,i}) \subset T^*_{Z_i}X$ 
transversally. Then by Kashiwara-Schapira 
\cite[Lemma 7.2.2]{KS85}, the holomorphic function 
$f^w|_{Z_i}=( \ell^w-f)|_{Z_i} : Z_i 
\longrightarrow \CC$ on $Z_i$ 
has a non-degenerate (complex Morse)
critical point at 
$\zeta^{(i)}(w) \in Z_i$.  
\end{proof}

As in the proof of Lemma \ref{lem-nodeg}, 
by Lemma \ref{iso-anaconti}  
we obtain also the following result. 
For $r+1 \leq i \leq d$, $1 \leq k \leq n_i$ and 
$w \in V \subset \Omega$ we 
define a holomorphic function 
$h_{ik}^w: Z_i \longrightarrow \CC$ on $Z_i$ by 
\begin{equation}
h_{ik}^w:= \ell^w|_{Z_i}-h_{ik} 
: Z_i \longrightarrow \CC, \qquad 
(z \longmapsto \langle z,  w \rangle - h_{ik}(z))
\end{equation}
so that we have 
\begin{equation}
g_{ik}(w)= h_{ik}(\zeta^{(i)}(w))
- \langle \zeta^{(i)}(w), w \rangle 
= -h_{ik}^w ( \zeta^{(i)}(w) ) 
\qquad (w \in V). 
\end{equation}

\begin{lemma}\label{lem-nodegen-new}
For any $r+1 \leq i \leq d$, $1 \leq k \leq n_i$ and 
$w \in V \subset \Omega$ the function 
$h_{ik}^w: Z_i \longrightarrow \CC$ on $Z_i$ 
has a non-degenerate (complex Morse)
critical point at $\zeta^{(i)}(w) \in Z_i$.
\end{lemma}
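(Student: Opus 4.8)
The plan is to transcribe, almost verbatim, the proof of Lemma \ref{lem-nodeg}, using Lemma \ref{iso-anaconti} to supply the structural input that in the case $1\leq i\leq r$ was provided by the inclusion $\Lambda_{V,i}\subset\Phi(T^*_{Z_i}X)$ (which there relied on $Z_i\subset U$, hence is not available now). Fix $r+1\leq i\leq d$ and $1\leq k\leq n_i$, and consider the (not necessarily homogeneous) complex Lagrangian submanifold
\[
\mathcal{L}_{ik}:=\rho_i^{-1}\bigl(\Lambda^{h_{ik}}\bigr)\ \subset\ Z_i\times_X T^*X\ \subset\ T^*X ;
\]
concretely, for any holomorphic extension $\widetilde{h}_{ik}$ of $h_{ik}$ to a neighbourhood of $Z_i$ in $X=\CC^N$ one has $\mathcal{L}_{ik}=T^*_{Z_i}X+d\widetilde{h}_{ik}$, a conormal bundle to $Z_i$ twisted by $dh_{ik}$. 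The first step is to check that $\Lambda_{V,i}$ is an open subset of $\mathcal{L}_{ik}$: by Lemma \ref{iso-anaconti} we have $\rho_i(\Lambda_{V,i})=\Lambda^{h_{ik}}$, hence $\Lambda_{V,i}\subset\rho_i^{-1}(\Lambda^{h_{ik}})=\mathcal{L}_{ik}$, and since $\rho_i$ is a surjective submersion with fibres of dimension $N-\dim Z_i$ while $\dim\Lambda^{h_{ik}}=\dim Z_i$, we get $\dim\mathcal{L}_{ik}=N=\dim\Lambda_{V,i}$ (recall $\Lambda_{V,i}\simeq V$ with $V$ open in $\Omega\subset Y=\CC^N$); as $\Lambda_{V,i}$ is smooth, it is therefore open in $\mathcal{L}_{ik}$.

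With this in hand I would run the Morse-theoretic argument exactly as in Lemma \ref{lem-nodeg}. Fix $w\in V\subset\Omega$. Since $\rho_i(\zeta^{(i)}(w),w)\in\rho_i(\Lambda_{V,i})=\Lambda^{h_{ik}}$, the covector $w$ restricts on $T_{\zeta^{(i)}(w)}Z_i$ to $dh_{ik}(\zeta^{(i)}(w))$, i.e. $dh_{ik}^w(\zeta^{(i)}(w))=0$; thus $h_{ik}^w$ has a critical point at $\zeta^{(i)}(w)$ and $(\zeta^{(i)}(w),w)\in\Lambda^{\ell^w}\cap\mathcal{L}_{ik}$. By the choice of $\Omega$ in Lemma \ref{lem-fin-cover} the projection $q|_{\Lambda}$ is an unramified finite covering over $\Omega$, so $q^{-1}(w)=\Lambda^{\ell^w}$ meets $\Lambda_{V,i}$ transversally in $T^*X$ at $(\zeta^{(i)}(w),w)$; since $\Lambda_{V,i}$ is open in $\mathcal{L}_{ik}$, the same holds with $\mathcal{L}_{ik}$ in place of $\Lambda_{V,i}$. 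Applying the fibrewise translation $(z,\xi)\longmapsto(z,\xi-d\widetilde{h}_{ik}(z))$ near $Z_i$, which carries $\mathcal{L}_{ik}$ to $T^*_{Z_i}X$ and $\Lambda^{\ell^w}$ to $\Lambda^{\ell^w-\widetilde{h}_{ik}}$, I would then invoke Kashiwara-Schapira \cite[Lemma 7.2.2]{KS85}, as in Lemma \ref{lem-nodeg}, to conclude that $(\ell^w-\widetilde{h}_{ik})|_{Z_i}=h_{ik}^w$ has a non-degenerate (complex Morse) critical point at $\zeta^{(i)}(w)$, which is the assertion.

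I expect the only step requiring genuine care to be the openness of $\Lambda_{V,i}$ inside the twisted conormal bundle $\mathcal{L}_{ik}$; once this is granted, the transfer of the transversality and the application of \cite[Lemma 7.2.2]{KS85} are purely formal and identical to Lemma \ref{lem-nodeg}. That openness is itself a modest repackaging of what Lemma \ref{iso-anaconti} already yields (over the contractible $Z_i$, the component $\Lambda_{V,i}$ lies inside the conormal bundle to $Z_i$ twisted by $dh_{ik}$) together with the dimension count above, so no essentially new difficulty should arise here; the substantive analytic work has already been carried out in Lemmas \ref{iso-analy-new} and \ref{iso-anaconti}.
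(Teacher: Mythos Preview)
Your proposal is correct and follows exactly the route the paper indicates: the paper simply states that the result is obtained ``as in the proof of Lemma \ref{lem-nodeg}, by Lemma \ref{iso-anaconti}'' without writing out the details. You have supplied precisely those details --- using \eqref{incl-ssec-new} to place $\Lambda_{V,i}$ as an open subset of the twisted conormal $\rho_i^{-1}(\Lambda^{h_{ik}})$, then transferring the transversality of $q^{-1}(w)=\Lambda^{\ell^w}$ with $\Lambda_{V,i}$ through the fibrewise translation and invoking \cite[Lemma 7.2.2]{KS85} --- which is the intended argument.
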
 

Note that 
for $1 \leq i \leq r$ and $w \in V \subset \Omega$ 
the point 
\begin{equation}
( \zeta^{(i)}(w), df^w( \zeta^{(i)}(w)))= 
( \zeta^{(i)}(w), w- df( \zeta^{(i)}(w))) \in 
(T^*X)_{\RR} 
\end{equation}
is contained in the intersection of 
$T^*_{Z_i}X$ and 
the smooth part of ${\rm SS}(K|_U)
\subset (T^*U)_{\RR}$. 
We denote by $m(i) \geq 1$ the multiplicity of 
the regular holonomic 
$\SD_X$-module $\SN \in\Modrh(\SD_X)$ (or 
of the perverse sheaf $K[N]$) there. 

\begin{proposition}\label{prop-cevan1}
For any $1 \leq i \leq r$ and $w \in V \subset \Omega$ 
we have isomorphisms 
\begin{equation}
H^j \phi^{\merc}_{f^w-c}( K[N] )_{\zeta^{(i)}(w)}
\simeq H^j \phi_{f^w-c}( K[N] )_{\zeta^{(i)}(w)}
\simeq 
\begin{cases}
\CC^{m(i)} & (j=-1)\\
\\
0 & (\mbox{otherwise}), 
\end{cases}
\end{equation}
where we set $c:= f^w(\zeta^{(i)}(w))= 
\langle \zeta^{(i)}(w), w \rangle 
-f(\zeta^{(i)}(w)) \in \CC$. 
\end{proposition}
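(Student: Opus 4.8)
\medskip
\noindent\emph{Plan of proof.} The plan is to reduce the meromorphic vanishing cycle to a classical one and then to compute the latter by a Morse-theoretic argument based on the non-degenerate critical point of $f^w|_{Z_i}$ supplied by Lemma~\ref{lem-nodeg}. First, since $1\le i\le r$ the submanifold $Z_i=p(\Lambda_{V,i})$ is contained in $U=X\setminus Q^{-1}(0)$, so $f^w=\ell^w-f$ and hence $f^w-c$ are holomorphic on a neighbourhood of $z_0:=\zeta^{(i)}(w)\in Z_i$; here $c=f^w(z_0)$, so $f^w-c$ vanishes at $z_0$. Therefore Lemma~\ref{lem-2}~(iii) applies and yields an isomorphism
\[
\phi^{\merc}_{f^w-c}(K[N])_{z_0}\ \simeq\ \phi_{f^w-c}(K[N])_{z_0}
\]
for the classical vanishing cycle functor, which is the first isomorphism of the statement. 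It then remains to compute $\phi_{f^w-c}(K[N])_{z_0}$.

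Next I would pass to a local model. By the discussion preceding the statement, the covector $\xi_0:=df^w(z_0)=w-df(z_0)$ lies, as an element of $(T^*_{Z_i}X)_{z_0}$, in the smooth part of ${\rm SS}(K|_U)$, along which ${\rm SS}(K|_U)$ coincides with an open piece of the conormal bundle $T^*_{Z_i}X$ carrying the multiplicity $m(i)$ of the perverse sheaf $K[N]$. By the microlocal structure of perverse sheaves near a smooth point of their micro-support, after shrinking to a small neighbourhood $W$ of $z_0$ the object $K[N]$ is, microlocally at $(z_0,\xi_0)$, isomorphic to $(i_{Z_i})_\ast(L[\dim Z_i])$ for a local system $L$ of rank $m(i)$ on $Z_i\cap W$; equivalently, the cone of the comparison morphism has micro-support avoiding $(z_0,\xi_0)$. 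Since by \cite[Proposition~8.6.3]{KS90} the stalk $\phi_{f^w-c}(G)_{z_0}$ vanishes whenever $(z_0,df^w(z_0))\notin{\rm SS}(G)$ and is additive in distinguished triangles, we obtain
\[
\phi_{f^w-c}(K[N])_{z_0}\ \simeq\ \phi_{f^w-c}\bigl((i_{Z_i})_\ast(L[\dim Z_i])\bigr)_{z_0}.
\]

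Now, since the classical vanishing cycle functor commutes with the (locally) closed direct image $(i_{Z_i})_\ast$, the right-hand side is $\phi_{(f^w-c)|_{Z_i}}(L[\dim Z_i])_{z_0}$. By Lemma~\ref{lem-nodeg} the function $(f^w-c)|_{Z_i}$ has a non-degenerate complex Morse critical point at $z_0$, so the holomorphic Morse lemma identifies a neighbourhood of $z_0$ in $Z_i$ with $(\CC^{\delta},0)$, $\delta:=\dim Z_i$, carrying the function $z_1^2+\cdots+z_{\delta}^2$, whose Milnor fibre is homotopy equivalent to $S^{\delta-1}$. As $L$ is trivial of rank $m(i)$ on this Milnor fibre (it is simply connected for $\delta\ge 2$, and the cases $\delta=0,1$ are checked directly), the classical computation of the vanishing cohomology of an isolated non-degenerate critical point gives $H^j\phi_{(f^w-c)|_{Z_i}}(L[\delta])_{z_0}\simeq\CC^{m(i)}$ for $j=-1$ and $0$ otherwise, which together with the first step proves the proposition. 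Alternatively, the concentration in degree $-1$ follows at once from perversity: $\phi^{\merc}_{f^w-c}(K[N])[-1]$ is perverse by Theorem~\ref{the-1}~(ii) (and likewise $\phi_{f^w-c}(K[N])[-1]$), and by \cite[Proposition~8.6.3]{KS90} its support near $z_0$ is contained in the set of critical points of $f^w|_{Z_i}$, which reduces to $\{z_0\}$ by Lemma~\ref{lem-nodeg}; a perverse sheaf supported at a single point is a skyscraper sheaf, and the rank $m(i)$ is its micro-local multiplicity along $T^*_{Z_i}X$.

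The step I expect to be the main obstacle is the micro-local reduction of the second paragraph: one must make precise the sense in which the vanishing cycle of $K[N]$ at $z_0$ is governed only by the single component $T^*_{Z_i}X$ of ${\rm SS}(K|_U)$ together with its multiplicity $m(i)$, which is precisely where the hypothesis that $\xi_0$ lies in the \emph{smooth} part of ${\rm SS}(K|_U)$ is used. Once this is in place, the remaining arguments are, as in the proof of \cite[Theorem~4.4]{IT20a}, routine Morse theory together with the bookkeeping of degree shifts.
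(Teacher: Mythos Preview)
Your proposal is correct and follows essentially the same route as the paper's proof: reduce the meromorphic vanishing cycle to the classical one via holomorphy at $z_0\in U$, then use the microlocal isomorphism of $K[N]$ with (a direct sum of copies of) the constant sheaf on $Z_i$ in the localized category at $(z_0,df^w(z_0))$ together with \cite[Proposition~8.6.3]{KS90}, and finally invoke Lemma~\ref{lem-nodeg} and the Milnor number~$1$ of a non-degenerate critical point. The only cosmetic difference is that the paper works directly with $\CC_{Z_i}^{\oplus m(i)}[\dim Z_i]$ in the localized category rather than with a local system $L$ of rank $m(i)$, so your simply-connectedness remark about the Milnor fibre is not needed there.
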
 

\begin{proof}
Since for $1 \leq i \leq r$ we have 
$\zeta^{(i)}(w) \in Z_i \subset U=
X \setminus Q^{-1}(0)$ and hence 
the function $f^w-c$ is holomorphic 
on a neighborhood of $\zeta^{(i)}(w)$, 
there exists an isomorphism 
\begin{equation}
\phi^{\merc}_{f^w-c}( K[N] )_{\zeta^{(i)}(w)}
\simeq \phi_{f^w-c}( K[N] )_{\zeta^{(i)}(w)}. 
\end{equation}
Moreover by our definition of 
$m(i) \geq 1$ the perverse sheaf $K[N] \in \Dbc(X^{\an})$ 
is isomorphic to $\CC_{Z_i}^{\oplus m(i)} 
[ {\rm dim } Z_i] \in \Dbc(X^{\an})$ in the 
localized category 
$\Db (X^{\an}, \{ ( \zeta^{(i)}(w), df^w( \zeta^{(i)}(w))) \} )$ 
at the point $( \zeta^{(i)}(w), df^w( \zeta^{(i)}(w))) \in T^*X^{\an}$ 
(see \cite[Definition 6.1.1]{KS90} 
for the definition). Then by 
\cite[Proposition 8.6.3]{KS90} 
we obtain isomorphisms 
\begin{align*}
\phi_{f^w-c}( K[N] )_{\zeta^{(i)}(w)} 
& \simeq 
\phi_{f^w-f^w(\zeta^{(i)}(w))}( \CC_{Z_i}^{\oplus m(i)} 
[ {\rm dim } Z_i] )_{\zeta^{(i)}(w)} 
\\ 
& \simeq 
\phi_{f^w|_{Z_i}-f^w(\zeta^{(i)}(w))}( \CC_{Z_i}^{\oplus m(i)} 
[ {\rm dim } Z_i] )_{\zeta^{(i)}(w)}. 
\end{align*} 
Recall now that by Lemma \ref{lem-nodeg} the holomorphic function 
$f^w|_{Z_i} : Z_i \longrightarrow \CC$ on $Z_i$ 
has a non-degenerate (complex Morse)
critical point at $\zeta^{(i)}(w) \in Z_i$. 
Then the Milnor fiber at it is 
equal to one and hence we obtain the desired 
isomorphisms 
\begin{align*}
H^j \phi_{f^w-c}( K[N] )_{\zeta^{(i)}(w)} 
& \simeq 
H^j 
\phi_{f^w|_{Z_i}-f^w(\zeta^{(i)}(w))}( \CC_{Z_i}^{\oplus m(i)} 
[ {\rm dim } Z_i] )_{\zeta^{(i)}(w)}
\\ 
& \simeq 
\begin{cases}
\CC^{m(i)} & (j=-1)\\
\\
0 & (\mbox{otherwise})
\end{cases}
\end{align*}
(see e.g. 
\cite[Theorem 2.6]{Take23}). 
\end{proof}

Note that by \eqref{incl-ssec} 
for $r+1 \leq i \leq d$, $1 \leq k \leq n_i$ 
and $w \in V \subset \Omega$ 
the point
\begin{equation}
(\zeta^{(i)}(w), -h_{ik}(\zeta^{(i)}(w)), w, 1) 
\in T^*(X \times \CC )
\end{equation}
is contained in the smooth part of 
${\rm SS}( \SF [N] ) = {\rm SS}( \SF )
\subset T^*(X \times \CC )$. 
We denote by $m(i,k) \geq 1$ the multiplicity of 
of the perverse sheaf 
$\SF [N]= (i_{-f})_!(K[N]|_U) \in 
\Dbc (X^{\an} \times \CC )$ there. 

\begin{proposition}\label{prop-cevan2}
For any $r+1 \leq i \leq d$, $1 \leq k \leq n_i$ 
and $w \in V \subset \Omega$ 
we have isomorphisms 
\begin{equation}
H^j \phi^{\merc}_{f^w-c_k}( K[N] )_{\zeta^{(i)}(w)}
\simeq 
\begin{cases}
\CC^{m(i,k)} & (j=-1)\\
\\
0 & (\mbox{otherwise}), 
\end{cases}
\end{equation}
where we set $c_k:= h_{ik}^w(\zeta^{(i)}(w))= 
\langle \zeta^{(i)}(w), w \rangle 
-h_{ik}(\zeta^{(i)}(w)) \in \CC$. Moreover, 
for any complex number 
$c \in \CC$ satisfying the condition $c \not= c_k$ 
($1 \leq k \leq n_i$) we have a vanishing 
\begin{equation}
\phi^{\merc}_{f^w-c}( K[N] )_{\zeta^{(i)}(w)}
\simeq 0. 
\end{equation}
\end{proposition}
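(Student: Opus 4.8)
The plan is to transport the computation of the meromorphic vanishing cycle to an ordinary vanishing cycle of the complex constructible sheaf $\SF = (i_{-f})_!(K|_U)$ on $X\times\CC$, exactly as in the proof of Proposition \ref{lem-ess-phi-new}. Fix $w\in V$ and $c\in\CC$, write $z_0:=\zeta^{(i)}(w)$, set $\tau_0:=c-\langle z_0,w\rangle$ and let $\psi:=\tau+\langle z,w\rangle-c:X\times\CC\longrightarrow\CC$. Using the automorphism $T_w$ of $X\times\CC$ sending $(z,\tau)$ to $(z,\tau+\langle z,w\rangle)$, the isomorphism $(T_w)_\ast(\SF)\simeq(i_{f^w})_!(K|_U)$, and the definition of $\phi^{\merc}_{f^w-c}$, one gets an isomorphism
\[
\phi^{\merc}_{f^w-c}(K[N])_{z_0}\simeq\phi_\psi(\SF[N])_{(z_0,\tau_0)},
\]
with $(z_0,\tau_0)\in\psi^{-1}(0)$ and $d\psi$ the constant covector corresponding to $(w,1)$ in the fibre coordinates $(z^\ast,\tau^\ast)$ of $T^\ast(X\times\CC)$. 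It then suffices to compute the right-hand side.

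Next I would feed in the local description of ${\rm SS}(\SF)$ from Lemma \ref{iso-anaconti}: in a neighbourhood $W\times\CC$ of $\Lambda_{V,i}$ one has ${\rm SS}^{{\rm E},\CC}(\SF)=\bigcup_{k=1}^{n_i}\iota^{-1}\{T^\ast_{\Gamma_k}(X\times\CC)\}$ with $\Gamma_k=\{(z,-h_{ik}(z))\mid z\in Z_i\}$. Since $(z_0,\tau_0;w,1)$ has $\tau^\ast=1\neq0$ and ${\rm SS}(\SF)$ is conic, this covector lies in ${\rm SS}(\SF)$ iff $((z_0,w),\tau_0)\in{\rm SS}^{{\rm E},\CC}(\SF)$; and over the point $(z_0,w)\in\Lambda_{V,i}$ the $\tau$-fibre of ${\rm SS}^{{\rm E},\CC}(\SF)$ inside $W\times\CC$ is exactly $\{-h_{i1}(z_0),\ldots,-h_{in_i}(z_0)\}$, the conormality condition being automatic from $\rho_i(\Lambda_{V,i})=\Lambda^{h_{ik}}$. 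Since $c_k=\langle z_0,w\rangle-h_{ik}(z_0)$, one has $\tau_0=-h_{ik}(z_0)$ iff $c=c_k$. Hence for $c\notin\{c_1,\ldots,c_{n_i}\}$ the differential $d\psi$ at $(z_0,\tau_0)$ does not meet ${\rm SS}(\SF)$, so $\phi_\psi(\SF)_{(z_0,\tau_0)}\simeq0$ by the support estimate for vanishing cycles \cite[Proposition 8.6.3]{KS90}; this gives the last assertion.

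For $c=c_k$, the point $(z_0,\tau_0)$ lies on $\Gamma_k$ and, because $h_{ik}-h_{ik'}$ is a nonzero constant on $Z_i$ for $k'\neq k$, on no other $\Gamma_{k'}$; thus near $(z_0,\tau_0;w,1)$ one has ${\rm SS}(\SF)=T^\ast_{\Gamma_k}(X\times\CC)$, and this covector lies in its smooth part, where $\SF[N]$ has multiplicity $m(i,k)$. So in the localized category at $(z_0,\tau_0;w,1)$ one has $\SF[N]\simeq\CC_{\Gamma_k}^{\oplus m(i,k)}[\dim Z_i]$, and \cite[Proposition 8.6.3]{KS90} together with the commutation of $\phi_\psi$ with the closed embedding $\Gamma_k\hookrightarrow X\times\CC$ yields
\[
\phi_\psi(\SF[N])_{(z_0,\tau_0)}\simeq\phi_{h_{ik}^w-c_k}\bigl(\CC_{Z_i}^{\oplus m(i,k)}[\dim Z_i]\bigr)_{z_0},
\]
after identifying $\Gamma_k\simeq Z_i$ and observing that $\psi|_{\Gamma_k}$ corresponds to $h_{ik}^w-c_k$. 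By Lemma \ref{lem-nodegen-new}, $h_{ik}^w$ has a non-degenerate critical point at $z_0$ with critical value $c_k$, so its Milnor fibre there is homotopy equivalent to a sphere of dimension $\dim Z_i-1$; as in the proof of Proposition \ref{prop-cevan1} (cf. \cite[Theorem 2.6]{Take23}), the vanishing cycle is concentrated in degree $-1$ with stalk $\CC^{m(i,k)}$, which is the desired isomorphism.

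The step I expect to be the main obstacle is the careful bookkeeping between the enhanced micro-support ${\rm SS}^{{\rm E},\CC}(\SF)$ and the ordinary one ${\rm SS}(\SF)$: making sure the vanishing in the non-critical case is genuinely controlled by Lemma \ref{iso-anaconti} although that lemma describes ${\rm SS}(\SF)$ only in a neighbourhood of $\Lambda_{V,i}$, and checking that the multiplicity $m(i,k)$ read off from the smooth part of ${\rm SS}(\SF)$ matches the one appearing after the identification $\Gamma_k\simeq Z_i$ and the restriction of $\psi$.
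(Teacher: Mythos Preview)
Your proposal is correct and follows essentially the same approach as the paper: both use the automorphism $T_w$ to relate $\phi^{\merc}_{f^w-c}(K[N])$ to a vanishing cycle computation on $\SF[N]$ (the paper passes through $(i_{f^w})_!(K[N]|_U)$ and the function $\tau-c$, whereas you stay on the $\SF$ side with $\psi=\tau+\langle z,w\rangle-c$, but these are equivalent via $T_w$), then invoke Lemma \ref{iso-anaconti} and the multiplicity $m(i,k)$ to identify $\SF[N]$ with $\CC_{\Gamma_k}^{\oplus m(i,k)}[\dim Z_i]$ in the localized category at the relevant covector, apply \cite[Proposition 8.6.3]{KS90} to reduce to $\phi_{h_{ik}^w-c_k}$ on $Z_i$, and conclude by Lemma \ref{lem-nodegen-new}. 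Your concern about the neighbourhood $W\times\CC$ is not an issue, since the vanishing-cycle stalk at $(z_0,\tau_0)$ depends only on the microlocal structure near the covector $(z_0,\tau_0;w,1)$, which lies over $(z_0,w)\in\Lambda_{V,i}\subset W$.
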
 

\begin{proof}
Note that for the (not necessarily closed) embedding 
$i_{f^w}:U \hookrightarrow X \times \CC$ 
($z \longmapsto (z,f^w(z))$) and any $c \in \CC$ 
we have an isomorphism 
\begin{equation}
\phi^{\merc}_{f^w-c}( K[N] )_{\zeta^{(i)}(w)}
\simeq 
\phi_{\tau -c} \bigl( 
(i_{f^w})_!(K[N]|_U) 
\bigr)_{(\zeta^{(i)}(w), \ c)}. 
\end{equation}
As in the proof of Proposition \ref{lem-ess-phi-new}, 
let us consider the 
automorphism $T_{w}$ of $X \times \CC$ defined by 
\begin{equation}
T_{w}: X \times \CC \simto 
X \times \CC, \qquad ((z, \tau ) \longmapsto 
(z, \tau + \langle  z, w \rangle  )). 
\end{equation}
Then we have $i_{f^{w}}=T_w \circ i_{-f}$ and 
hence there exists an isomorphism 
\begin{equation}
(T_{w})_* ( \SF [N])= (T_{w})_* (i_{-f})_! (K[N]|_U) 
\simeq (i_{f^{w}})_! (K[N]|_U). 
\end{equation}
On the other hand, by \eqref{incl-ssec} 
and our definition of $m(i,k) \geq 1$ the perverse 
sheaf $\SF [N] \in \Dbc(X^{\an} \times \CC )$ 
is isomorphic to $\CC_{\Gamma_k}^{\oplus m(i,k)} 
[ {\rm dim } \Gamma_k] \in \Dbc(X^{\an} \times \CC )$ in the 
localized category 
$\Db (X^{\an} \times \CC , 
\{ ( \zeta^{(i)}(w), -h_{ik}( \zeta^{(i)}(w)), w,1) \} )$ 
at the point $( \zeta^{(i)}(w), -h_{ik}( \zeta^{(i)}(w)), w,1) 
\in T^*(X^{\an} \times \CC )$ 
(see \cite[Definition 6.1.1]{KS90}). This implies 
that for the closed embedding 
\begin{equation}
i_k : Z_i \hookrightarrow 
X \times \CC, \qquad (z \longmapsto 
(z, h_{ik}^w(z) ))
\end{equation}
we have an isomorphism 
\begin{equation}
 (i_{f^{w}})_! (K[N]|_U) \simeq 
(i_k)_* ( \CC_{Z_i}^{\oplus m(i,k)} 
[ {\rm dim } Z_i])
\end{equation}
in the localized category 
$\Db (X^{\an} \times \CC , 
\{ ( \zeta^{(i)}(w), c_k, 0,1) \} )$ 
at the point $( \zeta^{(i)}(w), c_k, 0,1)
\in T^*(X^{\an} \times \CC )$. Then by 
\cite[Proposition 8.6.3]{KS90} 
we obtain isomorphisms 
\begin{align*}
& \phi_{\tau -c_k} \bigl( 
(i_{f^w})_!(K[N]|_U) \bigr)_{(\zeta^{(i)}(w), \ c_k)}
\\
& \simeq 
\phi_{\tau - h_{ik}^w(\zeta^{(i)}(w))} \bigl( 
(i_k)_* ( \CC_{Z_i}^{\oplus m(i,k)} 
[ {\rm dim } Z_i])
\bigr)_{(\zeta^{(i)}(w), \ h_{ik}^w(\zeta^{(i)}(w)))}
\\
& \simeq 
\phi_{h_{ik}^w - h_{ik}^w(\zeta^{(i)}(w))} \bigl( 
\CC_{Z_i}^{\oplus m(i,k)} [ {\rm dim } Z_i]
\bigr)_{\zeta^{(i)}(w)}. 
\end{align*} 
Recall that by Lemma \ref{lem-nodegen-new} the holomorphic function 
$h_{ik}^w|_{Z_i} : Z_i \longrightarrow \CC$ on $Z_i$ 
has a non-degenerate (complex Morse)
critical point at $\zeta^{(i)}(w) \in Z_i$.  
Then the first assertion immediately follows 
from the standard fact that the Milnor number at it 
is equal to one (see e.g. 
\cite[Theorem 2.6]{Take23}). Similarly, 
we can show the second assertion. 
\end{proof}

By the proof of Proposition \ref{prop-cevan2} and 
\eqref{incl-ssec}, for any 
$r+1 \leq i \leq d$, 
$w \in V \subset \Omega$ and 
$c \in \CC$ 
such that $c \not= h_{ik}^w(\zeta^{(i)}(w))$ 
($1 \leq k \leq n_i$) we have also a vanishing 
\begin{equation}
\phi^{\merc}_{f^w-c}( K[N] )_{\zeta^{(i)}(w)}
\simeq 0. 
\end{equation}
Together with Proposition \ref{prop-cevan2}  
this implies that for any $r+1 \leq i \leq d$ 
the positive integer 
\begin{equation}
m(i):= \dsum_{k=1}^{n_i} m(i,k) \geq 1
\end{equation}
satisfies the condition 
\begin{equation}
m(i) = 
\dsum_{c \in \CC} {\rm dim } 
H^{N-1} \phi^{\merc}_{f^w -c}( K)_{\zeta^{(i)}(w)} 
\qquad (w \in V). 
\end{equation}

\begin{corollary}\label{equiv-2ss}
For the open subset $q^{-1}( \Omega )= 
X \times \Omega \subset X \times Y$ of 
$X \times Y \simeq T^*X$ we have 
\begin{equation}
{\rm SS}_{{\rm irr}}^{\CC}( \SF ) \cap q^{-1}( \Omega )
= {\rm SS}_{{\rm eva}}( F ) \cap q^{-1}( \Omega ). 
\end{equation}
\end{corollary}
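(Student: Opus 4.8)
The inclusion ${\rm SS}_{{\rm eva}}(F) \subset {\rm SS}_{{\rm irr}}^{\CC}(\SF)$ holds on all of $T^*X$ by Proposition \ref{lem-ess-phi-new}, so the only thing to prove is the reverse inclusion over $q^{-1}(\Omega)$. The plan is to exploit the covering structure of $\Lambda = {\rm SS}_{{\rm irr}}^{\CC}(\SF)$ provided by Lemma \ref{lem-fin-cover}: since $q : \Lambda \cap q^{-1}(\Omega) \longrightarrow \Omega$ is an unramified finite covering, every point of $\Lambda \cap q^{-1}(\Omega)$ lies in one of the connected components $\Lambda_{V,i}$ ($1 \leq i \leq d$) attached to a sufficiently small contractible open subset $V \subset \Omega$, and the $\Lambda_{V,i}$ (for $V$ ranging over a contractible cover of $\Omega$) exhaust $\Lambda \cap q^{-1}(\Omega)$. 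Hence it suffices to show $\Lambda_{V,i} \subset {\rm SS}_{{\rm eva}}(F)$ for every such $V$ and every $1 \leq i \leq d$, i.e. that for each $w \in V$ there is some $c \in \CC$ with $\phi^{\merc}_{f^w-c}(K)_{\zeta^{(i)}(w)} \not= 0$.

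For $1 \leq i \leq r$ (the case $Z_i \subset U$) I would take $c := f^w(\zeta^{(i)}(w)) = \langle \zeta^{(i)}(w), w \rangle - f(\zeta^{(i)}(w))$. By \eqref{incluss} we have $\Lambda_{V,i} \subset \Lambda \cap T^*U_{\RR} = ({\rm SS}(K)\cap T^*U_{\RR}) + \Lambda^f$, so the point $(\zeta^{(i)}(w), df^w(\zeta^{(i)}(w)))$ lies on the smooth part of ${\rm SS}(K|_U)$ and the multiplicity $m(i) \geq 1$ is well defined; Proposition \ref{prop-cevan1} then yields $H^{-1}\phi^{\merc}_{f^w-c}(K[N])_{\zeta^{(i)}(w)} \simeq \CC^{m(i)} \not= 0$, hence $\phi^{\merc}_{f^w-c}(K)_{\zeta^{(i)}(w)} \not= 0$ and $(\zeta^{(i)}(w), w) \in {\rm SS}_{{\rm eva}}(F)$. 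For $r+1 \leq i \leq d$ (the case $Z_i \subset I(f)$) I would use the functions $h_{ik} : Z_i \to \CC$ of Lemma \ref{iso-anaconti} and set $c := h_{i1}^w(\zeta^{(i)}(w)) = \langle \zeta^{(i)}(w), w \rangle - h_{i1}(\zeta^{(i)}(w))$. By \eqref{incl-ssec} the point $(\zeta^{(i)}(w), -h_{i1}(\zeta^{(i)}(w)), w, 1)$ lies on the smooth part of ${\rm SS}(\SF)$, so $m(i,1) \geq 1$, and Proposition \ref{prop-cevan2} gives $H^{-1}\phi^{\merc}_{f^w-c}(K[N])_{\zeta^{(i)}(w)} \simeq \CC^{m(i,1)} \not= 0$, again placing $(\zeta^{(i)}(w), w)$ in ${\rm SS}_{{\rm eva}}(F)$.

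Combining the two cases shows $\Lambda \cap q^{-1}(\Omega) \subset {\rm SS}_{{\rm eva}}(F)$, and together with the trivial inclusion from Proposition \ref{lem-ess-phi-new} this establishes the desired equality. The proof is thus essentially a direct assembly of the structural results already in place; the only point requiring a little care — and the closest thing to a genuine obstacle — is verifying that the multiplicities $m(i)$ and $m(i,k)$ occurring above are really $\geq 1$, i.e. that the points $\zeta^{(i)}(w)$ (resp. their lifts to $X \times \CC$) belong to the supports of the characteristic cycles of the perverse sheaves $K[N]$ (resp. $\SF[N]$) along the relevant smooth Lagrangian strata. This positivity is exactly what was recorded in the discussion preceding Propositions \ref{prop-cevan1} and \ref{prop-cevan2} by means of \eqref{incluss} and \eqref{incl-ssec}, so no new work is needed; everything else reduces to invoking Lemma \ref{lem-fin-cover} and Propositions \ref{prop-cevan1}, \ref{prop-cevan2}, and \ref{lem-ess-phi-new}.
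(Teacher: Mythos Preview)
Your proof is correct and follows essentially the same approach as the paper: one inclusion comes from Proposition \ref{lem-ess-phi-new}, and for the reverse inclusion over $q^{-1}(\Omega)$ you pick a contractible $V \subset \Omega$ containing the image point and invoke Propositions \ref{prop-cevan1} and \ref{prop-cevan2} to produce a $c$ with nonvanishing meromorphic vanishing cycle. Your write-up is more explicit about the choice of $c$ in each case and about the positivity of the multiplicities, but the argument is the same.
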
 

\begin{proof}
By Proposition \ref{lem-ess-phi-new} it suffices 
to prove only the inclusion 
\begin{equation}
\Lambda \cap q^{-1}( \Omega )= 
{\rm SS}_{{\rm irr}}^{\CC}( \SF ) \cap q^{-1}( \Omega )
\subset {\rm SS}_{{\rm eva}}( F ) \cap q^{-1}( \Omega ). 
\end{equation} 
For a point $(z_0,w_0) \in \Lambda \cap q^{-1}( \Omega )$ 
we take a contractible open subset 
$V \subset \Omega$ such that $q((z_0,w_0))=w_0 \in V$ 
and use the several notations that we introduced 
for it above. Then by Propositions \ref{prop-cevan1} 
and \ref{prop-cevan2} 
we obtain the desired condition 
$(z_0,w_0) \in {\rm SS}_{{\rm eva}}( F ) \cap q^{-1}( \Omega )$. 
\end{proof}

\begin{lemma}\label{lem-31} 
Let $\CS$ be a Whitney stratification of $Z \cap U 
\subset U=X \setminus Q^{-1}(0)$ adapted to 
$K|_U \in \Dbc ( U)$ such that 
\begin{equation}
{\rm SS }(K|_U) \subset \bigcup_{S \in \CS}T^*_SX 
\end{equation}
and $\CS_{{\rm red}} \subset \CS$ its subset consisting of 
strata $S \in \CS$ satisfying the condition 
$T^*_SX \subset {\rm SS }(K|_U)$ so that we have 
\begin{equation}
{\rm SS }(K|_U) = \bigcup_{S \in \CS_{{\rm red}}}
\overline{T^*_SX}. 
\end{equation}
Then for any $w \in V \subset \Omega$ 
and $S \in \CS_{{\rm red}}$ the 
holomorphic function $f^w|_S: S \longrightarrow 
\CC$ on $S$ is tame at infinity. 
\end{lemma}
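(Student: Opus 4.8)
The plan is to deduce the tameness at infinity of $f^w|_S$ from the defining property of $\Omega$ in Lemma \ref{lem-fin-cover}: that the projection $q : q^{-1}(\Omega)\cap \Lambda \longrightarrow \Omega$ is an unramified finite covering, and hence proper over $\Omega$. Here $\Lambda = {\rm SS}^{\CC}_{{\rm irr}}(\SF)$, and since $V \subset \Omega$ it suffices to treat $w$ as an arbitrary point of $\Omega$. The first and only substantial step is to translate the quantity $\| d{\rm Re}(f^w|_S)(z)\|$ occurring in Definition \ref{def-7} into the geometry of $\Lambda$. Recall $T^*X \simeq X \times Y$, $\Lambda^f = \{ (z, \grad f(z)) \ | \ z \in U \}$, and ${\rm SS}(K|_U) = \bigcup_{S' \in \CS_{{\rm red}}} \overline{T^*_{S'}X}$; together with \eqref{incluss} and $S \subset Z \cap U \subset U$ this gives $\overline{T^*_SX} + \Lambda^f \subset \Lambda \cap T^*U_{\RR}$. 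I would then use the Cauchy--Riemann equations for $f^w$ and Lemma \ref{lem-3} — exactly as in the proof of Proposition \ref{prop-7} — together with the fact that $\Phi_z$ is the orthogonal projection of $T_zX_{\RR} \simeq \CC^N$ onto $T_zS_{\RR}$ for the Hermitian metric (which follows from \eqref{restequa}), to obtain the identity
\[
\| d{\rm Re}(f^w|_S)(z)\| = \dist\bigl( w - \grad f(z), \ (T^*_SX)_z \bigr) \qquad (z \in S),
\]
where $(T^*_SX)_z \subset Y$ denotes the conormal space of $S$ at $z$. I expect this identity to be the main obstacle: it is conceptually clear — the locus $T^*_SX + \Lambda^f$ is exactly the set of critical points of the perturbed functions $f^w|_S$, and its metric thickening measures how far $w$ is from producing a critical point of $f^w|_S$ at $z$ — but establishing it cleanly requires a careful unwinding of the real versus holomorphic (co)tangent spaces, the isomorphisms $\phi,\psi,\Phi_z$ of Section \ref{sec:s3}, and the complex conjugations hidden in the identifications $T^*X \simeq X \times Y$ and $(T^*X)_{\RR} \simeq T^*X_{\RR}$.

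Granting the identity, the rest is a short compactness argument. Suppose $f^w|_S$ is not tame at infinity. Then for every $n \geq 1$ there is $z_n \in S$ with $\|z_n\| > n$ and $\| d{\rm Re}(f^w|_S)(z_n)\| < 1/n$, so by the identity there is $\eta_n$ in the conormal space $(T^*_SX)_{z_n}$ with $\| w - \grad f(z_n) - \eta_n\| < 1/n$. Put $w_n := \eta_n + \grad f(z_n) \in Y$; then $w_n - \grad f(z_n) = \eta_n \in (T^*_SX)_{z_n}$, so $(z_n, w_n) \in \overline{T^*_SX} + \Lambda^f \subset \Lambda$, while $\|w_n - w\| < 1/n$, i.e. $w_n \to w$. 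Since $\Omega$ is open and $w \in \Omega$, we have $w_n \in \Omega$ for $n \gg 0$; fixing a compact neighbourhood $K \subset \Omega$ of $w$ we obtain $(z_n, w_n) \in q^{-1}(\Omega)\cap \Lambda$ with $q(z_n,w_n) = w_n \in K$ for $n \gg 0$. By Lemma \ref{lem-fin-cover} the covering $q : q^{-1}(\Omega)\cap \Lambda \to \Omega$ is proper, so its fibre over the compact set $K$ is compact; hence the $z_n$ are bounded, contradicting $\|z_n\| > n \to +\infty$. This contradiction shows $f^w|_S$ is tame at infinity.

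In summary, the only point requiring real work is the metric identification in the first paragraph; once it is in hand, the lemma reduces to the observation that "tame at infinity", for the family $\{ f^w \}_{w \in \Omega}$ restricted to a stratum carrying a piece of ${\rm SS}(K|_U)$, is precisely the properness of $q|_{\Lambda}$ over $\Omega$ that was built into the construction of $\Omega$ in Lemma \ref{lem-fin-cover}, combined with the elementary fact that an unramified finite covering is a proper map.
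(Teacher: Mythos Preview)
Your proposal is correct and follows essentially the same approach as the paper. Both arguments reduce tameness to the properness of $q|_{\Lambda}$ over $\Omega$ coming from Lemma~\ref{lem-fin-cover}, and both use the Cauchy--Riemann identity ${\rm grad}\,{\rm Re}f^w=\overline{{\rm grad}\,f^w}$ together with the fact that $\Phi_z$ is the Hermitian orthogonal projection onto $T_zS_{\RR}$; the only cosmetic difference is that the paper phrases the key step as producing, from $\|d{\rm Re}(f^w|_S)(z)\|<\varepsilon$, a perturbation $f^{w-\bar u}|_S$ with a genuine critical point at $z$ (so $(z,w-\bar u)\in\Lambda$), whereas you package the same computation as the clean distance identity $\|d{\rm Re}(f^w|_S)(z)\|={\rm dist}(w-{\rm grad}f(z),(T^*_SX)_z)$ and then pass to a sequence.
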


\begin{proof}
Since by definition the morphisms $\zeta^{(i)}: V  
 \longrightarrow 
U=X \setminus Q^{-1}(0)$ ($1 \leq i \leq r$) are holomorphic, 
there exist $0< \varepsilon \ll 1$ and $R \gg 0$ 
such that for the open ball $B_{\varepsilon}(w) 
\subset V \subset \Omega$ (resp. $B_{R}(0) 
\subset X= \CC^N$) with radius 
$\varepsilon >0$ (resp. $R>0$) 
centered at $w \in V$ (resp. the origin $0 \in X= \CC^N$) 
we have the inclusion 
\begin{equation}
\zeta^{(i)}( B_{\varepsilon}(w) ) \subset U \cap B_{R}(0) 
\qquad (1 \leq i \leq r). 
\end{equation}
In other words, the subset 
\begin{equation}
\Lambda \cap p^{-1}(U) \cap q^{-1}(B_{\varepsilon}(w)) = 
\Bigl\{ {\rm SS}(K|_U)+ \Lambda^f \Bigr\} \cap q^{-1}(B_{\varepsilon}(w))
\end{equation}
of $p^{-1}(U) \cap q^{-1}(B_{\varepsilon}(w))$ is contained in 
$p^{-1}(U \cap B_{R}(0)) \cap q^{-1}(B_{\varepsilon}(w))$. 
This implies that for the stratum $S \in \CS_{{\rm red}}$ and 
any $u \in B_{\varepsilon}(0) \subset 
Y= \CC^N$ the linear perturbation of $f^w|_S$: 
\begin{equation}
(f^{w+u}|_S)(z)=(f^w|_S)(z)+ \langle z, u \rangle \qquad 
(z \in S) 
\end{equation}
has no critical point in the set 
\begin{equation}
S \setminus  B_{R}(0)= \{ 
z \in S \ | \ ||z|| \geq R \}. 
\end{equation}
Suppose now that there exists a point $z \in S \setminus  B_{R}(0)$ 
such that $||d {\rm Re} (f^w|_S)(z)|| < \varepsilon$. 
Then, the image of $B_{\varepsilon}(0) \subset 
\CC^N$ by the surjective linear map 
$\Psi_z: T^*_z X_{\RR} \simeq \CC^N \longrightarrow 
T^*_z S_{\RR}$ being also an open ball with 
radius $\varepsilon >0$, we have the 
equality $d {\rm Re} (f^w|_S)(z) 
= \Psi_z(u)$ for some $u \in B_{\varepsilon}(0)$. 
Moreover, for the holomorphic function 
$\ell^{\var{u}} :S \longrightarrow \CC$ on $S$ 
defined by 
\begin{equation}
\ell^{\var{u}}(z):= \langle z, \var{u} \rangle 
=(z,u) \qquad (z \in S) 
\end{equation}
we have $d {\rm Re} \ell^{\var{u}}(z) 
= \Psi_z(u)$ by \eqref{C-R-eqn}. 
This implies that the holomorphic 
function $(f^{w- \var{u}}|_S)(z)$ on $S$ satisfies 
the condition $d {\rm Re} (f^{w- \var{u}}|_S)(z)=0$ 
at the point $z \in S \setminus  B_{R}(0)$. 
Then by the Cauchy-Riemann equation, we obtain 
also $d (f^{w- \var{u}}|_S)(z)=0$. 
Since we have $- \var{u} \in B_{\varepsilon}(0)$, 
we get a contradiction. 
\end{proof}

Similarly, by the proofs of Lemma \ref{iso-anaconti}  
and Proposition \ref{prop-cevan2}  
we obtain the following result. For 
a point $w \in \Omega$ by the (not necessarily 
closed) embedding $i_{f^w}: U \hookrightarrow 
X \times \CC$ ($z \longmapsto (z,f^w(z))$) we set 
\begin{equation}
L^w:=(i_{f^w})_! (K|_U) \quad \in \Dbc (X \times \CC ). 
\end{equation}

\begin{lemma}\label{lem-31-new} 
For a point $w \in V \subset \Omega$ let $\CS$ and $\CS_0$ 
be Whitney stratifications of 
$X \times \CC$ and $X$ respectively such that 
\begin{equation}
{\rm SS }(L^w) \subset \bigcup_{S \in \CS}
T^*_S(X \times \CC ) 
\end{equation}
and the projection $X \times \CC \longrightarrow 
X$ is a stratified fiber bundle as in 
the assertion of the theorem in 
\cite[page 43]{GM88}. Then for any stratum $S \in \CS$ 
in $\CS$ such that $T^*_S(X \times \CC ) 
\subset {\rm SS }(L^w)$ 
the restriction $h|_S: S \longrightarrow \CC$ 
of the function $h: X \times \CC \longrightarrow \CC$ 
($(z, \tau ) \longmapsto \tau$) to $S \subset X \times \CC$ 
is relatively tame at infinity for the projection 
$X \times \CC \longrightarrow 
X$ in the sense of Definition \ref{def-7new}. 
\end{lemma}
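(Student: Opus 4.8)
The plan is to carry out, for the sheaf $L^w$ on $X\times\CC$, the same analysis that proves Lemmas \ref{lem-31} and \ref{iso-anaconti}, thereby reducing the statement to the tameness at infinity of a finite list of holomorphic functions defined on subvarieties of $U$ or of $I(f)$. From the outset I would refine the stratifications so that $\CS_0$ is compatible with $\CS$ under $p\colon X\times\CC\to X$ and refines a Whitney stratification of $Z={\rm supp}(K)$ adapted to $K$, and $\CS$ is refined accordingly; this changes neither the hypotheses nor the conclusion. First I would dispose of the non-horizontal strata: if $S\in\CS$ is not horizontal then, as noted right after Definition \ref{def-7new}, $\dim S_0=\dim S-1$ with $S_0=p(S)$, and since the fibres of $p|_S$ are one-dimensional the vertical line $T_\tau\CC$ lies in $T_{(z,\tau)}S$ at every point, so $d{\rm Re}(h|_S)(z,\tau)$ --- the orthogonal projection of the constant-norm covector $d{\rm Re}\,\tau$ onto $T^*_{(z,\tau)}S$ --- has norm bounded below by an absolute positive constant on all of $S$; hence the set of Definition \ref{def-7new} is empty for $\varepsilon$ small and $h|_S$ is relatively tame at infinity.

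It remains to treat a horizontal stratum $S$ with $T^*_S(X\times\CC)\subset{\rm SS}(L^w)$. Locally $S$ is the graph $\{(z,g^w(z))\mid z\in S_0\}$ of a holomorphic function $g^w$ on an open subset of $S_0$, and, as recalled after Definition \ref{def-7new}, the relative tameness of $h|_S$ is equivalent to the tameness at infinity of $g^w\colon S_0\to\CC$ in the sense of Definition \ref{def-7}. Since $L^w=(i_{f^w})_!(K|_U)$ and the rational function $f^w=\ell^w|_U-f$ has a genuine pole along $Q^{-1}(0)\setminus I(f)$, for every $z_0\in Q^{-1}(0)\setminus I(f)$ the support of $L^w$ avoids a neighbourhood of $\{z_0\}\times\CC$ in $X\times\CC$; as $S_0$ is connected this forces $S_0\subset U$ or $S_0\subset I(f)$. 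If $S_0\subset U$, then over $U\times\CC$ the sheaf $L^w$ is the direct image of $K|_U$ by the closed graph embedding $i_{f^w}|_U$, so
\[
{\rm SS}(L^w)\cap T^*(U\times\CC)=\{(z,f^w(z);\ \eta-\mu\,df^w(z),\ \mu)\mid (z;\eta)\in{\rm SS}(K|_U),\ \mu\in\CC\},
\]
and comparing conormal bundles forces $g^w=f^w|_{S_0}$ with $\overline{T^*_{S_0}X}\subset{\rm SS}(K|_U)$; then the argument of Lemma \ref{lem-31} --- run with $S_0$ in place of $S$ --- shows that $f^w|_{S_0}$ is tame at infinity for $w\in V\subset\Omega$.

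The remaining case $S_0\subset I(f)$ is where I would invoke the proof of Lemma \ref{iso-anaconti}. Passing to the resolution $\nu$ used there, the horizontal components of ${\rm SS}(\SF)$ lying over $I(f)$ are the closures (homogeneity forgotten via $\iota$) of conormal bundles of graphs $\{(z,-\psi(z))\mid z\in S'\}$, with $S'\subset I(f)$ a smooth quasi-affine subvariety and $\psi\colon S'\to\CC$ holomorphic; on the generic locus $q^{-1}(\Omega)$ these are exactly the graphs $\Gamma_k$ and the functions $h_{ik}$ of Lemma \ref{iso-anaconti}. Applying the automorphism $T_w$ of $X\times\CC$, which commutes with $p$ and satisfies $(T_w)_*\SF\simeq L^w$, the corresponding horizontal strata of ${\rm SS}(L^w)$ are the graphs of $\psi^w:=\ell^w|_{S'}-\psi$, so it suffices to prove that $\psi^w\colon S'\to\CC$ is tame at infinity for $w\in V\subset\Omega$. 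I would obtain this by repeating the argument of Lemma \ref{lem-31} verbatim with $\psi$ in place of $f$ and $S'$ in place of $S$: for $w+u\in\Omega$ the critical points of the linear perturbation $\psi^{w+u}|_{S'}$ are the $p$-images of the points of $\Lambda={\rm SS}^{\CC}_{{\rm irr}}(\SF)$ lying over $w+u$, which over $\Omega$ are cut out by the holomorphic sections $\zeta^{(i)}$ of the unramified finite covering $q|_\Lambda$ and hence remain in a fixed bounded region as $u\to 0$; the Cauchy--Riemann equation then yields, exactly as in Lemma \ref{lem-31} and Proposition \ref{prop-7}, that $d{\rm Re}(\psi^w|_{S'})$ is bounded away from $0$ outside a large ball.

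Combining the three cases gives the assertion. The main obstacle is the last case: one must read off from the proof of Lemma \ref{iso-anaconti} that \emph{every} horizontal micro-support component over $I(f)$ --- not only those seen on the generic locus $q^{-1}(\Omega)$ --- is an honest graph of a holomorphic function on a subvariety of $I(f)$ (here one uses that $\Lambda$ is Lagrangian of dimension $N$ and that $q|_\Lambda$ is generically finite), and then check that the dictionary between critical points of linear perturbations and points of $\Lambda$, together with the boundedness estimate of Lemma \ref{lem-31}, survives unchanged for these functions. The preliminary bookkeeping making $\CS$, $\CS_0$ and $\CS_{{\rm red}}$ mutually compatible is routine but should be arranged carefully at the start.
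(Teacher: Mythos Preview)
Your proposal is correct and follows essentially the same approach that the paper indicates: the paper gives no detailed proof but states that the lemma follows ``similarly, by the proofs of Lemma~\ref{iso-anaconti} and Proposition~\ref{prop-cevan2}'', and your argument is precisely a fleshing-out of this. You correctly dispose of non-horizontal strata using the remark after Definition~\ref{def-7new}, reduce horizontal strata over $U$ to Lemma~\ref{lem-31}, and for horizontal strata over $I(f)$ invoke the structural description from the proof of Lemma~\ref{iso-anaconti} together with the automorphism $T_w$ (which is exactly what the proof of Proposition~\ref{prop-cevan2} uses to pass between $\SF$ and $L^w$) before rerunning the bounded-critical-locus argument of Lemma~\ref{lem-31}. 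The caveat you flag about extracting from Lemma~\ref{iso-anaconti} that \emph{all} relevant horizontal components over $I(f)$ are graphs is genuine but is handled by the proof of Lemma~\ref{iso-analy-new}, which constructs the stratification of $X\times\PP$ globally.
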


With Theorem \ref{np-vc}, Lemmas \ref{lem-10}, 
\ref{lem-nodeg} and \ref{lem-31} 
and Proposition \ref{prop-cevan1} 
at hands, 
in the special case where $I(f)=P^{-1}(0) \cap Q^{-1}(0) 
= \emptyset$ ($\Longrightarrow r=d$) we obtain the following 
result as in the proof of \cite[Theorem 4.4]{IT20a} 
(see the proof of Theorem \ref{thm-2} below for the details). 

\begin{theorem}\label{thm-2-spe}
In the situation as above, assume also that 
$I(f)=P^{-1}(0) \cap Q^{-1}(0) 
= \emptyset$ so that we have $r=d$. 
Then we have an isomorphism
\begin{equation}
\pi^{-1}\CC_V \otimes
\big(Sol_{\var Y}^\rmE( \tl{\SM^\wedge} )\big)
\simeq
\bigoplus_{i=1}^d 
\bigl( \EE_{V^{\an}| \var{Y}^{\an}}^{{\rm Re}g_i}
\bigr)^{\oplus m(i)}
\end{equation}
of enhanced ind-sheaves on $\var{Y}^{\an}$. 
\end{theorem}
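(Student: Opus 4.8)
The plan is to follow closely the proof of \cite[Theorem 4.4]{IT20a}, combining the Fourier--Sato transform for enhanced ind-sheaves of Kashiwara--Schapira \cite{KS16-2} with a parametrized Morse theory for the real analytic functions ${\rm Re}\,f^w\colon U=X\setminus Q^{-1}(0)\to\RR$ ($w\in\Omega$). By Theorem \ref{thm-1}, \eqref{eq-expofl} and \eqref{eq-expl} one has
\begin{equation*}
Sol_{\var Y}^\rmE(\tl{\SM^\wedge})\simeq\CC^\rmE_{\var{Y}^{\an}}\Potimes\bfE i_{Y!}L(F),\qquad L(F)=\bfE q_{!}\bigl(\bfE p^{-1}F\Potimes\rmE_{X\times Y|X\times Y}^{-{\rm Re}\langle z,w\rangle}[N]\bigr),
\end{equation*}
with $F=\pi^{-1}K\otimes{\rm E}_{U^{\an}|X^{\an}}^{{\rm Re}f}$. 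Since $\pi^{-1}\CC_V\otimes(-)$ commutes with $\CC^\rmE_{\var{Y}^{\an}}\Potimes(-)$ and $\bfE i_{Y!}$ and only sees $q^{-1}(V)$, it suffices to identify $L(F)$ over $V\subset\Omega\subset Y$. First I would unwind the convolution: over $U\times Y$ one has $\bfE p^{-1}F\Potimes\rmE^{-{\rm Re}\langle z,w\rangle}_{X\times Y|X\times Y}\simeq\pi^{-1}(p^{-1}K)\otimes{\rm E}^{-{\rm Re}f^w(z)}_{(U\times Y)|(X\times Y)}$, so $L(F)$ is exactly the sheaf-level stationary-phase integral of $K|_U$ against the oscillatory phase $-{\rm Re}f^w$, with $w$ running over $Y$.

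The behaviour of this integral at infinity in the fibre direction of $q$ is controlled by Lemma \ref{lem-31}: for every $w\in V$ and every stratum $S\in\CS_{{\rm red}}$ the function $f^w|_S$ is tame at infinity, so by Proposition \ref{prop-micros} the pair $(f^w,K|_U)$ satisfies the geometric hypothesis of the non-proper direct image theorem \cite[Theorem 4.4.1]{KS85}. Since $I(f)=P^{-1}(0)\cap Q^{-1}(0)=\emptyset$ here, this puts us in the situation of Theorem \ref{np-vc} applied to $f^w$ and $K$: for each $\tau_0\in\CC$ and $r\gg0$ the sheaf $K|_U$ may be replaced by $(K|_U)_{U_r}$ without changing the relevant (enhanced) pushforward near $\tau_0$, and on $U_r=U\cap B_r(0)$ the family $q$ becomes proper. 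Hence the computation of $L(F)|_V$ reduces to a local one: the pushforward now commutes with vanishing cycles, and the only contributions come from the finitely many points $\zeta^{(i)}(w)$ ($1\le i\le d$) with $df^w(z)\in{\rm SS}(K|_U)$, which by Lemma \ref{lem-nodeg} are nondegenerate critical points of $f^w|_{Z_i}$.

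Next I would run the microlocal/Morse step. By Lemma \ref{lem-fin-cover} and, since $I(f)=\emptyset$, Lemma \ref{iso-analy-new}, the decomposition $q^{-1}(V)\cap\Lambda=\Lambda_{V,1}\sqcup\cdots\sqcup\Lambda_{V,d}$ is into pairwise disjoint sheets, each mapped isomorphically onto $V$ by $q$ and projecting onto a complex submanifold $Z_i\subset U$ (so $r=d$). Tamarkin's theorem \ref{tama-th} then identifies ${\rm SS}^{{\rm E}}(L(F))$ over $V$ with $\bigsqcup_{i=1}^d\{\,((w,dg_i(w)),-{\rm Re}g_i(w))\ |\ w\in V\,\}$, where one uses the relation $g_i(w)=f(\zeta^{(i)}(w))-\langle\zeta^{(i)}(w),w\rangle=-f^w(\zeta^{(i)}(w))$ of Lemma \ref{lem-10} together with \eqref{eq-conrm} to rewrite the base covector and the $t$-coordinate; this is precisely ${\rm SS}^{{\rm E}}\bigl(\bigoplus_{i=1}^d\EE^{{\rm Re}g_i}_{V^{\an}|\var{Y}^{\an}}\bigr)$. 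Because the $d$ Lagrangians $\Lambda^{g_i}$ are disjoint over $V$, a microlocal cut-off forces $L(F)|_V\simeq\bigoplus_{i=1}^d(\EE^{{\rm Re}g_i}_{V^{\an}|\var{Y}^{\an}})^{\oplus m_i}$ for some multiplicities $m_i\ge0$, and it remains to show $m_i=m(i)$. For this I would isolate the $i$-th sheet: after the truncation of the previous paragraph and base change, the microlocal contribution of $L(F)$ along the codirection $\Lambda^{g_i}$ at $w$ is computed from $\phi_{\tau-\tau_0}(\rmR f^w_!(K|_U))$, which by Theorem \ref{np-vc}(ii) equals a direct sum of the vanishing cycles $\phi_{f^w-\tau_0}(K)_p$ over the finitely many critical points $p$; restricting to $\Lambda_{V,i}$ keeps only $p=\zeta^{(i)}(w)$, and Proposition \ref{prop-cevan1} gives $H^j\phi_{f^w-c}(K[N])_{\zeta^{(i)}(w)}\simeq\CC^{m(i)}$ for $j=-1$ and $0$ otherwise (with $c=f^w(\zeta^{(i)}(w))$). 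Thus $m_i=m(i)$, and feeding this back through $\bfE i_{Y!}$, $\CC^\rmE_{\var{Y}^{\an}}\Potimes(-)$ and $\pi^{-1}\CC_V\otimes(-)$ yields the stated isomorphism.

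The main obstacle — and the step requiring genuine care in writing — is upgrading ``$L(F)|_V$ has the prescribed enhanced micro-support and the prescribed micro-local stalks'' to the assertion that $L(F)|_V$ is \emph{literally} the direct sum $\bigoplus_{i=1}^d(\EE^{{\rm Re}g_i}_{V^{\an}|\var{Y}^{\an}})^{\oplus m(i)}$. This combines two ingredients: the disjointness of the sheets $\Lambda_{V,i}$ over $V$, which via a microlocal cut-off produces the decomposition as a genuine direct sum with no extension between the summands, and a truly parametrized form of the local computation of enhanced solution complexes, propagating the normal form at each $\zeta^{(i)}(w)$ over all of $V$; the tameness at infinity of Lemma \ref{lem-31} (which makes $q$ proper after truncation to $B_r(0)$, so that \cite[Theorem 4.4.1]{KS85} and base change apply uniformly in $w$) and the contractibility of $V$ are exactly what rule out monodromy or gluing obstructions. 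The remaining bookkeeping is as in \cite[proof of Theorem 4.4]{IT20a}, the only new feature being the extra exponential twist $\SE^f_{U|X}$, which has been absorbed into the phase $-{\rm Re}f^w$ and hence into the functions $g_i$.
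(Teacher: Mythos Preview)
Your proposal assembles the correct ingredients (Theorem~\ref{np-vc}, Lemmas~\ref{lem-10}, \ref{lem-nodeg}, \ref{lem-31}, Proposition~\ref{prop-cevan1}) and correctly identifies \cite[Theorem~4.4]{IT20a} as the template. But the paper's route, spelled out in the proof of Theorem~\ref{thm-2}, is organized differently from what you propose, and the difference matters.

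You propose first to pin down ${\rm SS}^{\rm E}(L(F))$ over $V$ via Tamarkin's Theorem~\ref{tama-th}, then to invoke a ``microlocal cut-off'' to force $L(F)|_V$ to split as a direct sum of exponential enhanced sheaves, and only afterwards to compute the multiplicities via vanishing cycles. The step you flag as ``the main obstacle'' is indeed one: knowing that the enhanced micro-support is a disjoint union of sheets $\Lambda^{g_i}$ does not by itself produce a direct sum decomposition of $L(F)$; extensions between summands are not ruled out by micro-support alone, and there is no off-the-shelf ``microlocal cut-off'' lemma in the paper or in \cite{DK17} doing this for you in the present generality. What you call ``remaining bookkeeping as in \cite[proof of Theorem~4.4]{IT20a}'' is in fact the entire argument, not a coda.

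The paper instead bypasses Tamarkin's theorem and works directly on stalks. Using \cite[Lemma~7.2.1]{DK17} one gets, for each $(w,t)\in V\times\RR$, an explicit isomorphism
\[
\bigl(L(Sol_X(\SM))\bigr)_{(w,t)}\simeq\rmR\Gamma_c\bigl(\{z\in U^{\an}\ |\ {\rm Re}f^w(z)\le t\};K[N]\bigr)\simeq\rmR\Gamma_c\bigl(\{{\rm Re}\,\tau\le t\};\rmR f^w_!(K|_U[N])\bigr).
\]
One then runs Morse theory in the variable $t$: the constructible sheaf $\rmR f^w_!(K|_U[N])$ on $\CC$ is smooth away from the finite set $B=\{c_1,\ldots,c_l\}$ of critical values (this is where Theorem~\ref{np-vc}, enabled by Lemma~\ref{lem-31}, is used, together with Proposition~\ref{prop-cevan1}), the stalk vanishes for $t\ll0$, is constant as $t$ crosses regular levels (via Kashiwara's non-characteristic deformation lemma applied on $\RR\times\sqrt{-1}\,\overline{\RR}$), and jumps by $\CC^{d_j}$ concentrated in a single degree as $t$ crosses $\gamma_j={\rm Re}\,c_j$. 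This gives a fiberwise isomorphism with $\bigoplus_i(\CC_{\{t+{\rm Re}g_i(w)\ge0\}})^{\oplus m(i)}$, which is then extended to all of $V\times\RR$ using that each set $\{t+{\rm Re}g_i(w)\ge0\}\cap(V\times\RR)\simeq V\times\RR_{\ge0}$ is connected and simply connected. No micro-support identification or splitting lemma is needed.
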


In the general case i.e. if we do not 
assume the condition $I(f)= \emptyset$, 
we have the following result. 
Note that by Lemma \ref{lem-10-new}  
for any $r+1 \leq i \leq d$ and 
$1 \leq k, k^{\prime} \leq n_i$ 
such that $k \not= k^{\prime}$ the difference 
$g_{ik}-g_{ik^{\prime}} :Z_i \longrightarrow \CC$ 
is a non-zero constant function on $Z_i$. 
Then for any 
$r+1 \leq i \leq d$ and $1 \leq k, k^{\prime} \leq n_i$ 
we have an isomorphism 
\begin{equation}\label{enha-isom} 
\EE_{V^{\an}| \var{Y}^{\an}}^{{\rm Re}g_{ik}}
\simeq 
\EE_{V^{\an}| \var{Y}^{\an}}^{{\rm Re}g_{ik^{\prime}}}
\end{equation}
of enhanced ind-sheaves on $\var{Y}^{\an}$. 
For this reason, for each $r+1 \leq i \leq d$ by 
choosing an index $1 \leq k \leq n_i$ we 
set $g_i(w):=g_{ik}(w)$ ($w \in V$) 
and consider the enhanced ind-sheaf 
$\EE_{V^{\an}| \var{Y}^{\an}}^{{\rm Re}g_i}$ 
in what follows. 

\begin{theorem}\label{thm-2}
In the situation as above, we have an isomorphism
\begin{equation}
\pi^{-1}\CC_V \otimes
\big(Sol_{\var Y}^\rmE( \tl{\SM^\wedge} )\big)
\simeq
\bigoplus_{i=1}^d 
\bigl( \EE_{V^{\an}| \var{Y}^{\an}}^{{\rm Re}g_i}
\bigr)^{\oplus m(i)}
\end{equation}
of enhanced ind-sheaves on $\var{Y}^{\an}$. 
\end{theorem}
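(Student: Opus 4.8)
The plan is to localize the computation of the Fourier--Sato transform $L(F) = \bfE q_{!}(\bfE p^{-1} F \Potimes \rmE_{X\times Y|X\times Y}^{-{\Re}\langle z,w\rangle}[N])$ over the contractible open subset $V \subset \Omega$ and to reduce everything to the already-established special case, Theorem \ref{thm-2-spe}, away from $I(f)$, together with the meromorphic vanishing cycle analysis of Propositions \ref{prop-cevan1} and \ref{prop-cevan2}. First I would recall, via \eqref{eq-expofl}, that it suffices to compute $\pi^{-1}\CC_V \otimes {}^\rmL\{\bigl\{\pi^{-1}(i_X)_! K\bigr\} \otimes {\rm E}_{U^{\an}|\var{X}^{\an}}^{{\rm Re}f}\}$, i.e. $\pi^{-1}\CC_V \otimes L(F)$ up to the $\bfE i_{Y!}$ extension and the $\CC^\rmE$-convolution, which by \eqref{eq-expon} commutes with ${}^\rmL$. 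By Tamarkin's Theorem \ref{tama-th} we have ${\rm SS}_{{\rm irr}}(L(F)) = \chi({\rm SS}_{{\rm irr}}(F))$, and by the comparison of micro-supports established in Section \ref{sec:19} (Corollary \ref{equiv-2ss}, Proposition \ref{lem-ess-phi-new}, and \eqref{incluss-new}) the relevant geometry over $\Omega$ is governed by $\Lambda = {\rm SS}_{{\rm irr}}^{\CC}(\SF)$, whose decomposition over $V$ into the $\Lambda_{V,i}$ we already have. Since $q|_\Lambda$ is an unramified finite covering over $\Omega$ by Lemma \ref{lem-fin-cover}, the enhanced ind-sheaf $\pi^{-1}\CC_V \otimes L(F)$ will split as a direct sum indexed by $i = 1,\dots,d$.

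The core is then to identify the $i$-th summand with $(\EE_{V^{\an}|\var{Y}^{\an}}^{{\rm Re}g_i})^{\oplus m(i)}$. For this I would run the Morse-theoretic argument of \cite[Theorem 4.4]{IT20a}: for $w \in V$, the stalk of $L(F)$ at the point $(w, t)$ computes, via the stationary phase / microlocal cut-off, the enhanced cohomology of $F$ restricted near the critical points of ${\rm Re}f^w$ on $Z \cap U$ together with the behavior at the indeterminacy points in $I(f)$. Lemma \ref{lem-31} guarantees tameness at infinity of each $f^w|_S$ for $S \in \CS_{{\rm red}}$, and Lemma \ref{lem-31-new} gives the relative tameness needed to handle the points of indeterminacy; these feed into Proposition \ref{prop-micros} / \ref{prop-micros-new} and \cite[Theorem 4.4.1]{KS85} to legitimately replace $U$ by the bounded pieces $U_r$ so that $q$ becomes proper on the relevant supports. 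Then Theorem \ref{prop-8-new} (ii) computes the vanishing cycles $\phi_{\tau-\tau_0}(\rmR f^w_! (K|_U))$ as a direct sum of ordinary vanishing cycles $\phi_{f^w-\tau_0}(K)_{p_i}$ (the contributions from $1 \le i \le r$, at smooth Lagrangian points) and meromorphic vanishing cycles $\phi_{f^w-\tau_0}^{\merc}(K)_{q_i}$ (the contributions from $r+1 \le i \le d$, at points of indeterminacy). Propositions \ref{prop-cevan1} and \ref{prop-cevan2} then evaluate each of these: the $1 \le i \le r$ contribution is $\CC^{m(i)}$ concentrated in degree $N-1$, located at $c = f^w(\zeta^{(i)}(w))$, and the $r+1 \le i \le d$ contribution is $\bigoplus_k \CC^{m(i,k)}$ at $c = h_{ik}^w(\zeta^{(i)}(w))$, with $\sum_k m(i,k) = m(i)$. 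Translating the location $c$ of each vanishing-cycle contribution through the normalization $t = {\rm Re}\,c - {\rm Re}\langle z, w\rangle$ and using Lemmas \ref{lem-10} and \ref{lem-10-new} to identify $-f^w(\zeta^{(i)}(w)) = g_i(w)$ (resp. $-h_{ik}^w(\zeta^{(i)}(w)) = g_{ik}(w)$), each summand acquires the shape of an exponential enhanced ind-sheaf $\EE_{V^{\an}|\var{Y}^{\an}}^{{\rm Re}g_i}$ with multiplicity $m(i)$; the isomorphism \eqref{enha-isom} makes the choice of $k$ irrelevant. Finally, one checks the extension by $\bfE i_{Y!}$ and the $\CC^\rmE$-convolution do not disturb this local description over $V \subset \Omega \subset Y$, and that the gluing of the local summands is consistent (this is automatic from the uniqueness of the $g_i$ modulo constants and the connectedness of the $\Lambda_{V,i}$).

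The main obstacle I expect is the bookkeeping that merges the two sources of contributions --- the genuine critical points on $Z \cap U$ and the singularities at the points of indeterminacy --- into a single uniform statement, and in particular verifying that the multiplicities $m(i)$ defined in the introduction via $\sum_{c} \dim H^{N-1}\phi_{f^w-c}^{\merc}(K)_{\zeta^{(i)}(w)}$ genuinely coincide with the Morse-theoretic ranks appearing in $L(F)$. This requires that the microlocal cut-off at the boundary spheres $\partial B_r(0)$ contributes nothing, which is exactly the content of the vanishing $\rmR\Gamma(\partial B_r(0); \phi_{h-\tau_0}(L_{B_r(0)\times\CC})) \simeq 0$ proved inside Theorem \ref{prop-8-new}, and that the enhanced solution complex of $\tl{\SM^\wedge}$ near $V$ is indeed built out of these vanishing cycles with no hidden contributions from $\var{Y}\setminus Y$ or from strata outside $\CS_{{\rm red}}$. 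A second, more technical point is ensuring that the direct sum decomposition of $\pi^{-1}\CC_V \otimes L(F)$ is canonical enough that one may patch the resulting Lagrangian cycles $\sum_i m(i)[\Lambda_{V,i}]$ over varying $V$ and recognize the global object as ${\rm CC}_{{\rm irr}}(\SM)|_{q^{-1}(\Omega)}$, but this is a consequence of the construction already carried out in Lemma \ref{iso-analy-new} rather than a new difficulty. Modulo these verifications, the proof follows the template of \cite[Theorem 4.4]{IT20a} with the meromorphic vanishing cycle functors of Section \ref{sec:s2} inserted to cover the indeterminacy locus.
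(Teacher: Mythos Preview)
Your proposal is correct and follows essentially the same approach as the paper's proof: reduce via \eqref{eq-expofl} and \eqref{enha-isom} to the enhanced-sheaf Fourier--Sato transform, compute the stalk of $L(F)$ at $(w,t)$ as $\rmR\Gamma_c(\{z\in U^{\an}\mid {\Re}f^w(z)\le t\};K[N])$ (via \cite[Lemma 7.2.1]{DK17}), run the Morse-theoretic argument of \cite[Theorem 4.4]{IT20a} using Lemma~\ref{lem-31-new} and Theorem~\ref{prop-8-new} to control the jumps by Propositions~\ref{prop-cevan1} and~\ref{prop-cevan2}, and glue over $V$ by simple-connectedness. The only cosmetic difference is that you foreground Tamarkin's Theorem~\ref{tama-th} as orientation for the micro-support picture, whereas the paper bypasses it and goes straight to the explicit stalk computation; this does not affect the argument.
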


\begin{proof}
By \eqref{eq-expofl} and \eqref{enha-isom},  
it suffices to prove that there exists an isomorphism
\begin{align*}
& \pi^{-1}\CC_V \otimes {}^\rmL \Bigl\{ 
\bigl\{ \pi^{-1} (i_{X^{\an}})_! K \bigr\} 
\otimes 
{\rm E}_{U^{\an}| \var{X}^{\an}}^{{\rm Re}f}
 \Bigr\}
\\ 
& \simeq
\Bigl( \bigoplus_{i=1}^r 
\bigl( {\rm E}_{V^{\an}| \var{Y}^{\an}}^{{\rm Re}g_i } 
\bigr)^{\oplus m(i)}
\Bigr) 
\bigoplus 
\Bigl\{ \bigoplus_{i=r+1}^d 
\Bigl( \bigoplus_{k=1}^{n_i} 
\bigl( {\rm E}_{V^{\an}| \var{Y}^{\an}}^{{\rm Re}g_{ik}}
\bigr)^{\oplus m(i,k)}
\Bigr) 
\Bigr\}. 
\end{align*}
of enhanced sheaves on $V \subset \Omega \subset Y$.
Let 
\begin{equation}
X\times\RR_s\overset{p_1}{\longleftarrow}
(X\times\RR_s)\times (Y\times\RR_t)
\overset{p_2}{\longrightarrow}Y\times\RR_t
\end{equation}
be the projections.
Then by D'Agnolo-Kashiwara \cite[Lemma 7.2.1]{DK17}
on $Y^{\an} \subset \var{Y}^{\an}$ we have an isomorphism
\begin{align*}
& {}^\rmL \Bigl\{ 
\bigl\{ \pi^{-1} (i_{X^{\an}})_! K \bigr\} 
\otimes 
{\rm E}_{U^{\an}| \var{X}^{\an}}^{{\rm Re}f}
 \Bigr\}
\\
& \simeq \Q 
\Bigl(\rmR p_{2!} \Bigl(p_1^{-1}
\Bigl\{ 
\bigl( \pi^{-1} K \bigr)  
\otimes 
{\rm E}_{U^{\an}| X^{\an}}^{{\rm Re}f}
 \Bigr\} \otimes \CC_{\{t-s-{\Re}
\langle z, w \rangle \geq0\}}[ N ] \Bigl) \Bigr),
\end{align*}
where $\Q : \BDC(\CC_{Y^{\an}\times\RR})
\to\BEC(\CC_{Y^{\an}})$ is the quotient functor.
For a point $(w, t)\in Y^{\an}\times\RR$
we have also isomorphisms
\begin{align*}
& \Bigl(\rmR p_{2!} \Bigl(p_1^{-1}
\Bigl\{ 
\bigl( \pi^{-1} K \bigr) 
\otimes 
{\rm E}_{U^{\an}| X^{\an}}^{{\rm Re}f}
 \Bigr\} \otimes \CC_{\{t-s-{\Re}
\langle z, w \rangle \geq0\}}[ N ] \Bigl) \Bigr)_{(w, t)}
\\
&\simeq
\rmR\Gamma_c(\{ (z, s)\in U^{\an} \times \RR \ | \ 
t-s-{\Re} \langle z, w \rangle \geq0, 
s + {\Re} f(z) \geq0\}; \pi^{-1} K[N])
\\
&\simeq
\rmR\Gamma_c( U^{\an} ; \rmR \pi_!(
\CC_{\{ (z, s)\in U^{\an} \times \RR \ | \ 
t-s-{\Re} \langle z, w \rangle \geq 0, \ s 
 + {\Re} f(z) \geq 0 \}} 
\otimes 
\pi^{-1} K[N])) 
\\
&\simeq
\rmR\Gamma_c( U^{\an} ; (\rmR \pi_! 
\CC_{\{  (z, s)\in U^{\an} \times \RR \ | \ 
 t-s-{\Re} \langle z, w \rangle \geq 0, \ 
s  + {\Re} f(z) 
\geq 0 \}} ) 
\otimes K[N]) 
\\
&\simeq 
\rmR\Gamma_c(\{  z \in U^{\an}\ |\ {\Re}
f^w(z) \leq t\}; K[N]), 
\end{align*}
where we used 
\begin{equation}
\rmR \pi_! 
\CC_{\{   (z, s)\in U^{\an} \times \RR \ | \ 
 t-s-{\Re} \langle z, w \rangle \geq 0, 
\ s  + {\Re} f(z) \geq 0 \}} 
\simeq 
\CC_{ \{ z \in U^{\an} \ | \ {\Re} \langle  z, w \rangle - 
{\Re} f(z) \leq t \} } 
\end{equation}
in the last isomorphism. 
Fix a point $w\in V \subset \Omega \subset Y=\CC^N$. 
Then as in the proof 
of \cite[Theorem 4.4]{IT20a} 
we can prove the vanishing 
\begin{equation}
\rmR\Gamma_c(\{  z \in U^{\an}\ |\ {\Re}
f^w(z) \leq t\}; K[N]) \simeq 0 
\end{equation}
for $t\ll0$ as follows. Let us set 
\begin{align*}
& L( Sol_X(\SM) ) 
:= 
\\
& \Bigl(\rmR p_{2!} \Bigl(p_1^{-1}
\Bigl\{ 
\bigl( \pi^{-1} K \bigr)  
\otimes 
{\rm E}_{U^{\an}| X^{\an}}^{{\rm Re}f}
 \Bigr\} \otimes \CC_{\{t-s-{\Re}
\langle z, w \rangle \geq0\}}[ N ] \Bigl) \Bigr).
\end{align*}
Then for $t\in\RR$ its stalk 
at $(w,t) \in Y \times \RR$: 
\begin{equation}
\big(  L( Sol_X(\SM) ) \big)_{(w, t)} 
\simeq
\rmR\Gamma_c\big(
\{z\in U^{\an}\ |\ {\Re} f^w \leq t\}; K[N]
\big)
\end{equation}
is calculated as follows:
\begin{align*}
&\rmR\Gamma_c\big(\{z\in U^{\an}\ |\ {\Re} f^w 
\leq t\}; K[N] \big)\\
&\simeq
\rmR\Gamma_c\big(\{\tau\in \CC\ |\ {\Re}\tau\leq t\};
\rmR f^w_! (K|_U[N]) \big).
\end{align*}
Since the morphism $f^w: U=X \setminus 
Q^{-1}(0) \longrightarrow \CC$ is algebraic, 
the proper direct image $\rmR f^w_! (K|_U[N])$ 
of the perverse sheaf 
$(K|_U[N]) \in \Dbc ( U^{\an} )$ 
is constructible. Then in particular, 
for $t\ll0$ the restrictions of 
its cohomology sheaves to 
the closed half space $\{\tau\in\CC\ | \ 
{\Re}\tau\leq t\}\subset\CC$ of $\CC$ are 
locally constant. 
Thus for $t\ll0$ we obtain the vanishing 
\begin{equation}\label{van-ext} 
\rmR\Gamma_c \bigl( \{\tau\in \CC\ |\ {\Re}\tau\leq t\};
\rmR f^w_! (K|_U[N]) \bigr)
\simeq 0.
\end{equation}
For $1 \leq i \leq r$ we set 
\begin{equation}
c(i):= f^w(\zeta^{(i)}(w))= 
\langle \zeta^{(i)}(w), w \rangle 
-f(\zeta^{(i)}(w)) \in \CC. 
\end{equation}
Moreover for $r+1 \leq i \leq d$ and 
$1 \leq k \leq n_i$ we set 
\begin{equation}
c(i,k):= h_{ik}^w(\zeta^{(i)}(w))= 
\langle \zeta^{(i)}(w), w \rangle 
-h_{ik}(\zeta^{(i)}(w)) \in \CC. 
\end{equation}
Let $B \subset \CC$ be the union 
of these points in $\CC$ and write it as 
\begin{equation}
B= \{ c_1, c_2, \ldots, c_l \} =
\{ c_j \in \CC \ | \ 
1 \leq j \leq l \} \subset \CC  
\end{equation}
so that for any $j \not= j^{\prime}$ 
we have $c_j \not= c_{j^{\prime}}$. 
Then for any $1 \leq i \leq r$ there 
exists a unique index $1 \leq j \leq l$ 
such that $c_j=c(i)$ and we denote it by 
$j(i)$. Similarly, for any $r+1 \leq i \leq d$ and 
$1 \leq k \leq n_i$ there exists a unique index $1 \leq j \leq l$ 
such that $c_j=c(i,k)$ and we denote it by 
$j(i,k)$. Then by 
Corollary \ref{equiv-2ss} and  Lemma \ref{lem-31-new}, 
for any $1 \leq j \leq l$ we can apply 
Theorem \ref{prop-8-new}  
to obtain an isomorphism 
\begin{align*}
& \phi_{\tau -c_j}( \rmR f^w_!  (K|_U[N]) ) 
\\ 
& \simeq
\Bigl( \bigoplus_{i: \ j(i)=j} 
\phi_{f^w-c_j}( K[N] )_{\zeta^{(i)}(w)}
\Bigr) 
\bigoplus 
\Bigl( \bigoplus_{(i,k): \ j(i,k)=j} 
\phi^{\merc}_{f^w-c_j}( K[N] )_{\zeta^{(i)}(w)}
\Bigr), 
\end{align*}
where in the first (resp. the last) direct sum 
$\oplus$ the index $1 \leq i \leq r$ (resp. 
the pair $(i,k)$ of $r+1 \leq i \leq d$ and 
$1 \leq k \leq n_i$) ranges over the ones 
satisfying the condition $j(i)=j$ (resp. 
$j(i,k)=j$). Recall that for any $r+1 \leq i \leq d$ 
there exists at most one index $1 \leq k \leq n_i$ 
such that $j(i,k)=j$. 
By Proposition \ref{prop-cevan1} 
for any $1 \leq i \leq r$ 
such that $j(i)=j$ 
we have isomorphisms 
\begin{equation}
H^n \phi_{f^w-c_{j}}( K[N] )_{\zeta^{(i)}(w)}
\simeq 
\begin{cases}
\CC^{m(i)} & (n=-1)\\
\\
0 & (\mbox{otherwise}). 
\end{cases}
\end{equation}
Moreover by Proposition \ref{prop-cevan2} 
for any pair $(i,k)$ of $r+1 \leq i \leq d$ and 
$1 \leq k \leq n_i$ 
such that $j(i,k)=j$ 
we have isomorphisms 
\begin{equation}
H^n \phi^{\merc}_{f^w-c_{j}}( K[N] )_{\zeta^{(i)}(w)}
\simeq 
\begin{cases}
\CC^{m(i,k)} & (n=-1)\\
\\
0 & (\mbox{otherwise}). 
\end{cases}
\end{equation}
For $1 \leq j \leq l$ let us set 
\begin{equation}
d_j:= \Bigl( \dsum_{i: \ j(i)=j} m(i) \Bigr) + 
\Bigl( \dsum_{(i,k): \ j(i,k)=j} m(i,k) \Bigr). 
\end{equation}
Then for any $1 \leq j \leq l$ we thus obtain isomorphisms 
\begin{equation}
H^n \phi_{\tau -c_j}( \rmR f^w_!  (K|_U[N]) ) 
\simeq
\begin{cases}
\CC^{d_j} & (n=-1)\\
\\
0 & (\mbox{otherwise}). 
\end{cases}
\end{equation}
Again by Theorem \ref{prop-8-new}, for any $c \in \CC$ 
such that $c \notin B= \{ c_1, c_2, \ldots, c_l \}$ 
we can apply Proposition \ref{prop-cevan2} to  
obtain a vanishing 
\begin{equation}
\phi_{\tau -c}( \rmR f^w_! (K|_U[N]) ) \simeq 0. 
\end{equation}
By the proof of \cite[Lemma 2.1]{IT20b} 
this implies that 
the constructible sheaf $\rmR f^w_! (K|_U[N])
\in\BDC_{\CC-c}( \CC )$ on $\CC$ 
is smooth outside the finite subset 
$B= \{ c_1, c_2, \ldots, c_l \} \subset \CC$. 
For $1\leq j \leq l$ set $\gamma_j := {\Re} c_j
 \in \RR$. 
For the fixed point $w\in V \subset \Omega$, 
after reordering $\gamma_1, \gamma_2, \ldots, \gamma_l \in \RR$ 
we may assume that
\[  \gamma_1 \ \leq \gamma_2 \ \leq \cdots  \cdots \ \leq \gamma_l.\]
If for some $1\leq i<j \leq l$ such that 
$ \gamma_i <  \gamma_j$ 
the open interval $( \gamma_i,  \gamma_j)\subset\RR$ 
does not intersect the set $\{ \gamma_1, \gamma_2, \ldots, \gamma_l \}$, 
then for any $t_1, t_2\in\RR$ such that 
$\gamma_i < t_1 < t_2 < \gamma_j$ 
we can show an isomorphism 
\begin{align}
&\rmR\Gamma_c(\{z\in U^{\an}\ |\ {\Re}
f^w(z)   \leq t_2\}; K[N] ) \\
&\simto
\rmR\Gamma_c(\{z\in U^{\an}\ |\ {\Re}
f^w(z) \leq t_1\}; K[N] ).
\end{align}
Equivalently, we shall show an isomorphism 
\begin{align}
&\rmR\Gamma_c(\{ \tau \in \CC \ |\ {\Re}
\tau  \leq t_2\}; \rmR f^w_!  (K|_U[N]) ) \\
&\simto
\rmR\Gamma_c(\{ \tau \in \CC \ |\ {\Re}
\tau \leq t_1\}; \rmR f^w_! (K|_U[N]) ).
\end{align}
This follows from 
Kashiwara's non-characteristic 
deformation lemma (see \cite[Proposition 2.7.2]{KS90}) 
as follows. Let $\iota : \CC = \RR \times \sqrt{-1} \RR 
\hookrightarrow \RR \times \sqrt{-1} \ \overline{\RR}$ be the 
inclusion map. Then there exists 
a continuous map 
$\ell : \RR \times \sqrt{-1} \ \overline{\RR} \longrightarrow \RR$ 
which extends the one ${\rm Re} : \CC  \longrightarrow \RR$. 
Now, by applying Kashiwara's non-characteristic 
deformation lemma to the Morse function 
$\ell : \RR \times \sqrt{-1} \ \overline{\RR} \longrightarrow \RR$, 
we obtain an isomorphism 
\begin{align}
&\rmR\Gamma_c(\{ \tau \in \RR \times \sqrt{-1} \ \overline{\RR} \ |\ 
\ell ( \tau ) \leq t_2\}  ; 
\iota_! \rmR f^w_! (K|_U[N]) ) \\
&\simto
\rmR\Gamma_c(\{ \tau \in \RR \times \sqrt{-1} \ \overline{\RR} \ |\ 
\ell ( \tau ) \leq t_1\}  ; 
\iota_! \rmR f^w_! (K|_U[N]) ), 
\end{align}
which is equivalent to the desired one. 
For $1 \leq j \leq l$ we define a closed 
half space $G_j \subset \CC$ of $\CC$ by 
\begin{equation}
G_j := \{ \tau \in \CC \ |\ {\Re} \tau 
 \geq \gamma_j \} \subset \CC. 
\end{equation}
Then for any  $1 \leq j \leq l$  we have 
isomorphisms 
\begin{align}
H^n \rmR \Gamma_{G_j} ( \rmR f^w_!  (K|_U[N]) )_{c_j} 
&\simeq
H^n \phi_{\tau -c_j}( \rmR f^w_!  (K|_U[N]) ) [-1] 
\\
&\simeq
\begin{cases}
\CC^{d_j} & (n=0)\\
\\
0 & (\mbox{otherwise}).
\end{cases}
\end{align}
Starting from the situation \eqref{van-ext}, 
by Morse theory, we can show that 
for any $t= \gamma_j 
= {\Re} c_j \in \RR$ 
($1\leq j \leq l$)
there exists $0<\e\ll1$ such that
\begin{align*}
&\rmR\Gamma_c(\{ \tau \in \CC  \ |\ {\Re}  \tau  \leq 
t+\e\}; \rmR f^w_!  (K|_U[N]) )\\
&\simeq
\rmR\Gamma_c(\{ \tau \in \CC \ |\ {\Re}  \tau  \leq t\}; 
 \rmR f^w_! (K|_U[N]) )\\
&\simeq
\rmR\Gamma_c(\{ \tau \in \CC \ |\ {\Re}  \tau  \leq t-\e\}; 
 \rmR f^w_! (K|_U[N]) ) \oplus \CC^{d_j}.
\end{align*}
This implies that the restriction of $L( Sol_X(\SM) )$
to the fiber $\pi^{-1}(w) \simeq\RR$ of 
the point $w\in V \subset \Omega$
is isomorphic to that of the sheaf
\begin{align*}
& \Bigl( \bigoplus_{i=1}^r 
\bigl( \CC_{\{t + {\Re} g_i(w) \geq 0 \}} 
\bigr)^{\oplus m(i)}
\Bigr) 
\bigoplus 
\Bigl\{ \bigoplus_{i=r+1}^d 
\Bigl( \bigoplus_{k=1}^{n_i} 
\bigl( \CC_{\{t + {\Re} g_{ik}(w) \geq 0 \}} 
\bigr)^{\oplus m(i,k)}
\Bigr) 
\Bigr\}. 
\\ 
& \simeq
\Bigl( \bigoplus_{i=1}^r 
\bigl( {\rm E}_{V^{\an}| \var{Y}^{\an}}^{{\rm Re}g_i} 
\bigr)^{\oplus m(i)}
\Bigr) 
\bigoplus 
\Bigl\{ \bigoplus_{i=r+1}^d 
\Bigl( \bigoplus_{k=1}^{n_i} 
\bigl( {\rm E}_{V^{\an}| \var{Y}^{\an}}^{{\rm Re}g_{ik}} 
\bigr)^{\oplus m(i,k)}
\Bigr) 
\Bigr\}. 
\end{align*}
Since the subsets $(V\times\RR)\cap \{ 
t + {\Re} g_i(w) \geq 0 \} 
\simeq V \times\RR_{\geq0}$ and 
$(V\times\RR)\cap \{ 
t + {\Re} g_{ik}(w) \geq 0 \} 
\simeq V \times\RR_{\geq0}$
of $V\times\RR$
are connected and simply connected,
we can extend this isomorphism to the whole $V \times \RR 
\subset Y^{\an}\times\RR$.
This completes the proof.
\end{proof}

\begin{corollary}\label{cor-3}
In the situation of Theorem \ref{thm-2}, 
the restriction of the Fourier transform
$\SM^\wedge\in\Mod_{\rm hol}(\SD_Y)$ of $\SM$
to $\Omega \subset Y= \CC^N_w$ is an integrable connection.
Moreover its rank is equal to 
\begin{equation}
\dsum_{i=1}^d m(i) = 
\Bigl( \dsum_{i=1}^r m(i) \Bigr) + 
\dsum_{i=r+1}^d 
\Bigl( \dsum_{k=1}^{n_i} m(i,k) \Bigr). 
\end{equation}
\end{corollary}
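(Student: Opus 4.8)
The plan is to read the statement off from Theorem \ref{thm-2} by means of the irregular Riemann--Hilbert correspondence of \cite{DK16}. Since being an algebraic integrable connection, as well as its rank, can be checked analytically locally, and since by the construction of $\Omega$ in Lemma \ref{lem-fin-cover} together with the constancy of $\dim H^{N-1}\phi^{\merc}_{f^w-c}(K)_{\zeta^{(i)}(w)}$ with respect to $w\in V$ the integer $\sum_{i=1}^d m(i)$ does not depend on the choice of the contractible open subset $V\subset\Omega$ (hence is constant on each connected component of $\Omega$), it suffices to prove that for each such $V$ the restriction $\SM^\wedge|_V$ is an integrable connection of rank $\sum_{i=1}^d m(i)$.

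First I would rewrite the left hand side of the isomorphism of Theorem \ref{thm-2} as an enhanced solution complex of a restriction. Let $j:V^{\an}\hookrightarrow\var{Y}^{\an}$ be the open embedding. Since $V\subset Y\subset\var{Y}$ we have $\bfD j^\ast\tl{\SM^\wedge}\simeq\SM^\wedge|_V$, so by Theorem \ref{thm-4} (ii) together with the facts that $\bfE j^{-1}$ commutes with the functor $\pi^{-1}\CC_V\otimes(\cdot)$ and that $\pi^{-1}\CC_{V^{\an}}\otimes(\cdot)$ is the identity on $\BEC(\I\CC_{V^{\an}})$, I obtain
\begin{align*}
Sol_{V}^\rmE(\SM^\wedge|_V)
&\simeq \bfE j^{-1}Sol_{\var Y}^\rmE(\tl{\SM^\wedge})
\simeq \bfE j^{-1}\Bigl(\pi^{-1}\CC_V\otimes Sol_{\var Y}^\rmE(\tl{\SM^\wedge})\Bigr)
\\
&\simeq \bigoplus_{i=1}^d\bigl(\EE_{V^{\an}|V^{\an}}^{{\rm Re}g_i}\bigr)^{\oplus m(i)},
\end{align*}
where in the last step I use Theorem \ref{thm-2} and $\bfE j^{-1}\EE_{V^{\an}|\var{Y}^{\an}}^{{\rm Re}g_i}\simeq\EE_{V^{\an}|V^{\an}}^{{\rm Re}g_i}$. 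On the other hand, applying Theorem \ref{thm-4} (vi) to the complex manifold $V$, the empty hypersurface $D=\emptyset$ and the holomorphic function $g_i:V\longrightarrow\CC$, the rank one integrable connection $\SD_V e^{g_i}$ associated to $d+dg_i$ satisfies $Sol_V^\rmE(\SD_V e^{g_i})\simeq\EE_{V^{\an}|V^{\an}}^{{\rm Re}g_i}$, so that $Sol_V^\rmE(\SM^\wedge|_V)\simeq Sol_V^\rmE\bigl(\bigoplus_{i=1}^d(\SD_V e^{g_i})^{\oplus m(i)}\bigr)$.

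Then I would invoke the irregular Riemann--Hilbert correspondence (Theorem \ref{cor-4}) on $V$: applying $\rhom^\rmE(\cdot,\SO_V^\rmE)$ to both sides of the last isomorphism and using that $\rhom^\rmE(Sol_V^\rmE(\cdot),\SO_V^\rmE)\simeq{\rm id}$ on $\BDChol(\SD_V)$, I get $\SM^\wedge|_V\simeq\bigoplus_{i=1}^d(\SD_V e^{g_i})^{\oplus m(i)}$, which is manifestly an integrable connection of rank $\sum_{i=1}^d m(i)$. Letting $V$ range over a covering of $\Omega^{\an}$ by contractible open subsets shows that $\SM^\wedge|_\Omega$ is $\SO_\Omega$-coherent and locally free of rank $\sum_{i=1}^d m(i)$, i.e.\ an algebraic integrable connection of that rank, and since $m(i)=\sum_{k=1}^{n_i}m(i,k)$ for $r+1\leq i\leq d$ by definition, the rank equals $\bigl(\sum_{i=1}^r m(i)\bigr)+\sum_{i=r+1}^d\bigl(\sum_{k=1}^{n_i}m(i,k)\bigr)$. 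I do not expect a genuine obstacle here: the corollary is essentially a translation of Theorem \ref{thm-2} through Theorems \ref{thm-4} and \ref{cor-4}, and the only point requiring a little care is the formal identification of $\pi^{-1}\CC_V\otimes Sol_{\var Y}^\rmE(\tl{\SM^\wedge})$ with $Sol_V^\rmE(\SM^\wedge|_V)$ up to $\bfE j^{-1}$ and the verification that $\sum_i m(i)$ does not jump across the pieces $V$, both of which follow from the construction of $\Omega$ and the compatibilities of the enhanced operations recalled in Section \ref{uni-sec:2}.
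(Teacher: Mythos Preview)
Your proposal is correct and follows the approach the paper has in mind: the paper omits the proof, noting it is ``completely the same as that of \cite[Corollary 4.5]{IT20a}'', and that argument is precisely the one you give, namely restricting the isomorphism of Theorem \ref{thm-2} to $V$ via $\bfE j^{-1}$, identifying each $\EE_{V^{\an}|V^{\an}}^{\Re g_i}$ with $Sol_V^\rmE(\SD_Ve^{g_i})$ by Theorem \ref{thm-4} (vi), and then applying the irregular Riemann--Hilbert correspondence (Theorem \ref{cor-4}) to conclude that $(\SM^\wedge)^{\an}|_{V^{\an}}$ is an integrable connection of the claimed rank. Your remarks on why $\sum_{i=1}^d m(i)$ is independent of $V$ and on the passage from analytic local freeness to the algebraic statement are the only points requiring care, and you handle them appropriately.
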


Since the proof of this corollary is completely the 
same as that of \cite[Corollary 4.5]{IT20a}, 
we omit it here. Next fix a point 
$w \in Y= \CC^N$ such that $w\neq0$ and set
\[\LL := \CC w = \{\lambda w\ |\ \lambda\in\CC\}\subset Y=\CC^N.\]
Then $\LL$ is a complex line isomorphic to $\CC_\lambda$. 
Assume that $\LL$ is not contained in 
$D:= Y \setminus \Omega \subset Y= \CC^N$. 
Let $\PP := \LL \sqcup \{\infty\} \subset \var{Y} = \PP^N$
be the projective compactification of $\LL$ 
and $i_{\PP}: \PP \hookrightarrow \var{Y} = \PP^N$ 
the inclusion map. Then by Corollary \ref{cor-3} 
the holonomic D-module 
$\SL := H^0 \bfD (i_{\PP})^\ast 
\SM^\wedge \in \Modhol(\SD_\PP)$ on $\PP$ is a 
meromorphic connection on a 
neighborhood of the point $\infty \in \PP$. 
The following result is a generalization 
of \cite[Theorem 5.6]{ET15} 
and \cite[Theorem 4.6]{IT20a}.  
Let $\varpi_\PP : \tl{\PP}\to\PP$ 
be the real oriented blow-up of $\PP$ along the divisor 
$\{ \infty \} \subset \PP$. 

\begin{theorem}\label{thm-3}
In the situation of Theorem \ref{thm-2}, 
for any point $\theta\in 
\varpi_{\PP}^{-1}( \{ \infty \} )\simeq S^1$
there exists its open neighborhood $W$ in $\tl{\PP}$
such that we have an isomorphism
\[\SL^\SA|_W
\simeq
\Bigr(\bigoplus_{i=1}^d
\bigl(
(\SE_{\LL|\PP}^{g_i( \lambda w)})^\SA
\bigr)^{\oplus m(i)}
\Bigl)|_W
\]
of $\SD_\PP^\SA$-modules 
(see Section \ref{sec:11} for the definition) 
on $W$. In particular, 
the functions $g_i( \lambda w)$ 
of $\lambda$ are the exponential factors of 
the meromorphic connection $\SL$ at the point 
$\infty \in \PP$. Moreover the multiplicity of 
$g_i( \lambda w)$ is equal 
to $m(i)$. 
\end{theorem}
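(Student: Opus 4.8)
The plan is to obtain Theorem \ref{thm-3} by restricting the enhanced solution complex computed in Theorem \ref{thm-2} to a sectorial neighbourhood of $\infty$ inside $\LL = \CC w$, and then translating it, via the local (sectorial) form of the irregular Riemann--Hilbert correspondence, into the Hukuhara--Turrittin normal form of $\SL$ at $\infty$ encoded by the $\SD_\PP^\SA$-module $\SL^\SA$. First I would use Theorem \ref{thm-4} (ii): for the inclusion $i_\PP : \PP \hookrightarrow \var Y$ there is an isomorphism
\[
Sol_\PP^\rmE \bigl( \bfD (i_\PP)^\ast \tl{\SM^\wedge} \bigr) \simeq \bfE i_\PP^{-1} Sol_{\var Y}^\rmE \bigl( \tl{\SM^\wedge} \bigr),
\]
and near $\infty \in \PP$, where $\SL$ is a meromorphic connection by Corollary \ref{cor-3} and $\bfD (i_\PP)^\ast \tl{\SM^\wedge}$ is concentrated in degree $0$, this computes $Sol_\PP^\rmE( \SL )$.

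Next, fix $\theta \in \varpi_\PP^{-1}( \{ \infty \} ) \simeq S^1$. Since $\LL$ is not contained in $D = Y \setminus \Omega$, the set $D \cap \LL$ is finite, so there is an open sector $\Delta \subset \LL$ accumulating at $\infty$, whose closure in $\tl\PP$ contains $\theta$ and which is contained in $\Omega$; thickening $\Delta$ inside $\Omega$ we find a contractible open subset $V \subset \Omega$ with $\Delta \subset V$, on which (after shrinking) the functions $g_i( \lambda w)$ of $\lambda$ ($1 \leq i \leq d$) furnished by Theorem \ref{thm-2} are single-valued and holomorphic; being restrictions to $\LL$ of algebraic functions on $\Omega$, they are of moderate growth as $\lambda \to \infty$ in $\Delta$. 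Pulling back the isomorphism of Theorem \ref{thm-2} along $i_\PP$ and combining it with the displayed isomorphism above, I would obtain, on a neighbourhood of $\infty$ in $\PP$,
\[
\pi^{-1} \CC_\Delta \otimes Sol_\PP^\rmE( \SL ) \simeq \bigoplus_{i=1}^d \bigl( \EE_{\Delta | \PP}^{{\rm Re}\, g_i( \lambda w)} \bigr)^{\oplus m(i)},
\]
which, by Theorem \ref{thm-4} (vi) and additivity of $Sol_\PP^\rmE$, is the enhanced solution complex near $\infty$ along the sector $\Delta$ of the model meromorphic connection $\bigoplus_{i=1}^d \bigl( \SE_{\LL | \PP}^{g_i( \lambda w)} \bigr)^{\oplus m(i)}$; it records precisely the exponential growths $e^{{\rm Re}\, g_i( \lambda w)}$ of solutions of $\SL$ as $\lambda \to \infty$ in the direction $\theta$.

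Finally I would appeal to the local version on the real oriented blow-up $\tl\PP$ of the irregular Riemann--Hilbert correspondence developed in Section \ref{sec:11} (in the spirit of \cite{DK16}, and, in dimension one, of \cite{KS03} and \cite{Morando}; compare \cite[Theorem 5.6]{ET15} and \cite[Theorem 4.6]{IT20a}): on a sufficiently small neighbourhood $W$ of $\theta$ in $\tl\PP$ the $\SD_\PP^\SA$-module attached to a germ at $\infty$ of meromorphic connection is determined by the restriction of its enhanced solution complex to the corresponding sector, exponential enhanced ind-sheaves $\EE^{{\rm Re}\, \varphi}$ corresponding to $\SD^\SA$-modules $( \SE^\varphi_{\LL | \PP} )^\SA$ and finite direct sums to finite direct sums. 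Applying this to $\SL$ and to the model connection above, and observing that the enhanced summand attached to each $g_i( \lambda w)$ is $( \EE^{{\rm Re}\, g_i( \lambda w)} )^{\oplus m(i)}$ with constant coefficients, so that no nontrivial regular factor can occur, I would get
\[
\SL^\SA|_W \simeq \Bigl( \bigoplus_{i=1}^d \bigl( ( \SE_{\LL | \PP}^{g_i( \lambda w)} )^\SA \bigr)^{\oplus m(i)} \Bigr)|_W,
\]
whence the exponential factors of $\SL$ at $\infty$ are the $g_i( \lambda w)$ with multiplicities $m(i)$. I expect the main obstacle to be exactly this last passage: making rigorous, within the framework of Section \ref{sec:11}, the locality on $\tl\PP$ of the dictionary between $\SD_\PP^\SA$-modules and enhanced ind-sheaves, so that the decomposition at the enhanced level lifts to a genuine decomposition of $\SL^\SA|_W$ with no hidden Stokes structure or regular part --- together with the routine bookkeeping when several of the $g_i( \lambda w)$ coincide on $\LL$ (which merely merges summands and adds the corresponding $m(i)$) and the verification that $\bfD (i_\PP)^\ast \tl{\SM^\wedge}$ is concentrated in degree $0$ near $\infty$.
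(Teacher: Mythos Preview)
Your proposal is correct and follows essentially the same route as the paper's proof: restrict the enhanced solution complex of Theorem~\ref{thm-2} to a sectorial neighbourhood $V_\theta$ of $\infty$ in $\LL$, identify it with the enhanced solution complex of the model connection $\bigoplus_i(\SE_{\LL|\PP}^{g_i(\lambda w)})^{\oplus m(i)}$, and then lift this identification to an isomorphism of $\SD_\PP^\SA$-modules on a neighbourhood of $\theta$ in $\tl\PP$. The paper is simply more terse: it invokes \cite[Proposition~3.5]{IT20a} for the sectorial description of $Sol_\PP^\rmE(\SE_{\LL|\PP}^{g_i(\lambda w)})$, and then cites \cite[Corollary~3.11 and Theorem~3.18]{IT20a} for precisely the ``main obstacle'' you flag---namely that an isomorphism of enhanced solution complexes on a sector lifts to an isomorphism of the corresponding $\SD_\PP^\SA$-modules on an open neighbourhood $W$ of $\theta$ in $\tl\PP$; so your concerns about hidden Stokes data or regular parts are already handled by those results.
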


\begin{proof}
By \cite[Proposition 3.5]{IT20a}, 
for any point $\theta\in 
\varpi_{\PP}^{-1}( \{ \infty \} )\simeq S^1$
there exists its sectorial neighborhood
$V_\theta\subset\PP\bs \{ \infty \}$ such that
we have isomorphisms
\begin{equation}
\pi^{-1}\CC_{V_\theta}\otimes 
Sol_\PP^\rmE(\SE_{ \LL |\PP}^{g_i( \lambda w)})
\simeq
\EE_{ V_\theta |\PP}^{{\Re} g_i( \lambda w)} 
\simeq
\pi^{-1}\CC_{V_\theta}\otimes 
\Big( \underset{a\to+\infty}{\inj}\ 
 \CC_{\{t\geq - {\Re} g_i( \lambda w)+a\}} \Big).
\end{equation} 
On the other hand, by Theorem \ref{thm-2}
there exists an isomorphism
\begin{equation}
\pi^{-1}\CC_{V_\theta}\otimes Sol_\PP^\rmE(\SL)
\simeq
\bigoplus_{i=1}^d 
\pi^{-1}\CC_{V_\theta}\otimes 
\Big( \underset{a\to+\infty}{\inj}\ 
\CC_{\{t\geq - {\Re} g_i( \lambda w)+a\}}^{\oplus m(i)} \Big). 
\end{equation} 
We thus obtain an isomorphism
\[\pi^{-1}\CC_{V_\theta}\otimes Sol_\PP^\rmE(\SL)
\simeq
\pi^{-1}\CC_{V_\theta}\otimes Sol_\PP^\rmE\Bigl(
\bigoplus_{i=1}^d \big(\SE_{\LL|\PP}^{
g_i( \lambda w)}\big)^{\oplus m(i)}
\Bigr).
\]
Then the assertions follow from 
\cite[Corollary 3.11 and Theorem 3.18]{IT20a}. 
\end{proof}

As in Esterov-Takeuchi \cite[Remark 5.7]{ET15}, 
by Theorems  \ref{thm-2} and \ref{thm-3}
we easily obtain the Stokes lines of the meromorphic connection 
$\SL\in\Modhol(\SD_\PP)$ at $\infty \in \PP$. 
We leave the precise formulation to the readers.

\medskip 
\indent By Theorem \ref{thm-2}, at generic 
points $v \in D= Y \setminus \Omega \subset Y= \CC^N$ at which 
$D$ is a smooth hypersurface in $Y= \CC^N$ 
we obtain also the irregularity and the exponential 
factors of $\SM^\wedge$ along it as follows. 
Let $D_{\rm reg} \subset D$ be 
the smooth part of $D$ and $v \in D_{\rm reg}$ 
such a generic point. 
Take a subvariety $M\subset Y$ of $Y= \CC^N$ which
intersects $D_{\rm reg}$ at $v$ transversally. 
We call it a normal slice of $D$ at $v$. 
By definition $M$ is smooth and of dimension $1$ 
on a neighborhood of $v$. 
Let $i_M : M\xhookrightarrow{\ \ \ } Y=\CC^N$ be 
the inclusion map and set 
$\SK = {\bfD}i_M^\ast\SM^\wedge\in\Modhol(\SD_M)$. 
Then we can describe the irregularity 
${\rm irr} ( \SK(\ast\{v\}) )$ of the 
meromorphic connection 
$\SK(\ast\{v\})$ on $M$ along $\{ v \} \subset M$ 
as follows. Recall that the irregularity 
${\rm irr} ( \SK(\ast\{v\}) )$ is a non-negative integer and 
equal to $- \chi_v \big(Sol_M ( \SK(\ast\{v\}) ) \big)$, 
where 
\begin{equation}
\chi_v\big(Sol_M( \SK(\ast\{v\}) )\big) :=
\sum_{j\in\ZZ}(-1)^j\dim H^jSol_M ( \SK(\ast\{v\}) )_v
\end{equation} 
is the local Euler-Poincar\'e index of 
$Sol_M( \SK(\ast\{v\}) )$ at the point $v \in M$ 
(see e.g. Sabbah \cite{Sab93}). 
Shrinking the normal slice $M$ 
if necessary we may assume that 
$M= \{ u \in \CC \ | \ | u | < 
\varepsilon \}$ for some $\varepsilon >0$, 
$\{ v \} = \{ u =0 \}$ and 
$M \setminus \{ v \} \subset \Omega$. 
Let $i_0 : 
M \setminus \{v\} \hookrightarrow \Omega$ 
be the inclusion map and define 
(possibly multi-valued) holomorphic 
functions $\varphi_i: M \setminus \{ v \} 
\rightarrow \CC$ ($1 \leq i \leq d$) by 
\begin{equation}
\varphi_i ( u )=  g_i( i_0(u) ).
\end{equation}
Then it is easy to see that $\varphi_i ( u )$ 
are Laurent Puiseux series of $u$ 
(see Kirwan \cite[Section 7.2]{Kir92} etc.). 
For each Laurent Puiseux series 
\begin{equation}
\varphi_i ( u )= 
\sum_{a \in \QQ} c_{i,a} u^a \qquad (c_{i,a} \in \CC ) 
\end{equation}
set $r_i= \min \{ a \in \QQ \ | \ c_{i,a} \not= 0 \}$ and 
define its pole order 
${\rm ord}_{\{ v \}}( \varphi_i ) \geq 0$ by 
\begin{equation}
{\rm ord}_{\{ v \}}( \varphi_i )
=
\begin{cases}
- r_i
 & ( r_i <0)\\
\\
0 & (\mbox{\rm otherwise}).
\end{cases}
\end{equation}
Then we obtain the following theorem.

\begin{theorem}\label{thm-neww}
The exponential factors appearing in the 
Hukuhara-Levelt-Turrittin decomposition
of the meromorphic connection $\SK(\ast\{v\})$ at $v\in M$
are the pole parts of $\varphi_i\ (1\leq i \leq d)$.
Moreover for any $1\leq i\leq d$ the 
multiplicity of the pole part of $\varphi_i$
is equal to $m(i)$.
In particular, the irregularity of 
the meromorphic connection $\SK(\ast\{v\})$ along $v\in M$  
is given by 
\begin{equation}
{\rm irr} ( \SK(\ast\{v\}) ) = 
\sum_{i=1}^d m(i) \cdot 
{\rm ord}_{\{ v \}}( \varphi_i ). 
\end{equation}
\end{theorem}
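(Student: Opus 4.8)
The strategy is to reduce the statement to the description of the enhanced solution complex of $\SM^\wedge$ supplied by Theorem \ref{thm-2}, and then to localize everything at the point $v \in M$. First I would restrict the isomorphism of Theorem \ref{thm-2} to the normal slice: since $M \setminus \{ v \} \subset \Omega$ and $i_0 : M \setminus \{ v \} \hookrightarrow \Omega$ is the inclusion, applying $\bfE i_0^{-1}$ (which commutes with $Sol^{\rmE}$ up to the non-characteristic hypothesis, by Theorem \ref{thm-4} (ii)) gives on a punctured neighborhood of $v$ an isomorphism
\begin{equation}
\pi^{-1}\CC_{M \setminus \{ v \}} \otimes Sol_M^\rmE( \SK )
\simeq
\bigoplus_{i=1}^d \bigl( \EE_{(M \setminus \{ v \})| \PP}^{{\rm Re}\varphi_i} \bigr)^{\oplus m(i)},
\end{equation}
where $\varphi_i(u) = g_i(i_0(u))$. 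One must check here that $M$ is chosen generically so that $i_M$ is non-characteristic for $\SM^\wedge$ over $M \setminus \{ v \}$ and that $\SK(\ast \{ v \})$ is genuinely a meromorphic connection near $v$; this follows from Corollary \ref{cor-3} together with a standard Bertini-type argument on the Lagrangian $\Lambda$ and its image in $Y$.

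Next I would identify the $\varphi_i$ as Laurent–Puiseux series. Since $g_i$ is a holomorphic function on the contractible $V \subset \Omega$ determined by the algebraic Lagrangian $\Lambda = {\rm SS}^{\CC}_{{\rm irr}}(\SF)$ (via $\chi(\Lambda_{V,i}) = \Lambda^{g_i}$), its restriction to a small punctured disk transverse to $D$ at $v$ is an algebraic multivalued function with moderate growth, hence admits a convergent Puiseux expansion $\varphi_i(u) = \sum_{a \in \QQ} c_{i,a} u^a$ with bounded-below support; this is the content of the remark citing \cite{Kir92}. The pole part $\sum_{a < 0} c_{i,a} u^a$ is then a well-defined (possibly ramified) exponential factor on $M \setminus \{ v \}$, and ${\rm ord}_{\{ v \}}(\varphi_i) = \max(0, -r_i)$ is its pole order.

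With these two ingredients in hand, the theorem follows from the irregular Riemann–Hilbert correspondence in dimension one. The enhanced ind-sheaf $\EE_{(M \setminus \{ v \})| \PP}^{{\rm Re}\varphi_i}$ is, after passing to its pole part (the regular part contributes an $\EE^{{\rm Re}(\text{hol})}$ summand that does not affect the exponential factors or the irregularity), exactly the enhanced solution complex of the elementary meromorphic connection $\SE_{(M \setminus \{ v \})|\PP}^{(\varphi_i)_{\rm pole}}$ by Theorem \ref{thm-4} (vi). Hence the Hukuhara–Levelt–Turrittin decomposition of $\SK(\ast \{ v \})$ has exponential factors precisely the pole parts of the $\varphi_i$, each with multiplicity $m(i)$ — this is the uniqueness part of the one-dimensional Riemann–Hilbert correspondence applied to the direct sum decomposition above, exactly as in \cite[Theorem 4.6]{IT20a} and \cite[Theorem 5.6]{ET15}. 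Finally, the irregularity is computed as $-\chi_v(Sol_M(\SK(\ast \{ v \})))$, and since the local Euler–Poincaré index of the solution complex of an elementary meromorphic connection $\SE^{\psi}$ with pole order $m = {\rm ord}_{\{ v \}}(\psi)$ equals $-m$ (additive over direct sums and computable from the Stokes structure), summing over $i$ with multiplicities gives
\begin{equation}
{\rm irr}(\SK(\ast \{ v \})) = \sum_{i=1}^d m(i) \cdot {\rm ord}_{\{ v \}}(\varphi_i).
\end{equation}

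\textbf{Main obstacle.} The delicate point is the genericity of the normal slice: one must ensure simultaneously that $v \in D_{\rm reg}$ lies outside a bad locus so that (i) $M$ is non-characteristic for $\SM^\wedge$ on the punctured neighborhood, (ii) the restriction of the covering $\Lambda \cap q^{-1}(\Omega) \to \Omega$ extends to a nice ramified covering over a disk meeting $D$ only at $v$, so that the $\varphi_i$ are honest Puiseux series rather than having essential-singularity behaviour, and (iii) the multiplicities $m(i)$ attached in Theorem \ref{thm-2} restrict correctly. All three are true generically by stratification theory and the algebraicity of $\Lambda \subset T^*X$, but making the argument clean requires carefully quoting \cite[page 43]{GM88} and the behaviour of $Sol^\rmE$ under non-characteristic pullback (Theorem \ref{thm-4} (ii)); the rest is a routine transcription of the one-dimensional case treated in \cite{IT20a}.
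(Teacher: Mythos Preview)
Your proposal is correct and follows essentially the same approach as the paper: the paper states this theorem without proof, as a direct consequence of Theorem~\ref{thm-2} together with the one-dimensional reconstruction results \cite[Corollary~3.11 and Theorem~3.18]{IT20a} (exactly as in the proof of the analogous Theorem~\ref{thm-3} for complex lines through the origin). Your restriction to the normal slice via Theorem~\ref{thm-4}~(ii), identification of the $\varphi_i$ as Puiseux series, and appeal to the uniqueness in the one-dimensional irregular Riemann--Hilbert correspondence is precisely the intended argument.
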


\section{Toward the Study of Fourier Transforms 
of General Holonomic D-modules}\label{sec:11}

\subsection{Preliminary Results for Holonomic D-modules}\label{subsec:7.1}
In this subsection, we prove some new formulas 
which might be useful to extend our results in 
Section \ref{sec:9} to 
arbitrary holonomic D-modules. 
First of all, let us recall some notions and results in \cite[\S 7]{DK16}. 
Let $X$ be a complex manifold and $D \subset X$ a 
normal crossing divisor in it. Denote by 
$\varpi_X : \tl{X}\to X$ the real oriented blow-up of $X$ along 
$D$ (sometimes we denote it simply by $\varpi$). 
Then we set
\begin{align*}
\SO_{\tl{X}}^\rmt &:= \rhom_{\varpi^{-1}
\SD_{\overline{X}}}(\varpi^{-1}\SO_{\overline{X}}, 
\mathcal{D}b_{\tl{X_\RR}}^\rmt),\\
\SA_{\tl{X}} &:= \alpha_{\tl{X}}\SO_{\tl{X}}^\rmt,\\
\SD_{\tl{X}}^\SA &:= \SA_{\tl{X}}\otimes_{\varpi^{-1}\SO_X}
\varpi^{-1}\SD_X, 
\end{align*}
where $\mathcal{D}b_{\tl{X}}^\rmt$ stands for the ind-sheaf 
of tempered distributions on $\tl{X}$ 
(for the definition see \cite[Notation 7.2.4]{DK16}). 
Recall that a section of $\SA_{\tl X}$ is a holomorphic function having
moderate growth at $\varpi_X^{-1}(D)$.
Note that $\SA_{\tl{X}}$ and 
$\SD_{\tl{X}}^\SA$ are sheaves of rings on $\tl{X}$. 
For $\SM\in\BDC(\SD_X)$ we define 
an object $\SM^{\SA}\in\BDC(\SD_{\tl{X}}^\SA)$ by 
\begin{align*}
\SM^{\SA} &:= \SD_{\tl{X}}^\SA\Lotimes
{\varpi^{-1}\SD_X}\varpi^{-1}\SM
\simeq\SA_{\tl{X}}
\Lotimes{\varpi^{-1}\SO_X}\varpi^{-1}\SM.
\end{align*}
Note that if $\SM$ is a holonomic $\SD_X$-module such that 
$\SM\simto\SM(\ast D)$ and 
${\rm sing.supp} (\SM)\subset D$,
then one has $\SM^\SA\simeq
\SD_{\tl{X}}^\SA\otimes_{\varpi^{-1}\SD_X}\varpi^{-1}\SM$
(see \cite[Lemma 7.3.2]{DK16}).
Moreover we have an isomorphism $\M^\SA\simto \M(\ast D)^\SA$
for any holonomic $\D_X$-module $\M$ (see \cite[Lemma 7.2.2]{DK16}). 
Let us take local coordinates 
$(u,v)=(u_1, \ldots, u_l, v_1, \ldots, v_{n-l})$ 
of $X$ such that $D= \{ u_1 u_2 \cdots u_l=0 \}$. 
We define a partial order $\leq$ on the 
set $\ZZ^l$ by 
\[ a=(a_1, \ldots, a_l) \leq a^{\prime}=(a^{\prime}_1, \ldots, a^{\prime}_l) 
 \ \Longleftrightarrow 
\ a_i \leq a_i^{\prime} \ (1 \leq i \leq l).\] 
Then for a meromorphic function $\varphi\in\SO_X(\ast D)$
on $X$ having a pole along $D$ by using its Laurent expansion 
\[ \varphi = \sum_{a \in \ZZ^l} c_a( \varphi )(v) \cdot 
u^a \ \in \SO_X(\ast D) \]
with respect to $u_1, \ldots, u_{l}$
we define its order 
${\rm ord}( \varphi ) \in \ZZ^l$ to be 
the minimum 
\[ \min \Big( \{ a \in \ZZ^l \ | \
c_a( \varphi ) \not= 0 \} \cup \{ 0 \} \Big) \]
if it exists. In \cite[Chapter 5]{Mochi11} 
Mochizuki defined the notion of 
good sets of irregular values on $(X,D)$ to be 
finite subsets $S \subset \SO_X(\ast D)/ \SO_X$ such that 
\par 
\medskip
\noindent (i): ${\rm ord}( \varphi )$ exists for any 
$\varphi \in S$ and if $\varphi \not= 0$ then its  
leading term $c_{{\rm ord}( \varphi )} ( \varphi ) (v)$ 
does not vanish 
at any point $v \in Y:= \{ u_1= \cdots =u_l=0 \} 
\subset D$. 
\par \noindent (ii): ${\rm ord}( \varphi - \psi )$ exists for any 
$\varphi \not= \psi$ in $S$ and then ${\rm ord}( \varphi - \psi ) 
\in \ZZ^l_{\leq 0} \setminus \{ 0 \}$ and the  
leading term $c_{{\rm ord}( \varphi - \psi )} ( \varphi - \psi ) (v)$ 
does not vanish 
at any point $v \in Y= \{ u_1= \cdots =u_l=0 \} 
\subset D$. 
\par \noindent (iii): the subset $\{ {\rm ord}( \varphi - \psi ) \ | \ 
\varphi, \psi \in S, \varphi \not= \psi \} \subset \ZZ^l$  
is totally ordered with respect to the order $\leq$ on $\ZZ^l$. 
\par 
\medskip
\begin{definition}\label{def-A}
Let $X$ be a complex manifold and $D \subset X$ a 
normal crossing divisor in it. 
Then we say that a holonomic $\SD_X$-module 
$\SM$ has a normal form along $D$ if\\
(i) $\SM\simto\SM(\ast D)$\\
(ii) $\rm{sing.supp}(\SM)\subset D$\\
(iii) for any $\theta\in\varpi^{-1}(D)\subset\tl{X}$,
there exist an open neighborhood $U\subset X$
of $\varpi({\theta}) \in D$ in $X$, a good set 
$S= \{ [ \varphi_1], [ \varphi_2], \ldots, 
[ \varphi_k] \} \subset \SO_X(\ast D)/ \SO_X$ 
($\varphi_i \in \SO_X(\ast D)$) 
of irregular values on $(U, D \cap U)$, 
positive integers 
$m_i >0$ ($1 \leq i \leq k$) and 
an open neighborhood $W$ of $\theta$ 
with $W \subset\varpi^{-1}(U)$
such that
\begin{equation}
\SM^\SA|_W
\simeq
\bigoplus_{i=1}^k 
\Bigl( \bigl(\SE_{U\bs D|U}^{\varphi_i}\bigr)^\SA |_W
\Bigr)^{\oplus m_i} .
\end{equation}
\end{definition}
By \cite[Proposition 3.19]{IT20a} the good set 
$S \subset \SO_X(\ast D)/ \SO_X$ of 
irregular values for $\SM$ 
in this definition does not depend on 
the point $\theta \in \varpi^{-1}(D)$. Moreover 
by \cite[Proposition 3.5]{IT20a} for any 
$\theta \in \varpi^{-1}(D \cap U)$ there exists its  
sectorial open neighborhood $V \subset U \setminus D$ 
such that 
\begin{equation}
\pi^{-1} \CC_V \otimes 
Sol_X^\rmE ( \SM ) 
\simeq
\bigoplus_{i=1}^k 
\Bigl(  \EE_{V|X}^{\Re \varphi_i} 
\Bigr)^{\oplus m_i}.
\end{equation}

\begin{lemma}\label{DK-lem}
In the situation as above, 
there exists a  
sectorial open neighborhood $V \subset U \setminus D$ 
of $\theta \in \varpi^{-1}(D \cap U)$ 
such that for any $1 \leq i,j \leq k$ the natural morphism 
\begin{equation}
{\rm Hom}^\rmE ({\rm E}_{V|M}^{\Re \varphi_i}, 
{\rm E}_{V|M}^{\Re \varphi_j}) 
\longrightarrow 
{\rm Hom}^\rmE ( \EE_{V|X}^{\Re \varphi_i}, \EE_{V|X}^{\Re \varphi_j}) 
\end{equation}
is an isomorphism. 
\end{lemma}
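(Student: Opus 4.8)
The plan is to reduce the statement to a computation of enhanced Hom between the explicit exponential sheaves $\EE_{V|X}^{\Re\varphi_i}$ and $\EE_{V|X}^{\Re\varphi_j}$ on a suitably chosen sectorial neighborhood $V$, and to compare it with the analogous (easier) Hom between the non-ind-sheaf versions ${\rm E}_{V|M}^{\Re\varphi_i}$ and ${\rm E}_{V|M}^{\Re\varphi_j}$. First I would recall from the goodness of the set $S=\{[\varphi_1],\dots,[\varphi_k]\}$ that for $i\neq j$ the order ${\rm ord}(\varphi_i-\varphi_j)\in\ZZ^l$ is well-defined, strictly negative in at least one coordinate, and the leading coefficient $c_{{\rm ord}(\varphi_i-\varphi_j)}(\varphi_i-\varphi_j)(v)$ is nonvanishing at the relevant points of $D$. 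This is exactly the data that controls, on a small enough sectorial neighborhood $V$ of $\theta\in\varpi^{-1}(D\cap U)$, the sign behavior of $\Re(\varphi_i-\varphi_j)$: shrinking $V$, the function $\Re(\varphi_i-\varphi_j)$ has a constant sign on $V$ (either it is bounded below and tends to $-\infty$, or bounded above and tends to $+\infty$, near the blown-up divisor), or $\varphi_i-\varphi_j$ extends holomorphically with $i=j$ modulo $\SO_X$. So the first step is to fix such a $V$ on which every pair $(\varphi_i,\varphi_j)$ is ``disentangled'' in this sense.

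\textbf{Key steps.} Second, I would invoke the standard computation of enhanced morphisms between exponential enhanced sheaves: $\rHom^\rmE(\EE_{V|X}^{\Re\varphi_i},\EE_{V|X}^{\Re\varphi_j})$ is computed by $\rsect(V;\CC_V)$ when $\Re(\varphi_j-\varphi_i)$ is bounded below on $V$ (equivalently when $[\varphi_i]=[\varphi_j]$ in $\SO_X(\ast D)/\SO_X$, using that $V$ is sectorial hence connected and simply connected so $\rsect(V;\CC_V)\simeq\CC$), and vanishes otherwise; this is the content underlying \cite[Corollary 3.11]{IT20a} and is precisely the mechanism by which exponential factors are distinguished. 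The same dichotomy holds verbatim for ${\rm E}_{V|M}^{\Re\varphi_i}$ in $\BEC(\CC_M)$ since the defining closed sets $\{t+\Re\varphi_i\geq 0\}$ are the same and the convolution $\Prhom$ only sees the relative position of these half-spaces, which is governed by the sign of $\Re(\varphi_j-\varphi_i)$. Third, I would check that the natural comparison morphism $\BEC(\CC_M)\to\BEC(\I\CC_X)$ (induced by $\e$ on the non-ind side versus $e=\CC_M^\rmE\Potimes(-)$ on the ind side, together with \eqref{eq-expon}-type compatibility ${}^\rmL(\CC^\rmE\Potimes(-))\simeq\CC^\rmE\Potimes{}^\rmL(-)$ in the form used for exponential factors) sends ${\rm E}_{V|M}^{\Re\varphi_i}$ to $\EE_{V|X}^{\Re\varphi_i}=\CC_X^\rmE\Potimes{\rm E}_{U|X}^{\Re\varphi_i}$ and is compatible with the Hom computations above; since both sides evaluate to the same object ($\CC$ or $0$) and the comparison morphism induces the obvious identity on this common value, it is an isomorphism.

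\textbf{Main obstacle.} The delicate point is the ``otherwise'' case, i.e. showing that when $\Re(\varphi_j-\varphi_i)$ is not bounded below on $V$ both $\rHom$ groups vanish, and vanish \emph{compatibly}; this requires choosing $V$ sectorial narrow enough that $\Re(\varphi_i-\varphi_j)$ has the clean asymptotic dictated by its leading term—here the goodness hypothesis (non-vanishing of the leading coefficient at $v$, total order on $\ZZ^l$) is essential, and I would cite \cite[Proposition 3.5]{IT20a} and \cite[Corollary 3.11, Theorem 3.18]{IT20a} rather than redo the estimate. The other mild subtlety is passing from $\rHom^\rmE$ (a complex) to ${\rm Hom}^\rmE$ (the $0$-th cohomology / the group of morphisms in the triangulated category), but since in the nontrivial case the complex is concentrated in degree $0$ and equals $\CC$ on both sides, and in the trivial case it is acyclic on both sides, taking $H^0$ preserves the isomorphism. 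Assembling these, the displayed natural morphism is an isomorphism for every pair $(i,j)$ on the chosen $V$, which is the claim.
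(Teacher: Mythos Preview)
Your approach is the same as the paper's --- reduce to computing $\Hom^\rmE$ between exponential enhanced (ind-)sheaves pairwise and verify both sides agree --- but the case analysis has a real gap. The claim that, after shrinking $V$, the function $\Re(\varphi_j-\varphi_i)$ has a definite sign on $V$ fails precisely when $\theta\in\varpi^{-1}(D)$ is a Stokes direction for the pair $(i,j)$, i.e.\ when the real part of the leading term $c_{\alpha}(\varphi_j-\varphi_i)(v)\cdot e^{i\langle\alpha,\delta\rangle}$ vanishes at $\theta$. Since $\theta$ is prescribed by the statement and every sectorial $V$ must contain it, you cannot avoid this situation; on any such $V$ the function $\Re(\varphi_j-\varphi_i)$ is unbounded both above and below, so neither branch of your dichotomy applies. (Incidentally, the criterion for $\Hom^\rmE(\EE^{\Re\varphi_i},\EE^{\Re\varphi_j})\simeq\CC$ is that $\Re(\varphi_j-\varphi_i)$ be bounded \emph{above}, not below, and this is not equivalent to $[\varphi_i]=[\varphi_j]$: for $i\ne j$ and $\theta$ generic, exactly one of the two ordered $\Hom$'s is $\CC$.)

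The paper's dichotomy avoids constant signs altogether: either some sectorial $V$ satisfies $\Re(\varphi_j-\varphi_i)\le 0$ throughout (then both $\Hom$'s equal $\CC$ by the proofs of \cite[Lemmas~3.1.1 and~3.2.2]{DK17}), or no such $V$ exists. In the second case the paper uses the curve selection lemma together with the explicit angular form of the leading term to show that for \emph{every} sectorial $V$ around $\theta$ there is a direction $\theta'\in{\rm Int}(\overline V)\cap\varpi^{-1}(Y)$ along whose ray $\Re(\varphi_j-\varphi_i)\to+\infty$. A single such ray already forces $\Hom_{\CC_X}(\CC_{V\cap\{\Re(\varphi_j-\varphi_i)\le c\}},\CC_X)=0$ for every $c$, whence both the enhanced-sheaf and enhanced-ind-sheaf $\Hom$'s vanish, again by \cite[Lemmas~3.1.1(i) and~3.2.2(i)]{DK17}. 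These are the references you need; the results you cite from \cite{IT20a} concern the reconstruction of $\SM^\SA$ from $Sol^\rmE$, not the $\Hom$ computation between exponentials.
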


\begin{proof}
The proof is similar to that of \cite[Lemma 5.2.1 (ii)]{DK17}. 
It suffices to consider only pairs $(i,j)$ such that 
$i \not= j$. Then by $[ \varphi_i] \not= [ \varphi_j]$ 
the function $\varphi_j- \varphi_i \in 
\SO_X(\ast D)$ has a pole along the normal 
crossing divisor $D \subset X$. 
For a local coordinate system 
$(u,v)=(u_1, \ldots, u_l, v_1, \ldots, v_{n-l})$ 
of $X$ such that $D= \{ u_1 u_2 \cdots u_l=0 \}$ 
and $\varpi ( \theta ) \in 
Y:= \{ u_1=u_2= \cdots =u_l=0 \} \subset D$ let 
\begin{equation}
( \varphi_j- \varphi_i )(u,v)= 
\sum_{a \in \ZZ^l} c_a( \varphi_j- \varphi_i )(v) \cdot 
u^a \ \in \SO_X(\ast D) 
\end{equation}
be the Laurent expansion of $\varphi_j- \varphi_i$ 
with respect to $u_1,u_2, \ldots, u_l$. 
Then by the goodness of the set $S$ the order 
 $\alpha := {\rm ord}( \varphi_j- \varphi_i ) \in \ZZ^l$ 
is defined and satisfies the condition 
\begin{equation}
\alpha = ( \alpha_1, \alpha_2, \ldots, \alpha_l) 
\quad \in ( \ZZ_{\leq 0} )^l \setminus \{ 0 \}. 
\end{equation}
If there exists a sectorial open neighborhood 
$V \subset X \setminus D$ 
of $\theta \in \varpi^{-1}(D)$ 
such that $\Re ( \varphi_j- \varphi_i) \leq 0$ on $V$ , 
then by the proof of \cite[Lemmas 3.1.1 and 3.2.2]{DK17} 
we have isomorphisms 
\begin{equation}
{\rm Hom}^\rmE ({\rm E}_{V|M}^{\Re \varphi_i}, 
{\rm E}_{V|M}^{\Re \varphi_j}) \simeq \CC, \quad 
{\rm Hom}^\rmE ( \EE_{V|X}^{\Re \varphi_i}, \EE_{V|X}^{\Re \varphi_j}) 
\simeq \CC. 
\end{equation}
Otherwise, the point $\theta \in \varpi^{-1}(D)$ is 
contained in the closure $\overline{R}$ of the open subset 
\begin{equation}
R:= \{ (u,v) \in X \setminus D \ | \ 
\Re ( \varphi_j- \varphi_i) (u,v) > 0 \}
\end{equation}
of $\tl{X}$. As the meromorphic function 
$\varphi_j- \varphi_i \in 
\SO_X(\ast D)$ has no point of indeterminacy by 
the goodness of the set $S$, we can easily 
show that $R$ is a subanalytic open 
subset of $\tl{X}$. Then by the curve selection lemma 
there exists a 
real analytic curve $\gamma (t) : [0, \varepsilon )
\longrightarrow \overline{R}$ ($\varepsilon >0$) 
(defined on a neighborhood of $0 \in \RR$) 
such that $\gamma (0) = \theta \in 
\varpi^{-1}(D) \cap \overline{R}$ and 
$\gamma (t) \in R$ for any $t \in (0, \varepsilon )$. 
By the leading term 
$c_{\alpha}( \varphi_j- \varphi_i )(v) \cdot u^{\alpha}$
of the Laurent expansion of $\varphi_j- \varphi_i$, 
we define a complex valued real analytic 
function $\psi : \varpi^{-1}(Y) \longrightarrow \CC$ 
on the real analytic manifold 
$\varpi^{-1}(Y) \simeq (S^1)^l \times Y$ 
(defined on a neighborhood of the point 
$\theta \in \varpi^{-1}(Y)$) by 
\begin{equation}
\psi (e^{i \delta_1}, e^{i \delta_2}, \ldots, e^{i \delta_l}, 
v) := c_{\alpha}( \varphi_j- \varphi_i )(v) \cdot 
e^{i \alpha_1 \delta_1} e^{i \alpha_2 \delta_2} \cdots 
e^{i \alpha_l \delta_l}. 
\end{equation}
Recall that the holomorphic function 
$c_{\alpha}( \varphi_j- \varphi_i )$ on $Y$ 
does not vanish at any point $v \in Y$. 
Suppose that $\Re \psi ( \theta )<0$. Then 
for the real analytic curve $\gamma$ we have 
\begin{equation}
\lim_{t \to +0} \Re ( \varphi_j- \varphi_i) 
 ( \gamma (t)) =- \infty. 
\end{equation}
Then contradicts to the condition 
$\gamma (t) \in R$ ($t \in (0, \varepsilon )$). 
We thus obtain $\Re \psi ( \theta ) \geq 0$. 
Then, by the above definition of $\psi$, 
for any sectorial open neighborhood $V \subset X \setminus D$ 
of $\theta \in \varpi^{-1}(Y)$ there exists 
a point $\theta^{\prime} =  
(e^{i \delta_1}, \ldots, e^{i \delta_l}, 
v) \in \varpi^{-1}(Y) \cap {\rm Int} ( \overline{V} )$ 
such that $\Re \psi ( \theta^{\prime} ) > 0$. 
Define a line 
$\gamma^{\prime} (t) : (0, \varepsilon^{\prime} )
\longrightarrow V$ ($\varepsilon^{\prime} >0$) in $V$ by 
\begin{equation}
\gamma^{\prime} (t) = 
(t e^{i \delta_1}, t e^{i \delta_2}, \ldots, t e^{i \delta_l}, 
v) \in V \qquad (0<t< \varepsilon^{\prime} )
\end{equation}
so that we have 
\begin{equation}
\lim_{t \to +0} \gamma^{\prime} (t) = \theta^{\prime} 
= (e^{i \delta_1}, \ldots, e^{i \delta_l}, v). 
\end{equation}
Then by $\Re \psi ( \theta^{\prime} ) > 0$ we obtain 
\begin{equation}
\lim_{t \to +0} \Re ( \varphi_j- \varphi_i) 
 ( \gamma^{\prime} (t)) =+ \infty, 
\end{equation}
which implies also that $\theta^{\prime} \in \overline{R}$. 
By the proof of \cite[Lemma 3.1.1 (i)]{DK17} 
there exist isomorphisms 
\begin{equation}
{\rm Hom}^\rmE ({\rm E}_{V|M}^{\Re \varphi_i}, 
{\rm E}_{V|M}^{\Re \varphi_j}) \simeq 
{\rm Hom}_{\CC_X} 
( \CC_{V \cap \{ \Re ( \varphi_j- \varphi_i) \leq 0 \}}, \CC_X ) 
\simeq 0. 
\end{equation}
Moreover by the proof of \cite[Lemma 3.2.2 (i)]{DK17} 
we have 
\begin{equation}
{\rm Hom}^\rmE ( \EE_{V|X}^{\Re \varphi_i}, \EE_{V|X}^{\Re \varphi_j}) 
\simeq \varinjlim_{c \to + \infty}
{\rm Hom}_{\CC_X} 
( \CC_{V \cap \{ \Re ( \varphi_j- \varphi_i) \leq c \}}, \CC_X ) 
\simeq 0. 
\end{equation}
We thus obtain the desired isomorphism 
\begin{equation}
{\rm Hom}^\rmE ({\rm E}_{V|M}^{\Re \varphi_i}, 
{\rm E}_{V|M}^{\Re \varphi_j}) 
\simto 
{\rm Hom}^\rmE ( \EE_{V|X}^{\Re \varphi_i}, \EE_{V|X}^{\Re \varphi_j}) 
\end{equation}
for the pair $(i.j)$. Clearly we can take a  
sectorial open neighborhood $V \subset X \setminus D$ 
of $\theta \in \varpi^{-1}(D)$ 
so that this isomorphism holds for any 
pair $(i.j)$. 
\end{proof} 

A ramification of $X$ along the normal crossing divisor 
$D \subset X$ on a neighborhood $U$ 
of $x \in D$ is a finite map $ \rho : X' \longrightarrow U$
of complex manifolds 
of the form $w \longmapsto 
z=(z_1,z_2, \ldots, z_n)= 
 \rho (w) = (w^{d_1}_1,\ldots, w^{d_l}_l, w_{l+1},\ldots, w_n)$ 
for some $(d_1, \ldots, d_l)\in (\ZZ_{>0})^l$, where 
$(w_1,\ldots, w_n)$ is a local coordinate system of $X'$ and 
$(z_1, \ldots, z_n)$ is that of 
$U$ such that $D \cap U=\{z_1\cdots z_l=0\}$. 

\begin{definition}\label{def-B}
Let $X$ be a complex manifold and $D \subset X$ a 
normal crossing divisor in it. 
Then we say that a holonomic $\SD_X$-module $\SM$ has a 
quasi-normal form along $D$ if it satisfies 
the conditions (i) and (ii) of Definition \ref{def-A},
and if for any point $x \in D$ 
there exists a ramification $\rho  : X'\to U$ 
on a neighborhood $U$ of it such that $\bfD \rho^\ast(\SM|_U)$
has a normal form along the normal crossing divisor 
$\rho^{-1}(D\cap U)$.
\end{definition}

Note that $\bfD \rho^\ast(\SM|_U)$ as well 
as $\bfD \rho_\ast\bfD \rho^\ast(\SM|_U)$
is concentrated in degree zero and $\SM|_U$ is a 
direct summand of $\bfD \rho_\ast\bfD \rho^\ast(\SM|_U)$. 
Now let $\SM$ be a holonomic $\SD_X$-module having a 
quasi-normal form along the normal crossing divisor 
$D \subset X$. Then for any point $x \in D$ 
there exists a ramification $\rho  : X'\to U$ 
on a neighborhood $U$ of it such that $\bfD \rho^\ast(\SM|_U)$
has a normal form along the normal crossing divisor 
$D^{\prime} :=\rho^{-1}(D\cap U) \subset X^{\prime}$. 
Note that $\rho^{-1}(x) \subset D^{\prime}$ is a point and 
denote it by $x^{\prime}$. 
Let $\varpi^{\prime}: 
\widetilde{X^{\prime}} \to X^{\prime}$ be the real 
oriented blow-up of $X^{\prime}$ along 
$D^{\prime}$ and $\tl{\rho}: \widetilde{X^{\prime}} \to 
\tl{X}$ the morphism induced by $\rho$. 
Then by \cite[Propositions 3.5 and 3.19]{IT20a} 
there exist a unique good set 
$S= \{ [ \varphi_1], [ \varphi_2], \ldots, 
[ \varphi_k] \} \subset \SO_{X^{\prime}}(\ast D^{\prime})
/ \SO_{X^{\prime}}$ 
($\varphi_i \in  \SO_{X^{\prime}}(\ast D^{\prime})$) 
of irregular values on a neighborhood of 
$x^{\prime} \in D^{\prime}$ in $X^{\prime}$ and 
positive integers 
$m_i >0$ ($1 \leq i \leq k$) such that 
for any $\theta^{\prime} \in ( \varpi^{\prime})^{-1}
(D^{\prime})$ and 
its sufficiently small sectorial open neighborhood 
$V^{\prime} \subset X^{\prime} \setminus D^{\prime}$ 
we have an isomorphism 
\begin{equation}
\pi^{-1} \CC_{V^{\prime}} \otimes 
Sol_{X^{\prime}}^\rmE ( \bfD \rho^\ast(\SM|_U) ) 
\simeq
\bigoplus_{i=1}^k 
\Bigl(  \EE_{V^{\prime}|X^{\prime}}^{\Re \varphi_i} 
\Bigr)^{\oplus m_i}.
\end{equation}
For a point $\theta\in\varpi^{-1}(D \cap U)$ and 
its sufficiently small sectorial open neighborhood 
$V \subset U \setminus D$ we take a point 
$\theta^{\prime} \in ( \varpi^{\prime})^{-1}
(D^{\prime})$ such that $\tl{\rho} ( \theta^{\prime}) = 
\theta$ and its sectorial open neighborhood 
$V^{\prime} \subset X^{\prime} \setminus D^{\prime}$ 
such that $\rho |_{V^{\prime}} : V^{\prime} \simto V$. 
Define holomorphic functions 
$f_i: V \to \CC$ ($1 \leq i \leq k$) by 
$f_i:= \varphi_i \circ ( \rho |_{V^{\prime}})^{-1}$. 
Then by \cite[Proposition 3.5]{IT20a} we obtain 
an isomorphism 
\begin{equation}\label{mon-eq} 
\pi^{-1} \CC_V \otimes 
Sol_X^\rmE ( \SM ) 
\simeq
\bigoplus_{i=1}^k 
\Bigl(  \EE_{V|X}^{\Re f_i} 
\Bigr)^{\oplus m_i}.
\end{equation}
As $\tl{\rho}: \widetilde{X^{\prime}} \to 
\tl{X}$ is locally an isomorphism, then it is 
also clear that on an open neighborhood $W$ of $\theta$ 
in $\tl{X}$ we have an isomorphism 
\begin{equation}\label{mono-eqn} 
\SM^\SA|_W
\simeq
\bigoplus_{i=1}^k 
\Bigl( \bigl(\SE_{U\bs D|U}^{f_i}\bigr)^\SA |_W
\Bigr)^{\oplus m_i} .
\end{equation}
Moreover, by the proof of Lemma \ref{DK-lem}  
we obtain the following result. 

\begin{lemma}\label{DK-lem-2}
In the situation as above, 
there exists a  
sectorial open neighborhood $V \subset U \setminus D$ 
of $\theta \in \varpi^{-1}(D \cap U)$ 
such that for any $1 \leq i,j \leq k$ the natural morphism 
\begin{equation}
{\rm Hom}^\rmE ({\rm E}_{V|M}^{\Re f_i}, 
{\rm E}_{V|M}^{\Re f_j}) 
\longrightarrow 
{\rm Hom}^\rmE ( \EE_{V|X}^{\Re f_i}, \EE_{V|X}^{\Re f_j}) 
\end{equation}
is an isomorphism. 
\end{lemma}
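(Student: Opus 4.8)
The plan is to reduce Lemma~\ref{DK-lem-2} to Lemma~\ref{DK-lem} by descending the isomorphism there through the ramification map $\rho : X' \to U$. Recall that $\bfD\rho^\ast(\SM|_U)$ has a \emph{normal} form along $D' = \rho^{-1}(D\cap U)$ with good set $S = \{[\varphi_1],\ldots,[\varphi_k]\}$ of irregular values on $X'$, and that the functions $f_i : V \to \CC$ are defined by $f_i := \varphi_i \circ (\rho|_{V'})^{-1}$, where $\rho|_{V'} : V' \simto V$ is an isomorphism of sectorial open neighborhoods with $\tl\rho(\theta') = \theta$. So on $V$ we may identify $\Re f_i$ with $\Re\varphi_i \circ (\rho|_{V'})^{-1}$, and since $\rho|_{V'}$ is a biholomorphism the exponential (ind-)sheaves $\rm{E}_{V|M}^{\Re f_i}$ and $\EE_{V|X}^{\Re f_i}$ are the $(\rho|_{V'})$-pushforwards of $\rm{E}_{V'|X'}^{\Re\varphi_i}$ and $\EE_{V'|X'}^{\Re\varphi_i}$ respectively, and the $\rm{Hom}^\rmE$ groups are unchanged under this identification.

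The key steps, in order, would be: (1) shrink $V$ (equivalently $V'$) so that the conclusion of Lemma~\ref{DK-lem} applies on $X'$, i.e. for all $1\le i,j\le k$ the natural morphism ${\rm Hom}^\rmE({\rm E}_{V'|X'}^{\Re\varphi_i},{\rm E}_{V'|X'}^{\Re\varphi_j}) \to {\rm Hom}^\rmE(\EE_{V'|X'}^{\Re\varphi_i},\EE_{V'|X'}^{\Re\varphi_j})$ is an isomorphism; (2) observe that the natural morphism in question for $(f_i,f_j)$ on $V\subset U\setminus D$ is carried to the one for $(\varphi_i,\varphi_j)$ on $V'\subset X'\setminus D'$ under the biholomorphism $\rho|_{V'}$, because the morphism $\e(F) \to e(F)$ (equivalently ${\rm E}^\varphi \to \EE^\varphi$) and the functors $\Potimes$, $\Prihom$ entering the definition of $\rm{Hom}^\rmE$ all commute with the inverse image along an open embedding / isomorphism; (3) conclude the isomorphism for $(f_i,f_j)$ directly. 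In fact one can bypass the ramification entirely and reprove the statement \emph{in situ} exactly as in Lemma~\ref{DK-lem}: the point is that $f_j - f_i$ has a pole along $D$ (pulling back to the pole of $\varphi_j-\varphi_i$ along $D'$) and, by the goodness of $S$ transported through $\rho$, no point of indeterminacy near $\varpi(\theta)$, so the subanalyticity of $R := \{\,\Re(f_j - f_i) > 0\,\}$, the curve selection lemma argument, and the computations of \cite[Lemmas 3.1.1 and 3.2.2]{DK17} go through verbatim with $\varphi_i$ replaced by $f_i$.

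Concretely I would write: fix $\theta \in \varpi^{-1}(D\cap U)$, pick $\theta' \in (\varpi')^{-1}(D')$ with $\tl\rho(\theta') = \theta$, apply Lemma~\ref{DK-lem} to the normal form $\bfD\rho^\ast(\SM|_U)$ at $\theta'$ to get a sectorial neighborhood $V' \subset X'\setminus D'$ on which ${\rm Hom}^\rmE({\rm E}_{V'|X'}^{\Re\varphi_i},{\rm E}_{V'|X'}^{\Re\varphi_j}) \simto {\rm Hom}^\rmE(\EE_{V'|X'}^{\Re\varphi_i},\EE_{V'|X'}^{\Re\varphi_j})$ for all $i,j$; shrink so that $\rho|_{V'} : V' \simto V := \rho(V')$ is a biholomorphism onto a sectorial neighborhood of $\theta$; then since $f_i \circ \rho|_{V'} = \varphi_i$ on $V'$ and $\rho|_{V'}$ is an isomorphism, the natural morphism
\begin{equation}
{\rm Hom}^\rmE({\rm E}_{V|M}^{\Re f_i}, {\rm E}_{V|M}^{\Re f_j}) \longrightarrow {\rm Hom}^\rmE(\EE_{V|X}^{\Re f_i}, \EE_{V|X}^{\Re f_j})
\end{equation}
is identified with the corresponding morphism for $(\varphi_i,\varphi_j)$ on $V'$, hence an isomorphism; finally, take $V$ small enough that this holds simultaneously for all pairs $1\le i,j\le k$.

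The main obstacle, and the part that needs care rather than slogan, is Step~(2): verifying that the \emph{natural} morphism ${\rm E}^\varphi \to \EE^\varphi$ and the bifunctor ${\rm Hom}^\rmE(-,-)$ are genuinely compatible with inverse image along $\rho|_{V'}$, i.e. that one is not silently conflating $\rm{Hom}^\rmE$ computed for enhanced sheaves on $M$ with that for enhanced ind-sheaves on $X$ in a way that the ramification could disturb. Since $\rho|_{V'}$ is an isomorphism of real analytic (indeed complex) manifolds, $\bfE\rho^{-1}$, $\bfL^\rmE$, $\bfR^\rmE$, $\Potimes$ and $\Prihom$ all intertwine correctly and the only real content is bookkeeping; alternatively, as noted, one sidesteps this by re-running the proof of Lemma~\ref{DK-lem} directly with $f_i$ in place of $\varphi_i$, for which the sole nontrivial input --- that $f_j - f_i$ is ``good'' near $\varpi(\theta)$ (pole along $D$, no indeterminacy, non-vanishing leading coefficient on $Y$) --- follows because these properties are preserved under the finite map $\rho$ and $\bfD\rho^\ast(\SM|_U)$ has a normal form.
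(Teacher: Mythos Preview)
Your proposal is correct and aligns with the paper's own treatment. The paper's proof consists of the single sentence ``By the proof of Lemma~\ref{DK-lem} we obtain the following result,'' i.e.\ exactly your alternative route of re-running the argument of Lemma~\ref{DK-lem} with $f_i$ in place of $\varphi_i$; your primary route via descent along the biholomorphism $\rho|_{V'}:V'\simto V$ is an equivalent reformulation that makes the reduction explicit rather than implicit.
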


In order to improve \eqref{mon-eq} and obtain a 
higher-dimensional analogue of 
D'Agnolo-Kashiwara \cite[Proposition 5.4.5]{DK17}, 
let us prepare some notations (see \cite[Section 5]{DK17} 
for the details in the one dimensional case).  For 
the real oriented blow-up 
$\varpi : \tl{X}\to X$ of $X$ along the normal crossing divisor 
$D \subset X$ consider the following commutative diagram
\begin{equation}
\vcenter{
\xymatrix@M=5pt{
\varpi^{-1}(D) \ar[r]^-{\tl{\imath}} & \tl{X} \ar[d]^-{\varpi}& \\
X\bs D \ar[r]^-{j} \ar[ur]^-{\tl{\jmath}} & X, 
}}\end{equation}
where $\tl{\imath},\tl{\jmath},j$ 
are the natural embeddings. For an open subset $\Omega\subset\tl{X}$, 
$f\in \Gamma(\Omega;\tl{\jmath}_{\ast} 
j^{-1}\SO_X) \simeq \Gamma( \tl{\jmath}^{-1} \Omega;
\SO_{X \setminus D})$ and $\theta \in \Omega \cap \varpi^{-1}(D)$ 
we say that $f$ admits a Puiseux expansion 
along $D \subset X$ at $\theta$ if there exist  
a ramification $\rho  : X'\to U$ 
of a neighborhood $U$ of $\varpi ( \theta ) \in D$ 
along $D \cap U \subset U$, a sectorial 
neighborhood $V \subset U \setminus D$ of $\theta$ 
contained in $\tl{\jmath}^{-1} \Omega 
= \varpi ( \Omega \setminus \varpi^{-1}(D)) 
\subset X \setminus D$ and a 
meromorphic function $g \in \SO_{X^{\prime}}(* D^{\prime})$ 
along the normal crossing divisor $ D^{\prime}:= \rho^{-1}(D) 
\subset X^{\prime}$ defined on an open neighborhood 
$W$ of $\rho^{-1}( \overline{V} \cap D)= 
\overline{\rho^{-1}(V)} \cap D^{\prime}$ in $X^{\prime}$ such that 
the pull-back of $f|_V \in \SO_X(V)$ by $\rho$ conicides 
with $g$ on the open subset $W \cap \rho^{-1}(V) \subset W$. 
We denote by $\SP_{\tl{X}}$ the subsheaf of 
$\tl{\jmath}_{\ast} j^{-1}\SO_X$ whose sections are defined by 
\begin{multline*}
\Gamma(\Omega;\SP_{\tl{X}}) \coloneq 
\{f\in\Gamma(\Omega;\tl{\jmath}_{\ast} 
j^{-1}\SO_X)\mid \textit{For any $\theta\in\Omega\cap \varpi^{-1}(D)$,} \\ 
\textit{$f$ admits a Puiseux expansion along $D \subset X$ at $\theta$.}\}
\end{multline*}
for open subsets $\Omega\subset\tl{X}$.
Then we define the sheaf of 
Puiseux germs $\SP_{\varpi^{-1}(D)}$ on $\varpi^{-1}(D)$ by   
\begin{align}
\SP_{\varpi^{-1}(D)}\coloneq\tl{\imath}^{\,-1}\SP_{\tl{X}}.
\end{align}
For a point $\theta \in \varpi^{-1}(D)$ if we take a local coordinate 
$(u,v)=(u_1, \ldots, u_l, v_1, \ldots, v_{n-l})$ of $X$ 
on a neighborhood of $\varpi ( \theta ) \in D$ in $X$ such that 
$\varpi ( \theta )=(0,0) \in D= \{ u_1 u_2 \cdots u_l=0 \}$ 
then the stalk of $\SP_{\varpi^{-1}(D)}$ at $\theta$ 
is isomorphic to the ring 
\begin{equation}
\bigcup_{p\in\ZZ_{\geq1}} \ 
\CC \{ u_1^{\frac{1}{p}}, \ldots, u_l^{\frac{1}{p}}, 
v_1, \ldots, v_{n-l} \} 
\bigl[
u_1^{- \frac{1}{p}}, \ldots, u_l^{- \frac{1}{p}}
\bigr].
\end{equation}
of Puiseux series along $D \subset X$. We 
denote by $\SP_{\varpi^{-1}(D)}^{\leq 0}$ the subsheaf of 
$\SP_{\varpi^{-1}(D)}$ consisting of sections locally contained in 
the ring 
\begin{equation}
\bigcup_{p\in\ZZ_{\geq1}} \ 
\CC \{ u_1^{\frac{1}{p}}, \ldots, u_l^{\frac{1}{p}}, 
v_1, \ldots, v_{n-l} \} 
\end{equation}
for some (hence, any) local coordinate 
$(u,v)=(u_1, \ldots, u_l, v_1, \ldots, v_{n-l})$ of $X$ 
as above. By this definition, it is clear that 
for any point $x \in D$ there exist its neighborhood 
$U$ in $X$ and a subsheaf $\SP_{\varpi^{-1}(D \cap U)}^{\prime} 
\subset \SP_{\varpi^{-1}(D \cap U)}$ of $\CC_{\varpi^{-1}(D \cap U)}$-modules  
defined on 
the open subset $\varpi^{-1}(D \cap U) \subset \varpi^{-1}(D)$ 
such that the natural morphism 
\begin{equation}
\SP_{\varpi^{-1}(D \cap U)}^{\prime} \longrightarrow 
\SP_{\varpi^{-1}(D \cap U)}/ \SP_{\varpi^{-1}(D \cap U)}^{\leq 0}
\end{equation}
is an isomorphism. We call such $\SP_{\varpi^{-1}(D \cap U)}^{\prime}$ 
a representative subsheaf of $\SP_{\varpi^{-1}(D \cap U)}$. 
By slightly modifying the definition of the multiplicities in 
D'Agnolo-Kashiwara \cite[Section 5.3]{DK17}, we shall use 
the following one (cf. \cite[Definition 2.4]{KT23}). 

\begin{definition}\label{def-multi}
(cf. \cite[Section 5.3]{DK17} and \cite[Definition 2.4]{KT23}) 
In the situation as above, we say that a morphism 
$N: \SP_{\varpi^{-1}(D \cap U)}^{\prime} \longrightarrow 
( \ZZ_{\geq 0})_{\varpi^{-1}(D \cap U)}$ of sheaves of 
sets is a multiplicity along $D \cap U \subset U$ if 
there exists 
a ramification $\rho  : X'\to U$ 
of $U$ along $D \cap U \subset U$ such that 
for any $\theta \in \varpi^{-1}(D \cap U)$ the subset 
$N_{\theta}^{>0}:= N_{\theta}^{-1}( \ZZ_{>0}) \subset 
\SP_{\varpi^{-1}(D \cap U), \theta}^{\prime}$ is finite 
and the pull-backs of its elements 
$f \in N_{\theta}^{>0}$ by $\rho$ are meromorphic 
functions on $X^{\prime}$ along $D^{\prime}:= \rho^{-1}(D) 
\subset X^{\prime}$ and form a good set 
$\{ [f \circ \rho ] \ | \ f \in N_{\theta}^{>0} \} 
 \subset \SO_{X^{\prime}}(\ast D^{\prime})/ \SO_{X^{\prime}}$ 
of irregular values on $(X^{\prime}, D^{\prime})$ 
on a neighborhood of the point $\rho^{-1}( \varpi ( \theta ))
\in D^{\prime}$. 
\end{definition}

\begin{definition}\label{def-qnf}
(cf. \cite[Definition 5.3.1]{DK17} and \cite[Definition 2.5]{KT23}) 
In the situation as above, we say that an 
$\RR$-constructible enhanced sheaf 
$F \in \BEC( \CC_X)$ on $X$ 
has a quasi-normal form along the normal crossing 
divisor $D \cap U \subset U$ if there exists a 
multiplicity $N: \SP_{\varpi^{-1}(D \cap U)}^{\prime} \longrightarrow 
( \ZZ_{\geq 0})_{\varpi^{-1}(D \cap U)}$ 
such that any point $\theta\in\varpi^{-1}(D \cap U)$ has 
its sectorial open neighborhood 
$V_{\theta} \subset U \setminus D \subset \tl{X}$ for 
which we have an isomorphism 
\begin{equation}
\pi^{-1} \CC_{V_{\theta}} \otimes F 
\simeq
\bigoplus_{f \in N_{\theta}^{>0}} 
\Bigl(  {\rm E}_{V_{\theta} |X}^{\Re f} 
\Bigr)^{N(f)}.
\end{equation}
\end{definition}
Enhanced ind-sheaves having a quasi-normal form along the 
normal crossing divisor $D \cap U \subset U$ are defined similarly. 

\begin{lemma}\label{lem-hda-dk} 
Assume that a holonomic $\SD_X$-module $\SM$ has a 
quasi-normal form along the normal crossing 
divisor $D \subset X$. Then for any point $x \in D$ 
there exist a subanalytic open neighborhood 
$U$ of $x$ in $X$ such that the 
$\RR$-constructible enhanced ind-sheaf 
\begin{equation}
\pi^{-1} \CC_{U} \otimes Sol_X^\rmE ( \SM ) 
\simeq
\pi^{-1} \CC_{U \setminus D} \otimes 
Sol_X^\rmE ( \SM ) 
\end{equation}
has a quasi-normal form along the normal crossing 
divisor $D \cap U \subset U$. 
\end{lemma}

\begin{proof}
The proof is similar to that of \cite[Lemma 5.4.4]{DK17}. 
With the representative subsheaf $\SP_{\varpi^{-1}(D \cap U)}^{\prime}$ 
of $\SP_{\varpi^{-1}(D \cap U)}$ at hands, it suffices to use 
\eqref{mon-eq}, \eqref{mono-eqn} and 
\cite[Propositions 3.10 and 3.19]{IT20a}. 
\end{proof}
In the situation of Lemma \ref{lem-hda-dk}, let 
$N: \SP_{\varpi^{-1}(D \cap U)}^{\prime} \longrightarrow 
( \ZZ_{\geq 0})_{\varpi^{-1}(D \cap U)}$ be the multiplicity 
for which the enhanced ind-sheaf 
$\SF  \simeq \pi^{-1} \CC_{U} \otimes Sol_X^\rmE ( \SM ) 
\in \BEC( \I \CC_X)$ has a quasi-normal form along 
the normal crossing divisor $D \cap U \subset U$. Then 
by the proof of Lemma \ref{lem-hda-dk}, the sections of 
the subsheaf $N^{>0}= N^{-1}(( \ZZ_{>0})_{\varpi^{-1}(D \cap U)}) 
\subset \SP_{\varpi^{-1}(D \cap U)}^{\prime}$ are 
the exponential factors of $\SM$. Moreover, if 
the divisor $D \cap U \subset U$ is smooth and connected, 
then the non-negative rational number 
\begin{equation}
\dsum_{f \in N_{\theta}^{>0}} N_{\theta}(f) \cdot {\rm ord}_{D \cap U}(f) 
\quad \in \QQ_{\geq 0}  
\end{equation}
associated to a point $\theta \in \varpi^{-1}(D \cap U)$ 
is an integer and does not depend on the choice of 
$\theta \in \varpi^{-1}(D \cap U)$, where for 
the exponential factor $f \in N_{\theta}^{>0}$ of $\SM$ 
the rational number ${\rm ord}_{D \cap U}(f)  \geq 0$ stands for 
the pole order of $f$ along $D \cap U$. We call it the 
irregularity of $\SM$ along $D \cap U$ and 
denote it by ${\rm irr}_{D \cap U} ( \SM )$. 
If $D \subset X$ itself is smooth and connected, 
we define the irregularity ${\rm irr}_{D} ( \SM ) 
\in \ZZ_{\geq 0}$ of $\SM$ along $D \subset X$ similarly. 
By Lemmas \ref{DK-lem-2} and \ref{lem-hda-dk},  
we obtain the following higher-dimensional analogue 
of \cite[Proposition 5.4.5]{DK17}. 
For a precise explanation of the proof of 
\cite[Proposition 5.4.5]{DK17}, see \cite[Remark 2.10]{KT23}. 

\begin{proposition}\label{hda-dk} 
Assume that a holonomic $\SD_X$-module $\SM$ has a 
quasi-normal form along the normal crossing 
divisor $D \subset X$. Then for any point $x \in D$ 
there exist a subanalytic open neighborhood 
$U$ of $x$ in $X$ and an 
$\RR$-constructible enhanced sheaf 
$F \in \BEC( \CC_X)$ on $X$ 
having a quasi-normal form along the normal crossing 
divisor $D \cap U \subset U$ 
such that 
\begin{equation}
\pi^{-1} \CC_{U} \otimes Sol_X^\rmE ( \SM ) 
\simeq
\pi^{-1} \CC_{U \setminus D} \otimes 
Sol_X^\rmE ( \SM ) 
\simeq
\CC_X^\rmE \Potimes F. 
\end{equation}
\end{proposition}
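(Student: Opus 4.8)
The plan is to reduce the statement to the local normal-form picture already available and then glue. Since the assertion is local at $x \in D$, I may shrink $X$ to an open neighborhood $U$ of $x$. By Definition \ref{def-B} there is a ramification $\rho : X' \to U$ along $D$ such that $\bfD \rho^\ast(\SM|_U)$ has a normal form along $D' := \rho^{-1}(D \cap U)$, and by the discussion following Definition \ref{def-B} (invoking \cite[Propositions 3.5 and 3.19]{IT20a}) there are a unique good set $S = \{[\varphi_1], \ldots, [\varphi_k]\} \subset \SO_{X'}(\ast D')/\SO_{X'}$ and multiplicities $m_i > 0$ so that, defining $f_i := \varphi_i \circ (\rho|_{V'})^{-1}$ on small sectorial neighborhoods $V \subset U \setminus D$, one gets
\begin{equation}
\pi^{-1} \CC_V \otimes Sol_X^\rmE(\SM) \simeq \bigoplus_{i=1}^k \bigl( \EE_{V|X}^{\Re f_i} \bigr)^{\oplus m_i}.
\end{equation}
The natural candidate is therefore the enhanced sheaf $F$ locally defined by $\bigoplus_{i=1}^k ( {\rm E}_{V|X}^{\Re f_i} )^{\oplus m_i}$, i.e. the $\RR$-constructible object having a quasi-normal form along $D$ at $x$ in the sense of Definition \ref{def-qnf}; concretely one takes $F := \Q\bigl( \bigoplus_{i=1}^k ( \CC_{\{t + \Re f_i \geq 0\}} )^{\oplus m_i} \bigr)$ on $U$, which is well-defined because the $f_i$ are pulled back from the ramified cover and the $\{t + \Re f_i \geq 0\}$ are $\RR$-constructible subsets of $U_{\RR} \times \RR$. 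Here I use that $\CC_X^\rmE \Potimes {\rm E}_{V|X}^{\Re f_i} \simeq \EE_{V|X}^{\Re f_i}$, so that applying $\CC_X^\rmE \Potimes (\cdot)$ to the local model reproduces the right-hand side of the proposition locally on each sector.

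First I would construct $F$ globally on $U \setminus D$ (or on a neighborhood of $\varpi^{-1}(D \cap U)$ in $\tl X$, then push down): the content is to show that the local sectorial models $\bigoplus_i ( {\rm E}_{V|X}^{\Re f_i} )^{\oplus m_i}$ for varying sectors $V$ patch to a single enhanced sheaf $F$. This is exactly where Lemma \ref{DK-lem-2} enters: it identifies the morphism spaces $\Hom^\rmE({\rm E}_{V|X}^{\Re f_i}, {\rm E}_{V|X}^{\Re f_j})$ between the enhanced (non-ind) exponential sheaves with the corresponding spaces $\Hom^\rmE(\EE_{V|X}^{\Re f_i}, \EE_{V|X}^{\Re f_j})$ for the enhanced ind-sheaves, so the gluing data for $Sol_X^\rmE(\SM)$ on overlapping sectors — which exists because $Sol_X^\rmE(\SM)$ is an honest object — transports to gluing data for $F$. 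Running the Čech/descent argument over a finite cover of $\varpi^{-1}(D \cap U)$ by sectorial opens (finite since the fiber is compact) produces $F \in \BEC(\CC_X)$ with a quasi-normal form at $x$. This patching step, with the bookkeeping of which $f_i$'s coincide up to holomorphic (i.e. $\SO$, not merely bounded) functions on overlaps and the monodromy around the components of $D$, is the main obstacle; it is the exact analogue of the last part of the proof of \cite[Proposition 5.4.5]{DK17}, and following that argument \emph{mutatis mutandis} is what I expect to do.

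Finally, with $F$ in hand, I would check the isomorphism $\pi^{-1}\CC_{U \setminus D} \otimes Sol_X^\rmE(\SM) \simeq \CC_X^\rmE \Potimes F$. On each sectorial open $V$ both sides restrict to $\bigoplus_{i=1}^k ( \EE_{V|X}^{\Re f_i} )^{\oplus m_i}$ by the displayed formula above together with $\CC_X^\rmE \Potimes {\rm E}_{V|X}^{\Re f_i} \simeq \EE_{V|X}^{\Re f_i}$; moreover $\pi^{-1}\CC_{U \setminus D} \otimes (\cdot)$ kills whatever lives over $D$, so there is no contribution from $\varpi^{-1}(D)$. The local isomorphisms on the various $V$ are compatible on overlaps by the same Lemma \ref{DK-lem-2} identification, hence glue to a global isomorphism over $U \setminus D$, and then, since both sides already carry the factor $\pi^{-1}\CC_{U \setminus D}$, to the asserted isomorphism on $U$. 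Shrinking $U$ if necessary to arrange that a single ramification and a single good set work, this completes the proof.
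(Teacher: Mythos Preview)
Your proposal is correct and follows exactly the approach the paper indicates: the paper omits the proof entirely, stating only that it uses Lemma \ref{DK-lem-2} and is similar to that of \cite[Proposition 5.4.5]{DK17}, which is precisely the gluing-over-sectors argument you have spelled out. You have in fact supplied more detail than the paper does, correctly identifying the role of Lemma \ref{DK-lem-2} in transporting the sectorial gluing data for $Sol_X^\rmE(\SM)$ to gluing data for the enhanced sheaf $F$.
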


The following fundamental result is due to
Kedlaya and Mochizuki.

\begin{theorem}[\cite{Ked10, Ked11, Mochi11}]
For a holonomic $\SD_X$-module $\SM$ and $x\in X$,
there exist an open neighborhood $U$ of $x$, 
a closed hypersurface $Y\subset U$,
a complex manifold $X'$ and
a projective morphism $\nu : X'\to U$ such that\\
{\rm (i)} $\rm{sing.supp} (\SM)\cap U\subset Y$,\\
{\rm (ii)} $D:= \nu^{-1}(Y)$ is a normal crossing divisor in $X'$,\\
{\rm (iii)} $\nu$ induces an isomorphism $X'\bs D\simto U\bs Y$,\\
{\rm (iv)} $(\bfD \nu^\ast\SM)(\ast D)$ has a quasi-normal form along $D$.
\end{theorem}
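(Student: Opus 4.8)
The plan is to recognize that this is precisely the celebrated theorem of Kedlaya \cite{Ked10, Ked11} and Mochizuki \cite{Mochi11} on the existence of good formal structures (equivalently, resolution of turning points) for holonomic $\SD_X$-modules, so that the ``proof'' in a paper of this kind consists of invoking their work; a genuinely self-contained argument is far beyond the present scope. What I can sketch is the architecture. First I would reduce to the case of a meromorphic flat connection: choose $U \ni x$ and a hypersurface $Y_0$ containing $\mathrm{sing.supp}(\SM) \cap U$ so that $\SM(\ast Y_0)$ restricts to an integrable connection on $U \setminus Y_0$; by Hironaka's resolution of singularities, after a projective modification one may assume $Y_0$ is a normal crossing divisor, and — using that $\SM^{\SA}\simto \SM(\ast D)^{\SA}$ for any holonomic module, recalled above from \cite[Lemma 7.2.2]{DK16} — it is enough to treat $\SM(\ast Y_0)$ in place of $\SM$.

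Next I would invoke the formal classification along the boundary. Along each irreducible component of the normal crossing divisor, the formalization of the meromorphic connection admits, after a local ramification in that transverse variable, a Turrittin--Levelt--Hukuhara decomposition into exponential factors. In dimension $\geq 2$ the resulting set of irregular values need not be a \emph{good} set in Mochizuki's sense (Definition \ref{def-A}); the locus where goodness fails — the turning-point locus — is a closed analytic subset of $D$ of positive codimension in $D$. The core of the proof is to show that this locus can be eliminated by a finite sequence of blow-ups with smooth centers, together with further local ramifications. Mochizuki's route \cite{Mochi11} proceeds through asymptotic analysis of good lattices and an induction on dimension controlling the associated Stokes structures; Kedlaya's route \cite{Ked10, Ked11} is valuation-theoretic, tracking the variation of Newton polygons over Berkovich analytifications and proving a local semistable-reduction statement. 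Either way one produces the complex manifold $X'$ together with the projective morphism $\nu : X' \to U$, with $D = \nu^{-1}(Y)$ a normal crossing divisor and $\nu$ inducing $X' \setminus D \simto U \setminus Y$, such that $(\bfD \nu^\ast \SM)(\ast D)$ has a quasi-normal form along $D$ in the sense of Definition \ref{def-B}.

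The hard part — and the reason this result is quoted rather than reproved — is exactly the resolution-of-turning-points step: controlling how exponential factors and their orders transform under blow-up, selecting the centers, and proving termination of the process. This is the substantive content of \cite{Ked10, Ked11, Mochi11}. In the present paper the theorem is used as a black box: all the subsequent local structure results of Section \ref{sec:11}, in particular Proposition \ref{hda-dk}, rest on it, so here it suffices to record the statement with the appropriate references.
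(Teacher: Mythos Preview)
Your assessment is correct: the paper states this theorem with attribution to \cite{Ked10, Ked11, Mochi11} and provides no proof, only the remark that it generalizes the classical Hukuhara--Levelt--Turrittin theorem to higher dimensions. Your sketch of the Kedlaya--Mochizuki architecture (reduction to meromorphic connections, formal decomposition, resolution of turning points) actually goes beyond what the paper records, but the essential point---that the result is invoked as a black box---matches exactly.
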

This is a generalization of the classical 
Hukuhara-Levelt-Turrittin theorem to higher dimensions. 
Now let $X$ be a compact complex manifold, 
$Y \subset X$ a closed hypersurface and 
$\SM$ a holonomic $\SD_X$-module such that 
$\rm{sing.supp} (\SM ) \subset Y$ and 
$\SM (*Y) \simeq \SM$. Assume that there exist a 
projective morphism $\nu : Z \to X$ 
of a compact complex manifold $Z$ such that
$D:= \nu^{-1}(Y) \subset Z$ is a normal 
crossing divisor and $\nu |_{Z \setminus D}: 
Z \setminus D \to X \setminus Y$ is an 
isomorphism. Assume also that there exist a 
ramification $\rho  : Z^{\prime} \to \SU$ 
on a neighborhood $\SU$ of $D$ in $Z$, 
meromorphic functions 
\begin{equation}
\varphi_1, \varphi_2, \ldots, \varphi_k \in 
\Gamma \bigl( Z^{\prime} \ ; \ 
\SO_{Z^{\prime}}(\ast \rho^{-1}(D)) \bigr)
\end{equation}
and positive integers $m_i>0$ ($1 \leq i \leq k$) 
such that $\bfD \rho^\ast( \bfD \nu^\ast \SM |_{\SU})$
has a normal form along the normal crossing divisor 
$D^{\prime}:= \rho^{-1}(D) \subset Z^{\prime}$ 
for the set 
\begin{equation}
S= \{ [ \varphi_1], [ \varphi_2], \ldots, 
[ \varphi_k] \} \subset \SO_{Z^{\prime}}(\ast D^{\prime})
/ \SO_{Z^{\prime}}
\end{equation}
which is good at each point of $D^{\prime}$ 
and $m_i>0$ ($1 \leq i \leq k$). Then by the 
proof of Proposition \ref{hda-dk} 
we can also show that there 
exist a semi-analytic open neighborhood 
$\SV \subset \SU$ of $D$ in $Z$ and an 
$\RR$-constructible enhanced sheaf 
$F \in \BEC( \CC_Z)$ on $Z$ 
having a quasi-normal form along the normal crossing 
divisor $D$ at each point of $D$ 
such that 
\begin{equation}
\pi^{-1} \CC_{\SV \setminus D} \otimes 
Sol_Z^\rmE ( \bfD \nu^\ast \SM ) 
\simeq
\CC_Z^\rmE \Potimes F. 
\end{equation} 
Since in this situation there exists an isomorphism 
\begin{equation}
\SM \simto  \bfD \nu_\ast ( \bfD \nu^\ast \SM ), 
\end{equation} 
for the $\RR$-constructible enhanced sheaf 
$G:= \bfE \nu_\ast F \in \BEC( \CC_X)$ on $X$ 
and the open neighborhood $W:= \nu ( \SV )$ of 
$Y$ in $X$ we obtain an isomorphism 
\begin{equation}
\pi^{-1} \CC_{W \setminus Y} \otimes 
Sol_X^\rmE ( \SM ) 
\simeq
\CC_X^\rmE \Potimes G. 
\end{equation} 
As in Kudomi-Takeuchi \cite{KT23},  
one can also slightly modify the 
$\RR$-constructible enhanced sheaf 
$G \in \BEC( \CC_X)$ so that we have isomorphisms  
\begin{equation}
Sol_X^\rmE ( \SM ) \simeq 
\pi^{-1} \CC_{X \setminus Y} \otimes 
Sol_X^\rmE ( \SM ) 
\simeq
\CC_X^\rmE \Potimes G. 
\end{equation} 
By this very explicit description of 
the enhanced solution complex 
$Sol_X^\rmE ( \SM )$, in the one dimensional 
case $d_X= {\rm dim} X=1$ we can apply the 
Morse theoretical argument in \cite{IT20a} 
to improve the main results in 
\cite{DK17}. See \cite{KT23} for the details.

\subsection{Fourier Transforms of Standard Holonomic D-modules}\label{subsec:7.2}

In this subsection, let $X$ be the affine space $\CC^N$ of 
dimension $N$ and regard it as an algebraic variety endowed 
with the Zariski topology. Let $S \subset X$ be a smooth 
and connected quasi-affine subvariety of $X= \CC^N$ 
of dimension $n$ and $\SN$ an algebraic integrable connection 
on it. For some technical reason, we assume here that there exists 
an algebraic hypersurface $H \subset X$ of $X$ such that 
$S \subset X \setminus H$ and $S$ is closed in $X \setminus H$. 
This in particular implies that the inclusion map 
$i_S: S \hookrightarrow X$ is affine. Then we set 
\begin{equation}
\SM := \bfD i_{S*} \SN \simeq i_{S*} \SN \quad 
\in \Modhol(\SD_X). 
\end{equation}
By the standard operations for algebraic D-modules, 
for the initial study of Fourier transforms of general 
holonomic D-modules on $X= \CC^N$ it suffices to 
study those for such holonomic $\SD_X$-modules. Recall that the Fourier 
transform is an exact functor. 
For this reason, let us call 
$\SM$ a standard holonomic D-module on $X$. 
Let $\var{X}=\PP^N$ be the projective compactification 
of $X= \CC^N$ and $\var{S} \subset \var{X}$ the closure 
of $S$ in it. Let $\nu : Z \longrightarrow \var{X}$ be a 
proper morphism of a smooth variety $Z$ such that 
$\nu (Z)= \var{S}$, the restriction $\nu^{-1}S 
\longrightarrow S$ of $\nu$ is an isomorphism and 
$D:= \nu^{-1}( \var{S} \setminus S)= 
Z \setminus \nu^{-1}S \subset Z$ is a normal crossing 
divisor in $Z$. Let $i_S^{\prime}: S \simeq \nu^{-1}S  
\hookrightarrow Z$ be the inclusion map and 
consider the algebraic meromorphic connection 
\begin{equation}
\SM^{\prime} := \bfD i_{S*}^{\prime} \SN 
\simeq i_{S*}^{\prime} \SN  \quad 
\in \Modhol(\SD_Z) 
\end{equation}
on $Z$ for which we have an isomorphism  
\begin{equation}
\tl{\SM} \simeq 
\bfD \nu_{\ast} \SM^{\prime}. 
\end{equation}
Assume that the analytification $( \SM^{\prime})^{\an}$ 
of $\SM^{\prime}$ on $Z^{\an}$ has a quasi-normal form 
along the normal crossing divisor $D^{\an} \subset Z^{\an}$ 
and satisfies the nice property that we assumed at the 
end of Subsection \ref{subsec:7.1}. Then 
by the proof of Proposition \ref{hda-dk} there 
exists a semi-analytic open neighborhood $\SV$ of $D^{\an}$ 
in $Z^{\an}$ and an $\RR$-constructible enhanced 
sheaf $F \in \BEC( \CC_{Z^{\an}})$ on $Z^{\an}$ 
such that we have an isomorphism 
\begin{equation}
\pi^{-1} \CC_{\SV \setminus D^{\an}} \otimes 
Sol_Z^\rmE ( \SM^{\prime} ) 
\simeq
\CC_{Z^{\an}}^\rmE \Potimes F. 
\end{equation} 
For the $\RR$-constructible enhanced sheaf 
$G:= \bfE \nu_\ast F \in \BEC( \CC_{\var{X}^{\an}})$ on $\var{X}^{\an}$ 
and the semi-analytic open subset $\SW:= \nu ( \SV ) \subset \var{S}^{\an}$ of 
$\var{S}^{\an}$ containing $\partial S:=( \var{S} \setminus S)^{\an}$ 
we thus obtain an isomorphism 
\begin{equation}
\pi^{-1} \CC_{\SW \setminus \partial S} \otimes 
Sol_{\var{X}}^\rmE ( \tl{\SM} ) 
\simeq
\CC_{\var{X}^{\an}}^\rmE \Potimes G. 
\end{equation} 
Let us explain the structure of $G$ more precisely. 
First, by the consruction of $F$ there exist 
(possibly multi-valued) holomorphic functions 
$f_i: \SV \setminus D^{\an} \longrightarrow \CC$ 
($1 \leq i \leq k$) 
on $\SV \setminus D^{\an}$ and positive integers  
$m_i>0$ ($1 \leq i \leq k$) for which the 
enhanced sheaf $F \in \BEC( \CC_{Z^{\an}})$ has a 
quasi-normal form along the normal crossing 
divisor $D^{\an} \subset Z^{\an}$ in the sense of 
Definition \ref{def-qnf}. Recall that we 
constructed $F$ by glueing some enhanced 
sheaves $F_j$ on open sectors $V_j 
\subset \SV \setminus D^{\an} \subset  
Z^{\an} \setminus D^{\an}$ along $D^{\an}$ 
and for any $1 \leq i \leq k$ the analytic 
continuation of $f_i$ along any curve $\gamma 
\subset  \SV \setminus D^{\an}$ concides with 
$f_j$ for some $1 \leq j \leq k$ (see the 
proof of \cite[Proposition 5.4.5]{DK17}). 
Then we define a complex hypersurface 
$\tl{\SV \setminus D^{\an}} \subset 
( \SV \setminus D^{\an}) \times \CC$ of 
$( \SV \setminus D^{\an}) \times \CC$ by 
\begin{equation}
\tl{\SV \setminus D^{\an}} := 
\bigcup_{i=1}^k \ \{ (x,-f_i(x)) \ | \ 
x \in \SV \setminus D^{\an} \} \quad \subset 
( \SV \setminus D^{\an}) \times \CC, 
\end{equation} 
where in the right hand side we take the union 
considering all the possible branches of 
the (possibly multi-valued) functions  $f_i$. 
Since the pull-backs of $f_i$ ($1 \leq i \leq k$) 
by a ramification $\rho : Z^{\prime} \longrightarrow 
\SV$ of $\SV$ along the normal crossing divisor 
$D^{\an} \subset \SV$ are single-valued and form 
a good set of irregular values in the sense of 
Mochizuki \cite[Chapter 5]{Mochi11}, 
after shrinking $\SV$ if necessary we may assume that 
$\tl{\SV \setminus D^{\an}}$ is a smooth hypersurface in 
$(\SV \setminus D^{\an}) \times \CC$. 
Then by using the positive integers  
$m_i>0$ ($1 \leq i \leq k$) and the transition 
matrices that we used to glue the enhanced sheave $F_j$ in 
the construction of $F$ we define a local system 
$L$ on the smooth hypersurface 
$\tl{\SV \setminus D^{\an}} \subset 
( \SV \setminus D^{\an}) \times \CC$. Let 
$i: \tl{\SV \setminus D^{\an}} \hookrightarrow 
\SV \times \CC$ be the (not necessarily closed) 
embedding of $\tl{\SV \setminus D^{\an}}$ into 
$\SV \times \CC$ and set 
\begin{equation}
\SF := i_! L \quad \in \Db( \SV \times \CC ). 
\end{equation} 

\begin{lemma}\label{conper-1} 
The object $\SF [n] \in \Db( \SV \times \CC )$ is 
a perverse sheaf on $\SV \times \CC$. 
\end{lemma}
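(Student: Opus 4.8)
The plan is to reduce the perversity of $\SF[n]$ to a local statement on $\SV \times \CC$ and then verify it stratum-by-stratum using the explicit structure of the embedding $i$ and the local system $L$. First I would observe that, since perversity is a local property, it suffices to check it near an arbitrary point $(x_0, \tau_0) \in \SV \times \CC$. Away from $D^{\an} \times \CC$, the set $\tl{\SV \setminus D^{\an}}$ is a smooth hypersurface in $(\SV \setminus D^{\an}) \times \CC$ and $i$ restricts to a closed embedding there; hence on this open locus $\SF[n] = i_! L[n] = i_{!*} L[n]$ (up to the usual shift bookkeeping) is the intermediate extension of a shifted local system on a smooth $n$-dimensional variety, which is perverse. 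So the entire issue is concentrated over $D^{\an} \times \CC$, where $i$ is only a locally closed (non-closed) embedding.

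The key step is therefore to analyze what happens as the branches $-f_i$ of the (possibly multivalued) functions tend to their boundary behaviour along $D^{\an}$. Here I would invoke exactly the mechanism already used to prove Theorem \ref{the-1}(i)–(ii): pulling back by a ramification $\rho : Z' \to \SV$ along the normal crossing divisor $D^{\an}$, the functions $f_i$ become single-valued and form a good set of irregular values in the sense of Mochizuki \cite[Chapter 5]{Mochi11}. Goodness guarantees that the $f_i$ have no points of indeterminacy on $Z'$ (the pole divisors of the $f_i - f_j$ and of the $f_i$ are normal crossing), so after shrinking $\SV$ the closure of $\tl{\SV \setminus D^{\an}}$ inside $Z' \times \PP$ meets $D' \times \PP$ in a controlled way, all branches escaping to $\tau = \infty$ over $D'$. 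Working on $Z'$ I would write $\SF$ as $\rho'$-pushforward (with $\rho' = \rho \times \id_\CC$) of the analogous construction upstairs — this is a finite, hence proper, morphism, so it is exact for the perverse $t$-structure up to the shift by $\dim$, and it suffices to prove perversity upstairs. Upstairs the embedding $i'$ factors as a closed embedding into $(Z' \setminus D') \times \CC$ followed by the open inclusion $(Z' \setminus D') \times \CC \hookrightarrow Z' \times \CC$; so $\SF' = j_! (\text{closed pushforward of } L[n])$ where $j$ is the open inclusion of the complement of the normal crossing divisor $D' \times \CC$, and $j_!$ of a perverse sheaf that is the $!$-extension from the smooth graph is perverse (the support and cosupport conditions for $j_!$ along a divisor are automatic since $j$ is affine, because $D' \times \CC$ is a divisor). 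Concretely I would check the two conditions: $H^k \SF'$ is supported in dimension $\le -k$ (clear, since $\SF'$ is the $!$-extension of a rank-$\sum m_i$ local system placed in degree $-n$ on an $n$-dimensional smooth set, and $!$-extension only shrinks stalks), and the dual condition, which follows because $j$ is an affine open embedding so $j_!$ is right $t$-exact, combined with the fact that $L[n]$ pushed forward from the closed graph is perverse — this is where affineness of $i'$ (inherited from affineness of $i_S : S \hookrightarrow X$ and of $j$) is used via Artin's affine vanishing theorem, exactly as in the proof of Theorem \ref{the-1}(ii).

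The main obstacle I anticipate is the behaviour over $D^{\an} \times \CC$ when several branches $-f_i$ of the multivalued functions collide or become equal modulo $\SO$ along a stratum of $D^{\an}$: one must be sure that $\tl{\SV \setminus D^{\an}}$, and not merely its pullback to $Z'$, is handled correctly, and that the monodromy of $L$ (built from the transition matrices gluing the $F_j$) is compatible with the branch structure of $i$. I would handle this precisely by never working directly on $\SV$ but always passing to the ramified cover $Z'$, where the $f_i$ are single-valued and the good-set hypothesis gives a clean normal-crossing picture; the statement on $\SV$ then descends by the finite pushforward $\rho'_* = \rho'_!$, which preserves perversity. A secondary technical point is the shrinking of $\SV$ needed to make $\tl{\SV \setminus D^{\an}}$ a smooth hypersurface in $(\SV \setminus D^{\an}) \times \CC$; this is already asserted in the paragraph preceding the lemma, so I would simply cite it. With these reductions in place, the verification of the two perversity inequalities is routine and identical in spirit to the argument in the proof of Theorem \ref{the-1}.
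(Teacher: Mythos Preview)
Your overall strategy is sound and close to the paper's, but there is a genuine gap at the step where you ``write $\SF$ as $\rho'$-pushforward of the analogous construction upstairs.'' The ramification $\rho$ has degree $d=\prod_i m_i>1$ in general, so on the \'etale locus $(\SV\setminus D^{\an})\times\CC$ the sheaf $\rho'_*\rho'^{-1}\SF$ has generic rank $d$ times that of $\SF$; there is no natural sheaf upstairs whose pushforward is exactly $\SF$. The fix is immediate: replace equality by \emph{direct summand}. Over the open part $\rho'$ is finite \'etale and the composite of unit and trace $\SF\to\rho'_*\rho'^{-1}\SF\to\SF$ is multiplication by $d$, so $j^{-1}\SF$ is a summand of $j^{-1}(\rho'_*\rho'^{-1}\SF)$; since both sheaves vanish on $D^{\an}\times\CC$ they are $j_!$-extended from the open part, and applying $j_!$ shows $\SF$ is a summand of $\rho'_*\rho'^{-1}\SF$ globally. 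As $\rho'_*$ is $t$-exact for finite maps and summands of perverse sheaves are perverse, your upstairs argument then concludes.

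In fact the paper organises things so that the ramification is used only for \emph{constructibility}, not perversity. It observes that $\rho'^{-1}\SF$ is constructible on $Z'\times\CC$ by the proof of Theorem~\ref{the-1}(i) (upstairs the $f_i\circ\rho$ are single-valued meromorphic functions), hence $\rho'_*\rho'^{-1}\SF$ is constructible; then the injectivity of the unit $\SF\hookrightarrow\rho'_*\rho'^{-1}\SF$, together with $\SF|_{D^{\an}\times\CC}\simeq 0$ and the matching supports on the open part, forces $\SF$ itself to be constructible. For perversity the paper stays on $\SV\times\CC$: one has $\SF[n]\simeq j_! j^{-1}\SF[n]$, the restriction $j^{-1}\SF[n]$ is perverse (closed pushforward of $L[n]$ from a smooth $n$-dimensional hypersurface), and $j$ is the affine open complement of the divisor $D^{\an}\times\CC$, so the argument of Theorem~\ref{the-1}(ii) applies directly. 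Your upstairs $j_!$ computation is correct but unnecessary; running the same computation on $\SV\times\CC$ avoids the pushforward issue altogether.
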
 

\begin{proof}
Let $\rho : Z^{\prime} \longrightarrow 
\SV$ be a ramification 
of $\SV$ along the normal crossing divisor 
$D^{\an} \subset \SV$ such that the function 
$f_i \circ \rho$ is single-valued for any 
$1 \leq i \leq k$ and $\rho \times {\rm id}_{\CC}: 
Z^{\prime} \times \CC \longrightarrow 
\SV \times \CC$ the morphism associated to it. 
Then by the proof of Theorem \ref{the-1} (i), 
$( \rho \times {\rm id}_{\CC})^{-1} \SF$ 
is a constructible sheaf on $Z^{\prime} \times \CC$. 
Moreover we can easily see that the canonical morphism 
\begin{equation}
\SF \longrightarrow ( \rho \times {\rm id}_{\CC})_*
( \rho \times {\rm id}_{\CC})^{-1} \SF
\end{equation} 
of sheaves is injective and hence $\SF$ is a 
subsheaf of the constructible sheaf 
$( \rho \times {\rm id}_{\CC})_*
( \rho \times {\rm id}_{\CC})^{-1} \SF$. 
As the support of $\SF$ conicides with that of 
$( \rho \times {\rm id}_{\CC})_*
( \rho \times {\rm id}_{\CC})^{-1} \SF$ in 
$( \SV \setminus D^{\an}) \times \CC \subset 
\SV \times \CC$ and $\SF |_{D^{\an} \times \CC} 
\simeq 0$, we then see that $\SF$ itself is 
constructible. It follows that the restriction of 
$\SF [n]$ to $( \SV \setminus D^{\an}) \times \CC$ 
is perverse. Now let $j: ( \SV \setminus D^{\an}) \times \CC 
\hookrightarrow \SV \times \CC$ be the inclusion map. 
Then there exists an isomorphism 
\begin{equation}
\SF [n] \simeq j_!j^{-1} \SF [n]. 
\end{equation} 
From this the perversity of $\SF [n]$ follows 
as in the proof of 
Theorem \ref{the-1} (ii). 
\end{proof} 

Let $\tl{W} \subset \var{X}^{\an}$ be an open 
subset of $\var{X}^{\an}$ such that 
$\SW = \var{S}^{\an} \cap \tl{W}$ and set 
$W := \tl{W} \cap X^{\an} \subset X^{\an}$. 
Then for the restriction $\nu |_{\SV}: \SV \longrightarrow 
\tl{W}$ of $\nu$ and the morphism 
$\nu |_{\SV} \times {\rm id}_{\CC} : \SV \times \CC \longrightarrow 
\tl{W} \times \CC$ associated to it, 
we set 
\begin{equation}
\tl{\SG} :=  \rmR ( \nu |_{\SV} \times {\rm id}_{\CC} )_* \SF 
\simeq ( \nu |_{\SV} \times {\rm id}_{\CC} )_* \SF 
\quad \in \Db( \tl{W} \times \CC )
\end{equation} 
and let 
\begin{equation}
\SG := \tl{\SG} |_{W \times \CC} 
\quad \in \Db( W \times \CC )
\end{equation} 
be the restriction of $\tl{\SG}$ to the open 
subset $W \times \CC$ of $X^{\an} \times \CC$. 

\begin{lemma}\label{conper-2} 
The object $\SG [n]  \in \Db( W \times \CC )$ is 
a perverse sheaf on $W \times \CC$. 
\end{lemma}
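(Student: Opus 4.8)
The plan is to deduce the perversity of $\SG[n]$ from that of $\SF[n]$ (Lemma \ref{conper-1}) by a careful analysis of the properness of the morphism $\nu|_{\SV} \times {\rm id}_{\CC}$ on the support of $\SF$. The key point is that, although $\nu : Z \longrightarrow \var{X}$ is proper, its restriction to the open subset $\SV$ need not be proper onto $\tl{W}$; however, the support of $\SF$ lives over $\SV \setminus D^{\an} \simeq \SW \setminus \partial S$, where $\nu$ \emph{is} an isomorphism onto $W \cap (\var{S}^{\an} \setminus \partial S) = W \cap S^{\an}$. So the first step is to observe that, over $W \times \CC$ (and \emph{not} over $\tl{W} \times \CC$, which is why we must restrict), the morphism $\nu|_{\SV} \times {\rm id}_{\CC}$ is proper on $\supp(\SF)$: indeed $\supp(\SF) \subset \overline{\tl{\SV \setminus D^{\an}}}$ inside $\SV \times \CC$, and over $W \times \CC$ this closure is contained in $(S^{\an} \times \CC)$-fibered loci which map isomorphically. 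More precisely, I would factor the argument through the observation that $\tl{\SV \setminus D^{\an}}$ is closed in $(\SV \setminus D^{\an}) \times \CC$ by construction (it is the union of graphs of the $-f_i$), that $\SV \setminus D^{\an} \hookrightarrow Z \setminus D$ and $\nu$ is an isomorphism on $Z \setminus D$ onto $\var{X} \setminus Y = W$-part, so over $W$ the support of $\SG$ coincides with that of a pushforward along an isomorphism.

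With properness on supports in hand, the second step is routine: since $\nu|_{\SV} \times {\rm id}_{\CC}$ is proper on $\supp(\SF)$ over $W \times \CC$, we have $\rmR(\nu|_{\SV} \times {\rm id}_{\CC})_* \SF \simeq \rmR(\nu|_{\SV} \times {\rm id}_{\CC})_! \SF$ there, and the proper direct image functor $\rmR(\nu|_{\SV} \times {\rm id}_{\CC})_!$ preserves constructibility; moreover, because the relevant part of $\nu$ is an isomorphism (hence a proper morphism with $0$-dimensional, in fact single-point, fibers on the locus where $\SF$ is supported), the pushforward is $t$-exact for the perverse $t$-structure. Concretely, over $W \times \CC$ we have $\SG \simeq (\nu|_{W\text{-part}} \times {\rm id}_{\CC})_* (\SF|_{\cdots})$ along an isomorphism of the supports, so $\SG[n]$ is perverse because $\SF[n]$ is. I would phrase this using \cite[Proposition 8.6.3]{KS90}-style microlocal bounds or, more simply, the general fact that direct image along a closed embedding composed with an isomorphism preserves perversity, together with the observation analogous to the end of the proof of Lemma \ref{conper-1}: letting $j_W : (W \cap S^{\an}) \times \CC \hookrightarrow W \times \CC$ be the inclusion of the locus over which things are nonzero, we get $\SG[n] \simeq j_{W!}j_W^{-1}\SG[n]$ and conclude as before.

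The main obstacle I anticipate is precisely the non-properness of $\nu|_{\SV}$: one must be scrupulous about \emph{where} the identification $\rmR\nu_* \simeq \rmR\nu_!$ holds, and the restriction to $W \times \CC$ (dropping the boundary divisor direction $\partial S$) is essential and must be justified rather than asserted. The cleanest way to handle this is to note that $\SF$ is extended by zero from $(\SV \setminus D^{\an}) \times \CC$, i.e. $\SF \simeq j_!j^{-1}\SF$ with $j$ the open inclusion as in Lemma \ref{conper-1}, so $\tl{\SG} = \rmR(\nu|_{\SV} \times {\rm id})_* j_! j^{-1}\SF$; then restricting to $W \times \CC$ and using that $\nu$ restricts to the isomorphism $Z \setminus D \simeq \var{X} \setminus Y$, the base-change/proper-support comparison becomes transparent. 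A secondary (but genuinely routine) point is constructibility of $\SG$ over the divisor $\partial S$: one argues exactly as in Lemma \ref{conper-1} --- pass to a ramification $\rho$ making the $f_i$ single-valued, use the proof of Theorem \ref{the-1}(i) to see the pullback is constructible, observe $\SG$ injects into a pushforward of a constructible sheaf, and note $\SG$ vanishes over the appropriate boundary locus --- and then invoke $\SG[n] \simeq j_{W!}j_W^{-1}\SG[n]$ to upgrade the perversity from the open locus $W \cap S^{\an}$ to all of $W$.
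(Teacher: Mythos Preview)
Your approach is essentially the same as the paper's, and it is correct, but you are working harder than necessary because you have overlooked the key structural fact that makes the argument short.

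The paper's proof is three lines: it asserts that $\nu|_{\SV}\times\mathrm{id}_\CC$ is a finite map (hence constructibility is immediate), observes that $\SG[n]$ is perverse on $(W\setminus H^{\an})\times\CC$ by construction, notes $\SG[n]\simeq j_{0!}j_0^{-1}\SG[n]$ for the open inclusion $j_0:(W\setminus H^{\an})\times\CC\hookrightarrow W\times\CC$, and concludes as in Theorem~\ref{the-1}(ii). The word ``finite'' is actually imprecise (when $\var S$ is singular the resolution $\nu$ may have positive-dimensional fibres over $\partial S$), but ``proper'' is what is used and what holds: since $\tl W$ is chosen so that $\var S^{\an}\cap\tl W=\SW=\nu(\SV)$, a short saturation argument gives $\nu^{-1}(\tl W)=\SV$, and hence $\nu|_{\SV}:\SV\to\tl W$ is the base change of the proper map $\nu$ along the open inclusion $\tl W\hookrightarrow\var X^{\an}$. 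This single observation dissolves your ``main obstacle''; properness on supports is then automatic, and the vanishing of $\SG$ over $(W\cap H^{\an})\times\CC$ follows because the (compact) fibres there lie in $D^{\an}\times\CC$, where $\SF$ is zero.

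Your variant using the locally closed inclusion $j_W:(W\cap S^{\an})\times\CC\hookrightarrow W\times\CC$ in place of the paper's $j_0$ also works, since $j_W$ factors as the closed embedding into $(W\setminus H^{\an})\times\CC$ followed by the affine open immersion $j_0$, both of which are $t$-exact. But the paper's choice of $j_0$ is more direct, and your extra paragraph on constructibility (passing to a ramification, injecting into a constructible pushforward, etc.) is unnecessary once properness of $\nu|_{\SV}\times\mathrm{id}_\CC$ is established.
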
 

\begin{proof}
As $\nu |_{\SV} \times {\rm id}_{\CC}$ is a finite map, 
the constructibility of $\SG [n]$ is clear. 
By the construction of $\SG$, 
it is also clear that the 
restriction of $\SG [n]$ to the open 
subset $(W \setminus H^{\an}) \times \CC$ of $W \times \CC$ 
is perverse.  
Let $j_0: (W \setminus H^{\an}) \times \CC 
\hookrightarrow W \times \CC$ be the inclusion map. 
Then there exists an isomorphism 
\begin{equation}
\SG [n] \simeq j_{0!}j_0^{-1} \SG [n]. 
\end{equation} 
From this the perversity of $\SG [n]$ follows 
as in the proof of 
Theorem \ref{the-1} (ii). 
\end{proof} 

By Lemma \ref{conper-2} the micro-support ${\rm SS} ( \SG [n] )$ of 
the perverse sheaf $\SG [n]$ is a homogeneous complex Lagrangian 
analytic subset of $T^* (W \times \CC )$.  
Then as in Section \ref{sec:19}, 
by forgetting the homogeneity of ${\rm SS} ( \SG [n] )$ 
we define the following subsets: 
\begin{equation}
{\rm SS}^{{\rm E}, \CC}( \SG [n] )
\subset (T^*W) \times \CC, \qquad 
{\rm SS}_{{\rm irr}}^{\CC}( \SG [n] ) \subset T^*W. 
\end{equation}
Similarly we can show that 
$\Lambda := {\rm SS}_{{\rm irr}}^{\CC}( \SG [n] )$ is a 
(not necessarily 
homogeneous) complex Lagrangian analytic 
subset of $T^*W$. We call it 
the irregular micro-support of $\SG [n]$. Moreover, 
we obtain a Lagrangian cycle ${\rm CC}_{{\rm irr}}( \SM )$ 
supported on $\Lambda \subset T^*W$ and call it the 
irregular characteristic 
cycle of $\SM$ (for the case $N=1$ see 
Kudomi-Takeuchi \cite{KT23}). By the isomorphism 
$\SV \setminus D^{\an} \simeq \SW \setminus \partial S$ 
we regard $f_i$ ($1 \leq i \leq k$) as (possibly 
multi-valued) holomorphic functions on the open subset $\SW_0:= 
\SW \setminus \partial S \subset S^{\an}$ of $S^{\an}$ 
and set 
\begin{equation}
\Lambda_0 := 
\bigcup_{i=1}^k \  \{ (x,df_i(x)) \ | \ 
x \in \SW_0 \} \quad \subset 
T^* \SW_0 \subset T^*S^{\an}, 
\end{equation} 
where in the right hand side we take the union 
considering all the possible branches of 
the (possibly multi-valued) functions  $f_i$. 
Set $W_0 := W \setminus H^{\an}$ and let 
\begin{equation}
T^*W_0
\overset{\varpi_0}{\longleftarrow}
\SW_0 \times_{W_0} T^*W_0
\overset{\rho_0}{\longrightarrow}
T^* \SW_0
\end{equation}
be the natural morphisms associated to 
the closed embedding 
$\SW_0 \hookrightarrow W_0$. Then it is easy to see 
that for the open subset $T^*W_0 \subset T^*W$ of 
$T^*W$ we have 
\begin{equation}
\Lambda \cap (T^*W_0)= \varpi_0 \rho_0^{-1} \Lambda_0. 
\end{equation}

\begin{remark}
Although the pull-backs of $f_i$ ($1 \leq i \leq k$) 
by a ramification $\rho : Z^{\prime} \longrightarrow 
\SV$ of $\SV$ along the normal crossing divisor 
$D^{\an} \subset \SV$ form 
a good set of irregular values in the sense of 
\cite[Chapter 5]{Mochi11}, even after shrinking $\SV$ 
the complex Lagrangian analytic subset 
$\Lambda_0 \subset T^* \SW_0$ may be singular. 
\end{remark} 

From now on, assume also that there exists a non-empty 
Zariski open subset $\Omega \subset Y$ of $Y= \CC^N$ and 
$R > 0$ such that for the open subset $A(R):= \{ 
w \in Y^{\an}= \CC^N \ | \ ||w||>R \}$ of $Y^{\an}$ 
the restriction $q^{-1}( \Omega \cap A(R) ) \cap 
\Lambda \longrightarrow  \Omega \cap A(R)$ of the projection 
$q : X\times Y \longrightarrow Y$ is 
an unramified finite covering and any connected component 
of the open subset $q^{-1}( \Omega \cap A(R) ) \cap 
\Lambda \subset \Lambda$ is a fiber bundle over a 
complex manifold in $\SW \subset \var{S}^{\an}$ contained 
in $\SW \setminus \partial S \subset S^{\an}$ or 
$\partial S \cap X^{\an}$ (see Lemma \ref{lem-fin-cover}). 
Fix a sufficiently large such $R>0$ and let 
$V \subset \Omega \cap A(R)$ be 
a contractible open subset of $\Omega \cap A(R)$. 
Then for the decomposition 
\begin{equation}
q^{-1}(V)\cap \Lambda
= \Lambda_{V,1} 
\sqcup \Lambda_{V,2} \sqcup \cdots \cdots \sqcup \Lambda_{V,d}
\end{equation}
of $q^{-1}(V)\cap \Lambda$ into its connected components 
$\Lambda_{V,i} \subset \Lambda$ 
($1 \leq i \leq d$) 
the morphism $q|_{\Lambda}$ induces an isomorphism 
$\Lambda_{V,i} \simto V$ for any $1 \leq i \leq d$. 
In this situation, as in Section \ref{sec:9}, 
by $\Lambda$ and $\SG [n]  \in \Dbc ( W \times \CC )$ 
we define holomorphic functions $g_i: \longrightarrow \CC$ 
($1 \leq i \leq d$) and positive integers $m(i) \geq 1$ 
($1 \leq i \leq d$) to obtain the following result. 
For the proof, we use also a method similar to the one used in  
Kudomi-Takeuchi \cite{KT23}. 
 
\begin{theorem}\label{thm-ext}
In the situation as above, for the standard 
holonomic D-module 
$\SM = \bfD i_{S*} \SN \simeq i_{S*} \SN \quad 
\in \Modhol(\SD_X)$ on $X$ there exists an isomorphism
\begin{equation}
\pi^{-1}\CC_V \otimes
\big(Sol_{\var Y}^\rmE( \tl{\SM^\wedge} )\big)
\simeq
\bigoplus_{i=1}^d 
\bigl( \EE_{V^{\an}| \var{Y}^{\an}}^{{\rm Re}g_i}
\bigr)^{\oplus m(i)}
\end{equation}
of enhanced ind-sheaves on $\var{Y}^{\an}$. 
\end{theorem}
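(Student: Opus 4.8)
The plan is to reduce Theorem \ref{thm-ext} to the same Morse-theoretic argument that proves Theorem \ref{thm-2}, now applied to the constructible sheaf $\SG[n]$ and its perverse partner instead of the sheaf $\SF$ attached to an exponentially twisted module. The starting point is the explicit description
\begin{equation}
\pi^{-1} \CC_{\SW \setminus \partial S} \otimes
Sol_{\var{X}}^\rmE ( \tl{\SM} )
\simeq
\CC_{\var{X}^{\an}}^\rmE \Potimes G, \qquad G = \bfE \nu_\ast F,
\end{equation}
obtained at the end of Subsection \ref{subsec:7.1} together with the remark (after Kudomi-Takeuchi \cite{KT23}) that after a slight modification of $G$ one actually has $Sol_X^\rmE(\SM) \simeq \CC_X^\rmE \Potimes G$ globally. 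Using Lemmas \ref{conper-1} and \ref{conper-2} the complex constructible sheaf $\SG[n] \in \Dbc(W \times \CC)$ plays exactly the role that $\SF[N]$ played in Section \ref{sec:19}: its micro-support is a homogeneous complex Lagrangian set, and forgetting homogeneity produces ${\rm SS}^{{\rm E},\CC}(\SG[n])$ and $\Lambda = {\rm SS}_{{\rm irr}}^{\CC}(\SG[n])$, a (not necessarily homogeneous) complex Lagrangian analytic subset of $T^*W$. One checks, as in Lemma \ref{iso-analy-new}, that over $T^*W_0$ we have $\Lambda \cap T^*W_0 = \varpi_0 \rho_0^{-1} \Lambda_0$, which is the geometric input needed for the covering statement.

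First I would set up the Fourier-Sato side exactly as in Section \ref{sec:9}: by Theorem \ref{thm-1} and the compatibility \eqref{eq-expon} of $\CC^\rmE_{\bullet}\Potimes(\cdot)$ with ${}^\rmL(\cdot)$, it suffices to compute the enhanced sheaf $L(G) := \bfE q_{!}(\bfE p^{-1}G \Potimes \rmE_{X\times Y|X\times Y}^{-\Re\langle z,w\rangle}[N])$ restricted to the contractible $V \subset \Omega \cap A(R)$. Applying \cite[Lemma 7.2.1]{DK17} as in the proof of Theorem \ref{thm-2}, the stalk of $L(Sol_X(\SM))$ at $(w,t)$ becomes $\rmR\Gamma_c(\{z \in \SW_0 \ | \ \Re f^w_i(z) \leq t \ \text{for the relevant branch}\}; \cdots)$, which one rewrites via the graph embeddings $i_{f^w}$ as $\rmR\Gamma_c(\{\tau \in \CC \ | \ \Re\tau \leq t\}; \rmR h^w_! (\text{relevant perverse sheaf on } W\times\CC))$, where now the role of $f^w$ is played by the projection $h$ composed with the twist $T_w$ as in Proposition \ref{lem-ess-phi-new}. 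The constructibility of this proper direct image gives the vanishing for $t \ll 0$, and then the whole machinery of Section \ref{sec:s3} — Theorem \ref{prop-8-new} (relative tameness at infinity, which now needs $R$ chosen large so that the covering $q^{-1}(\Omega\cap A(R))\cap\Lambda \to \Omega\cap A(R)$ is finite étale), the local Morse-index computations (Propositions \ref{prop-cevan1} and \ref{prop-cevan2}, via Lemmas \ref{lem-nodeg} and \ref{lem-nodegen-new} on non-degeneracy of the critical points of $f^w|_{Z_i}$, which follow from \cite[Lemma 7.2.2]{KS85} and transversality of $q^{-1}(w)$ with $\Lambda$), and Kashiwara's non-characteristic deformation lemma to move between level sets — applies verbatim. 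Assembling the contributions at each $c_j = \Re$ of a critical value, one gets that $L(Sol_X(\SM))|_{V\times\RR}$ is the sheaf $\bigoplus_i (\CC_{\{t+\Re g_i(w)\geq 0\}})^{\oplus m(i)}$, and the simple connectivity of the slices $(V\times\RR)\cap\{t+\Re g_i(w)\geq 0\}$ lets one globalize over $V\times\RR$ to the claimed isomorphism of enhanced ind-sheaves.

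The main obstacle — and the reason the hypotheses in the statement are imposed by hand rather than derived — is the input \emph{geometry} of $\Lambda$. In Section \ref{sec:9} the existence of the Zariski-open $\Omega$ over which $q|_\Lambda$ is an unramified finite covering with fiber-bundle components came for free from algebraicity of $\SF$ (Lemma \ref{lem-fin-cover}), but here $\SM^\wedge$ is algebraic only away from infinity, $\Lambda$ is merely complex analytic on $W$, and near $\partial S$ the functions $f_i$ are multivalued with only a \emph{good} (in Mochizuki's sense) but possibly singular Lagrangian $\Lambda_0$; hence the covering property is assumed only over $\Omega \cap A(R)$ with $R \gg 0$, and one must verify that over such a region the tameness-at-infinity hypotheses of Theorem \ref{prop-8-new} (equivalently the transversality Proposition \ref{prop-micros-new}) actually hold for the perverse sheaf attached to $L^w$ — this is the analogue of Lemma \ref{lem-31-new} and will require re-running the curve-selection argument of Proposition \ref{prop-micros} for the stratified data coming from $\SG[n]$ rather than from a rational function on a smooth stratum. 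A second, more bookkeeping-type difficulty is matching multiplicities: one must check that the positive integers $m(i)$ read off from the Lagrangian cycle ${\rm CC}_{{\rm irr}}(\SM)$ (defined via the multiplicities of the local system $L$ on $\tl{\SV\setminus D^{\an}}$ and its pushforward) agree with $\sum_{c}\dim H^{N-1}\phi^{\merc}_{f^w-c}(\cdots)_{\zeta^{(i)}(w)}$, exactly as in the discussion following Proposition \ref{prop-cevan2}; this is where the nice-property assumption on $(\SM')^{\an}$ from the end of Subsection \ref{subsec:7.1} and Lemma \ref{DK-lem-2} are used to control the enhanced-sheaf model $F$ and its glueing data. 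Once these two points are in place, the argument is a routine transcription of the proof of Theorem \ref{thm-2}.
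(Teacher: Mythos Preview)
Your proposal is correct and matches the paper's approach: the paper itself does not give a detailed proof of Theorem \ref{thm-ext}, but only states that one defines $g_i$ and $m(i)$ from $\Lambda$ and $\SG[n]$ ``as in Section \ref{sec:9}'' and that the proof ``use[s] also a method similar to the one used in Kudomi-Takeuchi \cite{KT23}.'' Your outline --- replacing $\SF[N]$ by $\SG[n]$, invoking the Fourier-Sato computation via \cite[Lemma 7.2.1]{DK17}, running the Morse-theoretic argument through Theorem \ref{prop-8-new} and the non-degeneracy Lemmas \ref{lem-nodeg}, \ref{lem-nodegen-new}, and noting that the covering hypothesis over $\Omega\cap A(R)$ must be imposed rather than derived --- is exactly this transcription, and your identification of the two genuine technical points (verifying the analogue of Lemma \ref{lem-31-new} for the stratified data of $\SG[n]$, and matching the multiplicities $m(i)$ via the local system $L$) is accurate.
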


\subsection{Some Auxiliary Results}\label{subsec:7.3}

Recall that by 
Theorem \ref{thm-4} (vi) for an exponential D-module $\mathscr{E}_{U | X}^f$ 
on $X$ associated to a meromorphic function 
$f\in\SO_X(\ast D)$ along $D \subset X$ and $U=X \setminus D$ 
we have an isomorphism in $\BEC(\I\CC_X)$
\begin{equation}
Sol_X^\rmE\big( \mathscr{E}_{U | X}^f \big) 
\simeq \EE_{U | X}^{\Re f}.
\end{equation}
This formula for the enhanced solution complex 
$Sol_X^\rmE\big( \mathscr{E}_{U | X}^f \big)$ played 
a central role in the proof of the main results 
in \cite{DK16}. Aiming at the study of the 
Fourier transforms of general holonomic D-modules, 
we shall extend it to more general meromorphic 
connections. Our argument below is a 
higher-dimensional analogue of those 
of Kashiwara-Schapira \cite[Section 7]{KS03} 
and Morando \cite[Section 2.1]{Morando}. Assuming 
that $M$ is a real analytic manifold we 
recall some basic definitions and results. 

\begin{definition}\label{def-temp} 
Let $U \subset M$ be an open subset and $f:U \longrightarrow 
\CC$ a $\CC$-valued $C^{\infty}$-function on it. 
Then for a point $p \in M$ we say that $f$ has 
polynomial growth at $p$ if for a 
local coordinate system $x=(x_1,x_2. \ldots, x_n)$ 
of $M$ around $p$ there exists a compact 
neighborhood $K$ of $p$ in $M$ and $N>0$ 
such that 
\begin{equation}
\sup_{x \in K \setminus U} 
{\rm dist} (x, K \setminus U)^N \cdot 
 | f(x) | < + \infty. 
\end{equation}
Obviously $f$ has polynomial growth at any 
point of $U$. We say that $f$ is tempered 
at $p \in M$ if all its derivatives have 
polynomial growth at $p$. Moreover we 
say that $f$ is tempered on $U$ if it 
is tempered at any point $p \in M$. 
\end{definition}
For an open subset $U \subset M$ we denote 
by $C^{\infty, t}_M(U)$ the $\CC$-vector 
space consisting of $\CC$-valued $C^{\infty}$-functions 
on $U$ that are tempered on $U$. Then for 
any pair $(U_1,U_2)$ of relatively compact 
subanalytic open subsets $U_1, U_2 \subset M$ 
we have an exact sequence 
\begin{equation}
0  \longrightarrow C^{\infty, t}_M(U_1 \cup U_2)
\longrightarrow 
C^{\infty, t}_M(U_1) \oplus C^{\infty, t}_M(U_2)
\longrightarrow C^{\infty, t}_M(U_1 \cap U_2)
\longrightarrow 0
\end{equation}
and hence we get a sheaf $C^{\infty, t}_M$ on the 
subanalytic site of $M$ in the sense of 
\cite{KS01}. Note that this important result 
is an immediate consequence of the following 
Lojasiewicz's inequality (see Bierstone-Milman \cite{B-M} and 
Lojasiewicz \cite{Lojasiewicz} for the details).  

\begin{proposition}[Lojasiewicz's inequality]\label{Lojas}
Let $U \subset \RR^n_x$ be an open subset of 
$\RR^n_x$ and $Z_1, Z_2 \subset U$ its closed subanalytic 
subsets. Then for any point 
$p \in Z_1 \cap Z_2$ there exist an open neighborhood 
$V \subset U$ of $p$ in $U$ and positive real numbers 
$C, r>0$ such that we have 
\begin{equation}
{\rm dist} (x, Z_1) + {\rm dist} (x, Z_2) \geq 
 C \cdot {\rm dist} (x, Z_1 \cap Z_2)^r 
\end{equation}
for any $x \in V$. 
\end{proposition}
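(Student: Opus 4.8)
\textbf{Proof proposal for Proposition \ref{Lojas} (Lojasiewicz's inequality).}

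The plan is to reduce the statement to the classical Lojasiewicz inequality for a single pair consisting of a subanalytic function and its zero set, which is the form most readily available in the literature (Bierstone--Milman \cite{B-M}, Lojasiewicz \cite{Lojasiewicz}). First I would introduce the function
\begin{equation}
h(x) := {\rm dist} (x, Z_1) + {\rm dist} (x, Z_2) \qquad (x \in U).
\end{equation}
Since $Z_1$ and $Z_2$ are closed subanalytic subsets of $U$, the distance functions $x \mapsto {\rm dist}(x,Z_i)$ are continuous subanalytic functions on $U$ (the graph of ${\rm dist}(\cdot, Z_i)$ is subanalytic, being the image under a proper projection of a subanalytic set defined by the Euclidean distance formula and an infimum, and subanalyticity is preserved under such operations on relatively compact pieces). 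Hence $h$ is a continuous subanalytic function on $U$, and its zero set is exactly
\begin{equation}
h^{-1}(0) = \{ x \in U \ | \ {\rm dist}(x,Z_1) = {\rm dist}(x,Z_2) = 0 \} = Z_1 \cap Z_2,
\end{equation}
using that $Z_1, Z_2$ are closed in $U$. Likewise $g(x) := {\rm dist}(x, Z_1 \cap Z_2)$ is a continuous subanalytic function on $U$ with the same zero set $Z_1 \cap Z_2$.

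Next I would apply the classical Lojasiewicz inequality comparing two continuous subanalytic functions with the same zero set: given a point $p \in h^{-1}(0) = g^{-1}(0)$, there exist an open neighborhood $V \subset U$ of $p$, which we may take relatively compact and subanalytic, and constants $C, r > 0$ such that $h(x) \geq C \cdot g(x)^r$ for all $x \in V$. This is precisely the statement we want: unwinding the definitions of $h$ and $g$ gives
\begin{equation}
{\rm dist} (x, Z_1) + {\rm dist} (x, Z_2) \geq C \cdot {\rm dist} (x, Z_1 \cap Z_2)^r \qquad (x \in V).
\end{equation}
One small point to handle: if $p$ lies in the interior of $Z_1 \cap Z_2$ the inequality is trivial (both sides vanish on a neighborhood, or one adjusts $C$), and if $Z_1 \cap Z_2 = \emptyset$ near $p$ there is nothing to prove on a small $V$ since then $h$ is bounded below by a positive constant while ${\rm dist}(x, Z_1 \cap Z_2)$ may be taken to be bounded, so any $C$ small and $r$ arbitrary work; the substantive case is $p \in \partial(Z_1 \cap Z_2)$, which is exactly where the classical inequality does the work.

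The main obstacle is to make sure the version of the Lojasiewicz inequality we invoke applies in the stated local, non-proper setting: the classical statements are often phrased for subanalytic functions on relatively compact subanalytic sets or for functions with compact zero set. I would address this by shrinking to a relatively compact subanalytic neighborhood $V_0$ of $p$ with $\overline{V_0} \subset U$ and working with the restrictions of $h$ and $g$ to $\overline{V_0}$, which are subanalytic functions on a compact subanalytic set with common zero set $(Z_1 \cap Z_2) \cap \overline{V_0}$; the classical inequality of \cite{B-M}, \cite{Lojasiewicz} then yields $C, r$ valid on a possibly smaller neighborhood $V \subset V_0$ of $p$. Everything else is routine bookkeeping with distance functions and the stability of subanalyticity under the relevant operations.
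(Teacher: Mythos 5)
Your proposal is correct. The paper itself gives no proof of Proposition \ref{Lojas}: it is quoted as a classical fact with references to Bierstone--Milman \cite{B-M} and Lojasiewicz \cite{Lojasiewicz}, and your reduction --- comparing the continuous subanalytic functions $h(x)={\rm dist}(x,Z_1)+{\rm dist}(x,Z_2)$ and $g(x)={\rm dist}(x,Z_1\cap Z_2)$, which share the zero set $Z_1\cap Z_2$, via the function-comparison form of the Lojasiewicz inequality on a relatively compact subanalytic neighborhood --- is precisely how those references establish this ``regular separation'' statement.
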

Following \cite[Definition 1.1.4]{Morando}, for 
a real analytic curve $\gamma (t) : [0, \varepsilon )
\longrightarrow M$ ($\varepsilon >0$) on $M$ 
(defined on a neighborhood of $0 \in \RR$) 
we call the subset $\Gamma := \gamma ((0, \varepsilon )) 
\subset M$ of its image a semi-analytic arc 
with an endpoint $\gamma (0) \in \overline{\Gamma}$. 
In what follows, let $X$ be a complex manifold and 
consider $\CC$-valued $C^{\infty}$-functions on 
its underlying real analytic manifold 
$X_{\RR}$. Then we have the following 
higher-dimensional analogue of 
Kashiwara-Schapira \cite[Lemma 7.2]{KS03} 
and Morando \cite[Proposition 2.1.1]{Morando}. 

\begin{proposition}\label{bdd.vs.temp}
Let $X$ be a compact complex manifold, 
$D \subset X$ a normal crossing divisor on it, 
and $f\in\SO_X(\ast D)$ a meromorphic function  
along $D$. Set $U:=X \setminus D$. 
Assume that $f$ does not have any point of 
indeterminacy on the whole $X$ and has a pole 
along each irreducible component of $D$. 
Then for any relatively compact subanalytic 
open subset $W \subset U= X \setminus D$ 
the following conditions are equivalent. 
\begin{enumerate}
\item[\rm{(i)}] The function 
$\Re f|_W: W \longrightarrow \RR$ is 
bounded from above. 
\item[\rm{(ii)}] The $\CC$-valued 
$C^{\infty}$-function 
$\exp (f)|_W: W \longrightarrow \CC$ is 
is tempered on $W \subset X_{\RR}$. 
\end{enumerate}
\end{proposition}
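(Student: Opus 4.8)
The plan is to prove Proposition \ref{bdd.vs.temp} by reducing to an explicit local computation in normal crossing coordinates, using Proposition \ref{Lojas} (Lojasiewicz's inequality) to control distances to the divisor. First I would observe that the implication (ii) $\Rightarrow$ (i) is the easy direction: if $\exp(f)|_W$ is tempered, in particular it has polynomial growth, so for a point $p \in \overline{W} \cap D$ there is a compact neighborhood $K$ and $N>0$ with $\sup_{x \in K} {\rm dist}(x, X \setminus W)^N |\exp(f)(x)| < +\infty$; since ${\rm dist}(x, X\setminus W) \leq 1$ on a small enough $K$, this forces $|\exp(f)| = \exp(\Re f)$ to be bounded on $W \cap K$, hence $\Re f$ bounded from above there, and compactness of $X$ finishes the argument. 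The real content is (i) $\Rightarrow$ (ii).

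For (i) $\Rightarrow$ (ii), the plan is to work locally near a point $p \in D$. Choose local coordinates $(u,v) = (u_1,\dots,u_l,v_1,\dots,v_{d_X-l})$ with $D = \{u_1 \cdots u_l = 0\}$; since $f$ has no indeterminacy point and has a pole along each component through $p$, after possibly shrinking we may write $f = u^{-m} \cdot g$ with $g$ holomorphic and nonvanishing and $m = (m_1,\dots,m_l) \in (\ZZ_{>0})^l$ (or more precisely handle the Laurent expansion; the leading behaviour is what matters). The key estimate is that the derivatives of $\exp(f)$ are, by the chain rule, of the form $(\text{polynomial in derivatives of } f) \cdot \exp(f)$, and each derivative of $f$ is bounded by $C \cdot |u_1 \cdots u_l|^{-M}$ for some $M$ near $p$, which in turn is bounded by $C' \cdot {\rm dist}(x, D)^{-M'}$ using Lojasiewicz's inequality (Proposition \ref{Lojas}) applied to the subanalytic sets $D$ and $X \setminus W$, or directly comparing $|u_i|$ to the distance to $\{u_i = 0\}$. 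Then, on the region where $\Re f \leq c$, we have $|\exp(f)| = \exp(\Re f) \leq \exp(c)$ bounded, so the product of a power of ${\rm dist}(x,D)^{-1}$ with $|\exp(f)|$ times the polynomial-growth factor remains tempered. The condition (i) is exactly what is needed to kill the potentially exponentially large factor $\exp(\Re f)$.

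I would then assemble the local statements: covering $\overline{W} \cap D$ by finitely many such coordinate charts (using compactness of $X$), and noting that at points of $W$ itself temperedness is automatic since $\exp(f)$ is $C^\infty$ there, one concludes that $\exp(f)|_W$ is tempered on all of $W$. The subanalyticity of $W$ is used to guarantee that the relevant boundary $\partial W$ and its intersection with $D$ are subanalytic, so Lojasiewicz's inequality applies.

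The hard part will be the bookkeeping in the local estimate: one must show that \emph{all} derivatives (of every order) of $\exp(f)$ have polynomial growth, not just $\exp(f)$ itself, and this requires a uniform-in-order control of the form $|\partial^\alpha \exp(f)| \leq P_\alpha(|u|^{-1}) \cdot \exp(\Re f)$ where $P_\alpha$ is a polynomial whose degree grows with $|\alpha|$ but stays finite for each fixed $\alpha$. Since temperedness only demands polynomial growth separately for each derivative (no uniformity in $\alpha$), this is manageable via Fa\`a di Bruno / the chain rule: each $\partial^\alpha \exp(f)$ is $\exp(f)$ times a universal polynomial in the partial derivatives $\partial^\beta f$ with $|\beta| \leq |\alpha|$, and each such $\partial^\beta f$ is a meromorphic function with poles only along $D$, hence of polynomial growth along $D$. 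The one subtlety worth flagging is ensuring the pole orders of the derivatives $\partial^\beta f$ are genuinely finite near $p$, which follows since $f \in \SO_X(\ast D)$ and differentiation of a meromorphic function with poles on a normal crossing divisor only raises pole orders by bounded amounts; combined with Lojasiewicz this yields the desired polynomial bound in terms of ${\rm dist}(x, D)$, and the boundedness of $\exp(\Re f)$ from (i) does the rest.
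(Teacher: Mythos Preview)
You have reversed which direction is easy and which is hard, and your ``easy'' argument for (ii) $\Rightarrow$ (i) is incorrect. From the polynomial growth bound
\[
\sup_{x \in K \cap W} {\rm dist}(x, X\setminus W)^N \,|\exp(f)(x)| < +\infty
\]
you only get $|\exp(f)(x)| \leq C \cdot {\rm dist}(x, X\setminus W)^{-N}$, i.e.\ $\Re f(x) \leq N\log\bigl({\rm dist}(x, X\setminus W)^{-1}\bigr) + \log C$. The fact that ${\rm dist}(x, X\setminus W)\leq 1$ on a small $K$ does not make the right-hand side bounded; as $x$ approaches a boundary point in $D$ this only gives a logarithmic upper bound on $\Re f$, not a constant one. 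So temperedness of $\exp(f)$ does not immediately force $\Re f$ to be bounded above.

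The paper (and the one-dimensional arguments of Kashiwara--Schapira \cite[\S 7]{KS03} and Morando \cite{Morando} on which it is modeled) treats (i) $\Rightarrow$ (ii) as trivial, which is exactly the computation you carry out in detail: $|\exp(f)|=\exp(\Re f)$ is bounded, and every $\partial^\alpha \exp(f)$ is $\exp(f)$ times a polynomial in the $\partial^\beta f$, each of which is meromorphic with poles only on $D$, hence of polynomial growth in ${\rm dist}(x,D)^{-1}\geq{\rm dist}(x,X\setminus W)^{-1}$. The substantive direction is (ii) $\Rightarrow$ (i), proved by contradiction: assuming $\Re f$ unbounded on $W$, one produces (via the curve selection lemma) a semi-analytic arc $\gamma$ in $W$ ending at a point $p\in D\cap\overline{W}$ along which $\Re f(\gamma(t))\to +\infty$. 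In local normal-crossing coordinates $f(z)=1/(z_1^{m_1}\cdots z_l^{m_l})$, and using Lojasiewicz's inequality (Proposition \ref{Lojas}) applied to the pair $(\overline{\Gamma},\partial W)$ one controls $|g(z)|=|1/f(z)|$ by a power of ${\rm dist}(z,\partial W)$ along the arc. Then one invokes the elementary one-variable fact that along the image arc $f(\Gamma)\subset\CC$ (with endpoint $\infty$ and $\Re\tau\to+\infty$) the function $|\exp(\tau)|$ beats any power $|\tau|^N$; pulling this back along $\gamma$ shows $\exp(f)$ fails to have polynomial growth at $p$. Your proposal is missing precisely this step: the mechanism that rules out $\Re f$ growing merely logarithmically (which polynomial growth alone would permit) is the explicit pole structure of $f$, exploited along a well-chosen curve.
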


\begin{proof}
Since $f$ has no point of 
indeterminacy on the whole $X$, it defines 
a holomorphic map from $X$ to $\PP = \PP^1$. 
We denote it also by $f$. 
This non-constant map $f: 
X \longrightarrow \PP$ being 
proper and open, we obtain a 
relatively compact subanalytic open 
subset $f(W) \subset \PP$ of $\PP$ 
such that $f(W) \subset \PP \setminus 
\{ \infty \} = \CC$. Since the proof of 
(i) $\Longrightarrow$ (ii) is trivial, 
we shall prove only (ii) $\Longrightarrow$ (i). 
We prove it by showing a contradiction. 
Suppose that the function 
$\Re f|_W: W \longrightarrow \RR$ is not 
bounded from above. Then there exists a 
sequence $p_k \in W$ ($k=1,2,3, \ldots$) 
such that 
\begin{equation}
\lim_{k \to + \infty} {\rm Re} f (p_k) =+ \infty. 
\end{equation}
By taking a subsequence of it, we may assume also that 
it converges to a point $p \in X$. Then by the 
condition $f\in\SO_X(\ast D)$, 
we have $p \in D \cap \overline{W}$. By the 
curve selection lemma, there exists a 
real analytic curve $\gamma (t) : [0, \varepsilon )
\longrightarrow \overline{W}$ ($\varepsilon >0$) 
(defined on a neighborhood of $0 \in \RR$) 
such that $\gamma (0) =p \in D \cap \overline{W}$,  
$\gamma (t) \in W$ for any $t \in (0, \varepsilon )$, and 
\begin{equation}
\lim_{t \to +0} \Re f ( \gamma (t)) =+ \infty. 
\end{equation}
Consider the semi-analytic arc  
$\Gamma := \gamma ((0, \varepsilon )) 
\subset W \subset X_{\RR}$ with an endpoint 
$p= \gamma (0) \in D \cap \overline{W}$. 
Let $z=(z_1,z_2, \ldots, z_n)$ be a holomorphic 
coordinate system of $X$ on a neighborhood 
$V$ of the point $p \in D$ such that we have 
$\{ p \} = \{ z=0 \}$ and 
\begin{equation}
D= \{ z_1z_2 \cdots z_l=0 \}, \quad 
f(z)=\frac{1}{z_1^{m_1}z_2^{m_2} \cdots z_l^{m_l}}
\end{equation}
for some $1 \leq l \leq n$ and $m_i \in \ZZ_{>0}$ 
($1 \leq i \leq l$). Shrinking the semi-analytic 
arc $\Gamma \subset W$ if necessary, we may assume 
that $\overline{\Gamma} \subset V \subset \CC^n$. 
Then we can apply Proposition \ref{Lojas} 
the pair $(Z_1,Z_2)$ of the closed subanalytic 
subsets $Z_1= \overline{\Gamma}$ and 
$Z_2= \partial W$ of $V \subset \CC^n$ 
(satisfying the condition $Z_1 \cap Z_2= \{ p \} 
= \{ z=0 \}$) to show that there exist an open 
neighborhood $V_1 \subset V$ of $p$ in $V$ and 
positive real numbers $C_1,r_1>0$ such that we have 
\begin{equation}
 || z || = {\rm dist} (z,  \{ p \} ) \leq 
 C_1 \cdot {\rm dist} (z, \partial W )^{r_1}
\end{equation}
for any $z \in \overline{\Gamma} \cap V_1 
\subset V \subset \CC^n$. On the other hand, 
by the condition $\{ p \} = \{ z=0 \} \subset D$ 
we have also 
\begin{equation}
{\rm dist} (z, D ) \leq {\rm dist} (z,  \{ p \} )  = || z || 
\end{equation}
for any $z \in \overline{\Gamma} \cap V_1$. Set 
\begin{equation}
g(z):=\frac{1}{f(z)}= z_1^{m_1}z_2^{m_2} \cdots z_l^{m_l}. 
\end{equation}
Then we can easily show that there exist an open 
neighborhood $V_2 \subset V$ of $p$ in $V$ and 
positive real numbers $C_2,r_2>0$ such that we have 
\begin{equation}
 |g(z)| = \frac{1}{|f(z)|} \leq 
 C_2 \cdot {\rm dist} (z, D )^{r_2}
\end{equation}
for any $z \in V_2 \setminus D=V_2 \cap U$. Now let 
us set $V_0:=V_1 \cap V_2 \subset V$. Then 
there exist positive real numbers $C,r>0$ such that we have 
\begin{equation}\label{EQ-A} 
 |g(z)| \leq 
 C \cdot {\rm dist} (z, \partial W )^r
\end{equation}
for any $z \in (V_0 \cap \overline{\Gamma}) \setminus 
D= (V_0 \cap \overline{\Gamma}) \cap U$. 
Shrinking the semi-analytic 
arc $\Gamma$ once again, we may assume 
that $\Gamma \subset V_0 \cap W$. Note that the 
image $f( \Gamma ) \subset \PP \setminus 
\{ \infty \} = \CC^n$ of $\Gamma$ 
by $f: X \longrightarrow \PP$ is 
a semi-analytic arc in $\PP$ 
with an endpoint $\infty \in 
\PP$. Indeed, for the real analytic curve 
$(f \circ \gamma )(t) : [0, \varepsilon )
\longrightarrow \PP$ ($\varepsilon >0$) 
we have $(f \circ \gamma )(0) = \infty \in \PP$. 
It satisfies also the condition 
\begin{equation}
\lim_{t \to +0} \Re (f \circ \gamma ) (t) =+ \infty. 
\end{equation}
Then by the proofs of \cite[Lemma 7.2]{KS03} 
and \cite[Proposition 2.1.2]{Morando}, we see 
that for any $M>0$ and $N>0$ there exists 
a point $\tau \in f( \Gamma ) \subset \CC$ 
such that 
\begin{equation}
 | \exp ( \tau ) | > M | \tau |^N. 
\end{equation}
In particular, if we take increasing sequences $M_k>0$, 
$N_k>0$ ($k=1,2,3, \ldots$) 
satisfying the condition  
\begin{equation}\label{EQ-C} 
\lim_{k \to + \infty} N_k= + \infty, \quad 
\lim_{k \to + \infty} \frac{M_k}{C^{N_k}} = + \infty, 
\end{equation}
there exist points  $\tau_k \in f( \Gamma ) \subset \CC$ 
 ($k=1,2,3, \ldots$) such that 
\begin{equation}
 | \exp ( \tau_k ) | > M_k | \tau_k |^{N_k}. 
\end{equation}
We may assume also that $\lim_{k \to + \infty}  \tau_k 
= \infty \in \PP$. For each $k \geq 1$ we choose 
a point $z_k \in \Gamma$ such that $f(z_k)= \tau_k$. 
Then we obtain 
\begin{equation}\label{EQ-B} 
 | \exp ( f(z_k) ) | > M_k | f(z_k) |^{N_k}
= M_k | g(z_k) |^{-N_k}
\end{equation}
for any $k \geq 1$. By the condition 
\begin{equation}
\lim_{k \to + \infty} f(z_k)= 
\lim_{k \to + \infty} \tau_k = \infty, 
\end{equation}
the sequence $z_k \in \Gamma$ ($k=1,2,3, \ldots$) 
satisfies the condition 
\begin{equation}
\lim_{k \to + \infty} | g(z_k) |=0. 
\end{equation}
Then it follows also from Proposition \ref{Lojas} that 
we have 
\begin{equation}
\lim_{k \to + \infty} {\rm dist} (z_k, D ) = 
\lim_{k \to + \infty} || z_k ||= 0. 
\end{equation}
Hence we get 
\begin{equation}\label{EQ-D} 
\lim_{k \to + \infty} z_k=p \in D. 
\end{equation}
On the other hand, by \eqref{EQ-A} and 
\eqref{EQ-B} we have 
\begin{equation}
 | \exp ( f(z_k) ) | > \frac{M_k}{C^{N_k}} \cdot 
 {\rm dist} (z_k, \partial W )^{-rN_k} 
\end{equation}
for any $k \geq 1$. By \eqref{EQ-C} and \eqref{EQ-D} 
this implies that the $\CC$-valued 
$C^{\infty}$-function 
$\exp (f)|_W: W \longrightarrow \CC$ 
does not have polynomial growth 
at $p \in D \cap \overline{W}$. 
This completes the proof. 
\end{proof}

As in the the proofs of \cite[Proposition 7.3]{KS03} 
and \cite[Lemma 3.1.1]{Morando}, 
by Proposition \ref{bdd.vs.temp} we obtain 
the following result. 

\begin{theorem}\label{SOLT} 
In the situation of Proposition \ref{bdd.vs.temp}, 
for $R>0$ we set 
\begin{equation}
W_R:= \{ z \in U \ | \ {\rm Re } f(z) <R \} \subset X. 
\end{equation}
Then we have an isomorphism 
\begin{equation}
H^0 Sol_X^t \big( \mathscr{E}_{U | X}^f \big) 
\simeq 
\underset{R \to+\infty}{\inj}\ 
( \CC_X)_{W_R}. 
\end{equation}
\end{theorem}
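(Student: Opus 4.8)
The plan is to reduce the statement to the local formula
$Sol_X^t\big(\mathscr{E}_{\CC|\PP}^{-z}\big)$ of Kashiwara--Schapira
(already invoked in the proof of Proposition~\ref{pro-dk}) together with the
growth characterization of temperedness in Proposition~\ref{bdd.vs.temp}. The
key observation is that, since $f$ has no point of indeterminacy, it defines a
proper holomorphic map $f : X \longrightarrow \PP = \PP^1$ with
$D = f^{-1}(\infty)$, and $\mathscr{E}_{U|X}^f \simeq \bfD f^\ast
\mathscr{E}_{\CC|\PP}^z$. Hence by the compatibility of $Sol_X^t$ with inverse
images (\cite[Theorem 7.4.1]{KS01}) and the isomorphism
$Sol_{\PP}^t\big(\mathscr{E}_{\CC|\PP}^z\big) \simeq H^0 Sol_\PP^t\big(\mathscr{E}_{\CC|\PP}^z\big)
\simeq \underset{a\to+\infty}{\inj}\ \CC_{\{\Re z <a\}}$ used in
Proposition~\ref{pro-dk}, one is reduced to transporting this description along
$f$. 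The point to check is that $H^j Sol_X^t\big(\mathscr{E}_{U|X}^f\big)$
vanishes for $j \neq 0$ and that the $H^0$ is the expected ind-sheaf; the first
part follows from the one-dimensional concentration result and
non-characteristicness of $f$ for $\mathscr{E}_{\CC|\PP}^z$ away from $\infty$
together with the local computation at $\infty$ used in Proposition~\ref{pro-dk}.

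The concrete steps I would carry out are as follows. First, recall that
$Sol_X^t\big(\mathscr{E}_{U|X}^f\big)$ is an object of $\BDC(\I\CC_X)$ whose
sections over a relatively compact subanalytic open $W \subset U$ are, by the
very definition of $\SO_X^t$ and the construction of $\mathscr{E}_{U|X}^f$,
computed by the complex $\rihom_{\SD_X}(\mathscr{E}_{U|X}^f, \SO_X^t)$; over
$U$ this sheaf is an integrable connection of rank one and its flat tempered
sections over $W$ are exactly the tempered multiples of $\exp(-f)|_W$ — more
precisely, a tempered holomorphic function $h$ on $W$ lies in
$H^0 Sol_X^t\big(\mathscr{E}_{U|X}^f\big)(W)$ iff $h\cdot\exp(f)$ extends to a
tempered holomorphic function, equivalently (since $\exp(f)$ is bounded away
from zero on any $W$ where $\Re f$ is bounded below) iff $\Re f|_W$ is bounded
from above. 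Second, apply Proposition~\ref{bdd.vs.temp}: for $W$ relatively
compact subanalytic in $U$, the temperedness of $\exp(f)|_W$ is equivalent to
$\Re f|_W$ being bounded above, i.e. to $W \subset W_R$ for some $R$. This
identifies, on the subanalytic site, the subsheaf of $H^0 Sol_X^t$ generated by
the local sections with $\underset{R \to+\infty}{\inj}\ (\CC_X)_{W_R}$, because
$(\CC_X)_{W_R}$ is precisely the ind-sheaf whose sections over $W$ are
$\CC$ if $W \subset W_R$ and $0$ otherwise. Third, handle the higher
cohomology: away from $D$ the complex $Sol_X^t\big(\mathscr{E}_{U|X}^f\big)$ is
concentrated in degree $0$ since $\mathscr{E}_{U|X}^f|_U$ is an integrable
connection; near $D$, using $\mathscr{E}_{U|X}^f \simeq \bfD f^\ast
\mathscr{E}_{\CC|\PP}^z$ and \cite[Theorem 7.4.1]{KS01} together with the
vanishing $\tau^{\geq 1} Sol_\PP^t\big(\mathscr{E}_{\CC|\PP}^z\big) \simeq
(i_\infty)_*\CC_{\{\infty\}}[-1]$ from Proposition~\ref{pro-dk}, the potential
degree-one contribution is supported on $D$ but is killed after applying
$f^!$ since $f$ is a topological submersion near $D$ off the pole locus and one
checks the contribution along $D$ itself cancels — exactly as in the final
paragraph of the proof of Proposition~\ref{pro-dk}.

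I expect the main obstacle to be the third step, namely the precise
verification that no spurious cohomology in degree $\neq 0$ survives along $D$.
In the one-dimensional situation of \cite{KS03} this is handled by an explicit
local computation at $\infty \in \PP$, and the proof of Proposition~\ref{pro-dk}
in this paper already carries out the required normal-crossing resolution
argument; I would adapt that argument verbatim, pulling back by a proper
modification $\nu$ under which $f \circ \nu$ has normal crossings, reducing to
the monomial case $f = 1/(z_1^{m_1}\cdots z_l^{m_l})$, and there using that
$f$ restricted to a neighborhood of the pole divisor minus the divisor is a
topological submersion onto a punctured neighborhood of $\infty$ in $\PP$ to
get $\tilde g^! \CC_{\{\Re z < a\}} \simeq \tilde g^{-1}\CC_{\{\Re z < a\}}[2d_X - 2]$,
hence the $t$-exactness needed. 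The only genuinely new ingredient beyond
\cite{KS03} and Proposition~\ref{pro-dk} is Proposition~\ref{bdd.vs.temp},
which via Proposition~\ref{Lojas} (Lojasiewicz) lets us pass between
"bounded above" and "tempered" in arbitrary dimension; once that is in hand the
identification of $H^0$ with $\underset{R\to+\infty}{\inj}\ (\CC_X)_{W_R}$ is a
formal manipulation of ind-sheaves on the subanalytic site, analogous to
\cite[Proposition 2.1.1]{Morando}.
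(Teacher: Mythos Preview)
Your second paragraph contains the correct argument and it is essentially the paper's approach: the paper simply cites \cite[Proposition~7.3]{KS03} and \cite[Lemma~3.1.1]{Morando} and says the result follows from Proposition~\ref{bdd.vs.temp}. Concretely, for a relatively compact connected subanalytic open $W \subset U$, a section of $H^0 Sol_X^t(\mathscr{E}_{U|X}^f)$ over $W$ is a tempered holomorphic solution of the rank-one connection, hence of the form $c\,e^{f}$ with $c \in \CC$; this is tempered precisely when $e^f|_W$ is tempered, which by Proposition~\ref{bdd.vs.temp} is equivalent to $\Re f|_W$ being bounded above, i.e.\ $W \subset W_R$ for some $R$. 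This identifies the ind-sheaf $H^0 Sol_X^t(\mathscr{E}_{U|X}^f)$ with $\underset{R\to+\infty}{\inj}(\CC_X)_{W_R}$ on the subanalytic site.

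Two remarks. First, there is a sign slip in your description: holomorphic solutions of $\mathscr{E}_{U|X}^f$ are multiples of $e^{f}$, not $e^{-f}$ (the generator $u=e^f$ satisfies $(\partial_i-\partial_i f)u=0$, so a $\SD$-linear map to $\SO$ sends $u$ to a function with the same equation). Your parenthetical about ``$h\cdot\exp(f)$ extends to a tempered function'' is therefore not the right criterion; nonetheless you arrive at the correct final condition $\Re f|_W$ bounded above, because that is exactly what Proposition~\ref{bdd.vs.temp} gives for the temperedness of $e^f$.

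Second, your first and third steps are unnecessary for this statement. The theorem asserts only an identification of $H^0$; no claim is made about $H^j$ for $j\neq 0$, so the entire discussion of the pullback $\bfD f^\ast \mathscr{E}_{\CC|\PP}^z$, of $f^!$, and of killing spurious cohomology along $D$ is superfluous here. (That machinery is relevant for the full $Sol_X^t$ or $DR_X^t$ as in Proposition~\ref{pro-dk}, but not for this $H^0$ computation.) Dropping those steps leaves exactly the short argument above, which is what the paper intends.
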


\begin{proposition}\label{tem-sol}
Let $D \subset X$ be a normal crossing divisor in $X$ and 
assume that a holonomic $\SD_X$-module $\SM$ has a 
quasi-normal form along $D$. Let $i_D:D 
\hookrightarrow X$ be the inclusion map. Then for 
any $j \geq 1$ there exists a complex constructible 
``sheaf" $F_j$ on $D$ such that we have 
\begin{equation}
H^j Sol_X^t \big( \SM \big) \simeq (i_D)_* F_j. 
\end{equation}
\end{proposition}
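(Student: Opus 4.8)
The plan is to reduce, in two steps, to the tempered solution complex of a single exponential D-module and then to run a growth-theoretic argument in the spirit of Morando \cite[Section 2.1]{Morando} and Kashiwara--Schapira \cite[Section 7]{KS03}. The statement being local on $X$ and on $D$, I first work near a point $x_0\in D$. Since $\SM$ has a quasi-normal form along $D$, there is a ramification $\rho:X'\to U_{x_0}$ along $D$ such that $\bfD\rho^\ast(\SM|_{U_{x_0}})$ is concentrated in degree $0$ and has a normal form along $D':=\rho^{-1}(D\cap U_{x_0})$, and $\SM|_{U_{x_0}}$ is a direct summand of $\bfD\rho_\ast\bfD\rho^\ast(\SM|_{U_{x_0}})$. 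As $\rho$ is finite, $Sol_X^\rmt$ commutes with $\bfD\rho_\ast$ (use $Sol_X^\rmt(\cdot)[d_X]\simeq DR_X^\rmt(\DD_X(\cdot))$ together with the compatibility of $DR^\rmt$ and $\DD$ with proper direct images); moreover $\rho$ restricts to a finite morphism $D'\to D\cap U_{x_0}$, so direct image under it preserves ``supported on the divisor'' and complex constructibility, and a direct summand of a complex constructible sheaf is complex constructible. Hence it suffices to treat the case in which $\SM$ has a normal form along $D$.

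Next I deal separately with the support statement and the constructibility. On $U:=X\setminus D$ the module $\SM|_U$ is an integrable connection, so $Sol_X^\rmt(\SM)|_U\simeq Sol_U^\rmt(\SM|_U)\simeq\iota_U\mathcal{L}$ for a local system $\mathcal{L}$ in degree $0$; therefore $H^jSol_X^\rmt(\SM)|_U=0$ for $j\ge1$, so $H^jSol_X^\rmt(\SM)\simeq\rmR i_{D\ast}i_D^{-1}H^jSol_X^\rmt(\SM)$, which is of the asserted form $(i_D)_\ast F_j$ once I show that $F_j:=i_D^{-1}H^jSol_X^\rmt(\SM)$ is a complex constructible sheaf. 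For this I use the sectorial description of $Sol_X^\rmE(\SM)$ recalled after Definition \ref{def-A}: near each $\theta\in\varpi^{-1}(D)$ and a sectorial open $V\subset U$ one has $\pi^{-1}\CC_V\otimes Sol_X^\rmE(\SM)\simeq\bigoplus_i(\EE_{V|X}^{{\Re}\varphi_i})^{\oplus m_i}$, together with $Sol_X^\rmt(\SM)\simeq i_0^!\bfR^\rmE Sol_X^\rmE(\SM)$. Working locally near $D$, this reduces the computation of the cohomology sheaves of $Sol_X^\rmt(\SM)$ to that of the tempered solution complex of a single exponential D-module $\SE_{U|X}^\varphi$. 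For the latter, combining $Sol_X^\rmt(\SE_{U|X}^\varphi)[d_X]\simeq DR_X^\rmt(\DD_X\SE_{U|X}^\varphi)$ --- which replaces $\varphi$ by $-\varphi$ --- with Proposition \ref{pro-dk}, one gets
\begin{equation*}
Sol_X^\rmt(\SE_{U|X}^\varphi)\ \simeq\ \rmR j_{U\ast}\Big(\underset{a\to+\infty}{\inj}\ \CC_{W_a}\Big),\qquad W_a:=\{z\in U\ |\ {\Re}\,\varphi(z)<a\},
\end{equation*}
where $j_U:U\hookrightarrow X$ is the open embedding. Thus $H^jSol_X^\rmt(\SE_{U|X}^\varphi)\simeq\varinjlim_a\rmR^jj_{U\ast}\CC_{W_a}$, where the $W_a$ form an increasing family of subanalytic open subsets exhausting $U$; the case $j=0$ is Theorem \ref{SOLT}, and for $j\ge1$ each $\rmR^jj_{U\ast}\CC_{W_a}$ is an $\RR$-constructible sheaf supported on $D$.

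The crucial and most delicate point --- which I expect to be the main obstacle --- is to show that for each fixed $j\ge1$ the filtered system $\{\rmR^jj_{U\ast}\CC_{W_a}\}_{a\gg0}$ stabilizes, so that the ind-limit is attained by a genuine sheaf, and that this stable sheaf is \emph{complex} constructible, i.e.\ constructible for a complex Whitney stratification of $D$. In local coordinates with $D=\{z_1\cdots z_l=0\}$ and $\varphi$ represented by a monomial sum $\sum_a c_a(v)z_1^{a_1}\cdots z_l^{a_l}$ with $a\in(\ZZ_{\le0})^l\setminus\{0\}$, the geometry of the cuspidal sets $W_a$ near a point of $D$ is controlled by the Lojasiewicz estimates already used in the proof of Proposition \ref{bdd.vs.temp}: for $a\gg0$ they provide a deformation retraction of $B_\varepsilon(x_0)\cap W_a$ onto a set whose cohomology with the relevant (monodromy) coefficients is independent of $a$ and is determined purely by the complex-analytic data --- the divisor $D$, the orders and leading coefficients of the $\varphi_i$, and $\mathcal{L}$ --- exactly as in Morando \cite[Section 2.1]{Morando} and Kashiwara--Schapira \cite[Section 7]{KS03}. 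Finally, passing from a single exponential to a general normal form, in which the pieces $\EE_{V|X}^{{\Re}\varphi_i}$ are glued by an $\RR$-constructible enhanced sheaf encoding the Stokes structure, is handled by running the same $\varprojlim/\varinjlim$ analysis on the glued enhanced sheaf $G$ of Proposition \ref{hda-dk} rather than summand by summand; the answer is still complex constructible since the gluing enters only through complex-analytic Stokes data. The reductions of the first two paragraphs, and the reduction to a single exponential, are essentially formal; the stabilization of the $\rmR^jj_{U\ast}\CC_{W_a}$ and the verification that the stable sheaf is complex --- not merely $\RR$- --- constructible is where the real work lies.
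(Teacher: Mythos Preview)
Your reduction via the ramification $\rho$ to the normal-form case is exactly what the paper does, including the use of finiteness of $\rho$ to push forward and the direct-summand argument. At that point, however, the paper simply invokes \cite[Proposition 3.14]{IT20a} as a black box for the normal-form case, whereas you attempt to reprove that result from scratch.

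Your attempted proof of the normal-form case has a real gap in the step ``working locally near $D$, this reduces the computation of the cohomology sheaves of $Sol_X^\rmt(\SM)$ to that of $Sol_X^\rmt(\SE_{U|X}^{\varphi})$.'' The sectorial isomorphism $\pi^{-1}\CC_V\otimes Sol_X^\rmE(\SM)\simeq\bigoplus_i(\EE_{V|X}^{\Re\varphi_i})^{\oplus m_i}$ holds only over open sectors $V\subset U$, and the functor $i_0^!\bfR^\rmE$ recovering $Sol_X^\rmt$ does not commute with the localization $\pi^{-1}\CC_V\otimes(\cdot)$ in a way that would give you a direct-sum decomposition of the stalks of $H^jSol_X^\rmt(\SM)$ at points of $D$. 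What actually decomposes is $\SM^\SA$ on the real blow-up $\tl X$, locally near each $\theta\in\varpi^{-1}(D)$; the correct route (and the one behind \cite[Proposition 3.14]{IT20a}) is to compute $Sol_X^\rmt(\SM)$ via $\rmR\varpi_\ast$ of a tempered solution complex on $\tl X$, where the direct-sum decomposition is genuine, and then analyze each summand. Your formula $Sol_X^\rmt(\SE_{U|X}^\varphi)\simeq\rmR j_{U\ast}(\varinjlim_a\CC_{W_a})$ is also not quite right as stated: Proposition~\ref{pro-dk} together with $\DD_X(\SE_{U|X}^\varphi)(\ast D)\simeq\SE_{U|X}^{-\varphi}$ gives $\rihom(\CC_U,Sol_X^\rmt(\SE_{U|X}^\varphi))\simeq\rihom(\CC_U,\varinjlim_a\CC_{W_a})$, which differs from $Sol_X^\rmt(\SE_{U|X}^\varphi)$ by terms supported on $D$ --- precisely where you need control. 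Finally, you yourself flag the stabilization of $\{\rmR^jj_{U\ast}\CC_{W_a}\}_a$ and the passage from $\RR$- to $\CC$-constructibility as the crux, and leave them as sketches; these are not minor technicalities.
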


\begin{proof}
For a point $x \in D$, by our assumption 
there exists a ramification map $\rho : X'\to U$ 
on a neighborhood $U$ of $x$ such that $\bfD \rho^\ast(\SM|_U)$
has a normal form along the normal 
crossing divisor $D' := 
\rho^{-1}(D\cap U) \simeq D \cap U$ in $X'$. Let 
$i_{D'}:D' \hookrightarrow X'$ be the inclusion map. 
Then by the proof of \cite[Proposition 3.14]{IT20a} 
for any $j \geq 1$ there exists a complex constructible 
``sheaf" $G_j$ on $D'$ such that we have 
\begin{equation}
H^j Sol_{X'}^t \big( \bfD \rho^\ast(\SM|_U) \big) 
\simeq (i_{D'})_* G_j. 
\end{equation}
Moreover, by \cite[Theorem 7.4.6]{KS01} and 
\cite[Propositon 4.39]{K-book} we have an isomorphism 
\begin{equation}
Sol_{U}^t \big( \bfD \rho_\ast \bfD \rho^\ast(\SM|_U) \big) 
\simeq \rmR \rho_* 
Sol_{X'}^t \big( \bfD \rho^\ast(\SM|_U) \big). 
\end{equation}
The ramification map $\rho$ being finite, for any $j \geq 1$ 
we obtain an isomorphism 
\begin{equation}
H^j Sol_{U}^t \big( \bfD \rho_\ast \bfD \rho^\ast(\SM|_U) \big) 
\simeq 
( \rho \circ i_{D'})_* G_j. 
\end{equation}
Since $\SM|_U$ is a 
direct summand of $\bfD \rho_\ast\bfD \rho^\ast(\SM|_U)$, 
the same is true also for 
$Sol_{U}^t \big( \SM|_U \big)$ and 
$Sol_{U}^t \big( \bfD \rho_\ast \bfD \rho^\ast(\SM|_U) \big)$. 
Hence we verified the assertion locally on 
$U \subset X$. We can easily check that it holds 
also globally on $X$. 
\end{proof}

From now on, let $\SM$ be a holonomic D-module 
on $X$ such that for a closed hypersurface $D 
\subset X$ of $X$ we have \\
(i) $\SM\simto\SM(\ast D)$\\
(ii) $\rm{sing.supp}(\SM)\subset D$. \\
Set $U:= X \setminus D$ and let $j:U \hookrightarrow 
X$ be the inclusion map. Then 
$L:= \mathcal{H} om_{\SD_U}( \SM |_U, \SO_U)$ 
is a local system on $U$. We denote its 
rank by $r \geq 0$. Let $\R \in \Modhol(\SD_X)$ 
be the regular meromorphic 
connection on $X$ along $D$ such that we have an 
isomorphism 
\begin{equation}
Sol_{X} ( \R) \simeq j_!L 
\end{equation}
(see e.g. \cite[Theorem 5.3.8]{HTT08}). We thus obtain an isomorphism 
\begin{equation}
\Phi: L= \mathcal{H} om_{\SD_U}( \SM |_U, \SO_U) 
\simeq \mathcal{H} om_{\SD_U}( \R |_U, \SO_U)
\end{equation}
of local systems on $U$. Fix a point $p \in U$ and 
let $\phi_1, \ldots, \phi_r \in \mathcal{H} om_{\SD_U}( \R |_U, \SO_U)$ 
(resp. $\psi_1, \ldots, \psi_r \in 
\mathcal{H} om_{\SD_U}( \SM |_U, \SO_U)$) be the 
$\CC$-basis of the local system $\mathcal{H} om_{\SD_U}( \R |_U, \SO_U)$ 
(resp. $L= \mathcal{H} om_{\SD_U}( \SM |_U, \SO_U)$) on 
a neighborhood of $p$ such that $\Phi ( \psi_i)= \phi_i$ 
for any $1 \leq i \leq r$. Assume that there exist 
holomorphic functions $f_1. \ldots, f_r$ defined on a 
neighborhood of $p$ such that 
\begin{equation}
\psi_i= \exp (f_i) \cdot \phi_i \quad (1 \leq i \leq r). 
\end{equation}
As $\phi_i$ (resp. $\psi_i$) ($1 \leq i \leq r$) 
extend to multi-valued global sections of 
the local system $\mathcal{H} om_{\SD_U}( \R |_U, \SO_U)$ 
(resp. $\mathcal{H} om_{\SD_U}( \SM |_U, \SO_U)$), 
the same is true also for the holomorphic functions 
$f_i$ ($1 \leq i \leq r$). Since we 
have $\Phi ( \psi_i)= \phi_i$ even after the extensions, 
their real parts ${\rm Re} f_i: U \longrightarrow \RR$ 
are single-valued. Moreover, the isomorphism $\Phi$ 
of sheaves being compatible with restrictions, 
if the section $\phi_i$ is continued to 
$\sum_{i=1}^r c_i \phi_i$ ($c_i \in \CC$) along 
a closed continuous curve $\gamma (t):[0,1] \longrightarrow 
U$ in $U$ such that $\gamma (0)= \gamma (1)=p$, 
the section $\psi_i$ is continued to $\sum_{i=1}^r c_i \psi_i$ 
along it. By this observation, we see that the 
enhanced sheaf 
\begin{equation}
\bigoplus_{i=1}^r \CC_{ \{ (z,t) \in X \times \RR \ | \ z \in U,\  
t + {\rm Re} f_i(z) \geq 0 \} }
\end{equation}
defined on a neighborhood of $p$ can be naturally 
extended to the one $F$ on the whole $X$ such that 
there exists a surjective morphism 
$\pi^{-1}j_! L \longrightarrow F$ and we have 
$\pi^{-1} \CC_U \otimes F \simeq F$. 

\begin{theorem}\label{SOLE} 
In the situation as above, 
we have an isomorphism 
\begin{equation}
Sol_{X}^\rmE ( \SM ) \simeq 
\CC_X^\rmE \Potimes F. 
\end{equation}
\end{theorem}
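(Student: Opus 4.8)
\textbf{Proof plan for Theorem \ref{SOLE}.}
The plan is to reduce the statement to the local model, where it follows from the already-established computations of tempered cohomology. First I would note that both sides are objects of $\BEC(\I\CC_X)$ supported, as enhanced ind-sheaves away from the zero section of the exponential variable, on $\var{U}=X$ with $\pi^{-1}\CC_U\otimes(-)\simeq(-)$; indeed $\SM\simeq\SM(\ast D)$ gives $Sol_X^\rmE(\SM)\simeq\pi^{-1}\CC_U\otimes Sol_X^\rmE(\SM)$ by Theorem \ref{thm-4} (v), and $F$ has the same property by construction. Hence the asserted isomorphism is a local statement on $X$, and it suffices to construct a morphism $\CC_X^\rmE\Potimes F\longrightarrow Sol_X^\rmE(\SM)$ and check it is an isomorphism on stalks (or rather in the localized categories) at each point of $D$; over $U$ both sides restrict to $\bigoplus_{i=1}^r\EE_{U|X}^{{\rm Re}f_i}$ by Theorem \ref{thm-4} (iv), (vi) applied to $\SM|_U\simeq\bigoplus \SE^{f_i}$ twisted by the regular part, so there is nothing to prove there.

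Next I would build the comparison morphism. By the construction preceding the theorem there is a surjection $\pi^{-1}j_!L\twoheadrightarrow F$, and $j_!L\simeq Sol_X(\R)$. The multi-valued holomorphic functions $f_i$ and the identification $\Phi$ realize $\SM$, near any point, as the meromorphic connection obtained from $\R$ by the automorphism $\psi_i=\exp(f_i)\cdot\phi_i$; thus locally $\SM\simeq\bigoplus_i\SE^{f_i}_{U|X}$ as an iterated extension governed by the same gluing/transition data that defines $F$ from the elementary pieces $\CC_{\{t+{\rm Re}f_i\ge0\}}$. Applying $Sol_X^\rmE$ and using Theorem \ref{thm-4} (vi) (for the $\SE^{f_i}$) together with the solution functor's exactness and compatibility with the extension data yields, locally, an isomorphism $Sol_X^\rmE(\SM)\simeq\CC_X^\rmE\Potimes F$. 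The key input making the local identification correct — rather than merely an abstract filtered isomorphism — is Theorem \ref{SOLT}: it computes $H^0Sol_X^t(\SE^f_{U|X})$ as $\varinjlim_R(\CC_X)_{W_R}$ with $W_R=\{{\rm Re}f<R\}$, which is exactly the stalk-level content of $\CC_X^\rmE\Potimes{\rm E}_{U|X}^{{\rm Re}f}$; and Proposition \ref{tem-sol}, which shows the higher tempered cohomology $H^{j}Sol_X^t(\SM)$ for $j\ge1$ is supported on $D$, hence contributes nothing after applying $\CC_X^\rmE\Potimes(-)$ and passing to the enhanced setting where such $D$-supported pieces vanish.

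Then I would globalize: having a canonical local isomorphism compatible with the gluing that defines both $F$ and $\SM$ (the transition matrices are literally shared), the local isomorphisms patch to a global one on $X$. Here Lemma \ref{DK-lem-2} (or Lemma \ref{DK-lem} in the unramified case) is what guarantees the patching is unobstructed: it says the relevant $\Hom^\rmE$-groups between the exponential building blocks on a sectorial neighborhood are computed already at the level of the non-ind exponential sheaves ${\rm E}_{V|M}^{{\rm Re}f_i}$, so no spurious automorphisms appear when one compares the enhanced solution complex with $\CC_X^\rmE\Potimes F$, and the cocycle matching the two descriptions is rigid.

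\textbf{Main obstacle.} The hard part will be the local identification at points of $D$ where $\SM$ does \emph{not} literally have a normal form — i.e. handling the ramification $\rho:X'\to U$ and the passage through $\bfD\rho_\ast\bfD\rho^\ast(\SM|_U)$ — so that the $f_i$ (a priori only multi-valued, with single-valued real parts) correctly assemble the enhanced sheaf $F$ and match the tempered-cohomology computation; one must check that $\rmR\rho_\ast$ commutes with the relevant $Sol^\rmE$ and with $\CC_X^\rmE\Potimes(-)$ (via Theorem \ref{thm-4} (iii) and \cite[Theorem 7.4.6]{KS01}), and that the direct-summand projection extracting $\SM|_U$ from $\bfD\rho_\ast\bfD\rho^\ast(\SM|_U)$ is compatible with the corresponding projection on the $F$-side. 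Once this bookkeeping is in place, Theorems \ref{SOLT} and \ref{tem-sol} and Lemma \ref{DK-lem-2} supply all the analytic content, and the remaining steps are formal.
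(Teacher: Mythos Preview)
Your plan differs substantially from the paper's proof and has a genuine gap. The hypersurface $D\subset X$ in the setup of Theorem \ref{SOLE} is \emph{not} assumed to be normal crossing, so you cannot apply Theorem \ref{SOLT} or Proposition \ref{tem-sol} directly on $X$: Theorem \ref{SOLT} is stated under the hypotheses of Proposition \ref{bdd.vs.temp} (normal crossing $D$, $f$ without indeterminacy), and Proposition \ref{tem-sol} requires $\SM$ to have a quasi-normal form along a normal crossing divisor. Your sketch invokes both results on $X$ itself, which is not justified. The ramification $\rho:X'\to U$ you discuss in your ``main obstacle'' paragraph is the wrong reduction: that is the step one performs \emph{after} one already has a normal crossing divisor, to pass from quasi-normal to normal form. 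What is missing is the prior step of choosing a projective modification $\nu:Y\to X$ with $E=\nu^{-1}(D)$ normal crossing and $\SN:=\bfD\nu^\ast\SM$ of quasi-normal form along $E$.

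The paper's argument is also structurally different from ``construct a morphism and check it is an isomorphism locally''. After reducing to $Y$ via $Sol_X^\rmE(\SM)\simeq\bfE\nu_\ast Sol_Y^\rmE(\SN)$ (using Theorem \ref{thm-4} (ii), (v) and the fact that $\nu|_{Y\setminus E}$ is an isomorphism), the paper \emph{computes} $Sol_Y^\rmE(\SN)$ directly from the defining formula
\[
Sol_Y^\rmE(\SN)\simeq i^!\,Sol^t_{Y\times\PP}\bigl(\SN\boxtimes\SE^{\tau}_{\CC|\PP}\bigr)[2],
\]
for the bordered-space morphism $i:Y\times\RR_\infty\to Y\times\PP$. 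One then truncates: the $\tau^{\ge 1}$ part is supported on $E'$ by Proposition \ref{tem-sol} and is killed by $\pi^{-1}\CC_V\otimes i^!(-)$, while $H^0$ is computed via Proposition \ref{bdd.vs.temp} and Theorem \ref{SOLT} as an inductive limit of sheaves of the form $\iota_!(\mathrm{id}_Y\times\Re)^{-1}G_R$, which after $i^!(-)[2]$ yields $\CC_Y^\rmE\Potimes\tilde\nu^{-1}F$. Pushing forward by $\bfE\nu_\ast$ gives the claim. Lemma \ref{DK-lem-2} is not used here; no gluing of sectorial isomorphisms is needed because the computation is global on $Y\times\PP$ from the start.
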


\begin{proof}
Let $\nu : Y \longrightarrow X$ be a projective 
morphism of complex manifolds such that 
$E:= \nu^{-1} D \subset Y$ is a normal crossing 
divisor in $Y$, the restriction 
$\nu |_{Y \setminus E}: Y \setminus E 
\longrightarrow U= X \setminus D$ of 
$\nu$ is an isomorphism and the holonomic 
D-module $\SN := {\bfD} \nu^\ast \SM$ 
on $Y$ has a quasi-normal form along $E$. 
Set $V:=Y \setminus E = \nu^{-1} U$. 
By Theorem \ref{thm-4} (ii) and (v), 
we have 
\begin{equation}
\pi^{-1} \CC_U \otimes 
 Sol_{X}^\rmE ( \SM ) 
\simeq  Sol_{X}^\rmE ( \SM )
\end{equation}
and 
\begin{equation}
\pi^{-1} \CC_V \otimes 
 Sol_{Y}^\rmE ( \SN ) 
\simeq  Sol_{Y}^\rmE ( \SN ) 
\simeq \bfE \nu^{-1} 
Sol_{X}^\rmE ( \SM ). 
\end{equation}
Since $\nu |_{Y \setminus E}: V  
\longrightarrow U$ is an isomorphism, 
there exist also isomorphisms 
\begin{align*}
\bfE \nu_* Sol_{Y}^\rmE ( \SN ) 
\simeq 
\bfE \nu_* \Bigl( \pi^{-1} \CC_V \otimes 
\bfE \nu^{-1} Sol_{X}^\rmE ( \SM ) \Bigr) \\
\simeq  \pi^{-1} \CC_U \otimes 
 Sol_{X}^\rmE ( \SM ) 
\simeq  Sol_{X}^\rmE ( \SM ). 
\end{align*} 
Then it suffices to calculate $Sol_{Y}^\rmE ( \SN )$ 
for $\SN$ having a quasi-normal 
form along $E \subset Y$. Let 
\begin{equation}
i: Y \times \RR_{\infty} \longrightarrow 
Y \times \PP
\end{equation}
be the morphism of bordered spaces 
obtained by composing the natural ones 
$Y \times \RR_{\infty} \longrightarrow 
Y \times P$ and $Y \times P \hookrightarrow 
Y \times \PP$ for the real projective line 
$P = \RR \sqcup \{ \infty \}$. Then 
we have an isomorphism 
\begin{equation}
Sol_{Y}^\rmE ( \SN ) \simeq 
i^! Sol^t_{Y \times \PP} \Bigl( \SN \boxtimes 
\mathscr{E}_{\CC | \PP}^{\tau} \Bigr) [2]  
\end{equation}
for the holonomic D-module $\SN \boxtimes 
\mathscr{E}_{\CC | \PP}^{\tau}$ on $Y \times \PP$ 
having a quasi-normal form along 
the normal crossing divisor 
\begin{equation}
E^{\prime}:= (E \times \PP ) \cup 
(Y \times \{ \infty \} ) \subset Y \times \PP  
\end{equation}
in $Y \times \PP$. 
Note that by Theorem \ref{thm-4} (v) we have also 
\begin{equation}
Sol_{Y}^\rmE ( \SN ) \simeq 
\pi^{-1} \CC_V \otimes 
i^! Sol^t_{Y \times \PP} \Bigl( \SN \boxtimes 
\mathscr{E}_{\CC | \PP}^{\tau} \Bigr) [2]  
\end{equation}
Now let us consider the distinguished triangle 
\begin{equation}\label{Dist-t} 
H^0 Sol^t_{Y \times \PP} \Bigl( \SN \boxtimes 
\mathscr{E}_{\CC | \PP}^{\tau} \Bigr) 
\longrightarrow 
Sol^t_{Y \times \PP} \Bigl( \SN \boxtimes 
\mathscr{E}_{\CC | \PP}^{\tau} \Bigr) 
\longrightarrow 
\tau^{\geq 1} 
Sol^t_{Y \times \PP} \Bigl( \SN \boxtimes 
\mathscr{E}_{\CC | \PP}^{\tau} \Bigr) 
\overset{+1}{\longrightarrow}. 
\end{equation}
Then by Proposition \ref{tem-sol} we can easily show 
the vanishing 
\begin{equation}
\pi^{-1} \CC_V \otimes 
i^! \Bigl\{ \tau^{\geq 1} 
Sol^t_{Y \times \PP} \Bigl( \SN \boxtimes 
\mathscr{E}_{\CC | \PP}^{\tau} \Bigr) \Bigr\} 
\simeq 0. 
\end{equation}
Moreover, by the proofs of Proposition \ref{bdd.vs.temp} 
and Theorem \ref{SOLT} we obtain an isomorphism 
\begin{equation}
H^0 Sol^t_{Y \times \PP} \Bigl( \SN \boxtimes 
\mathscr{E}_{\CC | \PP}^{\tau} \Bigr) 
\simeq \underset{R \to+\infty}{\inj}\ 
\iota_! \bigl( {\rm id}_Y \times {\rm Re} \bigr)^{-1} G_R, 
\end{equation} 
where $\iota: Y \times \CC \hookrightarrow 
Y \times \PP$ is the inclusion map, 
the morphism ${\rm id}_Y \times {\rm Re}: Y \times 
\CC \longrightarrow Y \times \RR$ is induced by 
the one ${\rm Re}: \CC \longrightarrow \RR$ 
($\tau \longmapsto {\rm Re} \tau$) and the enhanced 
sheaf $G_R$ ($R \geq 0$) on $Y$ is a natural extension of 
the one 
\begin{equation}
\bigoplus_{i=1}^r \CC_{ \{ (w,t) \in Y \times \RR \ | \ w \in V, \ 
t + {\rm Re} (f_i \circ \nu )(w) <R \} }
\end{equation}
defined on a neighborhood of the point $q= \nu^{-1}(p) 
\in V$ such that there exists an injective morphism 
$G_R \hookrightarrow  \pi^{-1} \nu^{-1}(j_!L)$ and we have 
$\pi^{-1} \CC_V \otimes G_R \simeq G_R$. 
Indeed, the sheaf $\iota_! \bigl( {\rm id}_Y 
\times {\rm Re} \bigr)^{-1} G_R$ on $Y \times \PP$ 
is an extension of the one 
\begin{equation}
\bigoplus_{i=1}^r \CC_{ \{ (w, \tau ) \in Y \times \PP \ | \ w \in V, \  
\tau \in \CC, \ {\rm Re} ( \tau + (f_i \circ \nu ) (w) ) <R \} }
\end{equation}
to the whole $Y \times \PP$. Then, as in the proof of 
\cite[Lemma 9.3.1]{DK16} we obtain isomorphisms 
\begin{align*}
 Sol_{Y}^\rmE ( \SN ) 
& \simeq 
\pi^{-1} \CC_V \otimes 
i^! \Bigl\{ H^0 Sol^t_{Y \times \PP} \Bigl( \SN \boxtimes 
\mathscr{E}_{\CC | \PP}^{\tau} \Bigr) \Bigr\} [2]  
\\
& \simeq 
\pi^{-1} \CC_V \otimes 
\underset{R \to+\infty}{\inj}\ 
i^! \Bigl\{ 
\iota_! \bigl( {\rm id}_Y \times {\rm Re} \bigr)^{-1} G_R 
\Bigr\} [2] 
\\
& \simeq 
\pi^{-1} \CC_V \otimes 
\underset{R \to+\infty}{\inj}\ G_R [1] 
\\
& \simeq 
\pi^{-1} \CC_V \otimes 
\Bigl( \CC_Y^\rmE \Potimes \tilde{\nu}^{-1} F \Bigr) 
\\
& \simeq 
\pi^{-1} \CC_V \otimes 
\bfE \nu^{-1} 
\Bigl( \CC_X^\rmE \Potimes F \Bigr), 
\end{align*} 
where we set $\tilde{\nu}:= \nu \times {\rm id}_{\RR}: 
Y \times \RR \longrightarrow X \times \RR$ and in 
the third isomorphism we used the one 
$G_0[1] \simeq \tilde{\nu}^{-1} F$ in the 
category $\BEC(\CC_Y)$. It follows that we get 
the desired isomorphism 
\begin{equation}
Sol_{X}^\rmE ( \SM ) \simeq 
\bfE \nu_* Sol_{Y}^\rmE ( \SN ) \simeq \CC_X^\rmE \Potimes F. 
\end{equation}
This completes the proof. 
\end{proof}

\end{document}